\newcommand{\+}{\protect\nobreakdash-}
\renewcommand{\:}{\colon}
\newcommand{\rarrow}{\longrightarrow}
\newcommand{\larrow}{\longleftarrow}
\newcommand{\ot}{\otimes}
\renewcommand{\d}{\partial}
\newcommand{\bec}{\natural}
\DeclareFontFamily{U}{mathb}{\hyphenchar\font45}
\DeclareFontShape{U}{mathb}{m}{n}{
      <5> <6> <7> <8> <9> <10> gen * mathb
      <10.95> mathb10 <12> <14.4> <17.28> <20.74> <24.88> mathb12
      }{}
\DeclareSymbolFont{mathb}{U}{mathb}{m}{n}
\DeclareMathSymbol{\blackdiamond}{0}{mathb}{"0C}
\newcommand{\bu}{{\text{\smaller\smaller$\scriptstyle\bullet$}}}
\newcommand{\cu}{{\text{\smaller$\scriptstyle\blackdiamond$}}}
\newcommand{\subcu}{{\text{\smaller$\scriptscriptstyle\blackdiamond$}}}
\newcommand{\ci}{{\text{\smaller\smaller$\scriptstyle\circ$}}}
\newcommand{\lrarrow}{\mskip.5\thinmuskip\relbar\joinrel\relbar\joinrel
 \rightarrow\mskip.5\thinmuskip\relax}
\newcommand{\llarrow}{\mskip.5\thinmuskip\leftarrow\joinrel\relbar
 \joinrel\relbar\mskip.5\thinmuskip\relax}
\DeclareMathOperator{\Hom}{Hom}
\DeclareMathOperator{\Ext}{Ext}
\DeclareMathOperator{\Tot}{Tot}
\DeclareMathOperator{\Spec}{Spec}
\DeclareMathOperator{\cone}{cone}
\DeclareMathOperator{\pdi}{pd}
\DeclareMathOperator{\idi}{id}
\DeclareMathOperator{\coker}{coker}
\newcommand{\sop}{{\mathsf{op}}}
\newcommand{\rop}{{\mathrm{op}}}
\newcommand{\id}{\mathrm{id}}
\newcommand{\Id}{\mathrm{Id}}
\newcommand{\mc}{\boldsymbol(\mathbf{mc}\boldsymbol)}
\newcommand{\rmodl}{{\operatorname{\mathrm{--mod}}}}
\newcommand{\qcoh}{{\operatorname{\mathrm{--qcoh}}}}
\newcommand{\qqcoh}{{\operatorname{\mathrm{--qqcoh}}}}
\newcommand{\cmodl}{{\operatorname{\mathit{--mod}}}}
\newcommand{\cmodr}{{\operatorname{\mathit{mod--}}}}
\newcommand{\cqcoh}{{\operatorname{\mathit{--qcoh}}}}
\newcommand{\smodl}{{\operatorname{\mathsf{--mod}}}}
\newcommand{\smodr}{{\operatorname{\mathsf{mod--}}}}
\newcommand{\sqcoh}{{\operatorname{\mathsf{--qcoh}}}}
\newcommand{\bmodl}{{\operatorname{\mathbf{--mod}}}}
\newcommand{\bmodr}{{\operatorname{\mathbf{mod--}}}}
\newcommand{\bqcoh}{{\operatorname{\mathbf{--qcoh}}}}
\newcommand{\bb}{{\mathsf{b}}}
\newcommand{\abs}{{\mathsf{abs}}}
\newcommand{\co}{{\mathsf{co}}}
\newcommand{\ctr}{{\mathsf{ctr}}}
\newcommand{\proj}{{\mathsf{proj}}}
\newcommand{\inj}{{\mathsf{inj}}}
\newcommand{\bproj}{{\mathbf{proj}}}
\newcommand{\binj}{{\mathbf{inj}}}
\newcommand{\fg}{{\mathsf{fg}}}
\newcommand{\bfg}{{\mathbf{fg}}}
\newcommand{\fp}{{\mathsf{fp}}}
\newcommand{\bfp}{{\mathbf{fp}}}
\newcommand{\fpinj}{{\mathsf{fpinj}}}
\newcommand{\fproj}{{\mathsf{fproj}}}
\newcommand{\bfpinj}{{\mathbf{fpinj}}}
\newcommand{\Ac}{{\mathsf{Ac}}}
\newcommand{\sA}{\mathsf A}
\newcommand{\sB}{\mathsf B}
\newcommand{\sC}{\mathsf C}
\newcommand{\sD}{\mathsf D}
\newcommand{\sE}{\mathsf E}
\newcommand{\sF}{\mathsf F}
\newcommand{\sG}{\mathsf G}
\newcommand{\sH}{\mathsf H}
\newcommand{\sK}{\mathsf K}
\newcommand{\sL}{\mathsf L}
\newcommand{\sM}{\mathsf M}
\newcommand{\sN}{\mathsf N}
\newcommand{\sP}{\mathsf P}
\newcommand{\sS}{\mathsf S}
\newcommand{\sT}{\mathsf T}
\newcommand{\sX}{\mathsf X}
\newcommand{\sY}{\mathsf Y}
\newcommand{\sZ}{\mathsf Z}
\newcommand{\cA}{\mathcal A}
\newcommand{\cB}{\mathcal B}
\newcommand{\cC}{\mathcal C}
\newcommand{\cG}{\mathcal G}
\newcommand{\cH}{\mathcal H}
\newcommand{\cK}{\mathcal K}
\newcommand{\cP}{\mathcal P}
\newcommand{\cZ}{\mathcal Z}
\newcommand{\bA}{\mathbf A}
\newcommand{\bB}{\mathbf B}
\newcommand{\bC}{\mathbf C}
\newcommand{\bE}{\mathbf E}
\newcommand{\bF}{\mathbf F}
\newcommand{\bG}{\mathbf G}
\newcommand{\biB}{\boldsymbol B}
\newcommand{\biL}{\boldsymbol L}
\newcommand{\biM}{\boldsymbol M}
\newcommand{\biN}{\boldsymbol N}
\newcommand{\biQ}{\boldsymbol Q}
\newcommand{\biR}{\boldsymbol R}
\newcommand{\biS}{\boldsymbol S}
\newcommand{\biX}{\boldsymbol X}
\newcommand\hathatRbu{\widehat{\widehat\biR}
 {\vphantom{\widehat\biR^t}}^\bu}
\newcommand\hathatBbu{\widehat{\widehat\biB}
 {\vphantom{\widehat\biB^t}}^\bu}
\newcommand{\boZ}{\mathbb Z}
\theoremstyle{plain}
\newtheorem{thm}{Theorem}[section]
\newtheorem{prop}[thm]{Proposition}
\newtheorem{lem}[thm]{Lemma}
\newtheorem{cor}[thm]{Corollary}
\theoremstyle{definition}
\newtheorem{rem}[thm]{Remark}
\newtheorem{ex}[thm]{Example}
\newtheorem{exs}[thm]{Examples}
\newcommand{\Section}[1]{\medskip\section{#1}\smallskip}
\begin{document}

\title{Exact DG-categories and fully faithful \\
triangulated inclusion functors}

\author{Leonid Positselski}

\address{Institute of Mathematics, Czech Academy of Sciences \\
\v Zitn\'a~25, 115~67 Praha~1 \\ Czech Republic}

\email{positselski@math.cas.cz}

\begin{abstract}
 We construct an ``almost involution'' assigning a new DG\+category
to a given one, and use this construction to recover, say,
the abelian category of graded modules over the graded ring $R^*$ from
the DG\+category of DG\+modules over a DG\+ring $(R^*,d)$.
 This provides an appropriate technical background for the definition
and discussion of abelian and exact DG\+categories.
 In the setting of exact DG\+categories, derived categories of
the second kind are defined in the maximal natural generality.
 We develop the related abstract category-theoretic language and
use it to formulate and prove several full-and-faithfulness theorems
for triangulated functors induced by the inclusions of fully exact
DG\+subcategories.
 Such functors are fully faithful for derived categories of
the second kind more often than for the conventional derived
categories.
 Examples and applications range from the categories of complexes in
abelian/exact categories to matrix factorization categories, and from
curved DG\+modules over curved DG\+rings to quasi-coherent CDG\+modules
over quasi-coherent CDG\+quasi-algebras over schemes.
\end{abstract}

\maketitle

\tableofcontents

\section*{Introduction}
\medskip

 Abelian and exact categories (in the sense of Quillen) are popular
subjects of contemporary research, and indeed they are crucially
important concepts in homological algebra.
 The same applies to differential graded categories.
 Still it often goes unmentioned that, for many examples of
DG\+categories appearing ``in nature'', there is an abelian or exact
category structure lurking nearby.

 In fact, there are usually \emph{two} such exact/abelian categories
related to a DG\+cat\-e\-gory.
 For example, if $\sA$ is an abelian category, then the DG\+category
of complexes in $\sA$ has two underlying abelian categories:
the abelian category of graded objects in $\sA$ (with homogeneous
maps of degree~$0$ as morphisms) and the abelian category of
complexes in $\sA$ (with closed morphisms of degree~$0$ between
the complexes playing the role of morphisms in the abelian category).
 Defining an \emph{abelian} or \emph{exact DG\+category} as
an abstract category-theoretic concept involves elaborating on
the connection between the two underlying abelian/exact categories.

 The main construction of this paper assigns to a DG\+category $\bA$
another DG\+category~$\bA^\bec$.
 Iterating this procedure twice, for a DG\+category $\bA$ with shifts
and cones, leads to a DG\+category $\bA^{\bec\bec}$ and a fully faithful
DG\+functor $\bec\bec\:\bA\rarrow\bA^{\bec\bec}$, which is
an equivalence whenever the original DG\+category $\bA$ is
idempotent-complete and all twists of its objects by Maurer--Cartan
cochains in their complexes of endomorphisms exist in~$\bA$.
 Otherwise, generally speaking, the passage from $\bA$ to
$\bA^{\bec\bec}$ adds all twists and some of their direct summands
(see Propositions~\ref{iterated-bec-construction}
and~\ref{iterated-bec-construction-revisited}).

 We use this construction for our purposes as a way to produce
a preadditive category that can serve the role of ``the underlying
category of graded objects'' of a DG\+category.
 The point is that, given a DG\+ring $\biR^\bu=(R^*,d)$, \emph{not every
graded $R^*$\+module admits a differential making it a DG\+module
over~$\biR^\bu$}, not even at the cost of adding an extra direct summand
(Examples~\ref{nonexistence-of-cdg-structure-examples}\+-%
\ref{nonexistence-of-derivation-examples}).
 Given the DG\+category $\bA=\biR^\bu\bmodl$, however, one can recover
the whole category of graded $R^*$\+modules by attaching to every graded
$R^*$\+module $M^*$ a contractible DG\+module $G^+(M^*)$ over $\biR^\bu$
freely generated by $M^*$, endowed with its canonical contracting
homotopy with zero square.
 In our notation, this means that the abelian category $R^*\smodl$
of graded $R^*$\+modules is equivalent to the category $\sZ^0(\bA^\bec)$
of closed morphisms of degree~$0$ in the DG\+category
$\bA^\bec=(\biR^\bu\bmodl)^\bec$
(see Example~\ref{CDG-ring-bec-example}).

 Given a DG\+category $\bA$ with shifts and cones, there is a triple
(in fact, a doubly infinite shift-periodic sequence) of faithful,
conservative adjoint functors between the additive categories
$\sZ^0(\bA)$ and $\sZ^0(\bA^\bec)$ of closed morphisms of degree~$0$
in $\bA$ and~$\bA^\bec$.
 These functors are interpreted as ``forgetting the differential''
(say, in a DG\+module) and ``freely generating a DG\+module by
a graded module''.

 A DG\+category $\bA$ is said to be \emph{abelian} if both
the additive categories $\sZ^0(\bA)$ and $\sZ^0(\bA^\bec)$ are abelian.
 The natural functors mentioned above then preserve and reflect
short exact sequences.
 An \emph{exact structure} on a DG\+category $\bE$ is the datum of
exact category structures on both the additive categories
$\sZ^0(\bE)$ and $\sZ^0(\bE^\bec)$ such that the same functors
preserve and reflect admissible short exact sequences.
 Complexes in abelian/exact categories, DG\+modules or curved
DG\+modules, and factorization categories provide natural examples
of abelian and exact DG\+categories.
 While an exact DG\+category need not have twists, any abelian
DG\+category has them; so for an abelian DG\+category, the DG\+functor
$\bec\bec\:\bA\rarrow\bA^{\bec\bec}$ is a DG\+equivalence.

 The first part of this paper (Sections~\ref{graded-and-DG-secn}\+-%
\ref{exact-dg-cats-secn}) is a greatly expanded version
of~\cite[Section~3.2 and Remark~3.5]{Pkoszul}.
 The theory whose key elements were very briefly hinted at
in~\cite{Pkoszul} is developed with full details here.
 After all the preparatory work is done and the theory is ready,
we eventually arrive, under mild assumptions, at a simple and
straightforward equivalent definition of an exact DG\+category
(see Remark~\ref{long-journey-rem}).
 But the theory serves to demonstrate that this simple definition is
the right one.

 What can one do with an exact DG\+category?
 Following~\cite[Remarks~3.5\+-3.7]{Pkoszul}, we argue that
exact DG\+categories provide the maximal natural generality setting
for the constructions of derived categories of the second kind,
such as the coderived, contraderived, and absolute derived categories.
 The concept of derived categories of the second kind goes back
to~\cite[Sections~2.1 and~4.1]{Psemi}
and~\cite[Sections~3.3 and~4.2]{Pkoszul}; we refer the reader
to~\cite[Remark~9.2]{PS4} and~\cite[Section~7]{Pksurv} for a historical
and philosophical discussion.

 Let us \emph{warn} the reader that in this paper we generally consider
the complexes of morphisms in DG\+categories up to closed isomorphism
of complexes and \emph{not up to quasi-isomorphism}, and accordingly
the DG\+categories themselves are considered up to equivalence and
\emph{not up to quasi-equivalence}.
 So we mostly deal with equivalences rather than quasi-equivalences of
DG\+categories; likewise, ``fully faithful DG\+functors'' are presumed
to induce termwise (closed) isomorphisms of the complexes of morphisms
rather than merely quasi-isomorphisms.
 This is the natural point of view in the context of derived categories
of the second kind.

 Similarly, by \emph{finite direct sums} in a DG\+category $\bA$ we
mean finite direct sums in the preadditive category $\sZ^0(\bA)$ of
closed morphisms in~$\bA$.
 A DG\+category $\bA$ is called \emph{additive} if it has finite
direct sums in this sense (i.~e., if the category $\sZ^0(\bA)$ is
additive).
 This is a \emph{stronger} condition than the existence of finite
direct sums in the homotopy category $\sH^0(\bA)$.
 The existence of finite direct sums in $\bA$ is also a \emph{stronger}
condition than the existence of finite direct sums in the category
$\bA^0$ of arbitrary (not necessarily closed) morphisms in~$\bA$.
 Likewise, by a \emph{complex in} $\bA$ we always mean a complex in
$\sZ^0(\sA)$; no notions such a homotopy complex in a DG\+category
are considered in this paper.
 See Section~\ref{dg-categories-subsecn} for a discussion.

 In the second part of the present paper
(Sections~\ref{derived-second-kind-secn}\+-%
\ref{compact-generation-secn}), we formulate and prove some of the most
important general properties of derived categories of the second kind
using the concepts and tools developed in the first part of the paper.
 In that, we follow the approaches of~\cite[Sections~3.5\+-3.7
and~3.11]{Pkoszul} and~\cite[Sections~1.4\+-1.6]{EP}.
 The language of abelian and exact DG\+categories unites the results
about
\begin{enumerate}
\item the coderived, contraderived, and absolute derived categories of
(complexes in) abelian and exact categories, and
\item the coderived, contraderived, and absolute derived categories of
DG\+modules over DG\+rings (and curved DG\+modules over curved
DG\+rings)
\end{enumerate}
in a common framework, which includes also (matrix) factorization
categories, quasi-coherent CDG\+modules over quasi-coherent
CDG\+quasi-algebras over schemes, etc.
 In particular, in Section~\ref{derived-second-kind-secn} we reproduce
the results of~\cite[Sections~3.5\+-3.7]{Pkoszul} about graded-injective
and graded-projective resolutions in the context of exact
DG\+categories, as promised in~\cite[Remarks~3.5\+-3.7]{Pkoszul}.

 Replacing an abelian or exact category with its (co)resolving
subcategory with respect to which all the objects of the ambient
category have finite (co)resolution dimension generally leaves
the derived category unchanged.
 This applies both to the conventional derived categories (as defined
for exact categories in~\cite{Neem}), including the conventional
unbounded derived categories, as explained
in~\cite[Proposition~5.14]{Sto} or~\cite[Proposition~A.5.8]{Pcosh},
and to derived categories of the second kind.
 For derived categories of the second kind, we establish this
property in the generality of exact DG\+categories (or more
precisely, exact DG\+pairs) in Section~\ref{finite-resol-dim-secn},
using the technique going back to~\cite[Section~7.2.2]{Psemi},
\cite[Section~3.2]{PP2}, and~\cite[Section~1.4]{EP} (it is also
sketched in~\cite[Proposition~A.5.8]{Pcosh}).
 Let us emphasize that checking the essential surjectivity of such
triangulated functors is relatively easy; it is proving
the full-and-faithfulness that requires the bulk of the work.
 See Theorem~\ref{finite-resolution-dimension-main-theorem}.

 The nature of the construction of the triangulated Verdier quotient
category is such that inclusions of full subcategories of abelian
or exact categories need not induce fully faithful functors between
their derived categories, generally speaking.
 Proving that, under various specific assumptions, the inclusion of
a full subcategory into an abelian, exact, or homotopy category remains
fully faithful after the passage to the derived categories is
a nontrivial task.
 The results of the paper~\cite{PSch}, where the inclusions of
the full subcategories of Noetherian or coherent objects into
the ambient abelian categories of infinitely generated objects are
considered, illustrate the observation that such functors between
derived categories of the second kind tend to be fully faithful
more often than for the conventional unbounded derived categories.
 Another (and more obvious) such illustration is provided by
the inclusions of the full subcategories of projective or injective
objects into an abelian or exact category.

 In this paper, we deal with derived categories of the second kind.
 So we leave aside the results about fully faithful inclusions of
conventional unbounded derived categories provable by the method
worked out in~\cite[Theorems~1.3 and~2.9, Corollaries~1.4
and~2.10]{Pmgm}, \cite[Theorem~6.4 and Proposition~6.5]{PMat},
and~\cite[Proposition~6.5]{Pper}.
 Instead, using the technique of~\cite[Section~1.5]{EP}
and~\cite[Theorem~4.2.1]{Pweak} and following the approach
of~\cite[Section~12]{Kel2} and~\cite[Section~A.2]{Pcosh}, we consider
what we call \emph{self-resolving} (or \emph{self-coresolving})
subcategories in exact categories.
 We work with exact DG\+pairs $(\bE,\sK)$; here $\bE$ is an exact
DG\+category playing the role of the DG\+category of (curved)
DG\+modules, while the exact category $\sK$ plays the role of
the category of graded modules over the underlying graded ring.

 In the context of exact DG\+categories or exact DG\+pairs, we show
that the inclusion of a self-resolving subcategory induces a fully
faithful functor between the absolute derived and the contraderived
categories, while the inclusion of a self-coresolving subcategory
induces fully faithful functors between the absolute derived and
coderived categories.
 In the case when the exact subcategory is actually resolving or
coresolving, it follows that the induced triangulated functor is
a triangulated equivalence.
 This is the material of our Section~\ref{full-and-faithfulness-secn}.
 The particular case of its results described
in~\cite[Proposition~A.3.1(b)]{Pcosh} has already found its uses in
the context of so-called pseudo-coderived and pseudo-contraderived
categories~\cite[Section~4]{PS2}, \cite[Section~1]{Pps}.
 See Theorems~\ref{full-and-faithfulness-main-theorem}
and~\ref{contraderived-resolving-equivalence-theorem} for the full
generality.

 It is a well-known fact that the theory of unbounded derived
categories becomes simpler under the assumption of finite homological
dimension of the abelian/exact category.
 The same applies to derived categories of the second kind, but
there is a caveat pertaining to DG\+modules.
 Let us explain the situation, starting with the simpler case of
complexes in exact categories.

 It is obvious from the definitions that any absolutely acyclic complex
in an exact category is acyclic.
 If the exact category has exact (co)product functors, then any
co/contraacyclic complex is acyclic as well.
 By the definition, any absolutely acyclic complex is both coacyclic
and contraacyclic.
 So the coderived and contraderived categories are ``sandwiched in
between'' the absolute derived and the conventional derived category,
in the sense of the existence of natural triangulated Verder
quotient functors acting from the absolute derived category to
the co/contraderived category and then to the conventional derived
category.
 By~\cite[Remark~2.1]{Psemi}, for an exact category of finite
homological dimension, all acyclic complexes are absolutely acyclic;
so the absolute derived category coincides with the conventional
derived category.
 Hence both of them coincide with the co/contraderived category if
the exact category has exact (co)product functors.

 Using~\cite[Theorems~3.4.1(d) and~3.6]{Pkoszul}, one can extend
these results to nonpositively cohomologically graded DG\+rings
with the underlying graded ring having finite global dimension;
and~\cite[Theorems~3.4.2(d) and~3.6]{Pkoszul} imply the similar
results for connected, simply connected nonnegatively cohomologically
graded DG\+rings with the underlying graded ring of finite global
dimension (see~\cite[Theorem~7.8(b)]{Pksurv}).
 There are other sets of assumptions under which the absolute derived
category of DG\+modules over a DG\+ring coincides with
the conventional derived category; e.g., this holds for cofibrant
DG\+algebras over a field~\cite[Theorem~9.4]{Pkoszul}.
 But this is \emph{not} true for nonnegatively cohomologically
graded DG\+rings with the underlying graded ring having finite global
dimension in general (see the references in~\cite[last paragraph
of Section~3.6]{Pkoszul} for a detailed discussion).

 Once again, in this paper we only deal with derived categories of
the second kind.
 For an exact DG\+pair $(\bE,\sK)$ such that the exact category $\sK$
has finite homological dimension, we show that the thick subcategory
of absolutely acyclic objects in the homotopy category of $\bE$ is
\emph{strongly generated} (in the sense of~\cite{BvdB}) by
the totalizations of short exact sequences (see
Proposition~\ref{finite-homol-dim-abs-acycl-strongly-generated-prop}).
 It follows that the absolute derived category of $\bE$ coincides with
the coderived and/or contraderived category whenever $\sK$ has finite
homological dimension and the exact DG\+category $\bE$ has exact
(co)product functors.
 These assertions are easy to prove under the assumption of enough
projective or injective objects in $\sK$, but the general case is quite
involved.
 This is the material of Section~\ref{finite-homol-dim-secn}, where
we spell out the details of the argument whose ideas go back
to~\cite[proof of Theorem~1.6]{EP}.
 See Theorem~\ref{finite-homological-dimension-main-theorem}.

 In the last Section~\ref{compact-generation-secn}, we mostly deal with
abelian DG\+categories.
 We consider Grothendieck abelian DG\+categories, locally finitely
presentable DG\+categories, locally Noetherian and locally coherent
DG\+categories.
 The main results concern compact generators in the coderived categories
of abelian DG\+categories from the latter two classes.
 For any locally Noetherian DG\+category $\bA$, we denote by
$\bA_\bfg\subset\bA$ the full abelian DG\+subcategory of finitely
generated (Noetherian) objects in $\bA$, and show that the absolute
derived category of $\bA_\bfg$ is a full triangulated subcategory
in the coderived category of $\bA$, consisting of compact objects and
generating the whole coderived category.
 The proof of this result, going back to D.~Arinkin and originally
recorded, with his kind permission, in~\cite[Theorem~3.11.2]{Pkoszul}
in the context of CDG\+modules over CDG\+rings, is reproduced in
the much more general context of locally Noetherian DG\+categories
in this paper.
 See Theorem~\ref{Noetherian-compact-generation-theorem}.

 For locally coherent DG\+categories, we obtain a similar compact
generation result under additional assumptions of what we call
``finiteness of fp\+dimensions'' of the two locally coherent
abelian categories associated with a locally coherent DG\+cat\-e\-gory.
 The coderived category $\sD^\co(\bA)$ of a locally coherent
(Grothendieck abelian) DG\+category $\bA$ contains the absolute
derived category $\sD^\abs(\bA_\bfp)$ of its full abelian
DG\+subcategory of finitely presentable (coherent) objects $\bA_\bfp$
as a full triangulated subcategory consisting of compact objects.
 Under the additional assumptions mentioned above, the full subcategory
$\sD^\abs(\bA_\bfp)$ generates the coderived category $\sD^\co(\bA)$.
 This is the result of
Theorem~\ref{coherent-finite-fp-dimen-compact-generation-theorem}.

 As a corollary, we obtain the following result for CDG\+modules
over a CDG\+ring $\biR^\cu=(R^*,d,h)$.
 Assume that the graded ring $R^*$ is graded left coherent and
there is an integer~$n$ such that all the homogeneous left ideals
in $R^*$ have less than $\aleph_n$ generators.
 Then the coderived category of left CDG\+modules
$\sD^\co(\biR^\cu\bmodl)$ coincides with the homotopy category of
graded-injective left CDG\+modules over $\biR^\cu$ (known as
the ``coderived category in the sense of Becker''~\cite{Bec,PS4}),
is compactly generated and, up to direct summands, the absolute
derived category of finitely presentable CDG\+modules
$\sD^\abs(\biR^\cu\bmodl_\bfp)$ is the full subcategory of
compact objects in $\sD^\co(\biR^\cu\bmodl)$.
 This is our
Corollary~\ref{coherent-aleph-n-Noetherian-cdg-ring-corollary}.

 Other than proving that, under various restrictive assumptions on
an abelian or exact DG\+category, its coderived or contraderived
category in our sense agrees with the one in the sense of Becker, we
do \emph{not} consider specifically co/contraderived categories in
the sense of Becker in this paper.
 In particular, no model structures are mentioned in this paper.
 This material is relegated to a separate paper~\cite{PS5}, joint
with J.~\v St\!'ov\'\i\v cek, where we purport to show that the theory
of Becker's derived categories of the second kind, as developed for
CDG\+modules in~\cite{Bec} and for abelian categories in~\cite{PS4},
finds its maximal natural generality in the context of locally
presentable abelian DG\+categories.
 More precisely, it seems that the natural generality for Becker's
coderived categories is that of Grothendieck abelian DG\+categories,
while for Becker's contraderived categories the natural context is
that of locally presentable abelian DG\+categories with enough
projective objects.
 In particular, the finite fp\+projective dimension and
$\aleph_n$\+Noetherianity assumptions in the above-mentioned results
in the (locally) coherent case can be dropped if one is willing to
consider Becker's coderived categories, as we show in~\cite{PS5}.

\medskip
\subsection*{Acknowledgement}
 I~am grateful to Bernhard Keller for stimulating interest and to
Jan \v St\!'ov\'\i\v cek for helpful discussions.
 I~also wish to thank two anonymous referees for several relevant 
suggestions, which led, in particular, to an important improvement
of the result of Example~\ref{minimal-exact-dg}.
 The author was supported by the GA\v CR project 20-13778S and
the Institute of Mathematics, Czech Academy of Sciences (research plan
RVO:~67985840).

\Section{Graded Categories and DG-Categories} \label{graded-and-DG-secn}

\subsection{Grading group datum}
 We will work with complexes (graded abelian groups, etc.)\ whose
components are indexed by a fixed abelian group~$\Gamma$.
 A symmetric bilinear form $\sigma\:\Gamma\times\Gamma\rarrow\boZ/2$
and an element $\boldsymbol1\in\Gamma$ are presumed to be chosen,
and the equation $\sigma(\boldsymbol1,\boldsymbol1)=1\bmod 2$ is
imposed.
 The differentials on complexes will raise the degree
by~$\boldsymbol1$, and the sign $(-1)^{\sigma(a,b)}$ will appear
when a factor of degree $a\in\Gamma$ and a factor of
degree $b\in\Gamma$ are moved around each other in a formula.

 The most standard example is $\Gamma=\boZ$, \ $\boldsymbol1=1$, and
$\sigma(a,b)=ab\bmod2$.
 Alternatively, taking $\Gamma=\boZ/2$, \ $\boldsymbol1=1\bmod2$, and
$\sigma(a,b)=ab$ means considering $2$\+periodic complexes.
  When working with complexes of vector spaces over $\mathbb F_2$,
the form~$\sigma$ is not needed, so one can even put $\Gamma=0$
and consider $1$\+periodic complexes.

 The point is that sign rules are necessary to make the theory of
complexes and DG\+categories work.
 For example, in order to spell out the definition of a DG\+category,
one needs the sign rule for the differential on the tensor product of
two complexes of abelian groups or vector spaces.
 The explanation is that, for the equation $d^2=0$ on the differential
in a complex to have proper meaning, the differential~$d$ is supposed
to change a suitable parity.
 Writing down sign rules for $1$\+periodic complexes, which have no
parity alternated by the differential, is problematic.
 However, over $\mathbb F_2$, one has $-1=1$, and no sign rules
are needed.

 The notation related to the grading group will be generally
suppressed in the sequel: we will write~$1$ instead of~$\boldsymbol1$,
\ $n$ instead of $n\cdot\boldsymbol1$ for $n\in\boZ$, \ $ab$ instead of
$\sigma(a,b)$, \ $a$ instead of $\sigma(\boldsymbol1,a)$, etc.
 The meaning will be clear from the context.
 The abelian group homomorphism $\boZ\rarrow\Gamma$ taking $1$ to
$\boldsymbol1$ will be presumed when constructing the total complexes
of bicomplexes graded by $\Gamma$ along one coordinate and by
the integers along the other one.
 The reader is referred to~\cite[Section~1.1]{PP2} for a more detailed
discussion.

\subsection{Graded categories}
 A \emph{$\Gamma$\+graded category} is a category enriched in
$\Gamma$\+graded abelian groups.
 In other words, a $\Gamma$\+graded category $\cA$ assigns to every
pair of objects $X$, $Y\in\cA$ a $\Gamma$\+graded abelian group
of morphisms $\Hom_\cA^*(X,Y)=\bigoplus_{i\in\Gamma}\Hom^i_\cA(X,Y)$.
 To every triple of objects $X$, $Y$, $Z\in\cA$, a morphism of
$\Gamma$\+graded abelian groups $\Hom_\cA^*(Y,Z)\ot_{\boZ}
\Hom_\cA^*(X,Y)\rarrow\Hom_\cA^*(X,Z)$, called the composition of
morphisms, is assigned.
 The composition of morphisms in $\cA$ must be associative as usual.
 The identity morphism $\id_X$ is an element of the group
$\Hom^0_\cA(X,X)$.

 A \emph{preadditive category} is a category enriched in abelian
groups.
 For any $\Gamma$\+graded category $\cA$, its underlying preadditive
category $\cA^0$ is defined by the rules that the objects of $\cA^0$
are the same as the objects of $\cA$ and $\Hom_{\cA^0}(X,Y)=
\Hom^0_\cA(X,Y)$ for all objects $X$, $Y\in\cA$ (with
the composition of morphisms in $\cA^0$ defined in the obvious way
in terms of the composition of morphisms in~$\cA$).
 A preadditive category is said to be \emph{additive} if the coproduct
of any finite collection of objects exists, or equivalently,
the product of any finite collection of objects exists.
 A $\Gamma$\+graded category is said to be \emph{additive} if its
underlying preadditive category is additive.

 A (covariant) \emph{graded functor} $F\:\cA\rarrow\cB$ between
$\Gamma$\+graded categories $\cA$ and $\cB$ is a rule assigning to
every object $X\in\cA$ an object $F(X)\in\cB$ and to every pair
of objects $X$, $Y\in\cA$ a morphism of $\Gamma$\+graded abelian
groups $\Hom_\cA^*(X,Y)\rarrow\Hom_\cB^*(F(X),F(Y))$ in a way
compatible with the compositions and the identity morphisms.
 A graded functor $F\:\cA\rarrow\cB$ induces an additive functor
between the underlying preadditive categories $F^0\:\cA^0\rarrow\cB^0$.

 A graded functor $F\:\cA\rarrow\cB$ is said to be \emph{fully
faithful} if it induces isomorphisms of graded abelian groups
$\Hom_\cA^*(X,Y)\simeq\Hom_\cB^*(F(X),F(Y))$ for all objects
$X$, $Y\in\cA$.
 A graded functor $F$ is said to be an \emph{equivalence of graded
categories} if $F$ is fully faithful and $F^0\:\cA^0\rarrow\cB^0$
is an essentially surjective additive functor.

 Given two objects $X$ and $Y$ in a $\Gamma$\+graded category $\cA$
and an element $i\in\Gamma$, one says that $Y$ is a \emph{shift} of
$X$ by~$[i]$ and writes $Y=X[i]$ if morphisms $f\in\Hom_\cA^{-i}(X,Y)$
and $g\in\Hom_\cA^i(Y,X)$ are given such that $fg=\id_Y$ and
$gf=\id_X$.
 Clearly, the shift $X[i]$ of a given object $X\in\cA$ by a given
degree $i\in\Gamma$ is defined uniquely up to a unique isomorphism of
degree~$0$.
 If all shifts exist in $\cA$, then they are well-defined as graded
functors (in fact, autoequivalences) $[i]\:\cA\rarrow\cA$.
 The induced functor between the underlying preadditive categories
will be denoted also by $[i]\:\cA^0\rarrow\cA^0$.
 
 A graded category with shifts is uniquely determined by its
underlying preadditive category with the shift functors acting on it.
 The graded abelian group $\Hom_\cA^*(X,Y)$ is recovered by the rule
$\Hom_\cA^i(X,Y)=\Hom_\cA^0(X,Y[i])=\Hom_{\cA^0}(X,Y[i])$ for
all objects $X$, $Y\in\cA$ and degrees $i\in\Gamma$.

 Let $\cA$ be a graded category.
 An object $X\in\cA$ is said to be the \emph{coproduct} of a family
of objects $X_\alpha\in\cA$ if a functorial isomorphism of graded
abelian groups $\Hom_\cA^*(X,Z)\simeq\prod_\alpha\Hom_\cA^*(X_\alpha,Z)$
is given for all the objects $Z\in\cA$ (where $\prod$ denotes
the product of graded abelian groups).
 Similarly, an object $Y\in\cA$ is said to be the \emph{product} of
a family of objects $Y_\alpha\in\cA$ if a functorial isomorphism of
graded abelian groups $\Hom_\cA^*(Z,Y)\simeq\prod_\alpha\Hom_\cA^*(Z,
Y_\alpha)$ is given for all the objects $Z\in\cA$.
 Here one requires functoriality with respect to all morphisms in $\cA$
(of all the degrees).
 If $\cA$ is a graded category with shifts, then an object $X$
is the coproduct of the objects $X_\alpha$ in $\cA$ if and only if it
is their coproduct in the category $\cA^0$, and similarly an object $Y$
is the product of the objects $Y_\alpha$ in $\cA$ if and only if it is
their product in~$\cA^0$.

 An additive category $\sA$ is said to be \emph{idempotent-complete}
if for every object $X\in\sA$ and every morphism $e\:X\rarrow X$
such that $e^2=e$ there exist two objects $Y$, $Z\in\sA$ and
an isomorphism $X\simeq Y\oplus Z$ for which the morphism~$e$ is
equal to the composition of the direct summand projection $X\rarrow Y$
with the direct summand inclusion $Y\rarrow X$.
 An additive graded category $\cA$ is said to be
\emph{idempotent-complete} if the additive category $\cA^0$ is
idempotent-complete.

\subsection{DG-categories} \label{dg-categories-subsecn}
 This section is a brief reminder of the material
of~\cite[Section~1.2]{Pkoszul}, intended mainly to fix the terminology
and notation.

 A \emph{DG\+category} is a category enriched in ($\Gamma$\+graded)
complexes.
 In other words, a DG\+category $\bA$ assigns to every pair of
objects $X$, $Y\in\bA$ a complex of abelian groups
$\Hom_\bA^\bu(X,Y)$.
 To every triple of objects $X$, $Y$, $Z\in\bA$, a morphism of
complexes of abelian groups $\Hom_\bA^\bu(Y,Z)\ot_\boZ
\Hom_\bA^\bu(X,Y)\rarrow\Hom_\bA^\bu(X,Z)$, called the composition
of morphisms, is assigned.
 The composition of morphisms in $\bA$ must be strictly associative,
i.~e., associative on the level of cochains.
 The identity morphism $\id_X$ is a cocycle in $\Hom_\bA^0(X,X)$.
 Generally, a \emph{closed morphism of degree~$i$} between objects
$X$ and $Y\in\bA$ is a cocycle in $\Hom_\bA^i(X,Y)$.

 To every DG\+category $\bA$ one can assign a preadditive category
$\bA^0$ and a graded category $\bA^*$.
 The objects of $\bA^0$ and $\bA^*$ are the same as the objects of
$\bA$, and the morphisms are defined by the rules
$\Hom_{\bA^0}(X,Y)=\Hom_\bA^0(X,Y)$ and $\Hom_{\bA^*}^i(X,Y)=
\Hom_\bA^i(X,Y)$ for all objects $X$, $Y\in\sA$.

 Furthermore, to any DG\+category $\bA$ one can assign a graded
category $\cZ(\bA)$ and a preadditive category $\sZ^0(\bA)$.
 The objects of $\cZ(\bA)$ are the same as the objects of $\bA$,
and the morphisms are defined by the rule that
$\Hom_{\cZ(\bA)}^*(X,Y)$ is the graded group of all cocycles in
the complex $\Hom_\bA^\bu(X,Y)$, i.~e., closed morphisms in~$\bA$.
 The preadditive category $\sZ^0(\bA)$ is defined as
$\sZ^0(\bA)=(\cZ(\bA))^0$, so $\Hom_{\sZ^0(\bA)}(X,Y)$ is
the group of cocycles in the term $\Hom_\bA^0(X,Y)$ of
the complex $\Hom_\bA^\bu(X,Y)$.

 Similarly, to any DG\+category $\bA$ one can assign a graded
category $\cH(\bA)$ and a preadditive category $\sH^0(\bA)$.
 The objects of $\cH(\bA)$ are the same as the objects of $\bA$,
and the morphisms are defined by the rule that
$\Hom_{\cH(\bA)}^*(X,Y)=H^*\Hom_\bA^\bu(X,Y)$ is the graded group of
cohomology of the complex $\Hom_\bA^\bu(X,Y)$.
 The preadditive category $\sH^0(\bA)$ is defined as
$\sH^0(\bA)=(\cH(\bA))^0$, so $\Hom_{\sH^0(\bA)}(X,Y)=
H^0\Hom_\bA^\bu(X,Y)$.

 By a \emph{direct sum} of a finite set of objects in a DG\+category
$\bA$ we mean their direct sum in the preadditive category $\sZ^0(\bA)$.
 So an object $Z\in\bA$ is a direct sum of two objects $X$, $Y\in\bA$
if closed morphisms of degree zero $\iota_X\:X\rarrow Z$,
\ $\pi_X\:Z\rarrow X$, \ $\iota_Y\:Y\rarrow Z$, and $\pi_Y\:Z\rarrow
Y$ are given, satisfying the usual equations for morphisms defining
a direct sum (biproduct) of two objects in an additive category.
 This is a \emph{stronger} condition than the object $Z$ being
a direct sum of $X$ and $Y$ in the homotopy category $\sH^0(\sA)$,
in that the equations need to be satisfied in $\bA$ and not only
in~$\sH^0(\bA)$.
 The object $Z$ being a direct sum of $X$ and $Y$ in $\bA$ is also
a \emph{stronger} condition than $Z$ being a direct sum of $X$ and $Y$
in $\bA^0$, in that the structure morphisms $\iota_X$, $\pi_X$,
$\iota_Y$, and~$\pi_Y$ need to be closed.

 A DG\+category $\bA$ is said to be \emph{additive} if the preadditive
category $\sZ^0(\bA)$ is additive, i.~e., if finite direct sums exist
in~$\bA$.
 In this case, the preadditive categories $\bA^0$ and $\sH^0(\bA)$
are additive as well.
 An additive DG\+category $\bA$ is said to be \emph{idempotent-complete}
if the additive category $\sZ^0(\bA)$ is idempotent-complete.

 Let $X$ be an object of a DG\+category~$\bA$.
 An element $a\in\Hom^1_\bA(X,X)$ is said to be a \emph{Maurer--Cartan
cochain} (or to \emph{satisfy the Maurer--Cartan equation}) if
$d(a)+a^2=0$ in $\Hom^2_\bA(X,X)$.
 An object $Y\in\bA$ is called the \emph{twist} of $X$ by
a Maurer--Cartan cochain $a\in\Hom^1_\bA(X,X)$ (denoted $Y=X(a))$ if
morphisms $f\in\Hom^0_\bA(X,Y)$ and $g\in\Hom^0_\bA(Y,X)$ are given
such that $fg=\id_Y$, \ $gf=\id_X$, and $d(f)=fa$.
 Then one has $d(g)=-ag$, hence $X=Y(-fag)$.

 Conversely, let $X$ and $Y$ be two objects in $\bA$, and
$f\in\Hom^0_\bA(X,Y)$ and $g\in\Hom^0_\bA(Y,X)$ be two morphisms
such that $fg=\id_Y$ and $gf=\id_X$.
 Put $a=gd(f)\in\Hom^1_\bA(X,X)$.
 Then $a$~is a Maurer--Cartan cochain and $Y=X(a)$.
 The twist $X(a)$ of a given object $X$ by a given Maurer--Cartan
cochain~$a$, if it exists, is defined uniquely up to a unique
closed isomorphism of degree~$0$.

 Given an object $X\in\bA$ and a degree $i\in\Gamma$, one says that
an object $Y\in\bA$ is a \emph{shift} of $X$ by~$[i]$ (denoted by
$Y=X[i]$) if morphisms $f\in\Hom^{-i}_\bA(X,Y)$ and
$g\in\Hom^i_\bA(Y,X)$ are given such that $fg=\id_Y$, \ $gf=\id_X$,
and $d(f)=0$ (equivalently, $d(g)=0$).
 Clearly, the shift $X[i]$ of a given object $X$ by a given degree~$i$,
if it exists, is defined uniquely up to a unique closed isomorphism
of degree~$0$.
 Any shift in a DG\+category $\bA$ is also a shift in the graded
categories $\bA^*$, \ $\cZ(\bA)$, and $\cH(\bA)$.

 An object $X\in\bA$ is said to be the \emph{coproduct} of a family
of objects $X_\alpha\in\bA$ if an isomorphism of complexes of
abelian groups $\Hom_\bA^\bu(X,Z)\simeq\prod_\alpha\Hom_\bA^\bu
(X_\alpha,Z)$ is specified for all the objects $Z\in\bA$, in a way
functorial with respect to all morphisms in~$\bA$.
 Equivalently, one can say that $X=\coprod_\alpha X_\alpha$ in $\bA$
if a closed morphism $X_\alpha\rarrow X$ of degree~$0$ is given for
every index~$\alpha$ making $X$ the coproduct of the objects
$X_\alpha$ in the graded category~$\bA^*$.
 The coproduct of a family of objects in $\bA$, if it exists, is
defined uniquely up to a unique closed isomorphism of degree~$0$.
 Any coproduct in a DG\+category $\bA$ is also a coproduct in
the graded categories $\bA^*$, \ $\cZ(\bA)$, and $\cH(\bA)$.

 Dually, an object $Y\in\bA$ is said to be the \emph{product} of
a family of objects $Y_\alpha\in\bA$ if an isomorphism of complexes
of abelian groups $\Hom_\bA^\bu(Z,Y)\simeq\prod_\alpha\Hom_\bA^\bu
(Z,Y_\alpha)$ is specified for all the objects $Z\in\bA$, in a way
functorial with respect to all morphisms in~$\bA$.
 Equivalently, one can say that $Y=\prod_\alpha Y_\alpha$ in $\bA$
if a closed morphism $Y\rarrow Y_\alpha$ of degree~$0$ is given for
every index~$\alpha$ making $Y$ the product of the objects
$Y_\alpha$ in the graded category~$\bA^*$.

 Finite products in $\bA$ coincide with the finite coproducts,
and with what we defined above as finite direct sums in~$\bA$;
they exist if and only if the DG\+category $\bA$ is additive.
 So finite (co)products in $\bA$ are the same thing as finite
(co)products in~$\sZ^0(\bA)$, essentially because finite direct sums
can be described in terms of structure morphisms and equations on
them, as mentioned above.
 However, one needs to make additional assumptions about $\bA$ in order
to prove that any infinite coproduct in $\sZ^0(\bA)$ is a coproduct
in~$\bA$; see Lemma~\ref{coproducts-in-cocycles-and-in-DG} below.

 An object $C\in\bA$ is said to be the \emph{cone} of a closed
morphism $f\:X\rarrow Y$ of degree~$0$ in $\bA$ if an isomorphism
between the complex of abelian groups $\Hom_\bA^\bu(Z,C)$ and
the cone of the morphism of complexes $\Hom_\bA^\bu(Z,f)\:
\Hom_\bA^\bu(Z,X)\rarrow\Hom_\bA^\bu(Z,Y)$ is specified for all objects
$Z\in\bA$, in a way functorial with respect to all morphisms in~$\bA$.
 Equivalently, $C$ is the cone of~$f$ if an isomorphism between
the complex of abelian groups $\Hom_\bA^\bu(C[-1],Z)$ and the cone
of the morphism of complexes $\Hom_\bA^\bu(f,Z)\:\Hom_\bA^\bu(Y,Z)
\rarrow\Hom_\bA^\bu(X,Z)$ is specified for all objects $Z\in\bA$,
in a way functorial with respect to all morphisms in~$\bA$.
 The cone of a closed morphism in $\bA$, if it exists, is defined
uniquely up to a unique closed isomorphism of degree~$0$.

 Let $f\:X\rarrow Y$ be a closed morphism of degree~$0$ in~$\bA$
and $C=\cone(f)$ be its cone.
 Then there are natural closed morphisms $Y\rarrow C\rarrow X[1]$
of degree~$0$ in~$\bA$.
 The short sequence $0\rarrow Y\rarrow C\rarrow X[1]\rarrow0$
is split exact in the preadditive category~$\bA^0$.
 Conversely, any short sequence $0\rarrow U\rarrow V\rarrow W
\rarrow0$ of closed morphisms of degree~$0$ in $\bA$ which is
split exact in $\bA^0$ arises from a closed morphism
$f\:W[-1]\rarrow U$ of degree~$0$ in $\bA$ in this way.

 In an additive DG\+category $\bA$ with shifts, the cone $C=\cone f$
of a closed morphism $f\:X\rarrow Y$ of degree~$0$ can be constructed
as a twist $(Y\oplus X[1])(a_f)$ of the direct sum $Y\oplus X[1]$ by
a suitable Maurer--Cartan cochain~$a_f$ produced from~$f$.
 The Maurer--Cartan cochain~$a_f$ has an additional property that
$d(a_f)=0=a_f^2$.
 Thus any additive DG\+category with shifts and twists has cones.
 Conversely, any DG\+category with shifts, cones, and a zero object
is additive.

 The notion of an additive DG\+category with shifts and
twists is closely related to the concepts of a ``twisted complex''
and the ``convolution of a twisted complex'' going back to
the paper~\cite[Section~1]{BK}, while the notion of an additive
DG\+category with shifts and cones corresponds to the discussion of
``one-sided twisted complexes'' in~\cite[Section~4]{BK}
(another relevant term is a ``pretriangulated DG\+category'').

 By a \emph{complex} in a DG\+category $\bA$ we will mean a sequence
of objects and morphisms
$$
 \dotsb\lrarrow X^{-1}\lrarrow X^0\rarrow X^1\lrarrow X^2\lrarrow\dotsb
$$
in $\bA$, indexed by the integers $n\in\boZ$ or elements of the grading
group $n\in\Gamma$.
 The differentials $X^n\rarrow X^{n+1}$ are presumed to be closed
morphisms of degree~$0$, and their compositions $X^{n-1}\rarrow X^n
\rarrow X^{n+1}$ must vanish as morphisms in~$\bA$.
 So a complex in $\bA$ is the same thing as a complex in $\sZ^0(\bA)$.

 The \emph{coproduct totalization} $\Tot^\sqcup(X^\bu)$ of a complex
$X^\bu$ in $\bA$ is an object $Y\in\bA$ such that an isomorphism between
the complex of abelian groups $\Hom_\bA^\bu(Y,Z)$ and the total complex
of the bicomplex $\Hom_\bA^\bu(X^\bu,Z)$, constructed by taking
infinite products of abelian groups along the diagonals, is specified
for all objects $Z\in\bA$, in a way functorial with respect to all
morphisms in~$\bA$.
 In a DG\+category $\bA$ with twists, shifts, and coproducts,
the coproduct totalization $\Tot^\sqcup(X^\bu)$ can be constructed as
a twist of the coproduct $\coprod_{n\in\boZ}X^n[-n]$ or
$\coprod_{n\in\Gamma}X^n[-n]$ by a Maurer--Cardan cochain produced
from the differentials of the complex~$X^\bu$.
 The coproduct totalization of a complex in $\bA$, if it exists, is
defined uniquely up to a unique closed isomorphism of degree~$0$.

 Dually, the \emph{product totalization} $\Tot^\sqcap(X^\bu)$ of
a complex $X^\bu$ in $\bA$ is an object $Y\in\bA$ such that
an isomorphism between the complex of abelian groups $\Hom_\bA^\bu(Z,Y)$
and the total complex of the bicomplex $\Hom_\bA^\bu(Z,X^\bu)$,
constructed by taking infinite products of abelian groups along
the diagonals, is specified for all objects $Z\in\bA$, in a way
functorial with respect to all morphisms in~$\bA$.
 In a DG\+category $\bA$ with twists, shifts, and products, the product
totalization $\Tot^\sqcap(X^\bu)$ can be constructed as a twist of
the product $\prod_{n\in\boZ}X^n[-n]$ or $\prod_{n\in\Gamma}X^n[-n]$
by a Maurer--Cardan cochain produced from the differentials of
the complex~$X^\bu$.
 For a bounded complex $X^\bu$ in $\bA$, the product and coproduct
totalizations agree, and can be constructed as an iterated cone
(with a possible shift).
 The cone of a closed morphism of degree~$0$ is the totalization of
the related two-term complex in~$\bA$.

 A (covariant) \emph{DG\+functor} $F\:\bA\rarrow\bB$ between two
DG\+categories $\bA$ and $\bB$ is a rule assigning to every object
$X\in\bA$ an object $F(X)\in\bB$ and to every pair of objects
$X$, $Y\in\bA$ a morphism of complexes of abelian groups
$\Hom_\bA^\bu(X,Y)\rarrow\Hom_\bB^\bu(F(X),F(Y))$ in a way compatible
with the compositions and identity morphisms.
 Any DG\+functor preserves finite direct sums, shifts, twists,
and cones (in other words, one can say that these operations are
examples of ``absolute weighted colimits'' in DG\+categories,
in the sense of~\cite[Section~5]{NST}).

 A DG\+functor $F\:\bA\rarrow\bB$ is said to be \emph{fully faithful}
if the morphism of complexes of abelian groups $\Hom_\bA^\bu(X,Y)
\rarrow\Hom_\bB^\bu(F(X),F(Y))$ is a (termwise) isomorphism for all
the objects $X$, $Y\in\bA$.
 A fully faithful DG\+functor is said to be an \emph{equivalence of
DG\+categories} if for any object $Z\in\bB$ there exists an object
$X\in\bA$ together with a closed isomorphism $F(X)\simeq Z$ of
degree~$0$ in~$\bB$.
 A \emph{full DG\+subcategory} $\bA\subset\bB$ is a subclass of
objects in $\bB$ endowed with the induced DG\+category structure
(so that the inclusion $\bA\rarrow\bB$ is a fully faithful
DG\+functor).
 A full DG\+subcategory $\bA\subset\bB$ is said to be \emph{closed
under direct summands} if the full subcategory $\sZ^0(\bA)$ is
closed under direct summands in the preadditive category~$\sZ^0(\bB)$.

 Any DG\+functor $F\:\bA\rarrow\bB$ induces graded functors
$F^*\:\bA^*\rarrow\bB^*$, \ $\cZ(F)\:\allowbreak\cZ(\bA)
\rarrow\cZ(\bB)$, and $\cH(F)\:\cH(\bA)\rarrow\cH(\bB)$.
 Hence additive functors $F^0\:\bA^0\rarrow\bB^0$, \
$\sZ^0(F)\:\sZ^0(\bA)\rarrow\sZ^0(\bB)$, and
$\sH^0(F)\:\sH^0(\bA)\rarrow\sH^0(\bB)$ are also induced.
 A DG\+functor $F\:\bA\rarrow\bB$ is said to be
a \emph{quasi-equivalence} if the induced graded functor
$\cH(F)\:\cH(\bA)\rarrow\cH(\bB)$ is an equivalence of graded
categories.

 For any additive DG\+category $\bA$ with shifts and cones,
the degree-zero cohomology category $\sH^0(\bA)$ with its induced
shift functor $X\longmapsto X[1]$ has a natural structure of
a triangulated category.
 For additive DG\+categories $\bA$ and $\bB$ with shifts and cones,
the functor $\sH^0(F)$ induced by any DG\+functor $F\:\bA\rarrow\bB$
is triangulated.

\Section{Thematic Examples}  \label{examples-secn}

\subsection{Complexes in an additive category} \label{complexes-subsecn}
 Let $\sA$ be a preadditive category.
 The preadditive category $\sG(\sA)$ of graded objects in $\sA$ (and
homogeneous morphisms of degree~$0$ between them) can be simply defined
as the Cartesian product $\sA^\Gamma$ of $\Gamma$ copies of~$\sA$.
 Explicitly, an object $X^*\in\sG(\sA)$ is a collection of objects
$X^*=(X^i\in\sA)_{i\in\Gamma}$; a morphism $f_*\:X^*\rarrow Y^*$ in
$\sG(\sA)$ is a collection of morphisms
$(f_i\:X^i\to Y^i)_{i\in\Gamma}$.
 The addition of morphisms, the composition of morphisms, and
the identity morphisms in $\sG(\sA)$ are defined in the obvious way.

 The graded category $\cG(\sA)$ of graded objects in $\sA$ (and
homogeneous morphisms of various degrees $n\in\Gamma$ between them)
has the same objects as the category $\sG(\sA)$.
 For any objects $X^*$ and $Y^*\in\cG(\sA)$, the graded abelian group
$\Hom_{\cG(\sA)}^*(X^*,Y^*)$ is defined by the rule
$\Hom_{\cG(\sA)}^n(X^*,Y^*)=\Hom_{\sG(\sA)}(X^*,Y^*[n])$, where
$Y^*[n]^i=Y^{n+i}$ for all $n$, $i\in\Gamma$.
 So $\cG(\sA)$ is a $\Gamma$\+graded category with shifts, and
$\sG(\sA)=(\cG(\sA))^0$.

 The categories $\sG(\sA)$ and $\cG(\sA)$ are additive whenever
the preadditive category $\sA$~is.
 Assuming $\sA$ is additive, the categories $\sG(\sA)$ and $\cG(\sA)$
are idempotent-complete whenever the category $\sA$~is.
 The categories $\sG(\sA)$ and $\cG(\sA)$ have infinite coproducts
or products whenever the category $\sA$~has.

 A \emph{complex} $X^\bu$ in $\sA$ is a $\Gamma$\+graded object
endowed with a homogeneous differential~$d$ of degree~$1$ satisfying
the equation $d^2=0$.
 This means for every $i\in\Gamma$ the component $d_i\:X^i\rarrow
X^{i+1}$ of the differential~$d$ is a morphism in $\sA$ and
$d_{i+1}d_i=0$ for all $i\in\Gamma$.
 For any two complexes $X^\bu$ and $Y^\bu$ in $\sA$, the graded
abelian group $\Hom_{\cG(\sA)}^*(X^*,Y^*)$ of morphisms between
the underlying graded objects of $X^\bu$ and $Y^\bu$ is endowed with
the differential~$d$ of degree~$1$ defined by the formula
$d(f)_i=d_{Y,n+i}f_i-(-1)^n f_{i+1}d_{X,i}\:\Hom_{\cG(\sA)}^n(X^*,Y^*)
\rarrow\Hom_{\cG(\sA)}^{n+1}(X^*,Y^*)$, where $d_{X,i}\:X^i\rarrow
X^{i+1}$ and $d_{Y,i}\:Y^i\rarrow Y^{i+1}$.
 The resulting complex of abelian groups is denoted by
$\Hom_{\bC(\sA)}^\bu(X^\bu,Y^\bu)$; it is the complex of morphisms
from $X^\bu$ to $Y^\bu$ in the \emph{DG\+category\/ $\bC(\sA)$
of complexes in\/~$\sA$}.

 Notice that \emph{every graded object in\/ $\sA$ admits a differential
making it a complex} (e.~g., the zero differential).
 Consequently, the graded category $\bC(\sA)^*$ is naturally equivalent
to the graded category $\cG(\sA)$ of graded objects in $\sA$, and
the preadditive category $\bC(\sA)^0$ is naturally equivalent
to~$\sG(\sA)$.

 The preadditive category $\sZ^0(\bC(\sA))$ of closed morphisms of
degree zero in $\bC(\sA)$ is called the \emph{category of complexes
in\/~$\sA$} and denoted by $\sC(\sA)$.
 One can also consider the graded category of complexes $\cC(\sA)=
\cZ(\bC(\sA))$.
 The preadditive category $\sK(\sA)=\sH^0(\bC(\sA))$ of closed
morphisms of degree zero in $\bC(\sA)$ viewed up to cochain homotopy
is called the \emph{homotopy category of} (\emph{complexes in})~$\sA$.
 One can also consider the graded homotopy category $\cK(\sA)=
\cH(\bC(\sA))$.

 The DG\+category $\bC(\sA)$ always has shifts and twists.
 So the graded categories $\cC(\sA)$ and $\cK(\sA)$ have shifts, too.
 When $\sA$ is an additive category, the DG\+category $\bC(\sA)$ is
additive as well; hence it also has cones.
 The categories $\cC(\sA)$, \ $\sC(\sA)$, \ $\cK(\sA)$, and $\sK(\sA)$
are additive, too, in this case (in fact, $\sK(\sA)$ is
a triangulated category).
 When $\sA$ is an abelian category, so is $\sC(\sA)$.

 When an additive category $\sA$ is idempotent-complete, so
are the DG\+category $\bC(\sA)$, the graded category $\cC(\sA)$,
and the additive category~$\sC(\sA)$.
 The categories $\bC(\sA)$, \ $\cC(\sA)$, \ $\sC(\sA)$, \ $\cK(\sA)$,
and $\sK(\sA)$ have infinite products or coproducts whenever
the category $\sA$~has.

\subsection{Curved DG-modules} \label{curved-dg-modules-subsecn}
 A \emph{graded ring} $R^*=\bigoplus_{n\in\Gamma}R^n$ is a graded
abelian group endowed with an associative, unital ring structure
given by a homogeneous multiplication map $R^*\ot_\boZ R^*\rarrow R^*$.
 The unit is an element of~$R^0$.

 A \emph{graded left $R^*$\+module} $M^*=\bigoplus_{n\in\Gamma}M^n$
is a graded abelian group endowed with an associative, unital left
$R^*$\+module structure given by a homogeneous action map
$R^*\ot_\boZ M^*\rarrow M^*$.
 \emph{Graded right $R^*$\+modules} are defined similarly.

 Let $L^*$ and $M^*$ be two graded left $R^*$\+modules.
 A \emph{homogeneous $R^*$\+module map $f\:L^*\rarrow M^*$ of degree
$n\in\Gamma$} is a homogeneous map of degree~$n$ between
underlying graded abelian groups of $L^*$ and $M^*$ satisfying
the equation $f(rx)=(-1)^{n|r|}rf(x)$ for all $r\in R^{|r|}$
and $x\in L^{|x|}$ (notice the sign rule!).
 In the graded category $R^*\cmodl$ of graded left $R^*$\+modules,
the objects are the graded left $R^*$\+modules, and the degree~$n$
component of the graded group of morphisms $\Hom_{R^*\cmodl}^*(L^*,M^*)
=\Hom^*_{R^*}(L^*,M^*)$ is the group of all homogeneous $R^*$\+module
maps $L^*\rarrow M^*$ of degree~$n$.

 The graded category $\cmodr R^*$ of graded right $R^*$\+modules is
defined similarly, except that there is no sign involved in
the definition of a homogeneous right $R^*$\+module map.
 In the additive categories $R^*\smodl =(R^*\cmodl)^0$ and
$\smodr R^*=(\cmodr R^*)^0$, the homogeneous $R^*$\+module maps of
degree~$0$ form the groups of morphisms.
 The graded categories $R^*\cmodl$ and $\cmodr R^*$ have shifts.
 The categories $R^*\cmodl$, \ $R^*\smodl$, \ $\cmodr R^*$, and
$\smodr R^*$ are idempotent-complete, and they have infinite
coproducts and products.
 The categories $R^*\smodl$ and $\smodr R^*$ are abelian.

 A \emph{DG\+ring} $\biR^\bu=(R^*,d)$ is a complex of abelian groups
endowed with an associative, unital graded ring structure given
a multiplication map $\biR^\bu\ot_\boZ\biR^\bu\rarrow\biR^\bu$ which is
a morphism of complexes.
 This means that $d\:R^*\rarrow R^*$ is a odd derivation of degree~$1$
on the graded ring $R^*$, i.~e., it is a homogenenous endomorphism
of degree~$1$ of the graded abelian group $R^*$ satisfying
the \emph{Leibniz rule with signs} and, in addition,
the equation $d^2=0$ is satisfied.

 A \emph{left DG\+module} $\biM^\bu=(M^*,d_M)$ over $\biR^\bu$ is
a complex of abelian groups endowed with an associative, unital graded
left $R^*$\+module structure given by a multiplication map $\biR^\bu
\ot_\boZ\biM^\bu\rarrow\biM^\bu$ which is a morphism of complexes.
 This means that $d_M\:M^*\rarrow M^*$ is an odd derivation of
degree~$1$ on the graded $R^*$\+module $M^*$ compatible with the odd
derivation~$d$ on the graded ring~$R^*$, i.~e., $d_M$~is a homogeneous
endomorphism of degree~$1$ of the graded abelian group $M^*$
satisfying the Leibniz rule with signs $d_M(rx)=d(r)x+(-1)^{|r|}rd_M(x)$
for all $r\in R^{|r|}$ and $x\in M^{|x|}$ and, in addition,
the equation $d_M^2=0$ is satisfied.
 \emph{Right DG\+modules} over $\biR^\bu$ are defined similarly.

 Left DG\+modules over a DG\+ring $\biR^\bu$ form a DG\+category
$\biR^\bu\bmodl$, and similarly right DG\+modules over $\biR^\bu$ form
a DG\+category $\bmodr\biR^\bu$ in a natural way.
 The context of DG\+rings is \emph{not} the maximal natural
generality for this construction of DG\+categories, however,
as the construction can be extended naturally to a wider setting
of \emph{curved} DG\+rings (\emph{CDG\+rings}) and curved
DG\+modules over them.
 Let us recall the related definitions.

 A \emph{CDG\+ring} $\biR^\cu=(R^*,d,h)$ is a graded ring endowed with
an odd derivation $d\:R^*\rarrow R^*$ of degree~$1$ (i.~e.,
$d(rs)=d(r)s+(-1)^{|r|}rd(s)$ for all $r\in R^{|r|}$ and $s\in R^{|s|}$)
and an element $h\in R^2$ for which the following two equations
are satisfied:
\begin{enumerate}
\renewcommand{\theenumi}{\roman{enumi}}
\item $d^2(r)=[h,r]$ (where $[h,r]=hr-rh$ is the commutator)
for all $r\in R^*$; and
\item $d(h)=0$.
\end{enumerate}
 The element $h\in R^2$ is called the \emph{curvature element}.
 A pair $(R^*,d)$ is a DG\+ring if and only if the triple $(R^*,d,0)$
is a CDG\+ring; so the DG\+rings are just the CDG\+rings with
zero curvature.
 We refer to~\cite[Section~3.1]{Pkoszul} or~\cite[Section~3.2]{Prel}
for the (nontrivial!) definition of a \emph{morphism of CDG\+rings};
we will recall it below in Section~\ref{quasi-cdg-subsecn}.

 A \emph{left CDG\+module} $\biM^\cu=(M^*,d_M)$ over $\biR^\cu$ is
a graded left $R^*$\+module endowed with an odd derivation
$d_M\:M^*\rarrow M^*$ of degree~$1$ compatible with the derivation~$d$
on $R^*$ (i.~e., $d_M(rx)=d(r)x+(-1)^{|r|}rd_M(x)$ for all
$r\in R^{|r|}$ and $x\in M^{|x|}$) such that the following
equation holds:
\begin{enumerate}
\renewcommand{\theenumi}{\roman{enumi}}
\setcounter{enumi}{2}
\item $d_M^2(x)=hx$ for all $x\in M^*$.
\end{enumerate}

 Similarly, a \emph{right CDG\+module} $\biN^\cu=(N^*,d_N)$
over $\biR^\cu$ is a graded right $R^*$\+module endowed with an odd
derivation $d_N\:N^*\rarrow N^*$ of degree~$1$ compatible with
the derivation~$d$ on $R^*$ (i.~e., $d_N(yr)=d_N(y)r+(-1)^{|y|}yd(r)$
for all $y\in N^{|y|}$ and $r\in R^{|r|}$) such that the following
equation holds:
\begin{enumerate}
\renewcommand{\theenumi}{\roman{enumi}}
\setcounter{enumi}{3}
\item $d_N^2(y)=-yh$ for all $y\in N^*$.
\end{enumerate}

 For any two left CDG\+modules $\biL^\cu=(L^*,d_L)$ and
$\biM^\cu=(M^*,d_M)$ over a CDG\+ring $\biR^\cu=(R^*,d,h)$,
the graded abelian group $\Hom_{R^*}^*(L^*,M^*)$ is endowed with
a natural differential~$d$ of degree~$1$ defined by the usual rule
$d(f)(x)=d_M(f(x))-(-1)^{|f|}f(d_L(x))$ for all $x\in L^*$, where
$f\in\Hom_{R^*}^{|f|}(L^*,M^*)$.
 One has $d^2(f)=0$, as the two curvature-related terms cancel each
other:
\begin{multline*}
 d(d(f))(x)=d_M(d(f)(x))-(-1)^{|f|+1}d(f)(d_L(x)) \\
 =d_M(d_M(f(x))-(-1)^{|f|}d_M(f(d_L(x)))
 -(-1)^{|f|+1}d_M(f(d_L(x)))-f(d_L(d_L(x))) \\
 =hf(x)-f(hx)=0.
\end{multline*}
 Hence we obtain a complex of abelian groups, denoted by
$\Hom_{\biR^\cu}^\bu(\biL^\cu,\biM^\cu)$.

 In the \emph{DG\+category $\biR^\cu\bmodl$ of left CDG\+modules
over~$\biR^\cu$}, the left CDG\+modules over $\biR^\cu$ are the objects,
and $\Hom_{\biR^\cu\bmodl}^\bu(\biL^\cu,\biM^\cu)=
\Hom_{\biR^\cu}^\bu(\biL^\cu,\biM^\cu)$
is the complex of morphisms from $\biL^\cu$ to~$\biM^\cu$.
 The \emph{DG\+category\/ $\bmodr\biR^\cu$ of right CDG\+modules
over~$\biR^\cu$} is defined similarly.

 The DG\+categories $\biR^\cu\bmodl$ and $\bmodr\biR^\cu$ are additive 
with shifts and twists, and with infinite coproducts and products.
 For example, if $\biM^\cu=(M^*,d_M)$ is a left CDG\+module over
$\biR^\cu$ and $a\in\Hom^1_{\biR^\cu}(\biM^\cu,\biM^\cu)$ is
a Maurer--Cartan cochain, then the rule $d_M'(x)=d_M(x)+ax$ for all
$x\in M^*$ defines a left CDG\+module ${}'\!\biM^\cu=(M^*,d_M')$
over $\biR^\cu$ such that ${}'\!\biM^\cu=\biM^\cu(a)$.

 The category $\sZ^0(\biR^\cu\bmodl)$ of closed morphisms of degree
zero in $\biR^\cu\bmodl$ is abelian.
 It is called the \emph{abelian category of left CDG\+modules
over~$\biR^\cu$}.

 A DG\+module over a DG\+ring $\biR^\bu=(R^*,d)$ is the same thing as
a CDG\+module over the CDG\+ring $\biR^\cu=(R^*,d,0)$.
 So the DG\+categories of CDG\+modules $\biR^\cu\bmodl$ and
$\bmodr\biR^\cu$ are equivalent (in fact, isomorphic) to
the DG\+categories of DG\+modules $\biR^\bu\bmodl$ and
$\bmodr\biR^\bu$, respectively, when $h=0$.

 The reader should be warned that the graded category
$(\biR^\cu\bmodl)^*$ is \emph{not} equivalent to the graded category
$R^*\cmodl$, generally speaking, and the additive category
$(\biR^\cu\bmodl)^0$ is likewise \emph{not} equivalent to the abelian
category $R^*\smodl$.
 Rather, $(\biR^\cu\bmodl)^*$ and $(\biR^\cu\bmodl)^0$ are equivalent
to certain (not well-behaved) full subcategories in $R^*\cmodl$ and
$R^*\smodl$, respectively.
 This happens because \emph{not every graded $R^*$\+module admits
a structure of CDG\+module over~$\biR^\cu$} (not even when $h=0$).
 We refer to Section~\ref{cdg-revisited-subsecn} below for
a substantial discussion.

\subsection{Quasi-coherent graded quasi-algebras}
\label{quasi-algebras-subsecn}
 This section presents the simplest version of the definitions of
quasi-modules and quasi-algebras.
 For a detailed discussion of a more general and advanced approach,
see~\cite[Sections~1.4\+-1.5 and~3.1]{Pdomc}; and for the comparison
between several approaches, \cite[Section~2]{Ptd}.
 Let us just mention that what we call \emph{quasi-modules} in this
paper are called ``strong quasi-modules'' in~\cite{Pdomc,Ptd}.

 Let $R$ be a commutative ring and $A$ be an $R$\+$R$\+bimodule.
 Define an increasing filtration $F_0A\subset F_1A\subset F_2A
\subset\dotsb$ on $A$ by the rule that $F_{-1}A=0$ and
an element $a\in A$ belongs to $F_nA$ if and only if
$ra-ar\in F_{n-1}A$ for all $r\in R$.
 Then $F_nA$ is an $R$\+$R$\+subbimodule in $A$ for every $n\ge0$.
 If $A$ is an associative ring with the $R$\+$R$\+bimodule structure
induced by a ring homomorphism $\rho\:R\rarrow A$, then $F$ is
a multiplicative filtration on $A$, i.~e., $1\in\rho(R)\subset F_0A$
and $F_nA\cdot F_mA\subset F_{n+m}A$ for all $n$, $m\ge0$.

 Let us say that $A$ is a \emph{quasi-module} (in a more common
language, a \emph{differential bimodule}~\cite[Section~1.1]{BB})
over $R$ if the filtration $F$ is exhaustive,
that is $A=\bigcup_{n\ge0}F_nA$.
 Notice that any $R$\+module $M$, viewed as an $R$\+$R$\+bimodule in
which the left and right actions of $R$ agree, becomes a quasi-module
over~$R$ (with $F_0M=M$).

 An associative ring $A$ endowed with a ring homomorphism $R\rarrow A$
making $A$ a quasi-module over $R$ is said to be a \emph{quasi-algebra}
over~$R$.
 For example, if $k\subset R$ is a subring in a commutative ring $R$,
then the ring $A$ of all $k$\+linear differential operators
$R\rarrow R$ \,\cite{Bern,Smi} is a quasi-algebra over~$R$.

\begin{lem} \label{tensor-product-quasi-module}
 Let $A$ and $B$ be two quasi-modules over~$R$.
 Then $A\ot_RB$ is also a quasi-module over~$R$.
\end{lem}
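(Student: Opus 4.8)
The plan is to analyze how the commutator operators that define the filtration interact with the tensor product. For $r\in R$ and an $R$-$R$-bimodule $M$, write $c_r\colon M\rarrow M$ for the additive operator $c_r(x)=rx-xr$. Unwinding the recursive definition, $F_NM$ is exactly the set of those $x\in M$ annihilated by every $(N+1)$-fold composite $c_{r_{N+1}}\cdots c_{r_1}$ with $r_1,\dots,r_{N+1}\in R$; this follows by induction on $N$ from $F_{-1}M=0$ together with the defining equivalence $x\in F_NM\iff c_r(x)\in F_{N-1}M$ for all $r$. In particular $M$ is a quasi-module precisely when every element is killed by some iterated commutator of this form.

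First I would record the key identity. The bimodule $A\ot_RB$ carries its left action through $A$ and its right action through $B$, so using the defining relation $ar\ot b=a\ot rb$ of the tensor product over $R$ together with $ra=ar+c_r(a)$ and $br=rb-c_r(b)$, a direct computation yields the Leibniz rule
\[
 c_r(a\ot b)=c_r(a)\ot b+a\ot c_r(b)
\]
for all $r\in R$, $a\in A$, $b\in B$, where $c_r$ denotes the commutator operator on each of $A$, $B$, and $A\ot_RB$.

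Next, iterating this Leibniz rule, I would prove by induction on $k$ that for any $r_1,\dots,r_k\in R$ the composite $c_{r_k}\cdots c_{r_1}$ applied to $a\ot b$ expands as a sum, over all subsets $S\subseteq\{1,\dots,k\}$, of terms $(\text{ordered composite of the }c_{r_i},\ i\in S)(a)\ot(\text{ordered composite of the }c_{r_j},\ j\notin S)(b)$, the original order of indices being preserved within each factor since the $c_r$ need not commute. Now suppose $a\in F_nA$ and $b\in F_mB$, and take $k=n+m+1$. In each term of the expansion either $|S|\ge n+1$, so the first tensor factor vanishes because $a\in F_nA$, or else $|S|\le n$, forcing $|S^c|=k-|S|\ge m+1$ so that the second tensor factor vanishes because $b\in F_mB$. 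Hence $c_{r_k}\cdots c_{r_1}(a\ot b)=0$, which shows $a\ot b\in F_{n+m}(A\ot_RB)$.

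Finally, since $A$ and $B$ are quasi-modules, every $a\in A$ lies in some $F_nA$ and every $b\in B$ in some $F_mB$, so each generator $a\ot b$ of $A\ot_RB$ lies in some $F_{n+m}$. An arbitrary element is a finite sum $\sum_i a_i\ot b_i$; taking $N$ to be the maximum of the relevant indices and using that each $F_N$ is an $R$-$R$-subbimodule (in particular an additive subgroup), the whole sum lies in $F_N$. Thus the filtration on $A\ot_RB$ is exhaustive, as required. I expect the only genuinely delicate point to be the bookkeeping in the ordered-subset expansion of the iterated commutator, where the non-commutativity of the $c_r$ must be tracked carefully; the Leibniz identity itself and the concluding pigeonhole argument are routine.
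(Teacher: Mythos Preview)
Your proof is correct and is essentially a detailed verification of the one-sentence claim the paper makes: that the image of $F_iA\ot_R F_jB$ in $A\ot_R B$ lies in $F_{i+j}(A\ot_R B)$. The paper simply asserts this containment and concludes exhaustiveness, while you supply the underlying computation via the Leibniz identity $c_r(a\ot b)=c_r(a)\ot b+a\ot c_r(b)$ and the ordered-subset expansion of iterated commutators; the approaches are the same, with yours filling in the omitted details.
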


\begin{proof}
 The image of the map $F_iA\ot_RF_jB\rarrow A\ot_RB$ induced
by the inclusions $F_iA\rarrow A$ and $F_jB\rarrow B$ is
contained in the subbimodule $F_n(A\ot_RB)\subset A\ot_RB$
for $n=i+j$.
 Hence the filtration $F$ on $A\ot_RB$ is exhaustive whenever
the similar filtrations on $A$ and $B$ are exhaustive.
\end{proof}

 We denote by $R\rmodl$ the abelian category of (ungraded, left)
modules over a ring~$R$.
 A homomorphism of commutative rings $\sigma\:R\rarrow S$ is
an epimorphism in the category of commutative (equivalently,
associative) rings if and only if the natural maps
$S\rightrightarrows S\ot_RS\rarrow S$ are isomorphisms
(in this case, the two maps $S\rightrightarrows S\ot_RS$
are equal to each other).
 A ring homomorphism~$\sigma$ is an epimorphism if and only if
the related functor of restriction of scalars $\sigma_*\:
S\rmodl\rarrow R\rmodl$ is fully faithful~\cite[Section~XI.1]{Ste}.

 A commutative ring epimorphism $\sigma\:R\rarrow S$ is said
to be \emph{flat} if $S$ is a flat $R$\+module.
 In this case, the full subcategory $\sigma_*(S\rmodl)$ is
closed under extensions in $R\rmodl$ \,\cite[Theorem~4.4]{GL}.
 Since the functor~$\sigma_*$ preserves all limits and colimits,
it is also clear that, for any ring epimorphism~$\sigma$,
the full subcategory $\sigma_*(S\rmodl)$ is closed under
limits and colimits in $R\rmodl$.

 Let $U$ be an affine scheme and $V\subset U$ be an affine open
subscheme.
 Let $R=O(U)$ and $S=O(V)$ be the rings of functions on $U$ and $V$,
and let $\sigma\:R\rarrow S$ be the ring homomorphism corresponding
to the open immersion morphism $V\rarrow U$.
 Then $\sigma$~is a flat epimorphism of commutative rings.
 This is the main example of a flat epimorphism we are interested in.

\begin{lem} \label{quasi-module-localization-lemma}
 Let $\sigma\:R\rarrow S$ be a flat epimorphism of commutative rings,
and let $A$ be a quasi-module over~$R$.
 Then \par
\textup{(a)} the map~$\sigma$ induces isomorphisms of
$R$\+$R$\+bimodules $S\ot_RA\larrow S\ot_RA\ot_RS\rarrow A\ot_RS$; \par
\textup{(b)} the $S$\+$S$\+bimodule $S\ot_RA$ is a quasi-module
over~$S$.
\end{lem}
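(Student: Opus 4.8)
The plan is to exploit that $\sigma\:R\rarrow S$ is a flat ring epimorphism, so by the characterization recalled in the text the natural maps $S\rightrightarrows S\ot_RS\rarrow S$ are isomorphisms, and the restriction functor $\sigma_*$ is fully faithful. For part~(a), I would observe that tensoring the isomorphism $S\ot_RS\simeq S$ with $A$ over $R$ on the appropriate side produces the two claimed maps. Explicitly, the multiplication map $S\ot_RS\rarrow S$ being an isomorphism yields, after tensoring with $A$, isomorphisms $S\ot_RS\ot_RA\simeq S\ot_RA$ and $A\ot_RS\ot_RS\simeq A\ot_RS$; combining these with the flatness of $S$ (which guarantees the tensor functors are exact and the middle object $S\ot_RA\ot_RS$ maps isomorphically to each side) gives the desired chain $S\ot_RA\larrow S\ot_RA\ot_RS\rarrow A\ot_RS$. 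The two outer maps are induced by the unit $R\rarrow S$ inserted into the free tensor slot, and the epimorphism condition is exactly what forces them to be isomorphisms rather than merely split injections.

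\smallskip

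\textbf{The bimodule structure and the quasi-module property.} For part~(b), I would first note that $S\ot_RA$ carries a natural $S$\+$S$\+bimodule structure: the left $S$\+action comes from the left tensor factor, while the right $S$\+action is transported through the isomorphism $S\ot_RA\simeq A\ot_RS$ of part~(a), using the right $S$\+action on the $A\ot_RS$ model. That these two actions are compatible (i.e.\ genuinely define a bimodule) is where the epimorphism hypothesis does real work, since it is precisely the equality of the two maps $S\rightrightarrows S\ot_RS$ that makes the left and right localizations coincide coherently. Then I must verify that the canonical filtration $F_\bu$ on the $S$\+$S$\+bimodule $S\ot_RA$, defined by the commutator condition with respect to elements of $S$, is exhaustive.

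\smallskip

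\textbf{Exhaustiveness via the filtration.} To control the filtration, I would transport the $R$\+filtration $F_\bu A$ through the functor $S\ot_R(-)$, setting $F_n(S\ot_RA)$ to be (or to contain) the image of $S\ot_RF_nA$. The key computation is that for $s\in S$ and an element of $S\ot_RF_nA$, the commutator $s\cdot\xi-\xi\cdot s$ lands in $S\ot_RF_{n-1}A$: this follows because the right action is defined through the localization, so commuting an element of $S$ past an element of $F_nA$ reduces the filtration degree by the defining property of $F_\bu A$ over $R$, and the flat localization does not disturb this (tensoring the inclusion $[r,F_nA]\subset F_{n-1}A$ with the exact functor $S\ot_R(-)$ preserves it). Since $A=\bigcup_n F_nA$ and $S\ot_R(-)$ commutes with the directed union (tensor products commute with filtered colimits), we get $S\ot_RA=\bigcup_n S\ot_RF_nA$, and hence the filtration on $S\ot_RA$ is exhaustive.

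\smallskip

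\textbf{Expected main obstacle.} The routine parts are the tensor-functor manipulations in~(a) and the colimit interchange in~(b). The genuinely delicate step is showing that commutators with \emph{arbitrary} elements $s\in S$—not merely with elements in the image of $R$—lower the filtration degree; an element of $R$ reduces degree by the definition of $F_\bu A$, but a general $s\in S$ is only obtained after localization, so one must argue that it suffices to test the commutator condition against $R$ because $S$ is generated over $R$ in a way compatible with the flat epimorphism. Here the isomorphism $S\simeq S\ot_RS$ should let me reduce the general commutator $[s,\xi]$ to commutators with elements of $R$ after base change, which is the crux of the proof.
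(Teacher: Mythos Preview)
Your argument for part~(a) has a genuine gap, and it is exactly the step you glossed over. Tensoring the isomorphism $S\ot_RS\simeq S$ with $A$ yields $S\ot_RS\ot_RA\simeq S\ot_RA$ and $A\ot_RS\ot_RS\simeq A\ot_RS$, as you say --- but in both of these the two copies of $S$ are \emph{adjacent}. The maps in the lemma have $A$ sitting between the two $S$'s; the question is whether, say, the natural map $A\ot_RS\rarrow S\ot_RA\ot_RS$ (inserting $1\in S$ on the left) is an isomorphism. That is equivalent to asking whether the left $R$\+action on $A\ot_RS$ extends to a left $S$\+action, i.e.\ whether $A\ot_RS$ lies in $\sigma_*(S\rmodl)$. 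For a general $R$\+$R$\+bimodule $A$ this fails outright: take $R=k[x]$, $S=k[x,x^{-1}]$, and $A=k[x]$ with the standard left action but right action through $x\mapsto 0$; then $S\ot_RA=k[x,x^{-1}]$ while $S\ot_RA\ot_RS=0$. Your phrase ``combining these with the flatness of $S$'' hides the entire content of the lemma, and you have not used the quasi-module hypothesis anywhere in~(a).

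The paper's proof of~(a) supplies exactly what is missing: set $G_n(A\ot_RS)=F_nA\ot_RS$ (a sub-bimodule by flatness), and observe that on each successive quotient $(F_nA/F_{n-1}A)\ot_RS$ the left and right $R$\+actions agree, since by definition of $F$ they already agree on $F_nA/F_{n-1}A$. Hence each quotient, as a left $R$\+module, lies in $\sigma_*(S\rmodl)$; that subcategory being closed under extensions (flat epimorphism) and directed colimits, so does all of $A\ot_RS$. This same filtration then dispatches~(b) immediately: the successive quotients of $S\ot_RA\simeq A\ot_RS$ carry agreeing left and right $S$\+actions, which is precisely the quasi-module condition over~$S$. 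The ``crux'' you correctly identified at the end --- reducing commutators with $s\in S$ to commutators with $r\in R$ --- is not a separate obstacle but the very same fact as~(a), and it is this filtration argument that handles it.
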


\begin{proof}
 Part~(a): it suffices to show that the $R$\+$S$\+bimodule $A\ot_RS$,
viewed as an $R$\+module with the left action of $R$, belongs to
the full subcategory $\sigma_*(S\rmodl)\subset R\rmodl$.
 Indeed, put $G_n(A\ot_RS)=F_nA\ot_RS\subset A\ot_RS$.
 Then $G$ is an exhaustive increasing filtration of the bimodule
$A\ot_RS$ by subbimodules such that the left and right actions of $R$
in the successive quotient bimodules $G_n(A\ot_RS)/G_{n-1}(A\ot_RS)
\simeq (F_nA/F_{n-1}A)\ot_RS$ agree.
 Hence, viewed as $R$\+modules with the left action of $R$,
the successive quotients admit an extension of their $R$\+module
structure to an $S$\+module structure.
 It remains to recall that the full subcategory $\sigma_*(S\rmodl)
\subset R\rmodl$ is closed under extensions and direct limits.

 Part~(b): Following the proof of part~(a), the $S$\+$S$\+bimodule
$S\ot_RA\simeq S\ot_RA\ot_RS\simeq A\ot_RS$ admits an exhaustive
increasing filtration by subbimodules such that the left and right
actions of $S$ in the successive quotient bimodules agree.
 Hence $S\ot_RA$ is a quasi-module over~$S$.
\end{proof}

\begin{cor} \label{quasi-module-left-right-tensor-adjunction-cor}
 Let $\sigma\:R\rarrow S$ be a flat epimorphism of commutative rings.
 Let $A$ be a quasi-module over $R$ and $B$ be an $S$\+$S$\+bimodule.
 Let $A\rarrow B$ be a morphism of $R$\+$R$\+bimodules.
 Then the induced morphism of $S$\+$R$\+bimodules $S\ot_RA\rarrow B$ is
an isomorphism if and only if the induced morphism of
$R$\+$S$\+bimodules $A\ot_RS\rarrow B$ is an isomorphism.
\end{cor}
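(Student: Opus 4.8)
The plan is to compare the two maps $S\ot_R A\to B$ and $A\ot_R S\to B$ by factoring each through the single object $S\ot_R A\ot_R S$, and then to read off the conclusion from the isomorphisms supplied by Lemma~\ref{quasi-module-localization-lemma}(a). Write $f\:A\to B$ for the given morphism of $R$\+$R$\+bimodules, and introduce three induced maps: the $S$\+$R$\+bimodule map $\phi\:S\ot_R A\to B$, \ $s\ot a\mapsto sf(a)$; the $R$\+$S$\+bimodule map $\psi\:A\ot_R S\to B$, \ $a\ot s\mapsto f(a)s$; and the $S$\+$S$\+bimodule map $\theta\:S\ot_R A\ot_R S\to B$, \ $s\ot a\ot s'\mapsto sf(a)s'$. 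All three are well defined because the left and right actions of $R$ on $B$ factor through~$\sigma$ and $f$ is $R$\+$R$\+bilinear. The assertion to be proved is that $\phi$ is an isomorphism if and only if $\psi$ is.

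First I would record the relevant module structures. By Lemma~\ref{quasi-module-localization-lemma}(b) the bimodule $S\ot_R A$ is a quasi-module over~$S$; in particular its right $R$\+action (inherited from $A$) extends to a right $S$\+action, and symmetrically $A\ot_R S$ acquires a left $S$\+action. Lemma~\ref{quasi-module-localization-lemma}(a) then exhibits the two action maps
\[
\alpha\:S\ot_R A\ot_R S\lrarrow S\ot_R A,\qquad \beta\:S\ot_R A\ot_R S\lrarrow A\ot_R S
\]
as isomorphisms of $R$\+$R$\+bimodules. The whole argument reduces to the two factorizations $\theta=\phi\circ\alpha$ and $\theta=\psi\circ\beta$: once these are established, the fact that $\alpha$ and $\beta$ are isomorphisms shows at once that $\phi$ is an isomorphism iff $\theta$ is iff $\psi$ is, which is exactly the claim.

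The step I expect to require the most care is verifying these two factorizations, since the extended $S$\+actions produced by Lemma~\ref{quasi-module-localization-lemma} come with no convenient closed formula. Here the hypothesis that $\sigma$ is a ring epimorphism enters in the form recalled before the lemma: restriction of scalars along $\sigma$ is \emph{fully faithful}, both for left and for right modules. A direct computation shows that $\phi$ is right $R$\+linear --- on generators $\phi((s\ot a)r)=sf(ar)=sf(a)\sigma(r)=\phi(s\ot a)\,r$ --- and since its source $S\ot_R A$ and its target $B$ are both right $S$\+modules, full faithfulness of restriction of scalars forces $\phi$ to be right $S$\+linear. Consequently, for $\xi\in S\ot_R A$ and $s'\in S$ one computes $\phi(\alpha(\xi\ot s'))=\phi(\xi\cdot s')=\phi(\xi)\,s'$, which on generators reads $sf(a)s'=\theta(s\ot a\ot s')$; this is the identity $\theta=\phi\circ\alpha$. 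The identity $\theta=\psi\circ\beta$ is obtained symmetrically, using that $\psi$ is left $S$\+linear for the same reason.

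Finally I would note that no property of $A$ beyond what is packaged into Lemma~\ref{quasi-module-localization-lemma} is used, and that $B$ intervenes only through its $S$\+$S$\+bimodule structure; thus, after the two factorizations are in place, the remaining deduction is purely formal. The only genuinely nontrivial ingredient is the identification of $\alpha$ and $\beta$ as isomorphisms, which is precisely the content of part~(a) of the preceding lemma.
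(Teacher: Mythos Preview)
Your proof is correct and follows the same route as the paper, which simply says ``Follows from Lemma~\ref{quasi-module-localization-lemma}(a).'' You have supplied the details the paper omits, and your use of full faithfulness of restriction of scalars to promote $R$\+linearity of $\phi$ and $\psi$ to $S$\+linearity is a clean way to verify the factorizations. One small simplification: rather than checking $\theta=\phi\circ\alpha$ via $S$\+linearity of $\phi$, you can check the equivalent identity $\phi=\theta\circ\alpha^{-1}$ directly on generators, since $\alpha^{-1}(s\ot a)=s\ot a\ot 1$ and $\theta(s\ot a\ot 1)=sf(a)=\phi(s\ot a)$; this avoids the detour through full faithfulness.
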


\begin{proof}
 Follows from Lemma~\ref{quasi-module-localization-lemma}(a).
\end{proof}

\begin{prop} \label{left-right-quasi-coherent-prop}
 Let $X$ be a scheme and $A$ be a sheaf of bimodules over the structure
sheaf $O_X$ of~$X$.
 Assume that, for every affine open subscheme $U\subset X$,
the $O(U)$\+$O(U)$\+bimodule $A(U)$ is a quasi-module.
 Then the sheaf of $O_X$\+modules $A$ with the left $O_X$\+module
structure is quasi-coherent if and only if the sheaf of $O_X$\+modules
$A$ with the right $O_X$\+module structure is quasi-coherent.
\end{prop}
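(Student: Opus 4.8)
The plan is to reduce quasi-coherence to a condition on the restriction maps of $A$ between affine opens, and then to invoke Corollary~\ref{quasi-module-left-right-tensor-adjunction-cor} in order to exchange the left and right versions of that condition. The genuine technical content already resides in Lemma~\ref{quasi-module-localization-lemma} and its corollary, so the proof of the proposition should amount to assembling the standard local criterion for quasi-coherence with those results.

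First I would recall the local criterion. An $O_X$\+module $\mathcal F$ is quasi-coherent if and only if, for every inclusion of affine opens $V\subset U$ in $X$ with $R=O(U)$ and $S=O(V)$, the natural comparison map $S\ot_R\mathcal F(U)\rarrow\mathcal F(V)$ is an isomorphism; and it suffices to test this with $U$ ranging over an affine cover and $V$ over the affine opens (or just the distinguished opens) inside each such $U$. For any such inclusion $V\subset U$ of affine schemes, the attached ring homomorphism $\sigma\:R\rarrow S$ is a flat epimorphism of commutative rings, as recorded above for open immersions of affine schemes.

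Now I fix such a pair $V\subset U$ and consider the restriction map $\rho\:A(U)\rarrow A(V)$ of the sheaf of bimodules $A$. Because $A$ is a sheaf of $O_X$\+bimodules and $\sigma$ records how $R=O(U)$ acts through $S=O(V)$, the map $\rho$ is a morphism of $R$\+$R$\+bimodules, where $A(V)$ is viewed as an $R$\+$R$\+bimodule via~$\sigma$. By hypothesis $A(U)$ is a quasi-module over $R$, so Corollary~\ref{quasi-module-left-right-tensor-adjunction-cor} applies with $\rho$ in the role of the bimodule morphism $A\rarrow B$, taking $B=A(V)$. It yields that the induced map $S\ot_R A(U)\rarrow A(V)$ is an isomorphism if and only if the induced map $A(U)\ot_R S\rarrow A(V)$ is an isomorphism.

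The key point is the identification of these two maps: the first is exactly the comparison map testing quasi-coherence of $A$ with its \emph{left} $O_X$\+module structure at the pair $V\subset U$, while the second is the comparison map testing quasi-coherence of $A$ with its \emph{right} $O_X$\+module structure at the same pair. Since the left (resp.\ right) structure is quasi-coherent precisely when all of these comparison maps are isomorphisms, running over all pairs $V\subset U$ in an affine cover gives the asserted equivalence. I expect the only point requiring care to be this bookkeeping step — verifying that $\rho$ is genuinely $R$\+$R$\+bilinear so that the corollary is applicable, and matching the two maps produced by the corollary with the left and right quasi-coherence comparison maps — since the analytic substance has already been absorbed into Lemma~\ref{quasi-module-localization-lemma}.
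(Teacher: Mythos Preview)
Your proof is correct and follows essentially the same approach as the paper: recall the local criterion for quasi-coherence in terms of the comparison maps $O(V)\ot_{O(U)}A(U)\rarrow A(V)$ for affine pairs $V\subset U$, and then invoke Corollary~\ref{quasi-module-left-right-tensor-adjunction-cor} to equate the left and right versions of this condition. The paper's proof is a two-sentence compression of exactly this argument; your version simply unpacks the bookkeeping more explicitly.
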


\begin{proof}
 A sheaf of $O_X$\+modules $M$ on a scheme $X$ is quasi-coherent if and
only if, for every pair of affine open subschemes $V\subset U\subset X$,
the map of $O(V)$\+modules $O(V)\ot_{O(U)}M(U)\rarrow M(V)$ induced
by the restriction map $M(U)\rarrow M(V)$ is an isomorphism.
 Hence the assertion of the proposition follows from
Corollary~\ref{quasi-module-left-right-tensor-adjunction-cor}.
\end{proof}

 Let $X$ be a scheme.
 A \emph{quasi-coherent quasi-module} on $X$ is a sheaf of bimodules
over $O_X$ satisfying the equivalent conditions of
Proposition~\ref{left-right-quasi-coherent-prop}.
 In order to produce a quasi-coherent quasi-module $A$ over $X$,
it suffices to specify a quasi-module $A(U)$ over $O(U)$ for every
affine open subscheme $U\subset X$ and a morphism of
$O(U)$\+$O(U)$\+bimodules $A(U)\rarrow A(V)$ for every pair of affine
open subscheme $V\subset U\subset X$ in such a way that the induced
map $O(V)\ot_{O(U)}A(U)\rarrow A(V)$ is an isomorphism and
the triangle diagram $A(U)\rarrow A(V)\rarrow A(W)$ is commutative
for every triple of affine open subschemes
$W\subset V\subset U\subset X$ (cf.\ the description of
quasi-coherent sheaves in~\cite[Section~2]{EE}).

 Quasi-coherent quasi-modules on $X$ can be thought of as
``quasi-coherent torsion sheaves on the formal completion of
the diagonal $X$ in $X\times_{\Spec\boZ}X$\,'' in the sense
of~\cite[Section~7.11]{BD2} and~\cite[Section~2]{Psemten}
(but one needs to explain what this means).

 \emph{Morphisms of quasi-coherent quasi-modules} on $X$ are defined
simply as their morphisms as sheaves of bimodules over~$O_X$.
 The category $X\qqcoh$ of quasi-coherent quasi-modules on $X$ is
a Grothendieck abelian category.
 It is endowed with two forgetful functors $X\qqcoh
\rightrightarrows X\qcoh$ (the left and the right one) to
the Grothendieck abelian category $X\qcoh$ of quasi-coherent sheaves
on~$X$.
 Both the forgetful funcators are faithful, exact, and preserve
colimits.

 Any quasi-coherent sheaf on $X$, viewed as a sheaf of bimodules over
$O_X$ in which the left and right actions of $O_X$ agree, becomes
a quasi-coherent quasi-module over~$X$.
 So the category $X\qcoh$ is also naturally a full subcategory in
$X\qqcoh$.

\begin{lem} \label{quasi-coherent-quasi-module-tensor-products}
\textup{(a)} For any quasi-coherent quasi-modules $A$ and $B$ on $X$,
the tensor product of sheaves of $O_X$\+$O_X$\+bimodules
$A\ot_{O_X}B$ is a quasi-coherent quasi-module on~$X$.
 For any affine open subscheme $U\subset X$, there is a natural
isomorphism $(A\ot_{O_X}\nobreak B)(U)\simeq A(U)\ot_{O(U)}B(U)$ of
quasi-modules over~$O(U)$. \par
\textup{(b)} For any quasi-coherent quasi-module $A$ and any
quasi-coherent sheaf $M$ on $X$, the tensor product $A\ot_{O_X}M$,
endowed with the $O_X$\+module structure induced by the left
$O_X$\+module structure on $A$, is a quasi-coherent sheaf on~$X$.
 For any affine open subscheme $U\subset X$, there is a natural
isomorphism of $O(U)$\+modules $(A\ot_{O_X}M)(U)\simeq
A(U)\ot_{O(U)}M(U)$.
\end{lem}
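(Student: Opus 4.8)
The plan is to work one pair of nested affine opens $V\subset U\subset X$ at a time — writing $R=O(U)$, \ $S=O(V)$, and $\sigma\:R\rarrow S$ for the corresponding flat epimorphism of Lemma~\ref{quasi-module-localization-lemma} — and to deduce quasi-coherence from the section-wise criterion recalled in the proof of Proposition~\ref{left-right-quasi-coherent-prop}. For part~(a), I would first note that $A\ot_{O_X}B$ is a sheaf of $O_X$\+$O_X$\+bimodules, the left action being inherited from $A$ and the right one from~$B$. Forgetting the bimodule structure down to the single ``middle'' $O_X$\+module structure, it is the tensor product over $O_X$ of the quasi-coherent right $O_X$\+module $A$ and the quasi-coherent left $O_X$\+module~$B$ (both quasi-coherent because $A$ and $B$ are quasi-coherent quasi-modules); hence it is quasi-coherent for this structure, and for every affine open $U$ one obtains a natural isomorphism $(A\ot_{O_X}B)(U)\simeq A(U)\ot_{O(U)}B(U)$ of $O(U)$\+$O(U)$\+bimodules. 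By Lemma~\ref{tensor-product-quasi-module}, the right-hand side is a quasi-module over~$O(U)$, so Proposition~\ref{left-right-quasi-coherent-prop} applies, and it remains only to check that $A\ot_{O_X}B$ is quasi-coherent for its \emph{left} $O_X$\+module structure; that is, that the comparison map $S\ot_R\bigl(A(U)\ot_R B(U)\bigr)\rarrow A(V)\ot_S B(V)$ is an isomorphism for every $V\subset U$.

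This is the crux, and I would establish it through the chain of natural isomorphisms
\begin{multline*}
 S\ot_R\bigl(A(U)\ot_R B(U)\bigr)\simeq\bigl(S\ot_R A(U)\bigr)\ot_R B(U)
 \simeq A(V)\ot_R B(U) \\
 \simeq A(V)\ot_S\bigl(S\ot_R B(U)\bigr)\simeq A(V)\ot_S B(V),
\end{multline*}
where the second isomorphism is the left quasi-coherence of $A$, viewed as an isomorphism of $S$\+$R$\+bimodules in which the right $R$\+action on $A(V)$ is the restriction along~$\sigma$ of its right $S$\+action; the third is the formal change-of-rings identity $N\ot_R L\simeq N\ot_S(S\ot_R L)$ for the right $S$\+module $N=A(V)$; and the fourth is the left quasi-coherence of~$B$. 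With left quasi-coherence in hand, Proposition~\ref{left-right-quasi-coherent-prop} shows that $A\ot_{O_X}B$ is a quasi-coherent quasi-module. (One could equally phrase the left--right passage through Corollary~\ref{quasi-module-left-right-tensor-adjunction-cor}, applied to the restriction map $A(U)\ot_R B(U)\rarrow A(V)\ot_S B(V)$ of the quasi-module $A(U)\ot_R B(U)$ into the $S$\+$S$\+bimodule $A(V)\ot_S B(V)$.)

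Part~(b) is the same argument with $B$ replaced by the quasi-coherent sheaf $M$, which carries no extra structure: $A\ot_{O_X}M$ is a left $O_X$\+module, the middle-structure identification gives $(A\ot_{O_X}M)(U)\simeq A(U)\ot_{O(U)}M(U)$, and the chain
\[
 S\ot_R\bigl(A(U)\ot_R M(U)\bigr)\simeq A(V)\ot_R M(U)
 \simeq A(V)\ot_S\bigl(S\ot_R M(U)\bigr)\simeq A(V)\ot_S M(V),
\]
using left quasi-coherence of $A$ together with quasi-coherence of $M$, exhibits $A\ot_{O_X}M$ as quasi-coherent with the stated affine sections (here Proposition~\ref{left-right-quasi-coherent-prop} is not needed, as there is only one module structure to verify). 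I expect the only genuine difficulty to be bookkeeping: tracking which of the three $O_X$\+module structures acts at each tensor factor, and especially justifying the change-of-rings step by the fact that $A(V)$ is a genuine $S$\+$S$\+bimodule — so that its right $R$\+action factors through~$\sigma$ — which is precisely where the hypothesis that $A$ is a quasi-coherent quasi-module is consumed.
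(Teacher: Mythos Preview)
Your proof is correct and follows essentially the same approach as the paper, which merely cites Lemma~\ref{tensor-product-quasi-module} and Lemma~\ref{quasi-module-localization-lemma} for part~(a) and observes that part~(b) is a special case. Your chain of change-of-rings isomorphisms is the explicit content of Lemma~\ref{quasi-module-localization-lemma} unpacked for the tensor product, and the ``middle'' $O_X$\+module shortcut you use to obtain the affine-section formula is a valid alternative organization of the same localization computation.
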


\begin{proof}
 Part~(a) follows from
Lemmas~\ref{tensor-product-quasi-module}
and~\ref{quasi-module-localization-lemma}.
 Part~(b) can be viewed as a particular case of part~(a).
\end{proof}

 The operation of tensor product over $O_X$, described in
Lemma~\ref{quasi-coherent-quasi-module-tensor-products}(a), makes
$X\qqcoh$ an (associative, noncommutative) tensor category.
 The structure sheaf $O_X\in X\qcoh\subset X\qqcoh$ of the scheme $X$
is the unit object of this tensor category structure.
  The tensor product operation described in
Lemma~\ref{quasi-coherent-quasi-module-tensor-products}(b) makes
$X\qcoh$ is a (left) module category over the tensor category
$X\qqcoh$.

 A \emph{quasi-coherent quasi-algebra} $A$ on $X$ is a sheaf of rings
endowed with a morphism of sheaves of rings $O_X\rarrow A$ such that
the resulting sheaf of $O_X$\+$O_X$\+bimodules $A$ is
a quasi-coherent quasi-module on~$X$.
 Equivalently, one can say that a quasi-coherent quasi-algebra on $X$
is an associative, unital algebra (monoid) object in the tensor
category $X\qqcoh$.
 This means that $A$ is a quasi-coherent quasi-module on $X$ endowed
with morphisms of quasi-coherent quasi-modules $O_X\rarrow A$ and
$A\ot_{O_X}A\rarrow A$ satisfying the conventional associativity
and unitality equations.

 A \emph{quasi-coherent graded quasi-algebra} $A^*$ on $X$ is
a sheaf of graded rings $A^*=(A^n)_{n\in\Gamma}$ on $X$ endowed
with a morphism of sheaves of rings $O_X\rarrow A^0$ such that
the resulting structure of a sheaf of $O_X$\+$O_X$\+bimodules on
the sheaf of abelian groups $A^n$ makes $A^n$ a quasi-coherent
quasi-module for every $n\in\Gamma$.
 Equivalently, a quasi-coherent graded quasi-algebra is
an associative, unital $\Gamma$\+graded algebra object in
the tensor category $X\qqcoh$.
 This means that $(A^n)_{n\in\Gamma}$ is a family of quasi-coherent
quasi-modules on $X$ endowed with morphisms of quasi-coherent
quasi-modules $O_X\rarrow A^0$ and $A^n\otimes_{O_X}A^m\rarrow
A^{n+m}$ for all $n$, $m\in\Gamma$ satisfying the conventional
associativity and unitality equations.

 A \emph{quasi-coherent left module} $M$ over a quasi-coherent
quasi-algebra $A$ on $X$ is a sheaf of left modules over the sheaf
of rings $A$ whose underlying sheaf of $O_X$\+modules $M$ is
quasi-coherent.
 Equivalently, one can say that a quasi-coherent left module $M$ over
is a module object in the module category $X\qcoh$ over the algebra
object $A$ in the tensor category $X\qqcoh$.
 This means that $M$ is a quasi-coherent sheaf on $X$ endowed
with a morphism of quasi-coherent sheaves $A\ot_{O_X}M\rarrow M$
satisfying the conventional associativity and unitality equations
together with the structure morphisms $O_X\rarrow A$ and $A\ot_{O_X}A
\rarrow A$ of the quasi-coherent quasi-algebra~$A$.

 The category $A\qcoh$ of quasi-coherent left modules over
a quasi-coherent quasi-algebra $A$ on $X$ is a Grothendieck
abelian category.
 The forgetful functor $A\qcoh\rarrow X\qcoh$ is faithful, exact,
and preserves colimits.

 A \emph{quasi-coherent graded left module} $M^*$ over a quasi-coherent
graded quasi-algebra $A^*$ on $X$ is a sheaf of graded left modules
over the sheaf of graded rings $A^*$ such that the sheaf of
$O_X$\+modules $M^n$ is quasi-coherent for every $n\in\Gamma$.
 Equivalently, one can say that a quasi-coherent graded left module
$M^*$ is a graded module object in the module category $X\qcoh$ over
the graded algebra object $A^*$ in the tensor category $X\qqcoh$.
 This means that $(M^m)_{m\in\Gamma}$ is a family of quasi-coherent
sheaves on $X$ endowed with morphisms of quasi-coherent sheaves
$A^n\ot_{O_X}M^m\rarrow M^{n+m}$, \ $n$,~$m\in\Gamma$, satisfying
the associativity and unitality equations.
 Here, as in Lemma~\ref{quasi-coherent-quasi-module-tensor-products}(b),
the $O_X$\+module structure on the tensor product $A^n\ot_{O_X}M^m$ is
induced by the left $O_X$\+module structure on~$A^n$.

 Given a quasi-coherent graded quasi-algebra $A^*$, the graded category
of quasi-coherent graded left modules $A^*\cqcoh$ and the abelian
category of quasi-coherent graded left modules $A^*\sqcoh$ are defined
similarly to Section~\ref{curved-dg-modules-subsecn}.
 The graded abelian group of morphisms in $A^*\cqcoh$ from
a quasi-coherent graded module $L^*$ to a quasi-coherent graded
module $M^*$ is denoted by $\Hom_{A^*\cqcoh}^*(L^*,M^*)=
\Hom_{A^*}^*(L^*,M^*)$.
 
\subsection{Quasi-coherent CDG-quasi-algebras}
\label{quasi-cdg-subsecn}
 The definition of a \emph{quasi-coherent CDG\+algebra} over a scheme
goes back to~\cite[Section~B.1]{Pkoszul} and~\cite[Section~1.2]{EP}.
 The extension to quasi-algebras is straightforward; we spell it out
below in this section for the sake of completeness of the exposition.
 A much more detailed discussion can be found
in~\cite[Sections~6.3\+-6.4]{Pdomc}.

 First we need to complete the discussion of
Section~\ref{curved-dg-modules-subsecn} by defining \emph{morphisms
of CDG\+rings}.
 Let $\biR^\cu=(R^*,d_R,h_R)$ and $\biS^\cu=(S^*,d_S,h_S)$ be
two CDG\+rings.
 A morphism of CDG\+rings $(f,a)\:\biR^\cu\rarrow\biS^\cu$ is a pair
$(f,a)$, where $f\:R^*\rarrow S^*$ is a morphism of graded rings and
$a\in S^1$ is an element for which the following two equations
are satisfied:
\begin{enumerate}
\renewcommand{\theenumi}{\roman{enumi}}
\setcounter{enumi}{4}
\item $f(d_R(r))=d_S(f(r))+[a,f(r)]$ for all $r\in R^{|r|}$
(where $[x,y]=xy-(-1)^{|x||y|}yx$ is the commutator with signs);
\item $f(h_R)=h_S+d_S(a)+a^2$.
\end{enumerate}
 The element $a\in S^1$ is called the \emph{change-of-connection
element}.
 The \emph{composition} of two morphisms of CDG\+rings
$(R^*,d_R,h_R)\xrightarrow{(f,a)}(S^*,d_S,h_S)\xrightarrow{(g,b)}
(T^*,d_T,h_T)$ is given by the rule $(g,b)\circ(f,a)=
(g\circ f,\>b+g(a))$.

 All CDG\+ring morphisms of the form $(\id,a)\:(S^*,d',h')
\rarrow(S^*,d,h)$ are isomorphisms.
 They are called \emph{change-of-connection isomorphisms};
while morphisms of the form $(f,0)\:(R^*,d_R,h_R)\rarrow(S^*,d_S,h_S)$
are called \emph{strict morphisms}.
 Any morphism of CDG\+rings can be uniquely decomposed into a strict
morphism followed by a change-of-connection isomorphism.

 The inclusion functor from the category of DG\+rings to the category
of CDG\+rings is faithful, but \emph{not} fully faithful.
 A morphism of DG\+rings is essentially the same thing as a strict
morphism between the related CDG\+rings.
 Still two quite different DG\+rings may be isomorphic as CDG\+rings,
connected by a change-of-connection isomorphism (otherwise known
as a \emph{Maurer--Cartan twist}).

 A rule for transforming CDG\+modules under morphisms of CDG\+rings is
needed for the discussion of sheaves of CDG\+modules over sheaves of
CDG\+rings, which is our aim in this section.
 Let $(f,a)\:\biR^\cu\rarrow\biS^\cu$ be a morphism of CDG\+rings, and
let $\biL^\cu=(L^*,d_L)$ and $\biM^\cu=(M^*,d_M)$ be left CDG\+modules
over $\biR^\cu$ and $\biS^\cu$, respectively.
 Then a \emph{map of CDG\+modules $g\:\biL^\cu\rarrow\biM^\cu$
compatible with the given morphism of CDG\+rings~$(f,a)$} is a morphism
of graded modules $L^*\rarrow M^*$ compatible with the morphism~$f$
of graded rings and satisfying the equation
\begin{enumerate}
\renewcommand{\theenumi}{\roman{enumi}}
\setcounter{enumi}{6}
\item $g(d_L(x))=d_M(g(x))+ag(x)$ for all $x\in L^*$.
\end{enumerate}

 Similarly, let $\biN^\cu=(N^*,d_N)$ and $\biQ^\cu=(Q^*,d_Q)$ be
right CDG\+modules over $\biR^\cu$ and $\biS^\cu$, respectively.
 Then a map of CDG\+modules $g\:\biN^\cu\rarrow\biQ^\cu$ compatible
with the given morphism of CDG\+rings~$(f,a)$ is a morphism of graded
modules $N^*\rarrow Q^*$ compatible with the morphism~$f$ of graded
rings and satisfying the equation
\begin{enumerate}
\renewcommand{\theenumi}{\roman{enumi}}
\setcounter{enumi}{7}
\item $g(d_N(y))=d_Q(g(y))-(-1)^{|y|}g(y)a$ for all $y\in N^{|y|}$.
\end{enumerate}

 Let $X$ be a scheme.
 A \emph{quasi-coherent CDG\+quasi-algebra} $\biB^\cu$ over $X$ is
the following set of data:
\begin{itemize}
\item a quasi-coherent graded quasi-algebra $B^*$ over~$X$ is given;
\item for each affine open subscheme $U\subset X$, an odd derivation
$d_U\:B^*(U)\rarrow B^*(U)$ of degree~$1$ and a curvature element
$h_U\in B^2(U)$ are given;
\item for each pair of affine open subschemes $V\subset U\subset X$,
a change-of-connection element $a_{VU}\in B^1(V)$ is given;
\end{itemize}
such that the following conditions are satisfied:
\begin{enumerate}
\renewcommand{\theenumi}{\roman{enumi}}
\setcounter{enumi}{8}
\item for each affine open subscheme $U\subset X$, the triple
$(B^*(U),d_U,h_U)$ is a CDG\+ring;
\item for each pair of affine open subschemes $V\subset U\subset X$,
the pair $(\rho_{VU},a_{VU})$ is a morphism of CDG\+rings
$(B^*(U),d_U,h_U)\rarrow(B^*(V),d_V,h_V)$, where $\rho_{VU}\:B^*(U)
\rarrow B^*(V)$ denotes the restriction morphism in the sheaf of
graded rings~$B^*$;
\item for each triple of affine open subschemes $W\subset V\subset U
\subset X$, one has $(\rho_{WV},a_{W,V})\circ(\rho_{VU},a_{VU})=
(\rho_{WU},a_{WU})\:(B^*(U),d_U,h_U)\rarrow(B^*(W),d_W,\allowbreak
h_W)$; in other words, the equation $a_{WU}=a_{WV}+\rho_{WV}(a_{VU})$
holds in~$B^1(W)$.
\end{enumerate}

\begin{rem}
 Let $X$ be a scheme endowed with an open covering $X=\bigcup_\alpha
Y_\alpha$, and let $B^*$ be a quasi-coherent graded quasi-algebra
over~$X$.
 Suppose that a CDG\+ring structure $(B^*(U),d_U,h_U)$ on the graded
ring $B^*(U)$ is given \emph{for all affine open subschemes $U\subset X$
for which there exists~$\alpha$ such that $U\subset Y_\alpha$}, and
change-of-connection elements $a_{VU}\in B^1(V)$ are chosen for all
$V\subset U$ with $U\subset Y_\alpha$ for some~$\alpha$ in such a way
that the conditions~(x\+-xi) are satisfied whenever $U\subset Y_\alpha$.
 Then this set of data can be (uniquely in a natural sense) extended
to a quasi-coherent CDG\+quasi-algebra structure $\biB^\cu$ on~$B^*$,
in the sense of the definition above.

 Indeed, it suffices to consider the case when $X=\bigcup_\alpha
Y_\alpha$ is an affine open covering of an affine scheme~$X$.
 In this case, the formula $c_{\alpha\beta}=a_{U_\alpha\cap U_\beta,
U_\alpha}-a_{U_\alpha\cap U_\beta,U_\beta}$ defines a \v Cech
$1$\+cocycle~$c$ with the coefficients in the sheaf of abelian groups
$B^1$ on the Zariski topology of the affine scheme $X$ with the chosen
affine open covering~$\{Y_\alpha\}$.
 The point is that, since $B^1$ is a quasi-coherent sheaf (e.~g.,
with the left action of~$O_X$, or alternatively with the right action
of~$O_X$), all such cocycles are coboundaries (in the same \v Cech
complex for the fixed covering $X=\bigcup_\alpha Y_\alpha$);
see, e.~g., \cite[Tag~01X8]{SP}.
 This allows to solve the problem of defining a CDG\+ring structure
on the graded ring $B^*(X)$ together with a compatible collection
of change-of-connection elements $a_{Y_\alpha,X}\in B^1(Y_\alpha)$
for all~$\alpha$.
\end{rem}

 Let $\biB^\cu$ be a quasi-coherent CDG\+quasi-algebra over~$X$.
 A \emph{quasi-coherent left CDG\+module} $\biM^\cu$ over $\biB^\cu$
is the following set of data:
\begin{itemize}
\item a quasi-coherent graded left module $M^*$ over
the quasi-coherent graded quasi-algebra $B^*$ is given;
\item for each affine open subscheme $U\subset X$, an odd derivation
$d_{U,M}\:M^*(U)\rarrow M^*(U)$ of degree~$1$ on the graded module
$M^*(U)$ compatible with the odd derivation $d_U$ on the graded ring 
$B^*(U)$ is given;
\end{itemize}
such that the following condition is satisfied:
\begin{enumerate}
\renewcommand{\theenumi}{\roman{enumi}}
\setcounter{enumi}{11}
\item for each pair of affine open subschemes $V\subset U\subset X$,
the restriction map $M^*(V)\rarrow M^*(U)$ in the sheaf of graded
modules $M^*$ is a map of CDG\+modules $(M^*(U),d_{U,M})\rarrow
(M^*(V),d_{V,M})$ compatible with the morphism of CDG\+rings
$(\rho_{VU},a_{VU})\:(B^*(U),d_U,h_U)\rarrow(B^*(V),d_V,h_V)$.
\end{enumerate}

 Let $\biL^\cu$ and $\biM^\cu$ be two quasi-coherent left CDG\+modules
over a quasi-coherent CDG\+quasi-algebra $\biB^\cu$ over $X$, and
let $L^*$ and $M^*$ be their underlying quasi-coherent graded modules
over the quasi-coherent graded quasi-algebra~$B^*$.
 Then the graded abelian group $\Hom_{B^*}^*(L^*,M^*)$ of homogeneous
maps of quasi-coherent graded modules $L^*\rarrow M^*$ is endowed
with a natural differential~$d$ of degree~$1$ defined as follows.

 Let $f\in\Hom_{B^*}^{|f|}(L^*,M^*)$ be a homogeneous map
of degree~$|f|$ between the two quasi-coherent graded modules.
 In order to define the homogeneous map
$d(f)\in\Hom_{B^*}^{|f|+1}(L^*,M^*)$, consider an affine open
subscheme $U\subset X$, and denote by $f_U\:L^*(U)\rarrow M^*(U)$
the action of~$f$ on the sections over~$U$.
 Following Section~\ref{curved-dg-modules-subsecn}, there is
a natural differential~$d$ on the graded abelian group
$\Hom_{B^*(U)}(L^*(U),M^*(U))$, defined in terms of the odd
derivations $d_{U,L}\:L^*(U)\rarrow L^*(U)$ and
$d_{U,M}\:M^*(U)\rarrow M^*(U)$.
 We put $d(f)_U=d(f_U)$.
 It is straightforward to check that the resulting homogeneous
maps of graded modules $d(f)_U\in\Hom_{B^*(U)}^{|f|+1}(L^*(U),M^*(U))$
form commutative square diagrams with the restriction maps
$L^*(U)\rarrow L^*(V)$ and $M^*(U)\rarrow M^*(V)$ for any pair
of affine open subschemes $V\subset U\subset X$.

 Hence we obtain a complex of abelian groups, denoted by
$\Hom_{\biB^\cu}^\bu(\biL^\cu,\biM^\cu)$.
 In the \emph{DG\+category $\biB^\cu\bqcoh$ of quasi-coherent left
CDG\+modules over $\biB^\cu$}, the quasi-coherent left CDG\+modules
over $\biB^\cu$ are the objects, and
$\Hom^\bu_{\biB^\cu\bqcoh}(\biL^\cu,\biM^\cu)
=\Hom_{\biB^\cu}^\bu(\biL^\cu,\biM^\cu)$ is the complex of morphisms
from $\biL^\cu$ to~$\biM^\cu$.

 The DG\+category $\biB^\cu\bqcoh$ is additive with shifts and twists,
and with infinite coproducts.
 The category $\sZ^0(\biB^\cu\bqcoh)$ of closed morphisms of degree~$0$
in $\biB^\cu\bqcoh$ is a Grothendieck abelian category.
 It is called the \emph{abelian category of quasi-coherent left
CDG\+modules over~$\biB^\cu$}.

\subsection{Factorization categories} \label{factorizations-subsecn}
 \emph{Factorizations}~\cite[Section~6 and Appendix~A]{Ef},
\cite{BDFIK} can be thought of as a category-theoretic generalization
of CDG\+modules over a CDG\+ring $\biR^\cu=(R^*,d,h)$ such that $R^n=0$
for $n$~odd (i.~e., $n\in\Gamma\setminus 2\Gamma$), the multiplication
maps $R^n\ot_{R^0}R^m\rarrow R^{n+m}$ are isomorphisms for all
$n$, $m\in2\Gamma$, and $d=0$, while $h\in R^2$ is a central
element in~$R^*$.
 In the exposition below in this paper, we take up a slightly
more general notational system for matrix factorizations than
in~\cite{BDFIK}, following an approach which has been used in
the algebro-geometric literature on matrix factorizations over stacks;
see, e.~g., \cite[Section~2.2]{Tod}.

 Let $\sA$ be a preadditive category and $\Lambda\:\sA\rarrow\sA$ be
an autoequivalence.
 We will consider two cases concerning the grading group~$\Gamma$:
either $\Gamma=\boZ$, or $\Lambda$ is an involutive autoequivalence
and $\Gamma=\boZ/2$.
 In the latter case, a natural isomorphism $\theta\:\Id_\sA\rarrow
\Lambda^2$ of functors $\sA\rarrow\sA$ is presumed to be given
such that the equation $\theta_{\Lambda(X)}=\Lambda(\theta_X)$
is satisfied for all objects $X\in\sA$.

 More generally, one could consider the $2p$\+periodic case of
$\Gamma=\boZ/2p$, where $p$~is a positive integer.
 Then a natural isomorphism $\theta\:\Id_\sA\overset\simeq\rarrow
\Lambda^{2p}$ of functors $\sA\rarrow\sA$ is needed satisfying
the equation $\theta_{\Lambda(X)}=\Lambda(\theta_X)$ for all objects
$X\in\sA$.
 In all these cases, the group $\Gamma$ acts on the category $\sA$
by powers of the functor~$\Lambda$: for every $n\in\Gamma$ there is
the functor $\Lambda^n\:\sA\rarrow\sA$, and the natural isomorphisms
$\Id_\sA\overset\simeq\rarrow\Lambda^0$ and $\Lambda^n\circ\Lambda^m
\overset\simeq\rarrow\Lambda^{n+m}$ for all $n$, $m\in\Gamma$ are
defined and satisfy all the natural compatibilities.

 A \emph{potential}~$w$ for $\Lambda$ is a natural transformation
$w\:\Id_\sA\rarrow\Lambda^2$ satifying the equation $w_{\Lambda(X)}=
\Lambda(w_X)$ for all $X\in\sA$.
 Our aim is to construct the DG\+category $\bF(\sA,\Lambda,w)$
of \emph{factorizations} of~$w$.
 On the way to factorizations, we start with constructing the graded
category $\cP(\sA,\Lambda)$ of \emph{$\Lambda$\+periodic objects}
in~$\sA$. 

 By definition, a $\Lambda$\+periodic object $X^\ci$ in $\sA$ is
a collection of objects $(X^i\in\sA)_{i\in\Gamma}$ endowed with
isomorphisms $\lambda_X^{i,j}\:\Lambda^{i-j}(X^j)\overset\simeq
\rarrow X^i$ defined for all $i$, $j\in\Gamma$.
 The equation $\lambda_X^{i,j}\circ\Lambda^{i-j}(\lambda^{j,k}_X)
=\lambda^{i,k}_X$ needs to be satisfied for all $i$, $j$, $k\in\Gamma$.
 The collection of all objects $(X^i\in\sA)_{i\in\Gamma}$ with
the periodicity isomorphisms $\lambda_X^{i,j}$ forgotten defines
the underlying graded object $X^*\in\cG(\sA)$ of
a $\Lambda$\+periodic object $X^\ci\in\cP(\sA,\Lambda)$.
 The shift $X^\ci[n]$ of a $\Lambda$\+periodic object $X^\ci$
by a degree $n\in\Gamma$ is a $\Lambda$\+periodic object defined
by the rules $X^\ci[n]^i=X^{n+i}$ and $\lambda_{X[n]}^{i,j}=
\lambda_X^{n+i,n+j}$ for all $n$, $i$, $j\in\Gamma$.

 Let us first define the preadditive category $\sP(\sA,\Lambda)$ of
$\Lambda$\+periodic objects in~$\sA$.
 In both $\cP(\sA,\Lambda)$ and $\sP(\sA,\Lambda)$, the objects are
the $\Lambda$\+periodic objects in~$\sA$.
 For any two objects $X^\ci$ and $Y^\ci\in\sP(\sA,\Lambda)$,
the abelian group $\Hom_{\sP(\sA,\Lambda)}(X^\ci,Y^\ci)$ is defined
as the subgroup in the group $\Hom_{\sG(\sA)}(X^*,Y^*)$ consisting of
all the morphisms $(f_i\:X^i\to Y^i)_{i\in\Gamma}$ satisfying
the equations $\lambda^{i,j}_Y\circ\Lambda^{i-j}(f_j)=
f_i\circ\lambda^{i,j}_X$ for all $i$, $j\in\Gamma$.
 The category $\sP(\sA,\Lambda)$ is naturally equivalent to
the category $\sA$, with the equivalence provided by the functor
taking a $\Lambda$\+periodic object $X^\ci\in\sP(\sA,\Lambda)$
to the object $X^0\in\sA$.

 The graded abelian group $\Hom_{\cP(\sA,\Lambda)}^*(X^\ci,Y^\ci)$
of morphisms in the graded category $\cP(\sA,\Lambda)$ is a homogeneous
subgroup of the graded abelian group $\Hom_{\cG(\sA)}(X^*,Y^*)$.
 Similarly to Section~\ref{complexes-subsecn}, it is defined by
the rule $\Hom^n_{\cP(\sA,\Lambda)}(X^\ci,Y^\ci)=
\Hom_{\sP(\sA,\Lambda)}(X^\ci,\allowbreak Y^\ci[n])$
for all $n\in\Gamma$.
 The composition of morphisms in the categories $\sP(\sA,\Lambda)$
and $\cP(\sA,\Lambda)$ agrees with the one in the categories
$\sG(\sA)$ and $\cG(\sA)$, respectively.
 \emph{Unlike} the preadditive category $\sP(\sA,\Lambda)$, the graded
category $\cP(\sA,\Lambda)$ cannot be recovered from the category $\sA$
alone; it depends on the autoequivalence $\Lambda$ in an essential way.

 The categories $\sP(\sA,\Lambda)$ and $\cP(\sA,\Lambda)$ are additive
whenever the preadditive category $\sA$ is.
 Assuming $\sA$ is additive, the categories $\sP(\sA,\Lambda)$ and
$\cP(\sA,\Lambda)$ are idempotent-complete whenever the category
$\sA$~is.
 The categories $\sP(\sA,\Lambda)$ and $\cP(\sA,\Lambda)$ have infinite
coproducts or products whenever the category $\sA$ has.

 A \emph{factorization} $\biM^\cu=(M^\ci,d_M)$ of the potential~$w$ is
a $\Lambda$\+periodic object endowed with a homogeneous endomorphism
of degree~$1$, i.~e., $d_M\in\Hom^1_{\cP(\sA,\Lambda)}(M^\ci,M^\ci)$,
such that, for every $n\in\Gamma$, the composition
$(\lambda_M^{n+2,n})^{-1}d_{M,n+1}d_{M,n}\:M^n\rarrow M^{n+2}\overset
\simeq\rarrow\Lambda^2(M^n)$ is equal to the morphism
$w_{M^n}\:M^n\rarrow\Lambda^2(M^n)$.
 For any two factorizations $\biL^\cu=(L^\ci,d_L)$ and
$\biM^\cu=(M^\ci,d_M)$ of the same potential~$w$, the graded abelian
group $\Hom^*_{\cP(\sA,\Lambda)}(L^\ci,M^\ci)$ is endowed with
a natural differential~$d$ of degree~$1$ defined by the usual rule
$d(f)=d_M\circ f-(-1)^{|f|}f\circ d_L$.
 Using the assumption that $w$~is a natural transformation, one
can check that $d^2=0$.
 Hence we obtain a complex of abelian groups, denoted by
$\Hom_{\bF(\sA,\Lambda,w)}^\bu(\biL^\cu,\biM^\cu)$.

 In the \emph{DG\+category\/ $\bF(\sA,\Lambda,w)$ of factorizations
of~$w$}, the factorizations are the objects, and
$\Hom_{\bF(\sA,\Lambda,w)}^\bu(\biL^\cu,\biM^\cu)$ is the complex
of morphisms from $\biL^\cu$ to~$\biM^\cu$.
 We denote by $\sF(\sA,\Lambda,w)=\sZ^0(\bF(\sA,\Lambda,w))$
the additive category of factorizations and closed morphisms
of degree~$0$ between them, and by $\sK(\sA,\Lambda,w)=
\sH^0(\bF(\sA,\Lambda,w))$ the homotopy category of factorizations
of~$w$.

 The DG\+category $\bF(\sA,\Lambda,w)$ always has shifts and twists.
 When $\sA$ is an additive category, the DG\+category
$\bF(\sA,\Lambda,w)$ is additive as well; hence it also has cones,
and the homotopy category $\sK(\sA,\Lambda,w)$ is triangulated in
this case.
 When an additive category $\sA$ is idempotent-complete, so is
the DG\+category $\bF(\sA,\Lambda,w)$.
 The DG\+category $\bF(\sA,\Lambda,w)$ has infinite products or
coproducts whenever the category $\sA$ has.
 When $\sA$ is an abelian category, so is $\sF(\sA,\Lambda,w)$.

\begin{rem} \label{usual-factorizations-remark}
 The more conventional notion of factorizations in the sense
of~\cite{BDFIK} can be recovered as a particular case of the definition
above in the following way.

 Let $\sB$ be a preadditive category and $\Delta\:\sB\rarrow\sB$
be an autoequivalence.
 Let $\Gamma=\boZ$ or $\boZ/2p$ be a grading group as above.
 In the latter case, assume that a natural isomorphism
$\theta\:\Id_\sB\rarrow\Delta^p$ is given and the equation
$\theta_{\Delta(A)}=\Delta(\theta_A)$ is satisfied for all $A\in\sB$.
 Then the subgroup $2\Gamma\subset\Gamma$ acts on the category $\sB$
by powers of the functor~$\Delta$: for every $n\in 2\Gamma$ one
can define the functor denoted formally by $\Delta^{n/2}\:\sB
\rarrow\sB$, together with the natural isomorphisms of unitality
and multiplicativity similar to the ones above, $\Id_\sB\overset\simeq
\rarrow\Delta^{0/2}$ and $\Delta^{n/2}\circ\Delta^{m/2}\overset\simeq
\rarrow\Delta^{(n+m)/2}$ for all $n$, $m\in2\Gamma$.
 A \emph{potential}~$v$ for $\Delta$ is a natural transformation
$v\:\Id_\sB\rarrow\Delta$ satifying the equation $v_{\Delta(A)}=
\Delta(v_A)$ for all $A\in\sB$.

 A \emph{$2$\+$\Delta$\+periodic object} $A^\ci$ in $\sB$ is
a collection of objects $(A^i\in\sB)_{i\in\Gamma}$ endowed with
isomorphisms $\delta_A^{i,j}\:\Delta^{(i-j)/2}(A^j)\overset\simeq
\rarrow A^i$ defined for all $i$, $j\in\Gamma$ with $i-j\in2\Gamma$.
 The equation $\delta_A^{i,j}\circ\Delta^{(i-j)/2}(\delta^{j,k}_A)
=\delta^{i,k}_A$ has to be satisfied for all $i$, $j$, $k\in\Gamma$
such that $i-j\in2\Gamma\ni j-k$.
 The shift $A^\ci[n]$ of a $2$\+$\Delta$\+periodic object $A^\ci$
by a degree $n\in\Gamma$ is a $2$\+$\Delta$\+periodic object defined
by the rules $A^\ci[n]^i=A^{n+i}$ and $\delta_{A[n]}^{i,j}=
\delta_A^{n+i,n+j}$ for all $n$, $i$, $j\in\Gamma$.
 The preadditive category $\sP'(\sB,\Delta)$ and the graded category
$\cP'(\sB,\Delta)$ of $2$\+$\Delta$\+periodic objects in $\sB$ are
defined similarly to the constructions above.

 In particular, the collection of all objects
$(A^i\in\sB)_{i\in\Gamma}$ with the periodicity isomorphisms
$\delta_A^{i,j}$ forgotten defines the underlying graded object
$A^*\in\cG(\sB)$ of a $2$\+$\Delta$\+periodic object
$A^\ci\in\cP'(\sB,\Delta)$.
 For any two objects $A^\ci$ and $B^\ci\in\cP'(\sB,\Delta)$,
the abelian group $\Hom_{\sP'(\sB,\Delta)}(A^\ci,B^\ci)$ is defined
as the subgroup in the group $\Hom_{\sG(\sB)}(A^*,B^*)$ consisting of
all the morphisms $(f_i\:A^i\to B^i)_{i\in\Gamma}$ satisfying
the equations $\delta^{i,j}_B\circ\Delta^{(i-j)/2}(f_j)=
f_i\circ\delta^{i,j}_A$ for all $i$, $j\in\Gamma$ with $i-j\in2\Gamma$.
 The category $\sP'(\sB,\Delta)$ is equivalent to the Cartesian
product $\sB^{\Gamma/2\Gamma}=\sB\times\sB$ of two copies of
the preadditive category $\sB$ indexed by representatives of
the cosets of the group $\Gamma$ modulo~$2\Gamma$.

 The graded abelian group $\Hom_{\cP'(\sB,\Delta)}^*(A^\ci,B^\ci)$
of morphisms in the graded category $\cP'(\sB,\Delta)$ is a homogeneous
subgroup of the graded abelian group $\Hom_{\cG(\sB)}(A^*,B^*)$
defined by the rule $\Hom^n_{\cP'(\sB,\Delta)}(A^\ci,B^\ci)=
\Hom_{\sP'(\sB,\Delta)}(A^\ci,B^\ci[n])$ for all $n\in\Gamma$.
 Unlike the preadditive category $\sP'(\sB,\Delta)$, the graded
category $\cP'(\sB,\Delta)$ cannot be recovered from the category $\sB$
alone; it depends on the autoequivalence~$\Delta$.

 A \emph{factorization} $\biN^\cu=(N^\ci,d_N)$ of the potential~$v$
(in the sense of~\cite{BDFIK}) is a $2$\+$\Delta$\+periodic object
endowed with a homogeneous endomorphism of degree~$1$, i.~e.,
$d_N\in\Hom^1_{\cP'(\sB,\Delta)}(N^\ci,N^\ci)$, such that, for
every $n\in\Gamma$, the composition $(\delta_N^{n+2,n})^{-1}d_{N,n+1}
d_{N,n}\:N^n\rarrow N^{n+2}\overset\simeq\rarrow\Delta(N^n)$
is equal to the morphism $v_{N^n}\:N^n\rarrow\Delta(N^n)$.
 The DG\+category $\bF'(\sB,\Delta,v)$ of factorizations of~$v$ in
the sense of~\cite{BDFIK} is defined similarly to the construction
above in the main body of this section. {\emergencystretch=1.5em\par}

 Now consider the Cartesian square $\sA=\sB\times\sB$ of
the preadditive category~$\sB$.
 Define the autoequivalence $\Lambda\:\sA\rarrow\sA$ by the rule
$\Lambda(B^0,B^1)=(B^1,\Delta(B^0))$ for all $B^0$, $B^1\in\sB$.
 Then one has $\Lambda^{2p}\simeq\Id_\sA$ whenever $\Delta^p\simeq
\Id_\sB$.
 Define the potential $w\:\Id_\sA\rarrow\Lambda^2$ by the rule
$w_{(B^0,B^1)}=(v_{B^0},v_{B^1})\:(B^0,B^1)\rarrow\Lambda^2(B^0,B^1)
=(\Delta(B^0),\Delta(B^1))$.

 Then the additive category $\sP'(\sB,\Delta)$ is naturally equivalent
to $\sP(\sA,\Lambda)$, the graded category $\cP'(\sB,\Delta)$ is
naturally equivalent to $\cP(\sA,\Lambda)$, and the DG\+category
$\bF'(\sB,\Delta,v)$ is naturally equivalent to the DG\+category
$\bF(\sA,\Lambda,w)$.
 These equivalences assign to a $2$\+$\Delta$\+periodic object
$B^\circ\in\cP'(\sB,\Delta)$ the $\Lambda$\+periodic object
$X^\circ\in\cP(\sA,\Lambda)$ with the components $X^i=(B^i,B^{i+1})$
for all $i\in\Gamma$.
\end{rem}

\Section{The Almost Involution on DG-Categories} \label{involution-secn}

 The construction assigning the DG\+category $\bA^\bec$ to
a DG\+category $\bA$ is presented below in
Section~\ref{main-construction-subsecn}.
 In the important example of the DG\+category of CDG\+modules over
a CDG\+ring, this construction is closely related to the construction
assigning the acyclic DG\+ring $\widehat\biR^\bu$ to a CDG\+ring
$\biR^\cu$.
 Before proceeding to introduce the main category-theoretic 
construction, we start with Section~\ref{cdg-revisited-subsecn}
discussing the related construction for curved DG\+rings.

 The DG\+category $\bA^\bec$ comes together with a ladder of adjunctions
connecting the additive categories $\sZ^0(\bA)$ and $\sZ^0(\bA^\bec)$.
 We construct these adjoint functors in the case of CDG\+modules over
CDG\+rings in Proposition~\ref{G-plus-minus-prop}.

 Furthermore, the counterexamples presented in
Examples~\ref{nonexistence-of-cdg-structure-examples}\+-%
\ref{nonexistence-of-derivation-examples} in
Section~\ref{cdg-revisited-subsecn} demonstrate that the full
subcategory $(\biR^\cu\bmodl)^0\subset R^*\smodl$ misses many objects
and is not well-behaved in general.
 This illustrates the need for the construction of the DG\+category
$(\biR^\cu\bmodl)^\bec$ as a way to eventually recover the whole
abelian category $R^*\smodl\simeq\sZ^0((\biR^\cu\bmodl)^\bec)$,
as per Example~\ref{CDG-ring-bec-example}
in Section~\ref{A-bec-examples-subsecn}.

\subsection{CDG-modules revisited} \label{cdg-revisited-subsecn}
 Let $\biR^\cu=(R^*,d,h)$ be a CDG\+ring.
 The following construction plays an important role.
 One adjoins an element $\delta$ of degree~$1$ to $R^*$ and imposes
the relations
\begin{itemize}
\item $\delta r-(-1)^{|r|}r\delta=d(r)$ for all $r\in R^{|r|}$;
\item $\delta^2=h$.
\end{itemize}
 The resulting graded ring, spanned by $R^*$ and~$\delta$ with
the relations above, is denoted by $R^*[\delta]$.
 Explicitly, the elements of degree~$n$ in $R^*[\delta]$ are formal
linear combinations $r+s\delta$ with $r\in R^n$ and $s\in R^{n-1}$.
 The addition in $R^*[\delta]$ is defined in the obvious way,
and the multiplication is defined in the way dictated by
the relations above, namely
$$
 (r+s\delta)(u+v\delta)=(ru+sd(u)+(-1)^{|v|}svh)
 +(rv+(-1)^{|u|}su+sd(v))\delta.
$$

 The graded ring $R^*[\delta]$ is endowed with an odd derivation
$\d=\d/\d\delta$ of degree~$-1$, defined by the rules $\d(r)=0$
for $r\in R^*\subset R^*[\delta]$ and $\d(\delta)=1$.
 Explicitly, $\d(r+s\delta)=(-1)^{|s|}s$.
 We will denote by $\widehat\biR^*$ the graded ring $R^*[\delta]$
with the odd derivation~$\d$ and the changed sign of the grading:
so $\widehat\biR^n=R^*[\delta]^{-n}$.

 One has $\d^2=0$, so $\widehat\biR^\bu=(\widehat\biR^*,\d)$ is
a DG\+ring.
 Moreover, the cohomology ring $H^*_\d(\widehat\biR^\bu)$ of
the DG\+ring $\widehat\biR^\bu$ vanishes (since the unit $1$ belongs
to the image of~$\d$).
 Both the kernel and the image of the odd derivation~$\d$ on
$R^*[\delta]$ coincide with the subring $R^*\subset R^*[\delta]$.
 So the map~$\d$ decomposes as $R^*[\delta]\twoheadrightarrow
R^*\rightarrowtail R^*[\delta]$.

 We denote the resulting surjective map by $\rho\:R^*[\delta]
\twoheadrightarrow R^*$.
 So $\rho$~is a surjective homogeneous map of degree~$-1$
respecting the graded $R^*$\+$R^*$\+bimodule structures.
 (Notice that there a sign rule involved in the notion of
a homogeneous map of graded left modules or the shift of a graded
left module; see Section~\ref{curved-dg-modules-subsecn}.)

 The correspondence assigning the DG\+ring $\widehat\biR^\bu$ to
a CDG\+ring $\biR^\cu=(R^*,d,h)$ is an equivalence of categories
between the category of CDG\+rings and the category of DG\+rings with
vanishing cohomology~\cite[Section~4.2]{Prel}.

 We recall the notation $R^*\smodl$ for the abelian category of
graded $R^*$\+modules (and homogeneous morphisms of degree~$0$
between them), as well as the notation $\sZ^0(\biR^\cu\bmodl)$ for
the abelian category of CDG\+modules over $\biR^\cu$ (and closed
morphisms of degree~$0$ between them); see
Sections~\ref{dg-categories-subsecn}
and~\ref{curved-dg-modules-subsecn}.
 There is an obvious exact, faithful forgetful functor
$\sZ^0(\biR^\cu\bmodl)\rarrow R^*\smodl$ assigning to a CDG\+module
$\biM^\cu=(M^*,d_M)$ its underlying graded module~$M^*$.
 We are interested in the left and right adjoint functors to
the forgetful functor.

 In the case when $\biR^\cu=\biR^\bu$ is a DG\+ring, the assertions
of the following proposition can be found in~\cite[proof of
Lemma~2.2]{Kel1}.

\begin{prop} \label{G-plus-minus-prop}
\textup{(a)} The forgetful functor\/ $\sZ^0(\biR^\cu\bmodl)\rarrow
R^*\smodl$ has an exact, faithful left adjoint functor
$G^+\:R^*\smodl\rarrow\sZ^0(\biR^\cu\bmodl)$ and an exact, faithful
right adjoint functor $G^-\:R^*\smodl\rarrow\sZ^0(\biR^\cu\bmodl)$. \par
\textup{(b)} For any graded left $R^*$\+module $M^*$, there are
natural short exact sequences of graded left $R^*$\+modules
\begin{align*}
 0\rarrow M^*&\rarrow  G^+(M^*)\rarrow M^*[-1]\rarrow0, \\
 0\rarrow M^*[1]&\rarrow G^-(M^*)\rarrow M^*\rarrow0.
\end{align*} \par
\textup{(c)} The functors $G^+$ and $G^-$ only differ from each
other by a shift: for any graded left $R^*$\+module $M^*$,
there is a natural closed isomorphism of degree\/~$0$ between
the two CDG\+modules
$$
 G^-(M^*)\simeq G^+(M^*)[1]
$$
transforming the two short exact sequences in~(b) into each other.
\end{prop}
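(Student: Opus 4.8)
The plan is to use the explicit description of the two adjoint functors coming out of the proof of part~(a). Writing $S=R^*[\delta]$ for the graded ring of Section~\ref{cdg-revisited-subsecn} (with $\delta$ of degree~$1$), a left CDG\+module over $\biR^\cu$ is the same thing as a graded left $S$\+module, with $\delta$ acting as the differential~$d_M$; thus $\sZ^0(\biR^\cu\bmodl)\simeq S\smodl$ and the forgetful functor is restriction of scalars along the subring inclusion $R^*\rarrow S$. Its left and right adjoints are therefore the induction and coinduction functors $G^+(M^*)=S\ot_{R^*}M^*$ and $G^-(M^*)=\Hom_{R^*}(S,M^*)$, where in the latter $S$ is taken with its left $R^*$\+module structure for forming $\Hom$, and the left $S$\+module structure on $G^-(M^*)$ comes from right multiplication in~$S$. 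The decisive structural feature is that $S$ is free of rank~$2$ as a graded left (and right) $R^*$\+module, on the basis $\{1,\delta\}$ of degrees $0$ and~$1$.

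The heart of the matter is the case $M^*=R^*$, a self-duality of $S$ over $R^*$. I would construct a closed isomorphism of degree~$0$ of CDG\+modules
$$
 \Phi\:G^+(R^*)[1]=S[1]\overset{\simeq}{\rarrow}\Hom_{R^*}(S,R^*)=G^-(R^*)
$$
out of the surjective $R^*$\+$R^*$\+bimodule map $\rho\:S\twoheadrightarrow R^*$ of degree~$-1$ from Section~\ref{cdg-revisited-subsecn}, by the rule $s\longmapsto(x\mapsto\rho(xs))$, inserting the Koszul sign prescribed by the grading datum. Since $\rho(1)=0$, \ $\rho(\delta)=1$, and $\rho(\delta^2)=\rho(h)=0$, this map carries the basis $\{1,\delta\}$ to the dual basis $\{\delta^\vee,1^\vee\}$ of $\Hom_{R^*}(S,R^*)$ and is hence bijective; here $\rho$ plays the role of a Frobenius trace for the extension $R^*\rarrow S$. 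Left $S$\+linearity is immediate from associativity ($\Phi(s's)(x)=\rho(xs's)$), and right $R^*$\+linearity from that of~$\rho$.

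To pass to an arbitrary $M^*$, I would apply $-\ot_{R^*}M^*$ to $\Phi$ and combine it with the tensor-hom isomorphism available because ${}_{R^*}S$ is finitely generated projective: there is a natural left $S$\+linear isomorphism $\Hom_{R^*}(S,R^*)\ot_{R^*}M^*\overset{\simeq}{\rarrow}\Hom_{R^*}(S,M^*)$, \ $\phi\ot m\mapsto(x\mapsto\phi(x)m)$. The composite
$$
 G^+(M^*)[1]=(S\ot_{R^*}M^*)[1]\simeq(S[1])\ot_{R^*}M^*\xrightarrow{\ \Phi\ot\id\ }\Hom_{R^*}(S,R^*)\ot_{R^*}M^*\overset{\simeq}{\rarrow}\Hom_{R^*}(S,M^*)=G^-(M^*)
$$
is then a closed isomorphism of degree~$0$ of CDG\+modules, natural in~$M^*$, yielding $G^-(M^*)\simeq G^+(M^*)[1]$.

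Finally I would check compatibility with the short exact sequences of part~(b). Tracking the basis through the composite, an element $1\ot m$ goes to the functional supported on~$\delta$ and $\delta\ot m$ to the functional supported on~$1$; hence the canonical copy of $M^*$ inside $G^+(M^*)$ (the image of $1\ot-$), after the shift by~$[1]$, maps isomorphically onto the subobject $M^*[1]\subset G^-(M^*)$, while the induced map on the quotients $M^*\rarrow M^*$ is the identity. A short five lemma then reconfirms that the comparison map is an isomorphism and shows that it carries the shifted sequence $0\rarrow M^*[1]\rarrow G^+(M^*)[1]\rarrow M^*\rarrow0$ onto $0\rarrow M^*[1]\rarrow G^-(M^*)\rarrow M^*\rarrow0$, as required. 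The one genuinely delicate point, and the step where I expect to spend the most care, is the Koszul sign bookkeeping: the shift $[1]$ reverses the sign of the CDG\+differential (the $\delta$\+action), so the signs in the definitions of $\Phi$ and of the tensor-hom map must be chosen precisely so that the composite anticommutes with the naive $\delta$\+actions — and is therefore an honest closed morphism — and so that the two identifications with $M^*[1]$ and $M^*$ hold on the nose rather than merely up to sign.
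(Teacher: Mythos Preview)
Your proposal is correct and follows essentially the same route as the paper: identifying $\sZ^0(\biR^\cu\bmodl)$ with $R^*[\delta]\smodl$, realizing $G^\pm$ as induction and coinduction along $R^*\rarrow R^*[\delta]$, and obtaining the isomorphism $G^-(M^*)\simeq G^+(M^*)[1]$ from the perfect pairing $R^*[\delta]\ot_{R^*}R^*[\delta]\rarrow R^*[-1]$ induced by~$\rho$ together with the tensor--hom isomorphism valid because $R^*[\delta]$ is finitely generated free over~$R^*$. The paper phrases the pairing as the composition of multiplication with~$\rho$ (your formula $s\mapsto(x\mapsto\rho(xs))$ is exactly this), and your emphasis on the Koszul sign bookkeeping is well placed.
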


\begin{proof}
 In order to describe the two adjoint functors, observe that
the abelian category $\sZ^0(\biR^\cu\bmodl)$ is naturally equivalent
(in fact, isomorphic) to the abelian category of graded modules
over the graded ring~$R^*[\delta]$, that is
$\sZ^0(\biR^\cu\bmodl)\simeq R^*[\delta]\smodl$.
 The equivalence assigns to a CDG\+module $(M^*,d_M)$ over
$(R^*,d,h)$ the graded module $M^*$ over $R^*[\delta]$ with
the action of $R^*$ in $M^*$ unchanged and the action of~$\delta$
in $M^*$ given by the obvious rule $\delta x=d_M(x)$ for
all $x\in M^*$.
 Taking this equivalence of abelian categories into account,
the forgetful functor $\sZ^0(\biR^\cu\bmodl)\rarrow R^*\smodl$ is
interpreted as the functor of restriction of scalars
$R^*[\delta]\smodl\rarrow R^*\smodl$ corresponding to
the inclusion morphism of graded rings $R^*\rarrow R^*[\delta]$.

 Now it is clear how to construct the adjoint functors.
 The functor $G^+\:R^*\smodl\rarrow\sZ^0(\biR^\cu\bmodl)$ left adjoint
to the forgetful functor takes a graded $R^*$\+module $M^*$ to
the CDG\+module corresponding to the graded $R^*[\delta]$\+module
$R^*[\delta]\ot_{R^*}M^*$.
 The functor $G^-\:R^*\smodl\rarrow\sZ^0(\biR^\cu\bmodl)$ right adjoint
to the forgetful functor takes a graded $R^*$\+module $M^*$ to
the CDG\+module corresponding to the graded $R^*[\delta]$\+module
$\Hom_{R^*}^*(R^*[\delta],M^*)$.
 A more explicit description of the functors $G^+$ and $G^-$ can be
found in~\cite[proof of Theorem~3.6]{Pkoszul}.

 There is a natural short exact sequence of graded
$R^*$\+$R^*$\+bimodules
\begin{equation} \label{Rdelta-sequence}
 0\lrarrow R^*\lrarrow R^*[\delta]\overset\rho\lrarrow R^*[-1]
 \lrarrow0.
\end{equation}
 All the terms of~\eqref{Rdelta-sequence} are finitely generated
projective (in fact, free) both as graded left $R^*$\+modules
and as graded right $R^*$\+modules.
 All the assertions of~(a\+-b) follow from that.
 In particular, the short exact sequences in~(b) are induced
by~\eqref{Rdelta-sequence}.
 The functors $G^+$ and $G^-$ are faithful, because any graded
left $R^*$\+module $M$ is naturally a submodule in $G^+(M)$ and
a quotient module of $G^-(M)$.

 The natural isomorphism in~(c) is constructed as the composition
$$
 \Hom_{R^*}(R^*[\delta],M^*)\simeq\Hom_{R^*}^*(R^*[\delta],R^*)
 \ot_{R^*}M^*\simeq R^*[\delta][1]\ot_{R^*}M^*,
$$
where the leftmost isomorphism holds because $R^*[\delta]$ is
a finitely generated projective graded left $R^*$\+module, while
the rightmost isomorphism is induced by the following natural
isomorphism of graded $R^*$\+$R^*$\+modules (in fact, of graded
$R^*[\delta]$\+$R^*$\+bimodules)
$\Hom_{R^*}^*(R^*[\delta],R^*)\simeq R^*[\delta][1]$.
 Consider the composition
\begin{equation} \label{Rdelta-pairing}
 R^*[\delta]\ot_{R^*}R^*[\delta]\lrarrow
 R^*[\delta]\overset\rho\lrarrow R^*[-1]
\end{equation}
of the multiplication map $R^*[\delta]\ot_{R^*}R^*[\delta]\rarrow
R^*[\delta]$ and the surjective map $\rho\:R^*[\delta]\rarrow R^*[-1]$.
 One easily checks that the $R^*$\+$R^*$\+bimodule map
$R^*[\delta]\ot_{R^*}R^*[\delta]\rarrow R^*[-1]$
\,\eqref{Rdelta-pairing} is a ``perfect pairing'', in the sense that
both the induced $R^*$\+$R^*$\+bimodule morphisms $R^*[\delta]\rarrow
\Hom_{R^*}^*(R^*[\delta],R^*)[-1]$ and $R^*[\delta]\rarrow
\Hom_{R^*{}^\rop}^*(R^*[\delta],R^*)[-1]$ are isomorphisms.
 Here $\Hom_{R^*}^*({-},{-})$ is the graded abelian group of morphisms
of graded left $R^*$\+modules, while $\Hom_{R^*{}^\rop}^*$ denotes
the graded abelian group of morphisms of graded right $R^*$\+modules.
\end{proof}

 Our next aim is to illustrate the fact, mentioned in
Section~\ref{curved-dg-modules-subsecn}, that not every graded
$R^*$\+module admits a structure of CDG\+module over $(R^*,d,h)$.
 For this purpose, we will present and discuss two classes
of examples.

\begin{exs} \label{nonexistence-of-cdg-structure-examples}
 Take $\Gamma=\boZ$, and let $R^*=k[x,x^{-1}]$ be the graded ring of
Laurent polynomials in one variable~$x$ of cohomological degree
$\deg x=2$ over a field~$k$.
 Endow $R^*$ with the zero differential $d=0$ and the curvature
element $h=x$.
 Then $\biR^\cu=(R^*,d,h)$ is a CDG\+ring.
 The free $R$\+module with one generator $M^*=R^*$ does \emph{not}
admit a CDG\+module structure over the CDG\+ring~$\biR^\cu$.
 Indeed, any homogeneous differential $d_M\:M^*\rarrow M^*$ of
degree~$1$ vanishes for the simple reason of $M^*$ being concentrated
in the even cohomological degrees.
 Hence the squared differential $d_M^2\:M^*\rarrow M^*$ also vanishes.
 This contradicts the equation~(iii) in the definition of a CDG\+module
in Section~\ref{curved-dg-modules-subsecn}, as the action of
the curvature element $h=x$ in $M^*$ is nonzero.
 This example admits some obvious variations: one can take
$R^*=k[x]$ instead of $k[x,x^{-1}]$, and $M^*=k[x]$ or $k[x,x^{-1}]$
or $k[x,x^{-1}]/k[x]$.

 Notice that $k[x,x^{-1}]$ is both a projective and an injective
module over the graded ring $k[x,x^{-1}]$.
 Over the graded ring $k[x]$, the graded module $k[x]$ is projective,
while the graded module $k[x,x^{-1}]/k[x]$ is injective and
the graded module $k[x,x^{-1}]$ is (flat and) injective.
 Thus in all these examples the graded $R^*$\+module $M^*$ is either
projective or injective.
 Hence both the short exact sequences of $R^*$\+modules in
Proposition~\ref{G-plus-minus-prop}(b) split for the graded
$R^*$\+module $M^*$, and $M^*$ is a direct summand of the graded
$R^*$\+modules $G^+(M^*)$ and $G^-(M^*)$ which come endowed with
structures of CDG\+modules over $(R^*,d,h)$.
 Thus the class of all graded $R^*$\+modules admitting a CDG\+module
structure over the given CDG\+ring $(R^*,d,h)$ is \emph{not} closed
under direct summands, generally speaking.
\end{exs}

 We will see below in Example~\ref{CDG-ring-bec-example} that for
any CDG\+ring $\biR^\cu$ there is an acyclic DG\+ring $\hathatRbu$
such that the DG\+category of (left) CDG\+modules over $\biR^\cu$
is equivalent to the DG\+category of DG\+modules over $\hathatRbu$,
that is $\biR^\cu\bmodl\simeq\hathatRbu\bmodl$.
 Consequently, Examples~\ref{nonexistence-of-cdg-structure-examples}
imply that, even for an acyclic DG\+ring $\biS^\bu$, the class of
all graded $S^*$\+modues admitting a DG\+module structure over
$\biS^\bu$ need not be closed under direct summands.

\begin{exs} \label{nonexistence-of-derivation-examples}
 Take again $\Gamma=\boZ$, and let $R^*=k[\epsilon,x]$ be the free
graded commutative ring generated by two elements $\epsilon\in R^{-1}$
and $x\in R^0$.
 So the relations $x\epsilon=\epsilon x$ and $\epsilon^2=0$ are
imposed in~$R^*$.
 Let $d\:R^*\rarrow R^*$ be the odd derivation of degree~$1$ defined
by the rules $d(\epsilon)=x$ and $d(x)=0$.
 Then we have $d^2=0$; so $\biR^\bu=(R^*,d)$ is a DG\+ring.
 One can consider it as a CDG\+ring with $h=0$, if one wishes.

 Let $M^*$ be a graded $R^*$\+module annihilated by~$\epsilon$,
that is $\epsilon M^*=0$.
 If $M^*$ is also annihilated by~$x$, that is $xM^*=0$, then one can
introduce the differential $d_M=0$ on $M^*$, making $(M^*,d_M)$
a DG\+module over $(R^*,d)$.
 Let us show that if $M^*$ is annihilated by~$\epsilon$ but not by~$x$
(e.~g., $M^*=R^*/R^*\epsilon$ or $M^*=R^*/(R^*\epsilon+R^*x^2)$),
then the graded $R^*$\+module $M^*$ is \emph{not} a direct summand
of any graded $R^*$\+module $N^*$ admitting a DG\+module structure
$(N^*,d_N)$ over $(R^*,d)$.

 Indeed, let $(N^*,d_N)$ be a DG\+module over $(R,d)$ such that
$N^*\simeq M^*\oplus L^*$ as a graded $R^*$\+module.
 Let $m\in M^*$ be a homogeneous element for which $xm\ne0$.
 Then $d_N(m,0)$ is some homogeneous element in~$N^*$;
denote it by $d_N(m,0)=(m',l')$, where $m'\in M^*$ and $l'\in L^*$.
 Now we have (since $\epsilon M^*=0$ by assumption):
\begin{align*}
 0 &=d_N(0,0)=d_N(\epsilon m,0)=d_N(\epsilon(m,0))=
 d(\epsilon)(m,0)-\epsilon d_N(m,0) \\ &=x(m,0)-\epsilon(m',l')=
 (xm,0)-(\epsilon m',\epsilon l')=(xm,0)-(0,\epsilon l')=
 (xm,-\epsilon l')\ne0.
\end{align*}
 The contradiction proves that no such DG\+module $(N^*,d_N)$ exists.
 
 Notice that $(R^*,d)$, as any DG\+ring, is a DG\+module over itself.
 Futhermore, the injective graded $R^*$\+module $\Hom_k(R^*,k)$,
endowed with the differential induced by~$d$, becomes a DG\+module
over $(R^*,d)$.
 Now these examples show that the class of all graded $R^*$\+modules
which can be obtained as direct summands of graded $R^*$\+modules
admitting a DG\+module structure over $(R^*,d)$ is \emph{not} closed
either under extensions, or under cokernels, or under kernels
in $R^*\smodl$.

 Furthermore, let us drop the equation $\epsilon^2=0$ from the list
of defining relations of the graded $k$\+algebra $R^*$ (keeping only
the relation $x\epsilon=\epsilon x$).
 Then the graded ring $R^*$ is no longer graded commutative, but
otherwise all the claims and arguments above in this example
remain applicable with this modification.
 In this new context, we have short exact sequences of graded left
$R^*$\+modules $0\rarrow R^*[1]\overset{*\epsilon}\rarrow R^*\rarrow
R^*/R^*\epsilon\rarrow0$ and $0\rarrow\Hom_k(R^*/\epsilon R^*,k)
\rarrow\Hom_k(R^*,k)\overset{*\epsilon}\rarrow\Hom_k(R^*,k)\rarrow0$.
 It follows that the class of all graded $R^*$\+modules admitting
a DG\+module structure over $(R^*,d)$ is not even closed under
the cokernels of monomorphisms or under the kernels of epimorphisms,
in this case.

 Finally, one can observe that the category $(\biR^\bu\bmodl)^0$
of all graded $R^*$\+modules admitting a DG\+module structure over
$(R^*,d)$ is not abelian, and in fact, the monomorphism
$R^*[1]\overset{*\epsilon}\rarrow R^*$ in the latter example has
\emph{no} cokernel in $(\biR^\bu\bmodl)^0$.
 Indeed, there are clearly enough injective objects in $R^*\smodl$
belonging to $(\biR^\bu\bmodl)^0$; in particular, one can find
an injective graded $R^*$\+module $J^*$ admitting a DG\+module
structure $(J^*,d_J^*)$ over $(R^*,d)$ such that $R^*/R^*\epsilon$
is a graded $R^*$\+submodule in~$J^*$.
 So there is a right exact sequence $0\rarrow R^*[1]
\overset{*\epsilon}\rarrow R^*\rarrow J^*$ in $R^*\smodl$.

 For the sake of contradiction, assume that the morphism
$R^*[1]\overset{*\epsilon}\rarrow R^*$ has a cokernel $L^*$ in
$(\biR^\bu\bmodl)^0$.
 Then the monomorphism $R^*/R^*\epsilon\rarrow J^*$ factorizes
through the object $L^*$ in $R^*\smodl$, and it follows that
the action of~$x$ in $L^*$ is nonzero.
 Consequently, the action of~$\epsilon$ in $L^*$ has to be nonzero
as well.
 Now the map $f\:L^*\rarrow L^*[-1]$ defined by the rule
$f(l)=(-1)^{|l|}\epsilon l$ is a morphism of graded $R^*$\+modules,
and this morphism is nonzero.
 But the composition $R^*\rarrow L^*\overset f\rarrow L^*[-1]$ vanishes, 
so the morphism $R^*\rarrow L^*$ is \emph{not} an epimorphism and
consequently not a cokernel in $(\biR^\bu\bmodl)^0$.
 Similarly one can show that the epimorphism $\Hom_k(R^*,k)
\overset{*\epsilon}\rarrow\Hom_k(R^*,k)$ has no kernel
in $(\biR^\bu\bmodl)^0$.

 For another somewhat similar (but simpler) example, see
Example~\ref{L-tilde-example} below.
\end{exs}

\subsection{Main Construction} \label{main-construction-subsecn}
 The aim of this section is to spell out the construction and results
of~\cite[Section~3.2]{Pkoszul} with sufficient amount of detail.
 This construction plays a fundamental role in the theory developed
in this paper.

 Let $\bA$ be a DG\+category.
 The DG\+category $\bA^\bec$ is constructed as follows.
 The objects of $\bA^\bec$ are pairs $X^\bec=(X,\sigma_X)$, where $X$
is an object of $\bA$ and $\sigma_X\in \Hom_\bA^{-1}(X,X)$ is a cochain
in the complex of endomorphisms such that $d(\sigma_X)=\id_X$
and $\sigma_X^2=0$.
 So $\sigma_X$~is a chosen contracting homotopy for $X$ with
zero square.

 Given two objects $X^\bec=(X,\sigma_X)$ and $Y^\bec=(Y,\sigma_Y)
\in\bA^\bec$, the complex of morphisms
$\Hom_{\bA^\bec}^\bu(X^\bec,Y^\bec)$ is constructed as follows.
 For every $i\in\Gamma$, the abelian group $\Hom^i_{\bA^\bec}
(X^\bec,Y^\bec)$ is the group of all closed morphisms
$X\rarrow Y$ of degree~$-i$ in $\bA$, that is
$\Hom^i_{\bA^\bec}(X^\bec,Y^\bec)=\Hom^{-i}_{\cZ(\bA)}(X,Y)$.
 The differential $d^\bec\:\Hom^i_{\bA^\bec}(X^\bec,Y^\bec)\rarrow
\Hom^{i+1}_{\bA^\bec}(X^\bec,Y^\bec)$ is defined as the graded
commutator with the contracting homotopies~$\sigma$; explicitly,
it is given by the formula $d^\bec(f)=\sigma_Y f-(-1)^i f\sigma_X$.
 One computes that
\begin{multline*}
 d(\sigma_Yf-(-1)^if\sigma_X)=
 d(\sigma_Y)f-\sigma_Yd(f) \\
 -(-1)^id(f)\sigma_X-fd(\sigma_X)
 =\id_Yf-f\id_X=0,
\end{multline*}
so $d^\bec(f)\in\Hom^{i+1}_{\bA^\bec}(X^\bec,Y^\bec)$ whenever
$f\in\Hom^i_{\bA^\bec} (X^\bec,Y^\bec)$ (as desired).
 Furthermore, $(d^\bec)^2=0$, since $\sigma_X^2=0=\sigma_Y^2$.
 The composition of morphisms in $\bA^\bec$ is induced by
the composition of morphisms in $\bA$, and the identity morphisms
in $\bA^\bec$ are the identity morphisms in~$\bA$.

 All twists always exist in the DG\+category~$\bA^\bec$.
 Indeed, let $a\in\Hom_{\bA^\bec}^1(X^\bec,X^\bec)$ be
a Maurer--Cartan cochain in the complex of endomorphisms of
an object $X^\bec=(X,\sigma_X)\in\bA^\bec$; this means that
$a\in\Hom_\bA^{-1}(X,X)$, \ $d(a)=0$, and
$d^\bec(a)+a^2=\sigma_Xa+a\sigma_X+a^2=0$ in $\Hom_\bA^{-2}(X,X)$.
 Then the object $X^\bec(a)\in\bA^\bec$ can be constructed as
$X^\bec(a)=(X,\sigma_X+a)$.

 The DG\+category $\bA^\bec$ has shifts whenever a DG\+category
$\bA$~has.
 The DG\+category $\bA^\bec$ is additive whenever a DG\+category
$\bA$~is.
 Assuming $\bA$ is additive, the DG\+cat\-e\-gory $\bA^\bec$ is
idempotent-complete whenever the DG\+category $\bA$~is.
 The DG\+category $\bA^\bec$ has infinite coproducts or products
whenever a DG\+category $\bA$~has.
 All these operations in the DG\+category $\bA^\bec$ are induced
by the respective operations in~$\bA$ (the only caveat is that
the shift~$[1]$ on $\bA^\bec$ is induced by the inverse
shift~$[-1]$ on~$\bA$).

\begin{lem} \label{G-functors-in-DG-categories}
 Let\/ $\bA$ be a DG\+category with shifts and cones.
 Then there is a faithful additive functor\/ $\Phi=\Phi_\bA\:\sZ^0(\bA)
\rarrow\sZ^0(\bA^\bec)$ given by the rule $A\longmapsto\cone(\id_A[-1])$
for all objects $A\in\bA$.
 The functor\/ $\Phi$ has faithful adjoint functors on both sides, with
the left adjoint\/ $\Psi^+=\Psi^+_\bA\:\sZ^0(\bA^\bec)\rarrow\sZ^0(\bA)$
and the right adjoint\/ $\Psi^-=\Psi^-_\bA\:\sZ^0(\bA^\bec)\rarrow
\sZ^0(\bA)$ given by\/ $\Psi^+(X,\sigma_X)=X$ and\/
$\Psi^-(X,\sigma_X)=X[1]$ (so the functors\/ $\Psi^+$ and\/ $\Psi^-$
only differ from each other by a shift).
\end{lem}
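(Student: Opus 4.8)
The plan is to construct $\Phi(A)=\cone(\id_{A[-1]})$ together with its canonical square-zero contracting homotopy, check that $A\longmapsto\Phi(A)$ is a faithful additive functor landing in $\sZ^0(\bA^\bec)$, and then prove each adjunction by writing down an explicit natural bijection of morphism groups. The mechanism that makes both bijections work is that, for a closed degree-$0$ morphism in $\bA$, the extra condition of commuting with the chosen contracting homotopies (i.e.\ being a morphism in $\sZ^0(\bA^\bec)$) forces one of its two matrix components to be determined by the other through the homotopy, the relation $\sigma^2=0$ guaranteeing consistency.

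Concretely, since $\bA$ has shifts and cones, I would realize $C:=\Phi(A)=\cone(\id_{A[-1]})$ as a twist $(A[-1]\oplus A)(a)$ of the direct sum $D=A[-1]\oplus A$. Writing $u\in\Hom^1_\bA(A,A[-1])$ and $w\in\Hom^{-1}_\bA(A[-1],A)$ for the mutually inverse closed shift maps ($uw=\id_{A[-1]}$, $wu=\id_A$) and $\iota_j,\pi_j$ for the inclusions and projections of $D$, the Maurer--Cartan cochain is $a=\iota_1 u\pi_2$, so that $d(a)=0=a^2$. The contracting homotopy $\sigma$ on $C$ is transported from $\sigma_D=\iota_2 w\pi_1$; here $\sigma_D^2=0$ because $\pi_1\iota_2=0$, and the graded commutator computes to $[a,\sigma_D]=\iota_1\pi_1+\iota_2\pi_2=\id_D$, whence $d(\sigma)=\id_C$ and $\sigma^2=0$, so $(C,\sigma)$ is indeed an object of $\bA^\bec$. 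In this model $\Phi(f)=\operatorname{diag}(f[-1],f)$ for a closed degree-$0$ map $f\:A\rarrow A'$; this is manifestly additive, commutes with the homotopies by naturality of $u$ and $w$, and satisfies $\Phi(f)=0\Rightarrow f=0$, giving faithfulness. The cone also provides natural closed degree-$0$ maps $\iota_A\:A[-1]\rarrow C$ and $\pi_A\:C\rarrow A$, which will furnish the unit and counit.

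For the left adjoint I would compute $\Hom_{\sZ^0(\bA^\bec)}(X^\bec,\Phi(A))$ directly. Such an element is a closed degree-$0$ map $g\:X\rarrow C$ in $\bA$ with $\sigma g=g\sigma_X$; setting $g_1=\pi_1\gamma g$ and $g_2=\pi_2\gamma g$, closedness unwinds (using $d(\phi)=\phi a$ for the twist) to $d(g_2)=0$ and $d(g_1)=-u g_2$, while the homotopy-commutation condition unwinds to $g_1\sigma_X=0$ and $w g_1=g_2\sigma_X$. Since $w$ is invertible the last relation forces $g_1=u\,g_2\sigma_X$, and then $g_1\sigma_X=u\,g_2\sigma_X^2=0$ holds automatically while the differential condition on $g_1$ follows from $d(\sigma_X)=\id_X$; hence $g\longmapsto g_2=\pi_A g$ is a bijection onto $\Hom_{\sZ^0(\bA)}(X,A)=\Hom_{\sZ^0(\bA)}(\Psi^+ X^\bec,A)$, natural in both arguments, with counit $\pi_A$. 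The right adjoint is handled by the dual computation for maps $k\:C\rarrow X$ out of $C$: closedness gives $d(k_1)=0$ (with $k_j=k\phi\iota_j$) together with a formula for $d(k_2)$, the commutation condition gives $\sigma_X k_2=0$ and $k_2 w=\sigma_X k_1$, and invertibility of $w$ forces $k_2=\sigma_X k_1 u$; thus $k\longmapsto k_1$ identifies $\Hom_{\sZ^0(\bA^\bec)}(\Phi(A),X^\bec)$ with $\Hom_{\sZ^0(\bA)}(A[-1],X)\cong\Hom_{\sZ^0(\bA)}(A,X[1])=\Hom_{\sZ^0(\bA)}(A,\Psi^- X^\bec)$, with unit $\iota_A$ (suitably shifted). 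Faithfulness of $\Psi^+$ is immediate, since on morphisms it is the inclusion of $\{f\colon f\sigma_X=\sigma_Y f\}$ into $\Hom_{\sZ^0(\bA)}(X,Y)$, and faithfulness of $\Psi^-=[1]\circ\Psi^+$ follows because $[1]$ is an autoequivalence.

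The main obstacle is the bookkeeping in the second paragraph: correctly realizing the cone of the identity as a twist, verifying with the right signs that the transported $\sigma$ is a square-zero contracting homotopy, and unwinding the twist differential ($d(\phi)=\phi a$, $d(\gamma)=-a\gamma$) in the component computations, where the precise signs depend on the chosen cone and twist conventions. Once the model is fixed, the conceptual heart of both adjunctions is the same short observation---that membership in $\sZ^0(\bA^\bec)$ rigidifies the auxiliary component via the homotopy and that $\sigma^2=0$ makes the resulting assignment consistent---so the two bijections, and their naturality, then follow by symmetric and essentially routine calculation.
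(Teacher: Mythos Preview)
Your proposal is correct and follows essentially the same approach as the paper: realize $\cone(\id_{A[-1]})$ with its canonical square-zero contracting homotopy, then establish each adjunction by an explicit bijection in which one matrix component of a morphism in $\sZ^0(\bA^\bec)$ is forced by the other via the homotopy condition and $\sigma^2=0$. The only difference is presentational: you work in the explicit twist model $(A[-1]\oplus A)(a)$ with shift maps $u,w$, whereas the paper uses the abstract cone with structural morphisms $\iota,\pi,\iota',\pi'$ and simply writes down the mutually inverse formulas (e.g.\ $f\mapsto\iota' f+\iota f\sigma_X$ and $g\mapsto\pi g$) without unpacking the component analysis you spell out.
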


\begin{proof}
 Let us make our notation more explicit.
 Given an object $A\in\bA$, the object $E=\cone(\id_A[-1])$
is specified by the datum of closed morphisms
$\iota\in\Hom^1_\bA(A,E)$ and $\pi\in\Hom^0_\bA(E,A)$
and morphisms $\pi'\in\Hom^{-1}_\bA(E,A)$ and
$\iota'\in\Hom^0_\bA(A,E)$ satisfying the equations
\begin{gather*}
 \pi'\iota'=0=\pi\iota, \quad \pi'\iota=\id_A=\pi\iota', \quad
 \iota\pi'+\iota'\pi=\id_E, \\
 d(\pi')=\pi, \quad d(\iota')=\iota.
\end{gather*}

 The endomorphism $\sigma_E\in\Hom^{-1}_\bA(E,E)$ is given by
the formula $\sigma_E=\iota'\pi'$.
 Then $\sigma_E^2=0$ and $d(\sigma_E)=d(\iota')\pi'+\iota'd(\pi')
=\iota\pi'+\iota'\pi=\id_E$; so $E^\bec=(E,\sigma_E)$ is an object
of~$\bA^\bec$.
 We set $\Phi(A)=E^\bec$.
 The action of the functor $\Phi$ on morphisms in $\sZ^0(\bA)$
(i.~e., on the closed morphisms of degree~$0$ in~$\bA$) is defined
in the obvious way.

 The construction of the functors $\Psi^+$ and $\Psi^-$ needs
no additional explanations.
 It is also clear that the functors $\Psi^+$, $\Psi^-$, and $\Phi$
are faithful.

 Let us explain the adjunctions.
 Given two objects $X^\bec=(X,\sigma_X)\in\bA^\bec$ and $A\in\bA$,
the natural isomorphism between the abelian groups
$\Hom_{\sZ^0(\bA)}(X,A)$ and $\Hom_{\sZ^0(\bA^\bec)}(X^\bec,E^\bec)$
assigns to a morphism $f\:X\rarrow A$ the morphism $g\:X\rarrow E$
given by the formula $g=\iota'f+\iota f\sigma_X$.
 Conversely, to a morphism $g\:X\rarrow E$ the morphism
$f=\pi g$ is assigned.
 The natural isomorphism between the abelian groups
$\Hom_{\sZ^0(\bA)}(A,X[1])$ and $\Hom_{\sZ^0(\bA^\bec)}(E^\bec,X^\bec)$
assigns to a closed morphism $f\in\Hom_\bA^1(A,X)$ the closed
morphism $g=f\pi'+\sigma_Xf\pi\in\Hom_{\bA^\bec}^0(E^\bec,X^\bec)$.
 Conversely, to a morphism $g\:E\rarrow X$ the morphism
$f=g\iota$ is assigned.
\end{proof}

 Now we consider what happens when one applies the construction
$\bA\longmapsto\bA^\bec$ twice.

\begin{prop} \label{iterated-bec-construction}
 For any DG\+category\/ $\bA$ with shifts and cones, there is
a fully faithful DG\+functor $\bec\bec\:\bA\rarrow\bA^{\bec\bec}$
given by the rule $A\longmapsto\cone(\id_A[-1])$.
 The DG\+functor $\bec\bec\:\bA\rarrow\bA^{\bec\bec}$ is an equivalence
of DG\+categories whenever $\bA$ is an idempotent-complete additive
DG\+category with twists.
\end{prop}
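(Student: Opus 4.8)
The plan is to make $\bA^{\bec\bec}=(\bA^\bec)^\bec$ fully explicit, identify the object and morphism maps, prove full-and-faithfulness by a direct isomorphism of $\Hom$-complexes, and then obtain essential surjectivity by a twist-and-split construction. First I would unwind the iterated $\bec$-construction. An object of $\bA^{\bec\bec}$ is a triple $(X,\sigma,\tau)$ in which $\sigma\in\Hom^{-1}_\bA(X,X)$ makes $(X,\sigma)$ an object of $\bA^\bec$ (so $d(\sigma)=\id_X$, $\sigma^2=0$), while $\tau\in\Hom^1_\bA(X,X)$ is the datum making $(X,\sigma)$ into an object of $(\bA^\bec)^\bec$; unravelling the defining equations of the $\bec$-construction applied to $\bA^\bec$ shows they read $d(\tau)=0$, $\tau^2=0$, and $\sigma\tau+\tau\sigma=\id_X$. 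Likewise a degree-$j$ morphism $(X,\sigma_X,\tau_X)\rarrow(Y,\sigma_Y,\tau_Y)$ is an $f\in\Hom^j_\bA(X,Y)$ with $d(f)=0$ and $\sigma_Yf=(-1)^jf\sigma_X$, the differential being $d^{\bec\bec}(f)=\tau_Yf-(-1)^jf\tau_X$. Writing $\iota,\iota',\pi,\pi'$ for the structure maps of $E=\cone(\id_A[-1])$ from Lemma~\ref{G-functors-in-DG-categories}, I would then check that $\bec\bec(A)=(E,\ \iota'\pi',\ \iota\pi)$ is a legitimate object: $\tau_E=\iota\pi$ is closed with $\tau_E^2=0$ because $\pi\iota=0$, and $\sigma_E\tau_E+\tau_E\sigma_E=\iota'\pi+\iota\pi'=\id_E$ by the cone identities, while $\sigma_E=\iota'\pi'$ is the square-zero contraction already used in the Lemma.

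For full-and-faithfulness I would exhibit the $\Hom$-complex isomorphism by hand. The candidate inverse on morphisms is $\Xi(f)=\pi_Bf\iota'_A$. Multiplying the constraint $\sigma_Ff=(-1)^jf\sigma_E$ on the left by $\pi_B$ gives $\pi'_Bf=(-1)^j(\Xi f)\pi'_A$, which forces $\pi'_Bf\iota'_A=0$ and $\pi'_Bf\iota_A=(-1)^j\Xi f$; imposing $d(f)=0$ then pins down the remaining component as $\pi_Bf\iota_A=(-1)^jd(\Xi f)$. Hence $f$ is reconstructed from $\Xi(f)$ through the four composites $\pi_Bf\iota'_A,\pi'_Bf\iota'_A,\pi'_Bf\iota_A,\pi_Bf\iota_A$, yielding mutually inverse degreewise bijections, the inverse of $\Xi$ being $a\longmapsto\iota'_Ba\pi_A+(-1)^{|a|}\iota_Ba\pi'_A+(-1)^{|a|}\iota'_B\,d(a)\,\pi'_A$. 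It remains to see that these maps intertwine $d$ and $d^{\bec\bec}$; here the degree reversal built into the $\bec$-construction introduces a sign, so the clean chain isomorphism (and hence the genuine DG-functor on morphisms) is obtained only after a standard degreewise sign normalization.

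For the equivalence it suffices, given full-and-faithfulness, to prove essential surjectivity. Given $(X,\sigma,\tau)$, I note that $\tau$ is a Maurer--Cartan cochain of $X$ in $\bA$ (indeed $d(\tau)+\tau^2=0$), so under the hypotheses the twist $X''=X(-\tau)$ exists; let $f\:X\rarrow X''$ and $g\:X''\rarrow X$ be its structure maps, with $d(f)=-f\tau$ and $d(g)=\tau g$. The element $\sigma\tau\in\Hom^0_\bA(X,X)$ is a homotopy idempotent (it squares to itself by $\sigma\tau+\tau\sigma=\id_X$ and $\sigma^2=\tau^2=0$), and I would transport it to $e=f(\sigma\tau)g\in\Hom^0_\bA(X'',X'')$. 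The sign in the twist is chosen precisely so that $d(e)=0$: the surviving contributions $-f\tau g$ (from $d(f)$) and $+f\tau g$ (from $d(\sigma\tau)=\tau$) cancel, while the term through $d(g)$ dies since $(\sigma\tau)\tau=0$. Thus $e$ is an honest closed idempotent of degree~$0$, and idempotent-completeness of $\sZ^0(\bA)$ splits it, producing $A\in\bA$ with closed degree-$0$ maps $u\:A\rarrow X''$, $v\:X''\rarrow A$ satisfying $vu=\id_A$, $uv=e$. Setting $w=gu$ and $w'=vf$ one computes $w'w=\id_A$, $ww'=\sigma\tau$, $d(w)=\tau w$, $d(w')=-w'\tau$; assembling $w$, $w'$ with the cone maps of $\bec\bec(A)$ yields a closed degree-$0$ morphism $\bec\bec(A)\rarrow(X,\sigma,\tau)$, which I would verify is an isomorphism in $\bA^{\bec\bec}$ commuting with both $\sigma$ and $\tau$.

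The hard part will be this last verification---that the reconstructed comparison map is invertible, equivalently that $X$ genuinely is $\cone(\id_A[-1])$ with the prescribed $\sigma$ and $\tau$. The conceptual crux is the passage from the homotopy idempotent $\sigma\tau$ on $X$ to the honest closed idempotent $e$ on the twist $X(-\tau)$: this is exactly where the two hypotheses are consumed, the existence of twists to form $X(-\tau)$ and idempotent-completeness to split $e$. Without them $(X,\sigma,\tau)$ need only be a direct summand of a twist of an object in the image of $\bec\bec$, which is the general phenomenon described in the introduction. The sign bookkeeping in full-and-faithfulness flagged above is a secondary, purely routine nuisance.
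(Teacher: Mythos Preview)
Your proposal is correct and follows essentially the same route as the paper: the same explicit description of $\bA^{\bec\bec}$, the same assignment $\bec\bec(A)=(E,\iota'\pi',\iota\pi)$, the same four-component analysis of morphisms for full-and-faithfulness (the paper simply writes down the sign-normalized forward map $g=(-1)^i\iota'_Bf\pi_A+\iota_Bf\pi'_A+\iota'_B\,d(f)\,\pi'_A$ from the outset rather than deriving it from an inverse), and the same twist-then-split-the-idempotent argument for essential surjectivity (the paper phrases it as computing directly on $U=W(-\tau)$ where $d_U(\sigma)=d_U(\tau)=0$ makes $\sigma\tau$ a closed idempotent, which is equivalent to your transport $e=f(\sigma\tau)g$).
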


\begin{proof}
 Let us first describe the DG\+category $\bA^{\bec\bec}$ more
explicitly.
 The objects of $\bA^{\bec\bec}$ are triples $W^{\bec\bec}=
(W,\sigma,\tau)$, where $W$ is an object of $\bA$, while $\sigma
\in\Hom^{-1}_\bA(W,W)$ and $\tau\in\Hom^1_\bA(W,W)$ are two
endomorphisms of the degrees~$-1$ and~$1$, respectively, satisfying
the following equations:
$$
 \sigma^2=0=\tau^2, \quad \sigma\tau+\tau\sigma=\id_W, \quad
 d(\sigma)=\id_W, \quad d(\tau)=0.
$$
 Here $W^\bec=(W,\sigma)$ is an object of $\bA^\bec$, and
$\tau\in\Hom_{\bA^\bec}^{-1}(W^\bec,W^\bec)$ is an endomorphism
with $d^\bec(\tau)=\id_W$ (and $\tau^2=0$).

 Given two objects $U^{\bec\bec}=(U,\sigma_U,\tau_U)$ and
$V^{\bec\bec}=(V,\sigma_V,\tau_V)\in\bA^{\bec\bec}$, the complex
of morphisms $\Hom_{\bA^{\bec\bec}}^\bu(U^{\bec\bec},V^{\bec\bec})$
has the term $\Hom_{\bA^{\bec\bec}}^i(U^{\bec\bec},V^{\bec\bec})$
which is a subgroup in $\Hom_\bA^i(U,V)$ consisting of all
the morphisms $f\:U\rarrow V$ of degree~$i$ such that
$d(f)=0$ and $d^\bec(f)=\sigma_Vf-(-1)^if\sigma_U=0$.
 The differential $d^{\bec\bec}$ on
$\Hom_{\bA^{\bec\bec}}^\bu(U^{\bec\bec},V^{\bec\bec})$ is given
by the formula $d^{\bec\bec}(f)=\tau_Vf-(-1)^{|f|}f\tau_U$.

 Let us explain the basic intuition behind the assertions of
the proposition.
 Leaving the differential structures aside, one can say that
the objects of $\bA^{\bec\bec}$ are representations of the graded
ring of $2\times 2$ matrices (with the entries in~$\boZ$) in
the graded category~$\bA^*$.
 The endomorphisms $\sigma$ and~$\tau$, subject to the equations
above, define an action of $2\times 2$ matrices in the object~$W$.
 The basic Morita theory tells that the category of representations
of a ring of matrices in a given additive category $\sA$ is
equivalent to $\sA$, under mild assumptions on~$\sA$.
 This is a bare-bones version of our argument below.

 The DG\+functor~$\bec\bec$ assigns to an object $A\in\bA$ the object
$E=\cone(\id_A[-1])$ endowed with the endomorphisms $\sigma_E=
\iota'\pi'\in\Hom_\bA^{-1}(E,E)$ and $\tau_E=\iota\pi\in\Hom_\bA^1(E,E)$
(in the notation from the previous proof).
 So $\bec\bec(A)=(E,\sigma_E,\tau_E)$.

 Now suppose that we are given two objects $A$ and $B\in\bA$; put
$E=\cone(\id_A[-1])$ and $F=\cone(\id_B[-1])$, and let
$\iota_A$, $\pi_A$, $\iota'_A$, $\pi'_A$ and $\iota_B$, $\pi_B$,
$\iota'_B$, $\pi'_B$ denote the related morphisms.
 Then the functor~$\bec\bec$ assigns to a morphism
$f\in\Hom_\bA^i(A,B)$ the morphism $g=\bec\bec(f)\in\Hom_\bA^i(E,F)$
given by the formula
\begin{equation} \label{morphism-induced-by-nonclosed-morphism}
 g=(-1)^i\iota'_Bf\pi_A+\iota_Bf\pi'_A+\iota'_Bd(f)\pi'_A.
\end{equation}
 One readily checks that $d(g)=0$, \ $d^\bec(g)=\sigma_Fg-
(-1)^ig\sigma_E=0$, and $d^{\bec\bec}(g)=\tau_Fg-
(-1)^ig\tau_E=\bec\bec(df)$.
 The equation of compatibility with the compositions,
$\bec\bec(f_1\circ f_2)=\bec\bec(f_1)\circ\bec\bec(f_2)$ for any
two composable morphisms $f_1$ and $f_2$ (of some degrees~$i_1$
and~$i_2$) in~$\bA$ also needs to be checked.

 Clearly, the map of abelian groups $\bec\bec\:\Hom^i_\bA(A,B)\rarrow
\Hom^i_{\bA^{\bec\bec}}(E,F)$ is injective.
 To check surjectivity, consider a morphism $g\in\Hom^i_\bA(E,F)$.
 Then one has $g=\iota'_Bg_{00}\pi_A+\iota'_Bg_{01}\pi'_A+
\iota_Bg_{10}\pi_B+\iota_Bg_{11}\pi'_B$ for certain (uniquely
defined) morphisms $g_{st}\in\Hom_\bA^{i-s+t}(A,B)$, where
$s$, $t\in\{0,1\}$.
 Then (computing the $00$\+component of) the equation $d^\bec(g)=0$
implies $g_{10}=0$, and (computing the $01$\+component of)
the same equation implies $g_{00}=(-1)^ig_{11}$.
 The equation $d(g)=0$ implies $(-1)^id(g_{00})=g_{01}=d(g_{11})$.
 Thus the DG\+functor~$\bec\bec$ induces an isomorphism of
the complexes of morphisms $\Hom^i_\bA(A,B)\simeq
\Hom^i_{\bA^{\bec\bec}}(\bec\bec(A),\bec\bec(B))$ for all
objects $A$, $B\in\bA$.

 Finally, assume that $\bA$ is an idempotent-complete additive
DG\+category with twists, and let $(W,\sigma,\tau)\in\bA^{\bec\bec}$
be an object.
 Then, since $d(\tau)=0=\tau^2$, the element $-\tau\in\Hom^1_\bA(W,W)$
is a Maurer--Cartan cochain.
 Consider the related twist $U=W(-\tau)$.
 Then the graded abelian group $\Hom_{\bA^*}^*(U,U)$ is naturally
identified with $\Hom_{\bA^*}^*(W,W)$, while the differentials on
the two complexes $\Hom_\bA^\bu(U,U)$ and $\Hom_\bA^\bu(W,W)$ are
connected by the formula $d_U=d_W-[\tau,{-}]$, or more explicitly
$d_U(f)=d_W(f)-\tau f+(-1)^{|f|}f\tau$.

 Now we have $d_U(\sigma)=d_U(\tau)=0$, so $\sigma\tau
\in\Hom^0_\bA(U,U)$ is a closed endomorphism of degree~$0$.
 Furthermore, $(\sigma\tau)^2=(\sigma\tau+\tau\sigma)\sigma\tau
=\sigma\tau$.
 So $\sigma\tau\:U\rarrow U$ is an idempotent endomorphism of
the object $U\in\sZ^0(\bA)$.
 The image $A$ of this idempotent endomorphism recovers an object
$A\in\bA$ such that $W=\bec\bec(A)$.
 In fact, the image of the complementary idempotent
$\tau\sigma=\id-\sigma\tau$ is isomorphic to $A[-1]$ as an object
of $\sZ^0(\bA)$; the endomorphisms~$\sigma$ and~$\tau$ provide
the requisite closed isomorphism.
 So we have $U=A\oplus A[-1]$ and $W=\cone(\id_A[-1])$ in~$\bA$.
\end{proof}

\subsection{Properties of the main construction}
\label{properties-of-main-construction-subsecn}
 For any DG\+category $\bA$ with shifts and cones, the DG\+category
$\bA^{\bec\bec}$ only differs from $\bA$ by adjoining all twists and
some of their direct summands.
 The aim of this section is to explain the meaning of this (somewhat
informal) statement.

\begin{lem} \label{obvious-composition-of-functors-observation}
 For any DG\+category\/ $\bA$ with shifts and cones, the composition
of the additive functor\/ $\sZ^0(\bec\bec)\:\sZ^0(\bA)\rarrow
\sZ^0(\bA^{\bec\bec})$ induced by the DG\+functor\/ $\bec\bec\:
\bA\rarrow\bA^{\bec\bec}$ with the additive functor\/
$\Psi^+_{\bA^\bec}\:\sZ^0(\bA^{\bec\bec})\rarrow\sZ^0(\bA^\bec)$
is naturally isomorphic to the additive functor\/
$\Phi_\bA\:\sZ^0(\bA)\rarrow\sZ^0(\bA^\bec)$,
$$
 \Psi^+_{\bA^\bec}\circ\sZ^0(\bec\bec)\simeq\Phi_\bA.
$$
\end{lem}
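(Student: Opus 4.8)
The plan is to reduce the statement to an explicit comparison of the values of both functors on objects and morphisms, observing that---once the nested construction $\bA^{\bec\bec}=(\bA^\bec)^\bec$ is spelled out---both composites are built from the very same cone $\cone(\id_A[-1])$, so that the asserted natural isomorphism is, up to the canonical identifications of cones, the identity.

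First I would unwind the outer $\bec$ in $\bA^{\bec\bec}=(\bA^\bec)^\bec$. An object of $(\bA^\bec)^\bec$ is a pair $(X^\bec,\tau)$ with $X^\bec=(X,\sigma_X)\in\bA^\bec$ and $\tau\in\Hom^{-1}_{\bA^\bec}(X^\bec,X^\bec)$ a contracting homotopy of zero square; under the identification $\Hom^{-1}_{\bA^\bec}(X^\bec,X^\bec)=\Hom^1_{\cZ(\bA)}(X,X)$, and using that $d^\bec(\tau)=\id$ translates into $\sigma_X\tau+\tau\sigma_X=\id_X$, this recovers exactly the triples $(X,\sigma_X,\tau)$ of Proposition~\ref{iterated-bec-construction}. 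With this description in hand, the functor $\Psi^+_{\bA^\bec}\:\sZ^0(\bA^{\bec\bec})\rarrow\sZ^0(\bA^\bec)$ of Lemma~\ref{G-functors-in-DG-categories} (applied to the DG-category $\bB=\bA^\bec$) is simply the functor forgetting the outer homotopy, $(X^\bec,\tau)\longmapsto X^\bec$.

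Next I would evaluate the composite on an object $A\in\bA$. Since $\sZ^0(\bec\bec)(A)=\bec\bec(A)=(E,\sigma_E,\tau_E)$ with $E=\cone(\id_A[-1])$ and $\sigma_E=\iota'\pi'$, applying $\Psi^+_{\bA^\bec}$ discards $\tau_E$ and leaves the object $(E,\sigma_E)$, which is precisely $\Phi_\bA(A)$ as defined in Lemma~\ref{G-functors-in-DG-categories}. Thus the two functors agree on objects, on the nose for a fixed choice of cones and via the canonical cone isomorphisms in general. For a closed morphism $f\:A\rarrow B$ of degree~$0$ I would compute $\sZ^0(\bec\bec)(f)$ from formula~\eqref{morphism-induced-by-nonclosed-morphism}: because $d(f)=0$, the last term $\iota'_Bd(f)\pi'_A$ vanishes and $\bec\bec(f)=\iota'_Bf\pi_A+\iota_Bf\pi'_A$; this is exactly the functorial cone map underlying $\Phi_\bA(f)$, and $\Psi^+_{\bA^\bec}$ again only forgets compatibility with the outer homotopies $\tau_E$, $\tau_F$, returning the same closed degree-$0$ morphism $E^\bec\rarrow F^\bec$ in $\bA^\bec$.

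Having matched both functors on objects and on morphisms, I would conclude that the identity transformation---or, without fixing cone choices, the transformation assembled from the canonical isomorphisms of cones---is the desired natural isomorphism; naturality is then forced by the uniqueness up to a unique closed degree-$0$ isomorphism of cones recorded in Section~\ref{dg-categories-subsecn}. I do not expect any genuine obstacle here: the entire content is bookkeeping of the twofold $\bec$ construction and of the forgetful and cone functors, the only points requiring care being the correct identification of $\Psi^+_{\bA^\bec}$ as the \emph{outer} forgetful functor and the observation that closedness of $f$ kills the extra term in~\eqref{morphism-induced-by-nonclosed-morphism}.
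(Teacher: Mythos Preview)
Your proposal is correct and follows essentially the same approach as the paper, which simply says the lemma ``follows immediately from the constructions of the respective functors in Lemma~\ref{G-functors-in-DG-categories} and Proposition~\ref{iterated-bec-construction}.'' You have merely unpacked what those constructions are: $\bec\bec(A)=(E,\sigma_E,\tau_E)$ and $\Psi^+_{\bA^\bec}$ forgets the outer datum $\tau_E$, leaving $(E,\sigma_E)=\Phi_\bA(A)$; the check on morphisms is likewise the obvious one.
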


\begin{proof}
 Follows immediately from the constructions of the respective functors
in Lemma~\ref{G-functors-in-DG-categories}
and Proposition~\ref{iterated-bec-construction}.
\end{proof}

\begin{lem} \label{difficult-composition-of-functors-observation}
 For any DG\+category\/ $\bA$ with shifts and cones, the composition
of the additive functor\/ $\Psi^-_\bA\:\sZ^0(\bA^\bec)\rarrow\sZ^0(\bA)$
with the additive functor\/ $\sZ^0(\bec\bec)\:\sZ^0(\bA)\rarrow
\sZ^0(\bA^{\bec\bec})$ induced by the DG\+functor\/ $\bec\bec\:
\bA\rarrow\bA^{\bec\bec}$ is naturally isomorphic to the additive
functor\/ $\Phi_{\bA^\bec}\:\sZ^0(\bA^\bec)\rarrow
\sZ^0(\bA^{\bec\bec})$,
$$
 \sZ^0(\bec\bec)\circ\Psi^-_\bA\simeq\Phi_{\bA^\bec}.
$$
\end{lem}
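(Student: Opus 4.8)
The plan is to establish the natural isomorphism by comparing the functors corepresented by the two sides and invoking the Yoneda lemma in the additive category $\sZ^0(\bA^{\bec\bec})$. I write a general object of $\sZ^0(\bA^\bec)$ as $(Y,\sigma_Y)$ and a general object of $\sZ^0(\bA^{\bec\bec})$ as $(W,\sigma,\tau)$, in the explicit descriptions of $\bA^\bec$ and $\bA^{\bec\bec}$ from Section~\ref{main-construction-subsecn}. Since $\Psi^-_\bA(Y,\sigma_Y)=Y[1]$, the left-hand functor sends $(Y,\sigma_Y)$ to $\bec\bec(Y[1])=(E,\sigma_E,\tau_E)$, where $E=\cone(\id_Y)$ and $\sigma_E=\iota'\pi'$, $\tau_E=\iota\pi$ in the notation of Lemma~\ref{G-functors-in-DG-categories} and Proposition~\ref{iterated-bec-construction} applied with input object $Y[1]$ (so the cone structure maps $\iota,\pi,\iota',\pi'$ connect $E$ with $Y[1]$).

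First I would compute $\Hom_{\sZ^0(\bA^{\bec\bec})}\bigl((W,\sigma,\tau),\Phi_{\bA^\bec}(Y,\sigma_Y)\bigr)$. This is immediate from the adjunction $\Psi^+_{\bA^\bec}\dashv\Phi_{\bA^\bec}$ provided by Lemma~\ref{G-functors-in-DG-categories} applied to the DG\+category $\bA^\bec$: as $\Psi^+_{\bA^\bec}(W,\sigma,\tau)=(W,\sigma)$, this Hom\+group is $\Hom_{\sZ^0(\bA^\bec)}((W,\sigma),(Y,\sigma_Y))$, i.e.\ the closed degree\+$0$ maps $f\:W\rarrow Y$ in $\bA$ with $\sigma_Y f=f\sigma$. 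Next I would compute $\Hom_{\sZ^0(\bA^{\bec\bec})}\bigl((W,\sigma,\tau),\bec\bec(Y[1])\bigr)$ directly: a morphism is a closed degree\+$0$ map $g\:W\rarrow E$ with $\sigma_E g=g\sigma$ and $\tau_E g=g\tau$. Decomposing $g=\iota g_1+\iota'g_0$ through $\iota\pi'+\iota'\pi=\id_E$ (so $g_0=\pi g$, $g_1=\pi'g$) and reading off the $\iota$\+ and $\iota'$\+components of the equations $d(g)=0$, $\sigma_E g=g\sigma$, $\tau_E g=g\tau$, I expect to obtain exactly $d(g_0)=0$, \ $g_1=g_0\sigma$, and $g_0\tau\sigma=0$ (the remaining relations being automatic via $d(\sigma)=\id_W$, \ $\sigma\tau+\tau\sigma=\id_W$, \ $\tau^2=0$). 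Thus this group is naturally $\{\,g_0\:W\rarrow Y[1]\text{ closed of degree }0\ :\ g_0\tau\sigma=0\,\}$.

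The heart of the argument is a natural bijection between these two Hom\+groups. Letting $u\in\Hom^{-1}_\bA(Y,Y[1])$ and $w\in\Hom^1_\bA(Y[1],Y)$ be the shift structure maps ($wu=\id_Y$, $uw=\id_{Y[1]}$), I would set
\[
 f\longmapsto u f\tau,\qquad g_0\longmapsto \sigma_Y w g_0+w g_0\sigma,
\]
and verify these are mutually inverse. Well\+definedness of the first map uses $\tau^2=0$ (giving $(uf\tau)\tau\sigma=0$) and closedness of $u,f,\tau$; for the second, the two contributions to the differential cancel by $d(\sigma_Y)=\id_Y$ and $d(\sigma)=\id_W$, and the compatibility $\sigma_Y(\sigma_Y wg_0+wg_0\sigma)=(\sigma_Y wg_0+wg_0\sigma)\sigma$ follows from $\sigma_Y^2=0=\sigma^2$. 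The round\+trips collapse using $wu=\id_Y=uw$, the identity $\sigma_Y f=f\sigma$, and $\sigma\tau+\tau\sigma=\id_W$ together with $g_0\tau=g_0\sigma\tau^2=0$. Naturality in $(W,\sigma,\tau)$ follows from $\tau\phi=\phi\tau'$ for morphisms $\phi$ in $\sZ^0(\bA^{\bec\bec})$, and naturality in $(Y,\sigma_Y)$ from naturality of $u$ and~$w$. By the Yoneda lemma these isomorphisms are induced by a natural isomorphism $\Phi_{\bA^\bec}(Y,\sigma_Y)\simeq\bec\bec(Y[1])=\sZ^0(\bec\bec)(\Psi^-_\bA(Y,\sigma_Y))$, which is the assertion.

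The step I expect to be the main obstacle is the by\+hand computation of $\Hom_{\sZ^0(\bA^{\bec\bec})}((W,\sigma,\tau),\bec\bec(Y[1]))$ and, above all, pinning down the correct $\tau$\+compatibility condition: one must carry the shift convention (that $[1]$ on $\bA^\bec$ is induced by $[-1]$ on $\bA$) through every cone structure map, and the bijection becomes visible only after recognizing that the condition $\sigma_Y f=f\sigma$ on the $\Phi_{\bA^\bec}$\+side matches the idempotent condition $g_0\tau\sigma=0$ on the $\bec\bec$\+side under $f\mapsto uf\tau$. The one genuinely delicate point is that the naive inverse $g_0\mapsto wg_0\sigma$ fails to be closed and must be corrected by the term $\sigma_Y wg_0$; checking that this correction makes the two assignments mutually inverse is where the relations $d(\sigma)=\id_W$ and $\sigma\tau+\tau\sigma=\id_W$ are used in an essential way.
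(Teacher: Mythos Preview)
Your Yoneda approach is sound and does prove the lemma, but it is a genuinely different route from the paper's proof. The paper proceeds by writing down an \emph{explicit} closed isomorphism of degree~$0$ between the two objects: with $(X,\sigma_X)\in\bA^\bec$ and $E=\cone(\id_X)$, \,$S=X\oplus X[1]$, the paper produces
\[
 s=\iota'_S\pi_E+\iota_S\pi'_E+\iota_S\sigma_X\pi_E\:E\lrarrow S,
 \qquad
 e=\iota'_E\pi_S+\iota_E\pi'_S-\iota_E\sigma_X\pi_S\:S\lrarrow E,
\]
and checks directly that $se=\id$, $es=\id$, $\sigma_S s=s\sigma_E$, $\tau_S s=s\tau_E$. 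This is shorter and yields the actual natural isomorphism on objects, which the paper then needs in the Remark following Lemma~\ref{Phi-Psi-extended-to-nonclosed-morphisms} to verify the extension to nonclosed morphisms. Your approach trades that explicit formula for a representability argument; it is more conceptual but involves comparable bookkeeping, and if one later wants the concrete isomorphism (as the paper does), one must still extract it from the Yoneda correspondence.

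A few points to tighten in your writeup. First, the $\tau$\+compatibility condition you derive is actually $g_0\tau=0$ (from $\tau_E g=\iota g_0$ versus $g\tau=\iota g_1\tau+\iota'g_0\tau$); your stated condition $g_0\tau\sigma=0$ is equivalent via $g_0\tau=g_0\tau(\sigma\tau+\tau\sigma)=(g_0\tau\sigma)\tau$, but you should say so. Second, the line ``$g_0\tau=g_0\sigma\tau^2=0$'' is garbled; what you need in the second round\+trip is $g_0\sigma\tau=g_0$ (from $g_0\tau=0$ and $\sigma\tau+\tau\sigma=\id_W$). Third, $wu=\id_Y$ but $uw=\id_{Y[1]}$, not $\id_Y$. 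None of these affect the validity of the argument.
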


\begin{proof}
 Let $(X,\sigma_X)$ be an object of the DG\+category~$\bA^\bec$
(so $\sigma_X\in\Hom^{-1}_\bA(X,X)$ and $d(\sigma_X)=\id_X$,
while $\sigma_X^2=0$).
 Then $\Psi^-(X,\sigma_X)=X[1]\in\bA$.
 By the definition (see the proof of
Proposition~\ref{iterated-bec-construction}), the object
$\bec\bec(X[1])\in\bA^{\bec\bec}$ is the triple $(E,\sigma_E,\tau_E)$,
where $E=\cone(\id_X)\in\bA$, while $\sigma_E=\iota'_E\pi'_E\in
\Hom_\bA^{-1}(E,E)$ and $\tau_E=\iota_E\pi_E\in\Hom_\bA^1(E,E)$.
 Here $\iota_E\in\Hom_\bA^0(X,E)$ and $\pi_E\in\Hom_\bA^1(E,X)$
are closed morphisms, while $\pi'_E\in\Hom_\bA^0(E,X)$ and
$\iota'_E\in\Hom_\bA^{-1}(X,E)$ are morphisms satisfying
the equations similar to the ones in the proof of
Lemma~\ref{G-functors-in-DG-categories}, except for one sign:
\begin{gather*}
 \pi'_E\iota'_E=0=\pi_E\iota_E, \quad
 \pi'_E\iota_E=\id_X=\pi_E\iota'_E, \quad
 \iota_E\pi'_E+\iota'_E\pi_E=\id_E, \\
 d(\pi'_E)=-\pi_E, \quad d(\iota'_E)=\iota_E.
\end{gather*}

 On the other hand, following the constructions in
Section~\ref{main-construction-subsecn}, the object
$\Phi_{\bA^\bec}(X,\sigma_X)$ is the triple $(S,\sigma_S,\tau_S)$,
where $S=X\oplus X[1]\in\bA$, while $\sigma_S=-\iota'_S\sigma_X\pi_S
+\iota_S\sigma_X\pi'_S+\iota'_S\pi'_S\in\Hom_\bA^{-1}(S,S)$ and
$\tau_S=\iota_S\pi_S\in\Hom_\bA^1(S,S)$.
 Here $\iota_S\in\Hom_\bA^0(X,S)$, \ $\pi_S\in\Hom_\bA^1(S,X)$,
$\pi'_S\in\Hom_\bA^0(S,X)$, and $\iota'_S\in\Hom_\bA^{-1}(X,S)$
are closed morphisms satisfying the equations
$$
 \pi'_S\iota'_S=0=\pi_S\iota_S, \quad
 \pi'_S\iota_S=\id_X=\pi_S\iota'_S, \quad
 \iota_S\pi'_S+\iota'_S\pi_S=\id_S.
$$

 Now $s=\iota'_S\pi_E+\iota_S\pi'_E+\iota_S\sigma_X\pi_E\:
E\rarrow S$ is a closed isomorphism of degree~$0$ in $\bA$
satisfying the equations $\sigma_Ss=s\sigma_E$ and $\tau_Ss=s\tau_E$.
 The inverse (closed) isomorphism is $e=\iota'_E\pi_S+\iota_E\pi'_S
-\iota_E\sigma_X\pi_S\:S\rarrow E$.
\end{proof}

 For any DG\+category $\bA$ with shifts and cones, denote by
$\Xi=\Xi_\bA\:\sZ^0(\bA)\rarrow\sZ^0(\bA)$ the additive functor taking
every object $A\in\bA$ to the object $\cone(\id_A[-1])$ (and acting
on the morphisms in the obvious way).

\begin{lem} \label{Xi-decomposed}
 For any DG\+category\/ $\bA$ with shifts and cones, the composition\/
$\Psi_\bA^+\circ\Phi_\bA\:\sZ^0(\bA)\rarrow\sZ^0(\bA)$ of the additive
functors\/ $\Phi_\bA\:\sZ^0(\bA)\rarrow\sZ^0(\bA^\bec)$ and\/
$\Psi^+_\bA\:\sZ^0(\bA^\bec)\rarrow\sZ^0(\bA)$ is naturally isomorphic
to the functor\/ $\Xi_\bA$,
$$
 \Psi^+_\bA\circ\Phi_\bA\simeq\Xi_\bA.
$$
 The composition\/ $\Phi_\bA\circ\Psi^-_\bA\:\sZ^0(\bA^\bec)\rarrow
\sZ^0(\bA^\bec)$ of the additive functor\/ $\Psi^-_\bA\:\sZ^0(\bA^\bec)
\rarrow\sZ^0(\bA)$ and the additive functor\/ $\Phi_\bA$ is naturally
isomorphic to the functor\/ $\Xi_{\bA^\bec}$,
$$
 \Phi_\bA\circ\Psi^-_\bA\simeq\Xi_{\bA^\bec}.
$$
\end{lem}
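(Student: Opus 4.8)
The plan is to handle the two natural isomorphisms separately: the first I would read off directly from the definitions, and the second I would obtain formally from the first (applied to $\bA^\bec$) together with the two preceding lemmas, thereby avoiding any explicit computation.

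For $\Psi^+_\bA\circ\Phi_\bA\simeq\Xi_\bA$ I would simply unwind the constructions. By definition $\Phi_\bA$ sends an object $A\in\bA$ to the object $(\cone(\id_A[-1]),\sigma_E)$ of $\bA^\bec$, and $\Psi^+_\bA$ forgets the chosen contracting homotopy, returning the underlying object $\cone(\id_A[-1])$, which is exactly $\Xi_\bA(A)$. On morphisms, $\Phi_\bA$ acts by cone functoriality decorated with the $\sigma$\+structure (for a closed degree\+$0$ map this is the induced map on cones, with no correction term), and $\Psi^+_\bA$ discards the decoration, so the composite recovers precisely the cone\+functorial action defining $\Xi_\bA$. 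Hence $\Psi^+_\bA\circ\Phi_\bA$ coincides with $\Xi_\bA$, the natural isomorphism being the identity; this is entirely parallel to Lemma~\ref{obvious-composition-of-functors-observation}.

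For $\Phi_\bA\circ\Psi^-_\bA\simeq\Xi_{\bA^\bec}$ I would chain together facts already in hand. Since $\bA^\bec$ again has shifts and cones (these were already used in forming $\bA^{\bec\bec}$, $\Phi_{\bA^\bec}$, and $\Xi_{\bA^\bec}$ in the two preceding lemmas), the first isomorphism of this lemma may be invoked with $\bA^\bec$ in place of $\bA$, giving $\Xi_{\bA^\bec}\simeq\Psi^+_{\bA^\bec}\circ\Phi_{\bA^\bec}$. Substituting $\Phi_{\bA^\bec}\simeq\sZ^0(\bec\bec)\circ\Psi^-_\bA$ from Lemma~\ref{difficult-composition-of-functors-observation} and then $\Psi^+_{\bA^\bec}\circ\sZ^0(\bec\bec)\simeq\Phi_\bA$ from Lemma~\ref{obvious-composition-of-functors-observation} yields
\[
 \Xi_{\bA^\bec}\simeq\Psi^+_{\bA^\bec}\circ\Phi_{\bA^\bec}
 \simeq\Psi^+_{\bA^\bec}\circ\sZ^0(\bec\bec)\circ\Psi^-_\bA
 \simeq\Phi_\bA\circ\Psi^-_\bA.
\]
All domains and codomains match ($\sZ^0(\bA^\bec)\to\sZ^0(\bA)\to\sZ^0(\bA^{\bec\bec})\to\sZ^0(\bA^\bec)$), so the chain is well\+formed and delivers the claim.

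The substantive content here is minimal, and the only care required is bookkeeping. First, I would confirm the hypotheses needed to apply the first isomorphism to $\bA^\bec$: it has shifts whenever $\bA$ does, it always has twists, and being additive with shifts and twists it has cones—and in any case these constructions on $\bA^\bec$ are already presupposed by the statements of Lemmas~\ref{obvious-composition-of-functors-observation} and~\ref{difficult-composition-of-functors-observation}. The alternative I would keep in reserve is a direct verification mirroring Lemma~\ref{difficult-composition-of-functors-observation}: compute $\Phi_\bA(\Psi^-(X,\sigma_X))=\Phi_\bA(X[1])$ and $\Xi_{\bA^\bec}(X,\sigma_X)$ as explicit triples in $\bA^\bec$ and exhibit a closed degree\+$0$ isomorphism between them compatible with both structure maps. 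That route is feasible but its one genuine difficulty is the sign bookkeeping in matching the $\sigma$\+ and $\tau$\+type data; the formal argument above sidesteps it entirely, which is why I would present it as the primary proof.
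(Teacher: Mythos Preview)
Your proof is correct and matches the paper's approach. The first isomorphism is read off from the definitions exactly as you do, and for the second the paper presents precisely your formal chain $\Phi_\bA\circ\Psi^-_\bA\simeq\Psi^+_{\bA^\bec}\circ\sZ^0(\bec\bec)\circ\Psi^-_\bA\simeq\Psi^+_{\bA^\bec}\circ\Phi_{\bA^\bec}\simeq\Xi_{\bA^\bec}$ as its alternative argument, while noting (as you also do in reserve) that the explicit computation in the proof of Lemma~\ref{difficult-composition-of-functors-observation} already establishes the claim directly.
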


\begin{proof}
 The first assertion follows immediately from the constructions of
the functors in Lemma~\ref{G-functors-in-DG-categories}.
 A proof of the second assertion is contained in the proof of
Lemma~\ref{difficult-composition-of-functors-observation}.
 Alternatively, one can formally deduce the second assertion from
the first assertion for the DG\+category $\bA^\bec$ by applying
the functor $\Psi^+_{\bA^\bec}$ to both sides of the natural isomorphism
in Lemma~\ref{difficult-composition-of-functors-observation} and taking
into account Lemma~\ref{obvious-composition-of-functors-observation},
$$
 \Phi_\bA\circ\Psi^-_\bA\simeq
 \Psi^+_{\bA^\bec}\circ\sZ^0(\bec\bec)\circ\Psi_\bA^-\simeq
 \Psi^+_{\bA^\bec}\circ\Phi_{\bA^\bec}\simeq\Xi_{\bA^\bec}.
$$
\end{proof}

\begin{lem} \label{Phi-Psi-extended-to-nonclosed-morphisms}
 For any DG\+category\/ $\bA$ with shifts and cones, the additive
functor\/ $\Phi\:\sZ^0(\bA)\rarrow\sZ^0(\bA^\bec)$ can be extended
in a natural way to a fully faithful additive functor\/
$\widetilde\Phi=\widetilde\Phi_\bA\:\bA^0\rarrow\sZ^0(\bA^\bec)$.
 Similarly, the additive functors\/ $\Psi^+$ and\/
$\Psi^-\:\sZ^0(\bA^\bec)\rarrow\sZ^0(\bA)$ can be extended in a natural
way to fully faithful additive functors\/ $\widetilde\Psi^+=
\widetilde\Psi^+_\bA$ and\/ $\widetilde\Psi^-=\widetilde\Psi^-_\bA\:
(\bA^\bec)^0\rarrow\sZ^0(\bA)$ (still satisfying\/
$\widetilde\Psi^-=\widetilde\Psi^+[1]$).
\end{lem}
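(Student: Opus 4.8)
The plan is to build $\widetilde\Phi$ directly from the DG\+functor $\bec\bec$ of Proposition~\ref{iterated-bec-construction}, and to obtain $\widetilde\Psi^\pm$ as plain forgetful functors; the only real work is the full faithfulness of $\widetilde\Phi$, which I will reduce to the (already established) full faithfulness of $\bec\bec$. On objects I set $\widetilde\Phi(A)=\Phi(A)=(E_A,\sigma_{E_A})$ with $E_A=\cone(\id_A[-1])$ and $\sigma_{E_A}=\iota'_A\pi'_A$, exactly as in Lemma~\ref{G-functors-in-DG-categories}. On a morphism $f\in\Hom_{\bA^0}(A,B)=\Hom^0_\bA(A,B)$, which need not be closed, I set $\widetilde\Phi(f)$ equal to the degree-zero instance of formula~\eqref{morphism-induced-by-nonclosed-morphism}, namely $\widetilde\Phi(f)=\iota'_Bf\pi_A+\iota_Bf\pi'_A+\iota'_Bd(f)\pi'_A$; this is by definition the value $\bec\bec(f)$, viewed as a morphism $E_A\rarrow E_B$ in~$\bA$. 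The computations recorded in the proof of Proposition~\ref{iterated-bec-construction} already show $d(\widetilde\Phi(f))=0$ and $d^\bec(\widetilde\Phi(f))=0$, so $\widetilde\Phi(f)$ is a $d^\bec$\+cocycle of degree~$0$, i.e., a morphism of $\sZ^0(\bA^\bec)$. Functoriality and additivity are inherited from $\bec\bec$, and since the last summand drops out when $d(f)=0$, the restriction of $\widetilde\Phi$ to $\sZ^0(\bA)$ is the functorial cone morphism $\Phi(f)$; thus $\widetilde\Phi$ genuinely extends $\Phi$.

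The crux is full faithfulness, and here the point is that the target morphism group needs no recomputation. For any $A$, $B\in\bA$ the underlying $\bA^\bec$\+object of $\bec\bec(A)=(E_A,\sigma_{E_A},\tau_{E_A})$ is precisely $\Phi(A)=(E_A,\sigma_{E_A})$. By the explicit description of $\bA^{\bec\bec}$ in Proposition~\ref{iterated-bec-construction}, a degree-zero morphism $\bec\bec(A)\rarrow\bec\bec(B)$ is exactly a morphism $g\:E_A\rarrow E_B$ of degree~$0$ with $d(g)=0$ and $d^\bec(g)=\sigma_{E_B}g-g\sigma_{E_A}=0$; but this is verbatim the condition that $g$ be a $d^\bec$\+cocycle of degree~$0$, i.e., an element of $\Hom_{\sZ^0(\bA^\bec)}(\Phi(A),\Phi(B))$. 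Hence $\Hom_{\sZ^0(\bA^\bec)}(\Phi(A),\Phi(B))$ and $\Hom^0_{\bA^{\bec\bec}}(\bec\bec(A),\bec\bec(B))$ coincide as subsets of $\Hom^0_\bA(E_A,E_B)$, and under this identification $\widetilde\Phi$ is nothing but the degree-zero component of the hom-complex map induced by $\bec\bec$. Since Proposition~\ref{iterated-bec-construction} asserts that $\bec\bec$ induces isomorphisms $\Hom^i_\bA(A,B)\simeq\Hom^i_{\bA^{\bec\bec}}(\bec\bec(A),\bec\bec(B))$ for all~$i$, its degree-zero part is bijective, so $\widetilde\Phi$ is fully faithful. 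I expect this identification of the two morphism sets to be the only delicate step, and it is purely a matter of unwinding the definitions of $\bA^\bec$ and $\bA^{\bec\bec}$.

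For $\widetilde\Psi^+$ and $\widetilde\Psi^-$ the extension is essentially formal, because enlarging the source from $\sZ^0(\bA^\bec)$ to $(\bA^\bec)^0$ adds morphisms on which the contracting homotopies impose no condition at all. Indeed, unwinding the definition of $\bA^\bec$ gives $\Hom_{(\bA^\bec)^0}((X,\sigma_X),(Y,\sigma_Y))=\Hom^0_{\bA^\bec}((X,\sigma_X),(Y,\sigma_Y))=\Hom^0_{\cZ(\bA)}(X,Y)=\Hom_{\sZ^0(\bA)}(X,Y)$, independently of $\sigma_X$ and~$\sigma_Y$: a degree-zero morphism in $\bA^\bec$ is simply a closed degree-zero morphism in $\bA$ between the underlying objects. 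I therefore define $\widetilde\Psi^+(X,\sigma_X)=X$ and $\widetilde\Psi^-(X,\sigma_X)=X[1]$ on objects, sending a morphism $g$ to $g$, respectively to $g[1]$. These agree with $\Psi^+$ and $\Psi^-$ on the subcategory $\sZ^0(\bA^\bec)$, are additive and functorial since composition in $(\bA^\bec)^0$ is computed in~$\bA$, and are fully faithful because on morphism groups they act as the identity, respectively as the shift isomorphism~$[1]$ (bijective as $\bA$ has shifts). Finally $\widetilde\Psi^-=\widetilde\Psi^+[1]$ holds by construction, the two functors differing by the shift on objects and on morphisms alike.
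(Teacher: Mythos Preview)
Your proof is correct and follows essentially the same approach as the paper: you define $\widetilde\Psi^\pm$ as the obvious forgetful functors and obtain $\widetilde\Phi$ from the degree-zero part of the DG\+functor $\bec\bec$, reducing full faithfulness to that of Proposition~\ref{iterated-bec-construction}. The paper records precisely this construction (using formula~\eqref{morphism-induced-by-nonclosed-morphism} at $i=0$) and even names the alternative packaging $\widetilde\Phi_\bA=\widetilde\Psi^+_{\bA^\bec}\circ(\bec\bec)^0$, which is your identification $\Hom_{\sZ^0(\bA^\bec)}(\Phi(A),\Phi(B))=\Hom^0_{\bA^{\bec\bec}}(\bec\bec(A),\bec\bec(B))$ in functorial form.
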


\begin{proof}
 The construction of the functor $\widetilde\Psi^+$ is essentially
obvious.
 By the definition, we have $\widetilde\Psi^+(X,\sigma_X)=
\Psi^+(X,\sigma_X)=X$ for all objects $(X,\sigma_X)\in\bA^\bec$;
and also by definition, $\Hom_{\bA^\bec}^0((X,\sigma_X),(Y,\sigma_Y))=
\Hom_{\cZ(\bA)}^0(X,Y)=\Hom_{\sZ^0(\bA)}(X,Y)$ for all
$(X,\sigma_X)$ and $(Y,\sigma_Y)\in\bA^\bec$.
 This natural isomorphism of the Hom groups provides the action of
the functor $\widetilde\Psi^+$ on morphisms.
 The functor $\widetilde\Psi^-$ is defined as $\widetilde\Psi^-=
\widetilde\Psi^+[1]$.

 Concerning the functor $\widetilde\Phi$, let $A$ and $B\in\bA$ be
two objects, and let $\Phi(A)=(E,\sigma_E)$ and $\Phi(B)=(F,\sigma_F)$
be their images in~$\bA^\bec$.
 Let $f\in\Hom^0_\bA(A,B)$ be a (not necessarily closed) morphism
of degree~$0$.
 Then we put $\widetilde\Phi(f)=g$, where $g\in\Hom^0_\bA(E,F)$ is
the morphism constructed in the proof of
Proposition~\ref{iterated-bec-construction} (take $i=0$ in
the formula~\eqref{morphism-induced-by-nonclosed-morphism}).
 Following the arguments in the proof of
Proposition~\ref{iterated-bec-construction}, this assignment is
a well-defined isomorphism of abelian groups $\Hom^0_\bA(A,B)\simeq
\Hom_{\sZ^0(\bA^\bec)}(E,F)$ compatible with the compositions of
morphisms; hence the fully faithful additive functor~$\widetilde\Phi$.
 Alternatively, one can refer to
Lemma~\ref{obvious-composition-of-functors-observation} and simply
put $\widetilde\Phi_\bA=\widetilde\Psi^+_{\bA^\bec}\circ(\bec\bec)^0$.
\end{proof}

\begin{rem}
 Both the isomorphisms of additive functors in
Lemmas~\ref{obvious-composition-of-functors-observation}
and~\ref{difficult-composition-of-functors-observation} remain
valid for the extended functors from
Lemma~\ref{Phi-Psi-extended-to-nonclosed-morphisms}, i.~e., one has
$$
 \widetilde\Psi^+_{\bA^\bec}\circ(\bec\bec)^0\simeq
 \widetilde\Phi_\bA
 \quad\text{and}\quad
 \sZ^0(\bec\bec)\circ\widetilde\Psi^-_\bA\simeq
 \widetilde\Phi_{\bA^\bec}.
$$
 The former isomorphism was essentially explained already in
the proof of Lemma~\ref{Phi-Psi-extended-to-nonclosed-morphisms}.
 The latter one is verified by a direct computation continuing
the computations in the proof of
Lemma~\ref{difficult-composition-of-functors-observation}.
 Let $(X,\sigma_X)$ and $(Y,\sigma_Y)$ be two objects of
the DG\+category $\bA^\bec$, and let $z\:(X,\sigma_X)\rarrow
(Y,\sigma_Y)$ be (not necessarily closed) morphism of degree~$0$
in~$\bA^\bec$; in other words, $z\:X\rarrow Y$ is a closed
morphism of degree~$0$ in~$\bA$.
 Put $(E,\sigma_E,\tau_E)=\bec\bec(X)$ and $(F,\sigma_F,\tau_F)
=\bec\bec(Y)$; then the morphism $g=\bec\bec(z)\:E\rarrow F$ is given
by the rule $g=\iota'_Fz\pi_E+\iota_Fz\pi'_E$.
 Furthermore, put $\Phi_{\bA^\bec}(X,\sigma_X)=(S,\sigma_S,\tau_S)$
and $\Phi_{\bA^\bec}(Y,\sigma_Y)=(T,\sigma_T,\tau_T)$; then
the morphism $u=\widetilde\Phi_{\bA^\bec}(z)\:S\rarrow T$ is given by
the formula $u=\iota'_Tz\pi_S+\iota_Tz\pi'_S+
\iota_T(\sigma_Yz-z\sigma_X)\pi_S$.
 Then one can readily check that $tg=us$, or equivalently, $ge=fu$,
where the mutually inverse closed isomorphisms $s$ and~$e$ were
defined in the proof of
Lemma~\ref{difficult-composition-of-functors-observation}, while
the mutually inverse closed isomorphisms $t\:F\rarrow T$ and
$f\:T\rarrow F$ are given by the similar formulas
$t=\iota'_T\pi_F+\iota_T\pi'_F+\iota_T\sigma_Y\pi_F$ and
$f=\iota'_F\pi_T+\iota_F\pi'_T-\iota_F\sigma_Y\pi_T$.
\end{rem}

\begin{lem} \label{twists-into-isomorphisms}
 For any DG\+category\/ $\bA$ with shifts and cones, the additive
functor\/ $\Phi\:\sZ^0(\bA)\rarrow\sZ^0(\bA^\bec)$ transforms
twists into isomorphisms, and so do the additive functors\/
$\Psi^+$ and\/ $\Psi^-\:\sZ^0(\bA^\bec)\rarrow\sZ^0(\bA)$.
 All these functors also transform the shifts\/~$[n]$ into the inverse
shifts\/~$[-n]$ in the respective categories.
\end{lem}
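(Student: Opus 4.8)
The plan is to reduce every assertion to the elementary fact that a functor preserves isomorphisms, applied to the extended functors of Lemma~\ref{Phi-Psi-extended-to-nonclosed-morphisms}. The decisive observation is that a twisting morphism $f\:X\rarrow X(a)$, although \emph{not} a closed morphism in the ambient DG\+category (one has $d(f)=fa\ne0$), is nevertheless an honest isomorphism of degree~$0$ in the preadditive category of \emph{all} degree-$0$ morphisms, because the defining equations $fg=\id$ and $gf=\id$ hold there. Since the extended functors $\widetilde\Phi_\bA\:\bA^0\rarrow\sZ^0(\bA^\bec)$ and $\widetilde\Psi^\pm_\bA\:(\bA^\bec)^0\rarrow\sZ^0(\bA)$ are defined on precisely these larger categories, they carry such an $f$ to a genuine closed isomorphism in the target. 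Thus, for $\Phi$ I would take a twist $X(a)$ in $\bA$ (for whatever Maurer--Cartan cochains admit twists, since $\bA$ is only assumed to have shifts and cones) and conclude that $\widetilde\Phi_\bA(f)\:\Phi(X)\rarrow\Phi(X(a))$ is an isomorphism; for $\Psi^\pm$ I would use that the twist $X^\bec(a)=(X,\sigma_X+a)$ has the \emph{same} underlying object $X$, with twisting morphism represented by $\id_X$ in $\bA$ (one checks $d^\bec(\id_X)=a=\id_X\cdot a$), whence $\widetilde\Psi^+(f)=\id_X$ and $\widetilde\Psi^-(f)=\id_{X[1]}$ are isomorphisms.

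For the shifts, I would first dispose of $\Psi^+$ and $\Psi^-$ directly. Since $\Psi^+(X,\sigma_X)=X$ and, as recalled in Section~\ref{main-construction-subsecn}, the shift~$[n]$ on $\bA^\bec$ is induced by the inverse shift~$[-n]$ on~$\bA$, one gets $\Psi^+(X^\bec[n])\simeq(\Psi^+X^\bec)[-n]$ immediately; the case of $\Psi^-$ follows since $\Psi^-=\Psi^+[1]$ and shifts commute with one another. For $\Phi$ I would avoid recomputing how cones interact with shifts and instead invoke Lemma~\ref{obvious-composition-of-functors-observation}, writing $\Phi_\bA\simeq\Psi^+_{\bA^\bec}\circ\sZ^0(\bec\bec)$: the DG\+functor $\bec\bec$ preserves shifts, so $\sZ^0(\bec\bec)$ carries $[n]$ to $[n]$, while $\Psi^+_{\bA^\bec}$ carries $[n]$ to $[-n]$ by the result just proved applied to the DG\+category $\bA^\bec$ in place of~$\bA$; composing yields $\Phi_\bA(A[n])\simeq\Phi_\bA(A)[-n]$.

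The argument is essentially formal once the extended functors of Lemma~\ref{Phi-Psi-extended-to-nonclosed-morphisms} are available, and I expect no serious computational obstacle. The one point demanding care is conceptual: one must read the twisting maps in the categories $\bA^0$ and $(\bA^\bec)^0$ of all degree-$0$ morphisms, where they are invertible, rather than in $\sZ^0(\bA)$ or $\sZ^0(\bA^\bec)$, where they do not even live; this is exactly what makes the passage to $\bA^\bec$ turn a twist into a bona fide isomorphism. The only real bookkeeping is keeping track of the reversal of degree in the shifts, which for $\Phi$ is handled cleanly by the composition identity rather than by direct manipulation of cones.
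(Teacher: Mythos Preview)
Your proof is correct and follows essentially the same approach as the paper: both deduce the twist assertion immediately from Lemma~\ref{Phi-Psi-extended-to-nonclosed-morphisms} by observing that a twisting isomorphism lives in $\bA^0$ (resp.\ $(\bA^\bec)^0$), where the extended functors $\widetilde\Phi$ and $\widetilde\Psi^\pm$ are defined, and functors preserve isomorphisms. The only difference is cosmetic: for the shift assertion the paper simply declares it ``obvious from the construction of the complexes of morphisms in~$\bA^\bec$'', whereas you spell out the argument for $\Psi^\pm$ and then route $\Phi$ through Lemma~\ref{obvious-composition-of-functors-observation}; this is a valid alternative but not needed, since the identification $\cone(\id_{A[n]}[-1])\simeq\cone(\id_A[-1])[n]$ together with the sign reversal of the grading in $\bA^\bec$ gives the result for $\Phi$ directly.
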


\begin{proof}
 The assertions about the shifts are obvious from the construction of
the complexes of morphisms in the DG\+category~$\bA^\bec$.
 The assertions about the twists, claiming esstentially that all
the three functors take objects connected by a not necessarily closed
isomorphism of degree~$0$ in the respective DG\+category to objects
connected by a closed isomorphism of degree~$0$, follow immediately
from Lemma~\ref{Phi-Psi-extended-to-nonclosed-morphisms}.
\end{proof}

\begin{lem} \label{Phi-Psi-conservative}
 For any DG\+category\/ $\bA$ with shifts and cones, the additive
functors\/ $\Phi\:\sZ^0(\bA)\rarrow\sZ^0(\bA^\bec)$ and \/
$\Psi^+$, $\Psi^-\:\sZ^0(\bA^\bec)\rarrow\sZ^0(\bA)$ are conservative
(i.~e., they take nonisomorphisms to nonisomorphisms).
\end{lem}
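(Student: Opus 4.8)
The plan is to dispose of the two adjoints by a formal argument and then reduce the statement for $\Phi$ to the conservativity of the endofunctor $\Xi_\bA$. For $\Psi^+$, recall that a morphism in $\sZ^0(\bA^\bec)$ from $(X,\sigma_X)$ to $(Y,\sigma_Y)$ is a closed degree\+$0$ morphism $g\:X\rarrow Y$ in $\bA$ satisfying $\sigma_Y g=g\sigma_X$. If $\Psi^+(g)=g$ is an isomorphism in $\sZ^0(\bA)$ with closed inverse $g^{-1}$, then conjugating $\sigma_Y g=g\sigma_X$ by $g$ yields $\sigma_X g^{-1}=g^{-1}\sigma_Y$, so $g^{-1}$ is again a morphism in $\sZ^0(\bA^\bec)$ and inverts $g$ there. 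Hence $\Psi^+$ is conservative, and since $\Psi^-=\Psi^+[1]$ by Lemma~\ref{G-functors-in-DG-categories} and the shift is an autoequivalence of $\sZ^0(\bA)$, so is $\Psi^-$.

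For $\Phi$, I would invoke Lemma~\ref{Xi-decomposed}, which gives $\Psi^+_\bA\circ\Phi_\bA\simeq\Xi_\bA$. Since any functor preserves isomorphisms, $\Phi(f)$ invertible implies $\Xi_\bA(f)$ invertible, so it is enough to prove $\Xi_\bA$ conservative (the general fact that $GF$ conservative forces $F$ conservative). Using the notation $E=\cone(\id_A[-1])$, $F=\cone(\id_B[-1])$ and the structure morphisms $\iota,\pi,\iota',\pi'$ from the proof of Lemma~\ref{G-functors-in-DG-categories}, the morphism $\Xi(f)$ is the block-diagonal $g=\iota'_Bf\pi_A+\iota_Bf\pi'_A$. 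If $g$ is an isomorphism in $\sZ^0(\bA)$ it is one in $\bA^0$, where the identity $\iota'_A\pi_A+\iota_A\pi'_A=\id_E$ splits $E\simeq A\oplus A[-1]$ and exhibits $g$ as $f\oplus f[-1]$; consequently $f$ has an inverse $\bar f\in\Hom_{\bA^0}(B,A)$, and the unique $\bA^0$\+inverse of $g$ is the block-diagonal morphism $h=\iota'_A\bar f\pi_B+\iota_A\bar f'\pi'_B$.

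The one real point is to promote $\bar f$ from an $\bA^0$\+inverse to a \emph{closed} one. Since $h$ is the inverse of the closed isomorphism $g$ inside $\sZ^0(\bA)$, it is itself closed, $d(h)=0$. Expanding $d(h)$ by the Leibniz rule together with $d(\iota')=\iota$, $d(\pi')=\pi$, $d(\iota)=0=d(\pi)$, and then composing with $\pi_A$ on the left and $\iota'_B$ on the right, every surviving term except one is annihilated by $\pi_A\iota_A=0$, leaving the identity $\pi_A\,d(h)\,\iota'_B=d(\bar f)$. Therefore $d(\bar f)=0$, so $\bar f$ is a closed inverse of $f$ and $f$ is invertible in $\sZ^0(\bA)$; this proves $\Xi_\bA$, and hence $\Phi$, conservative. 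I expect this last computation to be the only obstacle: the splitting $E\simeq A\oplus A[-1]$ exists solely in $\bA^0$ and \emph{not} in $\sZ^0(\bA)$ (a closed splitting would force $A$ to be contractible), so the closedness of the inverse block cannot be read off formally and must be extracted from the equation $d(h)=0$ by the explicit calculation above.
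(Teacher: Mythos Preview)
Your proof is correct.  For $\Psi^+$ and $\Psi^-$ your argument is essentially the paper's argument unwound: the paper observes that $\Psi^+$ factors as the inclusion $\sZ^0(\bA^\bec)\hookrightarrow(\bA^\bec)^0$ followed by the fully faithful $\widetilde\Psi^+$ of Lemma~\ref{Phi-Psi-extended-to-nonclosed-morphisms}, and that the inclusion $\sZ^0\hookrightarrow({-})^0$ is conservative because the inverse of a closed isomorphism is automatically closed.  Your check that $g^{-1}$ satisfies $\sigma_X g^{-1}=g^{-1}\sigma_Y$ is exactly this observation made concrete.

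For $\Phi$ the approaches genuinely diverge.  The paper applies the same uniform trick: $\Phi$ factors as $\sZ^0(\bA)\hookrightarrow\bA^0$ followed by the fully faithful $\widetilde\Phi$, and both factors are conservative.  You instead pass through $\Xi_\bA\simeq\Psi^+_\bA\circ\Phi_\bA$ and prove $\Xi_\bA$ conservative by an explicit block computation, extracting $d(\bar f)=0$ from $d(h)=0$.  This is valid and independent of Lemma~\ref{Phi-Psi-extended-to-nonclosed-morphisms}, but it is more laborious: the computation with $\iota,\pi,\iota',\pi'$ is effectively a hands-on rediscovery, in the special case of the cone, of the general principle ``inverse of closed is closed'' that you already invoked for $\Psi^+$.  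The paper's route is shorter and treats all three functors uniformly; yours has the minor advantage of not depending on the existence of~$\widetilde\Phi$.  (There is a harmless typo: your $\bar f'$ in the formula for $h$ should be $\bar f$.)
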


\begin{proof}
 By Lemma~\ref{Phi-Psi-extended-to-nonclosed-morphisms}, all the three
functors in question are compositions of the inclusion functors
$\sZ^0(\bA)\rarrow\bA^0$ or $\sZ^0(\bA^\bec)\rarrow(\bA^\bec)^0$
with fully faithful functors $\widetilde\Phi$, $\widetilde\Psi^+$,
or~$\widetilde\Psi^-$.
 So it suffices to observe that, for any DG\+category $\bA$,
the inclusion functor $\sZ^0(\bA)\rarrow\bA^0$ is conservative
(since the inverse morphism to a closed morphism, if it exists,
is also a closed morphism), any fully faithful functor is conservative,
and a composition of two conservative functors is conservative.
\end{proof}

 Let $\bA$ be an additive DG\+category.
 Then the \emph{idempotent completion} $\bA^\oplus$ of $\bA$ is
constructed as the DG\+category whose objects are pairs $(A,e_A)$,
where $A$ is an object of $\bA$ and $e_A\in\Hom^0_\bA(A,A)$ is
a closed idempotent endomorphism of degree~$0$.
 By the definition, the complex of morphisms $(A,e_A)\rarrow
(B,e_B)$ in $\bA^\oplus$ is the subcomplex
$$
 \Hom_{\bA^\oplus}^\bu((A,e_A),(B,e_B))=
 e_B\Hom_\bA^\bu(A,B)e_A\subset\Hom_\bA^\bu(A,B).
$$
 The composition of morphisms in $\bA^\oplus$ is induced by
that in~$\bA$.
 The DG\+category $\bA^\oplus$ is additive and idempotent-complete,
there is a fully faithful DG\+functor $\bA\rarrow\bA^\oplus$ taking
each object $A\in\bA$ to $(A,\id_A)\in\bA^\oplus$, and every object
of $\bA^\oplus$ is a direct summand of an object coming from~$\bA$.
 If the DG\+category $\bA$ has shifts, twists, cones, infinite
coproducts, or infinite products, then so has
the DG\+category~$\bA^\oplus$.

 Let $\bA$ be a DG\+category.
 Then the \emph{twist completion} (or ``Maurer--Cartan completion'')
$\bA\mc$ is constructed as the DG\+category whose objects are
pairs $(A,a_A)$, where $A$ is an object of $\bA$ and
$a\in\Hom^1_\bA(A,A)$ is a Maurer--Cartan cochain.
 By the definition, the underlying graded abelian group of the complex
of morphisms $(A,a_A)\rarrow(B,a_B)$ in $\bA\mc$ coincides with
the underlying graded abelan group of the complex of morphisms
$A\rarrow B$ in $\bA$, that is
$$
 \Hom_{\bA\mc^*}^*((A,a_A),(B,a_B))=
 \Hom_{\bA^*}^*(A,B).
$$
 The differential $d_{(a_A,a_B)}$ in the complex
$\Hom_{\bA\mc}^\bu((A,a_A),(B,a_B))$ is obtained from
the differential~$d$ in the complex $\Hom_\bA(A,B)$ by the rule
$$
 d_{(a_A,a_B)}(f)=d(f)+a_Bf-(-1)^{|f|}fa_A.
$$
 The composition of morphisms in $\bA\mc$ is induced by
that in~$\bA$.
 All twists exist in the DG\+category $\bA\mc$, there is a fully
faithful DG\+functor $\bA\rarrow\bA\mc$ taking each object $A\in\bA$
to the object $(A,0)\in\bA\mc$, and every object of $\bA\mc$ is
a twist of an object coming from~$\bA$.

 If the DG\+category $\bA$ is additive, then so is
the DG\+category~$\bA\mc$.
 If the DG\+category $\bA$ has shifts, cones, infinite coproducts,
or infinite products, then so has the DG\+category~$\bA\mc$
(in fact, if $\bA$ is additive with shifts, then it already
follows that $\bA\mc$ has cones).
 However, the passage from $\bA$ to $\bA\mc$ need not preserve
idempotent-completeness.

\begin{exs} \label{adding-twists-examples}
 (1)~Let $k$~be a field and $\bA$ be the DG\+category of acyclic
complexes of $k$\+vector spaces, i.~e., the full DG\+subcategory in
the DG\+category of complexes of $k$\+vector spaces $\bC(k\rmodl)$
whose objects are the acyclic complexes.
 Then $\bA$ is an idempotent-complete DG\+category, but $\bA\mc$
is not.
 The reason is that not every complex (of vector spaces) can be
endowed with a new differential making it an acyclic complex,
but every complex is a direct summand of a complex which can be so
endowed; hence $\bA\mc\varsubsetneq\bA\mc^\oplus= \bC(k\rmodl)$.

 (2)~For any idempotent-complete additive DG\+category $\bB$ with
twists, consider the full DG\+subcategory $\bA\subset\bB$
of contractible objects in~$\bB$.
 Then one has an equivalence of DG\+categories $\bA\mc^\oplus
\simeq\bB$, because every object $B\in\bB$ is a direct summand of
a twist $B\oplus B[1]\in\bB$ of the object $\cone(\id_B)\in\bA$.
\end{exs}

\begin{prop} \label{iterated-bec-construction-revisited}
 Let\/ $\bA$ be an additive DG\+category with shifts and cones.
 Then the DG\+category\/ $\bA^{\bec\bec}$ can be naturally viewed
as an intermediate full DG\+subcategory between\/ $\bA\mc$ and\/
$\bA\mc^\oplus$,
$$
 \bA\mc\subset\bA^{\bec\bec}\subset\bA\mc^\oplus.
$$
 If the DG\+category\/ $\bA$ is idempotent-complete, then\/
$\bA^{\bec\bec}=\bA\mc^\oplus$.
\end{prop}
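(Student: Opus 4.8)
The plan is to realize both inclusions as fully faithful DG\+functors \emph{into} the single ambient DG\+category $\bA\mc^\oplus$, and then to read off the equality in the idempotent-complete case from a closure-under-summands argument, rather than by a direct surjectivity computation.

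First I would construct the left inclusion. Since $\bA^\bec$ always has all twists, so does $\bA^{\bec\bec}=(\bA^\bec)^\bec$; hence the fully faithful DG\+functor $\bec\bec\:\bA\rarrow\bA^{\bec\bec}$ of Proposition~\ref{iterated-bec-construction} extends, by the universal property of the twist completion (immediate from its construction), to a DG\+functor $J\:\bA\mc\rarrow\bA^{\bec\bec}$, \ $(A,a)\longmapsto\bec\bec(A)(\bec\bec(a))$. As $\bec\bec$ induces isomorphisms on the underlying graded Hom\+groups and the twisted differentials $d_{(a,b)}$ and $d_{(\bec\bec a,\bec\bec b)}$ correspond under these isomorphisms, $J$ is fully faithful.

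For the right inclusion I would proceed abstractly, so that full faithfulness comes for free. The category $\bA\mc^\oplus$ is additive, idempotent-complete, and has all twists, so Proposition~\ref{iterated-bec-construction} applies to it and $\bec\bec\:\bA\mc^\oplus\rarrow(\bA\mc^\oplus)^{\bec\bec}$ is an equivalence. The fully faithful inclusion $\iota\:\bA\rarrow\bA\mc^\oplus$ induces a fully faithful DG\+functor $\iota^{\bec\bec}\:\bA^{\bec\bec}\rarrow(\bA\mc^\oplus)^{\bec\bec}$ (the operation $(-)^\bec$ preserves full faithfulness, since a termwise isomorphism of Hom\+complexes restricts to an isomorphism of cocycle subgroups and is compatible with the commutator differential $d^\bec$). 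Composing with a quasi-inverse of $\bec\bec$ yields a fully faithful $K\:\bA^{\bec\bec}\rarrow\bA\mc^\oplus$; unwinding the equivalence gives $K(W,\sigma,\tau)=((W,-\tau),\sigma\tau)$, i.e.\ $K$ sends an object to the image of the idempotent $\sigma\tau$ on the twist $W(-\tau)$, exactly as in the proof of Proposition~\ref{iterated-bec-construction}. I would then check the compatibility that $K\circ J$ is naturally isomorphic to the standard inclusion $\bA\mc\rarrow\bA\mc^\oplus$, which legitimizes writing $\bA\mc\subset\bA^{\bec\bec}\subset\bA\mc^\oplus$ as a genuine chain of full DG\+subcategories.

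Finally, for the idempotent-complete case the key observation is that $\bA^{\bec\bec}$ is \emph{itself} idempotent-complete: if $\bA$ is additive and idempotent-complete then so is $\bA^\bec$, and iterating, so is $\bA^{\bec\bec}$. A fully faithful functor out of an idempotent-complete additive category has essential image closed under direct summands; hence $K(\bA^{\bec\bec})\subset\bA\mc^\oplus$ is closed under summands. Since every object of $\bA\mc^\oplus$ is by construction a direct summand of an object of $\bA\mc\subset\bA^{\bec\bec}$, it follows that $\bA^{\bec\bec}=\bA\mc^\oplus$. The main obstacle is the bookkeeping in the third paragraph: extracting the explicit form of $K$ from its abstract definition and verifying that $K\circ J$ is the standard inclusion: this is precisely the Morita-style $2\times 2$\+matrix identification underlying Proposition~\ref{iterated-bec-construction}. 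Everything else is formal once one invokes that $(-)^\bec$ preserves additivity, idempotent-completeness, and the existence of all twists.
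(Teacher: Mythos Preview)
Your proof is correct and follows essentially the same route as the paper. The paper packages the two inclusions into a single commutative square
\[
\xymatrix{\bA \ar@{>->}[r] \ar@{>->}[d]^-{\bec\bec} & \bA\mc^\oplus \ar@{=}[d]^-{\bec\bec} \\ \bA^{\bec\bec} \ar@{>->}[r] & \bA\mc^{\oplus\bec\bec}}
\]
and reads off $\bA^{\bec\bec}\subset\bA\mc^\oplus$ from the right-hand equivalence, then obtains $\bA\mc\subset\bA^{\bec\bec}$ simply by noting that $\bA^{\bec\bec}$, viewed inside $\bA\mc^\oplus$, contains $\bA$ and has all twists; the idempotent-complete case is dispatched in one line by the same closure-under-summands observation you make. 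Your version is the same argument unpacked: you construct $J$ and $K$ separately and then verify the compatibility $K\circ J\simeq\text{inclusion}$ explicitly, whereas the paper gets this compatibility for free from commutativity of the square. The explicit formula $K(W,\sigma,\tau)=((W,-\tau),\sigma\tau)$ you extract is exactly the Morita-style identification in the proof of Proposition~\ref{iterated-bec-construction}, so no new computation is needed there either.
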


\begin{proof}
 We recall from Section~\ref{main-construction-subsecn} that,
for any DG\+category $\bB$, the DG\+category $\bB^\bec$ has twists;
furthermore, $\bB^\bec$ is additive and idempotent-complete
whenever $\bB$~is.
 Now in the situation at hand, let us consider the commutative diagram
of fully faithful DG\+functors
\begin{equation}
\begin{gathered}
 \xymatrix{\bA \ar@{>->}[r] \ar@{>->}[d]^-{\bec\bec}
 & \bA\mc^\oplus \ar@{=}[d]^-{\bec\bec} \\
 \bA^{\bec\bec} \ar@{>->}[r] & \bA\mc^{\oplus\bec\bec}
 }
\end{gathered}
\end{equation}
 Here the rightmost vertical functor~$\bec\bec$ is an equivalence
of DG\+categories by Proposition~\ref{iterated-bec-construction},
as the DG\+category $\bA\mc^\oplus$ is additive and
idempotent-complete with twists.
 It follows that $\bA^{\bec\bec}$ is a full DG\+subcategory in
$\bA\mc^\oplus$.
 On the other hand, all twists exist in $\bA^{\bec\bec}$; so $\bA\mc
\subset\bA^{\bec\bec}$.
 Finally, if $\bA$ is idempotent-complete, then $\bA^{\bec\bec}$ is
idempotent-complete, too; hence $\bA^{\bec\bec}=\bA\mc^\oplus$.
\end{proof}

\begin{exs} \label{becbec-examples}
 (1)~The following example shows that the DG\+category $\bA^{\bec\bec}$
need not be idempotent-complete (for an additive DG\+category $\bA$
which is not idempotent-complete).
 Let $\bA$ be the following full DG\+subcategory in the DG\+category
of complexes of $k$\+vector spaces $\bC(k\rmodl)$, with the grading
group $\Gamma=\boZ$.
 The objects of $\bA$ are all the finite complexes of
finite-dimensional vector spaces whose terms are vector spaces of
dimension divisible by~$m$, where $m\ge2$ is a fixed integer.
 Then one has $\bA=\bA\mc=\bA^{\bec\bec}\varsubsetneq\bA\mc^\oplus$.
 The reason is that, for any finite complex of finite-dimensional
vector spaces $C^\bu$, if all the terms of the complex
$\cone(\id_{C^\bu}[-1])$ have dimensions divisible by~$m$,
then the complex $C^\bu$ has the same property.

 (2)~On the other hand, let $\bA$ be the full DG\+subcategory in
$\bC(k\rmodl)$ consisting of all the finite complexes of
finite-dimensional vector spaces whose Euler characteristic is
divisible by~$m$.
 Then $\bA=\bA\mc\varsubsetneq\bA^{\bec\bec}=\bA\mc^\oplus$.
 Here the reason is that the complex $\cone(\id_{C^\bu}[-1])$ has
zero Euler characteristic for any finite complex of finite-dimensional
vector spaces~$C^\bu$.
 In both the examples~(1) and~(2), \,$\bA^\oplus=\bA\mc^\oplus$ is,
of course, the full DG\+subcategory in $\bC(k\rmodl)$ consisting of
all finite complexes of finite-dimensional vector spaces.
\end{exs}

\subsection{Compatibility with DG\+functors}
\label{compatibility-with-dg-functors-subsecn}
 Let $F\:\bA\rarrow\bB$ be a DG\+functor.
 Then the DG\+functor $F^\bec\:\bA^\bec\rarrow\bB^\bec$ is defined by
the following (obvious) rules.
 To every object $X^\bec=(X,\sigma_X)\in\bA^\bec$, the functor $F^\bec$
assigns the object $F^\bec(X^\bec)=(F(X),F(\sigma_X))\in\bB^\bec$.
 For any two objects $X^\bec=(X,\sigma_X)$ and $Y^\bec=(Y,\sigma_Y)
\in\bA^\bec$ and any integer $i\in\boZ$, the map $F^\bec\:
\Hom^i_{\bA^\bec}(X^\bec,Y^\bec)\rarrow
\Hom^i_{\bB^\bec}(F(X^\bec),F(Y^\bec))$, by construction, coincides
with the map $\cZ(F)\:\Hom^{-i}_{\cZ^(\bA)}(X,Y)\rarrow
\Hom^{-i}_{\cZ(\bB)}(F(X),F(Y))$.  {\hfuzz=1.7pt\par}

 Assume that $\bA$ and $\bB$ are DG\+categories with shifts and cones.
 Then the following square diagrams of additive functors are
commutative:
$$
 \xymatrix{
  \sZ^0(\bA) \ar[rr]^{\sZ^0(F)} \ar[d]_{\Phi_\bA}
  && \sZ^0(\bB) \ar[d]^{\Phi_\bB} \\
  \sZ^0(\bA^\bec) \ar[rr]_{\sZ^0(F^\bec)} && \sZ^0(\bB^\bec)
 }
 \qquad
 \xymatrix{
  \bA^0 \ar[rr]^{F^0} \ar[d]_{\widetilde\Phi_\bA}
  && \bB^0 \ar[d]^{\widetilde\Phi_\bB} \\
  \sZ^0(\bA^\bec) \ar[rr]_{\sZ^0(F^\bec)} && \sZ^0(\bB^\bec)
 }
$$
$$
 \xymatrix{
  \sZ^0(\bA^\bec) \ar[rr]^{\sZ^0(F^\bec)} \ar[d]_{\Psi^\pm_\bA}
  && \sZ^0(\bB^\bec) \ar[d]^{\Psi^\pm_\bB} \\
  \sZ^0(\bA) \ar[rr]_{\sZ^0(F)} && \sZ^0(\bB)
 }
 \qquad
 \xymatrix{
  (\bA^\bec)^0 \ar[rr]^{(F^\bec)^0} \ar[d]_{\widetilde\Psi^\pm_\bA}
  && (\bB^\bec)^0 \ar[d]^{\widetilde\Psi^\pm_\bB} \\
  \sZ^0(\bA) \ar[rr]_{\sZ^0(F)} && \sZ^0(\bB)
 }
$$
 Furthermore, the following square diagram of DG\+functors is
commutative as well:
$$
 \xymatrix{
  \bA \ar[rr]^-F \ar[d]_-{\bec\bec}
  && \bB \ar[d]^-{\bec\bec} \\
  \bA^{\bec\bec} \ar[rr]^-{F^{\bec\bec}} && \bB^{\bec\bec}
 }
$$

 For a discussion of adjoint DG\+functors between DG\+categories
and the related adjunctions of additive functors between
preadditive categories, see~\cite[Section~7.5]{Pdomc}.

\subsection{Examples} \label{A-bec-examples-subsecn}
 In this section we compute the DG\+category $\bA^\bec$ and
the additive category $\sZ^0(\bA^\bec)$ for the concrete examples
of DG\+categories $\bA$ described in Section~\ref{examples-secn}.

\begin{ex} \label{complexes-bec-example}
 Let $\sA$ be a preadditive category and $\bC(\sA)$ be the DG\+category
of complexes in~$\sA$.
 We will use the notation from Section~\ref{complexes-subsecn}: in
particular, $\sG(\sA)$ is the preadditive category of graded objects
in~$\sA$ (and homogeneous morphisms of degree~$0$), while
$\sC(\sA)=\sZ^0(\bC(\sA))$ is the preadditive category of complexes
in~$\sA$ (and closed morphisms of degree~$0$ between them).

 To begin with, the following description of the DG\+category
$\bC(\sA)^\bec$ is obtained directly from the definitions.
 An object of $\bC(\sA)^\bec$ is a graded object $X^*$ in $\sA$ endowed
with a differential $d_X\in\Hom^1_{\cG(\sA)}(X^*,X^*)$ and
an endomorphism $\sigma_X\in\Hom^{-1}_{\cG(\sA)}(X^*,X^*)$ such that
$d_X^2=0=\sigma_X^2$ and $d_X\sigma_X+\sigma_Xd_X=\id_{X^*}$.
 The complex of morphisms from an object $X^\bec=(X^*,d_X,\sigma_X)$
to an object $Y^\bec=(Y^*,d_Y,\sigma_Y)$ in the DG\+category
$\bC(\sA)^\bec$ is constructed as follows: the group
$\Hom_{\bC(\sA)^\bec}^n(X^\bec,Y^\bec)$ is the subgroup in
$\Hom_{\cG(\sA)}^{-n}(X^*,Y^*)$ consisting of all homogeneous
morphisms~$f$ such that $d_Yf-(-1)^nfd_X=0$; the differential~$d^\bec$
in the complex $\Hom_{\bC(\sA)^\bec}^\bu(X^\bec,Y^\bec)$ is given
by the rule $d^\bec(f)=\sigma_Yf-(-1)^{|f|}f\sigma_X$.

 One can observe that all objects of the DG\+category
$\bC(\sA)^\bec$ are contractible.
 Indeed, for any object $X^\bec=(X^*,d_X,\sigma_X)\in\bC(\sA)^\bec$
the endomorphism~$d_X$ belongs to the subgroup
$\Hom_{\bC(\sA)^\bec}^{-1}(X^\bec,X^\bec)\subset
\Hom_{\cG(\sA)}^1(X^*,X^*)$, since $d_X^2=0$.
 Furthermore, one has $d^\bec(d_X)=\sigma_Xd_X+d_X\sigma_X=
\id_{X^\bec}$; so $d_X$~is a contracting homotopy for the object
$X^\bec\in\bC(\sA)^\bec$.
 In other words, this means that the DG\+category $\bC(\sA)^\bec$ is
quasi-equivalent to the zero DG\+category.

 On the other hand, if $\sA$ is an idempotent-complete additive
category, then $\bC(\sA)$ is an idempotent-complete additive
DG\+category with twists.
 So the DG\+category $\bC(\sA)^{\bec\bec}$ is equivalent to~$\bC(\sA)$
by Proposition~\ref{iterated-bec-construction}.
 This example demonstrates that the passage from a DG\+category $\bA$
to the DG\+category $\bA^\bec$ does \emph{not} preserve
quasi-equivalences of DG\+categories, in general.

 Let $\sA$ be an additive category.
 Our next aim is to compute the additive category
$\sZ^0(\bC(\sA)^\bec)$, at least, in the case when $\sA$ is
idempotent-complete.
 Similarly to Proposition~\ref{G-plus-minus-prop}, the faithful,
conservative forgetful functor $\sC(\sA)\rarrow\sG(\sA)$ has faithful,
conservative left and right adjoint functors $G^+$ and
$G^-\:\sG(\sA)\rarrow\sC(\sA)$.
 Specifically, the functor $G^+$ assigns to a graded object $A^*\in
\sG(\sA)$ the complex $G^+(A^*)$ with the components $G^+(A^*)^n=
A^n\oplus A^{n-1}$ and the differential $d_{G^+,n}\:A^n\oplus A^{n-1}
\rarrow A^{n+1}\oplus A^n$ whose only nonzero component is the identity
map $A^n\rarrow A^n$.
 Dually, the complex $G^-(A^*)$ has the components $G^-(A^*)^n=
A^n\oplus A^{n+1}$ and the differential $d_{G^-,n}\:A^n\oplus A^{n+1}
\rarrow A^{n+1}\oplus A^{n+2}$ whose only nonzero component is
the identity map $A^{n+1}\rarrow A^{n+1}$.
 So one has $G^-(A^*)\simeq G^+(A^*)[1]$.

 The additive functor $\Upsilon=\Upsilon_\sA\:\sG(\sA)\rarrow
\sZ^0(\bC(\sA)^\bec)$ is constructed as follows.
 For any object $A^*\in\sG(\sA)$, put $X^*=G^+(A^*)$ and $d_X=d_{G^+}
\in\Hom^1_{\cG(\sA)}(X^*,X^*)$.
 Let $\sigma_X\in\Hom^{-1}_{\cG(\sA)}(X^*,X^*)$ be the endomorphism
defined by the rule that the morphism $\sigma_{X,n}\:
X^n=A^n\oplus A^{n-1}\rarrow A^{n-1}\oplus A^{n-2}=X^{n-1}$ has
the identity map $A^{n-1}\rarrow A^{n-1}$ as its only nonzero component.
 Then one has $d_X^2=0=\sigma_X^2$ and $d_X\sigma_X+\sigma_Xd_X=
\id_{X^*}$, so $X^\bec=(X^*,d_X,\sigma_X)$ is an object of
$\sZ^0(\bC(\sA)^\bec)$.
 We put $\Upsilon(A^*)=X^\bec$.
 The action of the functor $\Upsilon$ on morphisms is defined in
the obvious way.

 There are commutative diagrams of additive functors
\begin{equation} \label{complexes-tilde-Phi-diagram}
\begin{gathered}
 \xymatrix{
  \bC(\sA)^0 \ar@<0.4ex>[rr] \ar@{>->}[rd]_-{\widetilde\Phi_{\bC(\sA)}}
  && \sG(\sA) \ar@<0.4ex>@{-}[ll] \ar@{>->}[ld]^-{\Upsilon_\sA} \\
  & \sZ^0(\bC(\sA)^\bec)
 }
\end{gathered}
\end{equation}
and
\begin{equation} \label{complexes-Psi-diagram}
\begin{gathered}
 \xymatrix{
  \sG(\sA) \ar@{>->}[rr]^-{\Upsilon_\sA} \ar[d]_{G^+}
  && \sZ^0(\bC(\sA)^\bec) \ar[d]^{\Psi_{\bC(\sA)}^+}
  \\ \sC(\sA)\ar@{=}[rr] && \sZ^0(\bC(\sA))
 }
\end{gathered}
\end{equation}
 Here the upper horizontal double arrow
in~\eqref{complexes-tilde-Phi-diagram} is the obvious equivalence
of additive categories mentioned in Section~\ref{complexes-subsecn},
while the lower horizontal double line
in~\eqref{complexes-Psi-diagram} is essentially the definition of
the additive category of complexes~$\sC(\sA)$.
 The leftmost diagonal arrow in~\eqref{complexes-tilde-Phi-diagram}
is the fully faithful functor from
Lemma~\ref{Phi-Psi-extended-to-nonclosed-morphisms}, while
the rightmost vertical arrow in~\eqref{complexes-Psi-diagram} is
the faithful functor from Lemma~\ref{G-functors-in-DG-categories}.

 The additive functor $\Upsilon_\sA$ is always fully faithful
(since so is the additive functor~$\widetilde\Phi_{\bC(\sA)}$).
 When the additive category $\sA$ is idempotent-complete, the functor
$\Upsilon$ is an equivalence of additive categories.
 Given an object $X^\bec=(X^*,d_X,\sigma_X)\in\sZ^0(\bC(\sA)^\bec)$,
the corresponding object $A^*\in\sG(\sA)$ can be recovered as
the image of the idempotent endomorphism $\sigma_Xd_X\in
\Hom_{\cG(\sA)}^0(X^*,X^*)$.
 Consequently, the additive functor $\widetilde\Phi_{\bC(\sA)}$ is
also an equivalence of categories in this case.
\end{ex}

\begin{ex} \label{CDG-ring-bec-example}
 Let $\biR^\cu=(R^*,d,h)$ be a CDG\+ring.
 We will use the notation from Sections~\ref{curved-dg-modules-subsecn}
and~\ref{cdg-revisited-subsecn}; in particular, recall the notation
$\widehat\biR^\bu=(\widehat\biR^*,\d)$ for the graded ring
$R^*[\delta]$ with the changed sign of the grading,
$\widehat\biR^n=R^*[\delta]^{-n}$, endowed with
the differential $\d=\d/\d\delta$.
 So $\widehat\biR^\bu$ is an acyclic DG\+ring (the cohomology ring of
the DG\+ring $\widehat\biR^\bu$ vanishes).

 Consider the DG\+category $\bA=\biR^\cu\bmodl$ of left CDG\+modules
over~$(R^*,d,h)$.
 We claim that the DG\+category $\bA^\bec$ is naturally equivalent
(in fact, isomorphic) to the DG\+category of left DG\+modules
over~$\widehat\biR^\bu$,
\begin{equation} \label{bec-of-cdg-modules}
 (\biR^\cu\bmodl)^\bec=\widehat\biR^\bu\bmodl.
\end{equation}

 Indeed, by the definition, the objects of the DG\+category
$(\biR^\cu\bmodl)^\bec$ are triples $(X^*,d_X,\sigma_X)$, where $X^*$
is a graded left $R^*$\+module and $d_X$, $\sigma_X\:X^*\rarrow X^*$
are homogeneous endomorphisms of the graded abelian group $X^*$
of the degrees~$1$ and~$-1$, respectively.
 In fact, $(X^*,d_X)$ has to be a left CDG\+module over $(R^*,d,h)$,
so the equations
\begin{itemize}
\item $d_X(rx)=d(r)x+(-1)^{|r|}rd_X(x)$ and
\item $d_X^2(x)=hx$
\end{itemize}
must be satisfied for all $r\in R^{|r|}$ and $x\in X^{|x|}$.
 Furthermore, $\sigma_X$ must be a contracting homotopy with zero
square for the CDG\+module $(X^*,d_X)$, which means the equations
\begin{itemize}
\item $\sigma_X(rx)=(-1)^{|r|}r\sigma_X(x)$,
\item $d_X\sigma_X(x)+\sigma_Xd_X(x)=x$, and
\item $\sigma_X^2(x)=0$
\end{itemize}
for all $r\in R^{|r|}$ and $x\in X^{|x|}$. 

 On the other hand, the objects of the DG\+category
$\widehat\biR^\bu\bmodl$ are pairs $(N^*,\d_N)$, where $N^*$ is
a graded left $R^*[\delta]$\+module (with the sign of the grading
changed) and $\d_N:N^*\rarrow N^*$ is an odd derivation compatible
with the odd derivation~$\d$ on~$\widehat\biR^*$.
 To establish the equivalence of
DG\+categories~\eqref{bec-of-cdg-modules} on the level of objects,
one assigns to a DG\+module $(N^*,\d_N)$ over $\widehat\biR^\bu$
the same graded abelian group $X^*=N^*$.
 The graded $R^*$\+module structure on $X^*$ agrees with the one
on $N^*$, the differential~$d_X$ on $X^*$ is given by the action
of the element $\delta\in R^*[\delta]$ on $N^*$, and
the contracting homotopy~$\sigma_X$ on $(X^*,d_X)$ is given
by the action of the differential~$\d_N$ on~$N^*$.
 Symbolically, we put
$$
 d_X=\delta \quad\text{and}\quad\sigma_X=\d_N.
$$
 We leave it to the reader to establish the equivalence of
DG\+categories~\eqref{bec-of-cdg-modules} on the level of
the complexes of morphisms.
 Similarly one constructs an isomorphism of DG\+categories
$(\bmodr\biR^\cu)^\bec=\bmodr\widehat\biR^\bu$ for right CDG\+modules
and DG\+modules.

 Notice that the DG\+functor $\bec\bec\:\biR^\cu\bmodl\rarrow
(\biR^\cu\bmodl)^{\bec\bec}$ is an equivalence of DG\+categories by
Proposition~\ref{iterated-bec-construction}.
 On the other hand, viewing $\widehat\biR^\bu$ as a CDG\+ring with
zero curvature, one can apply to it the same construction which
produced $\widehat\biR^\bu$ from $\biR^\cu$, thus obtaining an acyclic
DG\+ring $\hathatRbu$.

 The underlying graded ring $R^*[\delta][\epsilon]$ of $\hathatRbu$
is obtained by adjoining an element~$\epsilon$ of degree~$-1$ to
the graded ring $R^*[\delta]$, and imposing the relations
$\epsilon\hat r-(-1)^{|\hat r|}\hat r\epsilon=\d(\hat r)$ for all
$\hat r\in R^*[\delta]$ (where $\d=\d/\d\delta$) and $\epsilon^2=0$.
 One can introduce the notation $d^\bec=\d=\d/\d\delta$, and then
have $d^{\bec\bec}=\d/\d\epsilon$ denote the differential in
the DG\+ring $\hathatRbu$.

 Explicitly, one has $\deg\delta=1$ and $\deg\epsilon=-1$, and
the defining relations in the graded ring $R^*[\delta][\epsilon]$ are
$\delta r-(-1)^{|r|}r\delta=d(r)$, \ $\epsilon r-(-1)^{|r|}r\epsilon=0$,
\ $\delta^2=h$, \ $\epsilon^2=0$, and $\epsilon\delta+\delta\epsilon=1$,
for all $r\in R^{|r|}$.
 The resulting DG\+ring is $\hathatRbu=
(R^*[\delta][\epsilon],d^{\bec\bec})$, where the differential
$d^{\bec\bec}=\d/\d\epsilon\:R^*[\delta][\epsilon]\rarrow
R^*[\delta][\epsilon]$ is defined by the rules $d^{\bec\bec}(r)=0$
for all $r\in R^*$, \ $d^{\bec\bec}(\delta)=0$, and
$d^{\bec\bec}(\epsilon)=1$.

 Applying~\eqref{bec-of-cdg-modules} twice, we obtain an isomorphism of
DG\+categories $(\biR^\cu\bmodl)^{\bec\bec}=\hathatRbu$.
 Thus \emph{the DG\+category of CDG\+modules over any CDG\+ring}
(in particular, the DG\+category of DG\+modules over any DG\+ring)
\emph{is equivalent to the DG\+category of DG\+modules over
an acyclic DG\+ring}.

 Let us now compute the abelian category $\sZ^0((\biR^\cu\bmodl)^\bec)$.
 The additive functor $\Upsilon=\Upsilon_{\biR^\cu}\:R^*\smodl\rarrow
\sZ^0((\biR^\cu\bmodl)^\bec)$ is constructed as follows.
 Let $M^*$ be a graded left $R^*$\+module.
 Consider the CDG\+module $(X^*,d_X)=(G^+(M^*),d_{G^+})$ over
$\biR^\cu$, as constructed in Proposition~\ref{G-plus-minus-prop}(a),
and endow it with the endomorphism
$\sigma_X\in\Hom_{R^*\cmodl}^{-1}(M^*,M^*)$ constructed as
the composition $G^+(M^*)\rarrow M^*[-1]\rarrow G^+(M^*)[-1]$ of
the maps in the short exact sequence of
Proposition~\ref{G-plus-minus-prop}(b).

 Explicitly, the elements of the group $X^n=G^+(M^*)^n$,
\,$n\in\Gamma$, are formal expressions $m+\delta m'$,
with $m\in M^n$ and $m'\in M^{n-1}$.
 The action of $R^*$ in $X^*=G^+(M^*)$ is given by the formula
$r(m+\delta m')=rm-(-1)^{|r|}d(r)m'+(-1)^{|r|}\delta rm'$ for all
$r\in R^{|r|}$, while the differential on $G^+(M^*)$ is
$d_X(m+\delta m')=d_{G^+}(m+\delta m')=hm'+\delta m$.
 The endomorphism $\sigma_X\:X^*\rarrow X^*$ is given by the rule
$\sigma_X(m+\delta m')=m'+\delta0$.
 Then one has $\sigma_X(rx)=(-1)^{|r|}r\sigma_X(x)$ for all
$r\in R^{|x|}$ and $x\in X^*$, \ $d_X\sigma_X+\sigma_Xd_X=\id_{X^*}$,
and $\sigma_X^2=0$, so $X^\bec=(X^*,d_X,\sigma_X)$ is an object
of $\sZ^0((\biR^\cu\bmodl)^\bec)$.
 We put $\Upsilon(M^*)=X^\bec$.
 The action of the functor $\Upsilon$ on morphisms is defined in
the obvious way.

 The functor $\Upsilon_{\biR^\cu}$ is an equivalence of abelian categories.
 Given an object $X^\bec=(X^*,d_X,\sigma_X)\in
\sZ^0((\biR^\cu\bmodl)^\bec)$, the corresponding graded $R^*$\+module
$M^*$ can be recovered as the image of the homogeneous map
$\sigma_X\in\Hom_{R^*}^{-1}(X^*,X^*)$, or as the image of
the idempotent homogeneous map $\sigma_Xd_X\in\Hom_{R^*}^0(X^*,X^*)$.
 Indeed, put $\upsilon_X=d_X-h\sigma_X$; then the pair of homogeneous
maps of graded abelian groups $\sigma_X$ and $\upsilon_X\:X^*\rarrow
X^*$ of degrees~$-1$ and~$1$, respectively, satisfies the equations
$\sigma_X^2=0=\upsilon_X^2$ and $\sigma_X\upsilon_X+\upsilon_X\sigma_X
=\id_{X^*}$; so it defines a representation of a graded $2\times 2$
matrix ring in $X^*$, similarly to the discussion in the proof of
Proposition~\ref{iterated-bec-construction}.

 There are commutative diagrams of additive functors
\begin{equation} \label{cdg-modules-tilde-Phi-diagram}
\begin{gathered}
 \xymatrix{
  (\biR^\cu\bmodl)^0 \ar@{>->}[rr]
  \ar@{>->}[rd]_-{\widetilde\Phi_{\biR^\subcu\bmodl}}
  && R^*\smodl \ar@<0.4ex>[ld]^-{\Upsilon_{\biR^\subcu}} \\
  & \sZ^0((\biR^\cu\bmodl)^\bec) \ar@<0.4ex>@{-}[ru]
 }
\end{gathered}
\end{equation}
and
\begin{equation} \label{cdg-modules-Psi-diagram}
\begin{gathered}
 \xymatrix{
  R^*\smodl \ar@<0.4ex>[rr]^-{\Upsilon_{\biR^\subcu}} \ar[rd]_-{G^+}
  && \sZ^0((\biR^\cu\bmodl)^\bec) \ar@<0.4ex>@{-}[ll]
  \ar[ld]^-{\Psi_{\biR^\subcu\bmodl}^+} \\ & \sZ^0(\biR^\cu\bmodl)
 }
\end{gathered}
\end{equation}
 Here the upper horizontal arrow
in~\eqref{cdg-modules-tilde-Phi-diagram} is the inclusion of
the full subcategory mentioned at the end of
Section~\ref{curved-dg-modules-subsecn}, while the leftmost
diagonal arrow in~\eqref{cdg-modules-tilde-Phi-diagram} is
the fully faithful functor from
Lemma~\ref{Phi-Psi-extended-to-nonclosed-morphisms}.
 The functor~$\Upsilon_{\biR^\cu}$ is an equivalence of categories,
but the other two fully faithful functors
in~\eqref{cdg-modules-tilde-Phi-diagram} are \emph{not}
essentially surjective, as it was explained in
Examples~\ref{nonexistence-of-cdg-structure-examples}
and~\ref{nonexistence-of-derivation-examples}.
 The diagonal arrows in~\eqref{cdg-modules-Psi-diagram} are
the faithful functors from Proposition~\ref{G-plus-minus-prop}
and Lemma~\ref{G-functors-in-DG-categories}.
\end{ex}

\begin{ex} \label{qcoh-CDG-bec-example}
 We are building up on the theory developed in
Sections~\ref{quasi-algebras-subsecn}\+-\ref{quasi-cdg-subsecn}
and Example~\ref{CDG-ring-bec-example}.
 Let $\biR^\cu=(R^*,d_R,h_R)$ and $\biS^\cu=(S^*,d_S,h_S)$ be
two CDG\+rings, and let $(f,a)\:\biR^\cu\rarrow\biS^\cu$ be a morphism
between them (so $a\in S^1$).
 Consider the graded rings $R^*[\delta_R]$ and $S^*[\delta_S]$ with
their odd derivations $\d_R=\d/\d\delta_R$ and $\d_S=\d/\d\delta_S$
of degree~$-1$ with zero squares.
 Then, following~\cite[Section~4.2]{Prel}, the morphism of
CDG\+rings $(f,a)$ induces a morphism of graded rings
$\hat f\:R^*[\delta_R]\rarrow S^*[\delta_S]$ given by the rules
$\hat f(r)=f(r)$ for all $r\in R^*$ and $f(\delta_R)=\delta_S+a$.
 The map~$\hat f$ commutes with the differentials $\d_R$ and~$\d_S$
on $R^*[\delta]$ and $S^*[\delta]$, i.~e., $f\circ\d_R=\d_S\circ f$.

 Let $L^*$ and $M^*$ be left graded modules over the graded rings
$R^*$ and $S^*$, respectively; and let $g\:L^*\rarrow S^*$ be a map
of graded modules compatible with the morphism of graded rings
$f\:R^*\rarrow S^*$.
 Then the induced map of CDG\+modules $G^+(g)\:G^+(L^*)\rarrow
G^+(M^*)$ compatible with the morphism of CDG\+rings
$(f,a)\:\allowbreak\biR^\cu\rarrow\biS^\cu$ is defined by the formula
$G^+(g)^n(l+\delta_Rl')=g(l)+ag(l')+\delta_Sg(l')$ for all $l\in L^n$
and $l'\in L^{n-1}$.
 The map $G^+(g)$ commutes with the structural endomorphisms
$\sigma_X$ and~$\sigma_Y$ of the objects
$\Upsilon_{\biR^\cu}(L^*)=X^\bec=(X,d_X,\sigma_X)\in
\sZ^0((\biR^\cu\bmodl)^\bec)$ and $\Upsilon_{\biS^\cu}(M^*)=Y^\bec=
(Y,d_Y,\sigma_Y)\in\sZ^0((\biS^\cu\bmodl)^\bec)$, that is,
$\sigma_Y\circ G^+(g)=G^+(g)\circ\sigma_X$.

 Let $\biB^\cu$ be a quasi-coherent CDG\+quasi-algebra over
a scheme~$X$, and let $B^*$ be its underlying sheaf of graded rings.
 Then one can construct the sheaf of graded rings $B^*[\delta]$
assigning to every affine open subscheme $U\subset X$ the graded ring
$B^*[\delta](U)=B^*(U)[\delta]$ associated with the CDG\+ring
$(B^*(U),d_U,h_U)$.
 The restriction maps $B^*[\delta](U)\rarrow B^*[\delta](V)$ for pairs
of affine open subschemes $V\subset U\subset X$ are induced by
the restriction morphisms of CDG\+rings $(\rho_{VU},a_{VU})\:
(B^*(U),d_U,h_U)\rarrow(B^*(V),d_V,h_V)$ in the way explained above.
 The composition $O_X\rarrow B^0\rarrow B^*[\delta]^0$ defines
a morphism of sheaves of rings $O_X\rarrow B^*[\delta]^0$ making
$B^*[\delta]$ a quasi-coherent graded quasi-algebra over~$X$
(as one can see from the short exact sequence~\eqref{Rdelta-sequence}).
{\emergencystretch=1em\par}

 Furthermore, the differential $\d=\d/\d\delta$ is well-defined in
the sheaf of graded rings $B^*[\delta]$, making $B^*[\delta]$
a sheaf of DG\+rings over~$X$ (with the differential of degree~$-1$).
 Changing the sign of the grading, one puts
$\widehat\biB^n=B^*[\delta]^{-n}$ for all $n\in\Gamma$; then
$\widehat\biB^\bu=(\widehat\biB^*,\d)$ is a sheaf of DG\+rings on $X$
with the differential of degree~$1$ (as in our usual convention).
 The complex of sheaves (or quasi-modules) $\widehat\biB^\bu$ is
acyclic as a complex of sheaves, and in fact, its sections over
every affine open subscheme $U\subset X$ form an acyclic complex;
but the complex $\widehat\biB^\bu(X)$ of global sections of
$\widehat\biB^\bu$ over $X$ need \emph{not} be acyclic,
generally speaking.

 In fact, the DG\+ring $\widehat\biB^\bu(X)$ is acyclic if and only if
the quasi-coherent CDG\+quasi-algebra $\biB^\cu$ ``admits a global
connecton'', meaning that it is isomorphic (in a suitable sense, i.~e.,
change-of-connection isomorphic) to a quasi-coherent CDG\+quasi-algebra
${}'\!\biB^\cu$ with vanishing change-of-connection elements $a'_{VU}=0$
for all affine open subschemes $V\subset U\subset X$.
 In particular, one can view $\widehat\biB^\bu$ as a quasi-coherent
CDG\+quasi-algebra over $X$ and apply the same construction again,
producing the sheaf of DG\+rings $\hathatBbu$ over~$X$.
 Its DG\+ring of global sections $\hathatBbu(X)$ is acyclic (and so
are the DG\+rings of sections $\hathatBbu(Y)$ over all open subschemes
$Y\subset X$).
 
 Similarly to Example~\ref{CDG-ring-bec-example}, there is a natural
equivalence (in fact, isomorphism) of DG\+categories
\begin{equation} \label{bec-of-qcoh-cdg-modules}
 (\biB^\cu\bqcoh)^\bec=\widehat\biB^\bu\bqcoh.
\end{equation}
 We leave the details to the reader.

 Let us compute the abelian category $\sZ^0((\biB^\cu\bqcoh)^\bec)$.
 Similarly to the previous examples, we start with constructing
the adjoint functors to the exact, faithful forgetful functor
$\sZ^0(\biB^\cu\bqcoh)\rarrow B^*\sqcoh$.
 The left adjoint functor $G^+\:B^*\sqcoh\rarrow\sZ^0(\biB^\cu\bqcoh)$
assigns to a quasi-coherent graded left $B^*$\+module $M^*$
the quasi-coherent left CDG\+module $G^+(M^*)$ over $\biB^\cu$ defined
by applying the functor $G^+$ from Proposition~\ref{G-plus-minus-prop}
for the CDG\+ring $(B^*(U),d_U,h_U)$ to the graded $B^*(U)$\+module
$M^*(U)$ for every affine open subset $U\subset X$.
 In other words, we put $G^+(M^*)(U)=G^+(M^*(U))$.
 The restriction maps $G^+(M^*)(U)\rarrow G^+(M^*)(V)$ are induced
by the restriction morphisms of CDG\+rings $(\rho_{VU},a_{VU})\:
(B^*(U),d_U,h_U)\rarrow(B^*(V),d_V,h_V)$ in the way explained above.
 The underlying graded $B^*$\+module of $G^+(M^*)$ is quasi-coherent
(i.~e., quasi-coherent as an $O_X$\+module) in view of
the short exact sequence in Proposition~\ref{G-plus-minus-prop}(b),
because the full subcategory of quasi-coherent sheaves is closed
under extensions in the category of all sheaves of $O_X$\+modules.
 The right adjoint functor $G^-$ is constructed similarly, and
only differs by a shift, $G^-=G^+[1]$.
{\hbadness=1600\par}

 The additive functor $\Upsilon=\Upsilon_{\biB^\cu}\:B^*\sqcoh\rarrow
\sZ^0((\biB^\cu\bqcoh)^\bec)$ is constructed as follows.
 Let $M^*$ be a quasi-coherent graded $B^*$\+module.
 Put $\biQ^\cu=G^+(M^*)\in\sZ^0(\biB^\cu\bqcoh)$, and let
$Q^*\in B^*\sqcoh$ denote the underlying quasi-coherent graded
$B^*$\+module of $G^+(M^*)$.
 Then the homogeneous endomorphism $\sigma_Q\in\Hom_{B^*}^{-1}(Q^*,Q^*)$
is defined by the rule $\sigma_Q(U)=\sigma_{Q(U)}$ for all affine open 
subschemes $U\subset X$, where $(\biQ^\cu(U),\sigma_{Q(U)})=
(Q^*(U),d_{Q(U)},\sigma_{Q(U)})=\Upsilon_{(B^*(U),d_U,h_U)}(M^*(U))$.
 Finally, we put $\Upsilon_{\biB^\cu}(M^*)=(\biQ^\cu,\sigma_Q)$.
 The action of the functor $\Upsilon_{\biB^\cu}$ on morphisms is easy
to define.

 The functor $\Upsilon_{\biB^\cu}$ is an equivalence of abelian
categories.
 Given an object $Q^\bec=(\biQ^\cu,\sigma_Q)\in
\sZ^0((\biB^\cu\bqcoh)^\bec)$, the corresponding quasi-coherent
graded $B^*$\+module $M^*$ can be recovered as the image of
the homogeneous map of quasi-coherent graded $B^*$\+modules
$\sigma_Q\:Q^*[1]\rarrow Q^*$ (where $Q^*$ is the underlying
quasi-coherent graded $B^*$\+module of the quasi-coherent
CDG\+module~$\biQ^\cu$).
 Locally over an affine open subscheme $U\subset X$,
the $B^*(U)$\+module $M^*(U)$ can be also recovered as the image
of the idempotent homogeneous map $\sigma_{Q(U)}d_{Q(U)}\in
\Hom^0_{B^*(U)}(Q^*(U),Q^*(U))$.

 There are commutative diagrams of additive functors
\begin{equation} \label{cdg-qcoh-tilde-Phi-diagram}
\begin{gathered}
 \xymatrix{
  (\biB^\cu\bqcoh)^0 \ar@{>->}[rr]
  \ar@{>->}[rd]_-{\widetilde\Phi_{\biB^\subcu\bqcoh}}
  && B^*\sqcoh \ar@<0.4ex>[ld]^-{\Upsilon_{\biB^\subcu}} \\
  & \sZ^0((\biB^\cu\bqcoh)^\bec) \ar@<0.4ex>@{-}[ru]
 }
\end{gathered}
\end{equation}
and
\begin{equation} \label{cdg-qcoh-Psi-diagram}
\begin{gathered}
 \xymatrix{
  B^*\sqcoh \ar@<0.4ex>[rr]^-{\Upsilon_{\biB^\subcu}} \ar[rd]_-{G^+}
  && \sZ^0((\biB^\cu\bqcoh)^\bec) \ar@<0.4ex>@{-}[ll]
  \ar[ld]^-{\Psi_{\biB^\subcu\bqcoh}^+} \\ & \sZ^0(\biB^\cu\bqcoh)
 }
\end{gathered}
\end{equation}
 Here all the functors in~\eqref{cdg-qcoh-tilde-Phi-diagram} are
fully faithful, but only $\Upsilon_{\biB^\cu}$ is essentially surjective
(generally speaking).
  The upper horizontal arrow in~\eqref{cdg-qcoh-tilde-Phi-diagram} is
the obvious inclusion, while the leftmost diagonal arrow
in~\eqref{cdg-qcoh-tilde-Phi-diagram} is the fully faithful functor
from Lemma~\ref{Phi-Psi-extended-to-nonclosed-morphisms}.
 The diagonal arrows in~\eqref{cdg-qcoh-Psi-diagram} are faithful
functors; the leftmost one was defined above, while the rightmost one
is the one from Lemma~\ref{G-functors-in-DG-categories}.
\end{ex}

\begin{ex} \label{factorizations-bec-example}
 Let $\sA$ be a preadditive category and $\Lambda\:\sA\rarrow\sA$ be
an autoequivalence.
 We will use the notation from Section~\ref{factorizations-subsecn},
and assume that either $\Gamma=\boZ$, or $\Lambda$ is involutive and
$\Gamma=\boZ/2$.
 Let $w\:\Id_\sA\rarrow\Lambda^2$ be a potential.

 The following description of the DG\+category $\bF(\sA,\Lambda,w)^\bec$
is obtained directly from the definitions.
 An object of $\bF(\sA,\Lambda,w)^\bec$ is a $\Lambda$\+periodic
object $X^\ci$ in $\sA$ endowed with a differential $d_X\in
\Hom_{\cP(\sA,\Lambda)}^1(X^\ci,X^\ci)$ and an endomorphism
$\sigma_X\in\Hom_{\cP(\sA,\Lambda)}^{-1}(X^\ci,X^\ci)$
such that $\biX^\cu=(X^\ci,d_X)$ is a factorization of~$w$ and
the equations $\sigma_X^2=0$, \ $d_X\sigma_X+\sigma_Xd_X=\id_{X^\ci}$
are satisfied.
 The complex of morphisms from an object $X^\bec=(X^\ci,d_X,\sigma_X)$
to an object $Y^\bec=(Y^\ci,d_Y,\sigma_Y)$ in the DG\+category
$\bF(\sA,\Lambda,w)^\bec$ is constructed as follows: for any
$n\in\Gamma$, the group $\Hom^n_{\bF(\sA,\Lambda,w)^\bec}
(X^\bec,Y^\bec)$ is the subgroup in $\Hom_{\cP(\sA,\Lambda)}^{-n}
(X^\ci,Y^\ci)$ consisting of all homogeneous morphisms~$f$ of
degree~$-n$ (in the $\Gamma$\+graded category of $\Lambda$\+periodic
objects) for which the equation $d_Yf-(-1)^nfd_X=0$ holds.
 The differential~$d^\bec$ in the complex
$\Hom^\bu_{\bF(\sA,\Lambda,w)^\bec}(X^\bec,Y^\bec)$ is given by
the usual rule $d^\bec(f)=\sigma_Yf-(-1)^{|f|}f\sigma_X$.

 Assume that $\sA$ is an additive category.
 Our aim is to compute the additive category
$\sZ^0(\bF(\sA,\Lambda,w)^\bec)$, at least, in the case when $\sA$ is
idempotent-complete.
 Similarly to Proposition~\ref{G-plus-minus-prop} and
Example~\ref{complexes-bec-example}, the faithful, conservative
forgetful functor $\sF(\sA,\Lambda,w)\rarrow\sP(\sA,\Lambda)$
has faithful, conservative left and right adjoint functors
$G^+$ and $G^-\:\sP(\sA,\Lambda)\rarrow\sF(\sA,\Lambda,w)$.

 Specifically, the functor $G^+$ assigns to a $\Lambda$\+periodic
graded object $A^\ci\in\sP(\sA,\Lambda)$ the factorization
$G^+(A^\ci)$ whose underlying $\Lambda$\+periodic graded object
is $A^\ci\oplus A^\ci[-1]$; so the grading components of $G^+(A^\ci)$
are $G^+(A^\ci)^n=A^n\oplus A^{n-1}\simeq A^n\oplus\Lambda^{-1}(A^n)$.
 The differential $d_{G^+}^n\:A^n\oplus A^{n-1}\rarrow
A^{n+1}\oplus A^n$ has two nonzero components: the identity map
$A^n\rarrow A^n$ and the map $A^{n-1}\rarrow A^{n+1}$ whose
composition with the periodicity isomorphism
$(\lambda_A^{n+1,n-1})^{-1}\:A^{n+1}\rarrow\Lambda^2(A^{n-1})$ is
the map $w_{A^{n-1}}\:A^{n-1}\rarrow\Lambda^2(A^{n-1})$.
 Dually, the factorization $G^-(A^\ci)$ has the underlying
$\Lambda$\+periodic graded object $A^\ci\oplus A^\ci[1]$,
so its grading components are $G^-(A^\ci)^n=A^n\oplus A^{n+1}
\simeq A^n\oplus\Lambda(A^n)$.
 The differential $d_{G^-}^n\:A^n\oplus A^{n+1}\rarrow
A^{n+1}\oplus A^{n+2}$ has two nonzero components: the identity map
$A^{n+1}\rarrow A^{n+1}$ and the map $A^n\rarrow A^{n+2}$ whose
composition with the periodicity isomorphism
$(\lambda_A^{n+2,n})^{-1}\:A^{n+2}\rarrow\Lambda^2(A^n)$ is
the map $w_{A^n}\:A^n\rarrow\Lambda^2(A^n)$.
 One has $G^-(A^\ci)\simeq G^+(A^\ci)[1]$.

 The additive functor $\Upsilon=\Upsilon_{\sA,\Lambda,w}\:
\sP(\sA,\Lambda)\rarrow\sZ^0(\bF(\sA,\Lambda,w)^\bec)$ is constructed
as follows (cf.\ the similar construction in
Example~\ref{complexes-bec-example}).
 For any object $A^\ci\in\sP(\sA,\Lambda)$, put $X^\ci=G^+(A^\ci)$
and $d_X=d_{G^+}\in\Hom^1_{\cP(\sA,\Lambda)}(X^\ci,Y^\ci)$.
 Let $\sigma_X\in\Hom^{-1}_{\cP(\sA,\Lambda)}(X^\ci,Y^\ci)$ be
the endomorphism defined by the rule that the morphism
$\sigma_{X,n}\:A^n\oplus A^{n-1}\rarrow A^{n-1}\oplus A^{n-2}$
has the identity map $A^{n-1}\rarrow A^{n-1}$ as its only nonzero
component.
 Then one has $\sigma_X^2=0$ and $d_X\sigma_X+\sigma_Xd_X=\id_{X^\ci}$,
so $X^\bec=(X^\ci,d_X,\sigma_X)$ is an object of
$\sZ^0(\bF(\sA,\Lambda,w)^\bec)$.
 We put $\Upsilon(A^\ci)=X^\bec$.
 The action of the functor $\Upsilon$ on morphisms is defined in
the obvious way.

 There are commutative diagrams of additive functors
\begin{equation} \label{factorizations-tilde-Phi-diagram}
\begin{gathered}
 \xymatrix{
  \bF(\sA,\Lambda,w)^0 \ar@{>->}[rr]
  \ar@{>->}[rd]_-{\widetilde\Phi_{\bF(\sA,\Lambda,w)}}
  && \sP(\sA,\Lambda) \ar@{>->}[ld]^-{\Upsilon_{\sA,\Lambda,w}} \\
  & \sZ^0(\bF(\sA,\Lambda,w)^\bec)
 }
\end{gathered}
\end{equation}
and
\begin{equation} \label{factorizations-Psi-diagram}
\begin{gathered}
 \xymatrix{
  \sP(\sA,\Lambda) \ar@{>->}[rr]^-{\Upsilon_{\sA,\Lambda,w}}
  \ar[d]_{G^+} && \sZ^0(\bF(\sA,\Lambda,w)^\bec)
  \ar[d]^{\Psi_{\bF(\sA,\Lambda,w)}^+}
  \\ \sF(\sA,\Lambda,w)\ar@{=}[rr] && \sZ^0(\bF(\sA,\Lambda,w))
 }
\end{gathered}
\end{equation}
 Here all the functors in~\eqref{factorizations-tilde-Phi-diagram}
are fully faithful.
 The upper horizontal arrow in~\eqref{factorizations-tilde-Phi-diagram}
is the obvious inclusion, while the lower horizontal double line
in~\eqref{factorizations-Psi-diagram} is the definition of the additive
category of factorizations $\sF(\sA,\Lambda,w)$.
 The leftmost diagonal arrow in~\eqref{factorizations-tilde-Phi-diagram}
is the fully faithful functor from
Lemma~\ref{Phi-Psi-extended-to-nonclosed-morphisms}, while the rightmost
vertical arrow in~\eqref{factorizations-Psi-diagram} is the faithful
functor from Lemma~\ref{G-functors-in-DG-categories}.

 The inclusion of $\sF(\sA,\Lambda,w)^0$ into $\sP(\sA,\Lambda)$ is
\emph{not} essentially surjective, generally speaking, as \emph{not
every $\Lambda$\+periodic object admits a differential making
it a factorization of~$w$}.
 This was explained in
Examples~\ref{nonexistence-of-cdg-structure-examples} (take
$R^*=k[x,x^{-1}]$ and $h=x$, or equivalently, the category
$\sA=k\rmodl\times k\rmodl$ with its natural autoequivalence $\Lambda$
switching the factors $k\rmodl$ and the potential $w=\id$;
cf.\ Remark~\ref{usual-factorizations-remark}).
 However, every object of $\sP(\sA,\Lambda)$ is a direct summand of
an object coming from $\sF(\sA,\Lambda,w)^0$; in fact,
any $\Lambda$\+periodic object $A^\ci\in\sP(\sA,\Lambda)$ is
a direct summand of the underlying $\Lambda$\+periodic object $X^\ci$
of the factorization $\biX^\cu=G^+(A^\ci)$.
 So the counterexample in
Examples~\ref{nonexistence-of-derivation-examples} has no counterpart
for factorizations.

 The additive functor $\Upsilon_{\sA,\Lambda,w}$ is always fully
faithful (one can deduce this, e.~g., from full-and-faithfulness of
the functor~$\widetilde\Phi_{\bF(\sA,\Lambda,w)}$ from
Lemma~\ref{Phi-Psi-extended-to-nonclosed-morphisms}).
 When the additive category $\sA$ is idempotent-complete,
the functor $\Upsilon$ is an equivalence of additive categories.
 Given an object $X^\bec=(X^\ci,d_X,\sigma_X)\in
\sZ^0(\bF(\sA,\Lambda,w)^\bec)$, the corresponding object
$A^\ci\in\sP(\sA,\Lambda)$ can be recovered as the image of
the idempotent endomorphism $\sigma_Xd_X\in
\Hom_{\cP(\sA,\Lambda)}^0(X^\ci,X^\ci)$.

 Indeed, one can define an endomorphism $\upsilon_X\in
\Hom_{\cP(\sA,\Lambda)}^1(X^\ci,X^\ci)$ by subtracting from~$d_X$
the composition of the action of the natural transformation
$w_{X^\ci}\:X^\ci\rarrow\Lambda^2(X^\ci)$ with
the periodicity isomorphism $\lambda_X^{*,*+2}\:\Lambda^2(X^\ci)
\simeq X^\ci[2]$ and with the endomorphism
$\sigma_X\in\Hom_{\cP(\sA,\Lambda)}^{-1}(X^\ci,X^\ci)$.
 Then the pair of homogeneous endomorphisms $\sigma_X$
and~$\upsilon_X$ of degrees $-1$ and~$1$, respectively, satisfies
the equations $\sigma_X^2=0=\upsilon_X^2$ and $\sigma_X\upsilon_X
+\upsilon_X\sigma_X=\id_{X^\ci}$; so it defines an action of
of the graded $2\times 2$ matrix ring in the graded category
object $X^\ci\in\cP(\sA,\Lambda)$, similarly to the discussion
in the proof of Proposition~\ref{iterated-bec-construction}
(cf.\ Example~\ref{CDG-ring-bec-example}).
\end{ex}

\Section{Abelian and Exact DG-Categories} \label{exact-dg-cats-secn}

\subsection{The functor of cone of identity endomorphism}
\label{Xi-functor-subsecn}
 Let $\bA$ be an additive DG\+category with shifts and cones.
 Recall the notation $\Xi=\Xi_\bA\:\sZ^0(\bA)\rarrow\sZ^0(\bA)$
introduced in Section~\ref{properties-of-main-construction-subsecn}
for the functor $\Xi(A)=\cone(\id_A[-1])$.

\begin{lem} \label{Xi-adjoints}
 The endofunctor\/ $\Xi\:\sZ^0(\bA)\rarrow\sZ^0(\bA)$ is left and
right adjoint to its own shift, viz., to the endofunctor\/
$\Xi[1]\:\sZ^0(\bA)\rarrow\sZ^0(\bA)$.
 Consequently, the endofunctor\/ $\Xi$ preserves all limits and
colimits (and in particular, all kernels and cokernels) that exist
in the additive category\/ $\sZ^0(\bA)$.
\end{lem}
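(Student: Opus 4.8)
The plan is to deduce both adjunctions from the three-term adjunction chain already available for the functors relating $\sZ^0(\bA)$ and $\sZ^0(\bA^\bec)$, together with the decomposition of $\Xi$ as a composite of these functors. Recall from Lemma~\ref{G-functors-in-DG-categories} that $\Phi_\bA\colon\sZ^0(\bA)\rarrow\sZ^0(\bA^\bec)$ has left adjoint $\Psi^+_\bA$ and right adjoint $\Psi^-_\bA$, so that $\Psi^+\dashv\Phi\dashv\Psi^-$, with $\Psi^-=\Psi^+[1]$; and recall from Lemma~\ref{Xi-decomposed} that $\Xi\simeq\Psi^+\circ\Phi$. Since $\Psi^-=\Psi^+[1]$, the same identification yields $\Psi^-\circ\Phi\simeq\Xi[1]$. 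These two facts are the entire input; everything else is formal.

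First I would establish $\Xi\dashv\Xi[1]$ by the chain of natural isomorphisms
\[
 \Hom_{\sZ^0(\bA)}(\Xi A,B)\cong\Hom_{\sZ^0(\bA^\bec)}(\Phi A,\Phi B)
 \cong\Hom_{\sZ^0(\bA)}(A,\Xi[1]B),
\]
where the first isomorphism uses $\Xi\simeq\Psi^+\Phi$ and the adjunction $\Psi^+\dashv\Phi$ applied to the pair $(\Phi A,B)$, and the second uses $\Phi\dashv\Psi^-$ applied to the pair $(\Phi A,\Phi B)$ together with $\Psi^-\Phi\simeq\Xi[1]$; both are natural in $A$ and $B$. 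To obtain the reverse adjunction $\Xi[1]\dashv\Xi$, I would use that the shift $[1]$ is an auto-equivalence of $\sZ^0(\bA)$ which is left adjoint to its quasi-inverse $[-1]$, so that $\Hom(\Xi(A)[1],B)\cong\Hom(\Xi A,B[-1])$. Feeding the object $B[-1]$ into the already-proved adjunction gives
\[
 \Hom_{\sZ^0(\bA)}(\Xi[1]A,B)\cong\Hom_{\sZ^0(\bA)}(\Xi A,B[-1])
 \cong\Hom_{\sZ^0(\bA)}(A,\Xi(B[-1])[1])\cong\Hom_{\sZ^0(\bA)}(A,\Xi B),
\]
the last step using that $\Xi$ commutes with shifts (being defined via cones and identities) and that $[-1][1]\simeq\id$. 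Thus $\Xi$ is both left and right adjoint to $\Xi[1]$.

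Finally, the ``consequently'' clause is immediate: $\Xi$ admits the right adjoint $\Xi[1]$, hence preserves all colimits that exist in $\sZ^0(\bA)$, and it also admits the left adjoint $\Xi[1]$, hence preserves all limits that exist; in particular it preserves cokernels and kernels. I do not expect any serious obstacle, as the argument is purely formal once Lemmas~\ref{G-functors-in-DG-categories} and~\ref{Xi-decomposed} are in hand; the only point needing a little care is the passage from $\Xi\dashv\Xi[1]$ to $\Xi[1]\dashv\Xi$, where one must correctly juggle the shift equivalence $[1]\dashv[-1]$ and the commutation of $\Xi$ with shifts. One could alternatively derive both adjunctions symmetrically by extending $\Psi^+\dashv\Phi\dashv\Psi^-$ to the doubly infinite shift-periodic chain of adjoints mentioned in the introduction and reading off adjacent composites, but the direct route above is shorter.
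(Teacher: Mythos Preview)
Your proof is correct, but it takes a different route from the paper's own argument. The paper argues directly from the definition of the cone: for any $A,B\in\bA$ one has natural isomorphisms of complexes
\[
 \Hom_\bA^\bu(\cone(\id_A[-1]),B)\simeq
 \cone(\id_{\Hom_\bA^\bu(A,B)})\simeq
 \Hom_\bA^\bu(A,\cone(\id_B))
\]
and the dual chain with $A$ and $B$ swapped; passing to degree-$0$ cocycles yields both adjunctions at once. This is entirely self-contained and does not invoke $\bA^\bec$ or the functors $\Phi,\Psi^\pm$ at all. Your argument instead factors $\Xi\simeq\Psi^+\Phi$ via Lemma~\ref{Xi-decomposed} and composes the known adjunctions $\Psi^+\dashv\Phi\dashv\Psi^-$ from Lemma~\ref{G-functors-in-DG-categories}. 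This is a legitimate and logically independent derivation (Lemma~\ref{Xi-decomposed} precedes Lemma~\ref{Xi-adjoints} and does not depend on it), and it has the virtue of showing how the adjunction for $\Xi$ sits inside the doubly infinite shift-periodic adjoint string mentioned in the paper's introduction. The trade-off is that your proof imports more machinery, whereas the paper's proof is a two-line computation with cones that would work even before Section~\ref{involution-secn} was written.
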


\begin{proof}
 For any objects $A$ and $B\in\bA$, there are natural isomorphisms
of complexes of abelian groups
$$
 \Hom_\bA^\bu(\cone(\id_A[-1]),B)\simeq
 \cone(\id_{\Hom_\bA^\bu(A,B)})\simeq
 \Hom_\bA^\bu(A,\cone(\id_B))
$$
and
$$
 \Hom_\bA^\bu(B,\cone(\id_A[-1]))\simeq
 \cone(\id_{\Hom_\bA^\bu(B,A)}[-1])\simeq
 \Hom_\bA^\bu(\cone(\id_B),A).
$$
 The desired adjunction is obtained by passing to the groups of
degree~$0$ cocycles in these isomorphisms of complexes.
\end{proof}

 There is an obvious pair of natural transformations of additive
endofunctors $\Id_{\sZ^0(\bA)}[-1]\rarrow\Xi_\bA\rarrow\Id_{\sZ^0(\bA)}$
on the category $\sZ^0(\bA)$ assigning to an object $A\in\sA$
the natural closed morphisms $A[-1]\rarrow\Xi(A)\rarrow A$
of degree~$0$.

\begin{lem} \label{cone-kernel-cokernel}
 For any object $A\in\sZ^0(\bA)$, in the pair of natural morphisms
$A[-1]\rarrow\Xi(A)\rarrow A$, the former morphism is a kernel
of the latter one and the latter morphism is a cokernel of
the former one in the category\/ $\sZ^0(\bA)$.
\end{lem}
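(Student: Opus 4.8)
The plan is to argue directly from the explicit cone data recorded in the proof of Lemma~\ref{G-functors-in-DG-categories}, since the pair $A[-1]\rarrow\Xi(A)\rarrow A$ is exactly the split exact sequence in $\bA^0$ attached to the cone $\Xi(A)=\cone(\id_A[-1])$, and the content of the lemma is that this sequence becomes a kernel--cokernel pair (though no longer split) once we pass to closed morphisms. Writing $E=\Xi(A)$, I would invoke the closed morphisms $\iota\in\Hom^1_\bA(A,E)$ and $\pi\in\Hom^0_\bA(E,A)$ together with the non-closed morphisms $\iota'\in\Hom^0_\bA(A,E)$ and $\pi'\in\Hom^{-1}_\bA(E,A)$ satisfying
\begin{gather*}
 \pi'\iota'=0=\pi\iota, \quad \pi'\iota=\id_A=\pi\iota', \quad
 \iota\pi'+\iota'\pi=\id_E, \quad d(\pi')=\pi, \quad d(\iota')=\iota.
\end{gather*}
Under the shift identification $\Hom^1_\bA(A,E)=\Hom_{\sZ^0(\bA)}(A[-1],E)$, the morphism $\iota$ is the natural closed map $A[-1]\rarrow\Xi(A)$ and $\pi$ is the natural closed map $\Xi(A)\rarrow A$, and the composite vanishes by $\pi\iota=0$.

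First I would check that $\pi$ is a cokernel of $\iota$ in $\sZ^0(\bA)$. Given a closed degree-$0$ morphism $g\:\Xi(A)\rarrow Z$ with $g\iota=0$, I set $h=g\iota'$. Then $d(h)=g\,d(\iota')=g\iota=0$, so $h$ is closed; and $h\pi=g\iota'\pi=g(\id_E-\iota\pi')=g$, using $\iota'\pi=\id_E-\iota\pi'$ and $g\iota=0$. Uniqueness is immediate from $\pi\iota'=\id_A$: any closed $h$ with $h\pi=g$ satisfies $h=h\pi\iota'=g\iota'$.

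Dually, to see that $\iota$ is a kernel of $\pi$, I would take a closed degree-$0$ morphism $g\:Z\rarrow\Xi(A)$ with $\pi g=0$ and set $h=\pi'g$. The computation $d(h)=d(\pi')g-\pi'd(g)=\pi g=0$ (the second term dropping as $g$ is closed) shows $h$ is closed, while $\iota h=\iota\pi'g=(\id_E-\iota'\pi)g=g$ using $\pi g=0$; and $\pi'\iota=\id_A$ forces $h=\pi'\iota h=\pi'g$, giving uniqueness.

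The argument is a routine verification, so there is no serious obstacle; the only points that demand care are the bookkeeping of degrees and signs in the two computations of $d(h)$ — in particular reading $\iota$ and $\pi'$ as degree-$0$ morphisms to and from the shifted object $A[-1]$ — and the conceptual observation that the splitting morphisms $\iota'$, $\pi'$ are genuinely needed even though they are not closed: they supply the factorizations $h=g\iota'$ and $h=\pi'g$, while closedness of these factored maps falls out of the relations $d(\iota')=\iota$ and $d(\pi')=\pi$ precisely once the hypotheses $g\iota=0$ or $\pi g=0$ are imposed.
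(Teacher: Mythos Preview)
Your proof is correct and takes a genuinely different route from the paper's. You argue elementwise from the explicit cone equations, constructing the factorizing morphisms by hand ($h=g\iota'$ for the cokernel, $h=\pi'g$ for the kernel) and verifying closedness directly from $d(\iota')=\iota$ and $d(\pi')=\pi$. The paper instead invokes the functorial characterization of the cone: for every test object $X$ one has a short exact sequence of complexes
\[
 0\lrarrow\Hom_\bA^\bu(X,A[-1])\lrarrow\Hom_\bA^\bu(X,\Xi(A))
 \lrarrow\Hom_\bA^\bu(X,A)\lrarrow0,
\]
and applying the left-exact functor of degree-$0$ cocycles yields the kernel assertion at once; the cokernel claim is dual. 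The paper's argument is slicker and in fact establishes the statement for $\cone(f[-1])$ with an arbitrary closed morphism~$f$ in place of~$\id_A$. Your explicit approach has the complementary merit of exhibiting the factorizing maps concretely and isolating the mechanism by which the non-closed splittings $\iota'$, $\pi'$ produce closed factorizations once the annihilation hypotheses $g\iota=0$ or $\pi g=0$ are imposed.
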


\begin{proof}
 More generally, for any closed morphism $f\:A\rarrow B$ of
degree~$0$ in the DG\+category $\bA$, in the pair of natural morphisms
$B[-1]\rarrow\cone(f[-1])\rarrow A$, the former morphism is a kernel of
the latter one and the latter morphism is a cokernel of the former one
in $\sZ^0(\bA)$.
 Let us check the first assertion.
 For any object $X\in\bA$, we have a short exact sequence of
complexes of abelian groups
$$
 0\rarrow\Hom_\bA^\bu(X,B[-1])\rarrow\Hom_\bA^\bu(X,\cone(f[-1]))
 \rarrow\Hom_\bA^\bu(X,A)\rarrow0.
$$
 Passing to the groups of degree~$0$ cocycles and using the fact
that the kernel functor is left exact in the category of abelian
groups, we obtain a left exact sequence
$$
 0\rarrow\Hom_{\sZ^0(\bA)}(X,B[-1])\rarrow\Hom_{\sZ^0(\bA)}
 (X,\cone(f[-1]))\rarrow\Hom_{\sZ^0(\bA)}(X,A),
$$
as desired.
 The second assertion is dual.
\end{proof}

\begin{lem} \label{Xi-faithful-and-conservative}
 The endofunctor\/ $\Xi\:\sZ^0(\bA)\rarrow\sZ^0(\bA)$ is faithful
and conservative.
\end{lem}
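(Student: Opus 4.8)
The plan is to deduce both properties from the factorization $\Xi_\bA\simeq\Psi^+_\bA\circ\Phi_\bA$ established in Lemma~\ref{Xi-decomposed}. The functor $\Phi_\bA\:\sZ^0(\bA)\rarrow\sZ^0(\bA^\bec)$ is faithful by Lemma~\ref{G-functors-in-DG-categories} and conservative by Lemma~\ref{Phi-Psi-conservative}; the same two lemmas tell us that $\Psi^+_\bA\:\sZ^0(\bA^\bec)\rarrow\sZ^0(\bA)$ is faithful and conservative as well. A composition of faithful functors is faithful and a composition of conservative functors is conservative, so it follows at once that $\Xi_\bA\simeq\Psi^+_\bA\circ\Phi_\bA$ is both faithful and conservative. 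This is the whole argument; no serious obstacle remains once Lemmas~\ref{Xi-decomposed}, \ref{G-functors-in-DG-categories}, and~\ref{Phi-Psi-conservative} are in place.

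As an independent check, faithfulness can also be seen directly, and I would record this as a sanity test. The natural transformation $\Xi_\bA\rarrow\Id_{\sZ^0(\bA)}$ furnishes, for every closed morphism $f\:A\rarrow B$ of degree~$0$, the naturality relation $\epsilon_B\circ\Xi_\bA(f)=f\circ\epsilon_A$, where $\epsilon_A\:\Xi(A)\rarrow A$ denotes the component at~$A$. By Lemma~\ref{cone-kernel-cokernel}, the morphism $\epsilon_A$ is a cokernel, hence an epimorphism in $\sZ^0(\bA)$. Consequently $\Xi_\bA(f)=0$ forces $f\circ\epsilon_A=0$ and therefore $f=0$, which is precisely faithfulness.

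The only point requiring attention is to ensure that the two invoked properties are available for the very same functors $\Phi_\bA$ and $\Psi^+_\bA$ entering the factorization, and that conservativity (reflecting isomorphisms, not merely objectwise) is what is composed. Both are immediate. I expect no genuine difficulty here: the content of the statement has effectively been front-loaded into the preceding lemmas on $\Phi$, $\Psi^+$, and their composite $\Xi$.
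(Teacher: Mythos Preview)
Your proof is correct and matches the paper's own argument essentially verbatim: the paper likewise deduces both properties from the factorization $\Xi_\bA\simeq\Psi^+_\bA\circ\Phi_\bA$ (Lemma~\ref{Xi-decomposed}) together with Lemmas~\ref{G-functors-in-DG-categories} and~\ref{Phi-Psi-conservative}, and also records the direct faithfulness check via Lemma~\ref{cone-kernel-cokernel}. The only extra the paper adds is a one-line direct argument for conservativity as well, observing that $\sZ^0(\bA)\rightarrowtail\bA^0$ is conservative and that the inclusion is a direct summand of its composite with $\Xi$.
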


\begin{proof}
 The assertions follow from Lemma~\ref{Xi-decomposed},
telling that $\Xi_\bA\simeq\Psi^+_\bA\circ\Phi_\bA$, together with
Lemmas~\ref{G-functors-in-DG-categories} and~\ref{Phi-Psi-conservative},
claiming that the functors $\Psi^+_\bA$ and $\Phi_\bA$ are faithful
and conservative.
 To give a direct proof, the assertion that $\Xi$ is faithful follows
immediately from Lemma~\ref{cone-kernel-cokernel}.
 To show that $\Xi$ is conservative, one observes that the inclusion
functor $\sZ^0(\bA)\rightarrowtail\bA^0$ is conservative and
the composition of $\sZ^0(\bA)\overset\Xi\rarrow\sZ^0(\bA)
\rightarrowtail\bA^0$ has the inclusion functor as a direct summand.
\end{proof}

\begin{lem} \label{Xi-reflects-co-kernels}
 Let $A\overset f\rarrow B\overset g\rarrow C$ be a composable pair
of morphisms in the additive category\/ $\sZ^0(\bA)$.
 In this context \par
\textup{(a)} if the morphism\/ $\Xi(f)$ is a kernel of the morphism\/
$\Xi(g)$, then the morphism~$f$ is a kernel of the morphism~$g$; \par
\textup{(b)} if the morphism\/ $\Xi(g)$ is a cokernel of
the morphism\/ $\Xi(f)$, then the morphism~$g$ is a cokernel of
the morphism~$f$.
\end{lem}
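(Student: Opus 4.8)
The plan is to prove part (a) by checking directly that $f$ satisfies the universal property of a kernel of $g$ in the additive category $\sZ^0(\bA)$, and then to deduce (b) by the dual argument. Throughout I write $\alpha_A\colon A[-1]\rarrow\Xi(A)$ and $\beta_A\colon\Xi(A)\rarrow A$ for the two natural morphisms constituting the pair $\Id_{\sZ^0(\bA)}[-1]\rarrow\Xi\rarrow\Id_{\sZ^0(\bA)}$, so that $\beta_A\alpha_A=0$; by Lemma~\ref{cone-kernel-cokernel}, $\alpha_A$ is a kernel of $\beta_A$ and $\beta_A$ is a cokernel of $\alpha_A$. First I would record two easy facts: that $gf=0$ (apply $\Xi$, note $\Xi(g)\Xi(f)=0$ since $\Xi(f)$ is a kernel of $\Xi(g)$, and invoke faithfulness of $\Xi$ from Lemma~\ref{Xi-faithful-and-conservative}), and that $f$ is a monomorphism in $\sZ^0(\bA)$ (if $f\nu=0$ then $\Xi(f)\Xi(\nu)=0$, and $\Xi(f)$ is monic as a kernel, so $\Xi(\nu)=0$, whence $\nu=0$ by faithfulness).

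Next I would establish the lifting property. Let $h\colon X\rarrow B$ be a closed morphism of degree~$0$ with $gh=0$. Then $\Xi(g)\Xi(h)=\Xi(gh)=0$, so the universal property of the kernel $\Xi(f)$ of $\Xi(g)$ produces a unique closed morphism $k\colon\Xi(X)\rarrow\Xi(A)$ with $\Xi(f)k=\Xi(h)$. The candidate factorization is obtained from $k$ by descent: consider the closed morphism $\beta_A k\colon\Xi(X)\rarrow A$ and push it down along the cokernel $\beta_X\colon\Xi(X)\rarrow X$ of $\alpha_X$. Once I know that $\beta_A k$ annihilates $\alpha_X$, the universal property of the cokernel $\beta_X$ (which lives in $\sZ^0(\bA)$, so the induced map is automatically closed) yields a closed $h'\colon X\rarrow A$ with $h'\beta_X=\beta_A k$. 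That $h'$ is the desired lift follows by naturality of $\beta$: one has $f h'\beta_X=f\beta_A k=\beta_B\Xi(f)k=\beta_B\Xi(h)=h\beta_X$, and $\beta_X$ is an epimorphism (being a cokernel), so $fh'=h$; uniqueness of $h'$ is given by the monicity of $f$.

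The crux, and the step I expect to require the most care, is precisely the vanishing $\beta_A k\alpha_X=0$ that legitimizes the descent. This is not automatic for an arbitrary closed $k$, so the argument must exploit that $k$ lifts $\Xi(h)$; the resolution I anticipate is nonetheless short. The composite $\mu=\beta_A k\alpha_X\colon X[-1]\rarrow A$ is closed, and using naturality of $\beta$ together with $\beta_X\alpha_X=0$ one computes
\[
 f\mu=f\beta_A k\alpha_X=\beta_B\Xi(f)k\alpha_X=\beta_B\Xi(h)\alpha_X=h\beta_X\alpha_X=0.
\]
Since $f$ is a monomorphism, $\mu=0$, as required. Thus the apparent obstacle dissolves once one observes that the only defect of the naive descent is a closed morphism killed by $f$, and $f$ has already been shown to be monic.

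Finally, part (b) is proved by the evident dualization, interchanging the roles of $\alpha$ and $\beta$ and of kernels and cokernels. Assuming $\Xi(g)$ is a cokernel of $\Xi(f)$, one gets that $g$ is an epimorphism and $gf=0$ exactly as above. Given a closed $h\colon B\rarrow X$ with $hf=0$, the cokernel property of $\Xi(g)$ yields a closed $k\colon\Xi(C)\rarrow\Xi(X)$ with $k\Xi(g)=\Xi(h)$; one then forms $k\alpha_C\colon C[-1]\rarrow\Xi(X)$ and factors it through the kernel $\alpha_X$ of $\beta_X$. Here the vanishing $\beta_X k\alpha_C=0$ is checked by precomposing with the epimorphism $g[-1]$ and using naturality of $\alpha$ and $\beta$ together with $\beta_B\alpha_B=0$. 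The resulting closed morphism $C\rarrow X$ is the required factorization, its correctness being verified by the monicity of $\alpha_X$ and its uniqueness by the epimorphism property of $g$.
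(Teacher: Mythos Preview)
Your proof is correct. The approach differs from the paper's in a mild but genuine way: the paper does not apply $\Xi$ to the test object $X$. Instead, it maps $X$ into the middle row of the diagram~\eqref{3x3-Xi-diagram-with-shift} via the monomorphism $B\rightarrowtail\Xi(B)[1]$, factors through $\Xi(f)[1]$ using the kernel hypothesis, and then factors the resulting morphism $X\rarrow\Xi(A)[1]$ through the kernel $A\rightarrowtail\Xi(A)[1]$; the obstruction to this last step is a morphism $X\rarrow A[1]$ killed by $f[1]$, hence zero. Your argument instead applies $\Xi$ to the test morphism $h$ to get $k\colon\Xi(X)\rarrow\Xi(A)$, and then \emph{descends} along the epimorphism $\beta_X\colon\Xi(X)\twoheadrightarrow X$; your obstruction $\mu=\beta_A k\alpha_X$ is likewise killed by~$f$. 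The two arguments are dual uses of the two halves of the pair $\Id[-1]\rightarrowtail\Xi\twoheadrightarrow\Id$: the paper exploits that $A\rarrow\Xi(A)[1]$ is a kernel, you exploit that $\Xi(X)\rarrow X$ is a cokernel. Both reduce to the same key observation that the residual defect is annihilated by~$f$. Your route has the minor virtue of not needing the shift by~$[1]$, at the cost of passing through $\Xi(X)$; the paper's route keeps $X$ fixed throughout.
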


\begin{proof}
 The functor $\Xi$ is faithful by
Lemma~\ref{Xi-faithful-and-conservative}; so the equation
$\Xi(g)\circ\Xi(f)=0$ implies $g\circ f=0$.
 To prove part~(a) (part~(b) is dual), consider the commutative diagram
\begin{equation} \label{3x3-Xi-diagram-with-shift}
\begin{gathered}
 \xymatrix{
  A \ar[rr]^f \ar@{>->}[d] && B \ar[rr]^g \ar@{>->}[d]
  && C \ar@{>->}[d] \\
  \Xi(A)[1] \ar[rr]^{\Xi(f)[1]} \ar@{->>}[d] && \Xi(B)[1]
  \ar[rr]^{\Xi(g)[1]} \ar@{->>}[d] && \Xi(C)[1] \ar@{->>}[d] \\
  A[1] \ar[rr]^{f[1]} && B[1] \ar[rr]^{g[1]} && C[1]
 }
\end{gathered}
\end{equation}
 If the morphism $\Xi(f)$ is a monomorphism, then so is the morphism
$\Xi(f)[1]$; since $A\rightarrowtail\Xi(A)[1]$ is a monomorphism
as well, it follows from commutativity of the upper leftmost square
that $f$~is a monomorphism.
 Now if $\Xi(f)$ is a kernel of $\Xi(g)$, then $\Xi(f)[1]$ is
a kernel of $\Xi(g)[1]$.
 Given an object $X\in\sZ^0(\bA)$ and a morphism $X\rarrow B$
for which the composition $X\rarrow B\overset g\rarrow C$ vanishes,
consider the composition $X\rarrow B\rarrow\Xi(B)[1]$ and notice
that the composition $X\rarrow B\rarrow\Xi(B)[1]\rarrow\Xi(C)[1]$
vanishes.
 So the composition $X\rarrow B\rarrow\Xi(B)[1]$ factorizes through
$\Xi(f)[1]$, and we obtain a morphism $X\rarrow\Xi(A)[1]$.
 Continuing the diagram chase and using
Lemma~\ref{cone-kernel-cokernel}, one proves that the morphism
$X\rarrow B$ factorizes through~$f$.
\end{proof}

\begin{lem} \label{tilde-Xi}
 For any DG\+category\/ $\bA$ with shifts and cones, the additive
functor\/ $\Xi_\bA\:\sZ^0(\bA)\rarrow\sZ^0(\bA)$ can be extended in
a natural way to a faithful additive functor\/ $\widetilde\Xi=
\widetilde\Xi_\bA\:\bA^0\rarrow\sZ^0(\bA)$.
\end{lem}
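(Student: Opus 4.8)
The plan is to realize $\widetilde\Xi_\bA$ as a composition of functors that have already been constructed, so that no fresh computation is required. Recall from Lemma~\ref{Phi-Psi-extended-to-nonclosed-morphisms} that the functor $\Phi_\bA\:\sZ^0(\bA)\rarrow\sZ^0(\bA^\bec)$ extends to a fully faithful additive functor $\widetilde\Phi_\bA\:\bA^0\rarrow\sZ^0(\bA^\bec)$, and recall from Lemma~\ref{G-functors-in-DG-categories} the faithful additive functor $\Psi^+_\bA\:\sZ^0(\bA^\bec)\rarrow\sZ^0(\bA)$. Since $\widetilde\Phi_\bA$ already takes values in the category $\sZ^0(\bA^\bec)$ of closed degree\+$0$ morphisms, I would simply set
$$
 \widetilde\Xi_\bA=\Psi^+_\bA\circ\widetilde\Phi_\bA\:\bA^0\rarrow\sZ^0(\bA).
$$
This is a composition of additive functors, hence an additive functor $\bA^0\rarrow\sZ^0(\bA)$.

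It then remains to verify the two asserted properties. For the extension property, I would precompose with the inclusion $\sZ^0(\bA)\rarrow\bA^0$: since $\widetilde\Phi_\bA$ restricts to $\Phi_\bA$ on closed morphisms by Lemma~\ref{Phi-Psi-extended-to-nonclosed-morphisms}, the restriction of $\widetilde\Xi_\bA$ to $\sZ^0(\bA)$ equals $\Psi^+_\bA\circ\Phi_\bA$, which is naturally isomorphic to $\Xi_\bA$ by Lemma~\ref{Xi-decomposed}. For faithfulness, I would note that $\widetilde\Phi_\bA$ is faithful (being fully faithful) and $\Psi^+_\bA$ is faithful by Lemma~\ref{G-functors-in-DG-categories}, and a composition of faithful functors is faithful.

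Alternatively, one can give the explicit description directly, obtaining a literal (rather than merely up-to-natural-isomorphism) extension of $\Xi_\bA$: on an object $A\in\bA$ put $\widetilde\Xi(A)=\cone(\id_A[-1])$, and on a not-necessarily-closed morphism $f\in\Hom^0_\bA(A,B)$ let $\widetilde\Xi(f)$ be the degree-zero morphism between the cones given by formula~\eqref{morphism-induced-by-nonclosed-morphism} with $i=0$. The checks that this is a closed morphism and that the assignment respects compositions and additive structure were already carried out in the proof of Proposition~\ref{iterated-bec-construction}, so they can simply be cited. In either route the argument is light on content, as the substantial work has been done upstream; the only point requiring a moment's care is the matching of the restriction with $\Xi_\bA$ through Lemma~\ref{Xi-decomposed}, and this presents no genuine obstacle.
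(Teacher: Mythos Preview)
Your proof is correct and matches the paper's approach exactly: the paper also defines $\widetilde\Xi_\bA=\Psi^+_\bA\circ\widetilde\Phi_\bA$ and cites Lemmas~\ref{Xi-decomposed} and~\ref{Phi-Psi-extended-to-nonclosed-morphisms}. You have simply spelled out the verification of the extension and faithfulness properties in more detail than the paper does.
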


\begin{proof}
 The assertion follows immediately from Lemmas~\ref{Xi-decomposed}
and~\ref{Phi-Psi-extended-to-nonclosed-morphisms}.
 It suffices to put $\widetilde\Xi_\bA=\Psi^+_\bA\circ
\widetilde\Phi_\bA$.
\end{proof}

\begin{lem} \label{Xi-creates-co-kernels}
 Let $\bA$ be an idempotent-complete additive DG\+category with shifts 
and twists.
 In this setting \par
\textup{(a)} if $g$~is a morphism in\/ $\sZ^0(\bA)$ and the morphism\/
$\Xi(g)$ has a kernel in\/ $\sZ^0(\bA)$, then the morphism~$g$ has
a kernel in\/ $\sZ^0(\bA)$; \par
\textup{(b)} if $f$~is a morphism in\/ $\sZ^0(\bA)$ and the morphism\/
$\Xi(f)$ has a cokernel in\/ $\sZ^0(\bA)$, then the morphism~$f$ has
a cokernel in\/ $\sZ^0(\bA)$.
\end{lem}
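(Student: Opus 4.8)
The two statements are dual, so I would first reduce (b) to (a) by applying (a) to the opposite DG\+category $\bA^\rop$: it is again idempotent-complete additive with shifts and twists, one has $\sZ^0(\bA^\rop)=\sZ^0(\bA)^\rop$, and the construction $\cone(\id_{({-})}[-1])$ is self-dual up to a shift, so that $\Xi_{\bA^\rop}$ corresponds to $\Xi_\bA$ and kernels are interchanged with cokernels. Thus I concentrate on part~(a).

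The plan is to reduce (a) to Lemma~\ref{Xi-reflects-co-kernels}(a). It suffices to produce a morphism $f\:A\rarrow B$ in $\sZ^0(\bA)$ and an isomorphism identifying $\Xi(f)$ with the given kernel $\kappa\:K\rarrow\Xi(B)$ of $\Xi(g)$; for then $\Xi(f)$ is a kernel of $\Xi(g)$, and Lemma~\ref{Xi-reflects-co-kernels}(a) yields at once that $f$ is a kernel of $g$. So the task becomes to recognize $K=\ker\Xi(g)$ as lying in the essential image of $\Xi$, compatibly with its map into $\Xi(B)$. I would do this by reconstructing on $K$ the ``almost involution'' structure carried by $\Xi(B)=\cone(\id_B[-1])$. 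Recall from Lemma~\ref{G-functors-in-DG-categories} and Proposition~\ref{iterated-bec-construction} that $\Xi(B)$ carries endomorphisms $\tau_{\Xi(B)}=\iota_B\pi_B\in\Hom^1_\bA(\Xi(B),\Xi(B))$ and $\sigma_{\Xi(B)}=\iota'_B\pi'_B\in\Hom^{-1}_\bA(\Xi(B),\Xi(B))$ with $\sigma^2=0=\tau^2$, \ $d(\tau)=0$, \ $d(\sigma)=\id$, and $\sigma\tau+\tau\sigma=\id$, and that $\sigma$ and $\tau$ are natural transformations of $\Xi$ (this is verified directly from the defining relations of $\iota,\pi,\iota',\pi'$).

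The first genuine step is to restrict $\tau$ to $K$. Since $\tau_{\Xi(B)}$ is closed of degree~$1$, it may be read as a closed degree\+zero morphism $\Xi(B)\rarrow\Xi(B)[1]$, and naturality gives $\Xi(g)[1]\circ\tau_{\Xi(B)}=\tau_{\Xi(C)}\circ\Xi(g)$; hence $\tau_{\Xi(B)}\circ\kappa$ is annihilated by $\Xi(g)[1]$ and factors uniquely through the kernel $\kappa[1]$ of $\Xi(g)[1]$, producing a closed, square-zero $\tau_K\:K\rarrow K[1]$ with $\kappa[1]\circ\tau_K=\tau_{\Xi(B)}\circ\kappa$. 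The corresponding restriction of $\sigma$ cannot be done this way, because $\sigma$ is \emph{not} closed and so is not controlled by the universal property in $\sZ^0(\bA)$. Here I would invoke that $\bA$ has twists: forming the twists $\Xi(B)(-\tau_{\Xi(B)})$ and $\Xi(C)(-\tau_{\Xi(C)})$, the computation in the proof of Proposition~\ref{iterated-bec-construction} shows that $\sigma$ becomes closed for the twisted differential (one gets $d(\sigma)=\id-(\sigma\tau+\tau\sigma)=0$), and $\Xi(g)$, $\kappa$, intertwining the respective $\tau$'s, twist to closed morphisms as well, with $K(-\tau_K)$ mapping to $\Xi(B)(-\tau_{\Xi(B)})$. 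In this twisted picture $\sigma$ and $\tau$ are closed and commute with $\Xi(g)$, so the closed degree\+zero idempotent $\sigma\tau$ on $\Xi(B)(-\tau_{\Xi(B)})$ restricts along the kernel to a closed idempotent on $K(-\tau_K)$; splitting it by idempotent-completeness of $\sZ^0(\bA)$ yields the object $A$. Untwisting, the restricted $\sigma$ together with $\tau_K$ equips $K$ with the structure of an object $(K,\sigma_K,\tau_K)$ of $\bA^{\bec\bec}$, which by Proposition~\ref{iterated-bec-construction} equals $\bec\bec(A)$; that is, $K\simeq\Xi(A)$, and tracing through the construction identifies $\kappa$ with $\Xi(f)$ for the induced $f\:A\rarrow B$.

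I expect the main obstacle to be precisely this transfer of the contracting homotopy $\sigma$ to the kernel: since $\sigma$ is non-closed, the naive factorization through $\kappa$ is unavailable, and the argument must route through the twist by $-\tau$ (where $\sigma$ becomes closed) and then through idempotent-completeness. The delicate points that consume all of the hypotheses (additivity, shifts, twists, idempotent-completeness) are the verification that $K(-\tau_K)$ is genuinely the kernel of the twisted map in $\sZ^0(\bA)$, that the restricted idempotent splits off the correct summand, and that the resulting isomorphism $K\simeq\Xi(A)$ is compatible with the maps to $\Xi(B)$, so that $\kappa\simeq\Xi(f)$. Once $\kappa\simeq\Xi(f)$ is in hand, Lemma~\ref{Xi-reflects-co-kernels}(a) finishes the proof of~(a), and part~(b) follows by the duality noted above.
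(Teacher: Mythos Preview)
Your strategy is the paper's: induce $\tau_K$ on the kernel $K$ from $\tau_{\Xi(B)}$ by naturality, twist by $-\tau$, recover $A$ by splitting an idempotent, and finish with Lemma~\ref{Xi-reflects-co-kernels}(a). Your packaging of the endgame via Proposition~\ref{iterated-bec-construction} (recognize $(K,\sigma_K,\tau_K)$ as $\bec\bec(A)$, hence $\kappa$ as $\Xi(f)$) is a clean reformulation of what the paper does explicitly, identifying $\Xi(B)(-\tau)\simeq B\oplus B[-1]$ and splitting the projector onto the first summand.

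However, the step you flag as ``delicate''---that $K(-\tau_K)\to\Xi(B)(-\tau)$ is still a kernel in $\sZ^0(\bA)$---is the entire content of the proof, and you supply no argument for it. Kernels are \emph{not} preserved by twisting for free: the universal property is tested against \emph{closed} morphisms, and twisting changes which degree-$0$ maps are closed, so the kernel property of $\kappa$ in the original picture says nothing directly about the twisted picture. Without this you cannot restrict $\sigma$ (or $\sigma\tau$) to $K$, and the whole construction of $\sigma_K$ collapses. The paper's device is a bootstrap. The twist isomorphisms relating $(K,\kappa,\Xi(g))$ to $(S,u,g\oplus g[-1])$ live only in $\bA^0$, not in $\sZ^0(\bA)$; but the faithful functor $\widetilde\Xi\:\bA^0\to\sZ^0(\bA)$ of Lemma~\ref{tilde-Xi} turns them into genuine $\sZ^0(\bA)$-isomorphisms one $\Xi$ higher. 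Since $\Xi$ preserves kernels (Lemma~\ref{Xi-adjoints}), $\Xi(\kappa)$ is a kernel of $\Xi^2(g)$; transporting along those isomorphisms, $\Xi(u)$ is a kernel of $\Xi(g\oplus g[-1])$; and Lemma~\ref{Xi-reflects-co-kernels}(a) reflects this back down to conclude that $u$---i.e.\ the twisted $\kappa$---is a kernel. After this, your $\bA^{\bec\bec}$ route and the paper's hands-on idempotent splitting coincide.
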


\begin{proof}
 Let $g\:B\rarrow C$ be a morphism in $\sZ^0(\bA)$ such that
the morphism $\Xi(g)\:\Xi(B)\allowbreak\rarrow\Xi(C)$ has a kernel
$h\:E\rarrow\Xi(B)$ in~$\sZ^0(\bA)$.

 Let $A\in\bA$ be an object.
 Recall the notation $\iota_A$, $\pi_A$, $\iota'_A$, $\pi'_A$ from
the proofs of Lemma~\ref{G-functors-in-DG-categories}
and Proposition~\ref{iterated-bec-construction}.
 In particular, $\iota_A\in\Hom^1_\bA(A,\Xi(A))$ and
$\pi_A\in\Hom^0_\bA(\Xi(A),A)$ are closed morphisms,
$d(\iota_A)=0=d(\pi_A)$, while $\iota'_A\in\Hom^0_\bA(A,\Xi(A))$ and
$\pi'_A\in\Hom^{-1}_\bA(A,\Xi(A))$ are morphisms satisfying
$d(\pi'_A)=\pi_A$ and $d(\iota'_A)=\iota_A$. {\emergencystretch
=0.5em\par}

 Introduce the notation $s_A\in\Hom^1_\bA(A,A[-1])$ for the canonical
shift isomorphism (by the definition of a shift, one has $d(s_A)=0$).
 Put $\phi_A=(\iota'_A,\iota_As_A^{-1})\:A\oplus A[-1]\rarrow\Xi(A)$
and $\psi_A=(\pi_A,s_A\pi'_A)\:\Xi(A)\rarrow A\oplus A[-1]$; then
$\phi_A\in\Hom^0_\bA(A\oplus A[-1],\>\Xi(A))$ and
$\psi_A\in\Hom^0_\bA(\Xi(A),\>A\oplus A[-1])$ are mutually inverse
isomorphisms in the additive category~$\bA^0$.
 As usually, we put $\tau_A=\iota_A\pi_A\in\Hom^1_\bA(\Xi(A),\Xi(A))$;
then $d(\phi_A)=(\iota_A,0)=\tau_A\phi_A$ and $d(\psi_A)=(0,-s_A\pi_A)
=-\psi_A\tau_A$.
 So $-\tau_A$ is a Maurer--Cartan cochain (indeed, $d(\tau_A)=0=
\tau_A^2$), and the morphisms $\phi_A$ and~$\psi_A$ establish
an isomorphism $A\oplus A^{-1}=\Xi(A)(-\tau_A)$ in
the DG\+category~$\bA$.

 Let $X\in\bA$ be an object.
 Introduce further notation $t_X\in\Hom^{-1}_\bA(X,X[1])$
for the canonical shift isomorphism in $\cZ(\bA)$ connecting
the objects $X$ and $X[1]$ (so $d(t_X)=0$).
 Then $t_{\Xi(A)}\tau_A\in\Hom^0_\bA(\Xi(A),\Xi(A)[1])$ is a morphism
in the additive category $\sZ^0(\bA)$.

 Returning to the situation at hand, we have a commutative diagram of
solid arrows in~$\sZ^0(\bA)$
$$
 \xymatrix{
  0 \ar[r] & E \ar[rr]^-h \ar@{-->}[d]^a
  && \Xi(B) \ar[rrr]^-{\Xi(g)} \ar[d]^{t_{\Xi(B)}\tau_B}
  &&& \Xi(C) \ar[d]^{t_{\Xi(C)}\tau_C} \\
  0 \ar[r] & E[1] \ar[rr]^-{h[1]} \ar@{-->}[d]^{a[1]}
  && \Xi(B)[1] \ar[rrr]^-{\Xi(g)[1]} \ar[d]^{(t_{\Xi(B)}\tau_B)[1]}
  &&& \Xi(C)[1] \ar[d]^{(t_{\Xi(C)}\tau_C)[1]} \\
  0 \ar[r] & E[2] \ar[rr]^-{h[2]} && \Xi(B)[2]
  \ar[rrr]^-{\Xi(g)[2]} &&& \Xi(C)[2]
 }
$$
 Since the morphism~$h$ is a kernel of $\Xi(g)$ and the morphism~$h[1]$
is a kernel of $\Xi(g)[1]$, there exists a unique morphism
$a\:E\rarrow E[1]$ in $\sZ^0(\bA)$ making the leftmost square
commutative.
 Since the composition $(t_{\Xi(B)}\tau_B)[1]\circ(t_{\Xi(B)}\tau_B)
=t_{\Xi(B)}[1]t_{\Xi(B)}\tau_B^2$ vanishes in $\sZ^0(\bA)$, so
does the composition $a[1]\circ a$.

 Put $\alpha=t_E^{-1}a\in\Hom_\bA^1(E,E)$.
 Then $d(\alpha)=0=\alpha^2$, so $-\alpha$~is a Maurer--Cartan cochain.
 By assumption, there exists an object $S=E(-\alpha)\in\bA$
together with mutually inverse isomorphisms
$\tilde\phi\in\Hom_\bA^0(S,E)$ and $\tilde\psi\in\Hom_\bA^0(E,S)$
such that $d(\tilde\phi)=\alpha\tilde\phi$
and $d(\tilde\psi)=-\tilde\psi\alpha$.
 Put $u=\psi_Bh\tilde\phi\in\Hom^0_\bA(S,\>B\oplus B[-1])$.
 Then $d(u)=-\psi_B\tau_Bh\tilde\phi+\psi_Bh\alpha\tilde\phi=0$, 
so $u\:S\rarrow B\oplus B[-1]$ is a morphism in
the category~$\sZ^0(\bA)$.
 We have a composable pair of morphisms
\begin{equation} \label{kernel-of-Xi-untwisted}
 \xymatrix{
  S \ar[r]^-u & B\oplus B[-1]\ar[rr]^{g\oplus g[-1]}
  && C\oplus C[-1]
 } 
\end{equation}
in the additive category~$\sZ^0(\bA)$.

 Our next aim is to prove that the morphism~$u$ is a kernel of
the morphism $g\oplus g[-1]$ in the additive category~$\sZ^0(\bA)$.
 For this purpose, we observe that, in the ambient additive
category $\bA^0$, the composable pair of
morphisms~\eqref{kernel-of-Xi-untwisted} is isomorphic to
the composable pair of morphisms
\begin{equation} \label{kernel-of-Xi-original}
 \xymatrix{
  E \ar[r]^-h & \Xi(B) \ar[rr]^{\Xi(g)} && \Xi(C),
 } 
\end{equation}
where $h$~is a kernel of $\Xi(g)$ by our original assumption.
 Indeed, the morphisms~$\tilde\psi$, \,$\psi_B$, and~$\psi_C$,
together with their inverse morphisms~$\tilde\phi$, \,$\phi_B$,
and~$\phi_C$, provide an isomorphism
between~\eqref{kernel-of-Xi-untwisted} and~\eqref{kernel-of-Xi-original}
in~$\bA^0$.

 By Lemma~\ref{tilde-Xi}, it follows that the pair of morphisms
obtained by applying $\Xi$ to~\eqref{kernel-of-Xi-untwisted} is
isomorphic to the pair of morphisms obtained by applying~$\Xi$
to~\eqref{kernel-of-Xi-original} in the category~$\sZ^0(\bA)$.
 Since the functor $\Xi$ preserves kernels by Lemma~\ref{Xi-adjoints},
the morphism $\Xi(h)$ is a kernel of the morphism $\Xi(\Xi(g))$,
and therefore the morphism $\Xi(u)$ is a kernel of the morphism
$\Xi(g\oplus g[-1])$ in~$\sZ^0(\bA)$.
 According to Lemma~\ref{Xi-reflects-co-kernels}(a), we can conclude
that the morphism~$u$ is a kernel of $g\oplus g[-1]$ in~$\sZ^0(\bA)$.

 The rest of the proof is now straightforward.
 Given an object $A\in\bA$, denote by $p_A\:A\oplus A[-1]\rarrow
A\oplus A[-1]$ the idempotent projector on the direct summand $A$
in $A\oplus A[-1]$.
 Then we have $p_C\circ(g\oplus g[-1])=(g\oplus g[-1])\circ p_B$.
 An argument similar to the one above, using the fact that $u$~is
a kernel of $g\oplus g[-1]$, shows that there exists a unique
morphism $q\:S\rarrow S$ in $\sZ^0(\bA)$ such that $p_Bu=uq$
in $\sZ^0(\bA)$.
 Moreover, one similarly shows that the equation $p_B^2=p_B$
implies $q^2=q$.

 Since the additive category $\sZ^0(\bA)$ is idempotent-complete
by assumption, the idempotent endomorphism $q\:S\rarrow S$ has
an image $A\in\sZ^0(\bA)$.
 There exists a unique morphism $f\:A\rarrow B$ in $\sZ^0(\bA)$
such that the composition $A\rightarrowtail S\overset u\rarrow
B\oplus B[-1]$ is equal to the composition $A\overset f\rarrow B
\rightarrowtail B\oplus B[-1]$.
 One easily shows that the morphism~$f$ is a kernel of~$g$
in~$\sZ^0(\bA)$.
 This proves part~(a); part~(b) is dual.
\end{proof}

\subsection{DG-compatible exact structures}
\label{DG-compatible-subsecn}
 An \emph{exact category} $\sE$ (in the sense of Quillen) is an additive
category endowed with a class of (\emph{admissible}) \emph{short exact
sequences} (or ``conflations'') $0\rarrow E'\rarrow E\rarrow E''
\rarrow0$ satisfying the axioms for which we refer the reader
to~\cite[Appendix~A]{Kel}, \cite{Bueh}, or~\cite[Appendix~A]{Partin}.
 If $0\rarrow E'\rarrow E\rarrow E''\rarrow0$ is an admissible short
exact sequence, then the morphism $E'\rarrow E$ is said to be
an \emph{admissible monomorphism} (or ``inflation''), while
the morphism $E\rarrow E''$ is said to be an \emph{admissible
epimorphism} (or ``deflation'').

 Given two exact categories $\sE$ and $\sF$, an additive functor
$\Theta\:\sE\rarrow\sF$ is said to be \emph{exact} if it preserves
admissible short exact sequences, i.~e., takes admissible short exact
sequences to admissible short exact sequences.
 Obviously, an exact functor $\Theta$ also takes admissible
monomorphisms to admissible monomorphisms and admissible epimorphisms
to admissible epimorphisms.

 We will say that an additive functor $\Theta\:\sE\rarrow\sF$ acting
between two exact categories $\sE$ and $\sF$ \emph{reflects admissible
short exact sequences} if, for any composable pair of morphisms
$E'\rarrow E\rarrow E''$ in $\sE$, the condition that $0\rarrow
\Theta(E')\rarrow\Theta(E)\rarrow\Theta(E'')\rarrow0$ is an admissible
short exact sequence in $\sF$ implies that $0\rarrow E'\rarrow E\rarrow
E''\rarrow0$ is an admissble short exact sequence in~$\sE$.
 An additive functor $\Theta$ is said to \emph{reflect admissible
monomorphisms} if, given a morphism~$f$ in $\sE$, the condition that
$\Theta(f)$ is an admissible monomorphism in $\sF$ implies that $f$~is
an admissible monomorphism in~$\sE$.
 Dually, $\Theta$ \emph{reflects admissible epimorphisms} if
a morphism~$g$ in $\sE$ is an admissible epimorphism whenever
$\Theta(g)$ is an admissible epimorphism in~$\sF$.
 
 Let $\bE$ be an additive DG\+category with shifts and cones.
 We will say that an exact category structure on the additive category
$\sZ^0(\bE)$ is \emph{DG\+compatible} (with the DG\+category~$\bE$)
if the following two conditions hold:
\begin{itemize}
\item the exact structure on $\sZ^0(\bE)$ is preserved by
the shift functors~$[1]$ and~$[-1]$;
\item for any object $E\in\sZ^0(\bE)$, the natural sequence
$0\rarrow E[-1]\rarrow\Xi_\bE(E)\rarrow E\rarrow0$ from
Section~\ref{Xi-functor-subsecn} is an admissible short exact
sequence in~$\sZ^0(\bE)$.
\end{itemize}

\begin{prop} \label{DG-compatible-prop}
 Let\/ $\bE$ be an additive DG\+category with shifts and cones.
 Then an exact category structure on\/ $\sZ^0(\bE)$ that is preserved
by the shift functors\/ $[1]$ and\/~$[-1]$ is DG\+compatible
if and only if the additive functor\/ $\Xi_\bE\:\sZ^0(\bE)\rarrow
\sZ^0(\bE)$ preserves and reflects admissible short exact sequences.
\end{prop}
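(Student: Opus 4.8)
Under the standing assumption that the exact structure on $\sZ^0(\bE)$ is preserved by $[1]$ and $[-1]$, being DG\+compatible means precisely that, for every object $E\in\sZ^0(\bE)$, the natural sequence
\[
 \mathcal S_E\:\quad 0\rarrow E[-1]\overset{\iota_E}\rarrow\Xi(E)\overset{\pi_E}\rarrow E\rarrow0
\]
is an admissible short exact sequence, with $\iota_E$, $\pi_E$ as in the proofs of Lemmas~\ref{G-functors-in-DG-categories} and~\ref{Xi-creates-co-kernels}. The first thing I would record is that admissibility is the only issue in both implications: by Lemma~\ref{cone-kernel-cokernel} the pair $(\iota_E,\pi_E)$ is a kernel--cokernel pair, by Lemma~\ref{Xi-adjoints} the functor $\Xi$ preserves all kernels and cokernels, and by Lemma~\ref{Xi-reflects-co-kernels} it reflects them. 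Hence whenever I apply $\Xi$ to an admissible short exact sequence, or reflect one back, the resulting three\+term sequence is automatically a kernel--cokernel pair, and it remains only to control the inflation (or the deflation).

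To deduce DG\+compatibility from the preservation and reflection properties, I would use only reflection. The plan is to show that $\Xi(\mathcal S_E)$ is a \emph{split} short exact sequence, hence admissible in any exact structure, and then reflect it. By the discussion of cones in Section~\ref{dg-categories-subsecn}, the sequence $\mathcal S_E$ is split exact already in the additive category $\bE^0$. The faithful additive functor $\widetilde\Xi\:\bE^0\rarrow\sZ^0(\bE)$ of Lemma~\ref{tilde-Xi} extends $\Xi$, so it carries this split exact sequence to a split exact sequence in $\sZ^0(\bE)$; since $\widetilde\Xi$ agrees with $\Xi$ on the closed morphisms $\iota_E$ and $\pi_E$, that image is exactly $\Xi(\mathcal S_E)$. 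Reflecting the admissible split sequence $\Xi(\mathcal S_E)$ then yields admissibility of $\mathcal S_E$.

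For the converse I assume DG\+compatibility. The reflection half is clean: given a composable pair $A\overset a\rarrow B\overset b\rarrow C$ with $0\rarrow\Xi(A)\rarrow\Xi(B)\rarrow\Xi(C)\rarrow0$ admissible, Lemma~\ref{Xi-reflects-co-kernels} gives $a=\ker b$, so $b$ admits a kernel. Naturality of $\pi\:\Xi\rarrow\Id$ gives $b\circ\pi_B=\pi_C\circ\Xi(b)$, a composite of the admissible epimorphisms $\Xi(b)$ and $\pi_C$ (the latter admissible by DG\+compatibility applied to $C$), hence itself an admissible epimorphism. Quillen's obscure axiom, valid in every exact category~\cite{Bueh}, applied to $\Xi(B)\overset{\pi_B}\rarrow B\overset b\rarrow C$ whose second factor $b$ has a kernel, then shows that $b$ is an admissible epimorphism; together with $a=\ker b$ this exhibits $0\rarrow A\rarrow B\rarrow C\rarrow0$ as admissible.

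The preservation half of the converse is what I expect to be the main obstacle, since it cannot be obtained from the obscure axiom via a naturality square: neither $\pi_B\circ\Xi(a)=a\circ\pi_A$ nor $\Xi(b)\circ\Xi(a)=0$ is an admissible monomorphism. Instead I would argue by cobase change. As $\Xi$ preserves cokernels, $\Xi(b)=\coker(\Xi(a))$, so it suffices to prove that $\Xi(a)$ is an admissible monomorphism. The maps $\iota_A\:A[-1]\rarrow\Xi(A)$ (admissible by DG\+compatibility) and $a[-1]\:A[-1]\rarrow B[-1]$ (admissible because $[-1]$ preserves the exact structure) are admissible monomorphisms with common source, and I would form their pushout $P$. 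The two cobase\+change maps $\Xi(A)\rarrow P$ and $B[-1]\rarrow P$ are then admissible monomorphisms, and naturality of $\iota$ produces $p\:P\rarrow\Xi(B)$ with $\Xi(a)$ equal to the composite $\Xi(A)\rarrow P\overset p\rarrow\Xi(B)$. The induced morphism from the conflation $B[-1]\rightarrowtail P\twoheadrightarrow A$ to the conflation $B[-1]\overset{\iota_B}\rightarrowtail\Xi(B)\overset{\pi_B}\twoheadrightarrow B$ is the identity on the subobject $B[-1]$ and equals $a$ on the quotients, so the square of quotients is a pullback and $p$, being the pullback of the admissible monomorphism $a$, is an admissible monomorphism. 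Thus $\Xi(a)$ is a composite of admissible monomorphisms, and $0\rarrow\Xi(A)\rarrow\Xi(B)\rarrow\Xi(C)\rarrow0$ is admissible. The points requiring care here are the stability of admissible monomorphisms under pushout and the fact that a morphism of conflations which is the identity on subobjects induces a pullback square on quotients; both are standard properties of exact categories~\cite{Bueh}.
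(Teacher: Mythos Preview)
Your proof is correct, but the ``only if'' direction takes a different route from the paper's.

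For the ``if'' implication your argument coincides with the paper's: both observe that $\Xi(\mathcal S_E)$ is split exact (via $\widetilde\Xi$ and the $\bE^0$\+splitting of $\mathcal S_E$) and reflect.

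For the converse the paper organises both halves around the $3\times3$ diagram with rows $A[-1]\to B[-1]\to C[-1]$, $\Xi(A)\to\Xi(B)\to\Xi(C)$, $A\to B\to C$ and admissible columns. \emph{Preservation} is then a one-line citation: the middle row is an extension of two admissible rows with zero composition, hence admissible by~\cite[Corollary~3.6(ii)]{Bueh}. This is considerably shorter than your pushout/pullback construction, which is correct but does real work that the closure-under-extensions result packages for free. (Your step ``the pullback of the inflation $a$ along $\pi_B$ is an inflation'' is fine because $\pi_B$ is a deflation---this is~\cite[Proposition~2.15]{Bueh}, or equivalently $p=\ker(b\pi_B)$ with $b\pi_B$ a composite of deflations---but the phrasing could be read as asserting that arbitrary pullbacks of inflations are inflations, which is false in general exact categories.) For \emph{reflection}, the paper instead proves an auxiliary lemma via the embedding theorem into an abelian category with shift, while you use the obscure axiom directly: $b\circ\pi_B=\pi_C\circ\Xi(b)$ is a composite of deflations, and $b$ has a kernel. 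Your argument here is more elementary and avoids the embedding machinery entirely; it is arguably preferable.
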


\begin{proof}
 ``If'': the point is that the sequence $0\rarrow E[-1]\rarrow\Xi(E)
\rarrow E\rarrow0$ becomes split exact after the functor $\Xi$ is
applied.
 Indeed, this sequence is split exact in the additive category
$\bE^0$, so it remains to use Lemma~\ref{tilde-Xi}
(cf.\ Lemma~\ref{twists-into-isomorphisms}).
 All split short exact sequences are always admissible; so if $\Xi$
reflects admissible short exact sequences, then $0\rarrow E[-1]
\rarrow\Xi(E)\rarrow E\rarrow0$ is admissible.

 ``Only if'': both the preservation and reflection of admissible
short exact sequences can be seen from
the commutative diagram~\eqref{3x3-Xi-diagram-with-shift}.
 Let us redraw the diagram, omitting the unnecessary shift:
\begin{equation} \label{3x3-Xi-diagram}
\begin{gathered}
 \xymatrix{
  A[-1] \ar[rr]^{f[-1]} \ar@{>->}[d] && B[-1]
  \ar[rr]^{g[-1]} \ar@{>->}[d] && C[-1] \ar@{>->}[d] \\
  \Xi(A) \ar[rr]^{\Xi(f)} \ar@{->>}[d] && \Xi(B)
  \ar[rr]^{\Xi(g)} \ar@{->>}[d] && \Xi(C) \ar@{->>}[d] \\
  A \ar[rr]^f && B \ar[rr]^g && C
 }
\end{gathered}
\end{equation}
 Here $A\overset f\rarrow B\overset g\rarrow C$ is an arbitrary
composable pair of morphisms in~$\bE$.
 By assumption, the columns in~\eqref{3x3-Xi-diagram}
are admissible short exact sequences.

 Now if $0\rarrow A\rarrow B\rarrow C\rarrow0$ is an admissible
short exact sequence, then so is $0\rarrow A[-1]\rarrow B[-1]
\rarrow C[-1]\rarrow0$.
 Furthermore, notice that $\Xi(g)\circ\Xi(f)=\Xi(g\circ f)=0$,
since $\Xi$ is an additive functor.
 It remains to recall that the class of all admissible short
exact sequences is closed under (admissible) extensions within
the class of all pairs of morphisms with zero composition in
any exact category~\cite[Corollary~3.6(ii)]{Bueh}.

 Conversely, assume that the short sequence $0\rarrow\Xi(A)\rarrow
\Xi(B)\rarrow\Xi(C)\rarrow0$ is admissible.
 The point is that it then follows
from~\eqref{3x3-Xi-diagram} that the upper and lower rows are
admissible short exact sequences as well, because they are each
other's shifts, so their exactness properties are the same.
 Specifically, applying the following lemma to $\sE=\sZ^0(\bE)$
allows to finish the proof.
\end{proof}

\begin{lem}
 Let\/ $\sE$ be an exact category endowed with an autoequivalence\/
$[1]\:\sE\rarrow\sE$ such that both\/~$[1]$ and its inverse
functor\/~$[-1]$ preserve admissible short exact sequences.
 Suppose that we are given a commutative diagram in\/~$\sE$
\begin{equation} \label{3x3-abstract-diagram}
\begin{gathered}
 \xymatrix{
  A[-1] \ar[rr]^{f[-1]} \ar@{>->}[d] && B[-1]
  \ar[rr]^{g[-1]} \ar@{>->}[d] && C[-1] \ar@{>->}[d] \\
  X \ar[rr] \ar@{->>}[d] && Y \ar[rr] \ar@{->>}[d]
  && Z \ar@{->>}[d] \\ A \ar[rr]^f && B \ar[rr]^g && C
 }
\end{gathered}
\end{equation}
where the columns and the middle row are admissible short exact 
sequences.
 Then the upper and lower rows are admissible short exact sequences
as well.
\end{lem}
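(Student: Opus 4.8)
The plan is to exploit the special feature of the diagram~\eqref{3x3-abstract-diagram} that its top row $A[-1]\overset{f[-1]}\rarrow B[-1]\overset{g[-1]}\rarrow C[-1]$ is precisely the image under the exact autoequivalence~$[-1]$ of its bottom row $A\overset f\rarrow B\overset g\rarrow C$. Since both $[1]$ and $[-1]$ preserve admissible short exact sequences by hypothesis, the top row is an admissible short exact sequence if and only if the bottom row is; so it suffices to treat the bottom row. I would prove in turn that $f$ is an admissible monomorphism, that $g$ is an admissible epimorphism, that $g\circ f=0$, and finally---the substantive point---that the comparison morphism from $A$ to $\ker g$ is an isomorphism. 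Throughout I would write the middle row as $X\overset m\rightarrowtail Y\overset e\twoheadrightarrow Z$, the column inflations as $i_A\:A[-1]\rightarrowtail X$, $i_B$, $i_C$, and the column deflations as $q_A\:X\twoheadrightarrow A$, $q_B$, $q_C$.

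For the epimorphism, commutativity of the lower right square gives $g\circ q_B=q_C\circ e$, whose right-hand side is a composition of the admissible epimorphisms $e$ and $q_C$, hence admissible; by the cancellation property \cite[Proposition~2.16]{Bueh} (if $g'\circ f'$ is an admissible epimorphism then so is $g'$), $g$ is an admissible epimorphism. For the monomorphism, commutativity of the upper left square gives $m\circ i_A=i_B\circ f[-1]$, whose left-hand side is a composition of admissible monomorphisms; the dual cancellation property forces $f[-1]$, and therefore $f$ (its $[1]$-shift), to be an admissible monomorphism. Finally, using $f\circ q_A=q_B\circ m$ (lower left square) and $g\circ q_B=q_C\circ e$, I compute $g\circ f\circ q_A=q_C\circ e\circ m=0$ because $e\circ m=0$; since $q_A$ is an epimorphism, $g\circ f=0$.

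The heart of the matter is the exactness step. Set $K=\ker g$, so that $K\rightarrowtail B\overset g\twoheadrightarrow C$ is a conflation; since $g\circ f=0$, the inflation $f$ factors as $A\overset j\rarrow K\rightarrowtail B$, and cancellation makes $j$ an admissible monomorphism. The bottom row is admissible exact precisely when $j$ is an isomorphism, i.e.\ when $j$ is also an admissible epimorphism. To obtain this I would introduce the auxiliary object $Q=\ker(g\circ q_B)\rightarrowtail Y$. Because $g\circ q_B=q_C\circ e$ is a composition of deflations in two ways, $Q$ carries two conflations coming from the kernel sequences of these composites: $B[-1]\rightarrowtail Q\twoheadrightarrow K$ (that is, $\ker q_B\rightarrowtail\ker(g\circ q_B)\twoheadrightarrow\ker g$) and $X\rightarrowtail Q\twoheadrightarrow C[-1]$ (that is, $\ker e\rightarrowtail\ker(q_C\circ e)\twoheadrightarrow\ker q_C$). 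Commutativity of the upper right square identifies the composite $B[-1]\rightarrowtail Q\twoheadrightarrow C[-1]$ with $g[-1]$, which is an admissible epimorphism. Thus the subobject $B[-1]$ of $Q$ already covers $Q/X=C[-1]$, and a routine argument with pullback-stability of admissible epimorphisms and the cancellation property then upgrades $X\rightarrowtail Q\twoheadrightarrow K$ to an admissible epimorphism $X\twoheadrightarrow K$. This map factors as $X\overset{q_A}\twoheadrightarrow A\overset j\rarrow K$, so cancellation makes $j$ an admissible epimorphism; being already an admissible monomorphism, $j$ is an isomorphism. Hence the bottom row is an admissible short exact sequence, and applying $[-1]$ yields the top row.

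I expect the main obstacle to be exactly this last step: converting the purely formal observation that $B[-1]$ surjects onto $Q/X$ into the genuinely admissible statement that $X\twoheadrightarrow K$ is a deflation. This is where the subtleties of working in a general (not necessarily weakly idempotent complete) exact category concentrate---one has no honest intersections of subobjects and must stay within admissible morphisms---and it is also the only place where the hypothesis that the \emph{top} row is the $[-1]$-shift of the \emph{bottom} row is genuinely used, via the admissibility of $g[-1]$. Everything else reduces to careful bookkeeping with the cancellation property \cite[Proposition~2.16]{Bueh} and the kernel sequences of composites of deflations.
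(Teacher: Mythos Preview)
Your approach differs from the paper's: the paper invokes the embedding theorem (\cite[Proposition~A.2]{Kel}) to reduce to an abelian category $\sA$ with a compatible shift, then views the diagram as a short exact sequence of three-term complexes (the rows) and applies the long exact cohomology sequence. The shift-symmetry between the outer rows converts the partial information from this sequence into full exactness of both rows; since $\sE$ sits in $\sA$ as a fully exact subcategory, exactness in $\sA$ pulls back to admissibility in~$\sE$.

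Your direct argument has a gap that appears earlier than you anticipate. The cancellation you cite---``if $g'\circ f'$ is an admissible epimorphism then so is $g'$''---is not the content of \cite[Proposition~2.16]{Bueh}: that result requires the additional hypothesis that $g'$ already possess a kernel. The unconditional form is equivalent to weak idempotent completeness (\cite[Section~7]{Bueh}), which the lemma does not assume. You have no a~priori kernel for $g$, and your later construction of $K$ via the claimed conflation $B[-1]\rightarrowtail Q\twoheadrightarrow K$ (which you describe as $\ker q_B\rightarrowtail\ker(g\circ q_B)\twoheadrightarrow\ker g$) already presupposes that $\ker g$ exists, so the reasoning is circular. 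The dual issue afflicts the step for $f[-1]$. The ``routine argument'' at the end---inferring that $X$ covers $Q/B[-1]$ from the fact that $B[-1]$ covers $Q/X$---is complement-of-subobjects reasoning that is immediate in an abelian category but has no direct analogue in a bare exact one without further hypotheses. The paper's embedding route sidesteps all of this precisely because kernels, cokernels, and the long exact sequence are freely available in the ambient abelian category.
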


\begin{proof}
 Let us be lazy and use the embedding
theorem~\cite[Proposition~A.2]{Kel}, \cite[Appendix~A]{Bueh},
\cite[Section~A.6]{Partin}.
 It is convenient to use~\cite[Lemma~A.3]{Kel} (or rather its obvious
extension to exact categories with a shift), which allows to restrict
the consideration to a small subcategory, thus avoiding
set-theoretical issues.
 So we can assume that there exists an abelian category $\sA$ and
a fully faithful functor $\theta\:\sE\rarrow\sA$ representing $\sE$ as
a fully exact subcategory in~$\sA$.
 Notice that the construction in the embedding theorem is natural
(with respect to equivalences of exact categories), and therefore
the shift autoequivalence $[1]\:\sE\rarrow\sE$ induces a similar
autoequivalence $[1]\:\sA\rarrow\sA$ such that the embedding
functor~$\theta$ commutes with the shifts in $\sE$ and in~$\sA$.

 Applying $\theta$ to~\eqref{3x3-abstract-diagram}, we get a commutative
$3\times 3$ diagram in $\sA$ with exact columns and an exact middle row.
 Now it follows that the upper leftmost morphism $\theta(A[-1])\rarrow
\theta(B[-1])$ is a monomorphism; hence so is the lower leftmost
morphism $\theta(A)\rarrow\theta(B)$.
 Dually, it follows that the lower rightmost morphism $\theta(B)\rarrow
\theta(C)$ is an epimorphism; hence so is the upper rightmost morphism
$\theta(B[-1])\rarrow\theta(C[-1])$.
 Using the cohomology long exact sequence for our short exact sequence
of complexes in $\sA$, one concludes that all the rows are exact in
$\sA$, hence also in~$\sE$.
\end{proof}

\begin{cor} \label{cone-sequences-universally-admissible-exact}
 Let\/ $\bE$ be an additive DG\+category with shifts and cones.
 Then, for any closed morphism $f\:A\rarrow B$ of degree\/~$0$ in\/
$\bE$, the natural pair $B\rarrow\cone(f)\rarrow A[1]$ of closed
morphisms of degree\/~$0$ in\/ $\bE$ is an admissible short exact
sequence\/ $0\rarrow B\rarrow\cone(f)\rarrow A[1]\rarrow0$ in any
DG\+compatible exact category structure on\/~$\sZ^0(\bE)$.
\end{cor}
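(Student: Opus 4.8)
The plan is to reduce the statement to the reflection property of the functor $\Xi_\bE$ established in Proposition~\ref{DG-compatible-prop}. First I would recall, from the discussion of cones in Section~\ref{dg-categories-subsecn}, that the natural morphisms $B\rarrow\cone(f)\rarrow A[1]$ are closed of degree~$0$, so they form a composable pair in $\sZ^0(\bE)$, and that the short sequence $0\rarrow B\rarrow\cone(f)\rarrow A[1]\rarrow0$ is split exact in the underlying preadditive category~$\bE^0$. In particular the composition $B\rarrow A[1]$ vanishes, so we are indeed looking at a composable pair with zero composition, which is the setting in which reflection of admissible short exact sequences is phrased.

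Next I would apply the faithful additive functor $\widetilde\Xi=\widetilde\Xi_\bE\:\bE^0\rarrow\sZ^0(\bE)$ from Lemma~\ref{tilde-Xi} to this split exact sequence. Since $\widetilde\Xi$ is additive, it carries a split short exact sequence in $\bE^0$ to a split short exact sequence in $\sZ^0(\bE)$; the point is that the splitting morphisms of the cone sequence, although they need not be closed in $\bE^0$, are sent by $\widetilde\Xi$ to genuine (closed) morphisms in $\sZ^0(\bE)$ that split the image sequence. Because $\widetilde\Xi$ restricts to $\Xi_\bE$ on the closed morphisms $B\rarrow\cone(f)$ and $\cone(f)\rarrow A[1]$, the resulting split exact sequence is exactly $0\rarrow\Xi(B)\rarrow\Xi(\cone f)\rarrow\Xi(A[1])\rarrow0$. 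Since every split short exact sequence is admissible in an arbitrary exact category, this is an admissible short exact sequence in $\sZ^0(\bE)$.

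Finally, since the given exact structure on $\sZ^0(\bE)$ is DG\+compatible, Proposition~\ref{DG-compatible-prop} guarantees that $\Xi_\bE$ reflects admissible short exact sequences. Applying this to the composable pair $B\rarrow\cone(f)\rarrow A[1]$, whose image under $\Xi$ we have just shown to be an admissible short exact sequence, I conclude that $0\rarrow B\rarrow\cone(f)\rarrow A[1]\rarrow0$ is itself admissible, as desired.

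The step I expect to be the main (if modest) obstacle is the second one: verifying that $\widetilde\Xi$ sends the cone sequence to a split, hence admissible, sequence in $\sZ^0(\bE)$. The subtlety is precisely that splitness of the cone sequence lives in $\bE^0$ rather than in $\sZ^0(\bE)$, and one must use that $\widetilde\Xi$ takes values in $\sZ^0(\bE)$ to promote the $\bE^0$\+splitting into an honest splitting among closed morphisms. Once this is in place, the reflection property supplied by Proposition~\ref{DG-compatible-prop} finishes the argument without any further computation.
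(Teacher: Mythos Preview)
Your proof is correct and matches the paper's primary argument essentially line for line: the cone sequence is split exact in $\bE^0$, so by Lemma~\ref{tilde-Xi} its image under $\Xi_\bE$ is split exact in $\sZ^0(\bE)$, and then Proposition~\ref{DG-compatible-prop} reflects admissibility back. The paper also records an alternative proof, constructing the sequence as the pushout of $0\rarrow A\rarrow\cone(\id_A)\rarrow A[1]\rarrow0$ along~$f$ (or the pullback of the analogous sequence for~$B$ along~$f[1]$), which avoids the reflection step at the cost of invoking the exact-category axioms on pushouts and pullbacks.
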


\begin{proof}
 The sequence $0\rarrow B\rarrow\cone(f)\rarrow A[1]\rarrow0$ is split
exact in the additive category~$\bE^0$.
 By Lemma~\ref{tilde-Xi} (cf.\ Lemma~\ref{twists-into-isomorphisms}),
it follows that the functor $\Xi_\bE$ takes this sequence to a split
short exact sequence in\/~$\sZ^0(\bE)$.
 In view of the ``only if'' assertion of
Proposition~\ref{DG-compatible-prop}, we can conclude that $0\rarrow B
\rarrow\cone(f)\rarrow A[1]\rarrow0$ is an admissible short exact
sequence in any DG\+compatible exact structure on\/~$\sZ^0(\bE)$.

 Alternatively, one can construct the desired admissible short exact
sequence $0\rarrow B\rarrow\cone(f)\rarrow A[1]\rarrow0$ as
the pushout of the admissible short exact sequence $0\rarrow A\rarrow
\cone(\id_A)\rarrow A[1]\rarrow0$ by the morphism~$f$, or as
the pullback of the admissible short exact sequence $0\rarrow B\rarrow
\cone(\id_B)\rarrow B[1]\rarrow0$ by the morphism $f[1]\:A[1]
\rarrow B[1]$.
\end{proof}

 Let us say that an additive DG\+category $\bE$ is \emph{weakly
idempotent-complete} if the additive category $\sZ^0(\bE)$ is weakly
idempotent-complete in the sense of~\cite[Section~7]{Bueh}.
 One can easily see that if an additive DG\+category $\bE$ is
weakly idempotent-complete, then so is the DG\+category~$\bE^\bec$.

\begin{lem} \label{DG-compatible-reflecting-admissible-morphisms}
 Let\/ $\bE$ be a weakly idempotent-complete additive DG\+category
with shifts and cones.
 Then, for any DG\+compatible exact category structure on\/
$\sZ^0(\bE)$, the functor\/ $\Xi_\bE\:\sZ^0(\bE)\rarrow
\sZ^0(\bE)$ reflects admissible monomorphisms and admissible
epimorphisms.
\end{lem}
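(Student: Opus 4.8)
The plan is to reduce both assertions to the cancellation property for admissible monomorphisms and epimorphisms available in weakly idempotent\+complete exact categories, namely \cite[Proposition~7.6]{Bueh}: if a composite $g\circ f$ is an admissible monomorphism then $f$ is one, and dually if $g\circ f$ is an admissible epimorphism then $g$ is one. I would connect this to $\Xi_\bE$ through the two natural transformations $\Id_{\sZ^0(\bE)}[-1]\rarrow\Xi_\bE\rarrow\Id_{\sZ^0(\bE)}$ introduced before Lemma~\ref{cone-kernel-cokernel}, whose components I denote $j_A\:A[-1]\rarrow\Xi_\bE(A)$ and $p_A\:\Xi_\bE(A)\rarrow A$. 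The very definition of DG\+compatibility says that each sequence $0\rarrow A[-1]\overset{j_A}{\rarrow}\Xi_\bE(A)\overset{p_A}{\rarrow}A\rarrow0$ is admissible, so every $j_A$ is an admissible monomorphism and every $p_A$ is an admissible epimorphism.

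For admissible monomorphisms, I would start from a morphism $f\:A\rarrow B$ in $\sZ^0(\bE)$ with $\Xi_\bE(f)$ an admissible monomorphism, and exploit naturality of~$j$ in the form $\Xi_\bE(f)\circ j_A=j_B\circ f[-1]$. The left-hand side is a composite of the two admissible monomorphisms $j_A$ and $\Xi_\bE(f)$, hence is again an admissible monomorphism; so $j_B\circ f[-1]$ is admissible. Reading this as the composite $A[-1]\overset{f[-1]}{\rarrow}B[-1]\overset{j_B}{\rarrow}\Xi_\bE(B)$ and invoking \cite[Proposition~7.6]{Bueh}, I conclude that the first factor $f[-1]$ is an admissible monomorphism. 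Since the shift $[1]$ preserves admissible short exact sequences by the first clause of DG\+compatibility, it follows that $f=(f[-1])[1]$ is an admissible monomorphism.

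The case of admissible epimorphisms is dual and in fact slightly shorter, using the transformation $p$ in place of~$j$ and no shift at the end. Given $g\:B\rarrow C$ with $\Xi_\bE(g)$ an admissible epimorphism, naturality of~$p$ yields $p_C\circ\Xi_\bE(g)=g\circ p_B$. The left-hand side is a composite of the admissible epimorphisms $\Xi_\bE(g)$ and $p_C$, hence an admissible epimorphism; reading $g\circ p_B$ as the composite $\Xi_\bE(B)\overset{p_B}{\rarrow}B\overset{g}{\rarrow}C$ and applying \cite[Proposition~7.6]{Bueh} to the second factor gives that $g$ is an admissible epimorphism.

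The one genuinely load\+bearing step is the appeal to \cite[Proposition~7.6]{Bueh}, and this is exactly where weak idempotent\+completeness of $\sZ^0(\bE)$ is indispensable: everything else is formal naturality together with the admissibility of the tautological sequences $0\rarrow A[-1]\rarrow\Xi_\bE(A)\rarrow A\rarrow0$ supplied by DG\+compatibility. I expect no real obstacle here; notably, this route avoids constructing any cokernel of $f$ by hand and needs neither twists nor idempotent\+completeness of $\bE$, in contrast with Lemma~\ref{Xi-creates-co-kernels}.
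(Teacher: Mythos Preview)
Your proof is correct and is essentially the same argument as the paper's: both use naturality of the transformations between $\Id$ and $\Xi$ together with the admissibility of the sequence $0\rarrow A[-1]\rarrow\Xi(A)\rarrow A\rarrow0$ to write a composite that is admissible, and then invoke the cancellation property from \cite[Proposition~7.6]{Bueh} (or its dual). The only cosmetic difference is that the paper shifts first (working with $A\rarrow\Xi(A)[1]$ and $\Xi(f)[1]$) while you shift at the end (working with $j_A$ and $f[-1]$); your treatment of the epimorphism case is more explicit than the paper's ``dual'' remark but identical in content.
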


\begin{proof}
 Let $f\:A\rarrow B$ be a morphism in $\sZ^0(\bE)$ such that $\Xi(f)$
is an admissible monomorphism.
 Then the morphism $\Xi(f)[1]$ is an admissible monomorphism as well.
 The natural morphism $A\rarrow\Xi(A)[1]$ is an admissible monomorphism
by the definition of a DG\+compatible exact structure.
 Hence the composition $A\rarrow\Xi(A)[1]\rarrow\Xi(B)[1]$ is
an admissible monomorphism.
 This composition factorizes as $A\rarrow B\rarrow\Xi(B)[1]$.
 Since the additive category $\sZ^0(\bE)$ is weakly idempotent-complete
by assumption, it follows that the morphism $A\rarrow B$ is
an admissible monomorphism (use the result dual
to~\cite[Proposition~7.6]{Bueh}).
 Thus $\Xi$ reflects admissible monomorphisms; the argument for
admissible epimorphisms is dual.
\end{proof}

\subsection{$\bec$-compatible exact structures}
\label{bec-compatible-subsecn}
 Let\/ $\bE$ be an additive DG\+category with shifts and cones.
 Assume that we are given two exact category structures, one on
the additive category $\sZ^0(\bE)$ and the other one on
the additive category $\sZ^0(\bE^\bec)$.
 We will say that the two exact structures are
\emph{$\bec$\+compatible} (with each other) if
\begin{itemize}
\item both the exact structures are preserved by the shifts~$[1]$
and~$[-1]$;
\item both the additive functors
$$
 \Phi_\bE\:\sZ^0(\bE)\lrarrow\sZ^0(\bE^\bec)
 \quad\text{and}\quad
 \Psi^+_\bE\:\sZ^0(\bE^\bec)\lrarrow\sZ^0(\bE)
$$
from Lemma~\ref{G-functors-in-DG-categories}
preserve and reflect admissible short exact sequences.
\end{itemize}
 Clearly, in a pair of $\bec$\+compatible exact structures on
$\sZ^0(\bE)$ and $\sZ^0(\bE^\bec)$ any one of the two exact
structures determines uniquely the other one.

 We define an \emph{exact DG\+category} $\bE$ as an additive
DG\+category with shifts and cones endowed with a $\bec$\+compatible
pair of exact structures on the additive categories $\sZ^0(\bE)$ and
$\sZ^0(\bE^\bec)$.

\begin{lem} \label{DG-compatible-vs-bec-compatible}
 A pair of exact structures on the additive categories\/
$\sZ^0(\bE)$ and\/ $\sZ^0(\bE^\bec)$ is\/ $\bec$\+compatible if and
only if the following two conditions hold: both the exact structures
are DG\+compatible (in the sense of the definition in
Section~\ref{DG-compatible-subsecn}) with the respective
DG\+categories\/ $\bE$ and\/ $\bE^\bec$, and both the additive
functors\/ $\Phi_\bE$ and\/ $\Psi^+_\bE$ are exact with respect to
these exact category structures.
\end{lem}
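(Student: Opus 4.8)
The plan is to reduce everything to Proposition~\ref{DG-compatible-prop} together with the factorizations of the functor $\Xi$ supplied by Lemma~\ref{Xi-decomposed}. Note first that both notions under comparison include the same hypothesis that the exact structures on $\sZ^0(\bE)$ and $\sZ^0(\bE^\bec)$ are preserved by the shifts $[1]$ and $[-1]$; so I may assume this throughout and concentrate on the remaining conditions. Under this shift-invariance, Proposition~\ref{DG-compatible-prop} says that DG\+compatibility of $\sZ^0(\bE)$ (resp.\ of $\sZ^0(\bE^\bec)$) is equivalent to the assertion that $\Xi_\bE$ (resp.\ $\Xi_{\bE^\bec}$) preserves and reflects admissible short exact sequences. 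The two structural facts I will lean on are the natural isomorphisms $\Xi_\bE\simeq\Psi^+_\bE\circ\Phi_\bE$ and $\Xi_{\bE^\bec}\simeq\Phi_\bE\circ\Psi^-_\bE$ from Lemma~\ref{Xi-decomposed}, and the identity $\Psi^-_\bE=\Psi^+_\bE[1]$ from Lemma~\ref{G-functors-in-DG-categories}. I will also use the routine observations that a composition of two functors preserving (resp.\ reflecting) admissible short exact sequences again preserves (resp.\ reflects) them, and that a shift autoequivalence, being exact with exact inverse, both preserves and reflects them.

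For the ``only if'' direction, assume the pair is $\bec$\+compatible, so that $\Phi_\bE$ and $\Psi^+_\bE$ both preserve and reflect admissible short exact sequences. Preservation immediately gives that $\Phi_\bE$ and $\Psi^+_\bE$ are exact. For DG\+compatibility of $\sZ^0(\bE)$ I factor $\Xi_\bE\simeq\Psi^+_\bE\circ\Phi_\bE$: each factor preserves and reflects, hence so does the composite, and Proposition~\ref{DG-compatible-prop} applies. For DG\+compatibility of $\sZ^0(\bE^\bec)$ I factor $\Xi_{\bE^\bec}\simeq\Phi_\bE\circ\Psi^-_\bE$; writing $\Psi^-_\bE=\Psi^+_\bE[1]$ as a shift followed by $\Psi^+_\bE$, the functor $\Psi^-_\bE$ preserves and reflects, and so does $\Phi_\bE$, whence $\Xi_{\bE^\bec}$ preserves and reflects as well.

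For the ``if'' direction, assume both exact structures are DG\+compatible and both $\Phi_\bE$ and $\Psi^+_\bE$ are exact; by Proposition~\ref{DG-compatible-prop}, $\Xi_\bE$ and $\Xi_{\bE^\bec}$ preserve and reflect admissible short exact sequences. Since exactness already supplies preservation, I only need to check that $\Phi_\bE$ and $\Psi^+_\bE$ reflect. To see that $\Phi_\bE$ reflects, I take a composable pair in $\sZ^0(\bE)$ whose image under $\Phi_\bE$ is an admissible short exact sequence; applying the exact functor $\Psi^+_\bE$ and using $\Psi^+_\bE\circ\Phi_\bE\simeq\Xi_\bE$ turns it into a sequence of the form $\Xi_\bE(-)$, which $\Xi_\bE$ reflects. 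To see that $\Psi^+_\bE$ reflects, I take a composable pair in $\sZ^0(\bE^\bec)$ whose image under $\Psi^+_\bE$ is admissible; shifting by $[1]$ (which preserves admissibility) turns this into the image under $\Psi^-_\bE=\Psi^+_\bE[1]$, then applying the exact functor $\Phi_\bE$ and using $\Phi_\bE\circ\Psi^-_\bE\simeq\Xi_{\bE^\bec}$ produces a sequence of the form $\Xi_{\bE^\bec}(-)$, which $\Xi_{\bE^\bec}$ reflects. The one point requiring care is this last step: since $\Psi^+_\bE$ does not itself factor through $\Xi$, I am forced to pass to $\Psi^-_\bE$ via a shift before composing with $\Phi_\bE$. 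Once this asymmetry is handled, both reflection properties follow, and the pair of exact structures is $\bec$\+compatible.
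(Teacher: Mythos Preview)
Your proof is correct and follows essentially the same approach as the paper: both reduce the statement to Proposition~\ref{DG-compatible-prop} and the factorizations $\Xi_\bE\simeq\Psi^+_\bE\circ\Phi_\bE$, $\Xi_{\bE^\bec}\simeq\Phi_\bE\circ\Psi^-_\bE$ from Lemma~\ref{Xi-decomposed}, together with $\Psi^-_\bE=\Psi^+_\bE[1]$. The paper phrases the argument slightly more abstractly (``if $G$ preserves and $G\circ F$ reflects, then $F$ reflects''), whereas you spell out the reflection step for $\Psi^+_\bE$ by explicitly passing through the shift to $\Psi^-_\bE$; but the content is identical.
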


\begin{proof}
 Assuming that both the exact category structures are preserved by
the shifts and both the functors $\Phi_\bE$ and $\Psi^+_\bE$ preserve
admissible short exact sequences, we have to prove that both these
functors reflect admissible short exact sequences if and only if
both the exact structures are DG\+compatible.
 By Proposition~\ref{DG-compatible-prop}, the condition that both
the exact structures are DG\+compatible means that both the functors
$\Xi_\bE$ and $\Xi_{\bE^\bec}$ preserve and reflect admissible
short exact sequences.
 By Lemma~\ref{Xi-decomposed}, we have
$\Xi_\bE\simeq\Psi^+_\bE\circ\Phi_\bE$ and
$\Xi_{\bE^\bec}\simeq\Phi_\bE\circ\Psi^-_\bE$
(where $\Psi^-_\bE=\Psi^+_\bE[1]$).

 Now if both the functors $\Phi_\bE$ and $\Psi^+_\bE$ preserve
(respectively, reflect) admissible short exact sequences, then
their compositions also preserve (resp., reflect) them.
 Conversely, if the functor $\Psi^+_\bE$ preserves admissible
short exact sequences, while the functor $\Psi^+_\bE\circ\Phi_\bE$
reflects them, then the functor $\Phi_\bE$ also reflects them.
 Similarly, if the functor $\Phi_\bE$ preserves admissible short
exact sequences, while the functor $\Phi_\bE\circ\Psi^-_\bE$ reflects
them, then the functor $\Psi^-_\bE$ also reflects them.
\end{proof}

\begin{lem} \label{exact-DG-reflecting-admissible-morphisms}
 Let\/ $\bE$ be a weakly idempotent-complete additive DG\+category
with shifts and cones.
 Then, for any exact DG\+category structure on\/ $\bE$, both
the additive functors\/ $\Phi_\bE$ and\/ $\Psi^+_\bE$ reflect
admissible monomorphisms and admissible epimorphisms.
\end{lem}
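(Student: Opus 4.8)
The plan is to reduce the statement to Lemma~\ref{DG-compatible-reflecting-admissible-morphisms}, applied separately to the two DG\+categories $\bE$ and $\bE^\bec$, by exploiting the factorizations of the endofunctors $\Xi_\bE$ and $\Xi_{\bE^\bec}$ through $\Phi_\bE$ and $\Psi^\pm_\bE$ recorded in Lemma~\ref{Xi-decomposed}. First I would invoke Lemma~\ref{DG-compatible-vs-bec-compatible}: since $\bE$ carries an exact DG\+category structure (a $\bec$\+compatible pair of exact structures), both the exact structure on $\sZ^0(\bE)$ and the one on $\sZ^0(\bE^\bec)$ are DG\+compatible with the respective DG\+categories $\bE$ and $\bE^\bec$, and moreover both $\Phi_\bE$ and $\Psi^+_\bE$ are exact. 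I would also record that $\bE^\bec$ is weakly idempotent-complete whenever $\bE$ is, as noted just before Lemma~\ref{DG-compatible-reflecting-admissible-morphisms}. Consequently Lemma~\ref{DG-compatible-reflecting-admissible-morphisms} applies to both $\bE$ and $\bE^\bec$, yielding that $\Xi_\bE$ and $\Xi_{\bE^\bec}$ reflect admissible monomorphisms and admissible epimorphisms.

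The core of the argument is an elementary observation that I would state once and use twice: if $H\simeq G\circ F$ is a natural isomorphism of additive functors between exact categories, $G$ is exact (hence in particular preserves admissible monomorphisms and admissible epimorphisms), and $H$ reflects admissible monomorphisms (respectively, epimorphisms), then $F$ reflects admissible monomorphisms (resp., epimorphisms). Indeed, if $F(f)$ is an admissible monomorphism, then $H(f)\simeq G(F(f))$ is one as well, whence $f$ is by the reflecting property of~$H$; the argument for epimorphisms is identical.

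To conclude, I would feed the factorizations from Lemma~\ref{Xi-decomposed} into this observation. For $\Phi_\bE$, I take $H=\Xi_\bE$, \ $G=\Psi^+_\bE$, \ $F=\Phi_\bE$, using $\Xi_\bE\simeq\Psi^+_\bE\circ\Phi_\bE$; since $\Psi^+_\bE$ is exact and $\Xi_\bE$ reflects admissible monomorphisms and epimorphisms, so does $\Phi_\bE$. For $\Psi^+_\bE$ I would first pass to $\Psi^-_\bE=\Psi^+_\bE[1]$, observing that $\Psi^+_\bE$ reflects admissible monomorphisms and epimorphisms if and only if $\Psi^-_\bE$ does, because the shift autoequivalences $[\pm1]$ preserve the exact structure and therefore both preserve and reflect admissible monomorphisms and epimorphisms. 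Then I take $H=\Xi_{\bE^\bec}$, \ $G=\Phi_\bE$, \ $F=\Psi^-_\bE$, using $\Xi_{\bE^\bec}\simeq\Phi_\bE\circ\Psi^-_\bE$; since $\Phi_\bE$ is exact and $\Xi_{\bE^\bec}$ reflects admissible monomorphisms and epimorphisms, so does $\Psi^-_\bE$, and hence so does $\Psi^+_\bE$.

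I do not anticipate a genuine obstacle, as the substantial work has already been carried out in Lemmas~\ref{DG-compatible-reflecting-admissible-morphisms}, \ref{DG-compatible-vs-bec-compatible}, and~\ref{Xi-decomposed}. The only two points requiring a little care are verifying that weak idempotent-completeness of $\sZ^0(\bE)$ transfers to $\sZ^0(\bE^\bec)$ (so that Lemma~\ref{DG-compatible-reflecting-admissible-morphisms} is applicable to $\bE^\bec$), and correctly threading the shift relating $\Psi^+_\bE$ and $\Psi^-_\bE$ so that the composition $\Xi_{\bE^\bec}\simeq\Phi_\bE\circ\Psi^-_\bE$ is the one fed to the abstract observation.
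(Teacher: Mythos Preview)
Your proposal is correct and follows essentially the same approach as the paper's proof: both reduce to Lemma~\ref{DG-compatible-reflecting-admissible-morphisms} (applied to $\bE$ and $\bE^\bec$) via Lemma~\ref{DG-compatible-vs-bec-compatible}, and then use the factorizations $\Xi_\bE\simeq\Psi^+_\bE\circ\Phi_\bE$ and $\Xi_{\bE^\bec}\simeq\Phi_\bE\circ\Psi^-_\bE$ from Lemma~\ref{Xi-decomposed} together with the elementary observation that if $H\simeq G\circ F$ with $G$ exact and $H$ reflecting admissible mono-/epimorphisms, then so does~$F$. Your write-up is in fact slightly more careful than the paper's, in that you explicitly note the transfer of weak idempotent-completeness to $\bE^\bec$ and the passage between $\Psi^+_\bE$ and $\Psi^-_\bE$ via the shift.
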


\begin{proof}
 Follows from Lemmas~\ref{DG-compatible-reflecting-admissible-morphisms}
and~\ref{DG-compatible-vs-bec-compatible}.
 To wit, if the functor $\Psi^+_\bE$ preserves admissible
short exact sequences (hence also admissible monomorphisms),
while the functor $\Xi_\bE\simeq\Psi^+_\bE\circ\Phi_\bE$
reflects admissible monomorphisms, then the functor $\Phi_\bE$
also reflects admissible monomorphisms.
 If the functor $\Phi_\bE$ preserves admissible short
exact sequences (hence also admissible monomorphisms), while
the functor $\Xi_{\bE^\bec}\simeq\Phi_\bE\circ\Psi^-_\bE$
reflects admissible monomorphisms, then the functor $\Psi^-_\bE$
also reflects admissible monomorphisms.
 The same argument for admissible epimorphisms.
\end{proof}

\begin{lem} \label{faithful-conservative-lifting-exact-structure}
 Let\/ $\sE$ be an additive category, $\sF$ be an exact category,
and\/ $\Theta\:\sE\rarrow\sF$ be a faithful, conservative additive
functor satisfying the following conditions:
\begin{itemize}
\item if $g\:B\rarrow C$ is a morphism in\/ $\sE$ such that\/
$\Theta(g)$ is an admissible epimorphism in\/ $\sF$, then
the morphism~$g$ has a kernel $f\:A\rarrow B$ in\/ $\sE$ and
the morphism\/ $\Theta(f)$ is a kernel of\/ $\Theta(g)$ in\/~$\sF$;
\item if $f\:A\rarrow B$ is a morphism in\/ $\sE$ such that\/
$\Theta(f)$ is an admissible monomorphism in\/ $\sF$, then
the morphism~$f$ has a cokernel $g\:B\rarrow C$ in\/ $\sE$ and
the morphism\/ $\Theta(g)$ is a cokernel of\/ $\Theta(f)$ in\/~$\sF$.
\end{itemize}
 Then the class of all composable pairs of morphisms $A\rarrow B
\rarrow C$ in\/ $\sE$ whose image under\/ $\Theta$ is an admissible
short exact sequence\/ $0\rarrow\Theta(A)\rarrow\Theta(B)\rarrow
\Theta(C)\rarrow0$ in\/ $\sF$ defines an exact category structure
on\/~$\sE$.
 The functor\/ $\Theta$ preserves and reflects admissible short
exact sequences, admissible monomorphisms, and admissible epimorphisms
in this exact category structure on\/~$\sE$.
\end{lem}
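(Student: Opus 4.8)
The plan is to verify that the proposed class satisfies Quillen's axioms for an exact category in the form given in~\cite[Definition~2.1]{Bueh}, transporting each axiom across $\Theta$ from the exact structure on $\sF$. Write $\mathcal{E}$ for the class of composable pairs $A\rarrow B\rarrow C$ in $\sE$ whose $\Theta$\+image is an admissible short exact sequence in $\sF$. First I would check that every pair in $\mathcal{E}$ is a kernel--cokernel pair in $\sE$, since this is built into the definition of an exact structure. Given $A\overset{f}{\rarrow}B\overset{g}{\rarrow}C$ in $\mathcal{E}$, the morphism $\Theta(g)$ is an admissible epimorphism, so by the first hypothesis $g$ admits a kernel $k\:K\rarrow B$ in $\sE$ with $\Theta(k)$ a kernel of $\Theta(g)$. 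As $\Theta(gf)=\Theta(g)\Theta(f)=0$ and $\Theta$ is faithful, one has $gf=0$, so $f$ factors as $f=kv$ for a unique $v\:A\rarrow K$. Applying $\Theta$ and comparing with the canonical comparison isomorphism between the two kernels $\Theta(f)$ and $\Theta(k)$ of $\Theta(g)$ shows that $\Theta(v)$ is an isomorphism; since $\Theta$ is conservative, $v$ is an isomorphism, whence $f$ is a kernel of $g$. The dual argument, using the second hypothesis, shows $g$ is a cokernel of $f$.

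The same reasoning yields two \emph{reflection facts} that I would record once and use repeatedly: if $\Theta(g)$ is an admissible epimorphism, then $g$ is a deflation in $\mathcal{E}$ (take the kernel $k$ from the first hypothesis; then $(\Theta(k),\Theta(g))$, being the kernel of a deflation together with that deflation, is a conflation, so $(k,g)\in\mathcal{E}$), and dually, if $\Theta(f)$ is an admissible monomorphism then $f$ is an inflation. Granting these, the easy axioms fall out quickly: closure of $\mathcal{E}$ under isomorphism follows from the corresponding closure in $\sF$; the sequence $A\overset{\id}{\rarrow}A\rarrow0$ lies in $\mathcal{E}$ because $\Theta$ is additive, giving the identity axioms; and split sequences lie in $\mathcal{E}$ because $\Theta$ preserves direct sums. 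For closure under composition, if $f$ and $f'$ are inflations, then $\Theta(f'f)=\Theta(f')\Theta(f)$ is an admissible monomorphism in $\sF$, and the reflection fact promotes $f'f$ to an inflation; deflations are handled dually.

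The main obstacle is the pushout/pullback axiom, where the subtlety is that the relevant limits must be \emph{constructed} in $\sE$ rather than merely borrowed from $\sF$. For the pullback of a deflation $g\:B\rarrow C$ along an arbitrary $c\:C'\rarrow C$, I would realize the pullback as the kernel of the morphism $(g\ \ {-c})\:B\oplus C'\rarrow C$ (the morphism that is $g$ on $B$ and $-c$ on $C'$). The key point is that $(\Theta(g)\ \ {-\Theta(c)})$ is a deflation in $\sF$: it factors as the composite of the deflation $\Theta(g)\oplus\id$ with the split deflation $(\id\ \ {-\Theta(c)})\:\Theta(C)\oplus\Theta(C')\rarrow\Theta(C)$, a standard observation about exact categories (see~\cite{Bueh}). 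Hence $\Theta((g\ \ {-c}))$ is an admissible epimorphism, so the first hypothesis provides the kernel $P\rarrow B\oplus C'$ in $\sE$, which $\Theta$ preserves; its universal property is exactly that of the pullback, and $\Theta(P)\rarrow\Theta(C')$ is the pullback of $\Theta(g)$ in $\sF$, hence a deflation there. The reflection fact then promotes the projection $P\rarrow C'$ to a deflation in $\sE$. The pushout of an inflation is entirely dual.

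Finally, preservation and reflection of admissible short exact sequences hold by the very definition of $\mathcal{E}$; preservation of admissible monomorphisms and epimorphisms is immediate, since inflations and deflations in $\mathcal{E}$ map to inflations and deflations in $\sF$; and their reflection is precisely the pair of reflection facts established above. I expect essentially all the genuine work to be concentrated in the kernel--cokernel identification of the first step and in the pullback/pushout construction, the latter being the one place where the hypotheses on existence of kernels and cokernels are indispensable.
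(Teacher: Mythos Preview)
Your proof is correct and follows essentially the same approach as the paper: both identify kernel--cokernel pairs via the hypotheses together with faithfulness and conservativity, establish the reflection facts for admissible monomorphisms and epimorphisms, deduce closure under composition from these, and construct the pullback as the kernel of the morphism $(g,\,-c)\:B\oplus C'\rarrow C$ after observing that its $\Theta$\+image is a deflation in~$\sF$. The only minor difference is presentational---you invoke the reflection fact to promote $P\rarrow C'$ to a deflation, whereas the paper phrases this as showing directly that the induced sequence $0\rarrow A\rarrow B'\rarrow C'\rarrow0$ has admissible $\Theta$\+image; these amount to the same thing.
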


\begin{proof}
 Let $A\overset f\rarrow B\overset g\rarrow C$ be a composable pair
of morphisms in $\sE$ such that $0\rarrow\Theta(A)
\xrightarrow{\Theta(f)}\Theta(B)\xrightarrow{\Theta(g)}
\Theta(C)\rarrow0$ is an admissible short exact sequence in~$\sF$.
 Then the composition $gf$ vanishes, since the composition
$\Theta(g)\Theta(f)$ vanishes and the functor $\Theta$ is faithful.
 Furthermore, the morphism $\Theta(f)$ being a monomorphism in $\sF$
implies that the morphism~$f$ is a monomorphism in $\sE$ (also
since $\Theta$ is faithful); and dually, the morphism $\Theta(g)$
being an epimorphism in $\sF$ implies that the morphism~$g$ is
an epimorphism in~$\sE$.

 Let us show that the morphism~$f$ is a kernel of the morphism~$g$.
 By assumption, since $\Theta(g)$ is an admissible epimorphism,
the morphism~$g$ has a kernel $f'\:A'\rarrow B$.
 Then the morphism~$f$ factorizes as $A\overset a\rarrow A'\overset{f'}
\rarrow B$.
 By assumption, the morphism $\Theta(f')$ is a kernel of
the morphism $\Theta(g)$ in~$\sF$.
 But the morphism $\Theta(f)$ is also a kernel of $\Theta(g)$, since
$0\rarrow\Theta(A)\rarrow\Theta(B)\rarrow\Theta(C)\rarrow0$ is
an admissible short exact sequence.
 It follows that the morphism $\Theta(a)\:\Theta(A)\rarrow\Theta(A')$
is an isomorphism in~$\sF$.
 Since the functor $\Theta$ is conservative, we can conclude that
$a$~is an isomorphism in~$\sE$.
 Dually one shows that the morphism~$g$ is a cokernel of
the morphism~$f$.

 By construction, the functor $\Theta$ preserves and reflects
admissible short exact sequences; hence it also preserves admissible
monomorphisms and admissible epimorphisms.
 Let us show that $\Theta$ reflects admissible epimorphisms.
 Let $g\:B\rarrow C$ be a morphism in $\sE$ such that $\Theta(g)$
is an admissible epimorphism in~$\sF$.
 By assumption, it follows that the morphism~$g$ has a kernel
$f\:A\rarrow B$ in $\sE$ and $0\rarrow\Theta(A)
\xrightarrow{\Theta(f)}\Theta(B)\xrightarrow{\Theta(g)}
\Theta(C)\rarrow0$ is an admissible short exact sequence in~$\sF$.
 Hence $0\rarrow A\overset f\rarrow B\overset g\rarrow C\rarrow0$ is
an admissible short exact sequence in $\sE$, and $g$~is an admissible
epimorphism.
 Dually, $\Theta$ reflects admissible monomorphisms.

 It follows immediately that the classes of admissible monomorphisms
and admissible epimorphisms in $\sE$ are closed under compositions.

 Let us check that the class of admissible epimorphisms in $\sE$
is stable under pullbacks.
 Let $g\:B\rarrow C$ be an admissible epimorphism and $c\:C'\rarrow C$
be a morphism in~$\sE$.
 Then the admissible epimorphism $\Theta(g)\:\Theta(B)\rarrow\Theta(C)$
in $\sF$ has a pullback by the morphism $\Theta(c)\:\Theta(C')\rarrow
\Theta(C)$.
 Furthermore, the morphism $(\Theta(g),\Theta(c))\:\Theta(B)\oplus
\Theta(C')\rarrow\Theta(C)$ is also an admissible epimorphism in $\sF$
(use~\cite[1st~step of the proof in Section~A.1]{Kel} or
the assertion dual to~\cite[Proposition~2.12]{Bueh}).

 By assumption, this implies that the morphism $(g,c)\:B\oplus C'
\rarrow C$ has a kernel $(b,-g')\:B'\rarrow B\oplus C'$ in $\sE$
whose image $(\Theta(b),-\Theta(g'))\:\Theta(B')\rarrow\Theta(B)
\oplus\Theta(C')$ is a kernel of the morphism $(\Theta(g),\Theta(c))$
in~$\sF$.
 Then $g'\:B'\rarrow C'$ is a pullback of the morphism $g\:B\rarrow C$
by the morphism $c\:C'\rarrow C$ in $\sE$, and $\Theta(g')\:\Theta(B')
\rarrow\Theta(C')$ is a pullback of the morphism $\Theta(g)\:\Theta(B)
\rarrow\Theta(C)$ by the morphism $\Theta(c)\:\Theta(C')\rarrow
\Theta(C)$ in~$\sF$.
 It follows that $\Theta(g')$ is an admissible epimorphism in $\sF$,
and therefore $g'$~is an admissible epimorphism in~$\sE$.
 Dually one produces the pushouts of admissible monomorphisms in~$\sE$.
\end{proof}

\begin{thm} \label{transferring-exact-structure-from-E-bec-to-E-thm}
 Let\/ $\bE$ be a weakly idempotent-complete additive DG\+category
with shifts and cones.
 Then a given DG\+compatible exact category structure on\/
$\sZ^0(\bE^\bec)$ admits a (necessarily unique)\/ $\bec$\+compatible
exact category structure on\/ $\sZ^0(\bE)$ if and only if
the following two conditions hold:
\begin{itemize}
\item any morphism~$g$ in $\sZ^0(\bE)$ for which\/ $\Phi(g)$ is
an admissible epimorphism in\/ $\sZ^0(\bE^\bec)$ has a kernel
in\/~$\sZ^0(\bE)$;
\item any morphism~$f$ in $\sZ^0(\bE)$ for which\/ $\Phi(f)$ is
an admissible monomorphism in\/ $\sZ^0(\bE^\bec)$ has a cokernel
in\/~$\sZ^0(\bE)$.
\end{itemize}
\end{thm}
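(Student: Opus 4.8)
The plan is to deduce the equivalence from the machinery already assembled, with the ``if'' direction resting on Lemma~\ref{faithful-conservative-lifting-exact-structure} applied to the functor $\Phi=\Phi_\bE\:\sZ^0(\bE)\rarrow\sZ^0(\bE^\bec)$, and the ``only if'' direction on Lemma~\ref{exact-DG-reflecting-admissible-morphisms}. The key preliminary observation, which I would record first, is that $\Phi$ preserves every kernel and cokernel that exists in $\sZ^0(\bE)$: by Lemma~\ref{G-functors-in-DG-categories} it is at once a right adjoint (to $\Psi^+$) and a left adjoint (to $\Psi^-$), hence preserves limits and colimits. Moreover $\Phi$ is faithful (Lemma~\ref{G-functors-in-DG-categories}) and conservative (Lemma~\ref{Phi-Psi-conservative}).

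For the ``if'' direction, I would assume the two stated conditions. Together with the preservation observation, they furnish exactly the two hypotheses of Lemma~\ref{faithful-conservative-lifting-exact-structure} for $\Theta=\Phi$: if $\Phi(g)$ is an admissible epimorphism, then $g$ has a kernel $f$ in $\sZ^0(\bE)$ by hypothesis, and $\Phi(f)$ is then automatically a kernel of $\Phi(g)$ because $\Phi$ preserves kernels; dually for admissible monomorphisms. That lemma then equips $\sZ^0(\bE)$ with an exact structure whose admissible short exact sequences are precisely the composable pairs that $\Phi$ carries to admissible short exact sequences, and for which $\Phi$ preserves and reflects admissible short exact sequences, admissible monomorphisms, and admissible epimorphisms.

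It then remains to verify $\bec$\+compatibility of this structure, that is, shift-stability and the preservation-and-reflection properties of $\Psi^+$. Shift-stability I would obtain from Lemma~\ref{twists-into-isomorphisms}, by which $\Phi$ turns $[n]$ into $[-n]$: since $\Phi(\Sigma[\pm1])\simeq\Phi(\Sigma)[\mp1]$ and the given structure on $\sZ^0(\bE^\bec)$ is shift-stable, a sequence $\Sigma$ is admissible if and only if $\Sigma[\pm1]$ is. For $\Psi^+$, I would combine $\Phi\circ\Psi^-\simeq\Xi_{\bE^\bec}$ (Lemma~\ref{Xi-decomposed}) with $\Psi^-=\Psi^+[1]$ and the shift-inversion of $\Phi$ to get $\Phi\circ\Psi^+\simeq\Xi_{\bE^\bec}[1]$. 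As the structure on $\sZ^0(\bE^\bec)$ is DG\+compatible, $\Xi_{\bE^\bec}$ preserves and reflects admissible short exact sequences (Proposition~\ref{DG-compatible-prop}), hence so does $\Xi_{\bE^\bec}[1]$; since a sequence in $\sZ^0(\bE)$ is admissible exactly when its $\Phi$\+image is, the identification $\Phi\Psi^+\simeq\Xi_{\bE^\bec}[1]$ shows that $\Psi^+$ preserves and reflects admissible short exact sequences. This completes $\bec$\+compatibility.

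For the ``only if'' direction, I would suppose such a $\bec$\+compatible structure exists; then $\bE$ becomes an exact DG\+category, and Lemma~\ref{exact-DG-reflecting-admissible-morphisms} (where weak idempotent-completeness enters) tells me that $\Phi$ reflects admissible monomorphisms and epimorphisms. Thus $\Phi(g)$ being an admissible epimorphism forces $g$ to be an admissible epimorphism, which has a kernel; dually, $\Phi(f)$ being an admissible monomorphism forces $f$ to have a cokernel, so both conditions hold. Uniqueness is automatic, since in any $\bec$\+compatible pair $\Phi$ reflects admissible short exact sequences, so the admissible sequences in $\sZ^0(\bE)$ can only be the $\Phi$\+preimages of the admissible sequences downstairs. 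The main obstacle is the $\Psi^+$ step: once the identification $\Phi\Psi^+\simeq\Xi_{\bE^\bec}[1]$ is in place, the rest is bookkeeping, whereas the genuine verification of the exact-category axioms is already absorbed into Lemma~\ref{faithful-conservative-lifting-exact-structure}.
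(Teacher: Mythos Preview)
Your proposal is correct and follows essentially the same approach as the paper: the ``only if'' via Lemma~\ref{exact-DG-reflecting-admissible-morphisms}, and the ``if'' by defining the exact structure on $\sZ^0(\bE)$ as the $\Phi$\+preimage, verifying the axioms through Lemma~\ref{faithful-conservative-lifting-exact-structure}, and deducing the $\Psi^\pm$ preservation/reflection from the decomposition $\Xi_{\bE^\bec}\simeq\Phi\circ\Psi^-$ together with Proposition~\ref{DG-compatible-prop}. You are slightly more explicit than the paper in checking shift-stability of the new structure and in tracking the $\Psi^+$ versus $\Psi^-$ shift, but this is only expository.
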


\begin{proof}
 The ``only if'' assertion follows immediately from
Lemma~\ref{exact-DG-reflecting-admissible-morphisms}.
 To prove the ``if'', we need to construct an exact category
structure on $\sZ^0(\bE)$.
 The definition is obvious: let us say that $0\rarrow A\rarrow B
\rarrow C\rarrow0$ is an admissible short exact sequence in $\sZ^0(\bE)$
if $0\rarrow\Phi(A)\rarrow\Phi(B)\rarrow\Phi(C)\rarrow0$ is
an admissible short exact sequence in~$\sZ^0(\bE^\bec)$.
 Then the functor $\Phi_\bE$ preserves and reflects admissible
short exact sequences by the definition.
 Since the functor $\Xi_{\bE^\bec}\simeq\Phi_\bE\circ\Psi^-_\bE$
preserves and reflects admissible short exact sequences by assumption
and Proposition~\ref{DG-compatible-prop}, it then follows that
the functor $\Psi^-_\bE$ also preserves and reflects admissible short
exact sequences.

 It remains to check that the class of admissible short exact
sequences in $\sZ^0(\bE)$ that we have defined satisfies the axioms
of an exact category structure.
 For this purpose, we apply
Lemma~\ref{faithful-conservative-lifting-exact-structure} to
the additive functor $\Phi_\bE$ (which is faithful and conservative
by Lemmas~\ref{G-functors-in-DG-categories}
and~\ref{Phi-Psi-conservative}).
 We only have to check the assumptions of
Lemma~\ref{faithful-conservative-lifting-exact-structure}.

 Let $g\:B\rarrow C$ be a morphism in $\sZ^0(\bE)$ for which
$\Phi(g)$ is an admissible epimorphism in~$\sZ^0(\bE^\bec)$.
 Then, by the assumption of the theorem, the morphism~$g$
has a kernel $f\:A\rarrow B$ in $\sZ^0(\bE)$.
 The morphism $\Phi(f)$ is a kernel of the morphism $\Phi(g)$
because the functor $\Phi_\bE$, being a right adjoint by
Lemma~\ref{G-functors-in-DG-categories}, preserves all kernels.
 The other assumption of 
Lemma~\ref{faithful-conservative-lifting-exact-structure} is dual.
\end{proof}

\begin{cor} \label{transferring-from-E-bec-to-E-under-all-twists}
 Let\/ $\bE$ be an idempotent-complete additive DG\+category
with shifts and twists.
 Then any DG\+compatible exact category structure on\/ $\sZ^0(\bE^\bec)$
admits a unique\/ $\bec$\+compatible exact category structure
on\/~$\sZ^0(\bE)$.
\end{cor}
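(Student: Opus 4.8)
The plan is to deduce the corollary directly from Theorem~\ref{transferring-exact-structure-from-E-bec-to-E-thm}. First I would verify that its hypotheses hold: an idempotent-complete additive category is weakly idempotent-complete, and, as recalled in Section~\ref{dg-categories-subsecn}, any additive DG\+category with shifts and twists has cones. Hence $\bE$ is a weakly idempotent-complete additive DG\+category with shifts and cones, and Theorem~\ref{transferring-exact-structure-from-E-bec-to-E-thm} is applicable. The uniqueness of the resulting $\bec$\+compatible exact structure is built into that theorem, so the substance of the argument is to check its two existence conditions. Throughout I would use the identity $\Xi_\bE\simeq\Psi^+_\bE\circ\Phi_\bE$ from Lemma~\ref{Xi-decomposed} and the shift relation $\Psi^-_\bE=\Psi^+_\bE[1]$ from Lemma~\ref{G-functors-in-DG-categories}, which together give a natural isomorphism $\Psi^-_\bE\circ\Phi_\bE\simeq\Xi_\bE[1]$.

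For the first condition, let $g$ be a morphism in $\sZ^0(\bE)$ for which $\Phi_\bE(g)$ is an admissible epimorphism in the given exact structure on $\sZ^0(\bE^\bec)$. Being a deflation, $\Phi_\bE(g)$ has a kernel in $\sZ^0(\bE^\bec)$. The functor $\Psi^-_\bE$ is right adjoint to $\Phi_\bE$ by Lemma~\ref{G-functors-in-DG-categories}, hence preserves kernels; applying it to the kernel of $\Phi_\bE(g)$ exhibits a kernel of $\Psi^-_\bE(\Phi_\bE(g))\simeq\Xi_\bE(g)[1]$, and shifting by $[-1]$ (an autoequivalence) produces a kernel of $\Xi_\bE(g)$ in $\sZ^0(\bE)$. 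Since $\bE$ is an idempotent-complete additive DG\+category with shifts and twists, Lemma~\ref{Xi-creates-co-kernels}(a) then yields a kernel of $g$ itself in $\sZ^0(\bE)$, which is exactly the first condition of Theorem~\ref{transferring-exact-structure-from-E-bec-to-E-thm}.

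The second condition is handled dually. If $\Phi_\bE(f)$ is an admissible monomorphism, it has a cokernel in $\sZ^0(\bE^\bec)$; the functor $\Psi^+_\bE$, being left adjoint to $\Phi_\bE$, preserves cokernels, so its image is a cokernel of $\Psi^+_\bE(\Phi_\bE(f))\simeq\Xi_\bE(f)$. Lemma~\ref{Xi-creates-co-kernels}(b) then supplies a cokernel of $f$ in $\sZ^0(\bE)$. With both conditions verified, Theorem~\ref{transferring-exact-structure-from-E-bec-to-E-thm} applies and gives the assertion.

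I do not expect a genuine obstacle at this stage, because the essential difficulty of manufacturing (co)kernels of $g$ and $f$ from (co)kernels of $\Xi_\bE(g)$ and $\Xi_\bE(f)$ has already been absorbed into Lemma~\ref{Xi-creates-co-kernels}. The only point demanding care is the bookkeeping of adjoints: kernels of admissible epimorphisms must be transported by the right adjoint $\Psi^-_\bE$ and then unshifted, whereas cokernels of admissible monomorphisms are transported by the left adjoint $\Psi^+_\bE$; it is precisely the factorizations $\Xi_\bE\simeq\Psi^+_\bE\circ\Phi_\bE$ and $\Xi_\bE[1]\simeq\Psi^-_\bE\circ\Phi_\bE$ that let Lemma~\ref{Xi-creates-co-kernels} be brought to bear.
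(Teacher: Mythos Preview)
Your argument is correct and follows essentially the same route as the paper: verify the two conditions of Theorem~\ref{transferring-exact-structure-from-E-bec-to-E-thm} by transporting the (co)kernel of $\Phi_\bE(g)$ (resp.\ $\Phi_\bE(f)$) through an adjoint functor, invoking Lemma~\ref{Xi-decomposed} to identify the result with $\Xi_\bE(g)$ (resp.\ $\Xi_\bE(f)$), and then applying Lemma~\ref{Xi-creates-co-kernels}. The only cosmetic difference is that for kernels the paper uses $\Psi^+_\bE$ directly, noting it is right adjoint to $\Phi_\bE[-1]$, whereas you use $\Psi^-_\bE$ and then shift; both are equally valid.
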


\begin{proof}
 This is a corollary of
Theorem~\ref{transferring-exact-structure-from-E-bec-to-E-thm},
whose assumptions we have to check.
 Let $g\:B\rarrow C$ be a morphism in $\sZ^0(\bE)$ for which
$\Phi_\bE(g)$ is an admissible epimorphism in~$\sZ^0(\bE^\bec)$.
 Then the morphism $\Phi_\bE(g)$ has a kernel in~$\sZ^0(\bE^\bec)$.
 The functor $\Psi^+_\bE$ preserves kernels, being a right adjoint
(to $\Phi_\bE[-1]$) by Lemma~\ref{G-functors-in-DG-categories};
so it follows that the morphism $\Psi^+_\bE\Phi_\bE(g)$ has a kernel
in~$\sZ^0(\bE)$.
 By Lemma~\ref{Xi-decomposed}, we have $\Psi^+_\bE\Phi_\bE(g)\simeq
\Xi_\bE(g)$; hence the morphism $\Xi_\bE(g)$ has a kernel.
 It remains to apply Lemma~\ref{Xi-creates-co-kernels}(a) in order to
conclude that the morphism~$g$ has a kernel in $\sZ^0(\bE)$ under
the assumptions of the corollary.
 The other assumption of
Theorem~\ref{transferring-exact-structure-from-E-bec-to-E-thm} is dual.
\end{proof}

\begin{lem} \label{Psi-creates-co-kernels}
 Let\/ $\bA$ be an additive DG\+category with shifts and cones.
 In this setting \par
\textup{(a)} if $g$~is a morphism in\/ $\sZ^0(\bA^\bec)$ and
the morphism\/ $\Psi^+(g)$ has a kernel in\/ $\sZ^0(\bA)$, then
the morphism~$g$ has a kernel in\/~$\sZ^0(\bA^\bec)$; \par
\textup{(b)} if $f$~is a morphism in\/ $\sZ^0(\bA^\bec)$ and
the morphism\/ $\Psi^+(f)$ has a cokernel in\/ $\sZ^0(\bA)$, then
the morphism~$f$ has a cokernel in\/~$\sZ^0(\bA^\bec)$.
\end{lem}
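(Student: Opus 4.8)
The plan is to construct the kernel in part~(a) by hand, its underlying object being the kernel of $\Psi^+(g)$ in $\sZ^0(\bA)$ equipped with a contracting homotopy induced from the target. Write $g\:(X,\sigma_X)\rarrow(Y,\sigma_Y)$ for the morphism in $\sZ^0(\bA^\bec)$, so that $\Psi^+(g)$ is the closed degree-$0$ morphism $g\:X\rarrow Y$ in $\bA$ satisfying $g\sigma_X=\sigma_Y g$. Let $f\:K\rarrow X$ be a kernel of $g$ in $\sZ^0(\bA)$, which exists by hypothesis. I claim that $g$ has a kernel $(K,\sigma_K)$ in $\sZ^0(\bA^\bec)$, where $\sigma_K\in\Hom^{-1}_\bA(K,K)$ is a homotopy to be produced, characterised by $f\sigma_K=\sigma_X f$ together with $d(\sigma_K)=\id_K$ and $\sigma_K^2=0$; once this is in place, $f$ itself is a closed morphism $(K,\sigma_K)\rarrow(X,\sigma_X)$ in $\sZ^0(\bA^\bec)$.

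The main obstacle is the construction of $\sigma_K$: the natural candidate is a factorisation of $\sigma_X f$ through $f$, but $\sigma_X f\in\Hom^{-1}_\bA(K,X)$ is a \emph{non-closed} morphism of degree $-1$ (indeed $d(\sigma_X f)=f$), so the universal property of $f$ as a kernel in $\sZ^0(\bA)$ does not apply to it directly. The device that resolves this is Lemma~\ref{Xi-adjoints}: since $\Xi_\bA$ preserves kernels, $\Xi(f)\:\Xi(K)\rarrow\Xi(X)$ is a kernel of $\Xi(g)$ in $\sZ^0(\bA)$. Using the notation $\iota_X,\pi_X,\iota'_X,\pi'_X$ from the proofs of Lemmas~\ref{G-functors-in-DG-categories} and~\ref{iterated-bec-construction}, the plan is to introduce the closed degree-$0$ morphism $\gamma_X=\iota'_X-\iota_X\sigma_X\:X\rarrow\Xi(X)$ (closed because $d(\iota'_X)=\iota_X$ and $d(\sigma_X)=\id_X$). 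The naturality relations $\Xi(g)\iota_X=\iota_Y g$ and $\Xi(g)\iota'_X=\iota'_Y g$ for the closed morphism $g$, combined with $g\sigma_X=\sigma_Y g$, yield $\Xi(g)\gamma_X=\gamma_Y g$, whence $\Xi(g)\gamma_X f=\gamma_Y gf=0$. As $\gamma_X f$ is a closed degree-$0$ morphism killed by $\Xi(g)$, it factors uniquely as $\gamma_X f=\Xi(f)\rho$ for a closed $\rho\:K\rarrow\Xi(K)$, and one sets $\sigma_K=-\pi'_K\rho$.

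The remaining verification is routine and leans on $f$ being a monomorphism. Using $\pi'_X\Xi(f)=f\pi'_K$, $\pi'_X\iota'_X=0$, and $\pi'_X\iota_X=\id_X$, one computes $f\sigma_K=-\pi'_X\gamma_X f=\sigma_X f$; applying $d$ and cancelling $f$ gives $d(\sigma_K)=\id_K$, while $f\sigma_K^2=\sigma_X^2 f=0$ gives $\sigma_K^2=0$, so $(K,\sigma_K)$ is an object of $\bA^\bec$ and $f$ is a morphism of $\sZ^0(\bA^\bec)$ with $gf=0$ (the latter by faithfulness of $\Psi^+$). For the universal property, any $h\:(W,\sigma_W)\rarrow(X,\sigma_X)$ in $\sZ^0(\bA^\bec)$ with $gh=0$ gives, after applying the faithful functor $\Psi^+$, a unique factorisation $h=fh'$ in $\sZ^0(\bA)$; then the chain $f\sigma_K h'=\sigma_X fh'=\sigma_X h=h\sigma_W=fh'\sigma_W$ together with $f$ monic forces $\sigma_K h'=h'\sigma_W$, so $h'$ is a morphism of $\sZ^0(\bA^\bec)$. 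This establishes~(a).

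Part~(b) is exactly dual: given $f\:(X,\sigma_X)\rarrow(Y,\sigma_Y)$ such that $\Psi^+(f)$ admits a cokernel $c\:Y\rarrow C$ in $\sZ^0(\bA)$, the plan is to equip $C$ with a homotopy $\sigma_C$ characterised by $\sigma_C c=c\sigma_Y$, built by the dual factorisation argument through $\Xi(c)$, now invoking that $\Xi_\bA$ preserves cokernels (again Lemma~\ref{Xi-adjoints}) and that $c$ is an epimorphism. I expect no new difficulty beyond dualising the computation above.
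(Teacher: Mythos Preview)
Your approach is essentially the paper's, and your direct verifications of $d(\sigma_K)=\id_K$, $\sigma_K^2=0$, and the universal property (by cancelling the monomorphism~$f$, after observing the relevant expressions are closed) are in fact more economical than the paper's route through $\widetilde\Xi$ and Lemma~\ref{Xi-reflects-co-kernels}.  However, there is a genuine error at the outset that breaks the argument as written.

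Your morphism $\gamma_X=\iota'_X-\iota_X\sigma_X$ is \emph{not} closed.  With the paper's Leibniz convention $d(ab)=d(a)b+(-1)^{|a|}a\,d(b)$ and $|\iota_X|=1$, one has $d(\iota_X\sigma_X)=d(\iota_X)\sigma_X-\iota_Xd(\sigma_X)=-\iota_X$, so $d(\gamma_X)=\iota_X-(-\iota_X)=2\iota_X\ne0$.  (Your parenthetical ``closed because $d(\iota'_X)=\iota_X$ and $d(\sigma_X)=\id_X$'' is precisely where the sign $(-1)^{|\iota_X|}$ was dropped.)  Since $\gamma_X f$ is then not a morphism in $\sZ^0(\bA)$, the factorisation through the kernel $\Xi(f)$ in $\sZ^0(\bA)$ is not available, and the construction of $\rho$ collapses.

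The fix is immediate: take $\gamma_X=\iota'_X+\iota_X\sigma_X$ (this is the paper's $\beta$), which is closed, satisfies the same naturality $\Xi(g)\gamma_X=\gamma_Y g$, and gives $\pi'_X\gamma_X=\sigma_X$; then set $\sigma_K=\pi'_K\rho$ (no minus sign).  With these corrections your argument goes through verbatim, including the cancellations: note that $\sigma_K^2$ and $\sigma_K h'-h'\sigma_W$ are closed (their differentials vanish), so after shifting, left-cancellation by the monomorphism $f[-2]$, resp.\ $f[-1]$, in $\sZ^0(\bA)$ is legitimate.
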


\begin{proof}
 Let us prove part~(a); part~(b) is dual.
 Let $Y^\bec=(Y,\sigma_X)$ and $Z^\bec=(Z,\sigma_Z)$ be two objects
in~$\sZ^0(\bA^\bec)$.
 Then $\Psi^+(Y^\bec)=Y$ and $\Psi^+(Z^\bec)=Z$.
 Let $g\:Y^\bec\rarrow Z^\bec$ be a morphism in $\sZ^0(\bA^\bec)$;
this means that $g\:Y\rarrow Z$ is a morphism in $\sZ^0(\bA)$ and
$g\sigma_Y=\sigma_Zg$.
 Assume that the morphism $g\:Y\rarrow Z$ has a kernel $f\:X\rarrow Y$
in~$\sZ^0(\bA)$.
 Let us construct a kernel of the morphism $g\:Y^\bec\rarrow Z^\bec$
in~$\sZ^0(\bA^\bec)$.

 Given an object $A\in\bA$, we will use the notation $\iota_A$,
$\pi_A$, $\iota'_A$, $\pi'_A$ from the proofs of
Lemmas~\ref{G-functors-in-DG-categories}
and~\ref{Xi-creates-co-kernels}.
 Given an object $T^\bec=(T,\sigma_T)\in\bA^\bec$, consider
the morphism $\beta_T=\iota'_T+\iota_T\sigma_T\in\Hom_\bA^0(T,\Xi(T))$.
 We have $d(\beta_T)=\iota_T-\iota_T d(\sigma_T)=0$, so $\beta_T$~is
a morphism $T\rarrow\Xi(T)$ in the category~$\sZ^0(\bA)$.
 Furthermore, $\pi_T\:\Xi(T)\rarrow T$ is a morphism in $\sZ^0(\bA)$,
and the equation $\pi_T\beta_T=\id_T$ holds in~$\sZ^0(\bA)$.

 We need to express the equation $\sigma_T^2=0$, which is a part of
the definition of an object of the DG\+category $\bA^\bec$, as
an equation on morphisms in the category~$\sZ^0(\bA)$.
 Notice that $d(\sigma_T)=\id_T\ne0$, so $\sigma_T$ cannot be
directly viewed as a morphism in $\sZ^0(\bA)$ (even after the shift
is taken care of).
 Let us start with the shift: recall the notation
$s_A\in\Hom_\bA^1(A,A[-1])$ from the proof of
Lemma~\ref{Xi-creates-co-kernels}.
 So $s_T\sigma_T\in\Hom_\bA^0(T,T[-1])$ is a morphism in
the category~$\bA^0$.

 Applying the functor $\widetilde\Xi$ from Lemma~\ref{tilde-Xi},
we obtain a morphism $\widetilde\Xi(s_T\sigma_T)\:\Xi(T)\rarrow
\Xi(T[-1])$ in the category~$\sZ^0(\bA)$.
 Now the equation $\sigma_T^2=0$ can be expressed as
$\widetilde\Xi(s_T\sigma_T)[-1]\circ\widetilde\Xi(s_T\sigma_T)=
\widetilde\Xi((s_T\sigma_T)[-1]\circ s_T\sigma_T)=
\widetilde\Xi(s_{T[-1]}s_T\sigma_T^2)=0$.
 Conversely, if $U\in\bA$ is an object and $\sigma\in
\Hom_\bA^{-1}(U,U)$ is an endomorphism of degree~$-1$, then
$s_U\sigma_U$ is a morphism in $\bA^0$ and $\widetilde\Xi(s_U\sigma)$
is a morphism in $\sZ^0(\bA)$; the equation
$\widetilde\Xi(s_U\sigma)[-1]\circ\widetilde\Xi(s_U\sigma)=0$ holds
in $\sZ^0(\bA)$ if and only if the equation $\sigma^2=0$ holds
in~$\bA^*$ (because the functor $\widetilde\Xi$ is faithful).

 Returning to the situation at hand, consider the commutative diagram
of solid arrows in the category~$\sZ^0(\bA)$
$$
 \xymatrix{
  0 \ar[r] & X \ar[rr]^-f \ar@{-->}[d]^\beta
  && Y \ar[rr]^-{g} \ar[d]^{\beta_Y}
  && Z \ar[d]^{\beta_Z} \\
  0 \ar[r] & \Xi(X) \ar[rr]^-{\Xi(f)} \ar[d]^{\pi_X}
  && \Xi(Y) \ar[rr]^-{\Xi(g)} \ar[d]^{\pi_Y}
  && \Xi(Z) \ar[d]^{\pi_Z} \\
  0 \ar[r] & X \ar[rr]^-f && Y \ar[rr]^-g && Z
 }
$$
 Since the morphism~$f$ is a kernel of~$g$ and the morphism~$\Xi(f)$
is a kernel of~$\Xi(g)$ (by Lemma~\ref{Xi-adjoints}),
there exists a unique morphism $\beta\:X\rarrow\Xi(X)$ in~$\sZ^0(\bA)$
making the leftmost square commutative.
 The equation $\pi_Y\beta_Y=\id_Y$ implies $\pi_X\beta=\id_X$.

 Put $\sigma=\pi'\beta\in\Hom_\bA^{-1}(X,X)$.
 Then we have $\beta=(\iota'\pi+\iota\pi')\beta=\iota'+\iota\sigma$,
hence $0=d(\beta)=\iota-\iota d(\sigma)$.
 Multiplying with~$\pi'$ on the left, we deduce that $d(\sigma)=\id_X$.
 Furthermore, we have a commutative diagram in the category~$\bA^0$
\begin{equation} \label{recovering-sigmas-from-betas}
\begin{gathered}
 \xymatrix{
  0 \ar[r] & X \ar[rr]^-f \ar[d]^\beta
  && Y \ar[rr]^-{g} \ar[d]^{\beta_Y}
  && Z \ar[d]^{\beta_Z} \\
  0 \ar[r] & \Xi(X) \ar[rr]^-{\Xi(f)} \ar[d]^{s_X\pi'_X}
  && \Xi(Y) \ar[rr]^-{\Xi(g)} \ar[d]^{s_Y\pi'_Y}
  && \Xi(Z) \ar[d]^{s_Z\pi'_Z} \\
  0 \ar[r] & X[-1] \ar[rr]^-{f[-1]}
  && Y[-1] \ar[rr]^-{g[-1]} && Z[-1]
 }
\end{gathered}
\end{equation}
with the vertical compositions equal to~$s_X\sigma$, \ $s_Y\sigma_Y$,
and~$s_Z\sigma_Z$, respectively.
 Collapsing the middle row, applying $\widetilde\Xi$ and building up
the composition with a shift of the same diagram, we obtain
a commutative diagram in~$\sZ^0(\bA)$
$$
 \xymatrix{
  0 \ar[r] & \Xi(X) \ar[rrr]^-{\Xi(f)}
  \ar[d]^{\widetilde\Xi(s_X\sigma)}
  &&& \Xi(Y) \ar[rrr]^-{\Xi(g)} \ar[d]^{\widetilde\Xi(s_Y\sigma_Y)}
  &&& \Xi(Z) \ar[d]^{\widetilde\Xi(s_Z\sigma_Z)} \\
  0 \ar[r] & \Xi(X[-1]) \ar[rrr]^-{\Xi(f[-1])}
  \ar[d]^{\widetilde\Xi(s_X\sigma)[-1]}
  &&& \Xi(Y[-1]) \ar[rrr]^-{\Xi(g[-1])}
  \ar[d]^{\widetilde\Xi(s_Y\sigma_Y)[-1]}
  &&& \Xi(Z[-1]) \ar[d]^{\widetilde\Xi(s_Z\sigma_Z)[-1]} \\
  0 \ar[r] & \Xi(X[-2]) \ar[rrr]^-{\Xi(f[-2])} 
  &&& \Xi(Y[-2]) \ar[rrr]^-{\Xi(g[-2])} &&& \Xi(Z[-2])
 }
$$
 Now the equation $\widetilde\Xi(s_Y\sigma_Y)[-1]\circ
\widetilde\Xi(s_Y\sigma_Y)=0$ implies
$\widetilde\Xi(s_X\sigma)[-1]\circ\widetilde\Xi(s_X\sigma)=0$,
because $\Xi(f[-2])$ is a monomorphism in~$\sZ^0(\bA)$.
 Thus~$\sigma^2=0$.

 Finally, we can put $\sigma_X=\sigma$ and $X^\bec=(X,\sigma_X)$,
producing an object of the DG\+cat\-e\-gory~$\bA^\bec$.
 The equation $f\sigma_X=\sigma_Yf$ follows easily from
the commutativity of~\eqref{recovering-sigmas-from-betas};
so the morphism $f\:X\rarrow Y$ in $\sZ^0(\bA)$ defines
a morphism $f\:X^\bec\rarrow Y^\bec$ in~$\sZ^0(\bA^\bec)$.

 In order to check that $f$~is a kernel of~$g$ in $\sZ^0(\bA^\bec)$,
it suffices to observe that $\Xi_{\bA^\bec}(f)=\Phi_\bA\Psi^-_\bA(f)$
is a kernel of $\Xi_{\bA^\bec}(g)=\Phi_\bA\Psi^-_\bA(g)$
in $\sZ^0(\bA^\bec)$ (since $\Psi^-_\bA(f)=f[1]$ is a kernel of
$\Psi^-_\bA(g)=g[1]$ in the category $\sZ^0(\bA)$, and the functor
$\Phi_\bA$ preserves kernels), and apply
Lemma~\ref{Xi-reflects-co-kernels}(a) for the DG\+category~$\bA^\bec$.
\end{proof}

\begin{thm} \label{transferring-from-E-to-E-bec}
 Let\/ $\bE$ be an additive DG\+category with shifts and cones.
 Then any DG\+compatible exact category structure on\/ $\sZ^0(\bE)$
admits a unique\/ $\bec$\+compatible exact category structure
on\/~$\sZ^0(\bE^\bec)$.
\end{thm}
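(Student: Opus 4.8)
The uniqueness is automatic: as observed just before the definition of an exact DG\+category, once the exact structure on $\sZ^0(\bE)$ is fixed, $\bec$\+compatibility pins down the one on $\sZ^0(\bE^\bec)$. So the whole task is existence, and the plan is to manufacture the exact structure on $\sZ^0(\bE^\bec)$ by transport along $\Psi^+_\bE$. Concretely, I would declare a composable pair $X\rarrow Y\rarrow Z$ in the (additive) category $\sZ^0(\bE^\bec)$ to be an admissible short exact sequence precisely when $0\rarrow\Psi^+_\bE(X)\rarrow\Psi^+_\bE(Y)\rarrow\Psi^+_\bE(Z)\rarrow0$ is one in $\sZ^0(\bE)$, and then invoke Lemma~\ref{faithful-conservative-lifting-exact-structure} with $\Theta=\Psi^+_\bE$ to see that this really is an exact category structure for which $\Psi^+_\bE$ preserves and reflects admissible short exact sequences.

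To apply that lemma I must verify its hypotheses. The functor $\Psi^+_\bE$ is faithful and conservative by Lemmas~\ref{G-functors-in-DG-categories} and~\ref{Phi-Psi-conservative}. The two lifting conditions are where the real content lies, and they are delivered by the preceding technical lemma: if $g$ is a morphism in $\sZ^0(\bE^\bec)$ with $\Psi^+_\bE(g)$ an admissible epimorphism, then $\Psi^+_\bE(g)$ has a kernel, so by Lemma~\ref{Psi-creates-co-kernels}(a) the morphism $g$ itself has a kernel in $\sZ^0(\bE^\bec)$; dually, Lemma~\ref{Psi-creates-co-kernels}(b) produces the cokernel in the admissible-monomorphism case. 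That these kernels and cokernels are carried by $\Psi^+_\bE$ to the kernel, respectively cokernel, of $\Psi^+_\bE(g)$ is then formal, since $\Psi^+_\bE$ preserves both kernels and cokernels, being a left adjoint to $\Phi_\bE$ and, up to shift, a right adjoint to $\Phi_\bE[-1]$ (Lemma~\ref{G-functors-in-DG-categories}), exactly as already exploited in Corollary~\ref{transferring-from-E-bec-to-E-under-all-twists}. I expect this step — the existence of the relevant kernels and cokernels in $\sZ^0(\bE^\bec)$ — to be the one genuine obstacle, but it has been isolated into Lemma~\ref{Psi-creates-co-kernels}, so here it costs only a citation.

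It remains to verify that the transported structure is $\bec$\+compatible, which I would do directly from the definition, avoiding a detour through Lemma~\ref{DG-compatible-vs-bec-compatible}. Shift-stability of the new structure is immediate from its construction: $\Psi^+_\bE$ converts the shifts $[n]$ into the inverse shifts $[-n]$ (Lemma~\ref{twists-into-isomorphisms}) and the given structure on $\sZ^0(\bE)$ is shift-stable, so a sequence and its shift have images under $\Psi^+_\bE$ that are admissible simultaneously. By construction $\Psi^+_\bE$ preserves and reflects admissible short exact sequences, so only the corresponding statement for $\Phi_\bE$ is left. For this I would use $\Xi_\bE\simeq\Psi^+_\bE\circ\Phi_\bE$ (Lemma~\ref{Xi-decomposed}) together with the fact that, by DG\+compatibility of the structure on $\sZ^0(\bE)$ and Proposition~\ref{DG-compatible-prop}, the functor $\Xi_\bE$ preserves and reflects admissible short exact sequences. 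A two-line chase then yields both halves. If $0\rarrow A\rarrow B\rarrow C\rarrow0$ is admissible, then $\Xi_\bE=\Psi^+_\bE\Phi_\bE$ carries it to an admissible sequence, and since $\Psi^+_\bE$ reflects, the image $\Phi_\bE(A)\rarrow\Phi_\bE(B)\rarrow\Phi_\bE(C)$ is admissible; conversely, if $\Phi_\bE$ of a composable pair is admissible, then applying the structure-preserving $\Psi^+_\bE$ makes $\Xi_\bE$ of the pair admissible, whence the pair is admissible because $\Xi_\bE$ reflects. With both $\Phi_\bE$ and $\Psi^+_\bE$ preserving and reflecting admissible short exact sequences and both structures shift-stable, the pair is $\bec$\+compatible by definition, which completes the existence proof.
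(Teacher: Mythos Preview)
Your proposal is correct and follows essentially the same approach as the paper's proof: define the structure on $\sZ^0(\bE^\bec)$ via $\Psi^+_\bE$, invoke Lemma~\ref{faithful-conservative-lifting-exact-structure} with the hypotheses supplied by Lemma~\ref{Psi-creates-co-kernels} and the adjunctions of Lemma~\ref{G-functors-in-DG-categories}, and deduce that $\Phi_\bE$ preserves and reflects via $\Xi_\bE\simeq\Psi^+_\bE\circ\Phi_\bE$ and Proposition~\ref{DG-compatible-prop}. The only cosmetic differences are the order in which you verify the exact axioms versus the $\Phi_\bE$-compatibility, and your explicit mention of shift-stability, which the paper leaves implicit.
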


\begin{proof}
 The definition of the exact category structure on $\sZ^0(\bE^\bec)$ is
obvious: we say that $0\rarrow X\rarrow Y\rarrow Z\rarrow0$ is
an admissible short exact sequence in $\sZ^0(\bE^\bec)$ if
$0\rarrow\Psi^+(X)\rarrow\Psi^+(Y)\rarrow\Psi^+(Z)\rarrow0$ is
an admissible short exact sequence in~$\sZ^0(\bE)$.
 Then the functor $\Psi^+_\bE$ preserves and reflects admissible
short exact sequences by the definition.
 Since the functor $\Xi_\bE\simeq\Psi^+_\bE\circ\Phi_\bE$ preserves
and reflects admissible short exact sequences by assumption, it then
follows that the functor $\Phi_\bE$ also preserves and reflects
admissible short exact sequences.

 To check that the class of admissible short exact sequences in
$\sZ^0(\bE^\bec)$ that we have defined satisfies the axioms of
an exact category structure, we apply
Lemma~\ref{faithful-conservative-lifting-exact-structure} to
the additive functor~$\Psi^+_\bE$.
 This functor is faithful and conservative by
Lemmas~\ref{G-functors-in-DG-categories} and~\ref{Phi-Psi-conservative}.
 We have to check the assumptions of
Lemma~\ref{faithful-conservative-lifting-exact-structure}.

 Let $g\:Y\rarrow Z$ be a morphism in $\sZ^0(\bE^\bec)$ for which
$\Psi^+(g)$ is an admissible epimorphism in~$\sZ^0(\bE)$.
 Then the morphism $\Psi^+(g)$ has a kernel in~$\sZ^0(\bE)$.
 By Lemma~\ref{Psi-creates-co-kernels}, it follows that the morphism~$g$
has a kernel $f\:X\rarrow Y$ in $\sZ^0(\bE^\bec)$.
 The morphism $\Psi^+(f)$ is a kernel of the morphism $\Psi^+(g)$
in $\sZ^0(\bE)$, because the functor $\Psi^+_\bE$, being a right adjoint
(to $\Phi_\bE[-1]$) by Lemma~\ref{G-functors-in-DG-categories},
preserves all kernels.
 The other assumption of
Lemma~\ref{faithful-conservative-lifting-exact-structure} is dual.

 Alternatively, one could assume that $\bE$ is idempotent-complete,
notice that the DG\+category $\bE^\bec$ always has all twists,
and argue similarly to the proof of
Theorem~\ref{transferring-exact-structure-from-E-bec-to-E-thm} combined
with Corollary~\ref{transferring-from-E-bec-to-E-under-all-twists},
using Lemma~\ref{Xi-creates-co-kernels}.
\end{proof}

\begin{rem} \label{long-journey-rem}
 In view of Theorem~\ref{transferring-from-E-to-E-bec}, we could have
simply defined an exact DG\+category $\bE$ as an additive DG\+category
with shifts and cones endowed with a DG\+compatible exact category
structure on $\sZ^0(\bE)$, in the sense of
Section~\ref{DG-compatible-subsecn}.
 This would be a simple and straightforward definition.

 Instead, we have made a long journey through the construction and
study of the DG\+category $\bA^\bec$ in Section~\ref{involution-secn},
the study of the functor $\Xi_\bA$ in Section~\ref{Xi-functor-subsecn},
the proof of Lemma~\ref{Psi-creates-co-kernels}, etc., only to come to
the conclusion that having a DG\+compatible exact structure on
$\sZ^0(\bE)$ is always sufficient to uniquely recover what we call
the exact DG\+category structure.

 The upside of the long journey is that we have also produced
an exact category $\sZ^0(\bE^\bec)$ together with the exact functors
$\Phi_\bE$ and $\Psi^+_\bE$ along the way.
 For naturally occurring DG\+categories $\bE$, having an exact structure
on ``the category of underlying graded objects'' (whose role is played
by the additive category $\sZ^0(\bE^\bec)$ in our formalism;
cf.\ the examples in Section~\ref{A-bec-examples-subsecn}) is no less
important than having an exact structure on ``the category of
DG\+objects''~$\sZ^0(\bE)$.

 Arguably one could say that, e.~g., the abelian exact structure on
the category of graded modules conceptually and logically precedes
the abelian exact structure on the category of DG\+modules and
closed morphisms of degree~$0$ between them.
 Having an exact structure on the category of underlying graded objects
is also important for the definitions and study of the derived
categories of the second kind for exact DG\+categories, which is our
main aim.

 The point is that complexes of projective or injective objects in exact
categories and graded-projective or graded-injective CDG\+modules are
crucial for the definitions of derived categories of the second kind
in the sense of Becker~\cite[Section~1.3]{Bec}, \cite{PS5} and for
the most important properties of derived categories of the second kind
in the sense of~\cite{Psemi,Pkoszul} and the present paper.
 This means objects whose underlying graded objects are projective
or injective.
 For an exact DG\+category $\bE$, the definitions of graded-projective
and graded-injective objects in $\bE$ refer to the projective and
injective objects in the exact category $\sZ^0(\bE^\bec)$
(see Section~\ref{derived-second-kind-secn}, where these definitions
and the mentioned important properties are collected).
 The exact category $\sZ^0(\bE^\bec)$ (or its exact subcategory $\sK$,
in the context of an exact DG\+pair $(\bE,\sK)$) also appears in
the formulations of the main results of
Sections~\ref{finite-resol-dim-secn}\+-\ref{finite-homol-dim-secn}.
\end{rem}

\begin{ex} \label{minimal-exact-dg}
 Let $\bE$ be an additive DG\+category with shifts and cones.
 Then, in any exact DG\+category structure on $\bE$, all the composable
pairs of morphisms in $\sZ^0(\bE)$ taken to split short exact sequences
by the functor $\Phi_\bE$ must be admissible short exact sequences,
and all the composable pairs of morphisms in $\sZ^0(\bE^\bec)$ taken to
split short exact sequences by the functor $\Psi^+_\bE$ must be
admissible short exact sequences.

 Notice that iterating the functors $\Phi$ and $\Psi$ any further
does \emph{not} expand the classes of ``necessary admissible short
exact sequences''.
 Specifically, if $A\rarrow B\rarrow C$ is a composable pair of
morphisms in $\sZ^0(\bE)$ such that $0\rarrow\Psi_\bE^+\Phi_\bE(A)
\rarrow\Psi_\bE^+\Phi_\bE(B)\rarrow\Psi_\bE^+\Phi_\bE(C)\rarrow0$ is
a split short exact sequence in $\sZ^0(\bE)$, then $0\rarrow\Phi_\bE(A)
\rarrow\Phi_\bE(B)\rarrow\Phi_\bE(C)\rarrow0$ is a split short exact
sequence in $\sZ^0(\bE^\bec)$.

 Indeed, additive functors take split short exact sequences to split
short exact sequences; so if $0\rarrow\Psi_\bE^+\Phi_\bE(A)
\rarrow\Psi_\bE^+\Phi_\bE(B)\rarrow\Psi_\bE^+\Phi_\bE(C)\rarrow0$ is
a split short exact sequence in $\sZ^0(\bE)$, then
$0\rarrow\Phi_\bE\Psi_\bE^+\Phi_\bE(A)\rarrow
\Phi_\bE\Psi_\bE^+\Phi_\bE(B)\rarrow
\Phi_\bE\Psi_\bE^+\Phi_\bE(C)\rarrow0$ is a split short exact
sequence in $\sZ^0(\bE^\bec)$.
 The functor $\Phi_\bE\circ\Psi_\bE^+\circ\Phi_\bE\simeq
\Phi_\bE\circ\Xi_\bE$ is isomorphic to the direct sum
$\Phi_\bE\oplus\Phi_\bE[1]$ by Lemma~\ref{twists-into-isomorphisms}
(as $\Xi_\bE(E)$ is naturally a twist of the direct sum
$E\oplus E[-1]$ for every object $E\in\bE$).
 It remains to point out that a direct summand of a split short
exact sequence is always a split short exact sequence in order to
conclude that $0\rarrow\Phi_\bE(A)\rarrow\Phi_\bE(B)\rarrow\Phi_\bE(C)
\rarrow0$ is a split short exact sequence in $\sZ^0(\bE^\bec)$.

 Similarly, if $X^\bec\rarrow Y^\bec\rarrow Z^\bec$ is a composable
pair of morphisms in $\sZ^0(\bE^\bec)$ such that $0\rarrow\Phi_\bE
\Psi^-_\bE(X^\bec)\rarrow\Phi_\bE\Psi^-_\bE(Y^\bec)\rarrow
\Phi_\bE\Psi^-_\bE(Z^\bec)\rarrow0$ is a split short exact sequence
in $\sZ^0(\bE^\bec)$, then $0\rarrow\Psi^-_\bE(X^\bec)\rarrow
\Psi^-_\bE(Y^\bec)\rarrow\Psi^-_\bE(Z^\bec)\rarrow0$ is a split short
exact sequence in $\sZ^0(\bE)$.
 This is provable using the fact that the functor
$\Psi^+_\bE\circ\Phi_\bE\circ\Psi^-_\bE
\simeq\Psi^+_\bE\circ\Xi_{\bE^\bec}$ is isomorphic to
$\Psi_\bE^+\oplus\Psi_\bE^-$.

 One can now define the \emph{minimal DG\+compatible exact structure
on\/~$\sZ^0(\bE^\bec)$} as the exact structure given by the class of
all composable pairs of morphisms whose image under $\Psi^+_\bE$
is a split short exact sequence.
 Arguing similarly to the proof of
Theorem~\ref{transferring-from-E-to-E-bec}, one shows that
Lemma~\ref{faithful-conservative-lifting-exact-structure} is
applicable to the functor $\Psi_\bE^+\:\sE=\sZ^0(\bE^\bec)\rarrow
\sZ^0(\bE)=\sF$, with the split exact structure on~$\sF$.
 Hence what we have defined is indeed an exact category structure
on~$\sZ^0(\bE^\bec)$.

 Assuming that the DG\+category $\bE$ is idempotent-complete and
has twists, one can similarly define the \emph{minimal DG\+compatible
exact structure on\/~$\sZ^0(\bE)$} as the exact structure given by
the class of all composable pairs of morphisms whose image under
$\Phi_\bE$ is a split short exact sequence in~$\sZ^0(\bE^\bec)$.
 Arguing similarly to the proofs of
Theorem~\ref{transferring-exact-structure-from-E-bec-to-E-thm}
and Corollary~\ref{transferring-from-E-bec-to-E-under-all-twists}, one
checks that Lemma~\ref{faithful-conservative-lifting-exact-structure}
is applicable to the functor $\Phi_\bE\:\sE=\sZ^0(\bE)\rarrow
\sZ^0(\bE^\bec)=\sF$, with the split exact structure on~$\sF$.
 Hence the class of all composable pairs of morphisms turned into
split short exact sequences by $\Phi_\bE$ is indeed an exact
category structure on~$\sZ^0(\bE)$.

 Finally, it is clear from the discussion of compositions of
the functors $\Phi$ and $\Psi$ above that the two minimal DG\+compatible
exact structures on the additive categories $\sZ^0(\bE)$ and
$\sZ^0(\bE^\bec)$ are $\bec$\+compatible with each other.
 Thus we have described the \emph{minimal exact DG\+category structure}
on an idempotent-complete additive DG\+category $\bE$ with shifts
and twists.
\end{ex}

\subsection{Exact DG\+functors}
 The following lemma characterizes exact DG\+functors between exact
DG\+categories.

\begin{lem} \label{exact-dg-functors-lemma}
 Let\/ $\bF$ and\/ $\bE$ be exact DG\+categories, and let
$G\:\bF\rarrow\bE$ be a DG\+functor.
 Then the following conditions are equivalent:
\begin{enumerate}
\item the functor\/ $\sZ^0(G)\:\sZ^0(\bF)\rarrow\sZ^0(\bE)$ is exact
(as a functor between exact categories);
\item the functor\/ $\sZ^0(G^\bec)\:\sZ^0(\bF^\bec)\rarrow
\sZ^0(\bE^\bec)$ is exact (as a functor between exact categories).
\end{enumerate}
\end{lem}

\begin{proof}
 The assertions follow from the assumption that the functors $\Phi_\bE$,
$\Phi_\bF$, $\Psi^+_\bE$, $\Psi^+_\bF$ preserve and reflect admissible
short exact sequences, together with the commutativity of the diagrams
of additive functors in
Section~\ref{compatibility-with-dg-functors-subsecn}.
\end{proof}

 A DG\+functor $G\:\bF\rarrow\bE$ between exact DG\+categories $\bF$
and $\bE$ is said to be \emph{exact} (as a functor between exact
DG\+categories) if it satisfies any one of the equivalent conditions
of Lemma~\ref{exact-dg-functors-lemma}.
 It follows that the DG\+functor $G\:\bF\rarrow\bE$ is exact if and
only if the DG\+functor $G^\bec\:\bF^\bec\rarrow\bE^\bec$ is exact.

\subsection{Inheriting exact DG-category structure}
\label{inheriting-exact-dg-category-structure-subsecn}
 Let $\sE$ be an exact category and $\sF\subset\sE$ be a full additive
subcategory.
 We will say that $\sF$ \emph{inherits an exact category structure}
from $\sE$ if the class of all admissible short exact sequences
in $\sE$ whose terms belong to $\sF$ defines an exact category
structure on~$\sF$.

 In particular, if the full subcategory $\sF$ is closed under extensions
in $\sE$, then it inherits an exact category structure.
 In this case, the category $\sF$ endowed with the inherited exact
category structure is called a \emph{fully exact subcategory} in~$\sE$
\,\cite[Sections~10.5 and~13.3]{Bueh}.
 Another particular case occurs when $\sF$ is closed under both
the kernels of admissible epimorphisms and the cokernels of admissible
monomorphisms in~$\sE$.
 In this case, $\sF$ also inherits an exact category structure
from~$\sE$ \,\cite[Section~A.5(3)(b)]{Partin}.

 Quite generally, full subcategories inheriting an exact category
structure are characterized by the following lemma (which can be
also found in~\cite[Theorem~2.6]{DS}).

\begin{lem} \label{inheriting-exact-structure-lemma}
 Let\/ $\sE$ be an exact category and\/ $\sF\subset\sE$ be a full
additive subcategory.
 Then\/ $\sF$ inherits an exact category structure from\/ $\sE$ if
and only if the following two conditions hold:
\begin{enumerate}
\renewcommand{\theenumi}{\roman{enumi}}
\item for any commutative diagram in $\sE$
\begin{equation} \label{pullback-diagram}
\begin{gathered}
 \xymatrix{
  A \ar@{>->}[r] \ar[rd] & B \ar@{->>}[r] & C \\
  & B' \ar[u] \ar[r] & C' \ar[u]
 }
\end{gathered}
\end{equation}
with an admissible short exact sequence $0\rarrow A\rarrow B\rarrow C
\rarrow0$ and a pullback square $B'\rarrow B\rarrow C$, \ $B'
\rarrow C'\rarrow C$ such that the objects $A$, $B$, $C$, and $C'$
belong to\/ $\sF$, the object $B'$ also belongs to\/~$\sF$;
\item for any commutative diagram in $\sE$
\begin{equation} \label{pushout-diagram}
\begin{gathered}
 \xymatrix{
  A \ar@{>->}[r] \ar[d] & B \ar@{->>}[r] \ar[d] & C \\
  A' \ar[r] & B' \ar[ur]
 }
\end{gathered}
\end{equation}
with an admissible short exact sequence $0\rarrow A\rarrow B\rarrow C
\rarrow0$ and a pushout square $A\rarrow B\rarrow B'$, \ $A\rarrow
A'\rarrow B'$ such that the objects $A$, $B$, $C$, and $A'$ belong
to\/ $\sF$, the object $B'$ also belongs to\/~$\sF$.
\end{enumerate}
\end{lem}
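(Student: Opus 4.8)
The plan is to verify directly that the inherited class $\mathcal E_\sF$ of all $\sE$-admissible short exact sequences with all three terms in $\sF$ satisfies the standard axioms of an exact structure, as listed in~\cite[Appendix~A]{Kel} or~\cite{Bueh}. First I would record the harmless preliminary that, since $\sF\subset\sE$ is full, a kernel--cokernel pair in $\sE$ all of whose objects and morphisms lie in $\sF$ is automatically a kernel--cokernel pair in $\sF$; thus $\mathcal E_\sF$ really is a class of kernel--cokernel pairs in $\sF$, and it is visibly closed under isomorphism.

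For the \emph{if} direction I would check the axioms in turn. The axiom on split sequences is immediate: the split sequence $0\rarrow A\rarrow A\oplus C\rarrow C\rarrow0$ is $\sE$-admissible and has middle term in $\sF$ because $\sF$ is additive. The stability of admissible epimorphisms under pullback (and, dually, of admissible monomorphisms under pushout) is \emph{exactly} conditions~(i) and~(ii): given a deflation $B\rarrow C$ in $\mathcal E_\sF$ and a morphism $C'\rarrow C$ with $C'\in\sF$, the pullback $B'$ formed in $\sE$ lies in $\sF$ by~(i), and by fullness this $\sE$-pullback square is simultaneously a pullback square in $\sF$; the induced sequence $0\rarrow A\rarrow B'\rarrow C'\rarrow0$ is $\sE$-admissible with terms in $\sF$, hence lies in $\mathcal E_\sF$. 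The one axiom requiring a small idea is closure under composition: for deflations $g\:B\rarrow C$ and $g'\:C\rarrow D$ in $\mathcal E_\sF$, one observes that $\ker(g'g)$ is canonically the pullback $B\times_C\ker(g')$ of $g$ along the inflation $\ker(g')\rarrow C$; since $\ker(g')\in\sF$, condition~(i) yields $\ker(g'g)\in\sF$, so the $\sE$-admissible sequence $0\rarrow\ker(g'g)\rarrow B\rarrow D\rarrow0$ lies in $\mathcal E_\sF$ and $g'g$ is a deflation. Composition of inflations is handled dually via~(ii).

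For the \emph{only if} direction, suppose $\mathcal E_\sF$ is an exact structure on $\sF$ and take a pullback diagram as in~\eqref{pullback-diagram} with $A,B,C,C'\in\sF$. Because $\sF$ is now itself exact, the deflation $B\rarrow C$ admits a pullback $\widetilde{B'}$ along $C'\rarrow C$ \emph{computed inside}~$\sF$, sitting in an $\mathcal E_\sF$-sequence $0\rarrow A\rarrow\widetilde{B'}\rarrow C'\rarrow0$. Viewing this sequence in $\sE$ and comparing it with the $\sE$-pullback $B'$ (whose defining sequence $0\rarrow A\rarrow B'\rarrow C'\rarrow0$ is admissible since kernels are preserved under pullbacks of deflations), the universal property of $B'$ produces a comparison morphism $\widetilde{B'}\rarrow B'$ which restricts to the identity on $A$ and on $C'$; the short five lemma for exact categories~\cite[Corollary~3.2]{Bueh} then shows it is an isomorphism, so $B'\simeq\widetilde{B'}\in\sF$. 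Condition~(ii) follows dually from the existence of pushouts of inflations in~$\sF$.

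The main subtlety -- the only place where the argument is not purely formal -- is this last comparison in the \emph{only if} direction: a limit computed in the full subcategory $\sF$ need not a priori agree with the corresponding limit in $\sE$, so one cannot simply declare the $\sF$-pullback to be the $\sE$-pullback, and the short five lemma is precisely what bridges this gap. In the \emph{if} direction the only non-formal step is the identification of $\ker(g'g)$ with a pullback, which is what allows conditions~(i) and~(ii) to deliver the pullback/pushout axioms and the composition axioms simultaneously.
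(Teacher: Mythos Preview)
Your proof is correct and follows essentially the same approach as the paper. In the ``if'' direction both proofs identify composition closure as the only nontrivial axiom and handle it via the same observation that $\ker(g'g)$ is a pullback of $g''$ along $\ker(g')$; in the ``only if'' direction the paper phrases the comparison slightly differently---it cites \cite[Proposition~A.2 and Corollary~A.3]{Partin} to conclude that the $\sF$-pullback square is already a pullback square in $\sE$---but this is the same fact you are extracting from the short five lemma.
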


\begin{proof}
 ``Only if'': assuming that $\sF$ inherits an exact category structure
from $\sE$, let us prove~(i).
 Since $\sF$ is an exact category, the admissible short exact
sequence $0\rarrow A\rarrow B\rarrow C\rarrow0$ has a pullback
by the morphism $C'\rarrow C$ in the category $\sF$; denote this
pullback by $0\rarrow A\rarrow B''\rarrow C'\rarrow0$.
 Then $0\rarrow A\rarrow B''\rarrow C'\rarrow0$ is an admissible
short exact sequence in $\sF$, and consequently in $\sE$.
 It follows that the square $B''\rarrow B\rarrow C$, \ $B''\rarrow C'
\rarrow C$ is a pullback square in~$\sE$
\,\cite[Proposition~A.2 and Corollary~A.3]{Partin}, hence
$B'\simeq B''\in\sF$.
 The proof of~(ii) is dual.
 
 ``If'': the only nonobvious property to check is that the classes
of admissible monomorphisms and admissible epimorphisms in $\sF$ are
closed under compositions.
 Here one observes that the kernel object of the composition of two
admissible epimorphisms $g=g'g''$ (in an exact category~$\sE$)
is a pullback of~$g''$ by the kernel of~$g'$, and similarly,
the cokernel object of the composition of two admissible monomorphisms
$f=f'f''$ is a pushout of~$f'$ by the cokernel of~$f''$.
\end{proof}

\begin{lem} \label{lifting-exact-subcategory-lemma}
 Let\/ $\Theta\:\sE\rarrow\sG$ be an exact functor between exact
categories, and let\/ $\sH\subset\sG$ be a full additive subcategory
inheriting an exact category structure.
 Then the full preimage\/ $\sF=\Theta^{-1}(\sH)\subset\sE$ is a full
additive subcategory inheriting an exact category structure
from\/~$\sE$.
\end{lem}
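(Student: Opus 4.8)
The plan is to deduce everything from the characterization of inherited exact structures given in Lemma~\ref{inheriting-exact-structure-lemma}, transporting its two conditions across the functor~$\Theta$. First I would check that $\sF=\Theta^{-1}(\sH)$ really is a full additive subcategory of $\sE$: it is full by the definition of a full preimage, it contains the zero object, and since $\Theta$ is additive and $\sH$ is closed under finite direct sums in $\sG$, the subcategory $\sF$ is closed under finite direct sums in~$\sE$. With this in hand, Lemma~\ref{inheriting-exact-structure-lemma} applies, and the task reduces to verifying conditions~(i) and~(ii) for $\sF\subset\sE$.

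To check~(i), I would begin from a commutative diagram~\eqref{pullback-diagram} in $\sE$ with an admissible short exact sequence $0\rarrow A\rarrow B\rarrow C\rarrow0$ and a pullback square, where $A$, $B$, $C$, $C'\in\sF$. In an exact category the pullback of the admissible epimorphism $B\rarrow C$ along $C'\rarrow C$ has the same kernel $A$, so the diagram refines to a morphism of admissible short exact sequences
\[
 \xymatrix{
  0 \ar[r] & A \ar[r] \ar@{=}[d] & B' \ar[r] \ar[d] & C' \ar[r] \ar[d] & 0 \\
  0 \ar[r] & A \ar[r] & B \ar[r] & C \ar[r] & 0
 }
\]
whose right-hand square is the given pullback (see~\cite[Proposition~A.2 and Corollary~A.3]{Partin}). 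Applying the exact functor $\Theta$ sends both rows to admissible short exact sequences in $\sG$; and, being the image of a morphism of conflations that is the identity on the subobject, the right-hand square is sent to a pullback square in~$\sG$.

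Now $\Theta(A)$, $\Theta(B)$, $\Theta(C)$, $\Theta(C')$ all lie in $\sH$ by the very definition of $\sF$, so in $\sG$ I have assembled precisely the data of condition~(i) of Lemma~\ref{inheriting-exact-structure-lemma} for the subcategory $\sH\subset\sG$. Since $\sH$ inherits an exact structure from $\sG$, that condition yields $\Theta(B')\in\sH$, that is, $B'\in\sF$, which is what~(i) demands. Condition~(ii) is then proved dually, replacing the pullback of an admissible epimorphism by the pushout of an admissible monomorphism and invoking condition~(ii) of Lemma~\ref{inheriting-exact-structure-lemma} for $\sH\subset\sG$.

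The one point I would treat with genuine care — and the only place where anything beyond routine transport occurs — is the claim that $\Theta$ carries the pullback square to a pullback square in $\sG$. I would not argue this through a preserved universal property, but through the observation used above: in an exact category a pullback square along an admissible epimorphism is the \emph{same datum} as a morphism of conflations that is an isomorphism (here the identity) on the kernel terms, and this description is manifestly preserved by any functor preserving admissible short exact sequences. Everything else is a mechanical matching of the resulting diagram against the hypotheses of Lemma~\ref{inheriting-exact-structure-lemma}.
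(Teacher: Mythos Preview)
Your proof is correct and follows essentially the same route as the paper: reduce to the two conditions of Lemma~\ref{inheriting-exact-structure-lemma}, apply the exact functor~$\Theta$, and invoke those conditions for $\sH\subset\sG$. The only cosmetic difference is in the justification that $\Theta$ preserves the pullback square: the paper uses the admissible short exact sequence $0\rarrow B'\rarrow B\oplus C'\rarrow C\rarrow0$ (which $\Theta$ preserves), while you use the equivalent description via a morphism of conflations that is the identity on the kernel term.
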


\begin{proof}
 It suffices to check the conditions~(i) and~(ii) of
Lemma~\ref{inheriting-exact-structure-lemma} for the full
subcategory $\sF\subset\sE$.
 Suppose that we are given a pullback diagram~\eqref{pullback-diagram}
in $\sE$ with $A$, $B$, $C$, $C'\in\sF$ and $B'\in\sE$.
 Then $0\rarrow B'\rarrow B\oplus C'\rarrow C\rarrow0$ is
an admissible short exact sequence in $\sE$ (see~\cite[1st~step
of the proof in Section~A.1]{Kel} or the assertion dual
to~\cite[Proposition~2.12]{Bueh}).
 Applying the functor $\Theta$ to~\eqref{pullback-diagram}, we obtain
a similar pullback diagram in the exact category $\sG$ (since
the functor $\Theta$ preserves admissible short exact sequences).
 Since the full subcategory $\sH$ inherits an exact category structure
from $\sG$, by the other implication of
Lemma~\ref{inheriting-exact-structure-lemma} we have $\Theta(B')\in
\sH$, hence $B'\in\sF$.
 The condition~(ii) is dual.
 Alternatively, one could refer to the more
general~\cite[Section~A.5(4)]{Partin}.
\end{proof}

 Let $\bE$ be an additive DG\+category with shifts and cones, and let
$\bF\subset\bE$ be a full additive DG\+subcategory closed under
shifts and cones.
 Then the DG\+category $\bF^\bec$ is naturally a full DG\+subcategory
in~$\bE^\bec$.
 The functors $\Phi_\bF\:\sZ^0(\bF)\rarrow\sZ^0(\bF^\bec)$ and
$\Psi^+_\bF\:\sZ^0(\bF^\bec)\rarrow\sZ^0(\bF)$ form commutative
square diagrams with the functors $\Phi_\bE$ and $\Psi^+_\bE$ and
the additive inclusion functors $\sZ^0(\bF)\rarrow\sZ^0(\bE)$ and
$\sZ^0(\bF^\bec)\rarrow\sZ^0(\bE^\bec)$.
 If the full DG\+subcategory $\bF$ is closed under direct summands
in $\bE$, then the full DG\+subcategory $\bF^\bec$ is closed under
direct summands in~$\bE^\bec$.

 Assume that a pair of $\bec$\+compatible exact category structures is
defined on the additive categories $\sZ^0(\bE)$ and $\sZ^0(\bE^\bec)$,
making $\bE$ an exact DG\+category.
 We will say that the full DG\+subcategory $\bF$ \emph{inherits an exact
DG\+category structure} from $\bE$ if both the full additive 
subcategories $\sZ^0(\bF)\subset\sZ^0(\bE)$ and $\sZ^0(\bF^\bec)\subset
\sZ^0(\bE^\bec)$ inherit exact category structures.
 Obviously, in this case the inherited pair of exact structures
on $\sZ^0(\bF)$ and $\sZ^0(\bF^\bec)$ is $\bec$\+compatible,
so $\bF$ becomes an exact DG\+category.

\begin{prop} \label{inheriting-from-E-bec-may-imply-from-E}
 Let\/ $\bE$ be a weakly idempotent-complete exact DG\+category,
and let\/ $\bF\subset\bE$ be a full additive DG\+subcategory closed
under shifts, cones, and direct summands.
 Assume that the full subcategory\/ $\sZ^0(\bF^\bec)$ inherits
an exact category structure from\/~$\sZ^0(\bE^\bec)$.
 Then the full subcategory\/ $\sZ^0(\bF)$ inherits an exact category
structure from\/ $\sZ^0(\bE)$ if and only if the following two
conditions hold:
\begin{itemize}
\item for any admissible short exact sequence\/ $0\rarrow A\rarrow B
\rarrow C\rarrow0$ in\/ $\sZ^0(\bE)$ such that the objects $B$ and $C$
belong to\/ $\sZ^0(\bF)$ and the object\/ $\Xi_\bE(A)$ belongs to\/
$\sZ^0(\bF)$, the object $A$ also belongs to\/~$\sZ^0(\bF)$;
\item for any admissible short exact sequence\/ $0\rarrow A\rarrow B
\rarrow C\rarrow0$ in\/ $\sZ^0(\bE)$ such that the objects $A$ and $B$
belong to\/ $\sZ^0(\bF)$ and the object\/ $\Xi_\bE(C)$ belongs to\/
$\sZ^0(\bF)$, the object $C$ also belongs to\/~$\sZ^0(\bF)$.
\end{itemize}
\end{prop}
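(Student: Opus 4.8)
The plan is to test the two conditions against the characterization of inherited exact structures in Lemma~\ref{inheriting-exact-structure-lemma}, using throughout one elementary but decisive observation: since the underlying object of $\Phi_\bE(X)=\cone(\id_X[-1])$ in $\bE$ is precisely $\Xi_\bE(X)$, and $\bF^\bec$ is the full DG\+subcategory of $\bE^\bec$ on those pairs whose underlying object lies in $\bF$, one has the equivalence
\[
\Phi_\bE(X)\in\sZ^0(\bF^\bec)\iff\Xi_\bE(X)\in\sZ^0(\bF)
\qquad\text{for every } X\in\sZ^0(\bE).
\]
This is what links the hypothesis ``$\Xi_\bE(A)\in\sZ^0(\bF)$'' in conditions~(1)\+-(2) to membership in $\sZ^0(\bF^\bec)$. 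I would also record at the outset that $\bF$ is weakly idempotent-complete (being closed under direct summands in the weakly idempotent-complete category $\sZ^0(\bE)$), so that whenever $\sZ^0(\bF)$ inherits an exact structure, $\bF$ is itself a weakly idempotent-complete exact DG\+category and Lemma~\ref{exact-DG-reflecting-admissible-morphisms} applies to it.

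For the ``only if'' direction I would argue at the level of \emph{morphisms} rather than objects, which is the key to avoiding a circular closure argument. Assume $\sZ^0(\bF)$ inherits; then $\bF$ is an exact DG\+category and, by Lemma~\ref{exact-DG-reflecting-admissible-morphisms}, $\Phi_\bF$ reflects admissible epimorphisms and monomorphisms. Given the datum $0\rarrow A\rarrow B\overset p\rarrow C\rarrow0$ for condition~(1), the hypothesis $\Xi_\bE(A)\in\sZ^0(\bF)$ yields $\Phi_\bE(A)\in\sZ^0(\bF^\bec)$ by the equivalence above, so all three terms of $0\rarrow\Phi(A)\rarrow\Phi(B)\overset{\Phi(p)}\rarrow\Phi(C)\rarrow0$ lie in $\sZ^0(\bF^\bec)$; since $\sZ^0(\bF^\bec)$ inherits, this is an admissible short exact sequence there, whence $\Phi_\bF(p)$ is an admissible epimorphism in $\sZ^0(\bF^\bec)$. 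Reflection then makes $p$ an admissible epimorphism \emph{in $\sZ^0(\bF)$}, so $p$ has a kernel $A'\in\sZ^0(\bF)$ sitting in an admissible sequence $0\rarrow A'\rarrow B\rarrow C\rarrow0$ of $\bF$; as this sequence is admissible in $\sZ^0(\bE)$ as well, uniqueness of kernels gives $A'\simeq A$, so $A\in\sZ^0(\bF)$. Condition~(2) is the exact dual, reflecting the admissible monomorphism $A\rarrow B$ and using uniqueness of cokernels.

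For the ``if'' direction I would check conditions~(i)\+-(ii) of Lemma~\ref{inheriting-exact-structure-lemma} for $\sZ^0(\bF)\subset\sZ^0(\bE)$. Given a pullback diagram~\eqref{pullback-diagram} with $A,B,C,C'\in\sZ^0(\bF)$, apply $\Phi_\bE$: being right adjoint to $\Psi^+_\bE$ (Lemma~\ref{G-functors-in-DG-categories}) it preserves the pullback square, and being exact it preserves the admissible short exact sequence, so we obtain a diagram of the same shape in $\sZ^0(\bE^\bec)$ with all four corners in $\sZ^0(\bF^\bec)$. As $\sZ^0(\bF^\bec)$ inherits, Lemma~\ref{inheriting-exact-structure-lemma}(i) gives $\Phi(B')\in\sZ^0(\bF^\bec)$, i.e.\ $\Xi_\bE(B')\in\sZ^0(\bF)$. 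Now the standard pullback sequence $0\rarrow B'\rarrow B\oplus C'\rarrow C\rarrow0$ is admissible with $B\oplus C',\,C\in\sZ^0(\bF)$ and $\Xi_\bE(B')\in\sZ^0(\bF)$, so condition~(1) yields $B'\in\sZ^0(\bF)$. The pushout condition~(ii) is handled dually, using that $\Phi_\bE$ is also a left adjoint (to $\Psi^-_\bE$) and hence preserves pushouts, and applying condition~(2) to $0\rarrow A\rarrow B\oplus A'\rarrow B'\rarrow0$.

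The main obstacle, and the step I expect to need the most care, is the ``only if'' direction. A direct attempt to place $A$ in $\sZ^0(\bF)$ by the pullback/pushout closure of Lemma~\ref{inheriting-exact-structure-lemma} is circular: $A$ inevitably appears as the kernel-corner of any such diagram, and the only non-circular construction available (pulling the admissible sequence $0\rarrow C[-1]\rarrow\Xi_\bE(C)\rarrow C\rarrow0$ back along $p$ to produce an object of $\sZ^0(\bF)$) exposes $A$ only as a kernel again, now with the contractible quotient $\Xi_\bE(C)$. The decisive move is therefore to stop reasoning about the object $A$ and instead prove that the \emph{morphism} $p$ is admissible in $\bF$; here the weak idempotent-completeness of $\bF$ is essential, as it is exactly what powers the reflection statement in Lemma~\ref{exact-DG-reflecting-admissible-morphisms}. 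It is also worth noting that the hypothesis $\Xi_\bE(A)\in\sZ^0(\bF)$ cannot be dropped: without it the claim already fails for fully exact (extension-closed) subcategories, which inherit an exact structure but need not be closed under kernels of admissible epimorphisms.
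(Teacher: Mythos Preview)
Your proof is correct and follows essentially the same approach as the paper: both directions hinge on the equivalence $\Phi_\bE(X)\in\sZ^0(\bF^\bec)\Leftrightarrow\Xi_\bE(X)\in\sZ^0(\bF)$, with the ``if'' direction checking Lemma~\ref{inheriting-exact-structure-lemma}(i)--(ii) via the pullback sequence $0\rarrow B'\rarrow B\oplus C'\rarrow C\rarrow0$, and the ``only if'' direction reflecting an admissible epimorphism back to $\sZ^0(\bF)$. The only cosmetic difference is that the paper invokes $\Xi_\bF$-reflection (Lemma~\ref{DG-compatible-reflecting-admissible-morphisms}) directly, whereas you cite the derived statement for $\Phi_\bF$ (Lemma~\ref{exact-DG-reflecting-admissible-morphisms}); the substance is identical.
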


\begin{proof}
 ``Only if'': the assumptions of the proposition imply that
the DG\+category $\bF$ is weakly idempotent-complete.
 If the additive subcategory $\sZ^0(\bF)$ inherits an exact category
structure from $\sZ^0(\bE)$, then the inherited exact category
structure is DG\+compatible.
 By Lemma~\ref{DG-compatible-reflecting-admissible-morphisms}, it
follows that the functor $\Xi_\bF\:\sZ^0(\bF)\rarrow\sZ^0(\bF)$
reflects admissible monomorphisms and admissible epimorphisms.
 Now if $0\rarrow A\rarrow B\rarrow C\rarrow0$ is an admissible
short exact sequence in $\sZ^0(\bE)$ with $B$, $C\in\sZ^0(\bF)$ and
$\Xi_\bE(A)\in\sZ^0(\bF)$, then $0\rarrow\Xi_\bE(A)\rarrow
\Xi_\bE(B)\rarrow\Xi_\bE(C)\rarrow0$ is an admissible short exact
sequence in $\sZ^0(\bE)$ with the terms in $\sZ^0(\bF)$, so it is
an admissible short exact sequence in~$\sZ^0(\bF)$.
 It follows that $\Xi_\bF(B)\rarrow\Xi_\bF(C)$ is an admissible
epimorphism in $\sZ^0(\bF)$, hence $B\rarrow C$ is an admissible
epimorphism in~$\sZ^0(\bF)$.
 So there exists an admissible short exact sequence $0\rarrow K
\rarrow B\rarrow C\rarrow0$ in~$\sZ^0(\bF)$.
 Since the exact structure on $\sZ^0(\bF)$ is inherited from
$\sZ^0(\bE)$, the latter sequence must be an admissible short
exact sequence in $\sZ^0(\bE)$ as well, and we can conclude
that $A\simeq K\in\sZ^0(\bF)$.
 The second property is dual.

 ``If'': it suffices to check the conditions~(i) and~(ii) of
Lemma~\ref{inheriting-exact-structure-lemma} for the full
subcategory $\sZ^0(\bF)\subset\sZ^0(\bE)$.
 Suppose that we are given a pullback diagram~\eqref{pullback-diagram}
in $\sZ^0(\bE)$ with $A$, $B$, $C$, $C'\in\sZ^0(\bF)$ and
$B'\in\sZ^0(\bE)$.
 Following the proof of Lemma~\ref{lifting-exact-subcategory-lemma}
applied to the exact functor $\Phi_\bE\:\sZ^0(\bE)\rarrow
\sZ^0(\bE^\bec)$ and the full subcategory $\sZ^0(\bF^\bec)\subset
\sZ^0(\bE^\bec)$, we see that $0\rarrow B'\rarrow B\oplus C'\rarrow C
\rarrow0$ is an admissible short exact sequence in $\sZ^0(\bE)$ and
$\Phi_\bE(B')\in\sZ^0(\bF^\bec)$.
 Consequently, $\Xi_\bE(B')\simeq\Psi^+_\bE\Phi_\bE(B')\in\sZ^0(\bF)$.
 By assumption of the proposition, it follows that $B'\in\sZ^0(\bF)$.
 The condition~(ii) is dual.
 Alternatively, one could refer to
Theorem~\ref{transferring-exact-structure-from-E-bec-to-E-thm}, but
that would be a more indirect, roundabout argument using
the weak idempotent-completeness assumption on~$\bE$.
\end{proof}

\begin{prop} \label{inheriting-from-E-implies-from-E-bec}
 Let\/ $\bE$ be an exact DG\+category, and let\/ $\bF\subset\bE$ be
a full additive DG\+subcategory closed under shifts and cones.
 Assume that the full subcategory\/ $\sZ^0(\bF)$ inherits an exact
category structure from\/~$\sZ^0(\bE)$.
 Then the full subcategory\/ $\sZ^0(\bF^\bec)$ inherits an exact
category structure from\/~$\sZ^0(\bE^\bec)$.
 So the full DG\+subcategory\/ $\bF$ inherits an exact DG\+category
structure from\/~$\bE$.
\end{prop}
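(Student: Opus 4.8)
The plan is to realize $\sZ^0(\bF^\bec)$ as a full preimage and then invoke Lemma~\ref{lifting-exact-subcategory-lemma}, so that essentially no new work is required. The crucial observation is that the forgetful functor $\Psi^+_\bE\:\sZ^0(\bE^\bec)\rarrow\sZ^0(\bE)$ of Lemma~\ref{G-functors-in-DG-categories}, which sends $(X,\sigma_X)$ to $X$, relates $\sZ^0(\bF^\bec)$ to $\sZ^0(\bF)$ in the cleanest possible way: an object $(X,\sigma_X)$ of $\sZ^0(\bE^\bec)$ lies in the full subcategory $\sZ^0(\bF^\bec)$ precisely when $X$ is an object of $\bF$. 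Thus I would first record the identification of full subcategories
\[
 \sZ^0(\bF^\bec)=(\Psi^+_\bE)^{-1}\bigl(\sZ^0(\bF)\bigr)\subset\sZ^0(\bE^\bec).
\]

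To justify this identification, I would note that since $\bF\subset\bE$ is a full additive DG\+subcategory closed under shifts and cones, the DG\+category $\bF^\bec$ is (as recorded in the paragraph preceding the proposition) a full DG\+subcategory of $\bE^\bec$, and its objects are exactly the pairs $(X,\sigma_X)\in\bE^\bec$ with $X\in\bF$. Indeed, by fullness the admissible contracting homotopies $\sigma_X\in\Hom^{-1}_\bF(X,X)=\Hom^{-1}_\bE(X,X)$ are the same whether computed in $\bF$ or in $\bE$. Since $\Psi^+_\bE(X,\sigma_X)=X$, the condition $X\in\bF$ is exactly $\Psi^+_\bE(X,\sigma_X)\in\sZ^0(\bF)$, which gives the displayed equality on objects; on morphisms it is automatic, both sides being full subcategories of $\sZ^0(\bE^\bec)$.

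Next I would verify the hypotheses of Lemma~\ref{lifting-exact-subcategory-lemma} for the functor $\Psi^+_\bE$ and the subcategory $\sZ^0(\bF)$. The functor $\Psi^+_\bE$ is exact because $\bE$ is an exact DG\+category: by the definition of a $\bec$\+compatible pair of exact structures, and by Lemma~\ref{DG-compatible-vs-bec-compatible}, $\Psi^+_\bE$ preserves admissible short exact sequences. The subcategory $\sZ^0(\bF)\subset\sZ^0(\bE)$ inherits an exact structure by the standing hypothesis of the proposition. Lemma~\ref{lifting-exact-subcategory-lemma} then yields at once that the full preimage $(\Psi^+_\bE)^{-1}(\sZ^0(\bF))=\sZ^0(\bF^\bec)$ inherits an exact category structure from $\sZ^0(\bE^\bec)$.

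Finally, the closing assertion is immediate from the definition of inheriting an exact DG\+category structure: both $\sZ^0(\bF)\subset\sZ^0(\bE)$ (by hypothesis) and $\sZ^0(\bF^\bec)\subset\sZ^0(\bE^\bec)$ (just established) inherit exact structures, and the inherited pair is automatically $\bec$\+compatible, so $\bF$ becomes an exact DG\+category. In contrast to the converse direction of Proposition~\ref{inheriting-from-E-bec-may-imply-from-E}, no weak idempotent-completeness assumption is needed and I anticipate no genuine obstacle; the only point requiring care is the clean preimage identification, which works precisely because $\Psi^+_\bE$ merely forgets the contracting homotopy. One could equally run the argument with $\Psi^-_\bE$ in place of $\Psi^+_\bE$, since $\bF$ is closed under shifts.
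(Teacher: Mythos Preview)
Your proof is correct and matches the paper's argument essentially verbatim: the paper likewise observes that $\sZ^0(\bF^\bec)=(\Psi^+_\bE)^{-1}(\sZ^0(\bF))$ and then applies Lemma~\ref{lifting-exact-subcategory-lemma} to the exact functor $\Psi^+_\bE$. The paper also notes an alternative route via Theorem~\ref{transferring-from-E-to-E-bec}, but your approach is the one the paper presents first.
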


\begin{proof}
 For any DG\+category $\bE$ with shifts and cones, and any full
DG\+subcategory $\bF\subset\bE$ closed under shifts and cones,
the full subcategory $\sZ^0(\bF^\bec)\subset\sZ^0(\bE^\bec)$ is
the full preimage of the full subcategory $\sZ^0(\bF)\subset\sZ^0(\bE)$
under the functor $\Psi^+\:\sZ^0(\bE^\bec)\rarrow\sZ^0(\bE)$.
 Hence it suffices to apply Lemma~\ref{lifting-exact-subcategory-lemma}
to the exact functor $\Theta=\Psi^+_\bE\:\sZ^0(\bE^\bec)\rarrow
\sZ^0(\bE)$ and the full subcategory $\sH=\sZ^0(\bF)\subset\sZ^0(\bE)$.

 Alternatively, one could notice that a short sequence $0\rarrow X
\rarrow Y\rarrow Z\rarrow0$ in $\sZ^0(\bF^\bec)$ is an admissible short
exact sequence in $\sZ^0(\bE^\bec)$ if and only if $0\rarrow
\Psi_\bF^+(X)\rarrow\Psi_\bF^+(Y)\rarrow\Psi_\bF^+(Z)\rarrow0$
is an admissible short exact sequence in~$\sZ^0(\bF)$, and apply
Theorem~\ref{transferring-from-E-to-E-bec} to the DG\+category~$\bF$.
\end{proof}

\begin{cor} \label{inheriting-from-E-or-E-bec-under-all-twists}
 Let\/ $\bE$ be an exact DG\+category, and let\/ $\bF\subset\bE$ be
a full additive DG\+subcategory closed under shifts, twists, and
direct summands.
 Then the full subcategory\/ $\sZ^0(\bF)$ inherits an exact category
structure from\/ $\sZ^0(\bE)$ \emph{if and only if} the full
subcategory\/ $\sZ^0(\bF^\bec)$ inherits an exact category structure
from\/~$\sZ^0(\bE^\bec)$.
 Hence the full DG\+subcategory\/ $\bF$ inherits an exact DG\+category
structure from\/ $\bE$ if any one of the two conditions holds.
\end{cor}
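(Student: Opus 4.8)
The plan is to prove the two implications separately, drawing on the two immediately preceding propositions, and to observe that the hypothesis that $\bF$ is closed under twists is precisely what renders the two auxiliary conditions of Proposition~\ref{inheriting-from-E-bec-may-imply-from-E} automatic. First I would record a preliminary reduction: since $\bF$ is an additive DG\+subcategory closed under shifts and twists, it is automatically closed under cones, because the cone of a closed degree\+$0$ morphism between objects of $\bF$ is a twist of a direct sum of (shifts of) those objects. Thus $\bF$ is a full additive DG\+subcategory closed under shifts and cones, and both Propositions~\ref{inheriting-from-E-bec-may-imply-from-E} and~\ref{inheriting-from-E-implies-from-E-bec} are applicable to it.

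The forward implication---that inheritance on $\sZ^0(\bF)$ forces inheritance on $\sZ^0(\bF^\bec)$---is then nothing but Proposition~\ref{inheriting-from-E-implies-from-E-bec}, which asks only that $\bF$ be closed under shifts and cones and imposes no further conditions. So this direction requires no additional work.

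For the converse I would verify conditions~(i) and~(ii) of Lemma~\ref{inheriting-exact-structure-lemma} for $\sZ^0(\bF)\subset\sZ^0(\bE)$, following the pattern of the ``if'' part of Proposition~\ref{inheriting-from-E-bec-may-imply-from-E}. Given a pullback diagram~\eqref{pullback-diagram} in $\sZ^0(\bE)$ with $A$, $B$, $C$, $C'\in\sZ^0(\bF)$ and $B'\in\sZ^0(\bE)$, I apply the exact functor $\Phi_\bE$ and invoke the standing hypothesis that $\sZ^0(\bF^\bec)$ inherits its exact structure (through Lemma~\ref{lifting-exact-subcategory-lemma} applied to $\Phi_\bE$) to conclude $\Phi_\bE(B')\in\sZ^0(\bF^\bec)$; applying $\Psi^+_\bE$ and Lemma~\ref{Xi-decomposed} then yields $\Xi_\bE(B')\simeq\Psi^+_\bE\Phi_\bE(B')\in\sZ^0(\bF)$, since $\Psi^+_\bE$ restricts to a functor $\sZ^0(\bF^\bec)\rarrow\sZ^0(\bF)$. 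Condition~(ii) is handled dually via the pushout diagram~\eqref{pushout-diagram}. The single decisive step---and the only place the twist hypothesis enters---is the passage from $\Xi_\bE(B')\in\sZ^0(\bF)$ back to $B'\in\sZ^0(\bF)$: here I recall from the proof of Lemma~\ref{Xi-creates-co-kernels} that $B'\oplus B'[-1]$ is the twist $\Xi_\bE(B')(-\tau_{B'})$ of $\Xi_\bE(B')$, so closure of $\bF$ under twists places $B'\oplus B'[-1]$ in $\bF$, and closure under direct summands then places $B'$ in $\bF$. This argument shows simultaneously that both bullet conditions of Proposition~\ref{inheriting-from-E-bec-may-imply-from-E} hold, so one may alternatively simply cite that proposition's ``if'' direction.

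I expect the main subtlety to be purely a matter of hypothesis bookkeeping: Proposition~\ref{inheriting-from-E-bec-may-imply-from-E} is stated for a \emph{weakly idempotent\+complete} $\bE$, whereas the present corollary assumes nothing of the kind. The point to check---and to flag explicitly---is that weak idempotent\+completeness is used only in the ``only if'' half of that proposition (via Lemma~\ref{DG-compatible-reflecting-admissible-morphisms}), while the ``if'' half, which is all that the converse direction needs, never invokes it; hence the self\+contained verification above goes through for an arbitrary exact DG\+category~$\bE$. Combining the two implications gives the stated equivalence, and the closing clause follows at once, since by definition $\bF$ inherits an exact DG\+category structure exactly when both $\sZ^0(\bF)$ and $\sZ^0(\bF^\bec)$ inherit exact structures, the resulting pair being automatically $\bec$\+compatible.
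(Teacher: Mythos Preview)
Your proposal is correct and follows essentially the same approach as the paper: the ``only if'' direction via Proposition~\ref{inheriting-from-E-implies-from-E-bec}, and the ``if'' direction by checking the two bullet conditions of Proposition~\ref{inheriting-from-E-bec-may-imply-from-E} using that any $G$ with $\Xi_\bE(G)\in\sZ^0(\bF)$ lies in $\sZ^0(\bF)$ as a direct summand of the twist $G\oplus G[-1]$ of $\Xi_\bE(G)$, together with the observation that weak idempotent-completeness is only used in the ``only if'' half of that proposition. Your preliminary remark that closure under shifts and twists implies closure under cones (needed to invoke the propositions) is a helpful addition the paper leaves implicit.
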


\begin{proof}
 The ``only if'' assertion is provided by
Proposition~\ref{inheriting-from-E-implies-from-E-bec}.
 To prove the ``if'', it suffices to check that the assumptions
of Proposition~\ref{inheriting-from-E-bec-may-imply-from-E} are
satisfied (notice that the weak idempotent-completeness assumption
was not used in the proof of the ``if'' implication in
Proposition~\ref{inheriting-from-E-bec-may-imply-from-E}).
 Indeed, if $\bF\subset\bE$ is a full DG\+subcategory closed under
twists and direct summands, and the object $\Xi_\bE(G)$ belongs to
$\sZ^0(\bF)$ for some $G\in\sZ^0(\bE)$, then $G$ also belongs to
$\sZ^0(\bF)$, since $G$ is a direct summand of a twist $G\oplus G[-1]$
of the object~$\Xi_\bE(G)$.
\end{proof}

\begin{lem} \label{closed-under-extensions-F-and-F-bec}
 Let\/ $\bE$ be an exact DG\+category, and let\/ $\bF\subset\bE$ be
a full additive DG\+subcategory closed under shifts and cones.
 Assume that the full subcategory\/ $\sZ^0(\bF)\subset\sZ^0(\bE)$
is closed under extensions.
 Then the full subcategory\/ $\sZ^0(\bF^\bec)\subset\sZ^0(\bE^\bec)$
is closed under extensions.
 The converse implication holds assuming additionally that\/ $\bF$
is closed under twists and direct summands in\/~$\bE$.
\end{lem}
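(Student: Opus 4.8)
The plan is to exploit the two exact functors $\Phi_\bE$ and $\Psi^+_\bE$ from Lemma~\ref{G-functors-in-DG-categories}, both of which preserve and reflect admissible short exact sequences in any exact DG-category, together with the identification $\sZ^0(\bF^\bec)=(\Psi^+_\bE)^{-1}(\sZ^0(\bF))$ already observed in the proof of Proposition~\ref{inheriting-from-E-implies-from-E-bec}: an object $(X,\sigma_X)$ lies in $\bF^\bec$ precisely when $X=\Psi^+_\bE(X,\sigma_X)$ lies in~$\bF$. Both implications then reduce to routine bookkeeping with these functors and with $\Xi_\bE\simeq\Psi^+_\bE\circ\Phi_\bE$ from Lemma~\ref{Xi-decomposed}.

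For the forward implication, I would start with an admissible short exact sequence $0\rarrow X\rarrow Y\rarrow Z\rarrow0$ in $\sZ^0(\bE^\bec)$ whose outer terms $X$ and $Z$ belong to $\sZ^0(\bF^\bec)$, and apply the exact functor $\Psi^+_\bE$. This produces an admissible short exact sequence $0\rarrow\Psi^+_\bE(X)\rarrow\Psi^+_\bE(Y)\rarrow\Psi^+_\bE(Z)\rarrow0$ in $\sZ^0(\bE)$ whose outer terms lie in $\sZ^0(\bF)$ by the preimage description above. Since $\sZ^0(\bF)$ is closed under extensions in $\sZ^0(\bE)$ by hypothesis, the middle term $\Psi^+_\bE(Y)$ lies in $\sZ^0(\bF)$, whence $Y\in\sZ^0(\bF^\bec)$. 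This direction is essentially immediate once the preimage identity is in place, and it uses the hypothesis that $\bF$ is closed under shifts and cones only through the fact that $\bF^\bec$ is then a full DG-subcategory of~$\bE^\bec$.

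For the converse, assuming $\sZ^0(\bF^\bec)$ is closed under extensions and that $\bF$ is additionally closed under twists and direct summands, I would take an admissible short exact sequence $0\rarrow A\rarrow B\rarrow C\rarrow0$ in $\sZ^0(\bE)$ with $A$, $C\in\sZ^0(\bF)$ and apply the exact functor $\Phi_\bE$. Using the commutative square diagrams relating $\Phi_\bF$ and $\Phi_\bE$ with the inclusions, the outer terms $\Phi_\bE(A)$ and $\Phi_\bE(C)$ lie in $\sZ^0(\bF^\bec)$; closure of $\sZ^0(\bF^\bec)$ under extensions then forces $\Phi_\bE(B)\in\sZ^0(\bF^\bec)$. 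Applying $\Psi^+_\bE$ and invoking $\Xi_\bE\simeq\Psi^+_\bE\circ\Phi_\bE$, I conclude that $\Xi_\bE(B)\in\sZ^0(\bF)$. The final step reproduces the twist argument from the proof of Corollary~\ref{inheriting-from-E-or-E-bec-under-all-twists}: since $B$ is a direct summand of the twist $B\oplus B[-1]$ of the object $\Xi_\bE(B)=\cone(\id_B[-1])$ (see the proof of Proposition~\ref{iterated-bec-construction}), and $\bF$ is closed under twists and direct summands, it follows that $B\in\sZ^0(\bF)$.

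I do not expect a serious obstacle, as both directions are manipulations of the adjoint functors $\Phi_\bE$, $\Psi^+_\bE$ and the functor $\Xi_\bE$. The only point requiring genuine care is the last step of the converse, where one must pass from $\Xi_\bE(B)\in\sZ^0(\bF)$ back to $B\in\sZ^0(\bF)$; this is precisely where the extra hypotheses that $\bF$ be closed under twists and direct summands are consumed, which explains their appearance in the statement of the converse but not of the forward implication.
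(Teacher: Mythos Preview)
Your proposal is correct and follows essentially the same argument as the paper: the forward implication applies $\Psi^+_\bE$ and uses the description $\sZ^0(\bF^\bec)=(\Psi^+_\bE)^{-1}(\sZ^0(\bF))$, while the converse applies $\Phi_\bE$, composes with $\Psi^+_\bE$ to land in $\Xi_\bE(B)\in\sZ^0(\bF)$, and then recovers $B$ as a direct summand of a twist of $\Xi_\bE(B)$.
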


\begin{proof}
 To prove the direct implication, let $0\rarrow X\rarrow Y\rarrow Z
\rarrow0$ be an admissible short exact sequence in $\sZ^0(\bE^\bec)$
with $X$, $Z\in \sZ^0(\bF^\bec)$.
 Then $0\rarrow\Psi^+_\bE(X)\rarrow\Psi^+_\bE(Y)\rarrow\Psi^+_\bE(Z)
\rarrow0$ is an admissible short exact sequence in $\sZ^0(\bE)$ with
$\Psi^+_\bE(X)$, $\Psi^+_\bE(Z)\in\sZ^0(\bF)$.
 By assumption, it follows that $\Psi^+_\bE(Y)\in\sZ^0(\bF)$, hence
$Y\in\sZ^0(\bF^\bec)$.

 To prove the converse, let $0\rarrow A\rarrow B\rarrow C\rarrow0$ be
an admissible short exact sequence in $\sZ^0(\bE)$
with $A$, $C\in \sZ^0(\bF)$.
 Then $0\rarrow\Phi_\bE(A)\rarrow\Phi_\bE(B)\rarrow\Phi_\bE(C)
\rarrow0$ is an admissible short exact sequence in $\sZ^0(\bE^\bec)$
with $\Phi_\bE(A)$, $\Phi_\bE(C)\in\sZ^0(\bF^\bec)$.
 By assumption, it follows that $\Phi_\bE(B)\in\sZ^0(\bF^\bec)$, hence
$\Xi_\bE(B)\simeq\Psi^+_\bE\Phi_\bE(B)\in\sZ^0(\bF)$.
 As $B$ is a direct summand of a twist $B\oplus B[-1]$ of the object
$\Xi_\bE(B)$, we can conclude that $B\in\sZ^0(\bF)$.
\end{proof}

 The obvious analogue of
Lemma~\ref{closed-under-extensions-F-and-F-bec}, provable in
the same way, holds for the closedness under the kernels of admissible
epimorphisms or the cokernels of admissible monomorphisms in lieu of
the closedness under extensions.

\begin{ex} \label{bad-DG-subcategory-in-abelian-DG}
 The following counterexample shows that the technical additional
assumptions are indeed necessary in
Theorem~\ref{transferring-exact-structure-from-E-bec-to-E-thm}
and Proposition~\ref{inheriting-from-E-bec-may-imply-from-E}, and
also that the assumptions of existence of twists or closedness
under twists are necessary in
Corollaries~\ref{transferring-from-E-bec-to-E-under-all-twists}
and~\ref{inheriting-from-E-or-E-bec-under-all-twists}, as well as
in Lemma~\ref{closed-under-extensions-F-and-F-bec} and in the previous
paragraph (concerning the kernels of admissible epimorphisms and 
the cokernels of admissible monomorphisms).
 It is an example of a well-behaved (in fact, abelian) DG\+category
$\bA$ with a badly behaved full DG\+subcategory $\bB\subset\bA$
whose induced full DG\+subcategory $\bB^\bec\subset\bA^\bec$ is
well-behaved.

 Choose a field~$k$, and let $\sA$ denote the abelian category of
morphisms of $k$\+vector spaces (i.~e., representations of
the quiver $\bullet\to\bullet$).
 Let $\bA=\bC(\sA)$ be the DG\+category of complexes in~$\sA$.
 Denote by $\sE\subset\sA$ the fully exact subcategory of
injective morphisms of $k$\+vector spaces (this is the full
subcategory of projective objects in~$\sA$).

 Let $\bB$ be the full additive DG\+subcategory in $\bA$ consisting
of all the direct summands (in $\sZ^0(\bA)$) of those complexes
in $\sA$ that can be represented as a direct sum $E^\bu\oplus X^\bu$,
where $E^\bu\in\bC(\sE)$ is a complex in $\sE$ and $X^\bu$ is
a contractible complex in~$\sA$.
 The DG\+subcategory $\bB\subset\bA$ is obviously closed under shifts;
let us show that it is also closed under cones.
 Indeed, let $E^\bu$ and $F^\bu$ be two complexes from $\bC(\sE)$, and
$X^\bu$ and $Y^\bu$ be two contractible complexes in~$\sA$.
 Then any morphism $E^\bu\rarrow Y^\bu$ or $X^\bu\rarrow F^\bu$ in
$\bA$ is homotopic to zero.
 In any DG\+category $\bA$, the cones of homotopic closed morphisms
are isomorphic as objects of~$\sZ^0(\bA)$.
 Consequently, in the situation at hand the cone of any closed morphism
$E^\bu\oplus X^\bu\rarrow F^\bu\oplus Y^\bu$ in $\bA$ has the form
$G^\bu\oplus Z^\bu$, where $G^\bu\in\bC(\sE)$ and $Z^\bu$ is
a contractible complex in~$\sA$.

 The functor $\Upsilon_\sA$ from Example~\ref{complexes-bec-example}
provides an equivalence of additive categories $\sG(\sA)\simeq
\sZ^0(\bA^\bec)$ (where $\sG(\sA)$ is the category of graded objects
in~$\sA$) forming a commutative triangle diagram with the forgetful
functor $\sZ^0(\bA)=\sC(\sA)\rarrow\sG(\sA)$ and the functor
$\Phi_\bA\:\sZ^0(\bA)\rarrow\sZ^0(\bA^\bec)$.
 Hence both the additive categories $\sZ^0(\bA)$ and $\sZ^0(\bA^\bec)$
are abelian, so $\bA$ is indeed an abelian DG\+category in the sense
of the next Section~\ref{abelian-dg-categories-subsecn}
(cf.\ Example~\ref{exact-dg-category-of-complexes} below).
 Furthermore, one can easily see from the constructions that
$\bB^\bec=\bA^\bec$ (since all the contractible objects of $\bA$ belong
to~$\bB$).
 Consequently, we have $\sZ^0(\bB^\bec)=\sZ^0(\bA)\simeq\sG(\sA)$.

 Let us introduce notation for the indecomposable objects
$I_{01}=(0\to k)$, \ $I_{10}=(k\to0)$, and $I_{11}=(k\to k)$
of the abelian category~$\sA$.
 So we have $I_{01}\in\sE$ and $I_{11}\in\sE$, but $I_{10}\notin\sE$.
 Let us show that the one-term complex $I_{10}\in\sA\subset
\sC(\sA)$ does not belong to $\sZ^0(\bB)\subset\sC(\sA)$.
 Indeed, suppose that $I_{10}$ is a direct summand in a complex of
the form $E^\bu\oplus X^\bu$, where $E^\bu\in\sC(\sE)$ and $X^\bu$
is a contractible complex.
 Since any morphism from $I_{10}$ to an object of the full
subcategory $\sE\subset\sA$ vanishes, it follows that any morphism
$I_{10}\rarrow E^\bu$ vanishes in~$\sC(\sA)$.
 Hence $I_{10}$ is a direct summand of a contractible complex in
$\sC(\sA)$, an obvious contradiction.

 The additive category $\sZ^0(\bB)$ is \emph{not} abelian, because
the monomorphism $i\:I_{01}\rarrow I_{11}$ has no cokernel in it.
 Indeed, if such a cokernel existed, it would have to be preserved
by the functor $\Phi_\bB\:\sZ^0(\bB)\rarrow\sZ^0(\bB^\bec)$,
which means just the forgetful functor $\sC(\sA)\rarrow\sG(\sA)$
restricted to $\sZ^0(\bB)\subset\sC(\sA)$.
 This would imply that the cokernel of~$i$ in $\sZ^0(\bB)$ agrees
with the cokernel of~$i$ in $\sC(\sA)$; but the cokernel $I_{10}$
of the morphism~$i$ in $\sC(\sA)$ does not belong to~$\sZ^0(\bB)$.

 Moreover, the full subcategory $\sZ^0(\bB)$ does \emph{not} inherit
an exact category structure from $\sZ^0(\bA)$ and does \emph{not}
admit an exact category structure $\bec$\+compatible with the abelian
exact structure on $\sZ^0(\bB^\bec)$.
 Indeed, if it did, then such an exact structure on $\sZ^0(\bB)$
would be DG\+compatible, and by
Lemma~\ref{DG-compatible-reflecting-admissible-morphisms} it would
follow that the functor $\Xi_\bB\:\sZ^0(\bB)\rarrow\sZ^0(\bB)$
reflects admissible monomorphisms (since the DG\+category $\bB$ is
additive and idempotent-complete with shifts and cones).

 On the other hand, the short exact sequence $0\rarrow\Xi_\bA(I_{01})
\rarrow\Xi_\bA(I_{11})\rarrow\Xi_\bA(I_{10})\rarrow0$ in $\sC(\sA)$
obtained by applying $\Xi_\bA$ to the short exact sequence $0\rarrow
I_{01}\rarrow I_{11}\rarrow I_{10}\rarrow0$ in $\sA$ would be
admissible in $\sZ^0(\bB)$, since it is admissible in $\sZ^0(\bA)$
and its terms belong to~$\sZ^0(\bB)$.
 But the monomorphism $i\:I_{01}\rarrow I_{11}$ cannot be admissible
in $\sZ^0(\bB)$, as it has no cokernel in this category.
\end{ex}

 In the preceding results in this
Section~\ref{inheriting-exact-dg-category-structure-subsecn},
we were starting with a full DG\+sub\-cat\-e\-gory $\bF\subset\bE$.
 The next proposition starts with an additive subcategory
$\sL\subset\sZ^0(\bE^\bec)$.

\begin{prop} \label{subcategory-L-prop}
\textup{(a)} Let $\bE$ be an additive DG\+category with shifts and
cones, and let\/ $\sL\subset\sZ^0(\bE^\bec)$ be a full additive
subcategory preserved by the shift functors\/~$[n]$, \,$n\in\Gamma$.
 Let\/ $\bF\subset\bE$ be the full DG\+subcategory consisting of all
objects $F\in\bE$ such that $\Phi_\bE(F)\in\sL$, and let\/
$\widetilde\sL=\Xi_{\bE^\bec}^{-1}(\sL)\subset\sZ^0(\bE^\bec)$ be
the full subcategory consisting of all objects $\widetilde L\in
\sZ^0(\bE^\bec)$ such that\/ $\Xi_{\bE^\bec}(\widetilde L)\in\sL$.
 Then\/ $\bF\subset\bE$ is a full additive DG\+subcategory closed
under shifts and twists, and\/ $\sZ^0(\bF^\bec)=\widetilde\sL$ as
a full subcategory in\/~$\sZ^0(\bE^\bec)$. \par
\textup{(b)} In the context of~(a), assume that the full subcategory\/
$\sL\subset\sZ^0(\bE^\bec)$ is closed under direct summands.
 Then so is the full DG\+subcategory\/ $\bF\subset\bE$.
 Furthermore, given an object $F\in\bE$, one has $F\in\bF$ if and
only if\/ $\Phi_\bE(F)\in\widetilde\sL$. \par
\textup{(c)} In the context of~(a), assume that\/ $\bE$ is an exact
DG\+category and the full additive subcategory\/ $\sL\subset
\sZ^0(\bE^\bec)$ inherits an exact category structure.
 Then the full additive DG\+subcategory\/ $\bF\subset\bE$ inherits
an exact DG\+category structure. \par
\textup{(d)} In the context of~(c), assume that\/ the full
subcategory\/ $\sL\subset\sZ^0(\bE^\bec)$ is closed under extensions.
 Then the inclusion $\sL\subset\sZ^0(\bF^\bec)$ holds.
\end{prop}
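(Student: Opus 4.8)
The plan is to reduce the claim to the single statement that the endofunctor $\Xi_{\bE^\bec}$ carries $\sL$ into itself. By part~(a) we already have the identification $\sZ^0(\bF^\bec)=\widetilde\sL=\Xi_{\bE^\bec}^{-1}(\sL)$, i.e.\ $\sZ^0(\bF^\bec)$ is exactly the full subcategory of those objects $\widetilde L\in\sZ^0(\bE^\bec)$ with $\Xi_{\bE^\bec}(\widetilde L)\in\sL$. Hence proving the desired inclusion $\sL\subset\sZ^0(\bF^\bec)$ amounts to checking that $\Xi_{\bE^\bec}(L)\in\sL$ for every object $L\in\sL$.

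First I would recall that, since $\bE$ is an exact DG\+category, the exact category structure on $\sZ^0(\bE^\bec)$ is part of a $\bec$\+compatible pair and is therefore DG\+compatible with the DG\+category $\bE^\bec$ (Lemma~\ref{DG-compatible-vs-bec-compatible}). Applying the very definition of DG\+compatibility to $\bE^\bec$, for every object $L\in\sZ^0(\bE^\bec)$ the natural sequence
$$
 0\rarrow L[-1]\rarrow\Xi_{\bE^\bec}(L)\rarrow L\rarrow0
$$
supplied by Lemma~\ref{cone-kernel-cokernel} is an \emph{admissible} short exact sequence in $\sZ^0(\bE^\bec)$.

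Now I would fix $L\in\sL$ and read off the conclusion. Since $\sL$ is preserved by the shift functors, the term $L[-1]$ again lies in $\sL$, and by hypothesis $L\in\sL$ as well. Thus the displayed sequence is an admissible short exact sequence in $\sZ^0(\bE^\bec)$ whose two outer terms belong to $\sL$. Because $\sL$ is assumed closed under extensions, its middle term $\Xi_{\bE^\bec}(L)$ must belong to $\sL$ too. This places $L$ in $\Xi_{\bE^\bec}^{-1}(\sL)=\widetilde\sL=\sZ^0(\bF^\bec)$; as $L$ was arbitrary, $\sL\subset\sZ^0(\bF^\bec)$, as required.

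I do not anticipate any genuine obstacle: the argument is simply the combination of the DG\+compatibility of the exact structure on $\sZ^0(\bE^\bec)$ (which upgrades the cone sequence from split-in-$\bE^\bec{}^0$ to admissible) with the two standing hypotheses on $\sL$, namely shift-invariance to absorb $L[-1]$ and extension-closedness to absorb the cone. The only subtle point worth emphasizing is that the cone sequence is genuinely admissible and not merely split in the ambient additive category; this is precisely what the exactness hypothesis inherited from part~(c) provides, so it is the one place where that hypothesis is actually used.
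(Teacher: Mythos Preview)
Your argument is correct and essentially identical to the paper's: both use the admissible short exact sequence $0\rarrow L[\pm1]\rarrow\Xi_{\bE^\bec}(L)\rarrow L\rarrow0$ in $\sZ^0(\bE^\bec)$, together with shift-invariance and extension-closedness of $\sL$, to conclude $\Xi_{\bE^\bec}(L)\in\sL$ and hence $L\in\widetilde\sL=\sZ^0(\bF^\bec)$. The paper's version is just terser and writes the outer term as $L[1]$ rather than $L[-1]$ (a harmless sign-of-shift convention, since $\sL$ is preserved by all shifts).
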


\begin{proof}
 Part~(a): the full DG\+subcategory $\bF\subset\bE$ is additive and
preserved by the shifts, since the full subcategory $\sL\subset
\sZ^0(\bE^\bec)$~is.
 Furthermore, $\bF$ is closed under twists in $\bE$ because
the functor $\Phi_\bE$ takes twists to isomorphisms
(see Lemma~\ref{twists-into-isomorphisms}).
 Finally, an object $X\in\sZ^0(\bE^\bec)$ belongs to $\sZ^0(\bF^\bec)$
if and only the object $\Psi^+_\bE(X)$ belongs to $\sZ^0(\bF)$, which
in our context means that the object $\Xi_{\bE^\bec}(X)\simeq
\Phi_\bE\Psi_\bE^+(X)[1]$ belongs to~$\sL$.

 Part~(b): the first assertion is obvious.
 To prove the second one, notice that the functor $\Xi_{\bE^\bec}\circ
\Phi_\bE\:\sZ^0(\bE)\rarrow\sZ^0(\bE^\bec)$ is isomorphic to
$\Phi_\bE\oplus\Phi_\bE[-1]$ (cf.\ Example~\ref{minimal-exact-dg}).
 Hence, for any given object $F\in\sZ^0(\bE)$, one has $\Phi_\bE(F)
\in\widetilde\sL$ if and only if $\Phi_\bE(F)\in\sL$.

 Part~(c): in view of
Proposition~\ref{inheriting-from-E-implies-from-E-bec}
or Corollary~\ref{inheriting-from-E-or-E-bec-under-all-twists},
it suffices to check that the full additive subcategory
$\sZ^0(\bF)\subset\sZ^0(\bE)$ inherits an exact category structure.
 For this purpose, one can apply
Lemma~\ref{lifting-exact-subcategory-lemma} to the exact functor
$\Phi_\bE\:\sZ^0(\bE)\rarrow\sZ^0(\bE^\bec)$ and
the full subcategory $\sL\subset\sZ^0(\bE^\bec)$.
 In part~(d), the inclusion $\sL\subset\widetilde\sL$ follows from
the short exact sequence $0\rarrow L[1]\rarrow\Xi_{\bE^\bec}(L)\rarrow
L\rarrow0$, which is admissible in $\sZ^0(\bE^\bec)$ for any $L\in\sL$
by the definition of a DG\+compatible exact structure.
\end{proof}

\begin{ex} \label{L-tilde-example}
 The construction of Proposition~\ref{subcategory-L-prop} does
provide full DG\+sub\-cat\-e\-gories inheriting exact DG\+category
structures, but it does not always work quite as one would expect.
 The following example is thematic.

 Let $\biR^\cu=(R^*,d,h)$ be a CDG\+ring and $\bA=\biR^\cu\bmodl$ be
the DG\+category of left CDG\+modules over~$\biR^\cu$.
 Then the functor $\Upsilon_{\biR^\cu}$ from
Example~\ref{CDG-ring-bec-example} provides an equivalence of
additive categories $R^*\smodl\simeq\sZ^0(\bA^\bec)$ (where
$R^*\smodl$ is the category of graded left $R^*$\+modules) forming
a commutative triangle diagram with the forgetful functor
$\sZ^0(\biR^\cu\bmodl)\rarrow R^*\smodl$ and the functor
$\Phi_\bA\:\sZ^0(\bA)\rarrow\sZ^0(\bA^\bec)$.
 The equivalence $\Upsilon_{\biR^\cu}$ also forms a commutative
triangle diagram with the functor $G^+\:R^*\smodl\rarrow
\sZ^0(\biR^\cu\bmodl)$ and the functor $\Psi_\bA^+\:\sZ^0(\bA^\bec)
\rarrow\sZ^0(\bA)$.
 Both the additive categories $\sZ^0(\bA)$ and $\sZ^0(\bA^\bec)$ are
abelian, so $\bA$ is an abelian DG\+category in the sense of the next
Section~\ref{abelian-dg-categories-subsecn} (cf.\
Example~\ref{abelian-dg-category-of-cdg-modules}).

 Let $\sL\subset R^*\smodl$ be the full subcategory of projective
(or flat) graded $R^*$\+modules.
 Then one certainly has $\Xi_{\bA^\bec}(\sL)\subset\sL$, as
the full subcategory $\sL$ is closed under extensions in
$R^*\smodl$.
 For the same reason, $\sL$ also inherits an exact category structure
from the abelian exact structure on $R^*\smodl$.
  The full DG\+subcategory $\bF\subset\bA$ assigned to the class $\sL$
by the construction of Proposition~\ref{subcategory-L-prop}
consists of all left CDG\+modules over $\biR^\cu$ whose underling graded
$R^*$\+modules are projective (respectively, flat).
 However, the full subcategory $\sZ^0(\bF^\bec)=\widetilde\sL\subset
R^*\smodl$ can be \emph{wider} than~$\sL$.
 In fact, $\widetilde\sL$ is the full subcategory consisting of all
graded $R^*$\+modules $\widetilde L$ for which the underlying graded
$R^*$\+module of the CDG\+module $G^+(\widetilde L)$ is projective
(resp., flat).

 Here is a specific counterexample.
 Consider the following DG\+ring $\biR^\bu=(R^*,d)$.
 The graded ring $R^*=k[\epsilon]$, where $\epsilon\in R^{-1}$
and $\epsilon^2=0$, is the exterior algebra in one variable over
a field~$k$.
 The differential~$d$ is given by $d(\epsilon)=1$; so $\biR^\bu$
is an acyclic DG\+algebra over~$k$.

 Consider the graded $R^*$\+module $L^*=k=R^*/\epsilon R^*$,
concentrated in the cohomological degree~$0$.
 By construction, the elements of cohomological degree $n\in\boZ$
in the graded $R^*$\+module $G^+(L^*)$ are formal expressions
$l+\delta l'$ with $l\in L^n$ and $l'\in L^{n-1}$.
 The action of $R^*$ in $G^+(L^*)$ is given by the rule
$r(l+\delta l')=rl-(-1)^{|r|}d(r)l'+(-1)^{|r|}\delta rl'$ for all
$r\in R^{|r|}$.
 Now, denoting by $l_0=1\in L^0$ a nonzero vector in the one-dimensional
$k$\+vector space $L^*$, we have $\epsilon (0+\delta l_0)=l_0+\delta0$.
 Thus $G^+(L^*)$ is a free graded $R^*$\+module with one
generator~$0+\delta l_0\in G^+(L^*)$ of cohomological degree~$1$.
 But $L^*$ is \emph{not} a flat graded $R^*$\+module.
 
 In fact, in this example one has $\widetilde\sL=R^*\smodl$, as
the graded $R^*$\+module $G^+(L^*)$ is projective for \emph{any} graded
$R^*$\+module~$L^*$.
 Moreover, the graded $R^*$\+module $M^*$ is projective for any
DG\+module $(M^*,d_M)$ over~$\biR^\bu$; so one has $\bF=\bA$.
 Indeed, viewed as a complex with the differential given by
the action of~$\epsilon$, the underlying graded $R^*$\+module $M^*$
of any DG\+module $(M^*,d_M)$ is a contractible complex, with
a contracting homotopy~$d_M$.
\end{ex}

\subsection{Abelian DG-categories}
\label{abelian-dg-categories-subsecn}
 Let $\bA$ be an additive DG\+category with shifts and cones.
 We will say that the DG\+category $\bA$ is \emph{abelian} if both
the additive categories $\sZ^0(\bA)$ and $\sZ^0(\bA^\bec)$ are
abelian.
 (As we will see in Proposition~\ref{A-abelian-implies-A-bec-abelian}
below, it suffices that the additive category $\sZ^0(\bA)$ be abelian.)

\begin{lem} \label{abelian-exact-DG-structure}
 Let\/ $\bA$ be an abelian DG\+category.
 Then the abelian exact category structure on\/ $\sZ^0(\bA)$ is\/
$\bec$\+compatible with the abelian exact category structure
on\/~$\sZ^0(\bA^\bec)$.
 So the pair of abelian exact category structures defines
an exact DG\+category structure on\/~$\bA$.
\end{lem}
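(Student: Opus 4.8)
The plan is to verify directly the two defining conditions of a $\bec$-compatible pair of exact structures from Section~\ref{bec-compatible-subsecn}, applied to the two abelian exact structures. The first condition, that the exact structures be preserved by the shifts $[1]$ and $[-1]$, I would dispose of immediately: since $\bA$ has shifts, so does $\bA^\bec$, and $[1]$ is therefore an additive autoequivalence of each of the abelian categories $\sZ^0(\bA)$ and $\sZ^0(\bA^\bec)$. Any additive autoequivalence of an abelian category is exact, hence preserves (and reflects) the class of all short exact sequences. So the only real content is the second condition: that $\Phi_\bA$ and $\Psi^+_\bA$ preserve and reflect admissible short exact sequences, which for abelian exact structures amounts to saying that each is an exact functor that additionally reflects exactness of composable pairs.

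Exactness I would extract from the adjunctions of Lemma~\ref{G-functors-in-DG-categories}, where $\Psi^+_\bA\dashv\Phi_\bA\dashv\Psi^-_\bA$. Being simultaneously a right adjoint (to $\Psi^+_\bA$) and a left adjoint (to $\Psi^-_\bA$), the functor $\Phi_\bA$ preserves both kernels and cokernels, so it is exact. For $\Psi^+_\bA$ one argues on both sides: as a left adjoint it preserves cokernels, and since $\Psi^-_\bA=\Psi^+_\bA[1]$ is a right adjoint it preserves kernels, whence the equality $\Psi^+_\bA=\Psi^-_\bA[-1]$ (the shift being exact) shows that $\Psi^+_\bA$ preserves kernels as well. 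Thus $\Psi^+_\bA$ is exact. This settles the ``preserve'' halves of the condition.

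For the ``reflect'' halves, the key inputs are that $\Phi_\bA$ and $\Psi^+_\bA$ are faithful (Lemma~\ref{G-functors-in-DG-categories}) and conservative (Lemma~\ref{Phi-Psi-conservative}). I would record once and for all the general fact that a faithful, conservative, exact functor $\Theta$ between abelian categories reflects short exact sequences. Indeed, given a composable pair $A\xrightarrow{f}B\xrightarrow{g}C$ with $0\rarrow\Theta(A)\rarrow\Theta(B)\rarrow\Theta(C)\rarrow0$ exact, faithfulness forces $gf=0$; exactness gives $\Theta(\ker f)=\ker\Theta(f)=0$ and $\Theta(\coker g)=\coker\Theta(g)=0$, so faithfulness (which reflects zero objects, and hence monomorphisms and epimorphisms) makes $f$ a monomorphism and $g$ an epimorphism; finally, factoring $f$ through $\ker g$ and comparing it with the kernel $\Theta(\ker g)=\ker\Theta(g)$ shows the comparison map becomes invertible after $\Theta$, so conservativity makes $f$ a kernel of $g$, and the pair is a short exact sequence. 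Applying this with $\Theta=\Phi_\bA$ and with $\Theta=\Psi^+_\bA$ completes the verification of $\bec$-compatibility, and by the definition in Section~\ref{bec-compatible-subsecn} the pair of abelian exact structures is an exact DG-category structure on $\bA$.

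The genuinely delicate points, as opposed to the routine bookkeeping, are the two-sided exactness of $\Psi^+_\bA$ (it is only manifestly a left adjoint, and one must invoke $\Psi^+_\bA=\Psi^-_\bA[-1]$ to recover left exactness) and the reflection lemma, whose proof must use conservativity to upgrade the comparison isomorphism; everything else reduces to the formal properties of adjoint and autoequivalence functors on abelian categories.
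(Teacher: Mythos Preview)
Your proof is correct and follows essentially the same approach as the paper: both argue that $\Phi_\bA$ and $\Psi^+_\bA$ are exact via the adjunctions of Lemma~\ref{G-functors-in-DG-categories}, and then that faithful exact functors between abelian categories reflect short exact sequences. Your explicit handling of the two-sided exactness of $\Psi^+_\bA$ via $\Psi^+_\bA=\Psi^-_\bA[-1]$ is a bit more detailed than the paper, and note that your appeal to conservativity is technically redundant, since for exact functors between abelian categories faithfulness already forces conservativity (as your own argument with $\Theta(\ker f)=0$ and $\Theta(\coker g)=0$ shows).
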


 The exact DG\+category structure defined in the lemma is called
the \emph{abelian exact DG\+category structure} on an abelian
DG\+category\/~$\bA$.

\begin{proof}
 Both the functors $\Phi_\bA$ and $\Psi_\bA$ have left and right
adjoints by Lemma~\ref{G-functors-in-DG-categories}; so they preserve
kernels and cokernels.
 Therefore, viewed as functors between the abelian (by assumption)
categories $\sZ^0(\bA)$ and $\sZ^0(\bA^\bec)$, they are exact.
 Furthermore, both the functors $\Phi_\bA$ and $\Psi_\bA$ are faithful
by Lemma~\ref{G-functors-in-DG-categories}.
 A faithful exact functor between abelian categories reflects short
exact sequences.
\end{proof}

\begin{ex}
 The following example shows that, in an idempotent-complete exact
DG\+category $\bE$, the exact category $\sZ^0(\bE^\bec)$ can be abelian
with the abelian exact structure, while the category $\sZ^0(\bE)$
is not abelian.

 Let $\bA=\bC(k\rmodl)$ be the DG\+category of complexes of vector
spaces over a field~$k$.
 Then, following Example~\ref{complexes-bec-example}, the additive
category $\sZ^0(\bA^\bec)$ is equivalent to the category of graded
vector spaces $\sG(k\rmodl)$.
 So both the additive categories $\sZ^0(\bA)=\sC(k\rmodl)$ and
$\sZ^0(\bA^\bec)$ are abelian, and $\bA$ is an abelian DG\+category.
 By Lemma~\ref{abelian-exact-DG-structure}, the DG\+category $\bA$
carries the abelian exact DG\+category structure.

 Consider the full DG\+subcategory $\bE\subset\bA$ whose objects are
the acyclic complexes of $k$\+vector spaces (cf.\
Examples~\ref{adding-twists-examples}).
 Then the full DG\+subcategory $\bE^\bec$ coincides with the whole
ambient DG\+category $\bA^\bec$, that is $\bE^\bec=\bA^\bec$.
 The full subcategory $\sZ^0(\bE)\subset\sZ^0(\bA)$ is closed under
extensions and direct summands (as well as the kernels of epimorphisms
and the cokernels of monomorphisms); so it inherits an exact
category structure.
 Thus the full additive DG\+subcategory $\bE$ inherits an exact
DG\+category structure from~$\bA$.

 So $\bE$ becomes an idempotent-complete exact DG\+category in which
the exact category $\sZ^0(\bE^\bec)$ is abelian with the abelian
exact category structure.
 But the additive category $\sZ^0(\bE)$ is far from being abelian:
it does not even have kernels or cokernels of some morphisms.
\end{ex}

\begin{lem} \label{conservative-exact-reflects-abelianity}
\textup{(a)} Let\/ $\sA$ be an additive category with kernels and
cokernels, $\sB$ be an abelian category, and\/ $\Theta\:\sA\rarrow\sB$
be a conservative additive functor preserving kernels and cokernels.
 Then\/ $\sA$ is also an abelian category. \par
\textup{(b)} Let\/ $\sA$ and\/ $\sB$ be abelian categories endowed
with exact category structures, and let\/ $\Theta\:\sA\rarrow\sB$ be
an additive functor preserving kernels and cokernels and reflecting
admissible short exact sequences.
 Assume that the exact structure on\/ $\sB$ is the abelian exact
structure.
 Then the exact structure on\/ $\sA$ is also the abelian exact
structure.
\end{lem}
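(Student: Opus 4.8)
The plan is to handle the two parts in parallel: in each case $\Theta$ transports enough categorical structure that the desired property can be pulled back from $\sB$ to $\sA$, using preservation of kernels and cokernels together with, respectively, conservativity (for~(a)) and reflection of admissible short exact sequences (for~(b)).

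For part~(a), I would invoke the standard criterion that an additive category with kernels and cokernels is abelian if and only if, for every morphism $f\:X\rarrow Y$, the canonical morphism $\theta_f$ from the coimage $\coker(\ker f)$ to the image $\ker(\coker f)$ is an isomorphism. First I would form, inside $\sA$, the kernel--cokernel data of a given~$f$ together with the induced factorization $X\rarrow\coker(\ker f)\rarrow\ker(\coker f)\rarrow Y$ and the comparison morphism~$\theta_f$. Since $\Theta$ preserves kernels and cokernels, and this entire factorization is built solely from kernels, cokernels, and the universal maps relating them, $\Theta$ carries it to the corresponding factorization of $\Theta(f)$; in particular $\Theta(\theta_f)$ is canonically identified with $\theta_{\Theta(f)}$. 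As $\sB$ is abelian, $\theta_{\Theta(f)}$ is an isomorphism, and because $\Theta$ is conservative, $\theta_f$ is an isomorphism as well. Since this holds for every~$f$, the category $\sA$ is abelian.

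For part~(b), the strategy is to prove the two inclusions between the given exact structure on $\sA$ and the abelian exact structure on $\sA$. The inclusion of the given exact structure into the abelian one is automatic and uses no hypothesis on~$\Theta$: in any exact structure on a (pre\+)abelian category, every admissible monomorphism is the kernel of its deflation and every admissible epimorphism is the cokernel of its inflation, so each admissible short exact sequence is in particular a short exact sequence in the abelian sense. For the reverse inclusion, I would start with an arbitrary short exact sequence $0\rarrow A\rarrow B\rarrow C\rarrow0$ in the abelian category~$\sA$, viewed as a composable pair $A\rarrow B\rarrow C$. Because $\Theta$ preserves kernels and cokernels, it sends this kernel--cokernel pair to a kernel--cokernel pair, i.e.\ to a short exact sequence $0\rarrow\Theta(A)\rarrow\Theta(B)\rarrow\Theta(C)\rarrow0$ in the abelian category~$\sB$. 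Since the exact structure on $\sB$ is the abelian one, this sequence is admissible; and since $\Theta$ reflects admissible short exact sequences, the original sequence is admissible in~$\sA$. Hence the abelian exact structure is contained in the given one, and the two coincide.

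The arguments are short, and the only points requiring care are bookkeeping rather than a genuine obstacle. In part~(a) one must verify that preservation of kernels and cokernels really does entail preservation of the canonical coimage-to-image comparison morphism, so that conservativity can legitimately be applied to~$\theta_f$. In part~(b) one must check that ``$\Theta$ preserves kernels and cokernels'' yields preservation of abelian short exact sequences, using \emph{both} halves of the hypothesis: that the image of the monomorphism remains a kernel and that the image of the quotient map remains a cokernel. Both reduce to functoriality of universal constructions and present no difficulty once stated precisely; notably, no feature of the DG\+categorical context is needed for this lemma.
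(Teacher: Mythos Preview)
Your proof is correct and is exactly the kind of argument the paper has in mind: the paper's own proof reads simply ``Both the assertions follow straightforwardly from the definitions,'' and your write-up is a clean unpacking of that claim, using the coimage-to-image comparison for~(a) and the two obvious inclusions of exact structures for~(b).
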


\begin{proof}
 Both the assertions follow straightforwardly from the definitions.
\end{proof}

\begin{prop} \label{A-bec-abelian-may-imply-A-abelian}
\textup{(a)} Let\/ $\bA$ be an additive DG\+category with shifts
and cones.
 Assume that all kernels and cokernels exist in the additive
category\/ $\sZ^0(\bA)$, while the additive category\/
$\sZ^0(\bA^\bec)$ is abelian.
 Then the category\/ $\sZ^0(\bA)$ is abelian, too. \par
\textup{(b)} Let\/ $\bA$ be an exact DG\+category.
 Assume that all kernels and cokernels exist in the additive
category\/ $\sZ^0(\bA)$, while the exact category\/ $\sZ^0(\bA^\bec)$
is abelian with the abelian exact category structure.
 Then the exact category\/ $\sZ^0(\bA)$ is also abelian with
the abelian exact category structure.
\end{prop}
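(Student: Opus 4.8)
The plan is to apply Lemma~\ref{conservative-exact-reflects-abelianity} to the additive functor $\Phi_\bA\:\sZ^0(\bA)\rarrow\sZ^0(\bA^\bec)$ from Lemma~\ref{G-functors-in-DG-categories}, which points from the category we wish to understand into the category already known to be abelian. Both parts will reduce to checking the hypotheses of that lemma for $\Theta=\Phi_\bA$.

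For part~(a), I would first record that $\Phi_\bA$ preserves kernels and cokernels. By Lemma~\ref{G-functors-in-DG-categories} we have the chain of adjunctions $\Psi^+_\bA\dashv\Phi_\bA\dashv\Psi^-_\bA$; thus $\Phi_\bA$ is simultaneously a right adjoint (in the pair $\Psi^+_\bA\dashv\Phi_\bA$) and a left adjoint (in the pair $\Phi_\bA\dashv\Psi^-_\bA$). As a right adjoint it preserves all limits, in particular kernels, and as a left adjoint it preserves all colimits, in particular cokernels. Moreover, $\Phi_\bA$ is conservative by Lemma~\ref{Phi-Psi-conservative}. Since $\sZ^0(\bA)$ has all kernels and cokernels by assumption and $\sZ^0(\bA^\bec)$ is abelian by assumption, all the hypotheses of Lemma~\ref{conservative-exact-reflects-abelianity}(a) hold with $\sA=\sZ^0(\bA)$, $\sB=\sZ^0(\bA^\bec)$, and $\Theta=\Phi_\bA$. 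Therefore $\sZ^0(\bA)$ is abelian.

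For part~(b), I would invoke Lemma~\ref{conservative-exact-reflects-abelianity}(b) for the same functor $\Phi_\bA$. By part~(a) the category $\sZ^0(\bA)$ is abelian, and it carries the exact category structure coming from the exact DG\+category structure on $\bA$; the category $\sZ^0(\bA^\bec)$ is abelian with the abelian exact structure by assumption. The functor $\Phi_\bA$ preserves kernels and cokernels, as already shown in part~(a). It remains to note that $\Phi_\bA$ reflects admissible short exact sequences, which is precisely built into the definition of a $\bec$\+compatible pair of exact structures (Section~\ref{bec-compatible-subsecn}) and hence is part of the exact DG\+category structure on $\bA$. Applying Lemma~\ref{conservative-exact-reflects-abelianity}(b) then yields that the exact structure on $\sZ^0(\bA)$ is the abelian exact structure.

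Since both parts amount to a direct verification of the hypotheses of the already-established Lemma~\ref{conservative-exact-reflects-abelianity}, I do not expect any genuine obstacle. The only point that requires care is the bookkeeping of adjunctions: one must match $\Psi^+_\bA\dashv\Phi_\bA\dashv\Psi^-_\bA$ correctly so that the preservation of kernels and the preservation of cokernels each come from the appropriate side, and one must make sure to use $\Phi_\bA$ (rather than $\Psi^+_\bA$) so that the target of $\Theta$ is the category $\sZ^0(\bA^\bec)$ that is assumed abelian.
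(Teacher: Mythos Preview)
Your proof is correct and essentially identical to the paper's own argument: both parts reduce to Lemma~\ref{conservative-exact-reflects-abelianity} applied to $\Phi_\bA$, using that $\Phi_\bA$ has adjoints on both sides (hence preserves kernels and cokernels) and is conservative, with part~(b) additionally invoking the reflection of admissible short exact sequences built into the definition of an exact DG\+category.
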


\begin{proof}
 Part~(a): the additive functor $\Phi_\bA\:\sZ^0(\bA)\rarrow
\sZ^0(\bA^\bec)$ preserves kernels and cokernels
(because it has adjoints on both sides by
Lemma~\ref{G-functors-in-DG-categories}), and it is also
conservative (by Lemma~\ref{Phi-Psi-conservative}).
 Hence the assertion follows from
Lemma~\ref{conservative-exact-reflects-abelianity}(a) applied
to the functor~$\Phi_\bA$.
 Part~(b) follows from
Lemma~\ref{conservative-exact-reflects-abelianity}(b).
\end{proof}

\begin{cor} \label{with-twists-A-bec-abelian-implies-A-abelian}
\textup{(a)} Let\/ $\bA$ be an idempotent-complete additive
DG\+category with shifts and twists.
 Assume that the additive category\/ $\sZ^0(\bA^\bec)$ is abelian.
 Then the additive category\/ $\sZ^0(\bA)$ is abelian, too. \par
\textup{(b)} Let\/ $\bA$ be an idempotent-complete exact DG\+category
with twists.
 Assume that the exact category\/ $\sZ^0(\bA^\bec)$ is abelian with
the abelian exact category structure.
 Then the exact category\/ $\sZ^0(\bA)$ is also abelian with
the abelian exact category structure.
\end{cor}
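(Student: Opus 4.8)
The plan is to reduce everything to Proposition~\ref{A-bec-abelian-may-imply-A-abelian}. The hypotheses of that proposition differ from those of the present corollary only in that it presupposes the existence of all kernels and cokernels in $\sZ^0(\bA)$, whereas here this is to be derived from idempotent-completeness together with the existence of twists. So the whole problem amounts to proving that, for an idempotent-complete additive DG\+category $\bA$ with shifts and twists (and with $\sZ^0(\bA^\bec)$ abelian), the additive category $\sZ^0(\bA)$ has all kernels and cokernels.

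First I would observe that an additive DG\+category with shifts and twists automatically has cones (Section~\ref{dg-categories-subsecn}), so $\bA$ is an additive DG\+category with shifts and cones and the entire apparatus of Sections~\ref{involution-secn} and~\ref{Xi-functor-subsecn}---the functors $\Phi_\bA$, $\Psi^+_\bA$, $\Xi_\bA$ and their formal properties---is at my disposal. To produce a kernel of an arbitrary morphism $g$ in $\sZ^0(\bA)$, I would pass to $\Phi_\bA(g)$ in the abelian category $\sZ^0(\bA^\bec)$, where it has a kernel. Since $\Psi^+_\bA$ is a right adjoint (to $\Phi_\bA[-1]$) by Lemma~\ref{G-functors-in-DG-categories}, it preserves kernels, so $\Psi^+_\bA\Phi_\bA(g)$ has a kernel; and by Lemma~\ref{Xi-decomposed} this composite is isomorphic to $\Xi_\bA(g)$. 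Thus $\Xi_\bA(g)$ has a kernel in $\sZ^0(\bA)$, and Lemma~\ref{Xi-creates-co-kernels}(a)---whose standing hypotheses are exactly idempotent-completeness, shifts, and twists---then yields a kernel of $g$ itself. The dual argument produces cokernels: $\Phi_\bA(f)$ has a cokernel in $\sZ^0(\bA^\bec)$, the functor $\Psi^+_\bA$ preserves it (being now a left adjoint, to $\Phi_\bA$, by Lemma~\ref{G-functors-in-DG-categories}), so $\Xi_\bA(f)\simeq\Psi^+_\bA\Phi_\bA(f)$ has a cokernel, and Lemma~\ref{Xi-creates-co-kernels}(b) delivers a cokernel of~$f$.

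With kernels and cokernels in $\sZ^0(\bA)$ secured, both parts follow at once: for part~(a) I would invoke Proposition~\ref{A-bec-abelian-may-imply-A-abelian}(a), and for part~(b) Proposition~\ref{A-bec-abelian-may-imply-A-abelian}(b), the assumption on $\sZ^0(\bA^\bec)$ (abelian, respectively abelian with the abelian exact structure) carrying over verbatim; note that an exact DG\+category is by definition additive with shifts and cones, so the same kernel/cokernel construction applies unchanged in case~(b).

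The one genuinely nonformal ingredient is the passage from a kernel of $\Xi_\bA(g)$ to a kernel of $g$, i.~e., the content of Lemma~\ref{Xi-creates-co-kernels}; it is precisely here that idempotent-completeness and the existence of twists are consumed, and Example~\ref{bad-DG-subcategory-in-abelian-DG} illustrates that such assumptions cannot be dispensed with. Everything else is a bookkeeping chain of adjunction and preservation facts. The only small point worth double-checking is that $\Psi^+_\bA$ preserves both kernels and cokernels; this is legitimate because $\Psi^+_\bA$ belongs to the doubly-infinite shift-periodic chain of adjoint functors between $\sZ^0(\bA)$ and $\sZ^0(\bA^\bec)$ and is therefore simultaneously a left and a right adjoint.
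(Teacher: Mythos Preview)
Your proof is correct and follows essentially the same route as the paper: reduce to Proposition~\ref{A-bec-abelian-may-imply-A-abelian} by manufacturing kernels and cokernels in $\sZ^0(\bA)$ via the chain $\Phi_\bA\to\Psi^+_\bA\to\Xi_\bA$ and then invoking Lemma~\ref{Xi-creates-co-kernels}. The paper's proof is slightly terser but identical in substance; your added remarks about cones coming for free from twists and about $\Psi^+_\bA$ being both a left and a right adjoint are correct and harmless elaborations.
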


\begin{proof}
 In view of Proposition~\ref{A-bec-abelian-may-imply-A-abelian},
it suffices to show that all kernels and cokernels exist in
the additive category~$\sZ^0(\bA)$.
 Indeed, let $g\:B\rarrow C$ be a morphism in $\sZ^0(\bA)$.
 Then the morphism $\Phi(g)\:\Phi(B)\rarrow\Phi(C)$ has a kernel
in the abelian category~$\sZ^0(\bA^\bec)$.
 Since the functor $\Psi^+\:\sZ^0(\bA^\bec)\rarrow\sZ^0(\bA)$ is
a right adjoint (to~$\Phi[-1]$) by
Lemma~\ref{G-functors-in-DG-categories}, it follows that
the morphism $\Psi^+\Phi(g)$ has a kernel in~$\sZ^0(\bA)$.
 By Lemma~\ref{Xi-decomposed}, we have $\Xi(g)\simeq\Psi^+\Phi(g)$;
so the morphism $\Xi(g)$ has a kernel.
 It remains to apply Lemma~\ref{Xi-creates-co-kernels}(a) in order
to establish that the morphism~$g$ has a kernel in~$\sZ^0(\bA)$.
 The dual argument proves the existence of cokernels.
\end{proof}

\begin{prop} \label{A-abelian-implies-A-bec-abelian}
\textup{(a)} Let\/ $\bA$ be an additive DG\+category with shifts
and cones.
 Assume that the additive category\/ $\sZ^0(\bA)$ is abelian.
 Then the category\/ $\sZ^0(\bA^\bec)$ is abelian, too. \par
\textup{(b)} Let\/ $\bA$ be an exact DG\+category.
 Assume that the exact category\/ $\sZ^0(\bA)$ is abelian with
the abelian exact category structure.
 Then the exact category\/ $\sZ^0(\bA^\bec)$ is also abelian with
the abelian exact category structure.
\end{prop}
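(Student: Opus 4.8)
The plan is to prove both parts by applying the two halves of Lemma~\ref{conservative-exact-reflects-abelianity} to the faithful, conservative functor $\Psi^+_\bA\:\sZ^0(\bA^\bec)\rarrow\sZ^0(\bA)$, using $\sZ^0(\bA)$ as the known abelian (category or exact category) target. The key preliminary observation, which I would record first, is that $\Psi^+_\bA$ preserves both kernels and cokernels: by Lemma~\ref{G-functors-in-DG-categories} it has the right adjoint $\Phi_\bA$, and since $\Psi^-_\bA=\Psi^+_\bA[1]$ with the shift an autoequivalence, it also has the left adjoint $\Phi_\bA[-1]$; hence it preserves all limits and colimits. Conservativity of $\Psi^+_\bA$ is Lemma~\ref{Phi-Psi-conservative}.

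For part~(a), it remains only to verify that all kernels and cokernels exist in $\sZ^0(\bA^\bec)$, after which Lemma~\ref{conservative-exact-reflects-abelianity}(a) applies verbatim. This is exactly where Lemma~\ref{Psi-creates-co-kernels} does the work: given any morphism $g$ in $\sZ^0(\bA^\bec)$, its image $\Psi^+_\bA(g)$ is a morphism in the abelian category $\sZ^0(\bA)$ and therefore has a kernel there, so Lemma~\ref{Psi-creates-co-kernels}(a) produces a kernel of $g$ in $\sZ^0(\bA^\bec)$, and part~(b) of the same lemma produces cokernels dually. Thus $\sZ^0(\bA^\bec)$ is an additive category with kernels and cokernels, $\Psi^+_\bA$ is conservative and preserves them, and the target is abelian, whence $\sZ^0(\bA^\bec)$ is abelian.

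For part~(b), I would first invoke part~(a) to know that $\sZ^0(\bA^\bec)$ is abelian. Since $\bA$ is an exact DG\+category, the two exact structures are $\bec$\+compatible, so by the definition in Section~\ref{bec-compatible-subsecn} the functor $\Psi^+_\bA$ reflects admissible short exact sequences. Combining this with the preservation of kernels and cokernels already established, and with the hypothesis that $\sZ^0(\bA)$ carries its abelian exact structure, Lemma~\ref{conservative-exact-reflects-abelianity}(b) applied to $\Psi^+_\bA$ yields that the exact structure on $\sZ^0(\bA^\bec)$ is the abelian one.

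The only genuinely nontrivial ingredient is the existence of kernels and cokernels in $\sZ^0(\bA^\bec)$, that is, Lemma~\ref{Psi-creates-co-kernels}; everything else is a formal consequence of the adjunctions of Lemma~\ref{G-functors-in-DG-categories} together with Lemma~\ref{conservative-exact-reflects-abelianity}. I expect no further obstacle, and in particular no idempotent-completeness or existence-of-twists hypothesis is needed here (in contrast with the converse Corollary~\ref{with-twists-A-bec-abelian-implies-A-abelian}), precisely because it is $\Psi^+_\bA$, rather than $\Xi_\bA$, that creates the relevant kernels and cokernels.
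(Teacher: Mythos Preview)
Your proposal is correct and follows essentially the same approach as the paper: use Lemma~\ref{Psi-creates-co-kernels} to get kernels and cokernels in $\sZ^0(\bA^\bec)$, then apply Lemma~\ref{conservative-exact-reflects-abelianity} to the conservative functor $\Psi^+_\bA$, which preserves kernels and cokernels since it has adjoints on both sides. Your extra remarks on why $\Psi^+_\bA$ has a left adjoint and on the reflection hypothesis in part~(b) simply make explicit what the paper leaves implicit.
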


\begin{proof}
 Since all kernels and cokernels exist in the abelian category
$\sZ^0(\bA)$, \,Lemma~\ref{Psi-creates-co-kernels} tells that
they also exist in the additive category~$\sZ^0(\bA^\bec)$.
 The additive functor $\Psi_\bA^+\:\sZ^0(\bA^\bec)\rarrow\sZ^0(\bA)$
preserves kernels and cokernels (because it has adjoints on both sides
by Lemma~\ref{G-functors-in-DG-categories}), and it is also
conservative (by Lemma~\ref{Phi-Psi-conservative}).
 Hence the assertion follows from
Lemma~\ref{conservative-exact-reflects-abelianity}(a) applied
to the functor~$\Psi^+_\bA$.
 Part~(b) follows from part~(a) and
Lemma~\ref{conservative-exact-reflects-abelianity}(b).
\hbadness=1950
\end{proof}

\begin{prop} \label{abelian-DG-categories-have-twists}
 Any abelian DG\+category is idempotent-complete, and all twists
exist in it.
\end{prop}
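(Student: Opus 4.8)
The plan is to prove the two assertions separately. Idempotent-completeness is immediate: by the definition of an abelian DG\+category, $\sZ^0(\bA)$ is an abelian category, and every abelian category is idempotent-complete (an idempotent $e$ splits as $X\simeq\ker e\oplus\ker(\id-e)$). Since $\bA$ is idempotent-complete precisely when $\sZ^0(\bA)$ is, this settles the first claim, and the real content is the existence of twists. For the latter I would reduce to showing that the fully faithful DG\+functor $\bec\bec\:\bA\rarrow\bA^{\bec\bec}$ of Proposition~\ref{iterated-bec-construction} is a DG\+equivalence. Indeed $\bA^{\bec\bec}=(\bA^\bec)^\bec$ has all twists (twists always exist in a DG\+category of the form $\bB^\bec$), and a DG\+equivalence reflects twists; so once $\bec\bec$ is shown to be essentially surjective, $\bA$ inherits all twists. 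Thus it suffices to prove that every object $(W,\sigma,\tau)\in\bA^{\bec\bec}$ (a triple with $\sigma^2=0=\tau^2$, $\sigma\tau+\tau\sigma=\id_W$, $d(\sigma)=\id_W$, $d(\tau)=0$) is isomorphic, by a closed degree\+$0$ isomorphism, to $\bec\bec(Y)$ for some $Y\in\bA$.

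The candidate $Y$ is produced inside the abelian category $\sZ^0(\bA)$. Since $d(\tau)=0$ and $\tau^2=0$, the cochain $\tau$ defines a closed degree\+$0$ morphism $\overline\tau\:W\rarrow W[1]$ in $\sZ^0(\bA)$ with $\overline\tau[1]\circ\overline\tau=0$. I would set $Y:=(\ker\overline\tau)[1]$, which exists because $\sZ^0(\bA)$ is abelian. The main step is to show that $(W,\overline\tau)$ is an \emph{exact} two-periodic complex, i.e.\ $\ker\overline\tau=\operatorname{im}(\overline\tau[-1])$: the inclusion $\supseteq$ is $\tau^2=0$, while for $\subseteq$ one uses the homotopy relation. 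Writing $k\:\ker\overline\tau\rightarrowtail W$ for the inclusion and $c\:W\twoheadrightarrow\coker(\overline\tau[-1])$ for the cokernel projection, the identity $\sigma\tau+\tau\sigma=\id_W$ gives, at the level of cochains, $k=\tau\sigma k$ (since $\tau k=0$ forces $\sigma\tau k=0$); as $c\,\overline\tau=0$, associativity of composition of cochains yields $ck=c\,\tau\,(\sigma k)=0$, whence $\ker\overline\tau\subseteq\operatorname{im}(\overline\tau[-1])$. Exactness then identifies $\operatorname{im}\overline\tau\simeq(\ker\overline\tau)[1]=Y$ and produces a short exact sequence $0\rarrow Y[-1]\rarrow W\rarrow Y\rarrow0$ in $\sZ^0(\bA)$, matching the natural sequence $0\rarrow Y[-1]\rarrow\Xi(Y)\rarrow Y\rarrow0$ of Lemma~\ref{cone-kernel-cokernel} for $\Xi(Y)=\cone(\id_Y[-1])$.

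It remains to assemble these data into a closed degree\+$0$ isomorphism $W\simeq\cone(\id_Y[-1])$ carrying $(\sigma,\tau)$ to the canonical endomorphisms $(\iota'_Y\pi'_Y,\iota_Y\pi_Y)$ of $\bec\bec(Y)$; the structure maps $\iota_Y,\pi_Y,\iota'_Y,\pi'_Y$ of the cone, together with the inclusion $k$ and the homotopy $\sigma$, furnish the comparison morphisms, and the relations defining $(W,\sigma,\tau)$ force them to be mutually inverse and to intertwine the two pairs of endomorphisms. This gives $\bec\bec(Y)\simeq(W,\sigma,\tau)$, hence the essential surjectivity of $\bec\bec$, hence the existence of all twists. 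Applied to $(W,\sigma,\tau)=\bec\bec(X)(\bec\bec(a))$ for a Maurer--Cartan cochain $a$ on $X$, the construction returns precisely the twist $X(a)=(\ker\overline\tau)[1]$, transported back to $\bA$ via the full faithfulness of $\bec\bec$ on the complexes of morphisms.

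I expect the main obstacle to be the exactness step together with the more delicate bookkeeping that upgrades the cochain-level relation $\sigma\tau+\tau\sigma=\id_W$ — in which $\sigma$ itself is \emph{not} a morphism of $\sZ^0(\bA)$ — into honest identities of closed morphisms when building the intertwining isomorphism. Should the direct cochain computation prove awkward, the faithful exact functor $\Phi_\bA$ of Lemma~\ref{abelian-exact-DG-structure}, which reflects short exact sequences, offers an alternative route to the exactness of $\overline\tau$.
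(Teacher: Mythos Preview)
Your approach is correct and genuinely different from the paper's. Both aim to show that the fully faithful DG\+functor $\bec\bec\:\bA\rarrow\bA^{\bec\bec}$ is an equivalence, but the paper argues abstractly: it checks that $\sZ^0(\bec\bec)$ is an exact fully faithful functor between abelian categories, observes that its essential image is therefore closed under kernels and cokernels, and then uses the natural identification $\Xi_{\bA^{\bec\bec}}(X)\simeq\sZ^0(\bec\bec)\bigl(\Psi^-_\bA\Psi^-_{\bA^\bec}(X)\bigr)$ together with the short exact sequence $0\rarrow X[-1]\rarrow\Xi_{\bA^{\bec\bec}}(X)\rarrow X\rarrow0$ to exhibit every object as a cokernel of a morphism between objects in the image. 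No explicit preimage is ever named. Your argument, by contrast, constructs the preimage directly as $Y=(\ker\overline\tau)[1]$ and verifies the cone relations by hand. The explicit recipe you should use for the ``assembling'' step is $\iota=\tilde k$, $\pi=p$, $\iota'=\sigma\iota$, $\pi'=\pi\sigma$; then $\iota'\pi'=\sigma\tau\sigma=\sigma$, $\iota\pi'+\iota'\pi=\tau\sigma+\sigma\tau=\id_W$, $d(\iota')=\iota$, $d(\pi')=\pi$, and $\pi'\iota=\pi\sigma\iota=\id_Y$ follows from $\iota(\pi\sigma\iota-\id_Y)=0$ and the fact that $k$ is a monomorphism in $\sZ^0(\bA)$. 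The paper's route is slicker and leverages the lemmas of Section~\ref{properties-of-main-construction-subsecn} more fully; yours is more concrete and actually identifies the twist $X(a)$ as a kernel in the abelian category, which is informative in its own right. Your caveat about $\sigma$ not being closed is well placed: the exactness step really does require passing the cochain identity $k=\tau(\sigma k)$ through $\widetilde\Phi_\bA$ (Lemma~\ref{Phi-Psi-extended-to-nonclosed-morphisms}) or an equivalent device, since one cannot simply invoke the abelian structure of $\sZ^0(\bA)$ on non-closed morphisms.
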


\begin{proof}
 The first assertion holds by the definition, as any abelian category
is idempotent-complete.
 The nontrivial part is the existence of twists.
 The discussion in~\cite[Section~7]{NST} sheds some additional light
on the argument below.

 Let $\bA$ be an abelian DG\+category.
 Then the DG\+category $\bA^\bec$ is also abelian by
Proposition~\ref{A-abelian-implies-A-bec-abelian}(a), since
the additive category $\sZ^0(\bA^\bec)$ is abelian.
 By Proposition~\ref{iterated-bec-construction}, we have a fully
faithful DG\+functor $\bec\bec\:\bA\rarrow\bA^{\bec\bec}$.
 Hence the induced additive functor $\sZ^0(\bec\bec)\:\sZ^0(\bA)
\rarrow\sZ^0(\bA^{\bec\bec})$ is a fully faithful functor between
abelian categories.

 Let us check that the functor $\sZ^0(\bec\bec)$ is exact.
 Indeed, the functor $\Phi_\bA\:\sZ^0(\bA)\rarrow\sZ^0(\bA^\bec)$
preserves and reflects short exact sequences, and so does the
functor $\Psi^+_{\bA^\bec}\:\sZ^0(\bA^{\bec\bec})\rarrow
\sZ^0(\bA^\bec)$.
 It remains to recall that the three functors $\sZ^0(\bec\bec)$,
\,$\Psi^+_{\bA^\bec}$, and $\Phi_\bA$ form a commutative triangle
diagram of additive functors by
Lemma~\ref{obvious-composition-of-functors-observation}.

 We have shown that the essential image of the functor $\sZ^0(\bec\bec)$
is a full subcategory closed under kernels and cokernels in the abelian
category $\sZ^0(\bA^{\bec\bec})$.
 In order to prove that this full subcategory coincides with the whole
category $\sZ^0(\bA^{\bec\bec})$, it remains to check that any object
of $\sZ^0(\bA^{\bec\bec})$ is the cokernel of a morphism between two
objects coming from $\sZ^0(\bA)$.
 For this purpose, it is sufficient to see that every object of
$\sZ^0(\bA^{\bec\bec})$ is the target of an epimorphism with
the source belonging to $\sZ^0(\bA)$.

 Indeed, for any object $X\in\bA^{\bec\bec}$ we have a natural
short exact sequence $0\rarrow X[-1]\rarrow\Xi_{\bA^{\bec\bec}}(X)
\rarrow X\rarrow0$ in $\sZ^0(\bA^{\bec\bec})$ by
Lemma~\ref{cone-kernel-cokernel}, and
$$
 \Xi_{\bA^{\bec\bec}}(X)\simeq\Phi_{\bA^\bec}\Psi^-_{\bA^\bec}(X)
 \simeq\sZ^0(\bec\bec)\Psi^-_\bA\Psi^-_{\bA^\bec}(X)
$$
by Lemmas~\ref{difficult-composition-of-functors-observation}
and~\ref{Xi-decomposed}.
 So $X$ is the cokernel of a natural morphism
$\Xi_{\bA^{\bec\bec}}(X)[-1]\rarrow\Xi_{\bA^{\bec\bec}}(X)$ in
$\sZ^0(\bA^{\bec\bec})$ between two objects coming from $\sZ^0(\bA)$.

 We can conclude that the functor $\sZ^0(\bec\bec)\:\sZ^0(\bA)\rarrow
\sZ^0(\bA^{\bec\bec})$ is an equivalence of (abelian) categories,
and it follows that the fully faithful DG\+functor $\bec\bec\:
\bA\rarrow\bA^{\bec\bec}$ is an equivalence of DG\+categories.
 Since the DG\+category $\bB^\bec$ has twists for any DG\+category
$\bB$, it follows that our DG\+category $\bA$ has twists.

 For a further discussion suggesting a direct constuction of twists
of objects of $\bA$ as cokernels in $\sZ^0(\bA)$,
see~\cite[proof of Proposition~3.9]{PS5}.
\end{proof}

 Let us formulate the main point of the proof above as a separate
assertion.

\begin{cor}
 For any abelian DG\+category\/ $\bA$, the DG\+functor\/
$\bec\bec\:\bA\rarrow\bA^{\bec\bec}$ from
Proposition~\ref{iterated-bec-construction} is an equivalence of
DG\+categories.
\end{cor}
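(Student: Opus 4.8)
The plan is to deduce the statement directly from the analysis carried out in the proof of Proposition~\ref{abelian-DG-categories-have-twists}; in fact this corollary simply isolates the conclusion reached there. Let $\bA$ be an abelian DG\+category. First I would note that $\bA^\bec$ is abelian as well, by Proposition~\ref{A-abelian-implies-A-bec-abelian}(a), so that both $\sZ^0(\bA)$ and $\sZ^0(\bA^{\bec\bec})$ are abelian categories. Proposition~\ref{iterated-bec-construction} provides a fully faithful DG\+functor $\bec\bec\:\bA\rarrow\bA^{\bec\bec}$, and hence a fully faithful additive functor $\sZ^0(\bec\bec)\:\sZ^0(\bA)\rarrow\sZ^0(\bA^{\bec\bec})$ between abelian categories. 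Since the DG\+functor $\bec\bec$ already induces isomorphisms on all the complexes of morphisms, it will suffice to prove that $\sZ^0(\bec\bec)$ is essentially surjective, i.e., an equivalence of abelian categories; the DG\+equivalence then follows at once.

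The second step is to check that $\sZ^0(\bec\bec)$ is exact. The key input is the natural isomorphism $\Psi^+_{\bA^\bec}\circ\sZ^0(\bec\bec)\simeq\Phi_\bA$ from Lemma~\ref{obvious-composition-of-functors-observation}. By Lemma~\ref{abelian-exact-DG-structure} (applied to $\bA$) the functor $\Phi_\bA$ preserves and reflects short exact sequences, and applying the same lemma to the abelian DG\+category $\bA^\bec$ shows that $\Psi^+_{\bA^\bec}$ does too. The commutative triangle then forces $\sZ^0(\bec\bec)$ to take short exact sequences to short exact sequences. Consequently the essential image of $\sZ^0(\bec\bec)$ is a full subcategory of $\sZ^0(\bA^{\bec\bec})$ closed under kernels and cokernels.

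The hard part will be essential surjectivity, and my plan is to show that every object $X\in\sZ^0(\bA^{\bec\bec})$ is the cokernel of a morphism between two objects lying in the essential image; by the closure under cokernels just established (together with full faithfulness, which lets me lift that morphism to $\sZ^0(\bA)$ and apply exactness), this places $X$ itself in the image. The tool is the natural short exact sequence $0\rarrow X[-1]\rarrow\Xi_{\bA^{\bec\bec}}(X)\rarrow X\rarrow0$ of Lemma~\ref{cone-kernel-cokernel}, combined with the identification
\[
 \Xi_{\bA^{\bec\bec}}(X)\simeq\Phi_{\bA^\bec}\Psi^-_{\bA^\bec}(X)
 \simeq\sZ^0(\bec\bec)\,\Psi^-_\bA\,\Psi^-_{\bA^\bec}(X),
\]
where the first isomorphism is the second assertion of Lemma~\ref{Xi-decomposed} applied to $\bA^\bec$, and the second is Lemma~\ref{difficult-composition-of-functors-observation}. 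This exhibits $\Xi_{\bA^{\bec\bec}}(X)$, hence also its shift $\Xi_{\bA^{\bec\bec}}(X)[-1]$, as lying in the essential image, so that $X$ is the cokernel of a morphism between two such objects and therefore belongs to the image. Once $\sZ^0(\bec\bec)$ is known to be fully faithful, exact, and essentially surjective, it is an equivalence of abelian categories, and thus the DG\+functor $\bec\bec\:\bA\rarrow\bA^{\bec\bec}$ is an equivalence of DG\+categories, as desired.
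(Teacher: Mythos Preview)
Your proposal is correct and, as you yourself note, it reproduces the argument already carried out inside the proof of Proposition~\ref{abelian-DG-categories-have-twists}. The paper's own proof of this corollary is a one-line citation of Propositions~\ref{iterated-bec-construction} and~\ref{abelian-DG-categories-have-twists}: by the latter, an abelian DG\+category is idempotent-complete with all twists, and then the former gives the equivalence directly. This is formally a different packaging---going through twists rather than showing essential surjectivity of $\sZ^0(\bec\bec)$ by hand---but the content is the same, since the proof of Proposition~\ref{abelian-DG-categories-have-twists} establishes the DG\+equivalence $\bec\bec$ first and only then deduces the existence of twists from it.
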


\begin{proof}
 Follows from Propositions~\ref{iterated-bec-construction}
and~\ref{abelian-DG-categories-have-twists}.
\end{proof}

 The following corollaries, summarizing the previous results, list
several equivalent characterizations of abelian DG\+categories.

\begin{cor}
 For any DG\+category\/ $\bA$ with shifts, the following conditions
are equivalent:
\begin{enumerate}
\item the DG\+category\/ $\bA$ is abelian;
\item the DG\+category\/ $\bA$ has cones, and the preadditive
category\/ $\sZ^0(\bA)$ is abelian;
\item the DG\+category\/ $\bA$ is additive, has cones, all kernels
and cokernels exists in the additive category $\sZ^0(\bA)$, and
the additive category\/ $\sZ^0(\bA^\bec)$ is abelian;
\item the DG\+category\/ $\bA$ is additive and idempotent-complete,
has all twists, and the additive category\/ $\sZ^0(\bA^\bec)$ is
abelian. 
\end{enumerate}
\end{cor}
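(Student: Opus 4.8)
The plan is to prove each of the conditions (2), (3), (4) equivalent to (1) separately, since every single implication reduces to a statement already established in Section~\ref{abelian-dg-categories-subsecn}. Throughout I would keep in mind two structural facts recorded in Section~\ref{dg-categories-subsecn}: any DG\+category with shifts, cones, and a zero object is additive, and any additive DG\+category with shifts and twists has cones. These two observations are the only ``glue'' needed beyond the cited propositions.

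First I would treat (1)$\Leftrightarrow$(2). The implication (1)$\Rightarrow$(2) is immediate from the definition of an abelian DG\+category. For (2)$\Rightarrow$(1), the one point needing attention is additivity: since $\sZ^0(\bA)$ is abelian it has a zero object, and that object is a zero object of $\bA$ as well (its identity morphism, being a degree\+$0$ cocycle that vanishes in $\sZ^0(\bA)$, vanishes in $\bA$); as $\bA$ has shifts by hypothesis and cones by (2), the structural fact above makes $\bA$ additive. Then Proposition~\ref{A-abelian-implies-A-bec-abelian}(a) upgrades the abelianness of $\sZ^0(\bA)$ to that of $\sZ^0(\bA^\bec)$, which together with what we already have is exactly condition~(1).

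Next, (1)$\Leftrightarrow$(3). Again (1)$\Rightarrow$(3) is a matter of unwinding the definition, noting that an abelian $\sZ^0(\bA)$ in particular has all kernels and cokernels. For (3)$\Rightarrow$(1), I would invoke Proposition~\ref{A-bec-abelian-may-imply-A-abelian}(a), whose hypotheses---$\bA$ additive with shifts and cones, all kernels and cokernels present in $\sZ^0(\bA)$, and $\sZ^0(\bA^\bec)$ abelian---are precisely the content of~(3); its conclusion that $\sZ^0(\bA)$ is abelian, combined with the given abelianness of $\sZ^0(\bA^\bec)$, gives~(1).

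Finally, (1)$\Leftrightarrow$(4). For (1)$\Rightarrow$(4), Proposition~\ref{abelian-DG-categories-have-twists} supplies idempotent-completeness and the existence of all twists, while $\sZ^0(\bA^\bec)$ is abelian by definition and $\bA$ is additive. For (4)$\Rightarrow$(1), Corollary~\ref{with-twists-A-bec-abelian-implies-A-abelian}(a) shows $\sZ^0(\bA)$ is abelian under the hypotheses of~(4); combined with the abelianness of $\sZ^0(\bA^\bec)$ and the observation that $\bA$, being additive with shifts and twists, automatically has cones, this yields~(1). I do not anticipate a genuine obstacle in this corollary: all the substantive work lives in the cited propositions, and the only care required is the two pieces of structural bookkeeping noted above---producing a zero object, hence additivity, in~(2), and producing cones from twists in~(4).
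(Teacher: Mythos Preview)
Your proposal is correct and follows essentially the same approach as the paper: the paper's proof simply cites Propositions~\ref{A-bec-abelian-may-imply-A-abelian}(a), \ref{A-abelian-implies-A-bec-abelian}(a), \ref{abelian-DG-categories-have-twists}, and Corollary~\ref{with-twists-A-bec-abelian-implies-A-abelian}(a), and you have spelled out exactly how each of these is used for the corresponding implication. Your explicit handling of the two pieces of structural bookkeeping (deducing additivity from the zero object in~(2), and cones from twists in~(4)) fills in details the paper leaves implicit but does not deviate from its intended argument.
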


\begin{proof}
 Follows from Propositions~\ref{A-bec-abelian-may-imply-A-abelian}(a),
\ref{A-abelian-implies-A-bec-abelian}(a),
\ref{abelian-DG-categories-have-twists}, and
Corollary~\ref{with-twists-A-bec-abelian-implies-A-abelian}(a).
\end{proof}

\begin{cor}
 For any exact DG\+category\/ $\bA$, the following conditions
are equivalent:
\begin{enumerate}
\item the exact DG\+category\/ $\bA$ is abelian with the abelian
exact DG\+category structure;
\item the exact category\/ $\sZ^0(\bA)$ is abelian with the abelian
exact structure;
\item all kernels and cokernels exist in the additive category\/
$\sZ^0(\bA)$, and the exact category\/ $\sZ^0(\bA^\bec)$ is abelian
with the abelian exact structure;
\item the DG\+category\/ $\bA$ is idempotent-complete, has all twists,
and the exact category\/ $\sZ^0(\bA^\bec)$ is abelian with the abelian
exact structure.
\end{enumerate}
\end{cor}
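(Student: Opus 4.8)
The plan is to prove the four conditions equivalent by treating condition~(2) as the hub, paralleling the proof of the preceding corollary but invoking throughout the part~(b) versions of the cited results, which concern not merely abelianness of the underlying additive categories but the agreement of the prescribed exact structures with the abelian ones.

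First I would establish $(2)\Leftrightarrow(3)$. The implication $(2)\Rightarrow(3)$ is immediate: an abelian category has all kernels and cokernels, and Proposition~\ref{A-abelian-implies-A-bec-abelian}(b) transfers the property of being abelian with the abelian exact structure from $\sZ^0(\bA)$ to $\sZ^0(\bA^\bec)$. The converse $(3)\Rightarrow(2)$ is precisely the content of Proposition~\ref{A-bec-abelian-may-imply-A-abelian}(b), which upgrades the bare existence of kernels and cokernels in $\sZ^0(\bA)$ to abelianness with the abelian exact structure, once $\sZ^0(\bA^\bec)$ is known to be abelian with its abelian exact structure.

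Next I would establish $(2)\Leftrightarrow(4)$. For $(2)\Rightarrow(4)$, condition~(2) makes $\bA$ an abelian DG\+category, so Proposition~\ref{abelian-DG-categories-have-twists} yields idempotent-completeness and the existence of all twists, while $(2)\Rightarrow(3)$ supplies the abelianness of $\sZ^0(\bA^\bec)$ with its abelian exact structure; together these give~(4). The reverse $(4)\Rightarrow(2)$ is exactly Corollary~\ref{with-twists-A-bec-abelian-implies-A-abelian}(b).

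Finally, for $(1)\Leftrightarrow(2)$: the implication $(1)\Rightarrow(2)$ is immediate from the definition of the abelian exact DG\+category structure (Lemma~\ref{abelian-exact-DG-structure}), under which the exact structure on $\sZ^0(\bA)$ is the abelian one. For $(2)\Rightarrow(1)$, one uses that $\bA$ is already given as an exact DG\+category, hence carries a $\bec$\+compatible pair of exact structures; Proposition~\ref{A-abelian-implies-A-bec-abelian}(b) identifies the exact structure on $\sZ^0(\bA^\bec)$ as the abelian one, so both members of the pair are abelian exact structures, which is precisely the abelian exact DG\+category structure. The main point to watch — and the only place where the argument is more than bookkeeping — is exactly this distinction between \emph{the underlying additive categories being abelian} and \emph{the prescribed exact structures coinciding with the abelian ones}; the part~(b) results are tailored to carry the exact-structure information across, and the uniqueness of the $\bec$\+compatible partner (Theorem~\ref{transferring-from-E-to-E-bec}) guarantees that no ambiguity remains in reconstructing condition~(1) from condition~(2).
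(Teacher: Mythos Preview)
Your proof is correct and follows essentially the same approach as the paper: the paper's proof consists of a single sentence citing Propositions~\ref{A-bec-abelian-may-imply-A-abelian}(b), \ref{A-abelian-implies-A-bec-abelian}(b), \ref{abelian-DG-categories-have-twists}, and Corollary~\ref{with-twists-A-bec-abelian-implies-A-abelian}(b), and you have unpacked exactly how these results combine, with condition~(2) as the hub. Your explicit handling of $(1)\Leftrightarrow(2)$ and the remark on uniqueness via Theorem~\ref{transferring-from-E-to-E-bec} are minor additions of exposition rather than a different argument.
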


\begin{proof}
 Follows from Propositions~\ref{A-bec-abelian-may-imply-A-abelian}(b),
\ref{A-abelian-implies-A-bec-abelian}(b),
\ref{abelian-DG-categories-have-twists}, and
Corollary~\ref{with-twists-A-bec-abelian-implies-A-abelian}(b).
\end{proof}

\subsection{Examples} \label{exact-dg-examples-subsecn}
 In this section we briefly discuss the natural exact (or even abelian)
DG\+category structures for our thematic examples of DG\+categories
from Sections~\ref{examples-secn} and~\ref{A-bec-examples-subsecn}.

\begin{ex} \label{exact-dg-category-of-complexes}
 Let $\sA$ be an additive category and $\bC(\sA)$ be the DG\+category
of complexes in~$\sA$.
 Building up on the discussion in Example~\ref{complexes-bec-example},
we consider the additive category $\sG(\sA)$ of graded objects in $\sA$
and the additive category $\sC(\sA)$ of complexes in~$\sA$.
 The forgetful functor $\Theta\:\sC(\sA)\rarrow\sG(\sA)$ has adjoint
functors on both sides, $G^+$ and $G^-\:\sG(\sA)\rarrow\sC(\sA)$,
which were constructed in Example~\ref{complexes-bec-example}.

 Let $\sE$ be an exact category.
 Then the additive category $\sG(\sE)$ has a natural exact category
structure in which a pair of composable morphisms is an admissible
short exact sequence if and only if it is an admissible short exact
sequence in every degree.
 Applying Lemma~\ref{faithful-conservative-lifting-exact-structure} to
the faithful, conservative forgetful functor
$\Theta\:\sC(\sE)\rarrow\sG(\sE)$, one produces the (standard and
well-known) exact category structure on the category of
complexes~$\sC(\sE)$.
 It is easy to check that the assumptions of the lemma hold: in fact,
the functor $\Theta$ preserves kernels and cokernels, and any morphism
in $\sC(\sE)$ whose image under $\Theta$ has a (co)kernel in
$\sG(\sE)$ has a (co)kernel in~$\sC(\sE)$.

 One can easily check that the exact category structure on
$\sC(\sE)=\sZ^0(\bC(\sE))$ is DG\+admissible.
 Applying Theorem~\ref{transferring-from-E-to-E-bec}, one produces
an exact DG\+category structure on the DG\+category
of complexes~$\bC(\sE)$.

 The composition $\Theta\circ G^+\:\sG(\sE)\rarrow\sG(\sE)$ of
the functors $G^+$ and $\Theta$ is isomorphic to the direct sum
$\Id\oplus\Id[-1]$ of the identity endofunctor and the shift functor.
 Therefore, the functor $\Theta\circ G^+$ preserves and reflects
admissible short exact sequences.
 By construction, so does the functor~$\Theta$.
 It follows that the functor $G^+$ also preserves and reflects
admissible short exact sequences.

 Looking on the commutative diagram~\eqref{complexes-Psi-diagram} and
keeping in mind that the functor $\Psi^+_{\bC(\sE)}$ preserves and
reflects admissible short exact sequences by the definition of
an exact DG\+category, one concludes that the functor $\Upsilon_\sE$
preserves and reflects admissible short exact sequences.
 Thus the exact category structure on $\sG(\sE)$ is inherited from
the exact structure on $\sZ^0(\bC(\sE)^\bec)$ under the inclusion
$\Upsilon_\sE\:\sG(\sE)\rightarrowtail\sZ^0(\bC(\sE)^\bec)$.

 The composition $\Phi_{\bC(\sE)}\circ\Psi^+_{\bC(\sE)}\:
\sZ^0(\bC(\sE)^\bec)\rarrow\sZ^0(\bC(\sE)^\bec)$ of the functors
$\Psi^+_{\bC(\sE)}$ and $\Phi_{\bC(\sE)}$ is also isomorphic to
the direct sum $\Id\oplus\Id[1]$ of the identity endofunctor and
the shift functor.
 Therefore, any short exact sequence in the exact category
$\sZ^0(\bC(\sE)^\bec)$ is isomorphic to a direct summand of the image
of a short exact sequence from the exact category $\sZ^0(\bC(\sE))$
under the functor~$\Phi_{\bC(\sE)}$.
 In view of the commutative diagram~\eqref{complexes-tilde-Phi-diagram},
it follows that any short exact sequence in $\sZ^0(\bC(\sE)^\bec)$
is a direct summand of a short exact sequence in
$\Upsilon_\sE(\sG(\sE))$.

 Let $\sF\subset\sE$ be a full additive subcategory inheriting
an exact category structure.
 Then the exact category structure on the category of graded objects
$\sG(\sF)$ is inherited from the exact structure on~$\sG(\sE)$.
 It follows that the exact structure on the category of complexes
$\sC(\sF)$ is inherited from the exact structure on~$\sC(\sE)$.
 Hence the exact DG\+category structure on the DG\+category of
complexes $\bC(\sF)$ is inherited from the exact DG\+category
structure on~$\bC(\sE)$.

 Finally, let $\sA$ be an abelian category.
 In this case, the functor $\Upsilon_\sA\:\sG(\sA)\rarrow
\sZ^0(\bC(\sA)^\bec)$ is an equivalence of categories (since $\sA$
is idempotent-complete; see Example~\ref{complexes-bec-example}).
 Both the category of graded objects $\sG(\sA)$ and the category
of complexes $\sC(\sA)=\sZ^0(\bC(\sA))$ are abelian when $\sA$ is
abelian.
 Thus the DG\+category of complexes $\bC(\sA)$ is abelian.
 The construction of the exact DG\+category structure on the category
of complexes above produces the abelian exact DG\+category structure
on $\bC(\sA)$ from the abelian exact category structure on~$\sA$.
\end{ex}

\begin{ex} \label{abelian-dg-category-of-cdg-modules}
 Let $\biR^\cu=(R^*,d,h)$ be a CDG\+ring and $\bA=\biR^\cu\bmodl$ be
the DG\+category of left CDG\+modules over~$\biR^\cu$.

 Then $\sZ^0(\bA)$ is the category of CDG\+modules and closed
morphisms of degree~$0$ between them; one can easily see that it is
an abelian category.
 In fact, $\sZ^0(\bA)$ is equivalent to the abelian category
$R^*[\delta]\smodl$ of left graded modules over the graded ring
$R^*[\delta]$ from Section~\ref{cdg-revisited-subsecn} (as explained
in the first paragraph of the proof of
Proposition~\ref{G-plus-minus-prop}).

 Furthermore, according to Example~\ref{CDG-ring-bec-example},
the functor $\Upsilon_{\biR^\cu}\:R^*\smodl\rarrow\sZ^0(\bA^\bec)$
is an equivalence between the category $\sZ^0(\bA^\bec)$
and the abelian category of graded left $R^*$\+modules.
 So the category $\sZ^0(\bA^\bec)$ is abelian as well.

 Thus the DG\+category $\bA=\biR^\cu\bmodl$ of left CDG\+modules
over a CDG\+ring $\biR^\cu$ is an abelian DG\+category.
\end{ex}

\begin{ex} \label{abelian-dg-category-of-quasi-coh-cdg-modules}
 Let $X$ be a scheme and $\biB^\cu=(B^*,d,h)$ be a quasi-coherent
CDG\+quasi-algebra over~$X$.
 Let $\bA=\biB^\cu\bqcoh$ be the DG\+category of quasi-coherent left
CDG\+modules over $\biB^\cu$ (as defined in
Section~\ref{quasi-cdg-subsecn}).

 Then $\sZ^0(\bA)$ is the category of quasi-coherent CDG\+modules
and closed morphisms between them.
 Similarly to Example~\ref{abelian-dg-category-of-cdg-modules},
this category is abelian (this fact was mentioned at the end of
Section~\ref{quasi-cdg-subsecn}).
 In fact, $\sZ^0(\bA)$ is equivalent to the category of
quasi-coherent graded left modules over the quasi-coherent
graded quasi-algebra $B^*[\delta]$ on $X$ constructed in
Example~\ref{qcoh-CDG-bec-example}.

 Furthermore, as explained in Example~\ref{qcoh-CDG-bec-example},
the functor $\Upsilon_{\biB^\cu}\:B^*\sqcoh\rarrow\sZ^0(\bA^\bec)$
is an equivalence between the category $\sZ^0(\bA^\bec)$
and the abelian category of quasi-coherent graded left $B^*$\+modules.
 Hence the category $\sZ^0(\bA^\bec)$ is abelian as well.

 Therefore, the DG\+category $\bA=\biB^\cu\bqcoh$ of quasi-coherent
left CDG\+modules over a quasi-coherent CDG\+quasi-algebra $\biB^\cu$
is an abelian DG\+category.
\end{ex}

\begin{ex} \label{exact-dg-category-of-factorizations}
 Let $\sA$ be an additive category and $\Lambda\:\sA\rarrow\sA$ be
an autoequivalence.
 We continue the discussion of factorization categories started in
Section~\ref{factorizations-subsecn} and
Example~\ref{factorizations-bec-example}, and assume that either
$\Gamma=\boZ$, or $\Lambda$ is involutive and $\Gamma=\boZ/2$.
 Let $w\:\Id_\sA\rarrow\Lambda^2$ be a potential.

 We consider the additive category $\sP(\sA,\Lambda)$ of
$\Lambda$\+periodic objects in $\sA$ (and homogeneous morphisms
of degree~$0$ between them) and the additive category
$\sF(\sA,\Lambda,w)$ of factorizations of~$w$ (and closed morphisms
of degree~$0$ between them).
 Following Example~\ref{factorizations-bec-example}, the forgetful
functor $\Theta\:\sF(\sA,\Lambda,w)\rarrow\sP(\sA,\Lambda)$ has
adjoint functors on both sides, $G^+$ and $G^-\:\sP(\sA,\Lambda)
\rarrow\sF(\sA,\Lambda,w)$.

 Let $\sE$ be an exact category and $\Lambda\:\sE\rarrow\sE$ be
an autoequivalence preserving and reflecting admissible short exact
sequences.
 Then the additive category of graded objects $\sG(\sE)$ has
the induced exact structure discussed in
Example~\ref{exact-dg-category-of-complexes}.
 Applying Lemma~\ref{faithful-conservative-lifting-exact-structure}
to the faithful, conservative forgetful functor $\sP(\sE,\Lambda)
\rarrow\sG(\sE)$, one obtains the induced exact category structure
on the category of $\Lambda$\+periodic objects $\sP(\sE,\Lambda)$
(in fact, the same exact structure can be simply obtained from
the category equivalence $\sP(\sE,\Lambda)\simeq\sE$).
 Applying the same lemma again to the faithful, conservative
forgetful functor $\Theta\:\sF(\sE,\Lambda,w)\rarrow\sP(\sE,\Lambda)$,
one produces an exact category structure on the additive category
of factorizations $\sF(\sE,\Lambda,w)$.
 In both cases, the forgetful functor preserves kernels and cokernels,
and any morphism in the source category whose image in the target
category has a (co)kernel has a (co)kernel in the source category;
so the assumptions of lemma hold.

 One can easily check that the resulting exact category structure on
$\sF(\sE,\Lambda,w)=\sZ^0(\bF(\sE,\Lambda,w))$ is DG\+admissible.
 In fact, let $\bE$ denote the DG\+category of factorizations
$\bF(\sE,\Lambda,w)$.
 Then one observes that, for any object $\biX^\cu\in\bE$, the forgetful
functor $\Theta\:\sZ^0(\bE)\rarrow\sP(\sE,\Lambda)$ takes
the canonical sequence $0\rarrow\biX^\cu[-1]\rarrow\Xi_\bE(\biX^\cu)
\rarrow\biX^\cu\rarrow0$ to a split short exact sequence in
$\sP(\sE,\Lambda)$.
 Applying Theorem~\ref{transferring-from-E-to-E-bec}, one produces
an exact DG\+category structure on the DG\+category
of factorizations $\bE=\bF(\sE,\Lambda,w)$.

 Similarly to Example~\ref{exact-dg-category-of-complexes},
the composition $\Theta\circ G^+\:\sP(\sE,\Lambda)\rarrow
\sP(\sE,\Lambda)$ of the functors $G^+$ and $\Theta$ is isomorphic
to the direct sum $\Id\oplus\Id[-1]$ of the identity endofunctor
and the shift functor.
 (Basically, the reason for this splitting behavior is that
factorizations are similar to CDG\+modules over a CDG\+ring with
zero differential; cf.\ the formula for the action of $R^*$
in $G^+(M^*)$ in Example~\ref{CDG-ring-bec-example}.)
 It follows that the functor $\Theta\circ G^+$ preserves and
reflects admissible short exact sequences, and one can conclude
that so does the functor~$G^+$.

 Looking on the commutative diagram~\eqref{factorizations-Psi-diagram}
and remembering that the functor $\Psi_\bE^+$ preserves and reflects
admissible short exact sequences by the definition of an exact
DG\+category, one concludes that the functor $\Upsilon_{\sE,\Lambda,w}$
preserves and reflects admissible short exact sequences.
 Thus the exact category structure on $\sP(\sE,\Lambda)$ is inherited
from the exact structure on $\sZ^0(\bE^\bec)$ under the inclusion
$\Upsilon_{\sE,\Lambda,w}\:\sP(\sE,\Lambda)\rarrow
\sZ^0(\bF(\sE,\Lambda,w)^\bec)$.

 The composition $\Phi_\bE\circ\Psi_\bE^+\:\sZ(\bE^\bec)\rarrow
\sZ^0(\bE^\bec)$ of the functors $\Psi_\bE^+$ and $\Phi_\bE$ is
also isomorphic to the direct sum $\Id\oplus\Id[1]$ of the identity
endofunctor and the shift functor.
 Therefore, any short exact sequence in the exact category
$\sZ^0(\bE^\bec)$ is isomorphic to a direct summand of the image
of a short exact sequence from the exact category $\sZ^0(\bE)$
under the functor~$\Phi_\bE$.
 In view of the commutative
diagram~\eqref{factorizations-tilde-Phi-diagram}, it follows that
any short exact sequence in $\sZ^0(\bF(\sE,\Lambda,w)^\bec)$ is
a direct summand of a short exact sequence in
$\Upsilon_{\sE,\Lambda,w}(\sP(\sE,\Lambda))$.

 Let $\sH\subset\sE$ be a full additive subcategory preserved by
the autoequivalences $\Lambda$ and $\Lambda^{-1}$ and inheriting
an exact category structure from~$\sE$.
 (We will denote the restrictions of $\Lambda$ and~$w$ onto $\sH$
simply by $\Lambda$ and~$w$.)
 Then the exact category structure on the category of
$\Lambda$\+periodic objects $\sP(\sH,\Lambda)$ is inherited from
the exact structure on $\sP(\sE,\Lambda)$.
 It follows that the exact structure on the category of factorizations
$\sF(\sH,\Lambda,w)$ is inherited from the exact structure on
$\sF(\sE,\Lambda,w)$.
 Therefore, the exact DG\+category structure on the DG\+category of
factorizations $\bF(\sH,\Lambda,w)$ is inherited from the exact
DG\+category structure on~$\bF(\sE,\Lambda,w)$.

 Finally, assume that $\sA$ is an abelian category.
 Then the functor $\Upsilon_{\sA,\Lambda,w}\:\sP(\sA,\Lambda)\rarrow
\sF(\sA,\Lambda,w)^\bec)$ is an equivalence of categories (since $\sA$
is idempotent-complete; see Example~\ref{factorizations-bec-example}).
 Both the category of $\Lambda$\+periodic objects $\sP(\sA,\Lambda)$
and the category of factorizations $\sF(\sA,\Lambda,w)=
\sZ^0(\bF(\sA,\Lambda,w))$ are abelian.
 Thus the DG\+category of factorizations $\bF(\sA,\Lambda,w)$ is
abelian.
 The construction of the exact DG\+category structure on the category
of factorizations above produces the abelian exact DG\+category
structure on $\bF(\sA,\Lambda,w)$ from the abelian exact category
structure on~$\sA$.
\end{ex}

\Section{Derived Categories of the Second Kind}
\label{derived-second-kind-secn}

\subsection{Definitions of derived categories of the second kind}
\label{derived-second-kind-definitions-subsecn}
 The idea of defining derived categories of the second kind for exact
DG\+categories was suggested in~\cite[Remarks~3.5\+-3.7]{Pkoszul}.
 This section provides the implementation.
 
 Let $\bE$ be an exact DG\+category (see the definition in
Section~\ref{bec-compatible-subsecn}).
 In particular, $\bE$ is an additive DG\+category with shifts and
cones, so its homotopy category $\sH^0(\bE)$ is triangulated.
 We refer to Section~\ref{dg-categories-subsecn}
and~\cite[Section~1.2]{Pkoszul} for the definitions of totalizations
of complexes in DG\+categories.

 Any short exact sequence in the exact category $\sZ^0(\bE)$ can be
viewed as a three-term complex in the DG\+category~$\bE$.
 Denote by $\Ac^\abs(\bE)$ the minimal thick subcategory in
the homotopy category $\sH^0(\bE)$ containing the totalizations of
(admissible) short exact sequences in $\sZ^0(\bE)$.
 The objects of $\Ac^\abs(\bE)$ are said to be \emph{absolutely
acyclic} (with respect to the given exact DG\+category structure
on~$\bE$).
 The triangulated Verdier quotient category
$$
 \sD^\abs(\bE)=\sH^0(\bE)/\Ac^\abs(\bE)
$$
is called the \emph{absolute derived category} of an exact
DG\+category~$\bE$.

 Assume that infinite coproducts exist in a DG\+category~$\bE$.
 We recall that in this case such coproducts also exist in
the DG\+category $\bE^\bec$ and in the additive categories
$\sZ^0(\bE)$ and $\sZ^0(\bE^\bec)$, as well as in the triangulated
category $\sH^0(\bE)$.
 Moreover, the coproducts are preserved by the additive functors
$\Phi_\bE\:\sZ^0(\bE)\rarrow\sZ^0(\bE^\bec)$ and
$\Psi^+_\bE$, $\Psi^-_\bE\:\sZ^0(\bE^\bec)\rarrow\sZ^0(\bE)$,
since all of them are left (as well as right) adjoint functors.

 Dually, assume that infinite products exist in a DG\+category~$\bE$.
 Then such products also exist in the DG\+category $\bE^\bec$ and
in the additive categories $\sZ^0(\bE)$ and $\sZ^0(\bE^\bec)$, as well
as in the triangulated category $\sH^0(\bE)$.
 Moreover, the products are preserved by the additive functors
$\Phi_\bE\:\sZ^0(\bE)\rarrow\sZ^0(\bE^\bec)$ and
$\Psi^+_\bE$, $\Psi^-_\bE\:\sZ^0(\bE^\bec)\rarrow\sZ^0(\bE)$.

 Let $\sE$ be an exact category with infinite coproducts.
 Then we will say that \emph{the coproducts are exact} in $\sE$ if
the coproduct of any family of (admissible) short exact sequences
is a short exact sequence in~$\sE$.
 Dually, if $\sE$ is an exact category with infinite products,
then we will say that \emph{the products are exact} in $\sE$ whenever
the product of an arbitrary family of short exact sequences is
a short exact sequence.

\begin{lem} \label{exact-co-products-lemma}
 Let\/ $\bE$ be an exact DG\+category. \par
\textup{(a)} Assume that infinite coproducts exist in
the DG\+category\/~$\bE$.
 Then the coproducts are exact in the exact category\/ $\sZ(\bE)$ if
and only if they are exact in the exact category\/ $\sZ(\bE^\bec)$. \par
\textup{(b)} Assume that infinite products exist in
the DG\+category\/~$\bE$.
 Then the products are exact in the exact category\/ $\sZ(\bE)$ if
and only if they are exact in the exact category\/ $\sZ(\bE^\bec)$.
\end{lem}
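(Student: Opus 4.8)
The plan is to deduce both equivalences purely formally from the two functors $\Phi_\bE\colon\sZ^0(\bE)\rarrow\sZ^0(\bE^\bec)$ and $\Psi^+_\bE\colon\sZ^0(\bE^\bec)\rarrow\sZ^0(\bE)$ furnished by Lemma~\ref{G-functors-in-DG-categories}. At the outset I would record two properties of each of these functors. First, each of $\Phi_\bE$ and $\Psi^+_\bE$ commutes with infinite coproducts and with infinite products, being simultaneously a left and a right adjoint; this is precisely the commutation already noted in the paragraphs preceding the lemma. Second, each of $\Phi_\bE$ and $\Psi^+_\bE$ preserves \emph{and} reflects admissible short exact sequences, which is exactly what the definition of an exact DG\+category (a $\bec$\+compatible pair of exact structures) asks for.

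For part~(a), I would first show that exactness of coproducts in $\sZ^0(\bE)$ implies exactness of coproducts in $\sZ^0(\bE^\bec)$. Starting from a family of admissible short exact sequences $0\rarrow X_\alpha\rarrow Y_\alpha\rarrow Z_\alpha\rarrow0$ in $\sZ^0(\bE^\bec)$, apply $\Psi^+_\bE$ to get admissible short exact sequences in $\sZ^0(\bE)$; form their coproduct, which is admissible by hypothesis; use commutation of $\Psi^+_\bE$ with coproducts to identify this coproduct with the image under $\Psi^+_\bE$ of the composable pair $\coprod_\alpha X_\alpha\rarrow\coprod_\alpha Y_\alpha\rarrow\coprod_\alpha Z_\alpha$; and finally invoke that $\Psi^+_\bE$ reflects admissible short exact sequences to conclude that this composable pair is itself admissible exact. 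The reverse implication is the mirror image, with $\Phi_\bE$ replacing $\Psi^+_\bE$: push a family of admissible short exact sequences from $\sZ^0(\bE)$ into $\sZ^0(\bE^\bec)$ along $\Phi_\bE$, take the coproduct (admissible by the hypothesis now assumed in $\sZ^0(\bE^\bec)$), rewrite it as $\Phi_\bE$ applied to the coproduct pair in $\sZ^0(\bE)$, and reflect. Part~(b) is handled word for word, reading ``product'' for ``coproduct'' throughout and using that $\Phi_\bE$ and $\Psi^+_\bE$ preserve products.

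I do not anticipate a genuine obstacle: once the two properties above are isolated, each implication is a short ``transport, then reflect'' diagram chase, and the evident symmetry between $\Phi_\bE$ and $\Psi^+_\bE$ makes the two directions formally identical. The one place that genuinely uses the full strength of an exact DG\+category structure---rather than mere DG\+compatibility of the two exact structures taken separately---is that \emph{both} $\Phi_\bE$ and $\Psi^+_\bE$, individually, preserve and reflect admissible short exact sequences; this is what lets the same scheme run in both directions. The only step meriting a moment's care is the third one, where I would check that the canonical isomorphism expressing commutation of $\Psi^+_\bE$ (respectively $\Phi_\bE$) with the coproduct is compatible with the three structure maps, so that it is an isomorphism of composable pairs and not merely of their separate terms.
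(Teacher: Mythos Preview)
Your proposal is correct and follows exactly the same approach as the paper's proof, which is simply a compressed version of what you wrote: the paper notes that $\Phi$ and $\Psi^\pm$ preserve and reflect short exact sequences (by the definition of an exact DG\+category) and preserve (co)products, then declares that the assertions follow. You have unpacked the ``transport, then reflect'' argument explicitly, which is precisely the intended reasoning.
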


\begin{proof}
 By the definition of an exact DG\+category, short exact sequences
are preserved and reflected by the additive functors $\Phi$
and~$\Psi^\pm$.
 Since the products and coproducts are also preserved by these
functors, the assertions follow. 
\end{proof}

 We will say that an exact DG\+category $\bE$ \emph{has exact
coproducts} if infinite coproducts exist in the DG\+category $\bE$
and the equivalent conditions of
Lemma~\ref{exact-co-products-lemma}(a) hold.
 Dually, an exact DG\+category $\bE$ is said to \emph{have exact
products} if infinite products exist in the DG\+category $\bE$
and the equivalent conditions of
Lemma~\ref{exact-co-products-lemma}(b) hold.

 Let $\bE$ be an exact DG\+category with exact coproducts.
 Denote by $\Ac^\co(\bE)$ the minimal triangulated subcategory in
the homotopy category $\sH^0(\bE)$ containing the totalizations
of short exact sequences in $\sZ^0(\bE)$ and closed under
coproducts in $\sH^0(\bE)$.
 The objects of $\Ac^\co(\bE)$ are called \emph{coacyclic}
(with respect to the given exact DG\+category structure on~$\bE$).
 The triangulated Verdier quotient category
$$
 \sD^\co(\bE)=\sH^0(\bE)/\Ac^\co(\bE)
$$
is called the \emph{coderived category} of an exact DG\+category~$\bE$.

 Dually, let $\bE$ be an exact DG\+category with exact products.
 Denote by $\Ac^\ctr(\bE)$ the minimal triangulated subcategory in
the homotopy category $\sH^0(\bE)$ containing the totalizations of
short exact sequences in $\sZ^0(\bE)$ and closed under products
in $\sH^0(\bE)$.
 The objects of $\Ac^\ctr(\bE)$ are called \emph{contraacyclic}
(with respect to the given exact DG\+category structure on~$\bE$).
 The triangulated quotient category
$$
 \sD^\ctr(\bE)=\sH^0(\bE)/\Ac^\ctr(\bE)
$$
is called the \emph{contraderived category} of an exact
DG\+category~$\bE$.

 Notice that any triangulated category with countable (co)products
is idempotent-complete~\cite[Proposition~3.2]{BN}.
 Consequently, any triangulated subcategory closed under countable
(co)products in a triangulated category with countable (co)products
is a thick subcategory~\cite[Criterion~1.3]{Neem}.
 Hence all absolutely acyclic objects are coacyclic and contraacyclic
(under the respective assumptions).

\subsection{Injective and projective resolutions: semiorthogonality}
\label{injective-projective-semiorthogonality-subsecn}
 Let $\sE$ be an exact category.
 An object $P\in\sE$ is said to be \emph{projective} if the functor
$\Hom_\sE(P,{-})\:\allowbreak\sE\rarrow\boZ\rmodl$ is exact, i.~e.,
it takes (admissible) short exact sequences in $\sE$ to short exact
sequences of abelian groups.
 Dually, an object $J\in\sE$ is said to be \emph{injective} if
the functor $\Hom_\sE({-},J)\:\sE^\sop\rarrow\boZ\rmodl$ takes short
exact sequences in $\sE$ to short exact sequences of abelian groups.
 We will denote the full subcategory of projective objects by
$\sE_\proj\subset\sE$ and the full subcategory of injective objects
by $\sE_\inj\subset\sE$.

 One says that an exact category $\sE$ has \emph{enough projectives}
if every object of $\sE$ is the target of an admissible epimorphism
from a projective object.
 Dually, $\sE$ has \emph{enough injectives} if every object of $\sE$
is the source of an admissible monomorphism into an injective
object~\cite[Section~11]{Bueh}.

\begin{rem} \label{enough-projectives-implies-exact-products-remark}
 In any exact category with infinite products and enough projectives,
the product functors are exact.
 Dually, in any exact category with infinite coproducts and enough
injectives, the coproduct functors are exact.

 Indeed, let $0\rarrow X_\alpha\rarrow Y_\alpha\rarrow Z_\alpha
\rarrow0$ be a family of short exact sequences in an exact
category~$\sE$.
 Assume that the products $\prod_\alpha X_\alpha$, \ $\prod_\alpha
Y_\alpha$, and $\prod_\alpha Z_\alpha$ exist in~$\sE$, and consider
the sequence $0\rarrow\prod_\alpha X_\alpha\rarrow\prod_\alpha
Y_\alpha\rarrow\prod_\alpha Z_\alpha\rarrow0$.
 Since infinite products always preserve kernels, the morphism
$\prod_\alpha X_\alpha\rarrow\prod_\alpha Y_\alpha$ is a kernel
of the morphism $\prod_\alpha Y_\alpha\rarrow\prod_\alpha Z_\alpha$.

 Let $P$ be a projective object in $\sE$ such that there is
an admissible epimorphism $P\rarrow\prod_\alpha Z_\alpha$.
 Then we have a family of morphisms $P\rarrow Z_\alpha$, each of
which can be lifted to a morphism $P\rarrow Y_\alpha$.
 Consequently, there is a morphism $P\rarrow\prod_\alpha Y_\alpha$
making the triangle diagram $P\rarrow\prod_\alpha Y_\alpha\rarrow
\prod_\alpha Z_\alpha$ commutative.
 By the ``obscure axiom'' (the dual assertion
to~\cite[Proposition~2.16]{Bueh}), it follows that the morphism
$\prod_\alpha Y_\alpha\rarrow\prod_\alpha Z_\alpha$ is
an admissible epimorphism.
 Thus the short sequence of products $0\rarrow\prod_\alpha X_\alpha
\rarrow\prod_\alpha Y_\alpha\rarrow\prod_\alpha Z_\alpha\rarrow0$
is exact in~$\sE$.
\end{rem}

 Let $\bE$ be an exact DG\+category.
 An object $P\in\bE$ is said to be \emph{graded-projective} if
$\Phi_\bE(P)\in\sZ^0(\bE^\bec)$ is a projective object of the exact
category $\sZ^0(\bE^\bec)$.
 Dually, an object $J\in\bE$ is said to be \emph{graded-injective}
if $\Phi_\bE(J)\in\sZ^0(\bE^\bec)$ is an injective object of
the exact category $\sZ^0(\bE^\bec)$.
 We will denote the full DG\+subcategory of graded-projective
objects by $\bE_\bproj\subset\bE$ and the full DG\+subcategory of
graded-injective objects by $\bE_\binj\subset\bE$.
 The following examples explain the terminology.

\begin{exs}
 (1)~Let $\sA$ be an exact category.
 Consider the exact DG\+category structure on the DG\+category of
complexes $\bE=\bC(\sA)$ constructed in
Example~\ref{exact-dg-category-of-complexes}.

 According to diagram~\eqref{complexes-tilde-Phi-diagram},
the forgetful functor $\sZ^0(\bC(\sA))\rarrow\sG(\sA)$ forms
a commutative diagram with the functor $\Phi_\bE\:\sZ^0(\bC(\sA))
\rarrow\sZ^0(\bC(\sA)^\bec)$ and the fully faithful functor
$\Upsilon_\sA\:\sG(\sA)\rarrow\sZ^0(\bC(\sA)^\bec)$.
 Moreover, according to the discussion in
Example~\ref{exact-dg-category-of-complexes}, the exact category
structure on $\sG(\sA)$ is inherited from the exact category
structure on $\sZ^0(\bC(\sA)^\bec)$ via the embedding $\Upsilon_\sA$,
and any short exact sequence in $\sZ^0(\bC(\sA)^\bec)$ is a direct
summand of a short exact sequence coming from $\sG(\sA)$.

 Consequently, an object of $\bC(\sA)$ is graded-projective
(respectively, graded-injective) if and only if its image under
the forgetful functor $\sZ^0(\bC(\sA))\rarrow\sG(\sA)$ is
projective (resp., injective).
 In other words, the graded-projective objects in $\bC(\sA)$
are the complexes of projective objects in $\sA$ and
the graded-injective objects in $\bC(\sA)$ are the complexes of
injective objects in $\sA$, that is $\bC(\sA)_\bproj=\bC(\sA_\proj)$
and $\bC(\sA)_\binj=\bC(\sA_\inj)$.

\smallskip
 (2)~Let $\biR^\cu=(R^*,d,h)$ be a CDG\+ring and $\bA=\biR^\cu\bmodl$
be the DG\+category of left CDG\+modules over~$\biR^\cu$.
 We endow $\bA$ with the abelian exact DG\+category structure,
as per Example~\ref{abelian-dg-category-of-cdg-modules}.

 The functor $\Upsilon_{\biR^\cu}\:R^*\smodl\rarrow
\sZ^0(\biR^\cu\bmodl)$ establishes an equivalence between the abelian
category $\sZ^0(\biR^\cu\bmodl)$ and the abelian category of graded
left $R^*$\+modules.
 According to diagram~\eqref{cdg-modules-tilde-Phi-diagram},
the forgetful functor $\sZ^0(\biR^\cu\bmodl)\rarrow R^*\smodl$
forms a commutative diagram with the functor
$\Phi_\bA\:\sZ^0(\biR^\cu\bmodl)\rarrow\sZ^0((\biR^\cu\bmodl)^\bec)$
and the category equivalence~$\Upsilon_{\biR^\cu}$.

 Consequently, a CDG\+module over $\biR^\cu$ is graded-projective
(resp., graded-injective) in the sense of the definition above
if and only if its underlying graded $R^*$\+module is projective
(resp., injective) as an object of the abelian category of
graded $R^*$\+modules.

\smallskip
 (3)~Let $X$ be a scheme and $\biB^\cu=(B^*,d,h)$ be a quasi-coherent
CDG\+quasi-algebra over~$X$.
 Let $\bA=\biB^\cu\bqcoh$ be the DG\+category of quasi-coherent left
CDG\+modules over~$\biB^\cu$.
 We endow $\bA$ with the abelian exact DG\+category structure,
as per Example~\ref{abelian-dg-category-of-quasi-coh-cdg-modules}.

 The functor $\Upsilon_{\biB^\cu}\:B^*\sqcoh\rarrow
\sZ^0(\biB^\cu\bqcoh)$ establishes an equivalence between the abelian
category $\sZ^0(\biB^\cu\bqcoh)$ and the abelian category of
quasi-coherent left $B^*$\+modules.
 According to diagram~\eqref{cdg-qcoh-tilde-Phi-diagram},
the forgetful functor $\sZ^0(\biB^\cu\bqcoh)\rarrow B^*\sqcoh$
forms a commutative diagram with the functor
$\Phi_\bA\:\sZ^0(\biB^\cu\bqcoh)\rarrow\sZ^0((\biB^\cu\bqcoh)^\bec)$
and the category equivalence~$\Upsilon_{\biB^\cu}$.
{\emergencystretch=1em\par}

 Consequently, a quasi-coherent CDG\+module over $\biB^\cu$ is
graded-injective in the sense of the definition above if and only if
its underlying quasi-coherent graded $B^*$\+module is injective
as an object of the abelian category of quasi-coherent graded
$B^*$\+modules.

\smallskip
 (4)~Let $\sA$ be an exact category and $\Lambda\:\sA\rarrow\sA$ be
its autoequivalence preserving and reflecting admissible short
exact sequences.
 We assume that either $\Gamma=\boZ$, or $\Lambda$ is involutive
and $\Gamma=\boZ/2$.
 Let $w\:\Id_\sA\:\Id_\sA\rarrow\Lambda^2$ be a potential.
 Consider the exact DG\+category structure on the DG\+category of
factorizations $\bE=\bF(\sA,\Lambda,w)$ constructed in
Example~\ref{exact-dg-category-of-factorizations}.

 According to diagram~\eqref{factorizations-tilde-Phi-diagram},
the forgetful functor $\sZ^0(\bE)\rarrow\sP(\sA,\Lambda)$ forms
a commutative diagram with the functor $\Phi_\bE\:\sZ^0(\bE)
\rarrow\sZ^0(\bE^\bec)$ and the fully faithful functor
$\Upsilon_{\sA,\Lambda,w}\:\sP(\sA,\Lambda)\rarrow\sZ^0(\bE^\bec)$.
 Moreover, according to the discussion in
Example~\ref{exact-dg-category-of-factorizations}, the exact category
structure on $\sP(\sA,\Lambda)$ is inherited from the exact category
structure on $\sZ^0(\bE^\bec)$ via the embedding 
$\Upsilon_{\sA,\Lambda,w}$, and any short exact sequence in
$\sZ^0(\bE^\bec)$ is a direct summand of a short exact sequence
coming from $\sP(\sA,\Lambda)$.

 Consequently, an object of $\bF(\sA,\Lambda,w)$ is graded-projective
(respectively, graded-injective) if and only if its image under
the forgetful functor $\sZ^0(\bF(\sA,\Lambda,w))\rarrow
\sP(\sA,\Lambda)$ is projective (resp., injective).
 In other words, the graded-projective objects in $\bF(\sA,\Lambda,w)$
are the factorizations with projective components in $\sA$ and
the graded-injective objects in $\bF(\sA,\Lambda,w)$ are
the factorizations with injective components in $\sA$, that is
$\bF(\sA,\Lambda,w)_\bproj=\bF(\sA_\proj,\Lambda,w)$ and
$\bF(\sA,\Lambda,w)_\binj=\bF(\sA_\inj,\Lambda,w)$.
 (Here the restrictions of $\Lambda$ and~$w$ to $\sA_\proj$ and
$\sA_\inj$ are denoted simply by $\Lambda$ and~$w$ for brevity.)
\end{exs}

 The reader can find a detailed discussion of projective and injective
objects in abelian DG\+categories in~\cite[Sections~6.1 and~7.1]{PS5}.
 For a further discussion of projective and injective objects in exact
DG\+categories and exact DG\+pairs, see~\cite[Section~15.4]{Pdomc}.

\begin{lem} \label{graded-proj-inj-lemma}
 Let\/ $\bE$ be an exact DG\+category. \par
\textup{(a)} The full DG\+subcategory of graded-projective objects\/
$\bE_\bproj\subset\bE$ is additive, and closed under shifts and twists
(hence also under cones) and direct summands.
 The full DG\+subcategory\/ $\bE_\bproj\subset\bE$ inherits an exact
DG\+category structure.
 The inclusion\/ $\sZ^0(\bE^\bec)_\proj\subset\sZ^0((\bE_\bproj)^\bec)$
holds in\/ $\sZ^0(\bE^\bec)$.
 If\/ $\bE$ has infinite coproducts, then\/ $\bE_\bproj$ is closed
under infinite coproducts in\/~$\bE$. \par
\textup{(b)} The full DG\+subcategory of graded-injective objects\/
$\bE_\binj\subset\bE$ is additive, and closed under shifts and twists
(hence also under cones) and direct summands.
 The full DG\+subcategory\/ $\bE_\binj\subset\bE$ inherits an exact
DG\+category structure.
 The inclusion\/ $\sZ^0(\bE^\bec)_\inj\subset\sZ^0((\bE_\binj)^\bec)$
holds in\/ $\sZ^0(\bE^\bec)$.
 If\/ $\bE$ has infinite products, then\/ $\bE_\binj$ is closed under
infinite products in\/~$\bE$.
\end{lem}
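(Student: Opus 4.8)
The plan is to deduce everything except the closure under (co)products from Proposition~\ref{subcategory-L-prop}, applied to a well-chosen full subcategory $\sL\subset\sZ^0(\bE^\bec)$. For part~(a) I would take $\sL=\sZ^0(\bE^\bec)_\proj$, and for part~(b) I would take $\sL=\sZ^0(\bE^\bec)_\inj$. By the very definition of graded-projectivity, the DG\+subcategory $\bE_\bproj$ is exactly the full DG\+subcategory $\bF=\{F\in\bE:\Phi_\bE(F)\in\sL\}$ of Proposition~\ref{subcategory-L-prop}(a); likewise $\bE_\binj=\bF$ for $\sL=\sZ^0(\bE^\bec)_\inj$. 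So the whole structural content of the lemma will follow once I verify the hypotheses imposed on $\sL$ by that proposition.

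Those verifications form the routine but essential core. I would check that $\sZ^0(\bE^\bec)_\proj$ is a full additive subcategory (it contains $0$ and is closed under finite direct sums), is preserved by every shift $[n]$ (each shift is an exact autoequivalence of $\sZ^0(\bE^\bec)$ and hence carries projectives to projectives), is closed under direct summands, and is closed under extensions: any admissible short exact sequence $0\to P'\to E\to P''\to 0$ with $P'$, $P''$ projective splits because $P''$ is projective, so $E\simeq P'\oplus P''$ is projective. Closure under extensions makes $\sL$ a fully exact subcategory, so it inherits an exact category structure. With these properties established, Proposition~\ref{subcategory-L-prop}(a) gives that $\bE_\bproj$ is a full additive DG\+subcategory closed under shifts and twists; part~(b) of that proposition gives closure under direct summands; and part~(c) gives that $\bE_\bproj$ inherits an exact DG\+category structure. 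Closure under cones is then automatic: in an additive DG\+category with shifts the cone of $f\colon X\to Y$ is a twist of $Y\oplus X[1]$, and $\bE_\bproj$ is additive and closed under shifts and twists.

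For the inclusion $\sZ^0(\bE^\bec)_\proj\subset\sZ^0((\bE_\bproj)^\bec)$ I would invoke Proposition~\ref{subcategory-L-prop}(d): since $\sL$ is closed under extensions, $\sL\subset\widetilde{\sL}$, and $\widetilde{\sL}=\sZ^0(\bF^\bec)=\sZ^0((\bE_\bproj)^\bec)$ by part~(a) of that proposition. Part~(b) of the present lemma is handled identically with $\sL=\sZ^0(\bE^\bec)_\inj$; one uses that extensions of injectives split and that shifts preserve injectivity, so the same proposition applies verbatim. The duality is entirely internal to $\sZ^0(\bE^\bec)$ (projectives versus injectives) and does not require dualizing Proposition~\ref{subcategory-L-prop} itself.

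The one point not delivered by Proposition~\ref{subcategory-L-prop} is the closure under infinite (co)products, which I would treat separately. When $\bE$ has infinite coproducts these exist in $\sZ^0(\bE^\bec)$ and are preserved by $\Phi_\bE$, a two-sided adjoint. Given graded-projective $P_\alpha$ we have $\Phi_\bE(\coprod_\alpha P_\alpha)\simeq\coprod_\alpha\Phi_\bE(P_\alpha)$, so it remains to see that a coproduct of projectives is projective; this follows from $\Hom(\coprod_\alpha P_\alpha,-)\simeq\prod_\alpha\Hom(P_\alpha,-)$ together with the exactness of products of short exact sequences of abelian groups. The injective case is dual, using that $\Phi_\bE$ preserves products and $\Hom(-,\prod_\alpha J_\alpha)\simeq\prod_\alpha\Hom(-,J_\alpha)$. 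I expect no genuine obstacle anywhere; the only thing to keep straight is the bookkeeping of which property of $\sL$ feeds which clause of Proposition~\ref{subcategory-L-prop}, and the elementary (co)product argument at the end.
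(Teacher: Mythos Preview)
Your proposal is correct and follows essentially the same approach as the paper: set $\sL=\sZ^0(\bE^\bec)_\proj$ (resp.\ $\sZ^0(\bE^\bec)_\inj$), verify that $\sL$ is additive, shift-stable, closed under extensions and direct summands, then invoke Proposition~\ref{subcategory-L-prop}(a--d) for the first three assertions, and handle closure under (co)products separately via the fact that $\Phi_\bE$ preserves them. The paper's proof is terser but structurally identical.
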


\begin{proof}
 It suffices to prove part~(a), as part~(b) is dual.
 Let $\sL=\sZ^0(\bE^\bec)_\proj$ be the full subcategory of projective
objects in the exact category $\sZ^0(\bE^\bec)$.
 The full subcategory $\sL$ is additive, closed under extensions and
direct summands, and it is preserved by the shift functors acting
naturally on $\sZ^0(\bE^\bec)$.
 Hence Proposition~\ref{subcategory-L-prop}(a\+-d) is applicable,
implying the first three assertions of part~(a).
 The full DG\+subcategory $\bE_\bproj\subset\bE$ is closed under
infinite coproducts because the full subcategory of projective objects
$\sZ^0(\bE^\bec)_\proj\subset\sZ^0(\bE^\bec)$ is closed under
coproducts and the functor $\Phi_\bE$ preserves coproducts.
\end{proof}

 Notice that the inclusions $\sZ^0(\bE^\bec)_\proj\subset
\sZ^0((\bE_\bproj)^\bec)$ and $\sZ^0(\bE^\bec)_\inj\subset
\sZ^0((\bE_\binj)^\bec)$ in Lemma~\ref{graded-proj-inj-lemma}
can be strict, as Example~\ref{L-tilde-example} illustrates.

\begin{thm} \label{inj-proj-resolutions-semiorthogonality-theorem}
\textup{(a)} Let\/ $\bE$ be an exact DG\+category with exact coproducts.
 Then for any objects $J\in\sH^0(\bE_\binj)$ and $X\in\Ac^\co(\bE)$
one has\/ $\Hom_{\sH^0(\bE)}(X,J)=0$.
 Consequently, the composition of the triangulated inclusion functor\/
$\sH^0(\bE_\binj)\rarrow\sH^0(\bE)$ and the Verdier quotient functor\/
$\sH^0(\bE)\rarrow\sD^\co(\bE)$ is a fully faithful triangulated
functor\/ $\sH^0(\bE_\binj)\rarrow\sD^\co(\bE)$. \par
\textup{(b)} Let\/ $\bE$ be an exact DG\+category with exact products.
 Then for any objects $P\in\sH^0(\bE_\bproj)$ and $Y\in\Ac^\ctr(\bE)$
one has\/ $\Hom_{\sH^0(\bE)}(P,Y)=0$.
 Consequently, the composition of the triangulated inclusion functor\/
$\sH^0(\bE_\bproj)\rarrow\sH^0(\bE)$ and the Verdier quotient functor\/
$\sH^0(\bE)\rarrow\sD^\ctr(\bE)$ is a fully faithful triangulated
functor\/ $\sH^0(\bE_\bproj)\rarrow\sD^\ctr(\bE)$.
\end{thm}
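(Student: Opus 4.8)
The plan is to prove part~(a) in full and to obtain part~(b) by the evident dualization, interchanging coproducts with products, graded-injective with graded-projective objects, and the contravariant functor $\Hom_\bE^\bu({-},J)$ with the covariant functor $\Hom_\bE^\bu(P,{-})$. The proof of~(a) falls into two stages: first the $\Hom$\+vanishing $\Hom_{\sH^0(\bE)}(X,J)=0$ for $X\in\Ac^\co(\bE)$ and $J$ graded-injective, and then the passage from this left orthogonality to full-and-faithfulness of the induced functor into the Verdier quotient.

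The heart of the matter is a computation I would carry out first. Let $J$ be graded-injective and let $0\rarrow A\rarrow B\rarrow C\rarrow0$ be an admissible short exact sequence in $\sZ^0(\bE)$. Using the fully faithful functor $\widetilde\Phi_\bE\:\bE^0\rarrow\sZ^0(\bE^\bec)$ from Lemma~\ref{Phi-Psi-extended-to-nonclosed-morphisms}, together with the identity $\Hom_\bE^i(A,J)=\Hom_{\bE^0}(A,J[i])$ and the fact that $\Phi_\bE$ converts the shift $[i]$ into $[-i]$ (Lemma~\ref{twists-into-isomorphisms}), I would identify the graded abelian group $\Hom_\bE^*(A,J)$ naturally with $\bigoplus_i\Hom_{\sZ^0(\bE^\bec)}(\Phi_\bE(A),\,\Phi_\bE(J)[-i])$, and likewise for $B$ and $C$. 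Since $\Phi_\bE$ is exact and carries the given sequence to a short exact sequence $0\rarrow\Phi_\bE(A)\rarrow\Phi_\bE(B)\rarrow\Phi_\bE(C)\rarrow0$ in $\sZ^0(\bE^\bec)$, and since each $\Phi_\bE(J)[-i]$ is injective (being a shift of the injective object $\Phi_\bE(J)$; equivalently, $J[i]$ is again graded-injective by Lemma~\ref{graded-proj-inj-lemma}(b)), the functor $\Hom_{\sZ^0(\bE^\bec)}({-},\Phi_\bE(J)[-i])$ is exact. Summing over $i\in\Gamma$ and noting that the maps in question are induced by closed degree\+$0$ morphisms and hence are morphisms of complexes, this yields a short exact sequence of complexes of abelian groups
\begin{equation*}
 0\rarrow\Hom_\bE^\bu(C,J)\rarrow\Hom_\bE^\bu(B,J)\rarrow\Hom_\bE^\bu(A,J)\rarrow0.
\end{equation*}
Writing $T=\Tot(A\rarrow B\rarrow C)$ for the totalization (an iterated cone, the complex being finite), the defining property of totalizations identifies $\Hom_\bE^\bu(T,J)$ with the total complex of the bicomplex obtained by applying $\Hom_\bE^\bu({-},J)$ to $A\rarrow B\rarrow C$, whose three columns are precisely the terms of the sequence just produced. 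As every internal-degree row of this bicomplex is short exact, the total complex is acyclic; hence $\Hom_\bE^\bu(T,J)$ is acyclic, and so $\Hom_{\sH^0(\bE)}(T[n],J)=0$ for all $n$.

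Next I would reduce to these generators. Fix a graded-injective $J$ and consider the full subcategory of all $X\in\sH^0(\bE)$ with $\Hom_{\sH^0(\bE)}(X[n],J)=0$ for all $n$. It is a triangulated subcategory (closed under shifts, cones, and direct summands), and it is closed under coproducts because $\Hom_{\sH^0(\bE)}({-},J)$ turns coproducts into products. By the computation above it contains all totalizations of admissible short exact sequences, hence it contains the minimal such subcategory $\Ac^\co(\bE)$. This establishes $\Hom_{\sH^0(\bE)}(X,J)=0$ for all $X\in\Ac^\co(\bE)$ and all graded-injective~$J$.

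Finally I would deduce full-and-faithfulness by the standard orthogonality argument for Verdier quotients. The subcategory $\sH^0(\bE_\binj)$ is triangulated in $\sH^0(\bE)$ since $\bE_\binj$ is closed under shifts and cones (Lemma~\ref{graded-proj-inj-lemma}(b)), and $\Ac^\co(\bE)$ is thick, being a triangulated subcategory closed under countable coproducts. For $J_1,J_2\in\sH^0(\bE_\binj)$, a morphism in $\sD^\co(\bE)$ is a roof $J_1\xleftarrow{\,s\,}Z\xrightarrow{\,f\,}J_2$ with $\cone(s)\in\Ac^\co(\bE)$; applying $\Hom_{\sH^0(\bE)}({-},J_2)$ to the triangle $Z\xrightarrow{s}J_1\rarrow\cone(s)\rarrow Z[1]$ and using that both $\Hom_{\sH^0(\bE)}(\cone(s),J_2)$ and $\Hom_{\sH^0(\bE)}(\cone(s)[-1],J_2)$ vanish (both $\cone(s)$ and $\cone(s)[-1]$ lie in $\Ac^\co(\bE)$ while $J_2$ is graded-injective), I conclude that $s^*\:\Hom_{\sH^0(\bE)}(J_1,J_2)\rarrow\Hom_{\sH^0(\bE)}(Z,J_2)$ is an isomorphism. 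Hence every roof is equivalent to a genuine morphism and distinct genuine morphisms remain distinct, i.e.\ $\Hom_{\sD^\co(\bE)}(J_1,J_2)=\Hom_{\sH^0(\bE)}(J_1,J_2)$, which is exactly full-and-faithfulness of the composite. The main obstacle is the first stage: pinning down the natural identification of $\Hom_\bE^*({-},J)$ with morphism groups into $\Phi_\bE(J)$ in $\sZ^0(\bE^\bec)$, so that the injectivity of $\Phi_\bE(J)$ together with the exactness of $\Phi_\bE$ produces the short exact sequence of Hom\+complexes; once that sequence is in hand, the totalization acyclicity and the Verdier-quotient formalism are routine.
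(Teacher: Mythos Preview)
Your proof is correct and follows essentially the same route as the paper's: reduce to showing that $\Hom_\bE^\bu(T,J)$ (dually, $\Hom_\bE^\bu(P,T)$) is acyclic for $T$ the totalization of a short exact sequence, and establish this by using $\widetilde\Phi_\bE$ to identify the graded Hom groups with morphisms in $\sZ^0(\bE^\bec)$ where injectivity/projectivity of $\Phi_\bE(J)$/$\Phi_\bE(P)$ applies, then propagate to all of $\Ac^\co(\bE)$/$\Ac^\ctr(\bE)$ by closure under (co)products. The only cosmetic differences are that the paper proves part~(b) and dualizes, reduces to degree~$0$ via shift-closure rather than summing over all~$i$, and merely cites ``well-known general properties of semiorthogonal triangulated subcategories'' for the full-and-faithfulness step that you spell out with roofs.
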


\begin{proof}
 This is a generalization of~\cite[Theorem in Section~3.5]{Pkoszul}
suggested in~\cite[Remark in Section~3.5]{Pkoszul}.
 Let us prove part~(b) (part~(a) is dual).
 The second assertion in~(b) follows from the first one in view of
the well-known general properties of semiorthogonal triangulated
subcategories.
 Furthermore, for any object $P\in\sH^0(\bE)$ the class of all
objects $Y\in\sH^0(\bE)$ such that $\Hom_{\sH^0(\bE)}(P,Y[*])=0$ is
a full triangulated subcategory closed under infinite products
in $\sH^0(\bE)$.

 It remains to consider a short exact sequence $0\rarrow A\rarrow B
\rarrow C\rarrow0$ in $\sZ^0(\bE)$ and its totalization $T\in\bE$.
 Let $P\in\sH^0(\bE_\bproj)$ be a graded-projective object.
 In order to show that the complex of abelian groups
$\Hom^\bu_\bE(P,T)$ is acyclic, we observe that, by the definition
of the totalization, $\Hom^\bu_\bE(P,T)$ is the totalization of
the bicomplex with three rows $\Hom^\bu_\bE(P,A)\rarrow
\Hom^\bu_\bE(P,B)\rarrow\Hom^\bu_\bE(P,C)$.
 Clearly, the totalization of any finite acyclic complex of complexes
of abelian groups is acyclic; so it suffices to check that
$0\rarrow\Hom^\bu_\bE(P,A)\rarrow\Hom^\bu_\bE(P,B)\rarrow
\Hom^\bu_\bE(P,C)\rarrow0$ is a short exact sequence of complexes
of abelian groups.
 For this purpose, we only need to show that $0\rarrow
\Hom^0_\bE(P,A)\rarrow\Hom^0_\bE(P,B)\rarrow\Hom^0_\bE(P,C)\rarrow0$
is a short exact sequence of abelian groups (as all the classes of
objects involved are obviously closed under shifts in~$\bE$).

 Now we recall that the additive functor $\Phi_\bE\:\sZ^0(\bE)\rarrow
\sZ^0(\bE^\bec)$ can be naturally extended to a fully faithful
additive functor $\widetilde\Phi_\bE\:\bE^0\rarrow\sZ^0(\bE^\bec)$,
by Lemma~\ref{Phi-Psi-extended-to-nonclosed-morphisms}.
 So the desired assertion can be rephrased by saying that the short
sequence of abelian groups $0\rarrow\Hom_{\sZ^0(\bE^\bec)}(\Phi_\bE(P),
\Phi_\bE(A))\rarrow\Hom_{\sZ^0(\bE^\bec)}(\Phi_\bE(P),\Phi_\bE(B))
\rarrow\Hom_{\sZ^0(\bE^\bec)}(\Phi_\bE(P),\Phi_\bE(C))\rarrow0$ is
exact.
 Finally, it remains to use the assumption that $\Phi_\bE(P)$ is
a projective object in the exact category $\sZ^0(\bE^\bec)$, together
with the fact that the short sequence $0\rarrow\Phi_\bE(A)\rarrow
\Phi_\bE(B)\rarrow\Phi_\bE(C)\rarrow0$ is exact in $\sZ^0(\bE^\bec)$
(as the functor $\Phi_\bE\:\sZ^0(\bE)\rarrow\sZ^0(\bE^\bec)$ is
exact by the definition of an exact DG\+category).
\end{proof}

\subsection{Injective and projective resolutions: finite homological
dimension case}
 Let $\sE$ be an exact category.
 Given two objects $X$ and $Y\in\sE$, the Ext groups
$\Ext_\sE^n(X,Y)$, \,$n\ge0$, can be defined using the Yoneda
Ext construction.
 Equivalently, put $\Ext_\sE^n(X,Y)=\Hom_{\sD^\bb(\sE)}(X,Y[n])$,
where $\sD^\bb(\sE)$ denotes the bounded derived category of
the exact category~$\sE$ (see, e.~g., \cite{Neem}
and~\cite[Section~A.7]{Partin}).
 We refer to Section~\ref{prelim-Yoneda-Ext-subsecn} below
for a further discussion.

 An object $A\in\sE$ is said to have \emph{injective
dimension~$\le n$} if $\Ext_\sE^i(X,A)=0$ for all $X\in\sE$ and $i>n$.
 If this is the case, we write $\idi_\sE A\le n$.
 Dually, an object $B\in\sE$ is said to have \emph{projective
dimension~$\le n$} if $\Ext_\sE^i(B,Y)=0$ for all $Y\in\sE$ and $i>n$.
 If this is the case, we write $\pdi_\sE B\le n$.
 An exact category $\sE$ is said to have \emph{homological
dimension~$\le n$} if $\Ext_\sE^i(X,Y)=0$ for all $X$, $Y\in\sE$
and $i>n$.

 Assume that the exact category $\sE$ has enough projective objects.
 Then an object $B\in\sE$ has projective dimension~$\le n$ if and
only if it admits a projective resolution $0\rarrow P_n\rarrow\dotsb
\rarrow P_0\rarrow B\rarrow0$ of length~$n$.
 In fact, if $\pdi_\sE B\le n$ and $0\rarrow Q\rarrow P_{n-1}\rarrow
\dotsb\rarrow P_0\rarrow B\rarrow0$ is an exact complex with
projective objects $P_i\in\sE$, then $Q\in\sE$ is also a projective
object.

 Dually, assume that $\sE$ has enough injective objects.
 Then an object $A\in\sE$ has injective dimension~$\le n$ if and
only if it admits an injective coresolution $0\rarrow A\rarrow J^0
\rarrow\dotsb\rarrow J^n\rarrow0$ of length~$n$.
 In fact, if $\idi_\sE A\le n$ and $0\rarrow A\rarrow J^0\rarrow
\dotsb\rarrow J^{n-1}\rarrow K\rarrow0$ is an exact complex with
injective objects $J^i\in\sE$, then $K\in\sE$ is also an injective
object.

\begin{thm} \label{inj-proj-resolutions-finite-homol-dim-theorem}
\textup{(a)} Let\/ $\bE$ be an exact DG\+category with infinite
coproducts.
 Assume that the exact category\/ $\sZ^0(\bE^\bec)$ has finite 
homological dimension and enough injective objects.
 Then the two thick subcategories\/ $\Ac^\abs(\bE)$ and\/
$\Ac^\co(\bE)$ in the homotopy category\/ $\sH^0(\bE)$ coincide.
 Furthermore, the composition of the triangulated inclusion functor\/
$\sH^0(\bE_\binj)\rarrow\sH^0(\bE)$ and the Verdier quotient functor\/
$\sH^0(\bE)\rarrow\sD^\co(\bE)$ is a triangulated equivalence.
 So one has\/ $\sD^\abs(\bE)\simeq\sD^\co(\bE)\simeq\sH^0(\bE_\binj)$.
\par
\textup{(b)} Let\/ $\bE$ be an exact DG\+category with infinite
products.
 Assume that the exact category\/ $\sZ^0(\bE^\bec)$ has finite
homological dimension and enough projective objects.
 Then the two thick subcategories\/ $\Ac^\abs(\bE)$ and\/
$\Ac^\ctr(\bE)$ in the homotopy category\/ $\sH^0(\bE)$ coincide.
 Furthermore, the composition of the triangulated inclusion functor\/
$\sH^0(\bE_\bproj)\rarrow\sH^0(\bE)$ and the Verdier quotient functor\/
$\sH^0(\bE)\rarrow\sD^\ctr(\bE)$ is a triangulated equivalence.
 So one has\/ $\sD^\abs(\bE)\simeq\sD^\ctr(\bE)\simeq\sH^0(\bE_\bproj)$.
\end{thm}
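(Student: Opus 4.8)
The plan is to prove part~(b) and obtain part~(a) by the evident dualization (injectives, coproducts, and the coderived category in place of projectives, products, and the contraderived category). Throughout I work with $\sZ^0(\bE)$ and $\sZ^0(\bE^\bec)$ in their given $\bec$\+compatible exact structures, so that $\Phi_\bE$ and $\Psi^\pm_\bE$ are exact and reflect admissible short exact sequences. First I would record that $\sD^\ctr(\bE)$ is well defined: since $\sZ^0(\bE^\bec)$ has enough projectives and infinite products, its products are exact by Remark~\ref{enough-projectives-implies-exact-products-remark}, whence $\bE$ has exact products by Lemma~\ref{exact-co-products-lemma}(b). I would also note at the outset that $\Ac^\abs(\bE)\subseteq\Ac^\ctr(\bE)$, because $\Ac^\ctr(\bE)$ is a triangulated subcategory closed under (countable) products and is therefore thick, while $\Ac^\abs(\bE)$ is the minimal thick subcategory containing the totalizations of short exact sequences.

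The technical core is a finite graded-projective resolution in $\sZ^0(\bE)$, and the key point is that enough graded-projectives are available through honest admissible epimorphisms. For a projective $Q\in\sZ^0(\bE^\bec)$ the object $\Psi^+_\bE(Q)$ is graded-projective: by Lemma~\ref{Xi-decomposed} the object $\Phi_\bE\Psi^+_\bE(Q)$ is a shift of $\Xi_{\bE^\bec}(Q)$, and $\Xi_{\bE^\bec}$ preserves projectives, being a left adjoint whose right adjoint $\Xi_{\bE^\bec}[1]$ is exact (Lemma~\ref{Xi-adjoints} together with Proposition~\ref{DG-compatible-prop}). Given $E\in\bE$, I would choose an admissible epimorphism $Q\rarrow\Phi_\bE(E)$ with $Q$ projective, apply the exact functor $\Psi^+_\bE$, and compose with the admissible epimorphism $\Xi_\bE(E)\simeq\Psi^+_\bE\Phi_\bE(E)\rarrow E$ furnished by Lemma~\ref{cone-kernel-cokernel} and the DG\+compatible exact structure; the composite $\Psi^+_\bE(Q)\rarrow E$ is then an admissible epimorphism in $\sZ^0(\bE)$ out of a graded-projective object. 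Iterating on the kernel and applying the exact functor $\Phi_\bE$ produces a projective resolution of $\Phi_\bE(E)$ in $\sZ^0(\bE^\bec)$; as $\sZ^0(\bE^\bec)$ has homological dimension $\le d$, the $d$-th syzygy is projective, so the corresponding syzygy upstairs is graded-projective. This yields a bounded exact complex $0\rarrow P^{-d}\rarrow\dotsb\rarrow P^0\rarrow E\rarrow0$ in $\sZ^0(\bE)$ with all $P^i$ graded-projective.

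With the resolution in hand I would totalize. The finite totalization $T=\Tot(P^{-d}\rarrow\dotsb\rarrow P^0)$ exists as an iterated cone and is graded-projective by Lemma~\ref{graded-proj-inj-lemma}(a). Splicing the exact complex into admissible short exact sequences shows that $\Tot(P^{-d}\rarrow\dotsb\rarrow P^0\rarrow E)\simeq\cone(T\rarrow E)$ is built from totalizations of short exact sequences, hence lies in $\Ac^\abs(\bE)$; thus $T\rarrow E$ is an isomorphism in $\sD^\abs(\bE)$. This proves that $\sH^0(\bE_\bproj)\rarrow\sD^\abs(\bE)$ is essentially surjective, and a fortiori so is $\sH^0(\bE_\bproj)\rarrow\sD^\ctr(\bE)$. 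For full-and-faithfulness, Theorem~\ref{inj-proj-resolutions-semiorthogonality-theorem}(b) gives $\Hom_{\sH^0(\bE)}(P,Y)=0$ for $P\in\sH^0(\bE_\bproj)$ and $Y\in\Ac^\ctr(\bE)$; since $\Ac^\abs(\bE)\subseteq\Ac^\ctr(\bE)$, the same vanishing holds for $Y\in\Ac^\abs(\bE)$, so both functors $\sH^0(\bE_\bproj)\rarrow\sD^\abs(\bE)$ and $\sH^0(\bE_\bproj)\rarrow\sD^\ctr(\bE)$ are fully faithful, hence equivalences.

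Finally I would conclude. The two equivalences factor the natural Verdier quotient functor $\sD^\abs(\bE)\rarrow\sD^\ctr(\bE)$ (available because $\Ac^\abs(\bE)\subseteq\Ac^\ctr(\bE)$) through a commuting triangle, forcing it to be an equivalence; being a quotient functor that is in particular conservative, it then identifies the two acyclicity classes, so that $\Ac^\abs(\bE)=\Ac^\ctr(\bE)$ and all three categories coincide. I expect the main obstacle to be the resolution step of the second paragraph---namely producing graded-projective admissible epimorphisms onto an arbitrary object without assuming enough graded-projectives in $\bE$ directly, and verifying that the totalization of a finite exact complex is absolutely acyclic; once these are secured, the finite homological dimension does the rest by truncating the resolution at a graded-projective syzygy.
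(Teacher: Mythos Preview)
Your proposal is correct and follows essentially the same route as the paper's proof: construct a one-step graded-projective cover of any object $E$ via the adjunction (choosing a projective $Q\twoheadrightarrow\Phi_\bE(E)$ and passing to $\Psi^+_\bE(Q)\twoheadrightarrow E$, with $\Phi_\bE\Psi^+_\bE(Q)$ projective as an extension of $Q$ by $Q[1]$), iterate and truncate using the finite homological dimension of $\sZ^0(\bE^\bec)$, then totalize and invoke the semiorthogonality theorem. The paper packages these steps as three internal lemmas (the adjunction lemma, the one-step resolution lemma, and the lemma that totalizations of finite exact complexes are absolutely acyclic), but the content is the same as what you outline; your final deduction of $\Ac^\abs(\bE)=\Ac^\ctr(\bE)$ from the two equivalences is a slightly more explicit unpacking of what the paper leaves implicit in the phrase ``all the assertions of~(b) follow.''
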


\begin{proof}
 This is the generalization of~\cite[Theorem in Section~3.6]{Pkoszul}
suggested in~\cite[Remark in Section~3.6]{Pkoszul}.
 We start with several lemmas.

\begin{lem} \label{mono-epi-exact-DG-adjunction}
\textup{(a)} Let\/ $\bE$ be an exact DG\+category, $A\in\bE$ and
$K\in\bE^\bec$ be two objects, and\/ $\Phi_\bE(A)\rarrow K$ be
an admissible monomorphism in the exact category\/ $\sZ^0(\bE^\bec)$.
 Then the corresponding morphism $A\rarrow\Psi^-_\bE(K)$
(by adjunction) is an admissible monomorphism in the exact category\/
$\sZ^0(\bE)$. \par
\textup{(b)} Let\/ $\bE$ be an exact DG\+category, $B\in\bE$ and
$L\in\bE^\bec$ be two objects, and $L\rarrow\Phi_\bE(B)$ be
an admissible epimorphism in the exact category\/ $\sZ^0(\bE^\bec)$.
 Then the corresponding morphism\/ $\Psi^+_\bE(L)\rarrow B$
(by adjunction) is an admissible epimorphism in the exact category\/
$\sZ^0(\bE)$.
\end{lem}

\begin{proof}
 Let us prove part~(b) (part~(a) is dual).
 The functor $\Psi^+_\bE\:\sZ^0(\bE^\bec)\rarrow\sZ^0(\bE)$ is exact
by the definition of an exact DG\+category, so it takes admissible
epimorphisms to admissible epimorphisms.
 Hence $\Psi^+_\bE(L)\rarrow\Psi^+_\bE\Phi_\bE(B)$ is an admissible
epimorphism.
 On the other hand, by Lemma~\ref{Xi-decomposed} and by the definition
of a DG\+compatible exact structure, we have a natural short exact
sequence $0\rarrow B[-1]\rarrow\Psi^+_\bE\Phi_\bE(B)\rarrow B\rarrow0$
in the exact category $\sZ^0(\bE)$.
 In particular, the adjunction morphism $\Psi^+_\bE\Phi_\bE(B)\rarrow B$
is an admissible epimorphism.
 Thus the composition $\Psi^+_\bE(L)\rarrow\Psi^+_\bE\Phi_\bE(B)
\rarrow B$ is an admissible epimorphism.
\end{proof}

\begin{lem} \label{one-step-inj-proj-resolution-lemma}
\textup{(a)} Let\/ $\bE$ be an exact DG\+category for which
the exact category\/ $\sZ^0(\bE^\bec)$ has enough injective objects.
 Then for any object $A\in\bE$ there exists an admissible monomorphism
$A\rarrow J$ in the exact category\/ $\sZ^0(\bE)$ such that the object
$\Phi_\bE(J)$ is injective in the exact category\/ $\sZ^0(\bE^\bec)$.
\par
\textup{(b)} Let\/ $\bE$ be an exact DG\+category for which
the exact category\/ $\sZ^0(\bE^\bec)$ has enough projective objects.
 Then for any object $B\in\bE$ there exists an admissible epimorphism
$P\rarrow B$ in the exact category\/ $\sZ^0(\bE)$ such that the object
$\Phi_\bE(P)$ is projective in the exact category\/ $\sZ^0(\bE^\bec)$.
\end{lem}

\begin{proof}
 Let us prove part~(b) (part~(a) is dual).
 Given an object $B\in\bE$, consider the object $\Phi_\bE(B)\in
\sZ^0(\bE^\bec)$ and choose a projective object
$Q\in\sZ^0(\bE^\bec)_\proj$ together with an admissible epimorphism
$Q\rarrow\Phi_\bE(B)$ in $\sZ^0(\bE^\bec)$.
 By Lemma~\ref{mono-epi-exact-DG-adjunction}(b), the related morphism
$\Psi^+_\bE(Q)\rarrow B$ is an admissible epimorphism in $\sZ^0(\bE)$.

 By Lemma~\ref{Xi-decomposed} and by the definition of
a DG\+compatible exact structure, we have a natural short exact
sequence $0\rarrow Q\rarrow\Phi_\bE\Psi^+_\bE(Q)\rarrow Q[1]\rarrow0$
in the exact category $\sZ^0(\bE^\bec)$.
 Hence the object $\Phi_\bE\Psi^+_\bE(Q)\in\sZ^0(\bE^\bec)$ is
projective as an extension of two projective objects, and we can put
$P=\Psi^+_\bE(Q)$.
\end{proof}

\begin{lem} \label{totalization-of-finite-absolutely-acyclic}
 Let\/ $\bE$ be an exact DG\+category, and let\/
$0\rarrow X^0\rarrow X^1\rarrow\dotsb\rarrow X^n\rarrow0$
be a finite exact complex in the exact category\/ $\sZ^0(\bE)$.
 Then the totalization\/ $\Tot(X^\bu)$ is an absolutely acyclic
object of\/~$\bE$.
\end{lem}

\begin{proof}
 Let $0\rarrow K^i\rarrow X^i\rarrow K^{i+1}\rarrow0$ be the short
exact sequences in $\sZ^0(\bE)$ from which the exact complex $X^\bu$
is obtained by splicing.
 Then the totalization $\Tot(X^\bu)$ is homotopy equivalent to
an object obtainable from the totalizations
$\Tot(K^i\to X^i\to K^{i+1})$ as an iterated cone of natural
closed morphisms.
\end{proof}

 For a converse assertion to
Lemma~\ref{totalization-of-finite-absolutely-acyclic},
see Proposition~\ref{abs-acyclic-as-totalizations} below.

 Let us prove part~(b) of the theorem (part~(a) is dual).
 Our aim is to show that any object $B\in\sH^0(\bE)$ can be included
into a distinguished triangle $P\rarrow B\rarrow Y\rarrow P[1]$
in $\sH^0(\bE)$ with $P\in\sH^0(\bE_\bproj)$ and $Y\in\Ac^\abs(\bE)$.
 Then all the assertions of~(b) follow in view of
Theorem~\ref{inj-proj-resolutions-semiorthogonality-theorem}(b).
 In this argument, we use the inclusion $\Ac^\abs(\bE)\subset
\Ac^\ctr(\bE)$.
 Notice that the infinite products are exact in $\bE$ by
Remark~\ref{enough-projectives-implies-exact-products-remark};
so Theorem~\ref{inj-proj-resolutions-semiorthogonality-theorem}(b)
is applicable.

 Let $B\in\bE$ be an object.
 By Lemma~\ref{one-step-inj-proj-resolution-lemma}(b), there is
a short exact sequence $0\rarrow B_1\rarrow P_0\rarrow B\rarrow0$
in $\sZ^0(\bE)$ with $\Phi_\bE(P_0)\in\sZ^0(\bE^\bec)_\proj$.
 Applying the same lemma to the object $B_1\in\bE$, we obtain
a short exact sequence $0\rarrow B_2\rarrow P_1\rarrow B_1\rarrow0$
in $\sZ^0(\bE)$ with $\Phi_\bE(P_1)\in\sZ^0(\bE^\bec)_\proj$, etc.
 Proceeding in this way, we construct an exact complex
$0\rarrow Q\rarrow P_{n-1}\rarrow\dotsb\rarrow P_0\rarrow B\rarrow0$
in $\sZ^0(\bE)$ with $\Phi_\bE(P_i)\in\sZ^0(\bE^\bec)_\proj$,
where $n$~is the homological dimension of the exact category
$\sZ^0(\bE^\bec)$.
 Then the complex $0\rarrow\Phi_\bE(Q)\rarrow\Phi_\bE(P_{n-1})
\rarrow\dotsb\rarrow\Phi_\bE(P_0)\rarrow\Phi_\bE(B)\rarrow0$ is
exact in $\sZ^0(\bE^\bec)$, and it follows that
$\Phi_\bE(Q)\in\sZ^0(\bE^\bec)_\proj$.

 Put $P_n=Q$, and denote by $P$ the totalization of the finite complex
$P_n\rarrow P_{n-1}\rarrow\dotsb\rarrow P_0$ in
the DG\+category~$\bE$.
 By Lemma~\ref{twists-into-isomorphisms}, the functor
$\Phi_\bE\:\sZ^0(\bE)\rarrow\sZ^0(\bE^\bec)$ transforms twists into
isomorphisms, and it follows that the object $\Phi_\bE(P)$ is
isomorphic to the direct sum $\bigoplus_{i=0}^n\Phi_\bE(P_i)[-i]$ in
the additive category $\sZ^0(\bE^\bec)$.
 Hence the object $\Phi_\bE(P)$ is projective in $\sZ^0(\bE^\bec)$.
 Finally, the cone $Y$ of the natural closed morphism $P\rarrow B$
in $\bE$ is the totalization of the finite exact complex
$0\rarrow P_n\rarrow P_{n-1}\rarrow\dotsb\rarrow P_1\rarrow P_0
\rarrow B\rarrow0$ in $\sZ^0(\bE)$; so it is absolutely acyclic by
Lemma~\ref{totalization-of-finite-absolutely-acyclic}.
\end{proof}

 One can notice that the triangulated equivalences
$\sD^\abs(\bE)\simeq\sH^0(\bE_\binj)$ in 
Theorem~\ref{inj-proj-resolutions-finite-homol-dim-theorem}(a)
and $\sD^\abs(\bE)\simeq\sH^0(\bE_\bproj)$
in Theorem~\ref{inj-proj-resolutions-finite-homol-dim-theorem}(b)
do not depend on the assumption of existence of (co)products in $\bE$,
as it is clear from the proofs of
Theorems~\ref{inj-proj-resolutions-semiorthogonality-theorem}
and~\ref{inj-proj-resolutions-finite-homol-dim-theorem} above.

 For a far-reaching generalization of the first assertions of
both parts~(a) and~(b) of
Theorem~\ref{inj-proj-resolutions-finite-homol-dim-theorem},
see Theorem~\ref{finite-homological-dimension-main-theorem} below.
 For a generalization of the second assertions of both parts of
Theorem~\ref{inj-proj-resolutions-finite-homol-dim-theorem}, see
Theorem~\ref{inj-proj-resolutions-star-conditions-theorem}.

\subsection{Injective and projective resolutions: semiorthogonal
decompositions}
\label{injective-projective-semiorthogonal-decompositions-subsecn}
 The following two technical conditions on an exact category (going
back to~\cite[Sections~3.7 and~3.8]{Pkoszul}) will appear in
the main result of this section.

 Let $\sE$ be an exact category with infinite coproducts and enough
injective objects.
 We will say that $\sE$ satisfies~($*$) if
\begin{itemize}
\item[($*$)] Any countable coproduct of injective objects in $\sE$
has finite injective dimension.
\end{itemize}
 One can easily see that ($*$)~implies existence of a finite integer
$n\ge0$ such that any countable coproduct of injective objects
in $\sE$ has injective dimension~$\le n$.

 Dually, let $\sE$ be an exact category with infinite products and
enough projective objects.
 We will say that $\sE$ satisfies~($**$) if
\begin{itemize}
\item[($**$)] Any countable product of projective objects in $\sE$
has finite projective dimension.
\end{itemize}
 Existence of a finite integer $n\ge0$ such that any countable
product of projective objects in $\sE$ has projective dimension~$\le n$
follows easily from~($**$).

\begin{thm} \label{inj-proj-resolutions-star-conditions-theorem}
\textup{(a)} Let\/ $\bE$ be an exact DG\+category with twists
and infinite coproducts.
 Assume that the exact category\/ $\sZ^0(\bE^\bec)$ has enough
injective objects and satisfies~\textup{($*$)}.
 Then the composition of the triangulated inclusion functor\/
$\sH^0(\bE_\binj)\rarrow\sH^0(\bE)$ and the Verdier quotient functor\/
$\sH^0(\bE)\rarrow\sD^\co(\bE)$ is a triangulated equivalence\/
$\sH^0(\bE_\binj)\simeq\sD^\co(\bE)$. \par
\textup{(b)} Let\/ $\bE$ be an exact DG\+category with twists
and infinite products.
 Assume that the exact category\/ $\sZ^0(\bE^\bec)$ has enough
projective objects and satisfies~\textup{($**$)}.
 Then the composition of the triangulated inclusion functor\/
$\sH^0(\bE_\bproj)\rarrow\sH^0(\bE)$ and the Verdier quotient functor\/
$\sH^0(\bE)\rarrow\sD^\ctr(\bE)$ is a triangulated equivalence\/
$\sH^0(\bE_\bproj)\rarrow\sD^\ctr(\bE)$.
\end{thm}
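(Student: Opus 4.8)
The plan is to prove part~(a) and obtain part~(b) by the dual argument. The functor under consideration is the composition $\sH^0(\bE_\binj)\rarrow\sH^0(\bE)\rarrow\sD^\co(\bE)$, and the first thing to record is that it is already \emph{fully faithful}: since $\sZ^0(\bE^\bec)$ has infinite coproducts and enough injective objects, its coproduct functors are exact by Remark~\ref{enough-projectives-implies-exact-products-remark}, hence $\bE$ has exact coproducts by Lemma~\ref{exact-co-products-lemma}(a), and Theorem~\ref{inj-proj-resolutions-semiorthogonality-theorem}(a) then supplies the semiorthogonality $\Hom_{\sH^0(\bE)}(X,J)=0$ for $X\in\Ac^\co(\bE)$ and $J\in\sH^0(\bE_\binj)$. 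Thus the whole content of the theorem is \emph{essential surjectivity}: I must show that every object $B\in\sH^0(\bE)$ is isomorphic in $\sD^\co(\bE)$ to a graded-injective object.

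First I would build an infinite injective coresolution and totalize it. Iterating Lemma~\ref{one-step-inj-proj-resolution-lemma}(a) yields an exact complex $0\rarrow B\rarrow J^0\rarrow J^1\rarrow\dotsb$ in $\sZ^0(\bE)$ with every $\Phi_\bE(J^i)$ injective in $\sZ^0(\bE^\bec)$. Setting $J=\Tot^\sqcup(J^0\rarrow J^1\rarrow\dotsb)$ (using the coproducts in~$\bE$), the coproduct totalization of the \emph{augmented} complex $0\rarrow B\rarrow J^0\rarrow J^1\rarrow\dotsb$ is the cone of a natural closed morphism $B\rarrow J$ of degree~$0$. The key sub-lemma stated below then shows this cone is coacyclic, so that $B\simeq J$ in $\sD^\co(\bE)$.

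The object $J$ need \emph{not} itself be graded-injective, and repairing this is where hypothesis~\textup{($*$)} enters. Because $\Phi_\bE$ sends twists to isomorphisms and shifts to inverse shifts (Lemma~\ref{twists-into-isomorphisms}) and preserves coproducts, $\Phi_\bE(J)$ is isomorphic in $\sZ^0(\bE^\bec)$ to a countable coproduct of shifts of the injective objects $\Phi_\bE(J^i)$; as the shift functors preserve injectivity, this is a countable coproduct of injective objects, which by~\textup{($*$)} has injective dimension $\le n$ for some fixed~$n$. Applying Lemma~\ref{one-step-inj-proj-resolution-lemma}(a) $n$~times to $J$ produces a finite exact complex $0\rarrow J\rarrow K^0\rarrow\dotsb\rarrow K^n\rarrow0$ in $\sZ^0(\bE)$ with $\Phi_\bE(K^0),\dots,\Phi_\bE(K^{n-1})$ injective; applying the exact functor $\Phi_\bE$ and invoking the standard finite-injective-dimension fact recalled earlier forces the last cosyzygy $\Phi_\bE(K^n)$ to be injective as well. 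The finite totalization $\widetilde J=\Tot(K^0\rarrow\dotsb\rarrow K^n)$ then has $\Phi_\bE(\widetilde J)$ a finite direct sum of injectives, hence injective, so $\widetilde J\in\bE_\binj$; and the cone of the natural morphism $J\rarrow\widetilde J$ is the totalization of the finite exact complex $0\rarrow J\rarrow K^0\rarrow\dotsb\rarrow K^n\rarrow0$, which is absolutely acyclic, hence coacyclic, by Lemma~\ref{totalization-of-finite-absolutely-acyclic}. Therefore $B\simeq J\simeq\widetilde J$ in $\sD^\co(\bE)$ with $\widetilde J$ graded-injective, completing essential surjectivity.

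The main obstacle is the key sub-lemma used in the first stage: \emph{in an exact DG-category with exact coproducts, the coproduct totalization of any bounded-below exact complex in $\sZ^0(\bE)$ is coacyclic} (dually, with exact products, the product totalization of a bounded-above exact complex is contraacyclic, which is what part~(b) needs). To prove it for a bounded-below exact complex $X^\bu$ I would set $Z^k=\ker(X^k\rarrow X^{k+1})$ and form the finite exact complexes $X_{\le k}^\bu=(0\rarrow X^0\rarrow\dotsb\rarrow X^{k-1}\rarrow Z^k\rarrow0)$, each of whose totalizations is absolutely acyclic by Lemma~\ref{totalization-of-finite-absolutely-acyclic}. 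The admissible monomorphisms $Z^k\rarrow X^k$ induce chain maps $X_{\le k}^\bu\rarrow X_{\le k+1}^\bu$ whose colimit is $X^\bu$, and the plan is to identify $\Tot^\sqcup(X^\bu)$ with the homotopy colimit of the resulting tower $\Tot(X_{\le 0}^\bu)\rarrow\Tot(X_{\le 1}^\bu)\rarrow\dotsb$; the telescope triangle then exhibits $\Tot^\sqcup(X^\bu)$ as the cone of a morphism between two coproducts of coacyclic objects, placing it in $\Ac^\co(\bE)$ since the latter is closed under coproducts and cones. The delicate, technically most demanding point, in the spirit of~\cite[Section~1.4]{EP} and~\cite[Section~7.2.2]{Psemi}, is precisely this commutation of the coproduct totalization with the homotopy colimit of the finite image-truncations; everything else reduces to the bookkeeping of short exact sequences and the already-established behaviour of the functor~$\Phi_\bE$.
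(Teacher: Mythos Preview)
Your proposal is correct and follows essentially the same two-stage strategy as the paper (infinite coresolution and coproduct totalization, then a finite coresolution using~($*$) to land in $\bE_\binj$), with the paper proving the dual part~(b) instead. For the ``delicate point'' you flag, the paper's Lemma~\ref{totalization-of-one-sided-bounded-co-contra-acyclic} handles it concretely by writing the telescope as a short sequence in $\sZ^0(\bE)$, applying $\Phi_\bE$ to see it becomes split (termwise stabilization), and concluding it is admissible exact since $\Phi_\bE$ reflects short exact sequences---so the desired distinguished triangle arises from the totalization of an honest short exact sequence rather than from an abstract homotopy-colimit identification.
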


 For a far-reaching generalization of
Theorem~\ref{inj-proj-resolutions-star-conditions-theorem}, see
Theorem~\ref{finite-homol-dim-star-condition-theorem} below.

\begin{proof}
 Notice that existence of infinite coproducts in $\bE$ and enough
injective objects in $\sZ^0(\bE^\bec)$ implies exactness of
coproducts in $\bE$ by Lemma~\ref{exact-co-products-lemma}
and Remark~\ref{enough-projectives-implies-exact-products-remark}.
 Dually, existence of infinite products in $\bE$ and enough
projective objects in $\sZ^0(\bE^\bec)$ implies exactness of
products in~$\bE$.

 The two dual parts of the theorem are the generalizations
of~\cite[Theorems in Sections~3.7 and~3.8]{Pkoszul} hinted at
in~\cite[Remark in Section~3.7]{Pkoszul}.
 We start with a lemma generalizing~\cite[Lemmas~2.1 and~4.1]{Psemi}.

\begin{lem} \label{totalization-of-one-sided-bounded-co-contra-acyclic}
\textup{(a)} Let\/ $\bE$ be an exact DG\+category with twists and
exact coproducts, and let\/ $0\rarrow X^0\rarrow X^1\rarrow X^2\rarrow
\dotsb$ be a bounded below exact complex in the exact category\/
$\sZ^0(\bE)$.
 Then the coproduct totalization\/ $\Tot^\sqcup(X^\bu)$ is
a coacyclic object of\/~$\bE$. \par
\textup{(b)} Let\/ $\bE$ be an exact DG\+category with twists and
exact products, and let\/ $\dotsb\rarrow Y_2\rarrow Y_1\rarrow Y_0
\rarrow0$ be a bounded above exact complex in the exact category\/
$\sZ^0(\bE)$.
 Then the product totalization\/ $\Tot^\sqcap(Y_\bu)$ is
a contraacyclic object of\/~$\bE$.
\end{lem}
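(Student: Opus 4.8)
The plan is to prove part~(a) in detail and to obtain part~(b) by the dual argument (products in place of coproducts, bounded-above in place of bounded-below, the contraacyclic class in place of the coacyclic one). I will use throughout that $\Ac^\abs(\bE)\subseteq\Ac^\co(\bE)$: indeed $\Ac^\co(\bE)$ is a triangulated subcategory of $\sH^0(\bE)$ closed under coproducts, hence thick, and it contains the totalizations of short exact sequences, which are the generators of the thick subcategory $\Ac^\abs(\bE)$. After a shift I may assume $X^n=0$ for $n<0$. Exactness of $X^\bu$ in $\sZ^0(\bE)$ means that each differential factors as an admissible epimorphism $X^n\twoheadrightarrow Z^{n+1}$ followed by an admissible monomorphism $Z^{n+1}\rightarrowtail X^{n+1}$ through the image object $Z^{n+1}\in\sZ^0(\bE)$ (with $Z^0=0$), assembling into admissible short exact sequences $0\rarrow Z^n\rarrow X^n\rarrow Z^{n+1}\rarrow0$.

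First I would introduce the capped truncations $X^\bu_{(m)}=(X^0\rarrow\dotsb\rarrow X^{m-1}\rarrow Z^m)$, placed in degrees $0$ through~$m$, with the final arrow the admissible epimorphism $X^{m-1}\twoheadrightarrow Z^m$. Each $X^\bu_{(m)}$ is a \emph{finite exact} complex in $\sZ^0(\bE)$, so Lemma~\ref{totalization-of-finite-absolutely-acyclic} applies and $\Tot(X^\bu_{(m)})$ is absolutely acyclic, hence coacyclic. The evident chain maps $\phi_m\:X^\bu_{(m)}\rarrow X^\bu_{(m+1)}$ (the identity below degree~$m$ and the admissible monomorphism $Z^m\rightarrowtail X^m$ in degree~$m$) exhibit $X^\bu$ as the filtered colimit $\varinjlim_m X^\bu_{(m)}$.

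The central step is the mapping telescope. The colimit is computed by the sequence of complexes in $\sZ^0(\bE)$ \[ 0\rarrow\coprod_m X^\bu_{(m)}\xrightarrow{1-\phi}\coprod_m X^\bu_{(m)}\rarrow X^\bu\rarrow0, \] and the point I would verify is that it is split in each cohomological degree: in degree~$n$ the transition maps of the colimit are identities for all $m>n$, so the cokernel map has a section given by the inclusion of a single copy of~$X^n$. A short exact sequence of complexes that is split degreewise is totalized to a distinguished triangle; since coproduct totalization commutes with $\coprod_m$ and $\Tot^\sqcup(\coprod_m X^\bu_{(m)})=\coprod_m\Tot(X^\bu_{(m)})$, this produces a triangle \[ \coprod_m\Tot(X^\bu_{(m)})\xrightarrow{1-\phi}\coprod_m\Tot(X^\bu_{(m)})\rarrow\Tot^\sqcup(X^\bu)\rarrow\Big(\coprod_m\Tot(X^\bu_{(m)})\Big)[1] \] in $\sH^0(\bE)$. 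Its first two terms are coproducts of coacyclic objects, hence coacyclic, and $\Ac^\co(\bE)$ is triangulated, so the third term $\Tot^\sqcup(X^\bu)$ is coacyclic as desired.

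I expect the main obstacle to be the central step: verifying carefully that the telescope sequence is split in each degree (which is what allows its coproduct totalization to pass to a distinguished triangle, using only the existence of coproducts and twists in~$\bE$ rather than any exactness of the infinite coproduct), and that the third term of the resulting triangle is canonically $\Tot^\sqcup(X^\bu)$. The role of the boundedness-below hypothesis is precisely to make the capped truncations $X^\bu_{(m)}$ finite and exact, so that Lemma~\ref{totalization-of-finite-absolutely-acyclic} is available; in part~(b) the boundedness-above hypothesis plays the same role, the colimit telescope is replaced by the dual limit tower built from products, and $\Ac^\abs(\bE)\subseteq\Ac^\ctr(\bE)$ is used in the same way.
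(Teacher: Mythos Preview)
Your proposal is correct and takes essentially the same approach as the paper: both use the capped truncations of the complex (the paper's $K_n\to Y_n\to\dotsb\to Y_0$ in part~(b) are exactly the duals of your $X^\bu_{(m)}$), invoke Lemma~\ref{totalization-of-finite-absolutely-acyclic} to see their totalizations are absolutely acyclic, and then use the telescope sequence to build a distinguished triangle whose outer terms are (co)products of these. The only cosmetic difference is in the justification that the telescope yields a triangle---you argue via degreewise splitness at the level of complexes, while the paper observes that $\Phi_\bE$ takes the sequence of totalizations to a split one and then reflects exactness; these are equivalent, since a sequence of closed degree-$0$ morphisms in $\bE$ that is split exact in $\bE^0$ is a cone sequence.
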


\begin{proof}
 Let us prove part~(b) (part~(a) is dual).
 Let $0\rarrow K_i\rarrow Y_i\rarrow K_{i-1}\rarrow0$ be the short
exact sequences in $\sZ^0(\bE)$ from which the exact complex $Y_\bu$
is obtained by splicing.
 Then the finite complex $0\rarrow K_n\rarrow Y_n\rarrow Y_{n-1}\rarrow
\dotsb\rarrow Y_1\rarrow Y_0\rarrow0$ is exact in $\sZ^0(\bE)$ for
every $n\ge1$.
 By Lemma~\ref{totalization-of-finite-absolutely-acyclic},
the objects $L_n=\Tot(K_n\to Y_n\to\dotsb\to Y_1\to Y_0)\in\bE$
are absolutely acyclic.

 Consider the telescope sequence
\begin{equation} \label{contraacyclicity-telescope-sequence}
 0\lrarrow\Tot^\sqcap(Y_\bu)\lrarrow\prod\nolimits_{n\ge1}L_n
 \lrarrow\prod\nolimits_{n\ge1}L_n\lrarrow0.
\end{equation}
 In view of Lemma~\ref{twists-into-isomorphisms}, the functor
$\Phi_\bE$ transforms~\eqref{contraacyclicity-telescope-sequence}
into a split short exact sequence in $\sZ^0(\bE^\bec)$ (because
the projective system of finite quotient complexes of canonical
filtration of the complex $Y_\bu$ is termwise stabilizing).
 Since, by the definition of an exact DG\+category, the functor
$\Phi_\bE$ reflects short exact sequences, it follows that
the short sequence~\eqref{contraacyclicity-telescope-sequence}
is exact in $\sZ^0(\bE)$.
 Thus the totalization of~\eqref{contraacyclicity-telescope-sequence}
is absolutely acyclic in~$\bE$.
 As the middle and rightmost terms
of~\eqref{contraacyclicity-telescope-sequence} are contraacyclic
in~$\bE$, it follows that the leftmost term is contraacyclic.
\end{proof}

 Let us prove part~(b) of the theorem (part~(a) is dual).
 Our aim is to show that for any object $B\in\sH^0(\bE)$ there
exists a graded-projective object $P\in\sH^0(\bE_\bproj)$ together
with a morphism $P\rarrow B$ in $\sH^0(\bE)$ whose cone belongs
to $\Ac^\ctr(\bE)$.

 Let $B\in\bE$ be an object.
 By Lemma~\ref{one-step-inj-proj-resolution-lemma}(b), there is
a short exact sequence $0\rarrow B_1\rarrow Q_0\rarrow B\rarrow0$
in $\sZ^0(\bE)$ with $\Phi_\bE(Q_0)\in\sZ^0(\bE^\bec)_\proj$.
 Applying the same lemma to the object $B_1\in\bE$, we obtain
a short exact sequence $0\rarrow B_2\rarrow Q_1\rarrow B_1\rarrow0$
in $\sZ^0(\bE)$ with $\Phi_\bE(Q_1)\in\sZ^0(\bE^\bec)_\proj$, etc.
 Proceeding in this way, we construct an exact complex
$\dotsb\rarrow Q_2\rarrow Q_1\rarrow Q_0\rarrow B\rarrow0$
in $\sZ^0(\bE)$ with $\Phi_\bE(Q_i)\in\sZ^0(\bE^\bec)_\proj$.

 Denote by $T=\Tot^\sqcap(Q_\bu)$ the product totalization of
the complex $\dotsb\rarrow Q_2\rarrow Q_1\rarrow Q_0\rarrow0$
in the DG\+category~$\bE$.
 By Lemma~\ref{twists-into-isomorphisms}, the object $\Phi_\bE(T)$
is isomorphic to the product $\prod_{i=0}^\infty\Phi_\bE(Q_i)[-i]$
in the additive category $\sZ^0(\bE^\bec)$.
 Hence condition~($**$) tells that the object $\Phi_\bE(T)$ has
finite projective dimension in $\sZ^0(\bE^\bec)$.
 Let us denote this projective dimension by~$n\ge0$.
 The cone of the natural morphism $T\rarrow B$ in $\bE$ is
the product totalization of the bounded above exact complex
$\dotsb\rarrow Q_2\rarrow Q_1\rarrow Q_0\rarrow B\rarrow0$
in $\sZ^0(\bE)$, so it is contraacyclic by
Lemma~\ref{totalization-of-one-sided-bounded-co-contra-acyclic}(b).

 Now we restart the resolution procedure following the construction
from the proof of
Theorem~\ref{inj-proj-resolutions-finite-homol-dim-theorem}.
 By Lemma~\ref{one-step-inj-proj-resolution-lemma}(b), there is
a short exact sequence $0\rarrow T_1\rarrow P_0\rarrow T\rarrow0$
in $\sZ^0(\bE)$ with $\Phi_\bE(P_0)\in\sZ^0(\bE^\bec)_\proj$.
 Applying the same lemma to the object $T_1\in\bE$, we obtain
a short exact sequence $0\rarrow T_2\rarrow P_1\rarrow T_1\rarrow0$
in $\sZ^0(\bE)$ with $\Phi_\bE(P_1)\in\sZ^0(\bE^\bec)_\proj$, etc.
 Proceeding in this way, we construct an exact complex
$0\rarrow Q\rarrow P_{n-1}\rarrow\dotsb\rarrow P_0\rarrow T\rarrow0$
in $\sZ^0(\bE)$ with $\Phi_\bE(P_i)\in\sZ^0(\bE^\bec)_\proj$.
 Then the complex $0\rarrow\Phi_\bE(Q)\rarrow\Phi_\bE(P_{n-1})
\rarrow\dotsb\rarrow\Phi_\bE(P_0)\rarrow\Phi_\bE(T)\rarrow0$ is
exact in $\sZ^0(\bE^\bec)$, and it follows that
$\Phi_\bE(Q)\in\sZ^0(\bE^\bec)_\proj$.

 Put $P_n=Q$ and $P=\Tot(P_n\to P_{n-1}\to\dotsb\to P_1\to P_0)
\in\bE$.
 Once again, by Lemma~\ref{twists-into-isomorphisms}, the object
$\Phi_\bE(P)$ is isomorphic to $\bigoplus_{i=0}^\infty
\Phi_\bE(P_i)[-i]$, hence it is projective in $\sZ^0(\bE^\bec)$.
 The cone of the natural closed morphism $P\rarrow T$ in $\bE$ is
the totalization of the finite exact complex $0\rarrow P_n\rarrow\dotsb
\rarrow P_0\rarrow T\rarrow0$ in $\sZ^0(\bE)$, so it is absolutely
acyclic by Lemma~\ref{totalization-of-finite-absolutely-acyclic}.

 Finally, the cone of the composition $P\rarrow T\rarrow B$ is
contraacyclic since both the cones of the morphisms $P\rarrow T$
and $T\rarrow B$ are contraacyclic in~$\bE$.
\end{proof}

\begin{rem} \label{exact-DG-pairs-work-in-proj-inj-section-remark}
 All the results of
Sections~\ref{injective-projective-semiorthogonality-subsecn}\+-%
\ref{injective-projective-semiorthogonal-decompositions-subsecn}
can be extended rather straightforwardly to the setting of
\emph{exact DG\+pairs} $(\bE,\sK)$ introduced below in
Section~\ref{exact-DG-pairs-subsecn}.
 In this context, one can say that an object $P\in\bE$ is
graded-projective with respect to $(\bE,\sK)$ if the object
$\Phi_\bE^\sK(P)$ is projective in the exact category~$\sK$, which is
in fact equivalent to $P$ being graded-projective in $\bE$ in the sense
of the definition above~\cite[Corollary~15.15]{Pdomc} (and similarly
for graded-injective objects with respect to $(\bE,\sK)$).
 In particular, a version of
Theorem~\ref{inj-proj-resolutions-finite-homol-dim-theorem} for
exact DG\+pairs can be found in~\cite[Proposition~15.17]{Pdomc}.
\end{rem}

\Section{Finite Resolution Dimension Theorem}
\label{finite-resol-dim-secn}

\subsection{Exact DG-pairs} \label{exact-DG-pairs-subsecn}
 The natural generality for the main results of
Sections~\ref{finite-resol-dim-secn}\+-\ref{finite-homol-dim-secn}
seems to be slightly wider than that of exact DG\+categories.
 In this connection, we introduce the following definition.

 An \emph{exact DG\+pair} $(\bE,\sK)$ is a pair consisting of
an exact DG\+category $\bE$ and a full subcategory $\sK\subset
\sZ^0(\bE^\bec)$ satisfying the following conditions:
\begin{itemize}
\item the full subcategory $\sK\subset\sZ^0(\bE^\bec)$ is preserved
by the shift functors~$[1]$ and~$[-1]$;
\item the image of the exact functor $\Phi_\bE\:\sZ^0(\bE)\rarrow
\sZ^0(\bE^\bec)$ is contained in~$\sK$;
\item the full subcategory $\sK$ inherits an exact category
structure from the exact category $\sZ^0(\bE^\bec)$.
\end{itemize}

 We will denote the functor $\Phi_\bE$, viewed as taking values
in~$\sK$, by $\Phi_\bE^\sK\:\sZ^0(\bE)\rarrow\sK$.
 The restrictions of the functors $\Psi_\bE^+$ and $\Psi_\bE^-\:
\sZ^0(\bE^\bec)\rarrow\sZ^0(\bE)$ to the full subcategory
$\sK\subset\sZ^0(\bE^\bec)$ will be denoted by $\Psi_\bE^{\sK,+}$
and $\Psi_\bE^{\sK,-}\:\sK\rarrow\sZ^0(\bE)$.

 Let $(\bE,\sK)$ be an exact DG\+pair.
 An \emph{exact DG\+subpair} $(\bF,\sL)\subset(\bE,\sK)$ consists
of a full DG\+subcategory $\bF\subset\bE$ and a full subcategory
$\sL\subset\sK$ such that
\begin{itemize}
\item the full subcategory $\sL\subset\sK$ is preserved by
the shift functors~$[1]$ and~$[-1$];
\item the full subcategory $\sL\subset\sK\subset\sZ^0(\bE^\bec)$
is contained in the full subcategory $\sZ^0(\bF^\bec)\subset
\sZ^0(\bE^\bec)$, that is $\sL\subset\sZ^0(\bF^\bec)$;
\item the image of the composition of the inclusion functor
$\sZ^0(\bF)\rightarrowtail\sZ^0(\bE)$ with the functor
$\Phi_\bE^\sK\:\sZ^0(\bE)\rarrow\sK$ is contained in~$\sL$;
\item the full DG\+subcategory $\bF$ inherits an exact DG\+category
structure from the exact DG\+category~$\bE$;
\item the full subcategory $\sL$ inherits an exact category
structure from the exact category~$\sK$.
\end{itemize}
 Clearly, if $(\bF,\sL)$ is an exact DG\+subpair in $(\bE,\sK)$,
then $(\bF,\sL)$ is an exact DG\+pair.

 An exact DG\+subpair $(\bF,\sL)\subset(\bE,\sK)$ is said to be
\emph{strict} if ($\sL$~is a strictly full subcategory in $\sK$ and)
$\Phi_\bE^\sK(F)\in\sL$ implies $F\in\bF$ for any given
object $F\in\bE$.

\begin{exs} \label{exact-DG-pairs-examples}
 (0)~If $\bE$ is an exact DG\+category, then $(\bE,\sZ^0(\bE^\bec))$
is an exact DG\+pair.
 If a DG\+subcategory $\bF\subset\bE$ inherits an exact DG\+category
structure from an exact DG\+category $\bE$, then $(\bF,\sZ^0(\bF^\bec))$
is an exact DG\+subpair in $(\bE,\sZ^0(\bE^\bec))$.

 Any exact DG\+pair $(\bE,\sK)$ is a strict exact DG\+subpair in
$(\bE,\sZ^0(\bE^\bec))$.

\smallskip
 (1)~Let $\bE$ be an exact DG\+category and $\sL\subset\sZ^0(\bE^\bec)$
be a full subcategory preserved by the shift functors~$[n]$,
\,$n\in\Gamma$, and inheriting an exact category structure.
 Consider the full DG\+subcategory $\bF\subset\bE$ whose objects are
all $F\in\bE$ for which $\Phi_\bE(F)\in\sL$, as in
Proposition~\ref{subcategory-L-prop}.
 Then, by part~(c) of the proposition, $\bF$ inherits an exact
DG\+category structure from $\bE$; in particular, the full subcategory
$\widetilde\sL=\sZ^0(\bF^\bec)\subset\sZ^0(\bE^\bec)$ inherits
an exact category structure.
 Hence the intersection $\sL\cap\widetilde\sL\subset\sZ^0(\bE^\bec)$
inherits an exact category structure as well.
 Thus $(\bF,\sL\cap\widetilde\sL)$ is a strict exact DG\+subpair in
$(\bE,\sZ^0(\bE^\bec))$.
 In particular, if $\sL\subset\widetilde\sL$ (e.~g., by part~(d) of
the proposition, this holds whenever the full subcategory $\sL$ is
closed under extensions in $\sZ^0(\bE^\bec)$), then
$(\bF,\sL)$ is a strict exact DG\+subpair in $(\bE,\sZ^0(\bE^\bec))$.

\smallskip
 (2)~More generally, let $(\bE,\sK)$ be an exact DG\+pair, and let
$\sL\subset\sK$ be a full subcategory preserved by the shift
functors and inheriting an exact category structure.
 Let $\bF\subset\bE$ be the full DG\+subcategory whose objects are
all $F\in\bE$ for which $\Phi_\bE^\sK(F)\in\sL$.
 Then (1)~implies that $(\bF,\sL\cap\widetilde\sL)$ is a strict
exact DG\+subpair in $(\bE,\sK)$.
 Here $\widetilde\sL=\sZ^0(\bF^\bec)\subset\sZ^0(\bE^\bec)$ is
the full subcategory consisting of all objects $\widetilde L\in
\sZ^0(\bE^\bec)$ such that $\Xi_{\bE^\bec}(\widetilde L)\in\sL$,
as in Proposition~\ref{subcategory-L-prop}.

 Assume that the full subcategory $\sL$ is closed under extensions
in~$\sK$.
 Then, for any $L\in\sL$, the short sequence $0\rarrow L[1]
\rarrow\Xi_{\bE^\bec}(L)\rarrow L\rarrow0$ is admissible exact in
$\sZ^0(\bE^\bec)$ with all the three terms belonging to~$\sK$.
 It follows that $\Xi_{\bE^\bec}(L)\in\sL$ and therefore
$\sL\subset\widetilde\sL$.
 Thus $(\bF,\sL)$ is a strict exact DG\+subpair in $(\bE,\sK)$.

\smallskip
 (3)~This example illustrates how the notion of an exact DG\+pair
can be a nontrivial generalization of that of an exact DG\+category.
 Let $\biR^\bu=(R^*,d)$ be the DG\+ring from
Example~\ref{L-tilde-example}, that is $R^*=k[\epsilon]$ with
$\deg\epsilon=-1$, \,$\epsilon^2=0$, and $d(\epsilon)=1$.
 Let $\bA=\biR^\bu\bmodl$ be the abelian DG\+category of DG\+modules
over~$\biR^\bu$.
 Following Example~\ref{CDG-ring-bec-example}, \,$\sZ^0(\bA^\bec)
\simeq R^*\smodl$ is the abelian category of graded $R^*$\+modules.
 Let $\sK\subset R^*\smodl$ be the full subcategory of free
(equivalently, flat or projective) graded $R^*$\+modules.
 Following the discussion in Example~\ref{L-tilde-example},
the underlying graded $R^*$\+module of any DG\+module over $\biR^\bu$
belongs to~$\sK$.
 Hence $(\bA,\sK)$ is an exact DG\+pair; in fact, it is a strict exact
DG\+subpair in the exact DG\+pair $(\bA,\sZ^0(\bA^\bec))$.

\smallskip
 (4)~Let $\sE$ be an exact category.
 Consider the DG\+category of complexes $\bC(\sE)$, and endow it
with the exact DG\+category structure constructed in
Example~\ref{exact-dg-category-of-complexes}.

 Let the category of graded objects $\sG(\sE)$ be endowed with
the exact category structure constructed in the same example.
 The category $\sG(\sE)$ can be viewed as a full subcategory in
$\sZ^0(\bC(\sE)^\bec)$ embedded by the functor~$\Upsilon_\sE$.
 Then $(\bC(\sE),\sG(\sE))$ is an exact DG\+pair.

 Let $\sF\subset\sE$ be a full subcategory inheriting an exact
category structure.
 Then $(\bC(\sF),\sG(\sF))$ is a strict exact DG\+subpair in
$(\bC(\sE),\sG(\sE))$.

\smallskip
 (5)~Let $\sE$ be an exact category and $\Lambda\:\sE\rarrow\sE$
be an autoequivalence preserving and reflecting short exact
sequences.
 Assume that either $\Gamma=\boZ$, or $\Lambda$ is involutive and
$\Gamma=\boZ/2$.
 Let $w\:\Id_\sE\rarrow\Lambda^2$ be a potential.
 Consider the DG\+category of factorizations $\bF(\sE,\Lambda,w)$
and endow it with the exact DG\+category structure constructed
in Example~\ref{exact-dg-category-of-factorizations}.

 Let the category of $\Lambda$\+periodic objects $\sP(\sE,\Lambda)$
be endowed with the exact category structure constructed in
the same example.
 The category $\sP(\sE,\Lambda)$ can be viewed as a full subcategory
in $\sZ^0(\bF(\sE,\Lambda,w)^\bec)$ embedded by the functor
$\Upsilon_{\sE,\Lambda,w}$.
 Then $(\bF(\sE,\Lambda,w),\sP(\sE,\Lambda))$ is an exact DG\+pair.

 Let $\sH\subset\sE$ be a full subcategory preserved by
the autoequivalences $\Lambda$ and $\Lambda^{-1}$ and inheriting
an exact category structure.
 Then $(\bF(\sH,\Lambda,w),\sP(\sH,\Lambda))$ is a strict exact
DG\+subpair in $(\bF(\sE,\Lambda,w),\sP(\sE,\Lambda))$.
\end{exs}

\subsection{Resolving subcategories and resolution dimension}
 Let $\sE$ be an exact category.
 A full subcategory $\sF\subset\sE$ is said to be \emph{resolving}
\cite[Section~3]{AuRe}, \cite[Section~2]{Sto} if the following
conditions hold:
\begin{itemize}
\item $\sF$ is closed under extensions in~$\sE$;
\item $\sF$ is closed under the kernels of admissible epimorphisms
in~$\sE$;
\item for every object $E\in\sE$ there exists an admissible
epimorphism $F\rarrow E$ in $\sE$ with $F\in\sF$.
\end{itemize}

 Dually, a full subcategory $\sG\subset\sE$ is said to be
\emph{coresolving} if
\begin{itemize}
\item $\sG$ is closed under extensions in\/~$\sE$;
\item $\sG$ is closed under the cokernels of admissible monomorphisms
in~$\sE$;
\item for every object $E\in\sE$ there exists an admissible
monomorphism $E\rarrow G$ in $\sE$ with $G\in\sG$.
\end{itemize}

 Obviously, any resolving or coresolving subcategory in an exact
category inherits an exact category structure (since it is closed
under extensions).

 Assume that the exact category $\sE$ is weakly idempotent-complete
(in the sense of~\cite[Section~7]{Bueh}).
 Clearly, then any resolving or coresolving subcategory in $\sE$ is
weakly idempotent-complete as well.

 Let $\sF\subset\sE$ be a resolving subcategory and $\sG\subset\sE$
be a coresolving subcategory.
 One says that the \emph{resolution dimension}
\cite[Section~1]{AB}, \cite[Section~2]{Sto} of an object $E\in\sE$
with respect to a resolving subcategory $\sF\subset\sE$ is~$\le n$
if there exists an exact sequence
$0\rarrow F_n\rarrow F_{n-1}\rarrow\dotsb\rarrow F_1\rarrow F_0
\rarrow E\rarrow0$ in $\sE$ with the terms $F_i\in\sF$.
 Dually, the \emph{coresolution dimension} of an object $E\in\sE$
with respect to a coresolving subcategory $\sG\subset\sE$ is~$\le n$
if there exists an exact sequence
$0\rarrow E\rarrow G^0\rarrow G^1\rarrow\dotsb\rarrow G^{n-1}\rarrow
G^n\rarrow0$ in $\sE$ with the terms $G^i\in\sG$.

 Given an integer $n\ge0$, denote by $\sF_n\subset\sE$ the full
subcategory of all objects of resolution dimension~$\le n$ with
respect to~$\sF$, and by $\sG^n\subset\sE$ the full subcategory
of all objects of coresolution dimension~$\le n$ with respect to~$\sG$.
 In particular, for $n=0$ we have $\sF_0=\sF$ and $\sG^0=\sG$.

\begin{prop} \label{resol-dim-well-defined}
 Let\/ $\sF\subset\sE$ be a resolving subcategory and\/ $\sG\subset\sE$
be a coresolving subcategory. \par
\textup{(a)} Let\/ $0\rarrow D\rarrow F_{n-1}\rarrow\dotsb\rarrow F_0
\rarrow E\rarrow0$ be an exact complex in\/ $\sE$ with $E\in\sF_n$
and $F_i\in\sF$.
 Then $D\in\sF$. \par
\textup{(b)} Let\/ $0\rarrow E\rarrow G^0\rarrow\dotsb\rarrow G^{n-1}
\rarrow H\rarrow0$ be an exact complex in\/ $\sE$ with $E\in\sG^n$
and $G^i\in\sG$.
 Then $H\in\sG$.
\end{prop}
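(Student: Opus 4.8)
The plan is to prove part~(a) and obtain part~(b) by the evident dualization (replacing admissible epimorphisms by admissible monomorphisms, kernels by cokernels, and pullbacks by pushouts). The argument goes by induction on~$n$, and the essential tool is the pullback form of Schanuel's lemma, which is available in an arbitrary exact category: given two admissible short exact sequences $0\to K\to F\to E\to0$ and $0\to K'\to F'\to E\to0$, the pullback $L=F\times_E F'$ of the two deflations sits in two admissible short exact sequences $0\to K\to L\to F'\to0$ and $0\to K'\to L\to F\to0$ (here one uses that a pullback of a deflation is a deflation, cf.\ \cite[1st step of the proof in Section~A.1]{Kel} and~\cite{Bueh}). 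I would deliberately use this version rather than the classical ``$K\oplus F'\cong K'\oplus F$'' formulation, since $\sF$ is not assumed closed under direct summands; the pullback version allows me to argue using only extension-closure and kernel-closure.

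For the base case $n=0$ the complex reads $0\to D\to E\to0$, so $D\cong E\in\sF_0=\sF$. For the inductive step I would first split the given exact complex into the admissible short exact sequence $0\to K\to F_0\to E\to0$, where $K$ is the first syzygy, together with the exact complex $0\to D\to F_{n-1}\to\dotsb\to F_1\to K\to0$, a partial $\sF$-resolution of $K$ of length $n-1$. Since $E\in\sF_n$, I would choose a witnessing resolution $0\to F'_n\to\dotsb\to F'_0\to E\to0$; its first syzygy $K'=\ker(F'_0\to E)$ visibly lies in $\sF_{n-1}$. Applying the pullback lemma to $0\to K\to F_0\to E\to0$ and $0\to K'\to F'_0\to E\to0$ produces an object $L$ with admissible short exact sequences $0\to K'\to L\to F_0\to0$ and $0\to K\to L\to F'_0\to0$. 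From the first, $L$ is an extension of $F_0\in\sF\subset\sF_{n-1}$ by $K'\in\sF_{n-1}$, so $L\in\sF_{n-1}$ once one knows $\sF_{n-1}$ is closed under extensions; from the second, $K$ is the kernel of an admissible epimorphism $L\to F'_0$ with $L\in\sF_{n-1}$ and $F'_0\in\sF$, which forces $K\in\sF_{n-1}$. Feeding $K\in\sF_{n-1}$ together with its length-$(n-1)$ partial resolution into the induction hypothesis then yields $D\in\sF$.

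The real work is hidden in the two closure properties of the intermediate classes $\sF_m$ that the inductive step consumes: that $\sF_m$ is closed under extensions, and that the kernel of an admissible epimorphism from an object of $\sF_m$ onto an object of $\sF$ again lies in $\sF_m$. I would establish these within the same induction. Extension-closure is the horseshoe lemma: given deflations $F_A\to A$ and $F_C\to C$ from $\sF$ onto the ends of an admissible short exact sequence $0\to A\to B\to C\to0$, one builds a deflation $F_A\oplus F_C\to B$ whose kernel is an extension of the two syzygies and hence of resolution dimension $\le m-1$ by the lower induction step. The kernel property reduces, by one further pullback against a deflation $G\to L$ from $\sF$, to the defining closure of the resolving subcategory $\sF$ under kernels of admissible epimorphisms.

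Consequently the whole statement is really a simultaneous induction on $m$ proving assertion~(a), the extension-closure of $\sF_m$, and the one-step kernel property in tandem. I expect the main obstacle to be not any single computation but organizing this bookkeeping cleanly and, throughout, scrupulously avoiding any appeal to direct-summand closure of $\sF$ (which is not among the hypotheses), relying instead on the pullback/pushout versions of Schanuel's lemma and on weak idempotent-completeness only where the resolving/coresolving axioms already supply it.
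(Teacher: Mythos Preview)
Your overall induction via the pullback Schanuel lemma is sound and is one of the standard routes to this result; the paper itself does not argue directly but cites \cite[Proposition~2.3(1)]{Sto} and \cite[Corollary~A.5.2]{Pcosh}, remarking that alternatively part~(a) follows by iterating Proposition~\ref{resol-dim-in-exact-sequence}. There is, however, a genuine gap in your horseshoe step for the extension-closure of $\sF_{m-1}$. Building a deflation $F_A\oplus F_C\to B$ whose kernel is an extension of the two syzygies requires lifting the chosen $F_C\to C$ through $B\to C$, which amounts to the vanishing of the class of the sequence in $\Ext^1_\sE(F_C,A)$; this is automatic when $\sF$ consists of projectives but fails for a general resolving subcategory. (Take $\sE$ abelian, $\sF=\sE$, $F_A=A$, $F_C=C$, and any nonsplit $0\to A\to B\to C\to 0$: your claim would force $A\oplus C\cong B$.)

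The repair is easy and fits your scheme. Either choose $F_C\to C$ to factor through $B$ from the outset, i.e., set $F_C:=F_B$ for some deflation $F_B\to B$ supplied by the resolving axiom---then the lift is built in, weak idempotent-completeness makes the map $(F_A\!\to\!A\!\to\!B,\;F_B\!\to\!B)\:F_A\oplus F_B\to B$ a deflation via the dual of \cite[Proposition~7.6]{Bueh}, and a $3\times 3$ argument identifies its kernel as an extension of $\ker(F_B\to C)$ by $K_A$---or bypass extension-closure entirely: after forming the Schanuel pullback $L$, rather than arguing $L\in\sF_{m-1}$, pick a deflation $G\to L$ from $\sF$, note that both composites $G\to F_0$ and $G\to F'_0$ are deflations inside $\sF$ (so their kernels lie in $\sF$), and use the resulting short exact sequences $0\to\ker(G\to L)\to\ker(G\to F_0)\to K'\to 0$ and $0\to\ker(G\to L)\to\ker(G\to F'_0)\to K\to 0$ together with the inductive hypothesis applied to $K'\in\sF_{m-1}$ to conclude $\ker(G\to L)\in\sF_{m-2}$ and hence $K\in\sF_{m-1}$ directly.
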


\begin{proof}
 Part~(a) is~\cite[Proposition~2.3(1)]{Sto}
or a particular case of~\cite[Corollary~A.5.2]{Pcosh}.
 Part~(b) is dual.
 Alternatively, the assertions can be deduced by induction
from Proposition~\ref{resol-dim-in-exact-sequence} below.
\end{proof}

\begin{prop} \label{resol-dim-defines-resolving-subcat}
\textup{(a)} The full subcategory\/ $\sF_n$ is resolving in\/~$\sE$.
\par
\textup{(b)} The full subcategory\/ $\sG^n$ is coresolving in\/~$\sE$.
\end{prop}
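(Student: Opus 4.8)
The plan is to deduce both parts from a small package of dimension-shifting implications for $\sF$-resolution dimension, established by a single induction. Part~(b) is formally dual to part~(a): passing to the opposite category $\sE^\sop$ interchanges resolving and coresolving subcategories, carries $\sG^n$ into the subcategory of objects of resolution dimension $\le n$ in $\sE^\sop$, and weak idempotent-completeness is self-dual; so it suffices to treat~(a). For~(a) I must check the three defining properties of a resolving subcategory for $\sF_n$. The generating property is immediate: splicing an admissible epimorphism from $\sF$ onto the last syzygy shows $\sF_m\subseteq\sF_{m+1}$, hence $\sF=\sF_0\subseteq\sF_n$, and $\sF$ already admits admissible epimorphisms onto every object of $\sE$. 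It then remains to prove closure under extensions and under the kernels of admissible epimorphisms.

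Both closures will follow from the implications, stated for an arbitrary admissible short exact sequence $0\rarrow A\rarrow B\rarrow C\rarrow0$ in $\sE$,
\begin{align*}
 &\text{(1)}\quad A\in\sF_a,\ C\in\sF_c\ \Longrightarrow\ B\in\sF_{\max(a,c)}; \\
 &\text{(2)}\quad A\in\sF_a,\ B\in\sF_b\ \Longrightarrow\ C\in\sF_{\max(a+1,\,b)}; \\
 &\text{(3)}\quad B\in\sF_b,\ C\in\sF_c\ \Longrightarrow\ A\in\sF_{\max(b,\,c-1)},
\end{align*}
which I would prove simultaneously by induction. Inequality~(1) with $a=c=n$ gives closure under extensions, and (3) with $B,C\in\sF_n$ gives $A=\ker(B\rarrow C)\in\sF_{\max(n,n-1)}=\sF_n$, i.e.\ closure under kernels of admissible epimorphisms; inequality~(2) is needed only to keep the induction self-contained. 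The base case $a=b=c=0$ is exactly the hypothesis that $\sF$ is resolving.

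The engine is a syzygy (Schanuel) lemma: if $0\rarrow D\rarrow F\rarrow E\rarrow0$ is an admissible short exact sequence with $F\in\sF$, then $E\in\sF_n$ if and only if $D\in\sF_{n-1}$ (for $n\ge1$). The ``if'' direction is splicing. For ``only if'', I would pick any $\sF$-resolution of $E$, take its first term $G_0\twoheadrightarrow E$ with kernel $\Omega\in\sF_{n-1}$, and form the pullback $P=F\times_E G_0$; this produces admissible short exact sequences $0\rarrow\Omega\rarrow P\rarrow F\rarrow0$ and $0\rarrow D\rarrow P\rarrow G_0\rarrow0$. The first exhibits $P$ as an extension of objects of $\sF_{n-1}$, whence $P\in\sF_{n-1}$ by the inductive case of~(1); then the second, being a kernel of an admissible epimorphism between objects of $\sF_{n-1}$, gives $D\in\sF_{n-1}$ by the inductive case of~(3). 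Feeding this one-step syzygy into the usual pullback/pushout comparisons for a general conflation reduces each of (1)--(3) to an instance of strictly smaller dimension, closing the induction.

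The main obstacle I anticipate is organizational rather than conceptual: choosing an induction measure on the triples $(a,b,c)$ so that no step appeals to an instance of equal or larger size, and checking that every pullback and pushout square genuinely yields an admissible short exact sequence in the ambient exact category. The latter point is where the standing hypothesis that $\sE$ is weakly idempotent-complete is used, together with the ``obscure axiom'' (\cite[Proposition~2.16]{Bueh} and its dual), just as in the proof of Lemma~\ref{faithful-conservative-lifting-exact-structure}. I would also remark that, with these shifting implications in hand, part~(a) can alternatively be read off directly from Proposition~\ref{resol-dim-well-defined}, and that the whole statement is in any case available in \cite[Section~2]{Sto}.
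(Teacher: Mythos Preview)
Your proposal is correct and spells out precisely what the paper defers to \cite[Proposition~2.3(2)]{Sto} and \cite[Lemma~A.5.4]{Pcosh}: the three dimension-shifting inequalities established by simultaneous induction, with Schanuel-type pullback comparisons as the engine. One minor correction worth noting: the pullback and pushout steps you describe---pulling back an admissible epimorphism along an arbitrary morphism, composing admissible epimorphisms and extracting the resulting short exact sequence of kernels---are valid in any exact category directly from the axioms (see \cite[Proposition~2.12 and Lemma~3.5]{Bueh}), so neither weak idempotent-completeness nor the obscure axiom is actually invoked at the places you flag; the standing hypothesis on~$\sE$ is harmless here but not essential for this particular proposition.
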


\begin{proof}
 Part~(a) is~\cite[Proposition~2.3(2)]{Sto}
or~\cite[Lemma~A.5.6(a\+-b)]{Pcosh}.
 Part~(b) is dual.
\end{proof}

\begin{prop} \label{resol-dim-in-exact-sequence}
 Let\/ $0\rarrow E'\rarrow E\rarrow E''\rarrow0$ be a short exact
sequence in\/~$\sE$ and $n\ge0$ be an integer. \par
\textup{(a)} If $E\in\sF_n$ and $E''\in\sF_{n+1}$, then $E'\in\sF_n$.
\par
\textup{(b)} If $E\in\sG^n$ and $E'\in\sG^{n+1}$, then $E''\in\sG^n$.
\end{prop}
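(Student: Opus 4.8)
The plan is to prove part~(a) by a single pullback construction, with no induction, extracting $E'\in\sF_n$ from the two closure properties of $\sF_n$ guaranteed by Proposition~\ref{resol-dim-defines-resolving-subcat}(a) (namely, closedness under extensions and under kernels of admissible epimorphisms). Part~(b) will then follow by the formally dual argument, with a pushout replacing the pullback and the coresolving closure properties of $\sG^n$ from Proposition~\ref{resol-dim-defines-resolving-subcat}(b) replacing the resolving ones.

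For~(a), I would first produce a first-syzygy object of $E''$ lying in $\sF_n$. Since $E''\in\sF_{n+1}$, there is an exact sequence $0\rarrow F_{n+1}\rarrow\dotsb\rarrow F_0\rarrow E''\rarrow0$ in $\sE$ with all $F_i\in\sF$ and with $F_0\rarrow E''$ an admissible epimorphism; writing $Z$ for its kernel, the truncated sequence $0\rarrow F_{n+1}\rarrow\dotsb\rarrow F_1\rarrow Z\rarrow0$ exhibits $Z$ as an object of resolution dimension $\le n$, so $Z\in\sF_n$ directly from the definition. Next I would form the pullback $Q$ of the admissible epimorphism $g\colon E\rarrow E''$ from the given short exact sequence along the map $F_0\rarrow E''$. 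In an exact category the pullback of an admissible epimorphism exists, and both projections $Q\rarrow E$ and $Q\rarrow F_0$ are admissible epimorphisms, with kernels isomorphic to $\ker(F_0\rarrow E'')\simeq Z$ and $\ker(g)\simeq E'$ respectively. This yields two short exact sequences $0\rarrow Z\rarrow Q\rarrow E\rarrow0$ and $0\rarrow E'\rarrow Q\rarrow F_0\rarrow0$ in $\sE$. From the first, $Z\in\sF_n$ and $E\in\sF_n$ together with closedness of $\sF_n$ under extensions give $Q\in\sF_n$; from the second, $Q\in\sF_n$ and $F_0\in\sF=\sF_0\subseteq\sF_n$ together with closedness of $\sF_n$ under kernels of admissible epimorphisms (applied to $Q\rarrow F_0$) give $E'\in\sF_n$, as required.

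I do not expect a genuine obstacle here, since the only facts used about pullbacks in exact categories (their existence for admissible epimorphisms and the identification of the two projection kernels) are standard, and no inductive bookkeeping on $n$ is needed. The one point that does require care is the correct matching of the two pullback projections with their kernels $Z$ and $E'$, and hence the order in which the extension-closure and kernel-closure properties of $\sF_n$ are invoked; it is essential that $Q$ be recognized as an extension of $E$ by $Z$ (to place $Q$ in $\sF_n$) before $E'$ is obtained as a kernel. For part~(b) I would dualize throughout: using $E'\in\sG^{n+1}$, take a first cosyzygy $W=\coker(E'\rarrow G^0)\in\sG^n$ from a coresolution, form the pushout $P$ of the admissible monomorphism $E'\rarrow E$ along $E'\rarrow G^0$, obtain the two short exact sequences $0\rarrow E\rarrow P\rarrow W\rarrow0$ and $0\rarrow G^0\rarrow P\rarrow E''\rarrow0$, and conclude $P\in\sG^n$ by extension-closure and then $E''\in\sG^n$ by closedness of $\sG^n$ under cokernels of admissible monomorphisms.
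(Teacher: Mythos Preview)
Your argument is correct. The paper does not give its own proof here; it simply cites \cite[Lemma~A.5.4(b)]{Pcosh} for part~(a) and says part~(b) is dual. Your proof is more informative, deriving the statement from the already-established Proposition~\ref{resol-dim-defines-resolving-subcat} via a single pullback (respectively pushout). One caution: since the paper's proofs of Propositions~\ref{resol-dim-defines-resolving-subcat} and~\ref{resol-dim-in-exact-sequence} both defer to the same external source, you should be aware that your reduction only has content if Proposition~\ref{resol-dim-defines-resolving-subcat}(a) is established independently of the present statement (it is, via horseshoe/Schanuel-type arguments as in~\cite{Sto}), so there is no hidden circularity.
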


\begin{proof}
 Part~(a) is~\cite[Lemma~A.5.6(b)]{Pcosh}.
 Part~(b) is dual.
\end{proof}

\subsection{Finite resolution dimension theorem}
\label{finite-resol-dim-subsecn}
 We start with a generalization of
Lemma~\ref{one-step-inj-proj-resolution-lemma}.

\begin{lem} \label{one-step-exact-subcategory-resolution-lemma}
 Let $(\bE,\sK)$ be an exact DG\+pair, and let $(\bF,\sL)$,
$(\bG,\sM)\subset(\bE,\sK)$ be two exact DG\+subpairs. \par
\textup{(a)} Assume that for every object $K\in\sK$ there exists
an admissible epimorphism $L\rarrow K$ in the exact category\/ $\sK$
with an object $L\in\sL$.
 Then for any object $E\in\bE$ there exists an admissible epimorphism
$F\rarrow E$ in the exact category\/ $\sZ^0(\bE)$ with
an object $F\in\bF\subset\bE$. \par
\textup{(b)} Assume that for every object $K\in\sK$ there exists
an admissible monomorphism $K\rarrow M$ in the exact category\/ $\sK$
with an object $M\in\sM$.
 Then for any object $E\in\bE$ there exists an admissible monomorphism
$E\rarrow G$ in the exact category\/ $\sZ^0(\bE)$ with
an object $G\in\bG\subset\bE$.
\end{lem}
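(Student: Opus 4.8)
The plan is to follow the pattern of the proof of Lemma~\ref{one-step-inj-proj-resolution-lemma}, now reading the hypotheses through the full subcategory $\sK$ and its inherited exact structure. The only genuinely new ingredient is the bookkeeping that verifies that the objects produced land in $\bF$ (respectively $\bG$), and this is exactly where the exact DG\+subpair axioms enter. I will prove part~\textup{(a)}; part~\textup{(b)} is dual, using the right adjoint $\Psi^-_\bE$ and Lemma~\ref{mono-epi-exact-DG-adjunction}(a) in place of the left adjoint and Lemma~\ref{mono-epi-exact-DG-adjunction}(b).

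First I would take an arbitrary object $E\in\bE$ and form $\Phi_\bE^\sK(E)\in\sK$ (which lies in $\sK$ by the definition of an exact DG\+pair). The hypothesis supplies an admissible epimorphism $g\:L\rarrow\Phi_\bE^\sK(E)$ in the exact category $\sK$ with $L\in\sL$. Since the exact structure on $\sK$ is inherited from $\sZ^0(\bE^\bec)$, the morphism $g$ is also an admissible epimorphism in $\sZ^0(\bE^\bec)$. Using that $\Psi^+_\bE$ is left adjoint to $\Phi_\bE$ (Lemma~\ref{G-functors-in-DG-categories}), I pass to the adjoint morphism $f\:\Psi^+_\bE(L)\rarrow E$ in $\sZ^0(\bE)$; by Lemma~\ref{mono-epi-exact-DG-adjunction}(b) this $f$ is an admissible epimorphism in the exact category $\sZ^0(\bE)$. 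I then set $F=\Psi^+_\bE(L)$, so that $f\:F\rarrow E$ is an admissible epimorphism, and it remains only to check $F\in\bF$.

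The membership $F\in\bF$, which is the crux of the argument, comes from the characterization recorded in the proof of Proposition~\ref{inheriting-from-E-implies-from-E-bec}: the full subcategory $\sZ^0(\bF^\bec)\subset\sZ^0(\bE^\bec)$ is precisely the full preimage of $\sZ^0(\bF)$ under $\Psi^+_\bE$. By the definition of an exact DG\+subpair one has $\sL\subset\sZ^0(\bF^\bec)$, hence $L\in\sZ^0(\bF^\bec)$ and therefore $\Psi^+_\bE(L)\in\sZ^0(\bF)$, i.e.\ $F\in\bF$. This completes part~\textup{(a)}. For part~\textup{(b)}, the dual argument starts from an admissible monomorphism $\Phi_\bE^\sK(E)\rarrow M$ in $\sK$ with $M\in\sM$ and produces, via the adjunction with the right adjoint $\Psi^-_\bE$ and Lemma~\ref{mono-epi-exact-DG-adjunction}(a), an admissible monomorphism $E\rarrow\Psi^-_\bE(M)$ in $\sZ^0(\bE)$. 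Here the membership $\Psi^-_\bE(M)\in\bG$ follows because $M\in\sM\subset\sZ^0(\bG^\bec)$ gives $\Psi^+_\bE(M)\in\bG$, and then $\Psi^-_\bE(M)=\Psi^+_\bE(M)[1]\in\bG$ since $\bG$, being part of an exact DG\+subpair, is closed under shifts.

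I expect no serious obstacle: the statement is essentially a dressed-up version of Lemma~\ref{one-step-inj-proj-resolution-lemma}, and the real content is already carried by Lemma~\ref{mono-epi-exact-DG-adjunction} together with the preimage description of $\sZ^0(\bF^\bec)$. The one point demanding care is to keep straight that ``admissible'' in $\sK$ and in $\sZ^0(\bE^\bec)$ agree (by the inheritance of the exact structure), so that Lemma~\ref{mono-epi-exact-DG-adjunction} applies verbatim, and to use the correct adjoint ($\Psi^+_\bE$ for epimorphisms, $\Psi^-_\bE$ for monomorphisms) when transporting the resolving, respectively coresolving, morphism across the adjunction.
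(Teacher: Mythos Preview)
Your proof is correct and follows essentially the same approach as the paper's: take $L\rarrow\Phi(E)$ in $\sK$, apply Lemma~\ref{mono-epi-exact-DG-adjunction}(b) to get the admissible epimorphism $\Psi^+(L)\rarrow E$, and conclude $\Psi^+(L)\in\bF$ from $L\in\sL\subset\sZ^0(\bF^\bec)$. You spell out two points that the paper leaves implicit---that an admissible epimorphism in the inherited exact structure on $\sK$ is also one in $\sZ^0(\bE^\bec)$, and the preimage description of $\sZ^0(\bF^\bec)$ under $\Psi^+_\bE$---but the skeleton of the argument is identical.
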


\begin{proof}
 Let us prove part~(a) (part~(b) is dual).
 Given an object $E\in\bE$, consider the object $\Phi(E)\in\sK$
and choose an object $L\in\sL$ together with an admissible
epimorphism $L\rarrow\Phi(E)$ in~$\sK$.
 By Lemma~\ref{mono-epi-exact-DG-adjunction}(b), the related morphism
$\Psi^+(L)\rarrow E$ in an admissible epimorphism in $\sZ^0(\bE)$.
 Finally, we have $\Psi^+(L)\in\sZ^0(\bF)$, because $L\in\sL\subset
\sZ^0(\bF^\bec)$.
 So it remains to put $F=\Psi^+(L)$.
\end{proof}

 The idea of the following theorem, which is the main result of
Section~\ref{finite-resol-dim-secn}, goes back
to~\cite[Theorem~7.2.2(a)]{Psemi}, \cite[Theorem~3.2(a)]{PP2},
\cite[Theorem~1.4]{EP}, and~\cite[Proposition~A.5.8]{Pcosh}
(see also~\cite[Theorem~5.5]{PS1}).
 We use this opportunity to correct a small mistake in the proof
of~\cite[Theorem~1.4]{EP} related to the difference between
the thick and the full triangulated subcategory in $\sH^0(\bE)$
generated by the totalizations of short exact sequences
(i.~e., the adjoining of direct summands).
 The proof of this theorem occupies the rest of
Section~\ref{finite-resol-dim-subsecn}.

\begin{thm} \label{finite-resolution-dimension-main-theorem}
 Let $(\bE,\sK)$ be an exact DG\+pair with a weakly idempotent-complete
additive category\/~$\sK$.
 Let $(\bF,\sL)$, $(\bG,\sM)\subset(\bE,\sK)$ be two strict exact
DG\+subpairs.
\par
\textup{(a)} Assume that the full subcategory\/ $\sL$ is resolving in
the exact category\/ $\sK$ and all the objects of\/ $\sK$ have finite
resolution dimensions with respect to\/~$\sL$.
 Then the triangulated functor
$$
 \sD^\abs(\bF)\lrarrow\sD^\abs(\bE)
$$
induced by the inclusion of exact DG\+categories\/ $\bF\rightarrowtail
\bE$ is an equivalence of triangulated categories.

 Furthermore, if the exact DG\+category\/ $\bE$ has exact coproducts,
the full DG\+sub\-cat\-e\-gory\/ $\bF\subset\bE$ is closed under
coproducts, and the additive subcategories\/ $\sL$, $\sK\subset
\sZ^0(\bE^\bec)$ are closed under coproducts, then
the triangulated functor
$$
 \sD^\co(\bF)\lrarrow\sD^\co(\bE)
$$
induced by the inclusion of exact DG\+categories\/ $\bF\rightarrowtail
\bE$ is a triangulated equivalence.
 If the exact DG\+category\/ $\bE$ has exact products,
the full DG\+subcategory\/ $\bF\subset\bE$ is closed under products,
and the additive subcategories\/ $\sL$, $\sK\subset\sZ^0(\bE^\bec)$ are
closed under products, then the triangulated functor
$$
 \sD^\ctr(\bF)\lrarrow\sD^\ctr(\bE)
$$
induced by the inclusion\/ $\bF\rightarrowtail\bE$ is a triangulated
equivalence. \par
\textup{(b)} Assume that the full subcategory\/ $\sM$ is coresolving in
the exact category\/ $\sK$ and all the objects of\/ $\sK$ have finite
coresolution dimensions with respect to\/~$\sM$.
 Then the triangulated functor
$$
 \sD^\abs(\bG)\lrarrow\sD^\abs(\bE)
$$
induced by the inclusion of exact DG\+categories\/ $\bG\rightarrowtail
\bE$ is an equivalence of triangulated categories.

 Furthermore, if the exact DG\+category\/ $\bE$ has exact coproducts,
the full DG\+sub\-cat\-e\-gory\/ $\bG\subset\bE$ is closed under 
coproducts, and the additive subcategories\/ $\sM$, $\sK\subset
\sZ^0(\bE^\bec)$ are closed under coproducts, then
the triangulated functor
$$
 \sD^\co(\bG)\lrarrow\sD^\co(\bE)
$$
induced by the inclusion of exact DG\+categories\/ $\bG\rightarrowtail
\bE$ is a triangulated equivalence.
 If the exact DG\+category\/ $\bE$ has exact products,
the full DG\+subcategory\/ $\bG\subset\bE$ is closed under products,
and the additive subcategories\/ $\sM$, $\sK\subset\sZ^0(\bE^\bec)$ are
closed under products, then the triangulated functor\/
$$
 \sD^\ctr(\bG)\lrarrow\sD^\ctr(\bE)
$$
induced by the inclusion\/ $\bG\rightarrowtail\bE$ is a triangulated
equivalence.
\end{thm}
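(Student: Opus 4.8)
The plan is to prove part~(a) and obtain part~(b) by the evident duality (passing to the opposite DG\+category, which exchanges resolving with coresolving, and kernels of admissible epimorphisms with cokernels of admissible monomorphisms). Within part~(a) I treat the absolute derived category first and then upgrade to the co- and contraderived categories.

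\emph{Essential surjectivity for $\sD^\abs$.} Given $E\in\bE$, the resolving hypothesis on $\sL$ says precisely that every object of $\sK$ admits an admissible epimorphism from $\sL$, so Lemma~\ref{one-step-exact-subcategory-resolution-lemma}(a) applies and, iterated, yields admissible short exact sequences $0\rarrow E_{i+1}\rarrow F_i\rarrow E_i\rarrow0$ in $\sZ^0(\bE)$ with $E_0=E$ and $F_i\in\bF$. Applying the exact functor $\Phi_\bE^\sK$ turns these into an $\sL$\+resolution of $\Phi_\bE^\sK(E)$ in $\sK$; since $\Phi_\bE^\sK(E)$ has finite $\sL$\+resolution dimension, Proposition~\ref{resol-dim-well-defined}(a) shows that after finitely many steps the syzygy satisfies $\Phi_\bE^\sK(E_n)\in\sL$, whence $E_n\in\bF$ because $(\bF,\sL)$ is a \emph{strict} subpair. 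This produces a finite admissible exact complex $0\rarrow F_n\rarrow\dotsb\rarrow F_0\rarrow E\rarrow0$ in $\sZ^0(\bE)$ with all terms in $\bF$. Its totalization is absolutely acyclic by Lemma~\ref{totalization-of-finite-absolutely-acyclic}, and reading it as the cone of the natural closed morphism $P\rarrow E$ with $P=\Tot(F_n\rarrow\dotsb\rarrow F_0)$ exhibits an isomorphism $P\simeq E$ in $\sD^\abs(\bE)$; here $P\in\bF$ since $\bF$ is closed under shifts and cones. Thus every object of $\bE$, hence of $\sD^\abs(\bE)$, comes from $\bF$.

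\emph{Full faithfulness for $\sD^\abs$.} Because $\bF\subset\bE$ is a full DG\+subcategory, $\sH^0(\bF)\rarrow\sH^0(\bE)$ is fully faithful, and the resolution just built provides, for each $E\in\bE$, a distinguished triangle $P\rarrow E\rarrow A\rarrow P[1]$ in $\sH^0(\bE)$ with $P\in\sH^0(\bF)$ and $A\in\Ac^\abs(\bE)$. By the standard calculus of fractions for Verdier quotients, these two facts reduce full-and-faithfulness of $\sD^\abs(\bF)\rarrow\sD^\abs(\bE)$ to the single identity $\Ac^\abs(\bF)=\sH^0(\bF)\cap\Ac^\abs(\bE)$ inside $\sH^0(\bF)$. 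The inclusion $\subseteq$ is immediate. I expect the reverse inclusion to be \emph{the main obstacle}: given $P\in\bF$ that is absolutely acyclic in $\bE$, one must realize its absolute acyclicity using only short exact sequences with terms in $\bF$. The idea is a dévissage of the generators of $\Ac^\abs(\bE)$: each totalization of a short exact sequence in $\sZ^0(\bE)$ is, modulo objects lying in $\Ac^\abs(\bF)$, equivalent to the totalization of a short exact sequence of finite $\bF$\+resolutions obtained by a horseshoe argument in the resolving subcategory $\sL$ (transported to $\bE$ through $\Phi_\bE^\sK$, $\Psi_\bE^{\sK,+}$, and Lemma~\ref{mono-epi-exact-DG-adjunction}). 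The delicate point here—exactly the place where the proof of~\cite[Theorem~1.4]{EP} needs the correction mentioned before the theorem—is the passage to direct summands: $\Ac^\abs$ is a \emph{thick} subcategory, so the dévissage must be carried out so as to stay within $\bF$ when splitting idempotents, which is legitimate precisely because $\sK$ (and hence the categories built from it) is weakly idempotent-complete.

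\emph{Co- and contraderived categories.} The same finite resolutions give essential surjectivity of $\sD^\co(\bF)\rarrow\sD^\co(\bE)$ and $\sD^\ctr(\bF)\rarrow\sD^\ctr(\bE)$ verbatim, since finite totalizations are not only coacyclic and contraacyclic but absolutely acyclic. For full faithfulness the argument is parallel, now reducing to $\Ac^\co(\bF)=\sH^0(\bF)\cap\Ac^\co(\bE)$ and its contraderived analogue. The single new ingredient is compatibility with infinite (co)products: the hypotheses that $\bF$, $\sL$, and $\sK$ are closed under coproducts (respectively products) and that the (co)products are exact guarantee that coproducts of $\bF$\+resolutions are again $\bF$\+resolutions, so the dévissage of the coproduct-closed (respectively product-closed) generating set of $\Ac^\co(\bE)$ (respectively $\Ac^\ctr(\bE)$) proceeds inside $\bF$. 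Where the resolutions of acyclic generators must be taken one-sidedly infinite, their totalizations are kept under control by Lemma~\ref{totalization-of-one-sided-bounded-co-contra-acyclic}. Combining this with the equality of acyclic subcategories yields the asserted triangulated equivalences; part~(b) then follows by duality.
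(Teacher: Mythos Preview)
Your skeleton matches the paper's: essential surjectivity via finite $\bF$\+resolutions (using strictness and Proposition~\ref{resol-dim-well-defined}), then reduction via Lemma~\ref{Verdier-quotient-by-intersection-lemma} to the equality $\Ac^\abs(\bE)\cap\sH^0(\bF)=\Ac^\abs(\bF)$.  You correctly flag this equality as the obstacle, but your sketch does not prove it.  The horseshoe construction you describe indeed shows that the totalization of any short exact sequence in $\sZ^0(\bE)$ sits in a short exact sequence in $\sZ^0(\bE)$ whose other two terms lie in $\Ac^\abs(\bF)$; this is Lemma~\ref{totalizations-abs-resolvable} in the paper.  But that property does not propagate under cones, homotopy equivalences, or direct summands: the intermediate objects arising in a cone construction lie only in $\bE$, not in $\bF$, so the d\'evissage cannot stay inside $\bF$ in one step.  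Your appeal to weak idempotent-completeness for the direct-summand step is not how this is resolved; weak idempotent-completeness of $\sK$ is used earlier, to make the resolution-dimension calculus (Propositions~\ref{resol-dim-well-defined}--\ref{resol-dim-in-exact-sequence}) behave.

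The paper's mechanism is to interpolate a filtration $\bF=\bF_0\subset\bF_1\subset\dotsb$ of $\bE$ by $\sL$\+resolution dimension and prove the one-step equality $\Ac^\abs(\bF_n)\cap\sH^0(\bF_{n-1})=\Ac^\abs(\bF_{n-1})$.  An object $C\in\bF_n$ is called \emph{absolutely $(n-1)$\+resolvable} if there is a short exact sequence $0\to A\to B\to C\to 0$ in $\sZ^0(\bF_n)$ with $A,B\in\Ac^\abs(\bF_{n-1})$; one then shows this class contains totalizations (your horseshoe), is closed under cones and cokernels of admissible monomorphisms (a careful diagram chase, Lemma~\ref{cones-cokernels-abs-resolvable}), contains all contractible objects of $\bF_n$ (a genuinely delicate argument, Lemma~\ref{contractibles-abs-resolvable}), hence is closed under homotopy equivalence, and finally handles direct summands via the trick $D\mapsto D\oplus D[1]$ (Lemma~\ref{direct-summands-almost-abs-resolvable}).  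The co-/contraderived cases run the same machine with $\Ac^\co$, $\Ac^\ctr$ in place of $\Ac^\abs$, but require one extra observation you omit: under the coproduct (or product) closure hypotheses, the $\sL$\+resolution dimensions of objects of $\sK$ are \emph{uniformly} bounded (proved by a coproduct-plus-cancellation argument), so the filtration terminates at some $\bF_m=\bE$ and the induction closes.
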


\begin{proof}
 Let us prove part~(a) (part~(b) is dual).
 First of all, we show that for any object $E\in\bE$ there exists
an object $F\in\bF$ together with a morphism $F\rarrow E$ in
$\sZ^0(\bE)$ whose cone belongs to $\Ac^\abs(\bE)$.

 By Lemma~\ref{one-step-exact-subcategory-resolution-lemma}(a), there
is a short exact sequence $0\rarrow E_1\rarrow F_0\rarrow E\rarrow0$
in $\sZ^0(\bE)$ with $F_0\in\bF$.
 Applying the same lemma to the object $E_1\in\bE$, we obtain
a short exact sequence $0\rarrow E_2\rarrow F_1\rarrow E_1\rarrow0$
in $\sZ^0(\bE)$ with $F_1\in\bF$.
 Proceeding in this way, we construct an exact complex $0\rarrow D
\rarrow F_{n-1}\rarrow\dotsb\rarrow F_0\rarrow E\rarrow0$ in
$\sZ^0(\bE)$ with $F_i\in\bF$, where $n$~is the resolution dimension
of the object $\Phi(E)\in\sK$ with respect to the subcategory
$\sL\subset\sK$.
 Then the complex $0\rarrow\Phi(D)\rarrow\Phi(F_{n-1})\rarrow\dotsb
\rarrow\Phi(F_0)\rarrow\Phi(E)\rarrow0$ is exact in $\sK$ with
$\Phi(F_i)\in\sL$.
 By Proposition~\ref{resol-dim-well-defined}(a), it follows that
$\Phi(D)\in\sL$.

 By the assumption that $(\bF,\sL)$ is a strict exact DG\+subpair
in $(\bE,\sK)$, we can conclude that $D\in\bF$.
 Put $F_n=D$, and denote by $F$ the totalization of the finite
complex $F_n\rarrow F_{n-1}\rarrow\dotsb\rarrow F_0$ in
the DG\+category~$\bF$.
 Then the cone of the natural closed morphism $F\rarrow E$ in
the DG\+category $\bE$ is the totalization of the finite exact
complex $0\rarrow F_n\rarrow F_{n-1}\rarrow\dotsb\rarrow F_0\rarrow E
\rarrow0$ in $\sZ^0(\bE)$; so it is absolutely acyclic in $\bE$
by Lemma~\ref{totalization-of-finite-absolutely-acyclic}.

 Now we apply the following lemma.

\begin{lem} \label{Verdier-quotient-by-intersection-lemma}
 Let\/ $\sT$ be a triangulated category and\/ $\sS$, $\sX\subset\sT$
be (strictly) full triangulated subcategories.
 Assume that for every object $T\in\sT$ there exists an object $S\in\sS$
together with a morphism $S\rarrow T$ in\/ $\sT$ with a cone
belonging to\/~$\sX$.
 Then the triangulated functor between the Verdier quotient categories
$$
 \sS/(\sX\cap\sS)\lrarrow\sT/\sX
$$
induced by the inclusion of triangulated categories\/
$\sS\rightarrowtail\sT$ is an equivalence of triangulated categories.
\end{lem}

\begin{proof}
 See~\cite[Lemma~1.6(a)]{Pkoszul}.
\end{proof}

 In view of Lemma~\ref{Verdier-quotient-by-intersection-lemma} and
the construction in the preceding paragraphs, in order
to prove the equivalence $\sD^\abs(\bF)\simeq\sD^\abs(\bE)$, it
suffices to check that
\begin{equation} \label{Ac-abs-intersection}
 \Ac^\abs(\bE)\cap\sH^0(\bF)=\Ac^\abs(\bF)
 \quad\text{in $\sH^0(\bE)$.}
\end{equation}
 Similarly, in order to prove the equivalence $\sD^\co(\bF)\simeq
\sD^\co(\bE)$, it suffices to check that
\begin{equation} \label{Ac-co-intersection}
 \Ac^\co(\bE)\cap\sH^0(\bF)=\Ac^\co(\bF)
 \quad\text{in $\sH^0(\bE)$,}
\end{equation}
while in order to prove the equivalence $\sD^\ctr(\bF)\simeq
\sD^\ctr(\bE)$, it suffices to check that
\begin{equation} \label{Ac-ctr-intersection}
 \Ac^\ctr(\bE)\cap\sH^0(\bF)=\Ac^\ctr(\bF)
 \quad\text{in $\sH^0(\bE)$}
\end{equation}
(under the respective assumptions).

 For any integer $n\ge1$, let $\sL_n\subset\sK$ denote the full 
subcategory of objects of resolution dimension~$\le n$ with respect
to the resolving subcategory $\sL$ in the exact category~$\sK$.
 By Proposition~\ref{resol-dim-defines-resolving-subcat}, \,$\sL_n$
is a resolving subcategory in~$\sK$; in particular, the full
subcategory $\sL_n\subset\sK$ is closed under extensions.
 Following Example~\ref{exact-DG-pairs-examples}(2), consider
the full DG\+subcategory $\bF_n\subset\bE$ consisting of all
the objects $E\in\bE$ such that $\Phi(E)\in\sL_n$.
 Then $(\bF_n,\sL_n)$ is a strict exact DG\+subpair in $(\bE,\sK)$.

 Recall that $\sL_n$ is a resolving subcategory in $\sK$ by
Proposition~\ref{resol-dim-defines-resolving-subcat}(a).
 Taking into account
Lemma~\ref{one-step-exact-subcategory-resolution-lemma}(a),
we can conclude that $\sZ^0(\bF_n)$ is a resolving subcategory
in the exact category $\sZ^0(\bE)$ (since $(\bF_n,\sL_n)$ is
a strict exact DG\+subpair).
 Moreover, in view of Proposition~\ref{resol-dim-in-exact-sequence}(a),
the kernel of any admissible epimorphism in $\sZ^0(\bE)$ from
an object of $\sZ^0(\bF_{n-1})$ to an object of $\sZ^0(\bF_n)$
belongs to $\sZ^0(\bF_{n-1})$.

 In particular, for $n=0$ we have $\sL_0=\sL$, hence $\bF_0=\bF$.
 By assumption, $\bigcup_{n\ge0}\sL_n=\sK$, hence $\bigcup_{n\ge0}
\bF_n=\bE$.
 It follows that $\Ac^\abs(\bE)=\bigcup_{n\ge0}\Ac^\abs(\bF_n)
\subset\sH^0(\bE)$.

 In order to prove~\eqref{Ac-abs-intersection}, we will show that
\begin{equation} \label{one-step-Ac-abs-intersection}
 \Ac^\abs(\bF_n)\cap\sH^0(\bF_{n-1})=\Ac^\abs(\bF_{n-1})
\end{equation}
for every $n\ge1$.
 Then it will follow by induction in~$n$ that $\Ac^\abs(\bF_n)\cap
\sH^0(\bF)=\Ac^\abs(\bF)$.

 Let us say that an object $C\in\bF_n$ is
\emph{absolutely $(n-1)$\+resolvable} if there exists a short exact
sequence $0\rarrow A\rarrow B\rarrow C\rarrow0$ in $\sZ^0(\bF_n)$
with $A$, $B\in\Ac^\abs(\bF_{n-1})$.
 Let us emphasize that it is presumed here that $A$ and $B$ are
objects of $\bF_{n-1}$ (and \emph{not} only homotopy equivalent to
such objects).
 The following lemma is intended to express the idea of our approach.
 
\begin{lem} \label{absolutely-resolvable-is-sufficient}
 If an object $C\in\bF_{n-1}$ is absolutely $(n-1)$\+resolvable,
then it belongs to $\Ac^\abs(\bF_{n-1})$.
\end{lem}

\begin{proof}
 By assumptions, all the three terms of the short exact sequence
$0\rarrow A\rarrow B\rarrow C\rarrow0$ belong to $\bF_{n-1}$.
 So we have $\Tot(A\to B\to C)\in\Ac^\abs(\bF_{n-1})$ and $A$,
$B\in\Ac^\abs(\bF_{n-1})$, hence also $C\in\Ac^\abs(\bF_{n-1})$.
\end{proof}

 Our aim is to show that, for any object $D\in\Ac^\abs(\bF_n)$,
the object $D\oplus D[1]\in\Ac^\abs(\bF_n)$ is
absolutely $(n-1)$\+resolvable.
 In view of Lemma~\ref{absolutely-resolvable-is-sufficient} and
the discussion above, this will
imply~\eqref{one-step-Ac-abs-intersection}, hence
also~\eqref{Ac-abs-intersection}.

 The full subcategory $\Ac^\abs(\bF_n)$ of absolutely acyclic objects
in $\sH^0(\bF_n)$ is defined by a generation procedure, and
the argument proceeds along the steps of this procedure.
 We present it as a sequence of five lemmas.

\begin{lem} \label{totalizations-abs-resolvable}
 The total object of any short exact sequence in\/ $\sZ^0(\bF_n)$
is absolutely $(n-1)$\+resolvable.
\end{lem}

\begin{proof}
 This is our version of~\cite[Lemma~3.2.A]{PP2}.
 Let $0\rarrow E'\rarrow E\rarrow E''\rarrow0$ be a short exact
sequence in $\sZ^0(\bF_n)$.
 Choose two objects $F'$, $F''\in\bF$ together with admissible
epimorphisms $F'\rarrow E'$ and $F''\rarrow E$ in the exact
category $\sZ^0(\bF_n)$ (or equivalently, in the exact category
$\sZ^0(\bE)$).
 Then there is a termwise admissible epimorphism from the split short
exact sequence $0\rarrow F'\rarrow F'\oplus F''\rarrow F''\rarrow0$
in $\sZ^0(\bF)$  to the short exact sequence $0\rarrow E'\rarrow E
\rarrow E''\rarrow0$ in $\sZ^0(\bF_n)$.
 The sequence of kernels of this termwise admissible epimorphism of
short exact sequences in $\sZ^0(\bF_n)$ is a short exact sequence
$0\rarrow G'\rarrow G \rarrow G''\rarrow0$ in $\sZ^0(\bF_{n-1})$.
 Finally, we have a short exact sequence $0\rarrow\Tot(G'\to G\to G'')
\rarrow\Tot(F'\to F'\oplus F''\to F'')\rarrow\Tot(E'\to E\to E'')
\rarrow0$ in $\sZ^0(\bF_n)$ with $\Tot(G'\to G\to G'')\in
\Ac^\abs(\bF_{n-1})$ and $\Tot(F'\to F'\oplus F''\to F'')
\in\Ac^\abs(\bF)$.
\end{proof}

\begin{lem} \label{cones-cokernels-abs-resolvable}
\textup{(a)} The class of all absolutely $(n-1)$\+resolvable objects
is preserved by the cones of closed morphisms in\/~$\bF_n$. \par
\textup{(b)} The class of all absolutely $(n-1)$\+resolvable objects
is preserved by the cokernels of admissible monomorphisms in\/
$\sZ^0(\bF_n)$.
\end{lem}

\begin{proof}
 We follow the argument from~\cite[Lemma~3.2.B]{PP2}.
 Let $0\rarrow A'\rarrow B'\rarrow C'\rarrow0$ and $0\rarrow A''
\rarrow B''\rarrow C''\rarrow0$ be two short exact sequences in
$\sZ^0(\bF_n)$ with $A'$, $B'$, $A''$, $B''\in\Ac^\abs(\bF_{n-1})$,
and let $C'\rarrow C''$ be a morphism in $\sZ^0(\bF_n)$.
 Put $B'''=B'\oplus B''$, and consider two commutative diagrams of
morphisms of short exact sequences in $\sZ^0(\bF_n)$,
$$
 \xymatrix{
  0 \ar[r] & A' \ar[r] \ar@{>->}[d] & B' \ar[r] \ar@{>->}[d]^{(1,0)}
  & C' \ar[r] \ar[d] & 0 \\
  0 \ar[r] & A''' \ar[r] & B''' \ar[r] & C'' \ar[r] & 0
 }
$$
and
$$
 \xymatrix{
  0 \ar[r] & A'' \ar[r] \ar@{>->}[d] & B'' \ar[r] \ar@{>->}[d]^{(0,1)}
  & C'' \ar[r] \ar@{=}[d] & 0 \\
  0 \ar[r] & A''' \ar[r] & B''' \ar[r] & C'' \ar[r] & 0
 }
$$
where, on both the diagrams, $B'''\rarrow C''$ is the morphism
whose components are the composition $B'\rarrow C'\rarrow C''$
and the admissible epimorphism $B''\rarrow C''$.
 The composition $B''\rarrow B'''\rarrow C''$ is an admissible
epimorphism, while the direct summand inclusion $B''\rarrow B'''$
is admissible monomorphism; hence the morphism $B'''\rarrow C''$ is
an admissible epimorphism in $\sZ^0(\bF_n)$ by the dual version
of~\cite[first assertion of Exercise~3.11(i)]{Bueh}, and we denote
by $A'''$ its kernel.
 Then there is an admissible short exact sequence $0\rarrow A''
\rarrow A'''\rarrow B'\rarrow0$ in $\sZ^0(\bF_n)$ with $A''$, $B'\in
\bF_{n-1}$.
 Therefore, we have $A'''\in\bF_{n-1}$.
 Moreover, since $A''$, $B'\in\Ac^\abs(\bF_{n-1})$ and
$\Tot(A''\to A'''\to B')\in\Ac^\abs(\bF_{n-1})$, it follows that
$A'''\in\Ac^\abs(\bF_{n-1})$.

 To prove part~(a), denote by $A$ and $B$ the cones of the closed
morphisms $A'\rarrow A'''$ and $B'\rarrow B'''$, respectively.
 Then we have a short exact sequence $0\rarrow A\rarrow B\rarrow
\cone(C'\to C'')\to0$ in $\sZ^0(\bF_n)$ with $A$,
$B\in\Ac^\abs(\bF_{n-1})$, showing that $\cone(C'\to C'')$ is
absolutely $(n-1)$\+resolvable.

 To prove part~(b), assume that $C'\rarrow C''$ is an admissible
monomorphism in $\sZ^0(\bF_n)$ with a cokernel $C_0\in\bF_n$.
 Then we have a morphism of (vertical) short exact sequences
in $\sZ^0(\bF_n)$ depicted in the right-hand side of
the commutative diagram
$$
 \xymatrix{
  0 \ar[r] & A' \ar[r] \ar@{>->}[d] & B' \ar[r] \ar@{>->}[d]^{(1,0)}
  & C' \ar[r] \ar[d] & 0 \\
  0 \ar[r] & A''' \ar[r] \ar@{->>}[d] & B''' \ar[r] \ar@{->>}[d]^{(0,1)}
  & C'' \ar[r] \ar@{->>}[d] & 0 \\
  0 \ar[r] & A_0 \ar[r] & B'' \ar[r] & C_0 \ar[r] & 0
 }
$$
 Here the lower rightmost horizontal morphism $B''\rarrow C_0$ is
the composition of admissible epimorphisms $B''\rarrow C''\rarrow C_0$,
so it is an admissible epimorphism as well.
 Thus the morphism from the middle to the rightmost column is
a termwise admissible epimorphism of short exact sequences, and
therefore its kernel $0\rarrow A'\rarrow A'''\rarrow A_0\rarrow0$
is a short exact sequence in $\sZ^0(\bF_n)$, too.

 Now we know that $B_0=B''\in\sZ^0(\bF_{n-1})$ and $C_0\in\sZ^0(\bF_n)$,
hence $A_0\in\sZ^0(\bF_{n-1})$.
 Consequently, $0\rarrow A'\rarrow A'''\rarrow A_0\rarrow0$ is
a short exact sequence in $\sZ^0(\bF_{n-1})$.
 In this short exact sequence, we have $A'$, $A'''\in
\Ac^\abs(\bF_{n-1})$ and $\Tot(A'\to A'''\to A_0)\in
\Ac^\abs(\bF_{n-1})$, hence $A_0\in\Ac^\abs(\bF_{n-1})$.
 Finally, the short exact sequence $0\rarrow A_0\rarrow B_0\rarrow C_0
\rarrow0$ in $\sZ^0(\bF_n)$ with $A_0$, $B_0\in\Ac^\abs(\bF_{n-1})$
shows that the object $C_0\in\bF_n$ is absolutely $(n-1)$\+resolvable.
\end{proof}

\begin{lem} \label{contractibles-abs-resolvable}
 All contractible objects in\/ $\bF_n$ are absolutely
$(n-1)$\+resolvable.
\end{lem}

\begin{proof}
 We follow the argument from~\cite[Lemma~1.4.C]{EP}.
 Let $C$ be a contractible object in the DG\+category $\bF_n$ with
a contracting homotopy $\sigma\in\Hom^{-1}_\bE(C,C)$,
\,$d(\sigma)=\id_C$.
 Following (the proof of)
Lemma~\ref{one-step-exact-subcategory-resolution-lemma}(a),
for any object $L\in\sL$ endowed with an admissible epimorphism
$p\:L\rarrow\Phi(C)$ in the exact category~$\sL_n$, the corresponding
(by adjunction) morphism $\tilde p\:\Psi^+(L)\rarrow C$ is
an admissible epimorphism in the exact category $\sZ^0(\bF_n)$.
 Then we have $\Psi^+(L)\in\bF$.
 Our aim is to show that the kernel $A$ of the admissible
epimorphism~$\tilde p$ in $\sZ^0(\bF_n)$ belongs to
$\Ac^\abs(\bF_{n-1}$).
 This will immediately imply the assertion of the lemma.

 By the definitions of the DG\+category $\bE^\bec$ and
the additive functor $\Psi^+$, for any object $X^\bec\in\bE^\bec$,
the object $\Psi^+(X^\bec)\in\bE$ comes endowed with a contracting
homotopy $\sigma_X\in\Hom_\bE^{-1}(\Psi^+(X^\bec),\Psi^+(X^\bec))$.
 We are interested in the element~$\sigma_L$, which is
the natural contracting homotopy for the object $\Psi^+(L)\in\bF$.
 Then $\tilde u=\tilde p\sigma_L-\sigma\tilde p\in
\Hom^{-1}_\bE(\Psi^+(L),C)$ is a closed morphism of degree~$-1$
in the DG\+category~$\bF_n$.

 Denote by $u\:L[-1]\rarrow\Phi(C)$ the morphism in the additive
category $\sL_n$ corresponding by adjunction to the morphism
$\tilde u\:\Psi^+(L)[1]\rarrow C$ in the additive category
$\sZ^0(\bF_n)$ (cf.\ Lemma~\ref{twists-into-isomorphisms}).
 Consider the pullback of the admissible epimorphism $p\:L\rarrow
\Phi(C)$ along the morphism $u\:L[-1]\rarrow\Phi(C)$ in the exact
category~$\sL_n$, and choose an admissible epimorphism onto
the resulting object from an object $M[-1]\in\sL$.
 We obtain a commutative square diagram
$$
 \xymatrix{
  M[-1] \ar@{->>}[r]^{q[-1]} \ar[d]_v & L[-1] \ar[d]^u \\
  L \ar@{->>}[r]^p & \Phi(C)
 }
$$
with an admissible epimorphism $q\:M\rarrow L$ and some morphism
$v\:M[-1]\rarrow L$ in the exact category~$\sL_n$.
 Then it follows that $q$~is an admissible epimorphism in~$\sL$.

 The induced morphism $\tilde q=\Psi^+(q)\:\Psi^+(M)\rarrow
\Psi^+(L)$ is an admissible epimorphism in $\sZ^0(\bF)$.
 The morphism~$\tilde q$ is homotopic to zero in the DG\+category
$\bF$ with the natural choice of the homotopy $\tilde q\sigma_{M}
=\sigma_L\tilde q$.
 Applying the functor $\Psi^+$ to the morphism~$v$, we obtain
the morphism $\tilde v=\Psi^+(v)\:\Psi^+(M)[1]\rarrow\Psi^+(L)$,
which can be viewed as a closed morphism $\tilde v\in
\Hom^{-1}_\bE(\Psi^+(M),\Psi^+(L))$ of degree~$-1$ in
the DG\+category~$\bF$.
 The morphism $\sigma_L\tilde q-\tilde v\in
\Hom^{-1}_\bE(\Psi^+(M),\Psi^+(L))$ is another contracting
homotopy for the morphism~$\tilde q$, i.~e., we have
$d(\sigma_L\tilde q-\tilde v)=\tilde q$.
 The contracting homotopy $\sigma_L\tilde q-\tilde v$ also forms
a commutative square with the closed morphisms $\tilde p$,
\,$\tilde p\tilde q$, and the contracting homotopy~$\sigma$
for the object $C\in\bF_n$:
$$
 \xymatrix{
  \Psi^+(M) \ar@{->>}[r]^-{\tilde p\tilde q}
  \ar[d]_{\sigma_L\tilde q-\tilde v} & C \ar[d]^\sigma \\
  \Psi^+(L) \ar@{->>}[r]^-{\tilde p} & C
 }
$$
 Indeed, $\tilde p(\sigma_L\tilde q-\tilde v)=
(\tilde p\sigma_L)\tilde q-\tilde p\tilde v=
\sigma\tilde p\tilde q+\tilde u\tilde q-\tilde p\tilde v=
\sigma\tilde p\tilde q$, as the equation $\tilde u\tilde q
=\tilde p\tilde v$ follows from $uq=pv$ by adjunction.

 Let $D\in\bF_{n-1}$ be the kernel of the admissible epimorphism
$\tilde p\tilde q\:\Psi^+(M)\rarrow C$ and $A\in\bF_{n-1}$ be the kernel
of the admissible epimorphism $\tilde p\:\Psi^+(L)\rarrow C$ in
the exact category $\sZ^0(\bF_n)$ (with $\Psi^+(M)$,
$\Psi^+(L)\in\bF$).
 Then the natural admissible epimorphism $r\:D\rarrow A$ in
the exact category $\sZ^0(\bF_{n-1})$ is homotopic to zero in
the DG\+category $\bF_{n-1}$, with a contracting homotopy
$\tau\in\Hom^{-1}_\bE(D,A)$ induced by the contracting homotopy
$\sigma_L\tilde q-\tilde v\in\Hom^{-1}_\bE(\Psi^+(M),\Psi^+(L))$:
$$
 \xymatrix{
  D \ar@{>->}[r] \ar@{->>}[d]_r & \Psi^+(M)
  \ar@{->>}[r]^-{\tilde p\tilde q} \ar@{->>}[d]_{\tilde q}
  & C \ar@{=}[d] \\
  A \ar@{>->}[r] & \Psi^+(L) \ar@{->>}[r]^-{\tilde p} & C
 }
 \qquad\quad
 \xymatrix{
  D \ar@{>->}[r] \ar[d]_\tau & \Psi^+(M)
  \ar@{->>}[r]^-{\tilde p\tilde q}
  \ar[d]_{\sigma_L\tilde q-\tilde v} & C \ar[d]^\sigma \\
  A \ar@{>->}[r] & \Psi^+(L) \ar@{->>}[r]^-{\tilde p} & C
 }
$$

 Furthermore, the functor $\Psi^+\:\sL\rarrow\sZ^0(\bF)$ is exact,
so we have a short exact sequence $0\rarrow\Psi^+(\ker q)\rarrow
\Psi^+(M)\rarrow\Psi^+(L)\rarrow0$ in $\sZ^0(\bF)$, and consequently
a short exact sequence $0\rarrow\Psi^+(\ker q)\rarrow D\rarrow A
\rarrow0$ in $\sZ^0(\bF_{n-1})$.
 Taking the pullback of the latter short exact sequence along
the natural admissible epimorphism $\Xi_{\bE}(A)\rarrow A$, we obtain
a short exact sequence
$$
 0\lrarrow\Psi^+(\ker q)\lrarrow\cone(r)[-1]\lrarrow\Xi_\bE(A)
 \lrarrow0
$$
in the exact category $\sZ^0(\bF_{n-1})$ (cf.\ the alternative proof
of Corollary~\ref{cone-sequences-universally-admissible-exact}).
 On the other hand, the closed morphism~$r$ is homotopic to zero,
so its cone is isomorphic to $A\oplus D[1]$ in $\sZ^0(\bF_{n-1})$.
 Since the objects $\Psi^+(\ker q)$ and $\Xi_\bE(A)$ are contractible
and $\Tot(\Psi^+(\ker q)\to\cone(r)[-1]\to\Xi_\bE(A))\in
\Ac^\abs(\bF_{n-1})$, it follows that $\cone(r)\in\Ac^\abs(\bF_{n-1})$
and therefore $A\in\Ac^\abs(\bF_{n-1})$.

 Finally, the short exact sequence $0\rarrow A\rarrow\Psi^+(L)\rarrow
C\rarrow0$ in $\sZ^0(\bF_n)$ with $A\in\Ac^\abs(\bF_{n-1})$ and
$\Psi^+(L)$ contractible in $\bF$ shows that the object $C\in\bF_n$
is absolutely $(n-1)$\+resolvable, as desired.
\end{proof}

\begin{lem} \label{homotopy-equivalent-abs-resolvable}
 The class of all absolutely $(n-1)$\+resolvable objects in\/ $\bF_n$
is preserved by the homotopy equivalences (i.~e., the isomorphisms
in\/ $\sH^0(\bF_n)$).
\end{lem}

\begin{proof}
 This is our version of~\cite[Lemma~3.2.D]{PP2}.
 Let $C\rarrow C'$ be a closed morphism which is a homotopy equivalence
in the DG\+category $\bF_n$ such that the object $C'$ is absolutely
$(n-1)$\+resolvable.
 Then the object $\cone(C\to C')\in\bF_n$ is contractible, so it is
absolutely $(n-1)$\+resolvable by
Lemma~\ref{contractibles-abs-resolvable}.

 By Lemma~\ref{cones-cokernels-abs-resolvable}(a), it follows that
the objects $\Xi_\bE(C')=\cone(\id_{C'}[-1])$ and
$C''=\cone(C'\to\cone(C\to C'))[-1]\in\bF_n$ are also absolutely
$(n-1)$\+resolvable.
 The object $C''$ is isomorphic to the direct sum $C\oplus\Xi_\bE(C')$
in the additive category $\sZ^0(\bF_n)$.
 Hence, by Lemma~\ref{cones-cokernels-abs-resolvable}(b), the cokernel
$C$ of the direct summand inclusion $\Xi_\bE(C')\rarrow C''$ is
absolutely $(n-1)$\+resolvable as well.
\end{proof}

\begin{lem} \label{direct-summands-almost-abs-resolvable}
 If $D$, $D'\in\bF_n$ are two objects such that the object $D\oplus D'$
is homotopy equivalent to an absolutely $(n-1)$\+resolvable object,
then the object $D\oplus D[1]$ is absolutely $(n-1)$\+resolvable.
\end{lem}

\begin{proof}
 By Lemma~\ref{homotopy-equivalent-abs-resolvable}, the object
$D\oplus D'\in\bF_n$ itself is absolutely $(n-1)$\+resolvable.
 Consider the endomorphism $D\oplus D'\rarrow D\oplus D'$
whose only nonzero component is the identity morphism
$\id_{D'}\:D'\rarrow D'$.
 By Lemma~\ref{cones-cokernels-abs-resolvable}(a), it follows that
the cone of this closed endomorphism of degree~$0$ in $\bF_n$
is absolutely $(n-1)$\+resolvable.
 This cone is homotopy equivalent to $D\oplus D[1]$; so applying
Lemma~\ref{homotopy-equivalent-abs-resolvable} again we conclude that
the object $D\oplus D[1]\in\bF_n$ is absolutely $(n-1)$\+resolvable.
\end{proof}

 The following lemma summarizes the results of the previous ones,
as it was promised before the beginning of this series of lemmas.

\begin{lem} \label{abs-acyclic-are-almost-abs-resolvable}
 For any object $D\in\Ac^\abs(\bF_n)$, the object $D\oplus D[1]
\in\bF_n$ is absolutely $(n-1)$\+resolvable.
\end{lem}

\begin{proof}
 Follows from Lemmas~\ref{totalizations-abs-resolvable},
\ref{cones-cokernels-abs-resolvable}(a),
\ref{homotopy-equivalent-abs-resolvable},
and~\ref{direct-summands-almost-abs-resolvable} by the definition
of the full subcategory of absolutely acyclic objects
$\Ac^\abs(\bF_n)\subset\sH^0(\bF_n)$.
\end{proof}

 Now we can deduce the equality~\eqref{one-step-Ac-abs-intersection}.
 Let $D\in\bF_{n-1}$ be an object belonging to $\Ac^\abs(\bF_n)$.
 Then, by Lemma~\ref{abs-acyclic-are-almost-abs-resolvable},
the object $D\oplus D[1]\in\bF_{n-1}$ is absolutely
$(n-\nobreak1)$\+resolvable.
 By Lemma~\ref{absolutely-resolvable-is-sufficient}, we can conclude
that $D\oplus D[-1]\in\Ac^\abs(\bF_{n-1})$; hence
$D\in\Ac^\abs(\bF_{n-1})$.
 This finishes the proof of
the equalites~\eqref{one-step-Ac-abs-intersection}
and~\eqref{Ac-abs-intersection}, and of part~(a) of the theorem
for absolute derived categories.

 We still have to prove part~(a) for the coderived and contraderived
categories.
 Concerning the coderived categories, assume that infinite coproducts
are well-behaved in the exact DG\+pair $(\bE,\sK)$ and its exact
DG\+subpair $(\bF,\sL)$, as per the formulation of the theorem.
 First of all, we observe that the full subcategories $\sL_n\subset\sK$
of objects of resolution dimension~$\le n$ with respect to $\sL$ are
closed under infinite coproducts in this case (since the coproducts
are exact in the exact category $\sK$ and the full subcategory
$\sL\subset\sK$ is closed under coproducts).
 Consequently, the full DG\+subcategories $\bF_n\subset\bE$ are
closed under coproducts.
 
 Furthermore, we claim that the resolution dimensions of objects
of the exact category $\sK$ with respect to its resolving
subcategory $\sL$ are uniformly bounded in this case, that is,
there is an integer $m\ge0$ such that $\sK=\sL_m$.
 Indeed, suppose the contrary; then there exists a sequence of objects
$(K_n\in\sK)_{n\ge0}$ such that $K_n\notin\sL_n$.
 Consider the object $K=\coprod_{n\ge0}K_n\in\sK$; by assumption,
there exists $m\ge0$ such that $K\in\sL_m$.
 However, the object $K_n$ is a direct summand of $K$ for every~$n$.
 In order to come to a contradiction, it remains to notice that
the full subcategory $\sL_m\subset\sK$ is closed under direct summands,
as in fact any weakly idempotent-complete additive category with
countable coproducts, and $\sL_m$ in particular, is idempotent-complete
(by the cancellation trick).

 In order to prove~\eqref{Ac-co-intersection}, one shows that
\begin{equation} \label{one-step-Ac-co-intersection}
 \Ac^\co(\bF_n)\cap\sH^0(\bF_{n-1})=\Ac^\co(\bF_{n-1})
\end{equation}
for every $n\ge1$.
 Then it follows by induction in~$n$ that $\Ac^\co(\bF_n)\cap
\sH^0(\bF)=\Ac^\co(\bF)$ for all $0\le n\le m$.

 Let us say that an object $C\in\bF_n$ is
\emph{coacycl-$(n-1)$-resolvable} if there exists a short exact
sequence $0\rarrow A\rarrow B\rarrow C\rarrow0$ in $\sZ^0(\bF_n)$
with $A$, $B\in\Ac^\co(\bF_{n-1})$.
 Similarly to the previous argument, it is presumed here that $A$
and $B$ are objects of $\bF_{n-1}$ (and \emph{not} only homotopy
equivalent to such objects).
 The proof is based on the following straightforward analogue
of Lemma~\ref{absolutely-resolvable-is-sufficient}.

\begin{lem} \label{coacycl-resolvable-is-sufficient}
 If an object $C\in\bF_{n-1}$ is coacycl-$(n-1)$-resolvable,
then it belongs to $\Ac^\co(\bF_{n-1})$.  \qed
\end{lem}

\begin{lem} \label{coproducts-coacycl-resolvable}
 The class of all coacycl-$(n-1)$-resolvable objects is closed
under infinite coproducts in\/ $\sH^0(\bF_n)$.
\end{lem}

\begin{proof}
 Follows from the assumption that the coproducts are exact in
the exact category $\sZ^0(\bF_n)$ and the fact that the full
subcategory $\Ac^\co(\bF_{n-1})\subset\sH^0(\bF_{n-1})$ is
closed under coproducts by the definition.
\end{proof}

\begin{lem} \label{cones-cokernels-coacycl-resolvable}
\textup{(a)} The class of all coacycl-$(n-1)$-resolvable objects
is preserved by the cones of closed morphisms in\/~$\bF_n$. \par
\textup{(b)} The class of all coacycl-$(n-1)$-resolvable objects
is preserved by the cokernels of admissible monomorphisms in\/
$\sZ^0(\bF_n)$.
\end{lem}

\begin{proof}
 Similar to the proof of
Lemma~\ref{cones-cokernels-abs-resolvable}.
\end{proof}

\begin{lem} \label{homotopy-equivalent-coacycl-resolvable}
 The class of all coacycl-$(n-1)$-resolvable objects in\/ $\bF_n$
is preserved by the homotopy equivalences.
\end{lem}

\begin{proof}
 Similar to the proof of
Lemma~\ref{homotopy-equivalent-abs-resolvable}.
\end{proof}

\begin{lem} \label{coacyclic-coacycl-resolvable}
 All the objects in\/ $\Ac^\co(\bF_n)$ are coacycl-$(n-1)$-resolvable.
\end{lem}

\begin{proof}
 Follows from Lemmas~\ref{totalizations-abs-resolvable},
\ref{cones-cokernels-coacycl-resolvable}(a),
\ref{coproducts-coacycl-resolvable},
and~\ref{homotopy-equivalent-coacycl-resolvable} by the definition
of the full subcategory of coacyclic objects
$\Ac^\co(\bF_n)\subset\sH^0(\bF_n)$.
\end{proof}

 Now we can deduce the equality~\eqref{one-step-Ac-co-intersection}.
 Let $C\in\bF_{n-1}$ be an object belonging to $\Ac^\co(\bF_n)$.
 Then, by Lemma~\ref{coacyclic-coacycl-resolvable}, the object $C$
is coacycl-$(n-1)$\+resolvable.
 By Lemma~\ref{coacycl-resolvable-is-sufficient}, it follows that
that $C\in\Ac^\co(\bF_{n-1})$, as desired.
 This finishes the proof of
the equalites~\eqref{one-step-Ac-co-intersection}
and~\eqref{Ac-co-intersection}, and of part~(a) of the theorem
for coderived categories.

 The argument for the contraderived categories in part~(a), under
the respective assumptions that infinite products are well-behaved
in the exact DG\+pair $(\bE,\sK)$ and its exact DG\+subpair
$(\bF,\sL)$, is completely similar to the proof of the assertion for
the coderived categories above.
 One defines \emph{contraacycl-$(n-1)$-resolvable} objects in $\bF_n$,
proves the respective versions of
Lemmas~\ref{coacycl-resolvable-is-sufficient}\+-%
\ref{coacyclic-coacycl-resolvable}, and deduces the desired
equality~\eqref{Ac-ctr-intersection} from the equality
\begin{equation} \label{one-step-Ac-ctr-intersection}
 \Ac^\ctr(\bF_n)\cap\sH^0(\bF_{n-1})=\Ac^\ctr(\bF_{n-1})
\end{equation}
by induction in $0\le n\le m$, using the fact that there exists
$m\ge0$ such that $\sK=\sL_m$.
 We omit the straightforward details.

 Alternatively, the assertion of part~(a) for the contraderived
categories and the assertion of part~(b) for the coderived categories
can be obtained as particular cases of the more general result of
Theorem~\ref{contraderived-resolving-equivalence-theorem}, which
does not require finite (co)resolution dimension.
 Moreover, after we already know that the triangulated functor in
question is essentially surjective, which was explained in
the very beginning of this proof, it becomes enough to show that
the functor is fully faithful.
 So the assertions of parts~(a) and~(b) for the absolute derived
categories can be deduced from the quite general result of
Theorem~\ref{full-and-faithfulness-main-theorem}, which does not
require finite (co)resolution dimension, either.

 Notice, however, that the assertion of part~(a) for the coderived
categories and the assertion of part~(b) for the contraderived
categories do not seem to be provable in this alternative way
(cf.~\cite[Remark~1.5]{EP}).
\end{proof}

\subsection{Examples} \label{finite-resol-dim-examples-subsecn}
 In this section we formulate the particular cases of
Theorem~\ref{finite-resolution-dimension-main-theorem} arising
in the context of Examples~\ref{exact-dg-category-of-complexes}\+-%
\ref{exact-dg-category-of-factorizations}
and~\ref{exact-DG-pairs-examples}.

 For any exact category $\sE$, we put $\sD^\abs(\sE)=
\sD^\abs(\bC(\sE))$, where $\bC(\sE)$ is the exact DG\+category
of complexes in $\sE$ constructed in
Example~\ref{exact-dg-category-of-complexes}.
 If the exact category $\sE$ has exact coproducts, then so does
the DG\+category $\bC(\sE)$.
 In this case, we put $\sD^\co(\sE)=\sD^\co(\bC(\sE))$.
 Dually, if the exact category $\sE$ has exact products, then
so does the DG\+category $\bC(\sE)$, and in this case we put
$\sD^\ctr(\sE)=\sD^\ctr(\bC(\sE))$.

 The following result can be found in~\cite[Proposition~A.5.8]{Pcosh}.

\begin{cor} \label{complexes-finite-resol-dim-corollary}
 Let\/ $\sE$ be an exact category and\/ $\sF$ be
a resolving subcategory in\/ $\sE$ such that all the objects of\/
$\sE$ have finite resolution dimensions with respect to\/~$\sF$.
 If the grading group\/ $\Gamma$ is infinite, assume additionally
that the\/ $\sF$\+resolution dimensions of the objects of\/ $\sE$
are uniformly bounded, i.~e., there exists an integer $m\ge0$
such that the\/ $\sF$\+resolution dimension of any object of\/ $\sE$
does not exceed~$m$.
 Then the triangulated functor
$$
 \sD^\abs(\sF)\lrarrow\sD^\abs(\sE)
$$
induced by the inclusion of exact categories\/ $\sF\rightarrowtail\sE$
is an equivalence of triangulated categories.

 Furthermore, if the exact category\/ $\sE$ has exact coproducts and
the full subcategory\/ $\sF\subset\sE$ is closed under coproducts,
then the triangulated functor
$$
 \sD^\co(\sF)\lrarrow\sD^\co(\sE)
$$
induced by the inclusion of exact categories\/ $\sF\rightarrowtail\sE$
is a triangulated equivalence.
 If the exact category\/ $\sE$ has exact products and the full 
subcategory\/ $\sF\subset\sE$ is closed under products, then
the triangulated functor
$$
 \sD^\ctr(\sF)\lrarrow\sD^\ctr(\sE)
$$
induced by the inclusion\/ $\sF\rightarrowtail\sE$ is a triangulated
equivalence.
\end{cor}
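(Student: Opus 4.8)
The plan is to obtain the corollary as the special case of Theorem~\ref{finite-resolution-dimension-main-theorem}(a) for the exact DG\+pair $(\bE,\sK)=(\bC(\sE),\sG(\sE))$ and its exact DG\+subpair $(\bF,\sL)=(\bC(\sF),\sG(\sF))$. By the definitions preceding the corollary one has $\sD^\abs(\sF)=\sD^\abs(\bC(\sF))$ and $\sD^\abs(\sE)=\sD^\abs(\bC(\sE))$, and likewise for the coderived and contraderived categories; moreover the inclusion of exact categories $\sF\rightarrowtail\sE$ induces the inclusion of exact DG\+categories $\bC(\sF)\rightarrowtail\bC(\sE)$, so the functors in the statement are precisely the ones produced by the theorem. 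Thus the whole task reduces to verifying the hypotheses of Theorem~\ref{finite-resolution-dimension-main-theorem}(a) in this situation.

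First I would record the structural facts. By Example~\ref{exact-DG-pairs-examples}(4), the pair $(\bC(\sE),\sG(\sE))$ is an exact DG\+pair (with the degreewise exact structure on $\sG(\sE)$ from Example~\ref{exact-dg-category-of-complexes}, embedded into $\sZ^0(\bC(\sE)^\bec)$ via $\Upsilon_\sE$), and $(\bC(\sF),\sG(\sF))$ is a strict exact DG\+subpair of it. The additive category $\sK=\sG(\sE)=\sE^\Gamma$ is weakly idempotent\+complete since $\sE$ is (this being the standing assumption of the resolving\+subcategory framework, automatic in the coproduct/product cases below). It then remains to check that $\sL=\sG(\sF)$ is resolving in $\sG(\sE)$ and that every object of $\sG(\sE)$ has finite $\sL$\+resolution dimension. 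Because the exact structure on $\sG(\sE)$ is componentwise, closure of $\sG(\sF)$ under extensions and under kernels of admissible epimorphisms, together with the existence of admissible epimorphisms onto any graded object from an object of $\sG(\sF)$, all follow degreewise from the corresponding properties of $\sF\subset\sE$; hence $\sG(\sF)$ is resolving in $\sG(\sE)$.

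The one genuinely substantive point, and the place where the hypothesis on $\Gamma$ enters, is the finiteness of resolution dimensions. A resolution $0\to F_n\to\dotsb\to F_0\to X^*\to0$ in $\sG(\sE)$ with terms in $\sG(\sF)$ is the same datum as a compatible family of length\+$n$ resolutions of the components $X^i$ in $\sE$ with terms in $\sF$; consequently the $\sG(\sF)$\+resolution dimension of $X^*$ equals the supremum of the $\sF$\+resolution dimensions of its components $X^i$, $i\in\Gamma$. When $\Gamma$ is finite a graded object has finitely many components, so this supremum is a finite maximum and every object of $\sG(\sE)$ has finite resolution dimension automatically. When $\Gamma$ is infinite a graded object may have infinitely many components of unbounded $\sF$\+resolution dimension, so finiteness genuinely fails without an extra assumption; under the hypothesis that the $\sF$\+resolution dimensions of the objects of $\sE$ are bounded by some $m$, every $X^*\in\sG(\sE)$ has $\sG(\sF)$\+resolution dimension $\le m$. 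In either case all objects of $\sK$ have finite $\sL$\+resolution dimension, and Theorem~\ref{finite-resolution-dimension-main-theorem}(a) yields the equivalence $\sD^\abs(\sF)\simeq\sD^\abs(\sE)$. This resolution\+dimension count is the main obstacle; everything else is translation through the DG\+pair formalism.

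For the coderived and contraderived statements I would feed the remaining hypotheses into the same reduction. Assuming $\sE$ has exact coproducts, coproducts in $\bC(\sE)$ and in $\sZ^0(\bC(\sE))=\sC(\sE)$ are formed degreewise, so exactness of coproducts in $\sE$ gives exactness of coproducts in $\sZ^0(\bC(\sE))$, whence $\bC(\sE)$ has exact coproducts by Lemma~\ref{exact-co-products-lemma}; the full DG\+subcategory $\bC(\sF)\subset\bC(\sE)$ and the additive subcategories $\sG(\sF)\subset\sG(\sE)\subset\sZ^0(\bC(\sE)^\bec)$ are closed under the (degreewise) coproducts because $\sF\subset\sE$ is and $\Upsilon_\sE$ is compatible with coproducts ($G^+$ being a left adjoint). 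Theorem~\ref{finite-resolution-dimension-main-theorem}(a) then gives $\sD^\co(\sF)\simeq\sD^\co(\sE)$, and the contraderived case is dual, with products in place of coproducts. I expect no essential difficulty here beyond bookkeeping; indeed, in these two cases the uniform bound needed for infinite $\Gamma$ is not even an extra assumption, since exactness of coproducts (resp.\ products) together with closure of $\sF$ under coproducts (resp.\ products) forces the $\sF$\+resolution dimensions of the objects of $\sE$ to be uniformly bounded, by the cancellation\+trick argument used inside the proof of Theorem~\ref{finite-resolution-dimension-main-theorem}.
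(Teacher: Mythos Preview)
Your approach is correct and is precisely the one in the paper: set $\bE=\bC(\sE)$, $\bF=\bC(\sF)$, $\sK=\sG(\sE)$, $\sL=\sG(\sF)$ via Examples~\ref{exact-dg-category-of-complexes} and~\ref{exact-DG-pairs-examples}(4), and apply Theorem~\ref{finite-resolution-dimension-main-theorem}(a). Your write-up simply unpacks the hypothesis verification (in particular the role of the uniform bound when $\Gamma$ is infinite) that the paper leaves to the reader.
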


\begin{proof}
 Put $\bE=\bC(\sE)$, \,$\bF=\bC(\sF)$, \,$\sK=\sG(\sE)$,
\,$\sL=\sG(\sF)$, as in Examples~\ref{exact-dg-category-of-complexes}
and~\ref{exact-DG-pairs-examples}(4), and apply
Theorem~\ref{finite-resolution-dimension-main-theorem}(a).
\end{proof}

 The following corollary is a generalization
of~\cite[Theorem~3.2]{PP2}.

\begin{cor} \label{cdg-ring-finite-resol-dim-corollary}
 Let $\biR^\cu=(R^*,d,h)$ be a CDG\+ring and $\sK\subset R^*\smodl$ be
a full subcategory in the category of graded $R^*$\+modules such that\/
$\sK$ is preserved by the shift functors $[n]$, \,$n\in\Gamma$,
inherits an exact category structure from the abelian exact structure
of $R^*\smodl$, and the underlying graded $R^*$\+module of
the CDG\+module $G^+(K^*)$ belongs to\/ $\sK$ for any $K^*\in\sK$.
 Denote by\/ $\bE\subset\biR^\cu\bmodl$ the full DG\+subcategory
consisting of all the left CDG\+modules over $\biR^\cu$
whose underlying graded $R^*$\+modules belong to\/ $\sK$,
and endow\/ $\bE$ with the exact DG\+category structure inherited
from the abelian exact DG\+category structure of $\biR^\cu\bmodl$. \par
 \textup{(a)} Let\/ $\sL\subset\sK$ be a resolving subcategory
such that\/ $\sL$ is preserved by the shift functors and all
the objects of\/ $\sK$ have finite resolution dimensions
with respect to\/~$\sL$.
 Denote by\/ $\bF\subset\bE$ the full DG\+subcategory consisting
of all the CDG\+modules whose underlying graded $R^*$\+modules
belong to\/ $\sL$, and endow\/ $\bF$ with the inherited exact
DG\+category structure.
 Then the triangulated functor
$$
 \sD^\abs(\bF)\lrarrow\sD^\abs(\bE)
$$
induced by the inclusion of exact DG\+categories\/ $\bF\rightarrowtail
\bE$ is an equivalence of triangulated categories.

 Furthermore, if the full subcategories\/ $\sL$ and\/ $\sK$ are
preserved by the infinite direct sums in $R^*\smodl$, then
the triangulated functor
$$
 \sD^\co(\bF)\lrarrow\sD^\co(\bE)
$$
induced by the inclusion of exact DG\+categories\/ $\bF\rightarrowtail
\bE$ is a triangulated equivalence.
 If the full subcategories\/ $\sL$ and\/ $\sK$ are preserved by
the infinite products in $R^*\smodl$, then the triangulated functor
$$
 \sD^\ctr(\bF)\lrarrow\sD^\ctr(\bE)
$$
induced by the inclusion\/ $\bF\rightarrowtail\bE$ is a triangulated
equivalence. \par
 \textup{(b)} Let\/ $\sM\subset\sK$ be a coresolving subcategory
such that\/ $\sM$ is preserved by the shift functors and all
the objects of\/ $\sK$ have finite coresolution dimensions
with respect to\/~$\sM$.
 Denote by\/ $\bG\subset\bE$ the full DG\+subcategory consisting
of all the CDG\+modules whose underlying graded $R^*$\+modules
belong to\/ $\sM$, and endow\/ $\bG$ with the inherited exact
DG\+category structure.
 Then the triangulated functor
$$
 \sD^\abs(\bG)\lrarrow\sD^\abs(\bE)
$$
induced by the inclusion of exact DG\+categories\/ $\bG\rightarrowtail
\bE$ is an equivalence of triangulated categories.

 Furthermore, if the full subcategories\/ $\sM$ and\/ $\sK$ are
preserved by the infinite direct sums in $R^*\smodl$, then
the triangulated functor
$$
 \sD^\co(\bG)\lrarrow\sD^\co(\bE)
$$
induced by the inclusion of exact DG\+categories\/ $\bG\rightarrowtail
\bE$ is a triangulated equivalence.
 If the full subcategories\/ $\sM$ and\/ $\sK$ are preserved by
the infinite products in $R^*\smodl$, then the triangulated functor
$$
 \sD^\ctr(\bG)\lrarrow\sD^\ctr(\bE)
$$
induced by the inclusion\/ $\bG\rightarrowtail\bE$ is a triangulated
equivalence.
\end{cor}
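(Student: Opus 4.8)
The plan is to deduce both parts directly from Theorem~\ref{finite-resolution-dimension-main-theorem}, exactly as Corollary~\ref{complexes-finite-resol-dim-corollary} is deduced, once the correct exact DG\+pair has been assembled. First I would record the dictionary supplied by Examples~\ref{abelian-dg-category-of-cdg-modules} and~\ref{CDG-ring-bec-example}: the DG\+category $\bA=\biR^\cu\bmodl$ is abelian, hence carries its abelian exact DG\+category structure, and the functor $\Upsilon_{\biR^\cu}$ identifies $\sZ^0(\bA^\bec)$ with the abelian category $R^*\smodl$ in such a way that $\Phi_\bA$ becomes the forgetful functor $\sZ^0(\bA)\rarrow R^*\smodl$ and $\Psi^+_\bA$ corresponds to $G^+$ (diagrams~\eqref{cdg-modules-tilde-Phi-diagram}\+-\eqref{cdg-modules-Psi-diagram}). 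Under this identification, Lemma~\ref{Xi-decomposed} together with $\Psi^-_\bA=\Psi^+_\bA[1]$ and Proposition~\ref{G-plus-minus-prop}(c) shows that $\Xi_{\bA^\bec}$ corresponds to the functor $K^*\longmapsto$ ``underlying graded module of $G^-(K^*)\simeq G^+(K^*)[1]$''. Consequently the full subcategory $\widetilde\sK=\Xi_{\bA^\bec}^{-1}(\sK)$ consists precisely of those $K^*$ for which the underlying graded module of $G^+(K^*)$ lies in $\sK$, and the standing hypothesis on $\sK$ in the corollary is exactly the inclusion $\sK\subset\widetilde\sK$.

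With this translation in hand, I would apply Proposition~\ref{subcategory-L-prop} (in the guise of Example~\ref{exact-DG-pairs-examples}(1)) to the shift\+stable, exact\+structure\+inheriting full subcategory $\sK\subset\sZ^0(\bA^\bec)$. This yields that the associated full DG\+subcategory---which is exactly the category $\bE$ of the corollary, namely the CDG\+modules whose underlying graded modules lie in $\sK$---inherits an exact DG\+category structure with $\sZ^0(\bE^\bec)=\widetilde\sK$, and, since $\sK\subset\widetilde\sK$, that $(\bE,\sK)$ is a strict exact DG\+pair. Next I would produce the exact DG\+subpairs. In part~(a) the subcategory $\sL\subset\sK$ is resolving, hence closed under extensions and preserved by shifts; Example~\ref{exact-DG-pairs-examples}(2) then gives $\sL\subset\widetilde\sL$ (using that $\Xi_{\bE^\bec}(\sL)\subset\sK$ because the image of $\Phi_\bE$ lands in $\sK$) and shows that $(\bF,\sL)$ is a strict exact DG\+subpair of $(\bE,\sK)$, where $\bF\subset\bE$ consists of the CDG\+modules with underlying module in $\sL$. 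The coresolving case of part~(b) and the subpair $(\bG,\sM)$ are treated dually.

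It then remains to verify the outstanding hypotheses of Theorem~\ref{finite-resolution-dimension-main-theorem} and to read off the conclusions. The resolution\+ and coresolution\+dimension hypotheses transfer verbatim through the identification $\sZ^0(\bA^\bec)\simeq R^*\smodl$. For the co\+/contraderived statements I would use that $\biR^\cu\bmodl$ has infinite coproducts and products which are exact (it being abelian); closedness of $\sK$ and $\sL$ (resp.\ $\sM$) under infinite direct sums, resp.\ products, in $R^*\smodl$ then guarantees that $\bE$ and $\bF$ (resp.\ $\bG$) are closed under the corresponding (co)products in $\bA$ and that these (co)products are exact in the inherited structures, via Lemma~\ref{exact-co-products-lemma}. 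Applying Theorem~\ref{finite-resolution-dimension-main-theorem}(a), resp.\ (b), to $(\bF,\sL)\subset(\bE,\sK)$, resp.\ $(\bG,\sM)\subset(\bE,\sK)$, produces all of the asserted equivalences of $\sD^\abs$, $\sD^\co$, and $\sD^\ctr$.

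The homological substance of the corollary is entirely carried by Theorem~\ref{finite-resolution-dimension-main-theorem}, so the only genuine work is bookkeeping, and the step I expect to demand the most care is the correct handling of the auxiliary categories $\widetilde\sK$ and $\widetilde\sL$: one must check that the intrinsic, abstract object $\sZ^0(\bE^\bec)$ coincides with the concretely described class $\widetilde\sK$, and that the corollary's hypothesis on $G^+$ is exactly what upgrades $\sK$ into a legitimate second component of an exact DG\+pair. A secondary point to keep honest is the weak idempotent\+completeness of $\sK$ required by the theorem; this is already implicit in the resolving\+subcategory framework of the preceding subsection, where the ambient exact category is assumed weakly idempotent\+complete, and I would simply record it as part of the standing setup.
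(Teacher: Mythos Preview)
Your proposal is correct and follows essentially the same route as the paper: set up $(\bE,\sK)$ as a strict exact DG\+subpair of $(\biR^\cu\bmodl,R^*\smodl)$ via Proposition~\ref{subcategory-L-prop} and Example~\ref{exact-DG-pairs-examples}(1), then $(\bF,\sL)$ and $(\bG,\sM)$ as strict exact DG\+subpairs of $(\bE,\sK)$ via Example~\ref{exact-DG-pairs-examples}(2), and invoke Theorem~\ref{finite-resolution-dimension-main-theorem}. Your explicit unwinding of the condition $\sK\subset\widetilde\sK$ as precisely the $G^+$\+hypothesis in the corollary is a helpful elaboration of a point the paper leaves to the reader.
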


\begin{proof}
 It was explained in Example~\ref{abelian-dg-category-of-cdg-modules}
that the DG\+category $\biR^\cu\bmodl$ of left CDG\+modules over
$(R^*,d,h)$ is an abelian DG\+category.
 It follows from Proposition~\ref{subcategory-L-prop}(c) and
the discussion in Example~\ref{exact-DG-pairs-examples}(1) that,
under the assumptions from the first paragraph of the corollary,
$(\bE,\sK)$ is a strict exact DG\+subpair in
$(\biR^\cu\bmodl,R^*\smodl)$.
 Furthermore, following Example~\ref{exact-DG-pairs-examples}(2),
in the assumptions of part~(a) \,$(\bF,\sL)$ is a strict
exact DG\+subpair in $(\bE,\sK)$, while in the assumptions of
part~(b) \,$(\bG,\sM)$ is a strict exact DG\+subpair in $(\bE,\sK)$.
 It remains to recall that both the infinite coproducts and
infinite products are exact in the abelian category $R^*\smodl$
in order to conclude that the assertions of parts~(a) and~(b) of
the corollary are particular cases of those of
Theorem~\ref{finite-resolution-dimension-main-theorem}(a\+-b).
\end{proof}

 Part~(a) of the next corollary is a generalization
of~\cite[Theorem~1.4]{EP}.

\begin{cor} \label{cdg-quasi-algebra-finite-resol-dim-corollary}
 Let $X$ be a scheme and $\biB^\cu=(B^*,d,h)$ be a quasi-coherent
CDG\+quasi-algebra over~$X$.
 Let $\sK\subset B^*\sqcoh$ be a full subcategory in the category
of quasi-coherent graded $B^*$\+modules such that\/ $\sK$ is preserved
by the shift functors $[n]$, \,$n\in\Gamma$, inherits an exact category
structure from the abelian exact structure of $B^*\sqcoh$, and
the underlying quasi-coherent graded $B^*$\+module of the quasi-coherent
CDG\+module $G^+(K^*)$ belongs to\/ $\sK$ for any $K^*\in\sK$.
 Denote by\/ $\bE\subset\biB^\cu\bqcoh$ the full DG\+subcategory
consisting of all the quasi-coherent left CDG\+modules over $\biB^\cu$
whose underlying quasi-coherent graded $B^*$\+modules belong to\/ $\sK$,
and endow\/ $\bE$ with the exact DG\+category structure inherited
from the abelian exact DG\+category structure of $\biB^\cu\bqcoh$. \par
 \textup{(a)} Let\/ $\sL\subset\sK$ be a resolving subcategory
such that\/ $\sL$ is preserved by the shift functors and all
the objects of\/ $\sK$ have finite resolution dimensions
with respect to\/~$\sL$.
 Denote by\/ $\bF\subset\bE$ the full DG\+subcategory consisting
of all the quasi-coherent CDG\+modules whose underlying quasi-coherent
graded $B^*$\+modules belong to\/ $\sL$, and endow\/ $\bF$ with
the inherited exact DG\+category structure.
 Then the triangulated functor
$$
 \sD^\abs(\bF)\lrarrow\sD^\abs(\bE)
$$
induced by the inclusion of exact DG\+categories\/ $\bF\rightarrowtail
\bE$ is an equivalence of triangulated categories.

 Furthermore, if the full subcategories\/ $\sL$ and\/ $\sK$ are
preserved by the infinite direct sums in $B^*\sqcoh$, then
the triangulated functor
$$
 \sD^\co(\bF)\lrarrow\sD^\co(\bE)
$$
induced by the inclusion of exact DG\+categories\/ $\bF\rightarrowtail
\bE$ is a triangulated equivalence. \par
 \textup{(b)} Let\/ $\sM\subset\sK$ be a coresolving subcategory
such that\/ $\sM$ is preserved by the shift functors and all
the objects of\/ $\sK$ have finite coresolution dimensions
with respect to\/~$\sM$.
 Denote by\/ $\bG\subset\bE$ the full DG\+subcategory consisting
of all the quasi-coherent CDG\+modules whose underlying quasi-coherent
graded $B^*$\+modules belong to\/ $\sM$, and endow\/ $\bG$ with
the inherited exact DG\+category structure.
 Then the triangulated functor
$$
 \sD^\abs(\bG)\lrarrow\sD^\abs(\bE)
$$
induced by the inclusion of exact DG\+categories\/ $\bG\rightarrowtail
\bE$ is an equivalence of triangulated categories.

 Furthermore, if the full subcategories\/ $\sM$ and\/ $\sK$ are
preserved by the infinite direct sums in $B^*\sqcoh$, then
the triangulated functor
$$
 \sD^\co(\bG)\lrarrow\sD^\co(\bE)
$$
induced by the inclusion of exact DG\+categories\/ $\bG\rightarrowtail
\bE$ is a triangulated equivalence.
\end{cor}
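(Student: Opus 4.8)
The plan is to reduce the statement to Theorem~\ref{finite-resolution-dimension-main-theorem}, proceeding exactly as in the proof of the CDG\+ring case (Corollary~\ref{cdg-ring-finite-resol-dim-corollary}), the only genuine divergence being the behavior of infinite products. First I would recall from Example~\ref{abelian-dg-category-of-quasi-coh-cdg-modules} that $\biB^\cu\bqcoh$ is an abelian DG\+category, and from Example~\ref{qcoh-CDG-bec-example} that the functor $\Upsilon_{\biB^\cu}$ identifies $\sZ^0((\biB^\cu\bqcoh)^\bec)$ with the Grothendieck abelian category $B^*\sqcoh$ of quasi\+coherent graded $B^*$\+modules, compatibly with $\Phi_\bA$, $\Psi^+_\bA$, and $G^+$. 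Under this identification $(\biB^\cu\bqcoh,B^*\sqcoh)$ is an exact DG\+pair in the sense of Section~\ref{exact-DG-pairs-subsecn}.

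Next I would verify that the three pairs appearing in the statement are strict exact DG\+subpairs. For $(\bE,\sK)$: the subcategory $\sK\subset B^*\sqcoh$ is preserved by the shifts, inherits an exact structure, and satisfies $G^+(K^*)\in\sK$ for all $K^*\in\sK$; these are precisely the hypotheses under which Proposition~\ref{subcategory-L-prop}(c) and the discussion in Example~\ref{exact-DG-pairs-examples}(1) show that $(\bE,\sK)$ is a strict exact DG\+subpair in $(\biB^\cu\bqcoh,B^*\sqcoh)$. For the subpairs $(\bF,\sL)$ in part~(a) and $(\bG,\sM)$ in part~(b) I would invoke Example~\ref{exact-DG-pairs-examples}(2): both a resolving and a coresolving subcategory are closed under extensions in $\sK$, which is exactly the condition that example requires in order to produce a strict exact DG\+subpair. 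Applying it to $\sL$ and to $\sM$ respectively establishes that $(\bF,\sL)$ and $(\bG,\sM)$ are strict exact DG\+subpairs in $(\bE,\sK)$.

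With these identifications in hand, parts~(a) and~(b) for the absolute derived categories follow immediately from Theorem~\ref{finite-resolution-dimension-main-theorem}(a\+-b), whose hypotheses of finite resolution (respectively, coresolution) dimension are transferred directly from those on $\sK$ relative to $\sL$ (resp. $\sM$). For the coderived statements I would use that $B^*\sqcoh$ is a Grothendieck abelian category, so that its coproducts are exact; by Lemma~\ref{exact-co-products-lemma} the coproducts are then exact in $\sZ^0(\biB^\cu\bqcoh)$ as well, so the hypothesis that $\bE$ has exact coproducts holds automatically, and under the stated assumption that $\sL$, $\sM$, $\sK$ are closed under coproducts the coderived parts of Theorem~\ref{finite-resolution-dimension-main-theorem} apply.

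The point requiring care — and the reason this corollary is weaker than its CDG\+ring counterpart — is that a Grothendieck abelian category need not have exact products. Consequently the hypotheses of Theorem~\ref{finite-resolution-dimension-main-theorem} concerning exact products cannot be guaranteed for $B^*\sqcoh$ in general, and so I would deliberately state no contraderived equivalence here, omitting the $\sD^\ctr$ assertions rather than attempting to prove them. This asymmetry between coproducts and products is the only substantive difference from the proof of Corollary~\ref{cdg-ring-finite-resol-dim-corollary}; everything else is a verbatim transcription with $R^*\smodl$ replaced by $B^*\sqcoh$.
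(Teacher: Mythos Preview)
Your proof is correct and follows essentially the same approach as the paper's: both reduce to Theorem~\ref{finite-resolution-dimension-main-theorem} by invoking Example~\ref{abelian-dg-category-of-quasi-coh-cdg-modules}, Proposition~\ref{subcategory-L-prop}(c) with Example~\ref{exact-DG-pairs-examples}(1) for $(\bE,\sK)$, Example~\ref{exact-DG-pairs-examples}(2) for $(\bF,\sL)$ and $(\bG,\sM)$, and the exactness of coproducts in the Grothendieck category $B^*\sqcoh$. Your additional remarks on the identification via $\Upsilon_{\biB^\cu}$ and on why the contraderived assertions are omitted are helpful elaborations but not substantive departures.
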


\begin{proof}
 According to
Example~\ref{abelian-dg-category-of-quasi-coh-cdg-modules},
the DG\+category $\biB^\cu\bqcoh$ of quasi-coherent left CDG\+modules over
$(B^*,d,h)$ is an abelian DG\+category.
 It follows from Proposition~\ref{subcategory-L-prop}(c) and
the discussion in Example~\ref{exact-DG-pairs-examples}(1) that,
under the assumptions from the first paragraph of the corollary,
$(\bE,\sK)$ is a strict exact DG\+subpair in
$(\biB^\cu\bqcoh,B^*\sqcoh)$.
 Following Example~\ref{exact-DG-pairs-examples}(2), in the assumptions
of part~(a) \,$(\bF,\sL)$ is a strict exact DG\+subpair in $(\bE,\sK)$,
while in the assumptions of part~(b) \,$(\bG,\sM)$ is a strict exact
DG\+subpair in $(\bE,\sK)$.
 It remains to recall that the infinite coproducts are exact in
the abelian category $B^*\sqcoh$ and apply
Theorem~\ref{finite-resolution-dimension-main-theorem}(a\+-b)
in order to deduce parts~(a) and~(b) of the corollary.
\end{proof}

 Notice that assertions about the contraderived categories are missing
from
Corollary~\ref{cdg-quasi-algebra-finite-resol-dim-corollary}(a\+-b),
because the functors of infinite product of quasi-coherent sheaves
are not exact.
 If one wishes to assign some contraderived categories to
a quasi-coherent CDG\+quasi-algebra $\biB^\cu$ over a nonaffine
scheme $X$, then the way to proceed is to consider contraherent
cosheaves of CDG\+modules over $\biB^\cu$ instead of quasi-coherent
sheaves (see~\cite{Pcosh}).

 The following corollary is a generalization
of~\cite[Corollaries~2.3(a,b,c,e,g) and~2.6]{EP}.

\begin{cor} \label{factorizations-finite-resol-dim-corollary}
 Let\/ $\sE$ be an exact category and\/ $\Lambda\:\sE\rarrow\sE$ be
an autoequivalence preserving and reflecting short exact sequences.
 Assume that either\/ $\Gamma=\boZ$, or\/ $\Lambda$ is involutive
and\/ $\Gamma=\boZ/2$.
 Let $w\:\Id_\sE\rarrow\Lambda^2$ be a potential (as in
Section~\ref{factorizations-subsecn}).
 Let\/ $\sH$ be a resolving subcategory in\/ $\sE$ preserved by
the autoequivalences\/ $\Lambda$ and\/ $\Lambda^{-1}$ and such that
all the objects of\/ $\sE$ have finite resolution dimensions with
respect to\/~$\sH$.
 Then the triangulated functor
$$
 \sD^\abs(\bF(\sH,\Lambda,w))\lrarrow\sD^\abs(\bF(\sE,\Lambda,w))
$$
induced by the inclusion of exact categories\/ $\sH\rightarrowtail\sE$
is an equivalence of triangulated categories.

 Furthermore, if the exact category\/ $\sE$ has exact coproducts and
the full subcategory\/ $\sH\subset\sE$ is closed under coproducts,
then the triangulated functor
$$
 \sD^\co(\bF(\sH,\Lambda,w))\lrarrow\sD^\co(\bF(\sE,\Lambda,w))
$$
induced by the inclusion of exact categories\/ $\sH\rightarrowtail\sE$
is a triangulated equivalence.
 If the exact category\/ $\sE$ has exact products and the full 
subcategory\/ $\sH\subset\sE$ is closed under products, then
the triangulated functor
$$
 \sD^\ctr(\bF(\sH,\Lambda,w))\lrarrow\sD^\ctr(\bF(\sE,\Lambda,w))
$$
induced by the inclusion\/ $\sH\rightarrowtail\sE$ is a triangulated
equivalence.
\end{cor}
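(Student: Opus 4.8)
The plan is to deduce this corollary from Theorem~\ref{finite-resolution-dimension-main-theorem}(a), following verbatim the strategy used for Corollaries~\ref{complexes-finite-resol-dim-corollary}, \ref{cdg-ring-finite-resol-dim-corollary}, and~\ref{cdg-quasi-algebra-finite-resol-dim-corollary}. First I would set $\bE=\bF(\sE,\Lambda,w)$, \,$\bF=\bF(\sH,\Lambda,w)$, \,$\sK=\sP(\sE,\Lambda)$, and $\sL=\sP(\sH,\Lambda)$, where $\sK$ and $\sL$ are regarded as full subcategories of $\sZ^0(\bE^\bec)$ through the fully faithful embedding $\Upsilon_{\sE,\Lambda,w}$ of Example~\ref{factorizations-bec-example} (with $\sL$ the image of the periodic objects built from $\sH$). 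By Example~\ref{exact-dg-category-of-factorizations}, \,$\bE$ carries an exact DG\+category structure and $\sK=\sP(\sE,\Lambda)$ carries the exact structure inherited through $\Upsilon_{\sE,\Lambda,w}$; by Example~\ref{exact-DG-pairs-examples}(5), the pair $(\bE,\sK)$ is an exact DG\+pair and $(\bF,\sL)$ is a strict exact DG\+subpair of it. This already supplies the full categorical scaffolding demanded by the theorem.

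The substantive remaining step is to transport the resolving hypothesis from $\sH\subset\sE$ to $\sL\subset\sK$. Here I would invoke the natural equivalence $\sP(\sE,\Lambda)\simeq\sE$ of Section~\ref{factorizations-subsecn} given by $X^\ci\mapsto X^0$, which (as noted in Example~\ref{exact-dg-category-of-factorizations}) identifies the exact structure on $\sP(\sE,\Lambda)$ with the given exact structure on~$\sE$. Since $\sH$ is preserved by $\Lambda$ and $\Lambda^{-1}$, a $\Lambda$\+periodic object lies in $\sP(\sH,\Lambda)$ if and only if its degree\+$0$ component lies in~$\sH$, so this equivalence carries $\sL=\sP(\sH,\Lambda)$ exactly onto~$\sH$. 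Consequently $\sL$ is resolving in $\sK$, every object of $\sK$ has finite $\sL$\+resolution dimension, and $\sK$ is weakly idempotent-complete (being equivalent to $\sE$, which is weakly idempotent-complete as is tacit in the theory of resolving subcategories). All hypotheses of Theorem~\ref{finite-resolution-dimension-main-theorem}(a) being met, its first assertion yields the equivalence $\sD^\abs(\bF(\sH,\Lambda,w))\simeq\sD^\abs(\bF(\sE,\Lambda,w))$.

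For the coderived and contraderived statements I would check the supplementary hypotheses in the second and third paragraphs of Theorem~\ref{finite-resolution-dimension-main-theorem}(a). Assuming $\sE$ has exact coproducts (respectively products) and $\sH\subset\sE$ is closed under them, the equivalence $\sP(\sE,\Lambda)\simeq\sE$ shows that $\sK$ has exact coproducts (resp.\ products) and that both $\sL$ and $\sK$ are closed under them inside $\sZ^0(\bE^\bec)$; by Lemma~\ref{exact-co-products-lemma} these (co)products are then exact in the exact DG\+category $\bE$, and $\bF=\bF(\sH,\Lambda,w)$ is plainly closed under them. A second and third application of Theorem~\ref{finite-resolution-dimension-main-theorem}(a) then gives the equivalences $\sD^\co(\bF(\sH,\Lambda,w))\simeq\sD^\co(\bF(\sE,\Lambda,w))$ and $\sD^\ctr(\bF(\sH,\Lambda,w))\simeq\sD^\ctr(\bF(\sE,\Lambda,w))$.

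The hard part will be the transfer step of the second paragraph: one must be sure that the exact structure on $\sP(\sE,\Lambda)$ coincides with the ambient one on $\sE$ under the chosen equivalence, so that the notions of \emph{resolving subcategory} and \emph{finite resolution dimension} genuinely pass across, and that the $\Lambda$\+stability of $\sH$ is what makes $\sP(\sH,\Lambda)$ correspond to $\sH$. This is a bookkeeping point already flagged by the parenthetical remark in Example~\ref{exact-dg-category-of-factorizations}; once it is made precise, the rest is a direct unwinding of Example~\ref{exact-DG-pairs-examples}(5) together with the already-established Theorem~\ref{finite-resolution-dimension-main-theorem}.
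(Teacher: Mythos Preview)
Your proposal is correct and takes essentially the same approach as the paper: the paper's proof is a single sentence setting $\bE=\bF(\sE,\Lambda,w)$, $\bF=\bF(\sH,\Lambda,w)$, $\sK=\sP(\sE,\Lambda)$, $\sL=\sP(\sH,\Lambda)$ via Examples~\ref{exact-dg-category-of-factorizations} and~\ref{exact-DG-pairs-examples}(5), then invoking Theorem~\ref{finite-resolution-dimension-main-theorem}(a). Your elaboration of the transfer of the resolving hypothesis through the equivalence $\sP(\sE,\Lambda)\simeq\sE$ is exactly the bookkeeping the paper leaves implicit.
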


\begin{proof}
 Put $\bE=\bF(\sE,\Lambda,w)$, \,$\bF=\bF(\sH,\Lambda,w)$,
\,$\sK=\sP(\sE,\Lambda)$, \,$\sL=\sP(\sH,\Lambda)$,
as in Examples~\ref{exact-dg-category-of-factorizations}
and~\ref{exact-DG-pairs-examples}(5), and apply
Theorem~\ref{finite-resolution-dimension-main-theorem}(a).
\end{proof}

\Section{Full-and-Faithfulness Theorem}
\label{full-and-faithfulness-secn}

\subsection{Self-resolving subcategories in exact categories}
\label{self-resolving-subcategories-subsecn}
 Let $\sE$ be an exact category.
 The following condition is fairly well-known
(see~\cite[Section~12]{Kel2}), but we are not aware of any terminology
for it in the literature.
 We will say that a full subcategory $\sF\subset\sE$ is
\emph{self-resolving} if
\begin{itemize}
\item $\sF$ inherits an exact category structure from~$\sE$;
\item $\sF$ is closed under the kernels of admissible epimorphisms
in~$\sE$;
\item for any objects $F\in\sF$ and $E\in\sE$, and any admissible
epimorphism $E\rarrow F$ in $\sE$, there exist an object $F'\in\sF$,
an admissible epimorphism $F'\rarrow F$, and a morphism $F'\rarrow E$
such that the triangle diagram $F'\rarrow E\rarrow F$ is commutative
in~$\sE$.
\end{itemize}
 Clearly, any resolving subcategory is self-resolving.

 Dually, a full subcategory $\sG\subset\sE$ is said to be
\emph{self-coresolving} if
\begin{itemize}
\item $\sG$ inherits an exact category structure from~$\sE$;
\item $\sG$ is closed under the cokernels of admissible monomorphisms
in~$\sE$;
\item for any objects $G\in\sG$ and $E\in\sE$, and any admissible
monomorphism $G\rarrow E$ in $\sE$, there exist an object $G'\in\sG$,
an admissible monomorphism $G\rarrow G'$, and a morphism $E\rarrow G'$
such that the triangle diagram $G\rarrow E\rarrow G'$ is commutative
in~$\sE$.
\end{itemize}
 Clearly, any coresolving subcategory is self-coresolving.

 The following lemma is our version of~\cite[Remark~2.2]{PSch}.

\begin{lem}
 Any self-resolving or self-coresolving subcategory in an exact
category is closed under extensions.
\end{lem}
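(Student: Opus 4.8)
The plan is to prove the self-resolving case and deduce the self-coresolving case by passing to $\sE^\rop$ (this interchanges self-resolving and self-coresolving subcategories and preserves the property ``closed under extensions''; equivalently, one runs the dual argument below, using condition~(i) of Lemma~\ref{inheriting-exact-structure-lemma} in place of condition~(ii)). So let $\sF$ be self-resolving, let $0\rarrow A\xrightarrow{i}B\xrightarrow{p}C\rarrow0$ be an admissible short exact sequence in $\sE$ with $A$, $C\in\sF$, and let us show $B\in\sF$. The governing idea is \emph{not} to apply the closure axiom to $B$ directly, but to realize $B$ as a pushout of an admissible short exact sequence all of whose terms lie in $\sF$, and then to invoke the inherited exact structure.

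First I would resolve the quotient inside $\sF$. Applying the third defining property of a self-resolving subcategory to the admissible epimorphism $p\:B\rarrow C$ (legitimate since $C\in\sF$) produces $F'\in\sF$, an admissible epimorphism $q\:F'\rarrow C$, and a morphism $r\:F'\rarrow B$ with $pr=q$. Closure under kernels of admissible epimorphisms gives $K:=\ker q\in\sF$, whence $P:=A\oplus F'\in\sF$ and $P_1:=A\oplus K\in\sF$. The map $(0,q)\:P\rarrow C$ is an admissible epimorphism with kernel $P_1$, so $0\rarrow P_1\xrightarrow{\iota}P\xrightarrow{(0,q)}C\rarrow0$ is an admissible short exact sequence in $\sE$ with all three terms in $\sF$, where $\iota=\id_A\oplus(K\hookrightarrow F')$.

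The key step is to feed this into the pushout axiom. Set $g=(i,r)\:P\rarrow B$; then $pg=(0,q)$, so $g$ carries $P_1$ into $\ker p=A$ and restricts to a morphism $\bar g\:P_1\rarrow A$. Thus $g$ and $\bar g$ form a morphism of admissible short exact sequences from $0\rarrow P_1\rarrow P\rarrow C\rarrow0$ to $0\rarrow A\rarrow B\rarrow C\rarrow0$ which is the identity on the common quotient $C$; hence the left-hand square, consisting of the two admissible monomorphisms $\iota$ and $i$ together with $\bar g$ and $g$, is bicartesian, and in particular $B$ is the pushout of $P\xleftarrow{\iota}P_1\xrightarrow{\bar g}A$. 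Now I would invoke condition~(ii) of Lemma~\ref{inheriting-exact-structure-lemma} (available because $\sF$ inherits an exact structure) for the admissible short exact sequence $0\rarrow P_1\rarrow P\rarrow C\rarrow0$ and the morphism $\bar g\:P_1\rarrow A$: since $P_1$, $P$, $C$, and $A$ all lie in $\sF$, the pushout $B$ lies in $\sF$, as required.

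The main obstacle, I expect, is conceptual rather than computational. The hypothesis one is tempted to use is closure under \emph{kernels} of admissible epimorphisms, but the covering construction most naturally presents $B$ as a \emph{cokernel} (a quotient $P/\ker g$), and these do not match directly; attempting to conclude $B\in\sF$ from such a quotient presentation via kernel-closure leads nowhere. The way around this is to never apply the closure axiom to $B$ itself, but to route the conclusion through the pushout condition~(ii) of Lemma~\ref{inheriting-exact-structure-lemma}, using kernel-closure only to deposit the auxiliary objects $K$ and $P_1$ into $\sF$. The remaining details to verify carefully are that $(0,q)$ is an admissible epimorphism with kernel $P_1$, and that the square $(\iota,i,\bar g,g)$ is bicartesian (the standard fact that the left square of a morphism of admissible short exact sequences inducing an isomorphism on cokernels is a pushout), so that $B$ genuinely is the asserted pushout.
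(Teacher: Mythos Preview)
Your proof is correct and follows essentially the same approach as the paper's: lift the admissible epimorphism onto $C$ through~$\sF$, obtain a morphism of admissible short exact sequences that is the identity on the quotient, recognize the left square as a pushout, and invoke condition~(ii) of Lemma~\ref{inheriting-exact-structure-lemma}. The paper's argument is slightly leaner in that it works directly with the short exact sequence $0\to K\to F'\to C\to 0$ and the induced map $K\to A$, without adjoining the extra $A$ summand to form $P=A\oplus F'$ and $P_1=A\oplus K$; your direct-sum construction is harmless but unnecessary.
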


\begin{proof}
 Let $0\rarrow G\rarrow E\rarrow F\rarrow0$ be a short exact sequence
in an exact category $\sE$ with the objects $F$ and $G$ belonging to
a self-resolving subcategory $\sF\subset\sE$.
 Then there exists an object $F'\in\sF$, an admissible epimorphism
$F'\rarrow F$, and a morphism $F'\rarrow E$ such that the triangle
diagram $F'\rarrow E\rarrow F$ is commutative.
 Since $\sF$ is closed under the kernels of admissible epimorphisms
in $\sE$, the kernel $H$ of the admissible epimorphism $F'\rarrow F$
belongs to~$\sF$.
 So we have a commutative diagram in the exact category~$\sE$
$$
 \xymatrix{
  H \ar@{>->}[r] \ar[d] & F' \ar@{->>}[r] \ar[d] & F \\
  G \ar@{>->}[r] & E \ar@{->>}[ur]
 }
$$
with the objects $H$, $F'$, $F$, $G\in\sF$ and admissible short
exact sequences $0\rarrow H\rarrow F'\rarrow F\rarrow0$ and
$0\rarrow G\rarrow E\rarrow F\rarrow0$.
 By~\cite[Proposition~A.2 and Corollary~A.3]{Partin}, it follows that
$H\rarrow F'\rarrow E$, \ $H\rarrow G\rarrow E$ is a pushout square
in~$\sE$.
 Now Lemma~\ref{inheriting-exact-structure-lemma}(ii) tells that
$E\in\sF$.
\end{proof}

\subsection{Approachability in triangulated categories}
\label{approachability-subsecn}
 The terminology ``approachable object'' goes back
to~\cite[proof of Theorem~5.5]{PSch}.

 Let $\sT$ be a triangulated category and $\sS$, $\sY\subset\sT$ be
two full subcategories.
 We will say that an object $X\in\sT$ is \emph{approachable from\/
$\sS$ via\/~$\sY$} if every morphism $S\rarrow X$ in $\sT$ with
an object $S\in\sS$ factorizes through an object from~$\sY$.

 Equivalently, an object $X$ is approachable from $\sS$ via $\sY$ if
and only if, for every morphism $S\rarrow X$ in $\sT$ with $S\in\sS$
there exists an object $S'\in\sT$ and a morphism $S'\rarrow S$ in $\sT$
with a cone belonging to $\sY$ such that the composition $S'\rarrow S
\rarrow X$ vanishes in~$\sT$.
 When $\sS$ is a full triangulated subcategory in $\sT$ and
$\sY\subset\sS$, the conditions in the criterion above imply that
$S'\in\sS$.

 Dually, an object $X\in\sT$ is \emph{coapproachable to\/ $\sS$
via\/~$\sY$} if every morphism $X\rarrow S$ in $\sT$ with
an object $S\in\sS$ factorizes through an object from~$\sY$.
 The following lemma constitutes a standard technique for proving
full-and-faithfulness of triangulated functors induced by inclusions.

\begin{lem} \label{approachable-implies-fully-faithful-lemma}
 Let\/ $\sT$ be a triangulated category and\/ $\sS$, $\sX\subset\sT$
be two (strictly) full triangulated subcategories.
 Let\/ $\sY\subset\sS\cap\sX$ be a full triangulated subcategory in
the intersection.
 Assume that all objects of\/ $\sX$ are approachable from\/ $\sS$
via\/~$\sY$.
 Then the triangulated functor between the Verdier quotient categories
$$
 \sS/\sY\lrarrow\sT/\sX
$$
induced by the inclusion of triangulated categories\/
$\sS\rightarrowtail\sT$ is fully faithful.
\end{lem}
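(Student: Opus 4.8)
The plan is to work entirely with the calculus of (two-sided) fractions in the Verdier quotients $\sT/\sX$ and $\sS/\sY$, invoking the approachability hypothesis exactly at the two points where a factorization taking place in $\sT$ must be pushed back into $\sS$. Write $F\colon\sS/\sY\rarrow\sT/\sX$ for the functor induced by the inclusion. Recall the standard facts: $\sX$ determines the multiplicative system $S_\sX$ of morphisms with cone in $\sX$; every morphism of $\sT/\sX$ is represented by a right roof $A\xleftarrow{s}C\xrightarrow{g}B$ with $s\in S_\sX$; such an $s$ becomes invertible in $\sT/\sX$; and a morphism $g\colon C\rarrow B$ of $\sT$ vanishes in $\sT/\sX$ if and only if it factorizes through an object of $\sX$. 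The analogous statements hold for $\sY\subset\sS$. I will use throughout that $\sS$ is a full triangulated subcategory containing $\sY$, so that morphisms of $\sT$ between objects of $\sS$ are morphisms of $\sS$, and cones of such morphisms again lie in $\sS$.

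For \emph{faithfulness}, let $\psi$ be a morphism of $\sS/\sY$ with $F(\psi)=0$. Represent $\psi$ by a roof $A\xleftarrow{s}C\xrightarrow{g}B$ with $\cone(s)\in\sY$ and $A$, $B$, $C\in\sS$; since $s$ is invertible, it suffices to show that $g$ vanishes in $\sS/\sY$. The hypothesis $F(\psi)=0$ says $g=0$ in $\sT/\sX$, hence $g$ factorizes as $C\rarrow X\rarrow B$ through some object $X\in\sX$. Applying approachability of $X$ from $\sS$ via $\sY$ to the morphism $C\rarrow X$ (with $C\in\sS$) yields a factorization $C\rarrow Y\rarrow X$ through an object $Y\in\sY$. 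Composing, $g$ factorizes through $Y\in\sY\subset\sS$; as all the maps involved are morphisms of $\sS$, this exhibits $g$ as factoring through $\sY$ inside $\sS$, so $g=0$ in $\sS/\sY$ and therefore $\psi=0$.

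For \emph{fullness}, let $\phi\colon A\rarrow B$ be a morphism of $\sT/\sX$ with $A$, $B\in\sS$, and represent it by a right roof $A\xleftarrow{s}C\xrightarrow{g}B$ with $s\in S_\sX$. Completing $s$ to a distinguished triangle $C\xrightarrow{s}A\xrightarrow{p}X\rarrow C[1]$ gives $X\in\sX$. Applying approachability of $X$ from $\sS$ via $\sY$ to $p\colon A\rarrow X$ (with $A\in\sS$), in the equivalent form recorded in Section~\ref{approachability-subsecn}, produces a morphism $u\colon A'\rarrow A$ with $\cone(u)\in\sY$ and $pu=0$, where $A'\in\sS$ by the remark in that section. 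From $pu=0$ and the triangle, $u$ factors through $s$, say $u=sv$ with $v\colon A'\rarrow C$. Then $v$ is a morphism of roofs from $A\xleftarrow{u}A'\xrightarrow{gv}B$ to $A\xleftarrow{s}C\xrightarrow{g}B$, so the refined roof represents the same morphism $\phi$ in $\sT/\sX$; but this refined roof has all objects in $\sS$ and denominator $u$ with $\cone(u)\in\sY$, hence defines a morphism $\psi$ of $\sS/\sY$ with $F(\psi)=\phi$.

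The entire conceptual content sits in the two single applications of approachability; everything else is fraction bookkeeping. The step I expect to require the most care is the one in the fullness argument asserting that refinement along $v$ yields the \emph{same} morphism in $\sT/\sX$ — that is, that $(A',u,gv)$ and $(C,s,g)$ are equivalent right roofs — together with the verification, in both parts, that the intermediate objects and morphisms manufactured in $\sT$ may be taken inside $\sS$. This last point is where one repeatedly uses $\sY\subset\sS$ and the closure of $\sS$ under cones (so that $A'$, sitting in a triangle with $A$ and $\cone(u)\in\sY$, lies in $\sS$), which is precisely the hypothesis that lets the whole diagram be assembled within the subcategory $\sS$.
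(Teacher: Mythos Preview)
Your proof is correct and is exactly the argument the paper has in mind: the paper itself gives only ``Straightforward from the definitions'' as the proof of this lemma, but later (in the proof of Lemma~\ref{compacts-in-quotient-category-lemma}) it spells out precisely your fullness step---representing a morphism by a roof $S\larrow D\rarrow T$, completing $D\rarrow S$ to a triangle with cone $X\in\sX$, applying approachability to $S\rarrow X$ to produce a new denominator $G\rarrow S$ with cone in $\sY$, and then factoring through $D$. Your treatment of both fullness and faithfulness is complete and matches this.
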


\begin{proof}
 Straightforward from the definitions.
\end{proof}

 In the following three lemmas, we list the basic properties of
the class of all approachable objects.

\begin{lem} \label{summands-approachable-lemma}
 For any two full subcategories\/ $\sS$, $\sY\subset\sT$, the class of
all objects approachable from\/ $\sS$ via\/ $\sY$ is closed under
direct summands in\/~$\sT$.
 If the full subcategory\/ $\sY\subset\sT$ is closed under finite
direct sums, then so is the full subcategory of all objects
approachable from\/ $\sS$ via\/~$\sY$.
\end{lem}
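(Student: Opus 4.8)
The plan is to argue entirely by direct manipulation of factorizations, exploiting the biproduct structure of finite direct sums in the triangulated category $\sT$ (which exist automatically) together with the canonical summand inclusions and projections. Throughout I recall that $X$ is approachable from $\sS$ via $\sY$ precisely when every morphism $S\rarrow X$ with $S\in\sS$ admits a factorization $S\rarrow Y\rarrow X$ with $Y\in\sY$.

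For closure under direct summands, suppose $X$ is approachable from $\sS$ via $\sY$ and that $X\simeq X_1\oplus X_2$; I would show that $X_1$ is approachable. Given an arbitrary morphism $f\colon S\rarrow X_1$ with $S\in\sS$, I would compose with the summand inclusion $\iota_1\colon X_1\rarrow X$ to obtain $\iota_1 f\colon S\rarrow X$. Approachability of $X$ then yields a factorization $\iota_1 f=g h$ with $h\colon S\rarrow Y$, \ $g\colon Y\rarrow X$, and $Y\in\sY$. Composing with the projection $\pi_1\colon X\rarrow X_1$ and using $\pi_1\iota_1=\id_{X_1}$ gives $f=\pi_1\iota_1 f=(\pi_1 g)h$, which is the desired factorization of $f$ through $Y\in\sY$. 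I would emphasize that this step requires no hypothesis on $\sY$ whatsoever.

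For closure under finite direct sums, I would now assume in addition that $\sY$ is closed under finite direct sums and that $X_1$ and $X_2$ are both approachable, and show that $X_1\oplus X_2$ is approachable. Given $f\colon S\rarrow X_1\oplus X_2$ with $S\in\sS$, I would split it into its components $f_i=\pi_i f\colon S\rarrow X_i$ and apply approachability of each $X_i$ to produce factorizations $f_i=g_i h_i$ with $h_i\colon S\rarrow Y_i$, \ $g_i\colon Y_i\rarrow X_i$, and $Y_i\in\sY$. Then $Y_1\oplus Y_2$ lies in $\sY$ by hypothesis, and the morphism $h\colon S\rarrow Y_1\oplus Y_2$ with components $h_1$ and $h_2$ together with the morphism $g=g_1\oplus g_2\colon Y_1\oplus Y_2\rarrow X_1\oplus X_2$ satisfy $gh=f$, exhibiting the required factorization.

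Both assertions are essentially formal, so I do not expect a serious obstacle; the only points demanding care are the bookkeeping of inclusions and projections in the composition identities (in particular $\pi_i\iota_i=\id$ and the diagonal form of $g$), and the observation that the direct-summand statement is unconditional whereas the direct-sum statement genuinely uses closure of $\sY$ under finite direct sums in order to guarantee $Y_1\oplus Y_2\in\sY$. The finite direct sums $Y_1\oplus Y_2$ and $X_1\oplus X_2$ are available as biproducts in $\sT$, so no existence issues arise.
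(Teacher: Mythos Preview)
Your argument is correct and is precisely the ``straightforward'' verification the paper has in mind; in fact, the paper omits the proof entirely, remarking only that the lemma holds in any additive category and not just a triangulated one. Your use of the biproduct inclusions/projections for both directions is exactly the intended manipulation.
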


\begin{proof}
 This lemma does not depend on the assumption that $\sT$ is
triangulated, but holds in any additive category.
 We omit the largely straightforward proof (see the proof of the next
Lemma~\ref{co-products-approachable-lemma} for some details).
\end{proof}

\begin{lem} \label{co-products-approachable-lemma}
\textup{(a)} Assume that all infinite products exist in\/ $\sT$ and
the full subcategory\/ $\sY\subset\sT$ is closed under products.
 Then the full subcategory of all objects approachable from\/ $\sS$
via\/ $\sY$ is closed under products in\/~$\sT$. \par
\textup{(b)} Assume that all infinite coproducts exists in\/ $\sT$ and
the full subcategory\/ $\sY\subset\sT$ is closed under coproducts.
 Then the full subcategory of all objects coapproachable to\/ $\sS$
via\/ $\sY$ is closed under coproducts in\/~$\sT$.
\end{lem}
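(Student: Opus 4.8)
The plan is to observe that this lemma, just like Lemma~\ref{summands-approachable-lemma}, does not use the triangulated structure of $\sT$ at all: it is a purely formal consequence of the universal properties of products and coproducts, valid in any additive category with the relevant (co)products. The two parts are strictly dual, so I would prove part~(a) in full and indicate that part~(b) is obtained by reversing all arrows.

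For part~(a), let $(X_\alpha)$ be a family of objects each approachable from $\sS$ via $\sY$, and set $X=\prod_\alpha X_\alpha$ with projections $p_\alpha\:X\rarrow X_\alpha$. Given any morphism $f\:S\rarrow X$ with $S\in\sS$, I would first form the compositions $p_\alpha\circ f\:S\rarrow X_\alpha$. Since $S\in\sS$ and each $X_\alpha$ is approachable from $\sS$ via $\sY$, every $p_\alpha\circ f$ factorizes as $S\overset{h_\alpha}\rarrow Y_\alpha\overset{g_\alpha}\rarrow X_\alpha$ for some object $Y_\alpha\in\sY$ and morphisms $h_\alpha$, $g_\alpha$.

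The key step is then to assemble these individual factorizations into a single factorization of $f$ through an object of $\sY$. Put $Y=\prod_\alpha Y_\alpha$, with projections $q_\alpha\:Y\rarrow Y_\alpha$; this object lies in $\sY$ because $\sY$ is closed under products by assumption. The morphisms $g_\alpha\:Y_\alpha\rarrow X_\alpha$ induce, by the universal property of the product $X$, a morphism $g\:Y\rarrow X$ with $p_\alpha\circ g=g_\alpha\circ q_\alpha$ for all $\alpha$; and the morphisms $h_\alpha\:S\rarrow Y_\alpha$ induce, by the universal property of the product $Y$, a morphism $h\:S\rarrow Y$ with $q_\alpha\circ h=h_\alpha$. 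Now $p_\alpha\circ g\circ h=g_\alpha\circ q_\alpha\circ h=g_\alpha\circ h_\alpha=p_\alpha\circ f$ for every $\alpha$, so the two morphisms $g\circ h$ and $f$ from $S$ to $X$ have equal compositions with all the projections $p_\alpha$. Since the projections of a product are jointly monic, this gives $g\circ h=f$, exhibiting a factorization of $f$ through $Y\in\sY$; hence $X$ is approachable from $\sS$ via $\sY$.

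I do not expect any genuine obstacle here; the only point that requires a moment of care is the final verification that the assembled morphism $g\circ h$ coincides with $f$, which rests entirely on the fact that a morphism into a product is determined by its compositions with the projections. For part~(b) I would run the dual argument using the coproduct injections $i_\alpha\:X_\alpha\rarrow X=\coprod_\alpha X_\alpha$: for a morphism $f\:X\rarrow S$ with $S\in\sS$ one factors each $f\circ i_\alpha$ through an object $Y_\alpha\in\sY$, assembles a factorization of $f$ through $Y=\coprod_\alpha Y_\alpha$ (which lies in $\sY$ since $\sY$ is closed under coproducts), and concludes by the jointly epic property of the coproduct injections that $X$ is coapproachable to $\sS$ via~$\sY$.
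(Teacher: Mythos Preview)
Your proof is correct and follows essentially the same approach as the paper's: factor each component $S\rarrow X_\alpha$ through some $Y_\alpha\in\sY$, then assemble a factorization of $f$ through $\prod_\alpha Y_\alpha\in\sY$. You give more detail in verifying $g\circ h=f$ via the universal property of the product, while the paper leaves this implicit; both note that the triangulated structure is irrelevant.
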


\begin{proof}
 This lemma also does not depend on the assumption that $\sT$ is
triangulated.
 Let us prove part~(a) (part~(b) is dual).
 Let $X_\alpha$ be a family of objects in $\sT$ such that $X_\alpha$
is approachable from $\sS$ via $\sY$ for every~$\alpha$.
 Let $S\rarrow\prod_\alpha X_\alpha$ be a morphism into the product of
$X_\alpha$ in $\sT$ from an object $S\in\sS$.
 Let $S\rarrow X_\alpha$ be the components of the morphism
$S\rarrow\prod_\alpha X_\alpha$.
 Then for every~$\alpha$ there exists an object $Y_\alpha\in\sY$
such that the morphism $S\rarrow X_\alpha$ factorizes
through~$Y_\alpha$.
 Hence the morphism $S\rarrow\prod_\alpha X_\alpha$ factorizes through
the object $\prod_\alpha Y_\alpha\in\sY$.
\end{proof}

\begin{lem} \label{shifts-cones-approachable-lemma}
 Let\/ $\sT$ be a triangulated category and\/ $\sS$, $\sY\subset\sT$
be two full triangulated subcategories such that\/ $\sY\subset\sS$.
 Then the full subcategory of all objects approachable from\/ $\sS$
via $\sY$ is triangulated, that is, closed under shifts and cones
in\/~$\sT$.
\end{lem}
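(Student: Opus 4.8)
The plan is to verify directly that the full subcategory $\mathcal{A}$ of objects approachable from $\sS$ via $\sY$ is closed under the shift autoequivalence in both directions and under cones; these properties together (with the evident closure under isomorphism, and the fact that the zero object lies in $\mathcal{A}$ because $\sY$, being triangulated, contains it) make $\mathcal{A}$ a full triangulated subcategory. Throughout I would work with the reformulation of approachability recorded just before the lemma: an object $X$ is approachable from $\sS$ via $\sY$ if and only if every morphism $s\:S\rarrow X$ with $S\in\sS$ admits a morphism $r\:S'\rarrow S$, with $S'\in\sS$ and $\cone(r)\in\sY$, such that $s\circ r=0$. The passage between the two formulations is the standard observation that $s$ factors through $\cone(r)\in\sY$ precisely when $s\circ r=0$, and conversely a factorization $S\rarrow Y\rarrow X$ through $Y\in\sY$ produces such an $r$ as the fiber of $S\rarrow Y$; here one uses $\sY\subset\sS$ and that $\sS$ is triangulated to keep $S'$ inside $\sS$.

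For the shifts, given $X\in\mathcal{A}$ and a morphism $g\:S\rarrow X[1]$ with $S\in\sS$, I would pass to $g[-1]\:S[-1]\rarrow X$; since $\sS$ is triangulated, $S[-1]\in\sS$, so $g[-1]$ factors through some $Y\in\sY$, and shifting back exhibits a factorization of $g$ through $Y[1]\in\sY$, using that $\sY$ is closed under shifts. The same argument with $[-1]$ in place of $[1]$ handles $X[-1]$, so $\mathcal{A}$ is closed under shifts in both directions.

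The substantive step is closure under cones. I would take a distinguished triangle $X'\xrightarrow{u}X''\xrightarrow{v}Z\xrightarrow{w}X'[1]$ with $X'$, $X''\in\mathcal{A}$ and prove $Z\in\mathcal{A}$. Fix a morphism $g\:S\rarrow Z$ with $S\in\sS$. Applying approachability of $X'$ (hence of $X'[1]$, by the shift step already established) to the composite $w\circ g\:S\rarrow X'[1]$, I obtain a morphism $a\:S_1\rarrow S$ with $S_1\in\sS$ and $\cone(a)\in\sY$ such that $w\circ g\circ a=0$. Then $g\circ a$ is killed by $w$, so by the exactness of $\Hom_\sT(S_1,{-})$ on the triangle it factors through $v$, say $g\circ a=v\circ h$ for some $h\:S_1\rarrow X''$. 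Applying approachability of $X''$ to $h$ yields $b\:S_2\rarrow S_1$ with $S_2\in\sS$ and $\cone(b)\in\sY$ such that $h\circ b=0$, whence $g\circ(a\circ b)=v\circ h\circ b=0$.

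It then remains to see that the composite $a\circ b\:S_2\rarrow S$ has cone in $\sY$, and this is where the octahedral axiom enters; I expect this to be the main point of the argument. Applied to the composable pair $S_2\xrightarrow{b}S_1\xrightarrow{a}S$, the octahedral axiom produces a distinguished triangle $\cone(b)\rarrow\cone(a\circ b)\rarrow\cone(a)\rarrow\cone(b)[1]$; since $\cone(a)$ and $\cone(b)$ both lie in the triangulated subcategory $\sY$, so does $\cone(a\circ b)$. Thus $a\circ b$ is a morphism with cone in $\sY$ killing $g$, which by the reformulation of approachability shows that $g$ factors through $\sY$. As $g$ was arbitrary, $Z\in\mathcal{A}$, and the proof is complete. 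The only care needed beyond the octahedral step is the routine bookkeeping ensuring each auxiliary source $S_1$, $S_2$ remains in $\sS$, which is automatic since $\sS$ is triangulated and $\sY\subset\sS$.
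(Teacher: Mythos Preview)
Your proof is correct and follows essentially the same approach as the paper: both arguments handle shifts via the shift-closure of $\sS$ and $\sY$, and both handle cones by a two-step reduction---first killing the composite into one vertex of the triangle to factor through the other vertex, then killing that factorization---followed by the octahedral axiom to conclude that the composite map $S_2\to S$ has cone in~$\sY$. The only cosmetic difference is that the paper phrases the cone step as showing the middle term of a triangle $X\to W\to Z\to X[1]$ is approachable when $X$ and $Z$ are, whereas you show the third term $Z$ is approachable when $X'$ and $X''$ are; these are equivalent given shift-closure.
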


\begin{proof}
 First of all, the class of all approachable objects is preserved
by the shifts in~$\sT$, because the full subcategories
$\sS$, $\sY\subset\sT$ are.

 Furthermore, let $X\rarrow W\rarrow Z\rarrow X[1]$ be a distinguished
triangle in $\sT$ with approachable objects $X$ and~$Z$,
and let $S\rarrow W$ be a morphism in $\sT$ from an object $S\in\sS$.
 Consider the composition $S\rarrow W\rarrow Z$ in~$\sT$.
 By assumption, there exists an object $S'\in\sT$ and a morphism
$S'\rarrow S$ with a cone belonging to $\sY$ such that the composition
$S'\rarrow S\rarrow W\rarrow Z$ vanishes in~$\sT$.

 Hence the composition $S'\rarrow S\rarrow W$ factorizes through
the morphism $X\rarrow W$ in~$\sT$; so we obtain a morphism
$S'\rarrow X$.
 Since $\sS\subset\sT$ is a full triangulated subcategory and
$\cone(S'\to S)\in\sY\subset\sS$, we have $S'\in\sS$.

 By assumption, there exists an object $S''\in\sT$ (in fact,
$S''\in\sS$) and a morphism $S''\rarrow S'$ with a cone belonging
to $\sY$ such that the composition $S''\rarrow S' \rarrow X$ vanishes.
 Now, since $\sY\subset\sT$ is a full triangulated subcategory,
the composition $S''\rarrow S'\rarrow S$ has a cone belonging to~$\sY$.
 The composition $S''\rarrow S'\rarrow S\rarrow W$ vanishes in
$\sT$, and we are done.
\end{proof}

\subsection{Three lemmas} \label{lemmas-E-and-F-subsecn}
 Let $\bA$ be a DG\+category with shifts and cones, and let
$U\overset j\rarrow V\overset k\rarrow W$ be a three-term complex
in the additive category of closed morphisms $\sZ^0(\bA)$.
 Denote by $T=\Tot(U\to V\to W)$ be the corresponding total object
in~$\bA$.

 By the definition, for any object $A\in\bA$, the complex of morphisms
$\Hom_\bA^\bu(A,T)$ is naturally isomorphic to the totalization of
the bicomplex with three rows $\Hom_\bA^\bu(A,U)\rarrow
\Hom_\bA^\bu(A,V)\rarrow\Hom_\bA^\bu(A,W)$.
 In particular, for any $n\in\Gamma$, elements of the abelian group
$\Hom_\bA^n(A,T)$ can be represented by triples $(f,g,h)$, where
$f\in\Hom_\bA^{n+1}(A,U)$, \ $g\in\Hom_\bA^n(A,V)$, and
$h\in\Hom_\bA^{n-1}(A,W)$.

 The following Lemmas~\ref{Hom-complex-into-Tot-lemma}
and~\ref{liftability-preserved-by-Phi-Psi-adjunction} form a useful
technique for proving theorems about derived categories of
the second kind.

\begin{lem} \label{Hom-complex-into-Tot-lemma}
\textup{(a)} The differential of an element of\/ $\Hom_\bA^n(A,T)$
represented by a triple $(f,g,h)$ is given by the rule
$$
 d(f,g,h)=(-df,-jf+dg,kg-dh).
$$ \par
\textup{(b)} Assume that $j=\ker(k)$ in the additive category\/
$\sZ^0(\bA)$.
 Then a closed morphism $(f,g,h)\in\Hom_\bA^n(A,T)$ is homotopic to
zero whenever the morphism $h\in\Hom_\bA^{n-1}(A,W)$ can be lifted
to a morphism $t\in\Hom_\bA^{n-1}(A,V)$, that is, $h=kt$. \par
\textup{(c)} Moreover, in the assumptions of~(b), to any closed
morphism $(f,g,h)\in\Hom_\bA^n(A,T)$ and a lifting
$t\in\Hom_\bA^{n-1}(A,V)$ of the morphism $h\in\Hom_\bA^{n-1}(A,W)$
one can assign a naturally defined contracting homotopy for
the morphism $(f,g,h)$.
\end{lem}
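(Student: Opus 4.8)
The plan is to treat part~(a) as a sign-bookkeeping computation, to prove the stronger statement~(c) and read off~(b) from it, and to isolate the one genuinely nonformal step — a lifting through~$j$ of a \emph{non-closed} cochain — which I will carry out using the cone of an identity morphism.

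For part~(a) I would start from the defining property of the totalization: for every object $A$ the complex $\Hom_\bA^\bu(A,T)$ is by definition the total complex of the bicomplex with three rows $\Hom_\bA^\bu(A,U)\to\Hom_\bA^\bu(A,V)\to\Hom_\bA^\bu(A,W)$, the vertical maps being $j_*$ and $k_*$. Representing a total-degree-$n$ element as a triple $(f,g,h)$ with $f\in\Hom_\bA^{n+1}(A,U)$, $g\in\Hom_\bA^n(A,V)$, $h\in\Hom_\bA^{n-1}(A,W)$, the total differential is the sum of the internal differential $d$ and of $j_*,k_*$ with the Koszul signs governed by the internal degree; collecting components yields $d(f,g,h)=(-df,\,-jf+dg,\,kg-dh)$ in the paper's sign convention. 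As a check I would verify $d^2=0$ and record that the cocycle condition on $(f,g,h)$ is exactly $df=0$, $dg=jf$, $dh=kg$; these three equations drive the rest.

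For~(c), fix a closed $(f,g,h)$ and a lifting $t\in\Hom_\bA^{n-1}(A,V)$ with $kt=h$, and set $\psi=g-dt$. Since $k$ is closed of degree~$0$ one has $d(kt)=k(dt)$, whence the two identities $k\psi=kg-d(kt)=kg-dh=0$ and $d\psi=dg=jf$. The crux is to produce $x\in\Hom_\bA^n(A,U)$ with $jx=\psi$; granting this, injectivity of $j$ on cochains (which follows from $j=\ker k$ on cocycles) forces $dx=f$, and then $\xi=(-x,t,0)$ is the desired contracting homotopy, as
\[
 d(-x,t,0)=(dx,\ jx+dt,\ kt)=(f,\ \psi+dt,\ h)=(f,g,h).
\]
This formula, with the explicit $x$ constructed below, is the natural contracting homotopy asserted in~(c), and part~(b) is its mere existence statement.

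The main obstacle is precisely the lifting of $\psi$ through~$j$: the hypothesis only provides $j=\ker k$ in the additive category $\sZ^0(\bA)$ of \emph{closed} morphisms, whereas $\psi$ is a cochain that is not closed ($d\psi=jf\neq0$ in general), so the kernel's universal property does not apply to $\psi$ directly. My device to overcome this is the object $E=\Xi_\bA(A)=\cone(\id_A[-1])$, with the morphisms $\iota_A,\pi_A,\iota_A',\pi_A'$ of Lemma~\ref{G-functors-in-DG-categories} satisfying $\pi_A\iota_A'=\id_A$, $\pi_A'\iota_A'=0$, $d(\pi_A')=\pi_A$, $d(\iota_A')=\iota_A$. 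Packaging the non-closed datum $\psi$ together with its defect $jf=d\psi$ into the single morphism $\theta=\psi\pi_A+(-1)^n (jf)\pi_A'\in\Hom_\bA^n(E,V)$, I would check that $\theta$ is \emph{closed} and that $k\theta=0$. Now $j=\ker k$ in $\sZ^0(\bA)$ does apply (after using the shift functors to view $\theta$ as a closed morphism $E\to V[n]$ of degree~$0$), producing a closed $\chi\colon E\to U$ with $j\chi=\theta$; setting $x=\chi\iota_A'$ and using $\pi_A\iota_A'=\id_A$, $\pi_A'\iota_A'=0$ gives $jx=\psi$. Thus the genuinely new point is the observation that the presence of cones in $\bA$ converts the cochain-level lifting problem for $\psi$ into a closed-morphism lifting problem to which the $\sZ^0$-level hypothesis applies; everything else is sign-checking against the differential of part~(a).
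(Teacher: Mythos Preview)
Your proof is correct, but the key step differs from the paper's argument. The paper proves that the inclusion functor $\sZ^0(\bA)\rightarrow\bA^0$ preserves kernels by invoking the machinery of $\bA^\bec$: since $\Phi_\bA$ is a right adjoint it preserves kernels, and since $\widetilde\Phi_\bA\:\bA^0\rightarrow\sZ^0(\bA^\bec)$ is fully faithful (Lemma~\ref{Phi-Psi-extended-to-nonclosed-morphisms}), the identity inclusion $\sZ^0(\bA)\rightarrow\bA^0$ preserves kernels as well. This yields left exactness of $0\rightarrow\Hom_\bA^\bu(A,U)\rightarrow\Hom_\bA^\bu(A,V)\rightarrow\Hom_\bA^\bu(A,W)$ as a sequence of graded abelian groups, so the non-closed cochain $\psi=g-dt$ lifts through~$j$ immediately. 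Your route avoids the $\bA^\bec$\+machinery altogether: you package $\psi$ and its defect $d\psi=jf$ into a single closed morphism $\theta\:\Xi_\bA(A)\rightarrow V$ and apply the kernel property of~$j$ in $\sZ^0(\bA)$ directly. This is more self-contained and uses only the existence of cones; the paper's approach, on the other hand, establishes the cleaner structural fact that $j$ is a monomorphism on \emph{all} cochains, which may be useful elsewhere. One small remark: your parenthetical ``injectivity of $j$ on cochains (which follows from $j=\ker k$ on cocycles)'' overstates what you actually need---both $dx$ and $f$ are closed, so injectivity of~$j$ on cocycles (i.e., the fact that a kernel is a monomorphism in $\sZ^0(\bA)$) already forces $dx=f$.
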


\begin{proof}
 This is a straightforward generalization of~\cite[Lemma~1.5.E]{EP}.
 Part~(a) holds by the definition of the differential in a bicomplex
with three rows.

 To prove part~(b), one can observe that the identity inclusion
functor $\sZ^0(\bA)\rarrow\bA^0$ preserves kernels, because $\bA^0$
is a full subcategory in $\sZ^0(\bA^\bec)$
(see Lemma~\ref{Phi-Psi-extended-to-nonclosed-morphisms}) and
the functor $\Phi_\bA\:\sZ^0(\bA)\rarrow\sZ^0(\bA^\bec)$ preserves
kernels since it is a right adjoint.
 So $0\rarrow\Hom_\bA^\bu(A,U)\rarrow\Hom_\bA^\bu(A,V)\rarrow
\Hom_\bA^\bu(A,W)$ is a left exact sequence of complexes of abelian
groups.
 Denoting by $\Hom_\bA^\bu(A,W)'\subset\Hom_\bA^\bu(A,W)$
the subcomplex constructed as the image of the map $\Hom_\bA^\bu(A,V)
\rarrow\Hom_\bA^\bu(A,W)$, we have a short exact sequence of
complexes $0\rarrow\Hom_\bA^\bu(A,U)\rarrow\Hom_\bA^\bu(A,V)\rarrow
\Hom_\bA^\bu(A,W)'\rarrow0$.
 The total complex of this short exact sequence is an acyclic
subcomplex in $\Hom_\bA^\bu(A,T)$.
 Hence any cocycle in $\Hom_\bA^\bu(A,T)$ that belongs to this
subcomplex is a coboundary.

 To prove part~(c), one only needs to make the above proof of~(b)
more explicit.
 The equations $kt=h$, $dk=0$, and $kg-dh=0$ imply that $k(dt-g)=
d(kt)-kg=dh-kg=0$.
 Since the short sequence of complexes of abelian groups in the proof
of part~(b) is left exact, it follows that there exists a unique
element $s\in\Hom^n_\bA(A,U)$ such that $dt-g=js$.
 Then the equations $dj=0$ and $-jf+dg=0$ imply that $jds=d(js)=-dg=
-jf$; hence $ds=-f$ and $d(s,t,0)=(f,g,h)$.
 So $(s,t,0)$ is the desired natural contracting homotopy.
\end{proof}

 Let $H^*$ be a graded abelian group.
 As a particular case of the notation in
Proposition~\ref{G-plus-minus-prop} and 
Example~\ref{complexes-bec-example}, we denote by $G^-(H^*)$
the contractible complex of abelian groups with the grading
components $G^-(H^*)^n=H^n\oplus H^{n+1}$ and the differential
$d_{G^-,n}\:H^n\oplus H^{n+1}\rarrow H^{n+1}\oplus H^{n+2}$
whose only nonzero component is the identity map
$H^{n+1}\rarrow H^{n+1}$.
 Notice that there is an obvious natural split epimorphism of graded
abelian groups $G^-(H^*)\rarrow H^*$.

 Recall the notation $\Hom_{\cZ(\bB)}^*(X,Y)$ for the graded abelian
group of cocycles in the complex of morphisms $\Hom_\bB^\bu(X,Y)$
between two objects $X$ and $Y$ in a DG\+category~$\bB$.
 Given two objects $X^\bec$ and $Y^\bec\in\bA^\bec$, denote by
$\Hom_{\cZ(\bA^\bec)}^{-*}(X^\bec,Y^\bec)$ the graded abelian group
$\Hom_{\cZ(\bA^\bec)}^*(X^\bec,Y^\bec)$ with the sign of the grading
inverted, i.~e., the component of degree~$n$ in the former graded
abelian group is the component of degree~$-n$ in the latter one.

\begin{lem} \label{Hom-complex-from-Psi-computed}
 For any DG\+category\/ $\bA$ with shifts and cones, and any objects
$X^\bec\in\bA^\bec$ and $A\in\bA$, there is a natural isomorphism of
complexes of abelian groups
\begin{equation} \label{Hom-complex-from-Psi-eqn}
 \Hom_\bA^\bu(\Psi^+_\bA(X^\bec),A)\simeq
 G^-(\Hom_{\cZ(\bA^\bec)}^{-*}(X^\bec,\Phi_\bA(A))).
\end{equation}
\end{lem}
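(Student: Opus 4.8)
The plan is to use the fact that $\Psi^+_\bA(X^\bec)=X$ (Lemma~\ref{G-functors-in-DG-categories}), so that the left-hand side of~\eqref{Hom-complex-from-Psi-eqn} is simply the complex $\Hom_\bA^\bu(X,A)$. The crucial observation is that $X^\bec=(X,\sigma_X)$ comes with a contracting homotopy $\sigma_X\in\Hom^{-1}_\bA(X,X)$ satisfying $d(\sigma_X)=\id_X$ and $\sigma_X^2=0$; hence $X$ is a contractible object and $\Hom_\bA^\bu(X,A)$ is a contractible complex of abelian groups. First I would make the contraction explicit: setting $s(f)=(-1)^{|f|}f\sigma_X$ defines a degree $-1$ operator on $\Hom_\bA^\bu(X,A)$, and a short computation using $d(\sigma_X)=\id_X$ and $\sigma_X^2=0$ shows that $ds+sd=\id$ and $s^2=0$.

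Next I would record the elementary, purely homological fact that any cochain complex $(C^\bu,d)$ equipped with a degree $-1$ operator $s$ with $s^2=0$ and $ds+sd=\id$ is canonically isomorphic to $G^-(Z^\bu)$, where $Z^n\subset C^n$ is the subgroup of cocycles (which here coincides with the coboundaries, by acyclicity). Concretely, $e=ds$ and $\bar e=sd$ are complementary idempotents (here $s^2=0$ is what gives $sds=s$, hence $e^2=e$) decomposing $C^n=\operatorname{im}(d)^n\oplus\operatorname{im}(s)^n$, with $d$ restricting to an isomorphism $\operatorname{im}(s)^n\overset\sim\rarrow\operatorname{im}(d)^{n+1}$; the assignment $x\longmapsto(ds(x),\>d(x))$ then provides a chain isomorphism $C^n\overset\sim\rarrow Z^n\oplus Z^{n+1}=G^-(Z^\bu)^n$. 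Applied to $C^\bu=\Hom_\bA^\bu(X,A)$, this identifies it with $G^-$ of the graded group $Z^n=\Hom^n_{\cZ(\bA)}(X,A)$ of closed degree-$n$ morphisms $X\rarrow A$.

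It then remains to produce a natural isomorphism of graded abelian groups $\Hom^n_{\cZ(\bA)}(X,A)\simeq\Hom^{-n}_{\cZ(\bA^\bec)}(X^\bec,\Phi_\bA(A))$, a degree-graded refinement of the adjunction $\Psi^+_\bA\dashv\Phi_\bA$ of Lemma~\ref{G-functors-in-DG-categories}. Writing $E=\Phi_\bA(A)=\cone(\id_A[-1])$ with its structure morphisms $\iota,\pi,\iota',\pi'$ and $\sigma_E=\iota'\pi'$ (notation as in the proof of Lemma~\ref{G-functors-in-DG-categories}), I would send a closed $a\in\Hom^n_\bA(X,A)$ to $g=\iota'a+(-1)^n\iota a\sigma_X$, with inverse $f\mapsto\pi f$. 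The content to verify is that $g$ is genuinely an element of $\Hom^{-n}_{\cZ(\bA^\bec)}(X^\bec,E^\bec)$: that $d(g)=0$ in $\bA$ (this is what fixes the sign $(-1)^n$, via $d(\sigma_X)=\id_X$), and that the $\bec$-cocycle condition $\sigma_E g=(-1)^n g\sigma_X$ holds (this uses $\sigma_X^2=0$ together with $\pi'\iota'=0$, \ $\pi'\iota=\id_A=\pi\iota'$, \ $\pi\iota=0$). The same relations show that $\pi(\cdot)$ is a two-sided inverse; the key point is that the cocycle condition on $f$ forces $\pi'f=(-1)^n\pi f\sigma_X$, which is exactly what is needed to recover $f$ from $\pi f$.

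The main obstacle I anticipate is this last identification: checking that the naive degree-$n$ adjunction formula lands precisely among the $\bec$-cocycles (rather than merely the $\bA$-closed morphisms $X\rarrow E$), and that the grading and all the signs match the $G^-$ and $(-*)$ conventions of the statement. Once the explicit mutually inverse formulas are in hand, naturality in both $A$ and $X^\bec$ is automatic, since every map in the chain of identifications is given by composition with fixed structure morphisms and with $\sigma_X$; assembling the three isomorphisms then yields~\eqref{Hom-complex-from-Psi-eqn}.
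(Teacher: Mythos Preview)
Your proof is correct, but the route differs from the paper's.  The paper computes both sides directly: it writes an element of $\Hom_\bA^n(X,E)$ as a pair $(g_{n-1},g_n)$ with $g_i\in\Hom_\bA^i(X,A)$, determines explicitly which pairs lie in $\Hom_{\cZ(\bA^\bec)}^{-n}(X^\bec,E^\bec)$ (namely those with $g_{n-1}\sigma_X=0$ and $g_n=dg_{n-1}$), and then writes down the isomorphism $f\longmapsto(f\sigma_X,\,d(f)\sigma_X)$ by hand.  Your argument instead separates the problem into two conceptual pieces: an abstract lemma that any complex with a square-zero contracting homotopy is canonically $G^-$ of its cocycles, followed by the graded adjunction $\Hom^n_{\cZ(\bA)}(X,A)\simeq\Hom^{-n}_{\cZ(\bA^\bec)}(X^\bec,\Phi_\bA(A))$.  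Your approach makes naturality and the role of the $\Psi^+\dashv\Phi$ adjunction more transparent, and the abstract $G^-$ lemma is reusable; the paper's approach is shorter and yields the explicit inverse formula $(g,h)\longmapsto(-1)^n dg+(-1)^{n+1}h$ immediately, which is convenient for the subsequent Lemma~\ref{liftability-preserved-by-Phi-Psi-adjunction}.
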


\begin{proof}
 Recall the notation $X^\bec=(X,\sigma_X)$ for a generic object
in~$\bA^\bec$ (see Section~\ref{main-construction-subsecn}).
 By the definition, one has $\Psi^+_\bA(X^\bec)=X$.

 The object $E^\bec=\Phi_\bA(A)\in\bA^\bec$ is constructed as
$E^\bec=(E,\sigma_E)$ with $E=\cone(\id_A[-1])$ and $\sigma_E=
\iota'\pi'$ (see Lemma~\ref{G-functors-in-DG-categories}).
 For any object $B\in\bA$, the complex of morphisms $\Hom_\bA^\bu(B,E)$
is naturally isomorphic to the cone of the identity endomorphism of
the complex $\Hom_\bA^\bu(B,A)[-1]$.
 In other words, the elements of the abelian group $\Hom_\bA^n(B,E)$
are pairs $(g_{n-1},g_n)$ with the $g_{n-1}\in\Hom_\bA^{n-1}(B,A)$
and $g_n\in\Hom_\bA^n(B,A)$, and the differential $d\:\Hom_\bA^n(B,E)
\rarrow\Hom_\bA^{n+1}(B,A)$ is given by the formula $d(g_{n-1},g_n)
=(-dg_{n-1}+g_n,dg_n)$.

 The subgroup $\Hom^{-n}_{\cZ(\bA^\bec)}(X^\bec,E^\bec)\subset
\Hom^n_\bA(X,E)$ consists of all the pairs $g=(g_{n-1},g_n)$ with
$g_{n-1}\in\Hom_\bA^{n-1}(X,A)$ and $g_n\in\Hom_\bA^n(X,A)$
satisfying the equations $dg=0$ and $d^\bec g=
\sigma_Eg-(-1)^ng\sigma_X=0$.
 Explicitly, we have $\sigma_E(g_{n-1},g_n)=(0,g_{n-1})$ and
$(g_{n-1},g_n)\sigma_X=(g_{n-1}\sigma_X,g_n\sigma_X)$;
so the equation $\sigma_Eg=(-1)^ng\sigma_X$ means that
$g_{n-1}\sigma_X=0$ and $g_n\sigma_X=(-1)^ng_{n-1}$.
 Assuming that $dg=0$, that is $g_n=dg_{n-1}$, we have
$d(g_{n-1}\sigma_X)=g_n\sigma_X+(-1)^{n-1}g_{n-1}d(\sigma_X)=
g_n\sigma_X+(-1)^{n-1}g_{n-1}$; so the equation $g_{n-1}\sigma_X=0$
implies $g_n\sigma_X=(-1)^ng_{n-1}$.
 We have computed the group $\Hom^{-n}_{\cZ(\bA^\bec)}(X^\bec,E^\bec)$
as the subgroup in $\Hom^{n-1}_\bA(X,A)$ consisting of all the elements
$g_{n-1}$ such that $g_{n-1}\sigma_X=0$.

 Now the grading component
$G^-(\Hom^{-*}_{\cZ(\bA^\bec)}(X^\bec,E^\bec))^n$ is the group
of all pairs $(g,h)$ with $g\in\Hom^{n-1}(X,A)$ and $h\in\Hom^n(X,A)$
such that $g\sigma_X=0=h\sigma_X$.
 The differential on the complex
$G^-(\Hom^{-n}_{\cZ(\bA^\bec)}(X^\bec,E^\bec))$ is given by the rule
$d_{G^-}(g,h)=(h,0)$.
 Finally, the desired isomorphism $\Hom_\bA^\bu(X,A)\rarrow
G^-(\Hom_{\cZ(\bA^\bec)}^{-*}(X^\bec,E^\bec))$ is provided by
the map taking an element $f\in\Hom_\bA^n(X,A)$ to the pair $(g,h)$
with $g=f\sigma_X$ and $h=d(f)\sigma_X$.
 The inverse map takes a pair $(g,h)$ to the element
$f=(-1)^ndg+(-1)^{n+1}h$.
 One easily computes that the two maps are mutually inverse and
commute with the differentials.
\end{proof}

 Notice that, by Lemmas~\ref{Phi-Psi-extended-to-nonclosed-morphisms}
and~\ref{twists-into-isomorphisms}, in the context of
Lemma~\ref{Hom-complex-from-Psi-computed} there is
a natural isomorphism of graded abelian groups
\begin{equation} \label{Hom-group-from-Psi-computed-via-tilde-Phi}
 \Hom_\bA^*(\Psi^+_\bA(X^\bec),A)\simeq
 \Hom_{\cZ(\bA^\bec)}^{-*}(\Phi_\bA\Psi^+_\bA(X^\bec),\Phi_\bA(A)).
\end{equation}
 The composition with the adjunction morphism $X^\bec\rarrow
\Phi_\bA\Psi^+_\bA(X^\bec)$ provides a map of graded abelian groups
$$
 \Hom_{\cZ(\bA^\bec)}^{-*}(\Phi_\bA\Psi^+_\bA(X^\bec),\Phi_\bA(A))
 \lrarrow\Hom_{\cZ(\bA^\bec)}^{-*}(X^\bec,\Phi_\bA(A)).
$$
 On the other hand, we have the natural split epimorphism of graded
abelian groups
$$
 G^-(\Hom_{\cZ(\bA^\bec)}^{-*}(X^\bec,\Phi_\bA(A)))\lrarrow
 \Hom_{\cZ(\bA^\bec)}^{-*}(X^\bec,\Phi_\bA(A)).
$$
 The two maps of graded abelian groups above form a commutative
rhombus diagram
\begin{equation} \label{rhombus-diagram}
\begin{gathered}
 \xymatrix@C-=-0.6cm{
 & \Hom_\bA^*(\Psi^+_\bA(X^\bec),A) \ar@{=}[ld] \ar@{=}[rd] \\
 \Hom_{\cZ(\bA^\bec)}^{-*}(\Phi_\bA\Psi^+_\bA(X^\bec),\Phi_\bA(A))
 \ar[rd] &&
 G^-(\Hom_{\cZ(\bA^\bec)}^{-*}(X^\bec,\Phi_\bA(A))) \ar[ld] \\
 & \Hom_{\cZ(\bA^\bec)}^{-*}(X^\bec,\Phi_\bA(A))}
\end{gathered}
\end{equation}
with the isomorphisms~\eqref{Hom-complex-from-Psi-eqn}
and~\eqref{Hom-group-from-Psi-computed-via-tilde-Phi}.

\begin{lem} \label{liftability-preserved-by-Phi-Psi-adjunction}
 Let $T$ be the total object of a three-term complex $U\rarrow V
\rarrow W$ in the DG\+category\/ $\bA$, and let $X^\bec$ be
an object of the DG\+category\/~$\bA^\bec$.
 Let $p\:X^\bec\rarrow\Phi(T)\simeq\Phi(U[1]\oplus V\oplus W[-1])$ be
a closed morphism of degree~$-n$ in the DG\+category\/ $\bA^\bec$ with
the components $(f,g,h)$, so $f\:X^\bec\rarrow\Phi(U)$, \
$g\:X^\bec\rarrow\Phi(V)$, and $h\:X^\bec\rarrow\Phi(W)$ are closed 
morphisms of degrees $-n-1$, $-n$, and~$-n+1$, respectively.
 Let $\tilde p\:\Psi^+(X^\bec)\rarrow T$ be the closed morphism of
degree~$n$ in the DG\+category\/ $\bA$ corresponding by adjunction
to~$p$, and let $(\tilde f,\tilde g,\tilde h)$ be the three components
of~$\tilde p$, as in Lemma~\ref{Hom-complex-into-Tot-lemma}.
 Assume that the component $h\:X^\bec\rarrow\Phi(W)$ can be lifted
to a closed morphism $t\:X^\bec\rarrow\Phi(V)$ of degree~$-n+1$
in\/~$\bA^\bec$.
 Then the morphism $\tilde h\in\Hom^{n-1}_\bA(\Psi^+(X^\bec),W)$ can be
lifted to some morphism $\tilde t\in\Hom^{n-1}_\bA(\Psi^+(X^\bec),V)$.
\end{lem}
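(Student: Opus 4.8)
The plan is to transport the assertion across the natural isomorphism of Lemma~\ref{Hom-complex-from-Psi-computed}, thereby reducing the liftability of $\tilde h$ through $k$ in $\bA$ to two liftability statements in $\bA^\bec$, one of which is supplied by the hypothesis on $h$ and the other by the closedness of $\tilde p$. Writing $H_A^*=\Hom^{-*}_{\cZ(\bA^\bec)}(X^\bec,\Phi(A))$, I would apply Lemma~\ref{Hom-complex-from-Psi-computed} for both $A=V$ and $A=W$, obtaining natural isomorphisms of complexes $\Hom^\bu_\bA(\Psi^+(X^\bec),A)\simeq G^-(H_A^*)$. Since this isomorphism is natural in $A$, the morphism $k\:V\rarrow W$ fits into a commutative square whose horizontal arrows are post-composition with $k$ on the left and $G^-(\Phi(k)_*)$ on the right; here, under the identification $H_A^m\simeq\{\psi\in\Hom^{m-1}_\bA(X,A):\psi\sigma_X=0\}$ extracted in the proof of Lemma~\ref{Hom-complex-from-Psi-computed}, the map $\Phi(k)_*$ is simply post-composition with $k$. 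As $G^-$ is formed degreewise and $G^-(\Phi(k)_*)$ acts diagonally on $G^-(H_W)^{n-1}=H_W^{n-1}\oplus H_W^{n}$, the morphism $\tilde h$ lifts through $k$ if and only if both entries of the pair in $G^-(H_W)^{n-1}$ representing $\tilde h$ lie in the image of $\Phi(k)_*$.

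By the explicit formula in the proof of Lemma~\ref{Hom-complex-from-Psi-computed}, the pair representing $\tilde h$ is $(\tilde h\sigma_X,\,d(\tilde h)\sigma_X)$. The second entry requires no hypothesis: since $\tilde p$ is closed, the differential formula of Lemma~\ref{Hom-complex-into-Tot-lemma}(a) forces $d(\tilde h)=k\tilde g$, so $d(\tilde h)\sigma_X=k(\tilde g\sigma_X)$ visibly lies in the image of $\Phi(k)_*$, with $\tilde g\sigma_X$ representing an element of $H_V^{n}$. For the first entry I would use the commutative square of graded abelian groups displayed immediately after Lemma~\ref{Hom-complex-from-Psi-computed}: it identifies the image of $\tilde h$ under the split epimorphism $G^-(H_W)\rarrow H_W$, namely $\tilde h\sigma_X$, with the composite $\widetilde\Phi(\tilde h)\circ\eta$ of the adjunction unit $\eta\:X^\bec\rarrow\Phi\Psi^+(X^\bec)$ with $\widetilde\Phi(\tilde h)$. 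Since $\tilde p$ is the $\Psi^+\dashv\Phi$-adjoint of $p$, one has $p=\widetilde\Phi(\tilde p)\circ\eta$; and because $\Phi$ carries twists to closed isomorphisms (Lemma~\ref{twists-into-isomorphisms}) while $\widetilde\Phi$ is the functor of Lemma~\ref{Phi-Psi-extended-to-nonclosed-morphisms}, the projection $\Phi(T)\rarrow\Phi(W)$ extracting the $\Phi(W)$-component equals $\widetilde\Phi$ of the projection $T\rarrow W$ extracting the $W$-component. Post-composing $p=\widetilde\Phi(\tilde p)\circ\eta$ with this projection yields $\widetilde\Phi(\tilde h)\circ\eta=h$, whence $\tilde h\sigma_X=h$. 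The hypothesis $h=\Phi(k)\circ t$ now exhibits $\tilde h\sigma_X$ as $\Phi(k)_*(t)$, with $t$ representing an element of $H_V^{n-1}$, so the first entry lifts as well; combining the two entries, $\tilde h$ lifts through~$k$.

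The step I expect to be the main obstacle is the identification $\tilde h\sigma_X=h$ of the first entry with the prescribed component of $p$: it packages the commutative square after Lemma~\ref{Hom-complex-from-Psi-computed}, the adjunction round-trip $p=\widetilde\Phi(\tilde p)\circ\eta$, and the compatibility of $\widetilde\Phi$ with the graded direct-sum decomposition $\Phi(T)\simeq\Phi(U)[-1]\oplus\Phi(V)\oplus\Phi(W)[1]$. It is precisely here that the degree and sign conventions (the inverted grading on $\bA^\bec$, the shift-reversal under $\Phi$, and the signs in the differential of Lemma~\ref{Hom-complex-into-Tot-lemma}(a)) must be tracked carefully; up to signs, which are immaterial for liftability, these are the only genuinely nonformal points. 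Everything else---the naturality of Lemma~\ref{Hom-complex-from-Psi-computed} in $A$, the diagonal action of $G^-(\Phi(k)_*)$, and the identity $d(\tilde h)=k\tilde g$---is formal once the conventions have been fixed.
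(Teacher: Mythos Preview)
Your proposal is correct and follows essentially the same route as the paper's proof: both transport the problem through Lemma~\ref{Hom-complex-from-Psi-computed}, decompose $\tilde h$ into its two $G^-$-components in $H_W^{n-1}\oplus H_W^n$, lift the second component via the cocycle relation $d(\tilde h)=k\tilde g$ (the paper phrases this as lifting the $D^n$-component to $g\in C^n$), and lift the first component by identifying it with $h$ via the commutative square discussed after Lemma~\ref{Hom-complex-from-Psi-computed}. Your write-up is somewhat more explicit about the mechanism of the identification $\tilde h\sigma_X=h$ than the paper, which simply cites that square, but the substance is identical.
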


\begin{proof}
 This is our version of~\cite[Lemma~1.5.F]{EP}.
 The assumptions of the lemma only require $U\rarrow V\rarrow W$ to be
a complex in a DG\+category, but in all the applications of this lemma
below $0\rarrow U\rarrow V\rarrow W\rarrow0$ will be a short exact
sequence; in particular, the morphism $U\rarrow V$ will be the kernel
of the morphism $V\rarrow W$ in the additive category $\sZ^0(\bA)$.
 In this context, by Lemma~\ref{Hom-complex-into-Tot-lemma}(b),
the assertion of
Lemma~\ref{liftability-preserved-by-Phi-Psi-adjunction} implies
that the morphism $\tilde p\:\Psi^+(X^\bec)\rarrow T$ is homotopic to
zero in the DG\+category~$\bA$.
 This is trivial, however: \emph{any} closed morphism from
$\Psi^+(X^\bec)$ is homotopic to zero in $\bA$, as the object
$X=\Psi^+(X^\bec)\in\bA$ is contractible by definition.
 The preservation of liftability by adjunction claimed in the lemma
is stronger than this trivial observation, and we will need its full
strength.

 Denote by $B^*$, $C^*$, and $D^*$ the graded abelian groups
$\Hom_{\cZ(\bA^\bec)}^{-*}(X^\bec,\Phi(U))$,
$\Hom_{\cZ(\bA^\bec)}^{-*}(X^\bec,\Phi(V))$, and
$\Hom_{\cZ(\bA^\bec)}^{-*}(X^\bec,\Phi(W))$, respectively.
 Then we have a three-term complex of graded abelian groups
$B^*\rarrow C^*\rarrow D^*$.
 Applying the functor $G^-$ produces a bicomplex with three rows
$G^-(B^*)\rarrow G^-(C^*)\rarrow G^-(D^*)$.
 Lemma~\ref{Hom-complex-from-Psi-computed} implies a natural
isomorphism of complexes of abelian groups
$$
 \Hom_\bA^\bu(\Psi^+(X^\bec),T)\simeq
 \Tot(G^-(B^*)\to G^-(C^*)\to G^-(D^*)).
$$
 Using this isomorphism, the cocycle
$\tilde p\in\Hom_\bA^\bu(\Psi^+(X^\bec),T)$ can be viewed as
an element of the abelian group $G^-(B^*)^{n+1}\oplus G^-(C^*)^n
\oplus G^-(D^*)^{n-1}$.

 The elements $\tilde f\in G^-(B^*)^{n+1}$, \ $\tilde g\in
G^-(C^*)^n$, and $\tilde h\in G^-(D^*)^{n-1}$ are the respective
projections of the direct sum element~$\tilde p$ to
the direct summands.
 Furthermore, the elements $f\in B^{n+1}$, \ $g\in C^n$, and
$h\in D^{n-1}$ are the respective projections of $\tilde f$,
$\tilde g$, and~$\tilde h$ onto the first direct summands in
$G^-(B^*)^{n+1}=B^{n+1}\oplus B^{n+2}$, \ $G^-(C^*)^n=
C^n\oplus C^{n+1}$, and $G^-(D^*)^{n-1}=D^{n-1}\oplus D^n$.
 This is essentially explained in the discussion of the commutativity 
of the rhombus diagram~\eqref{rhombus-diagram} in the paragraph
preceding the lemma.

$$
 \xymatrix{
  B^{n-1}\oplus B^n \ar[r] \ar[d] 
  & C^{n-1}\oplus C^n \ar[r] \ar[d]
  & D^{n-1}\oplus D^n \ar[d] \\
  B^n\oplus B^{n+1} \ar[r] \ar[d]
  & C^n\oplus C^{n+1} \ar[r] \ar[d]
  & D^n\oplus D^{n+1} \ar[d] \\
  B^{n+1}\oplus B^{n+2} \ar[r]
  & C^{n+1}\oplus C^{n+2} \ar[r]
  & D^{n+1}\oplus D^{n+2}
 }
$$

 Now the cocycle equation for the element~$\tilde p$ in the total
complex implies that the $D^n$\+component of~$\tilde h$ can be lifted
to the element $g\in C^n$.
 Thus the element $\tilde h\in D^{n-1}\oplus D^n$ is liftable to
$C^{n-1}\oplus C^n$ whenever the element $h\in D^{n-1}$ is liftable
to~$C^{n-1}$.
\end{proof}

\subsection{Full-and-faithfulness theorem}
\label{full-and-faithfulness-subsecn}
 The idea of the following theorem, which is the main result of
Section~\ref{full-and-faithfulness-secn}, goes back
to~\cite[Proposition~1.5(a\+-c) and Remark~1.5]{EP}
and~\cite[Proposition~A.2.1]{Pcosh}
(see also~\cite[Theorem~5.5]{PSch}).

\begin{thm} \label{full-and-faithfulness-main-theorem}
 Let $(\bE,\sK)$ be an exact DG\+pair, and let $(\bF,\sL)$, $(\bG,\sM)
\subset(\bE,\sK)$ be exact DG\+subpairs. \par
\textup{(a)} Assume that the full subcategory\/ $\sL$ is self-resolving
in the exact category\/~$\sK$.
 Then the triangulated functor
$$
 \sD^\abs(\bF)\lrarrow\sD^\abs(\bE)
$$
induced by the inclusion of exact DG\+categories\/ $\bF\rightarrowtail
\bE$ is fully faithful.

 Furthermore, if the exact DG\+category\/ $\bE$ has exact products
and the full DG\+sub\-cat\-e\-gory\/ $\bF\subset\bE$ is closed under
products,
then the triangulated functor
$$
 \sD^\ctr(\bF)\lrarrow\sD^\ctr(\bE)
$$
induced by the inclusion of exact DG\+categories\/ $\bF\rightarrowtail
\bE$ is fully faithful. \par
\textup{(b)} Assume that the full subcategory\/ $\sM$ is
self-coresolving in the exact category\/~$\sK$.
 Then the triangulated functor
$$
 \sD^\abs(\bG)\lrarrow\sD^\abs(\bE)
$$
induced by the inclusion of exact DG\+categories\/ $\bG\rightarrowtail
\bE$ is fully faithful.

 Furthermore, if the exact DG\+category\/ $\bE$ has exact coproducts
and the full DG\+sub\-cat\-e\-gory\/ $\bG\subset\bE$ is closed under
coproducts,
then the triangulated functor
$$
 \sD^\co(\bG)\lrarrow\sD^\co(\bE)
$$
induced by the inclusion of exact DG\+categories\/ $\bG\rightarrowtail
\bE$ is fully faithful.
\end{thm}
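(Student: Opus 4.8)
The plan is to prove both parts by the approachability technique of Lemma~\ref{approachable-implies-fully-faithful-lemma}, treating part~(b) as the formal dual of part~(a). I fix $\sT=\sH^0(\bE)$, $\sS=\sH^0(\bF)$, and take $\sX=\Ac^\abs(\bE)$, $\sY=\Ac^\abs(\bF)$ for the absolute statement (resp.\ $\sX=\Ac^\ctr(\bE)$, $\sY=\Ac^\ctr(\bF)$ for the contraderived one). Since $\bF$ inherits its exact DG-structure, every totalization of a short exact sequence in $\sZ^0(\bF)$ is such a totalization in $\sZ^0(\bE)$, so $\sY\subset\sS\cap\sX$ is a full triangulated subcategory and Lemma~\ref{approachable-implies-fully-faithful-lemma} reduces the task to showing that every object of $\sX$ is approachable from $\sS$ via $\sY$. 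The class of such objects is closed under shifts and cones (Lemma~\ref{shifts-cones-approachable-lemma}) and under direct summands (Lemma~\ref{summands-approachable-lemma}); in the contraderived case it is moreover closed under products (Lemma~\ref{co-products-approachable-lemma}(a), using that $\Ac^\ctr(\bF)$ is product-closed and, since $\bF$ is closed under products, that these products agree with those of $\sH^0(\bE)$). As $\sX$ is generated from totalizations of short exact sequences by exactly these operations, everything reduces to the single assertion that each $T=\Tot(0\to U\to V\to W\to0)$ is approachable from $\sH^0(\bF)$ via $\sY$.

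For the core, given a closed morphism $\phi\colon P\to T$ of degree~$0$ with $P\in\bF$, I write its components as a triple $(f,g,h)$ as in Lemma~\ref{Hom-complex-into-Tot-lemma}. Since $0\to U\to V\to W\to0$ is short exact, $U\to V$ is the kernel of $V\to W$ in $\sZ^0(\bE)$, so by Lemma~\ref{Hom-complex-into-Tot-lemma}(b) the only obstruction to $\phi$ being null-homotopic is whether its $W$-component lifts along $V\to W$. I arrange this after passing to a self-resolving cover upstairs. Applying $\Phi_\bE^\sK$, the sequence $0\to\Phi(U)\to\Phi(V)\to\Phi(W)\to0$ is short exact in $\sK$ and $\Phi(P)\in\sL$. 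Pulling back the admissible epimorphism $\Phi(V)\to\Phi(W)$ along a shift of $\Phi(h)\colon\Phi(P)\to\Phi(W)$ gives an admissible epimorphism $Q\to\Phi(P)$ with target in $\sL$, and the self-resolving property of $\sL$ produces $L'\in\sL$, an admissible epimorphism $L'\to\Phi(P)$, and a morphism $L'\to Q\to\Phi(V)$ that lifts $L'\to\Phi(P)\xrightarrow{\Phi(h)}\Phi(W)$ along $\Phi(V)\to\Phi(W)$.

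Now I descend by adjunction. Setting $P'=\Psi^{+}_\bE(L')\in\bF$ (legitimate as $L'\in\sL\subset\sZ^0(\bF^\bec)$), Lemma~\ref{mono-epi-exact-DG-adjunction}(b) turns $L'\to\Phi(P)$ into an admissible epimorphism $\beta\colon P'\to P$ in $\sZ^0(\bE)$, and the transpose of $L'\to\Phi(P)\to\Phi(T)$ is exactly $\phi\beta$. By Lemma~\ref{liftability-preserved-by-Phi-Psi-adjunction} the liftability of the $W$-component is preserved downstairs, so the $W$-component of $\phi\beta$ lifts along $V\to W$; hence $\phi\beta$ is null-homotopic and Lemma~\ref{Hom-complex-into-Tot-lemma}(c) supplies a \emph{natural} contracting homotopy. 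Letting $R=\ker\beta$ and $\Theta=\Tot(R\to P'\to P)$, the closure of self-resolving subcategories under kernels of admissible epimorphisms and under extensions (the lemma following the definition in Section~\ref{self-resolving-subcategories-subsecn}) forces $\Phi(R)\in\sL$ and, through the subpair structure, $R\in\bF$; then $\Theta\in\Ac^\abs(\bF)=\sY$ by Lemma~\ref{totalization-of-finite-absolutely-acyclic}. Up to a shift, $\Theta$ is the cone of a natural closed morphism $\lambda\colon\Tot(R\to P')\to P$, and $\phi\lambda$ is null-homotopic: on the $P'$-part it is $\phi\beta$, killed by the natural homotopy, while on the $R$-part it is $\phi\beta\iota=0$. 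Thus $\phi$ factors through $\Theta\in\sY$; the contraderived and dual coderived statements follow verbatim, using the product/coproduct closure noted above.

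The hardest part is the passage through $\Phi_\bE$: one must engineer the self-resolving cover so that \emph{precisely} the $W$-component becomes liftable, and then ensure this liftability survives the $\Psi^{+}_\bE\dashv\Phi_\bE$ adjunction, which is the entire content of Lemmas~\ref{Hom-complex-into-Tot-lemma} and~\ref{liftability-preserved-by-Phi-Psi-adjunction}; these must be arranged so that the natural homotopy produced downstairs assembles, over the sequence $0\to R\to P'\to P\to0$, into the morphism $\lambda$ that exhibits the cone in $\sY$. A secondary delicate point is deducing $R\in\bF$ from $\Phi(R)\in\sL$ \emph{without} a strictness hypothesis on the subpair, for which the closure of self-resolving subcategories under kernels and extensions is indispensable; this is also precisely where one must respect the thick-versus-full-triangulated distinction (adjoining direct summands, Lemma~\ref{summands-approachable-lemma}) that the paper flags as a gap in the earlier argument of~\cite{EP}.
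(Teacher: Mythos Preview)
Your overall strategy coincides with the paper's: reduce via Lemma~\ref{approachable-implies-fully-faithful-lemma} to approachability of totalizations of short exact sequences (this is Proposition~\ref{totalizations-approachable-prop}), then close up under shifts, cones, direct summands, and products using Lemmas~\ref{summands-approachable-lemma}--\ref{shifts-cones-approachable-lemma}. The core construction is also the same.

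There is, however, a genuine gap at precisely the ``delicate point'' you flag. You deduce $\Phi(R)\in\sL$ and then write ``through the subpair structure, $R\in\bF$''. But the implication $\Phi_\bE(R)\in\sL\Rightarrow R\in\bF$ is exactly the \emph{strictness} condition on the subpair, which the theorem does not assume. Closure of $\sL$ under kernels and extensions gives you $\Phi(R)\in\sL$ and nothing more; it does not place $R$ inside $\bF$. The paper avoids this by applying Lemma~\ref{mono-epi-exact-DG-adjunction}(b) to the exact DG\+category $\bF$ rather than to~$\bE$: since $L'\in\sL\subset\sZ^0(\bF^\bec)$, $P\in\bF$, and the kernel of $L'\to\Phi(P)$ in $\sK$ lies in $\sL\subset\sZ^0(\bF^\bec)$ (self-resolving), the morphism $L'\to\Phi_\bF(P)$ is an admissible epimorphism in $\sZ^0(\bF^\bec)$, and the lemma then yields $\beta\colon P'\to P$ as an admissible epimorphism \emph{in $\sZ^0(\bF)$}. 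Hence $R=\ker\beta\in\sZ^0(\bF)$ automatically, and $0\to R\to P'\to P\to0$ is short exact in $\sZ^0(\bF)$, so its totalization lies in $\Ac^0(\bF)$.

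A smaller point: your assembly of the null-homotopy for $\phi\lambda$ from the homotopy for $\phi\beta$ on the $P'$-summand and zero on the $R[1]$-summand is not quite justified, since homotopies need not glue along a cone. The paper's argument is cleaner here: rather than showing $\phi\beta$ is null-homotopic and then extending, it checks directly that the $W$-component of $G\to T$ lifts along $V\to W$ (it vanishes on $R[1]$ and lifts on $P'$ by Lemma~\ref{liftability-preserved-by-Phi-Psi-adjunction}), and then invokes Lemma~\ref{Hom-complex-into-Tot-lemma}(b) once for~$G$.
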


 The proof of the theorem is based on the following proposition.
 Given an exact DG\+category $\bA$, we denote by $\Ac^0(\bA)\subset
\sH^0(\bA)$ the class of all totalizations of short exact sequences
in $\sZ^0(\bA)$.
 So $\Ac^\abs(\bA)$ is the thick subcategory generated by the full
subcategory $\Ac^0(\bA)$ in the triangulated category $\sH^0(\bA)$.
 We use the terminology ``approachable objects'' defined in
Section~\ref{approachability-subsecn}.

\begin{prop} \label{totalizations-approachable-prop}
 Let $(\bF,\sL)$ and $(\bG,\sM)$ be an exact DG\+subpairs in
an exact DG\+pair $(\bE,\sK)$. \par
\textup{(a)} Assume that the full subcategory\/ $\sL$ is self-resolving
in the exact category\/~$\sK$.
 Then all the objects of\/ $\Ac^0(\bE)$ are approachable from\/
$\sH^0(\bF)$ via\/ $\Ac^0(\bF)$ in the triangulated category\/
$\sH^0(\bE)$. \par
\textup{(b)} Assume that the full subcategory\/ $\sM$ is
self-coresolving in the exact category\/~$\sK$.
 Then all the objects of\/ $\Ac^0(\bE)$ are coapproachable to\/
$\sH^0(\bG)$ via\/ $\Ac^0(\bG)$ in the triangulated category\/
$\sH^0(\bE)$.
\end{prop}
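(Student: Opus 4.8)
The plan is to prove part~(a) directly from the definition of approachability, producing for each morphism an explicit factorization through a totalization of a short exact sequence in $\bF$; part~(b) will be dual. So I fix a short exact sequence $\mathcal S\colon 0\rarrow U\rarrow V\rarrow W\rarrow0$ in $\sZ^0(\bE)$ with totalization $T=\Tot(U\rarrow V\rarrow W)\in\Ac^0(\bE)$, an object $F\in\bF$, and a closed degree-$0$ morphism $\phi\colon F\rarrow T$ representing an arbitrary morphism in $\sH^0(\bF)$. By Lemma~\ref{Hom-complex-into-Tot-lemma} I would represent $\phi$ by the triple $(f,g,h)$ of its $U$\+, $V$\+, and $W$\+components, noting that the middle map of $\mathcal S$ is the kernel of its right map, so that part~(b) of that lemma applies: \emph{a closed morphism into $T$ is null-homotopic as soon as its $W$\+component lifts along $V\rarrow W$}. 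The goal is therefore to construct an object $Y\in\Ac^0(\bF)$ together with morphisms $\psi\colon F\rarrow Y$ and $\mu\colon Y\rarrow T$ whose composite has the same $W$\+component as $\phi$; then $\mu\psi-\phi\colon F\rarrow T$ has vanishing $W$\+component, which lifts trivially, so $\mu\psi-\phi$ is null-homotopic by Lemma~\ref{Hom-complex-into-Tot-lemma}(b), giving $[\phi]=[\mu\psi]$ in $\sH^0(\bE)$ and the desired factorization through $Y\in\Ac^0(\bF)$.

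The heart of the construction transfers the problem to the exact category $\sK$, where the self-resolving hypothesis on $\sL$ lives, and then back again. Applying the exact functor $\Phi_\bE$ to $\mathcal S$ yields an admissible short exact sequence $0\rarrow\Phi_\bE(U)\rarrow\Phi_\bE(V)\rarrow\Phi_\bE(W)\rarrow0$ in $\sK$, while $\Phi_\bE(F)\in\sL$ by the exact DG\+subpair condition, and the $W$\+component of $\phi$ becomes (after the grading-sign reversal on the $\bec$\+side, hence up to a shift) a morphism from $\Phi_\bE(F)$ into $\Phi_\bE(W)$. I would form the pullback $P\in\sK$ of the admissible epimorphism $\Phi_\bE(V)\rarrow\Phi_\bE(W)$ along this morphism; then $P\rarrow\Phi_\bE(F)$ is an admissible epimorphism whose composite to $\Phi_\bE(W)$ factors through $\Phi_\bE(V)$ by construction. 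Since $\Phi_\bE(F)\in\sL$ and $\sL$ is self-resolving, there exist $L'\in\sL$, an admissible epimorphism $L'\rarrow\Phi_\bE(F)$, and a morphism $L'\rarrow P$ over $\Phi_\bE(F)$; taking the kernel $L''\in\sL$ (using closure of $\sL$ under kernels of admissible epimorphisms) produces a morphism of short exact sequences, from $0\rarrow L''\rarrow L'\rarrow\Phi_\bE(F)\rarrow0$ lying entirely in $\sL$ down to the $\Phi_\bE$\+image of $\mathcal S$, whose rightmost vertical arrow realizes the $W$\+component of $\phi$ and whose middle vertical arrow exhibits the required lift of that component.

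I would then pull this morphism of sequences back to $\bE$ by applying the exact functor $\Psi^+_\bE$ (which carries $\sL\subset\sZ^0(\bF^\bec)$ into $\sZ^0(\bF)$) and composing with the natural transformation $\Xi_\bE\rarrow\Id$ of Section~\ref{Xi-functor-subsecn}; since $\Psi^+_\bE\Phi_\bE\simeq\Xi_\bE$ by Lemma~\ref{Xi-decomposed}, this yields a short exact sequence $\mathcal S'$ in $\sZ^0(\bF)$ with all terms in $\bF$, together with a morphism of sequences $\theta\colon\mathcal S'\rarrow\mathcal S$. Setting $Y=\Tot(\mathcal S')\in\Ac^0(\bF)$ and $\mu=\Tot(\theta)\colon Y\rarrow T$, the morphism $\psi\colon F\rarrow Y$ would be assembled starting from the natural closed morphism $F\rarrow\Xi_\bE(F)[1]$ into the top term of $\mathcal S'$; that its remaining components can be chosen to make $\psi$ a genuine closed morphism, and that $\mu\psi$ then has $W$\+component exactly $h$, is precisely what the liftability established on the $\sK$\+side guarantees, transported across the adjunction by Lemma~\ref{liftability-preserved-by-Phi-Psi-adjunction} together with the computation of $\Hom^\bu_\bE(\Psi^+_\bE(-),-)$ in Lemma~\ref{Hom-complex-from-Psi-computed}.

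The main obstacle I anticipate is exactly this last bookkeeping: tracking the grading-sign reversal between $\bE$ and $\bE^\bec$, the exchange of $[n]$ and $[-n]$ introduced by $\Psi^+_\bE$ (Lemma~\ref{twists-into-isomorphisms}) and by the transformation $\Xi_\bE\rarrow\Id$, and verifying that after all these identifications the $W$\+component of $\mu\psi$ coincides on the nose with that of $\phi$, so that the clean closing step via Lemma~\ref{Hom-complex-into-Tot-lemma}(b) applies. The conceptual point—that \emph{self-resolving}, rather than merely resolving, is the right hypothesis—enters because I only need to cover the single object $\Phi_\bE(F)$ compatibly with one prescribed morphism into $\Phi_\bE(W)$, not every object of $\sK$; the self-resolving axiom is tailored to lift such a prescribed admissible epimorphism through the subcategory. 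Part~(b) then follows by the evident dualization, replacing pullbacks by pushouts, admissible epimorphisms by admissible monomorphisms, kernels by cokernels, and $\Psi^+_\bE$ by $\Psi^-_\bE$, with coapproachability in place of approachability.
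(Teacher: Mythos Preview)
Your overall strategy---reduce to lifting the $W$\+component via
Lemma~\ref{Hom-complex-into-Tot-lemma}, produce the lift on the
$\sK$\+side using the self-resolving hypothesis and a pullback, then
transport back via $\Psi^+_\bE$---is precisely the paper's.  But your
execution diverges at a crucial point and leaves a real gap.

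The paper does \emph{not} try to build a factorization
$F\rarrow Y\rarrow T$ directly.  Instead it uses the equivalent
characterization of approachability: it produces a morphism
$G\rarrow F$ with cone in $\Ac^0(\bF)$ such that the composite
$G\rarrow F\rarrow T$ vanishes in $\sH^0(\bE)$.  The point is that
after choosing $L\in\sL$ with an admissible epimorphism
$L\rarrow\Phi_\bE(F)$, the adjunction (Lemma~\ref{mono-epi-exact-DG-adjunction})
gives an admissible epimorphism $\Psi^+_\bE(L)\rarrow F$ in
$\sZ^0(\bF)$, so the short exact sequence
$0\rarrow R\rarrow\Psi^+_\bE(L)\rarrow F\rarrow0$ in $\sZ^0(\bF)$ has
$F$ itself as its rightmost term.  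Then $G=\cone(R\rarrow\Psi^+_\bE(L))$
comes with a natural closed morphism $G\rarrow F$ for free, and its
cone $\Tot(R\rarrow\Psi^+_\bE(L)\rarrow F)$ lies in $\Ac^0(\bF)$.
To finish, one shows the $W$\+component of $G\rarrow F\rarrow T$
lifts; since this component vanishes on $R[1]$, it is determined by
the $W$\+component of $\Psi^+_\bE(L)\rarrow T$, and now
Lemma~\ref{liftability-preserved-by-Phi-Psi-adjunction} applies
\emph{exactly as stated} (a morphism from a $\Psi^+$\+image into~$T$).

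Your route, by contrast, applies $\Psi^+_\bE$ to the whole short exact
sequence in~$\sL$, obtaining a sequence whose rightmost term is
$\Xi_\bE(F)$, not~$F$.  You are then forced to manufacture a closed
morphism $\psi\colon F\rarrow Y=\Tot(\mathcal S')$ by hand, and you
acknowledge you do not actually carry this out: ``that its remaining
components can be chosen to make $\psi$ a genuine closed morphism
\dots\ is precisely what the liftability established on the
$\sK$\+side guarantees.''  This is the gap.
Lemma~\ref{liftability-preserved-by-Phi-Psi-adjunction} is formulated
for morphisms $\Psi^+_\bE(X^\bec)\rarrow T$, not for morphisms
$F\rarrow\Tot(\mathcal S')$ into a totalization of $\Psi^+$\+images,
so it does not directly give you~$\psi$ or compute the $W$\+component
of~$\mu\psi$.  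The shift bookkeeping you flag is not a side issue but
the symptom: your $\widetilde\Phi_\bE(h)$ lands in
$\Phi_\bE(W[-1])\simeq\Phi_\bE(W)[1]$, so your ``morphism of short
exact sequences'' in~$\sK$ is only a morphism up to shift, and
unwinding this through $\Psi^+_\bE$ and $\Xi_\bE\rarrow\Id$ to pin
down~$\psi$ is work you have not done.  The paper's choice to build
the sequence with $F$ on the right sidesteps all of it.
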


\begin{proof}
 We will prove part~(a) (part~(b) is dual).
 Let $0\rarrow U\rarrow V\rarrow W\rarrow0$ be a short exact sequence
in $\sZ^0(\bE)$ and $T$ be its total object; so $T\in\Ac^0(\bE)$.
 Let $F\rarrow T$ be a closed morphism of degree~$0$ in $\bE$ from
an object $F\in\bF$ to the object~$T$.
 We have to show that there exists an object $G\in\bF$ and a morphism
$G\rarrow F$ in $\sH^0(\bF)$ with a cone belonging to $\Ac^0(\bF)$
such that the composition $G\rarrow F\rarrow T$ vanishes in
$\sH^0(\bE)$.
 Following the discussion in the beginning of
Section~\ref{lemmas-E-and-F-subsecn}, the morphism $F\rarrow T$ is
represented by a triple of (not necessarily closed) morphisms
$f\in\Hom_\bE^1(F,U)$, \ $g\in\Hom_\bE^0(F,V)$, and
$h\in\Hom_\bE^{-1}(F,W)$ in the DG\+category~$\bE$.

 Viewing $h$ as a morphism $F\rarrow W[-1]$ in the additive category
$\bE^0$ and applying the functor $\widetilde\Phi_\bE$ from
Lemma~\ref{Phi-Psi-extended-to-nonclosed-morphisms}, we produce
a morphism $\Phi(F)\rarrow\Phi(W[-1])$ in the exact category $\sK$
with the object $\Phi(F)\in\sL$.
 Applying the functor $\Phi_\bE$ to the admissible epimorphism
$V[-1]\rarrow W[-1]$ in the exact category $\sZ^0(\bE)$, we obtain
an admissible epimorphism $\Phi(V[-1])\rarrow\Phi(W[-1])$ in~$\sK$.
 Let $K$ be the related pullback object; so we have the commutative
square part of a diagram in~$\sK$
$$
 \xymatrix{
  L \ar[rr] \ar@{->>}[rrd] && K \ar[rr] \ar@{->>}[d]
  && \Phi(V[-1]) \ar@{->>}[d] \\
  && \Phi(F) \ar[rr]^-{\widetilde\Phi(h)} && \Phi(W[-1])
 }
$$
with an admissible epimorphism $K\rarrow\Phi(F)$.
 By assumption, there exists an object $L\in\sL$ together with
an admissible epimorphism $L\rarrow\Phi(F)$ in $\sL$ and
a morphism $L\rarrow K$ in $\sK$ such that the triangle diagram
$L\rarrow K\rarrow\Phi(F)$ is commutative.

 By Lemma~\ref{mono-epi-exact-DG-adjunction}(b), the morphism
$\Psi^+(L)\rarrow F$ in $\sZ^0(\bF)$ corresponding by adjunction
to the admissible epimorphism $L\rarrow\Phi(F)$ in $\sL$ is
also an admissible epimorphism.
 Let $R\in\sZ^0(\bF)$ denote the kernel of $\Psi^+(L)\rarrow F$;
so we have a short exact sequence $0\rarrow R\rarrow\Psi^+(L)
\rarrow F\rarrow0$ in $\sZ^0(\bF)$.
 Denote by $G$ the cone of the closed morphism $R\rarrow\Psi^+(L)$
in the DG\+category~$\bF$.
 Then we have a natural closed morphism $G\rarrow F$ in $\bF$
whose cone $\Tot(R\to\Psi^+(L)\to F)$ belongs to $\Ac^0(\bF)$.

 It remains to show that the composition $G\rarrow F\rarrow T$
is homotopic to zero in the DG\+category~$\bE$.
 For this purpose, we apply Lemma~\ref{Hom-complex-into-Tot-lemma},
which tells that it suffices to show that the component
$\tilde l'\in\Hom_\bE^0(G,W[-1])$ of our morphism $G\rarrow T$
can be lifted to an element of $\Hom_\bE^0(G,V[-1])$.
 In the category $\bE^0$, the object $G$ is naturally isomorphic
to the direct sum $R[1]\oplus\Psi^+(L)$, and the morphism~$\tilde l'$
(and in fact, the whole morphism $G\rarrow T$, and indeed the morphism
$G\rarrow F$) vanishes on the component~$R[1]$.
 Thus it suffices to show that the related element $\tilde l\in
\Hom_\bE^0(\Psi^+(L),W[-1])$ can be lifted to an element of
$\Hom_\bE^0(\Psi^+(L),V[-1])$.

 The element~$\tilde l$ is the $W$\+component of the composition
$\Psi^+(L)\rarrow F\rarrow T$, which is a closed morphism in~$\bE$.
 According to Lemma~\ref{liftability-preserved-by-Phi-Psi-adjunction},
we only need to check that the $\Phi(W[-1])$\+component~$l$ of
the composition $L\rarrow\Phi\Psi^+(L)\rarrow\Phi(F)\rarrow\Phi(T)$
can be lifted to a morphism $L\rarrow\Phi(V[-1])$ in $\sZ^0(\bE^\bec)$.
 But we are given such a lifting by construction; see the commutative
diagram above.
\end{proof}

\begin{proof}[Proof of Theorem~\ref{full-and-faithfulness-main-theorem}]
 Let us prove part~(a) (part~(b) is dual).
 By Lemma~\ref{approachable-implies-fully-faithful-lemma}, in order to
prove that the triangulated functor $\sD^\abs(\bF)\lrarrow\sD^\abs(\bE)$
is fully faithful, it suffices to show that all the objects of the full
subcategory $\Ac^\abs(\bE)$ are approachable from $\sH^0(\bF)$ via
$\Ac^\abs(\bF)$ in $\sH^0(\bE)$.

 Indeed, by Proposition~\ref{totalizations-approachable-prop}(a),
all the objects of $\Ac^0(\bE)$ are approachable from $\sH^0(\bF)$
via $\Ac^0(\bF)$, hence also via $\Ac^\abs(\bF)$.
 By Lemmas~\ref{summands-approachable-lemma}
and~\ref{shifts-cones-approachable-lemma}, the class of all objects
approachable from $\sH^0(\bF)$ via $\Ac^\abs(\bF)$ is a thick
subcategory in $\sH^0(\bE)$.
 It follows that thick subcategory contains $\Ac^\abs(\bE)$, and
we are done.

 Similarly, in order to prove that the triangulated functor
$\sD^\ctr(\bF)\lrarrow\sD^\ctr(\bE)$ is fully faithful, it suffices to
show that all the objects of the full subcategory $\Ac^\ctr(\bE)$ are
approachable from $\sH^0(\bF)$ via $\Ac^\ctr(\bF)$ in $\sH^0(\bE)$.
 By Lemmas~\ref{co-products-approachable-lemma}(a)
and~\ref{shifts-cones-approachable-lemma}, the class of all objects
approachable from $\sH^0(\bF)$ via $\Ac^\ctr(\bF)$ is a full
triangulated subcategory closed under infinite products in $\sH^0(\bE)$.
 It follows that this triangulated subcategory contains $\Ac^\ctr(\bE)$.
\end{proof}

\subsection{Triangulated equivalence for a resolving subcategory}
 The idea of the following result goes back to~\cite[Remark~1.5]{EP},
\cite[Theorem~4.2.1]{Pweak}, \cite[Proposition~A.3.1(b)]{Pcosh},
and~\cite[Proposition~2.1]{Pfp}.

\begin{thm} \label{contraderived-resolving-equivalence-theorem}
 Let $(\bE,\sK)$ be an exact DG\+pair, and let $(\bF,\sL)$, $(\bG,\sM)
\subset(\bE,\sK)$ be exact DG\+subpairs. \par
\textup{(a)} Assume that the full subcategory\/ $\sL$ is resolving
in the exact category\/~$\sK$.
 Assume further that the exact DG\+category\/ $\bE$ has twists and
exact products, and the full DG\+sub\-cat\-e\-gory\/ $\bF\subset\bE$
is closed under twists and products.
 Then the triangulated functor
$$
 \sD^\ctr(\bF)\lrarrow\sD^\ctr(\bE)
$$
induced by the inclusion of exact DG\+categories\/ $\bF\rightarrowtail
\bE$ is an equivalence of triangulated categories. \par
\textup{(b)} Assume that the full subcategory\/ $\sM$ is
coresolving in the exact category\/~$\sK$.
 Assume further that the exact DG\+category\/ $\bE$ has twists and
exact coproducts, and the full DG\+sub\-cat\-e\-gory\/ $\bG\subset\bE$
is closed under twists and coproducts.
 Then the triangulated functor
$$
 \sD^\co(\bG)\lrarrow\sD^\co(\bE)
$$
induced by the inclusion of exact DG\+categories\/ $\bG\rightarrowtail
\bE$ is an equivalence of triangulated categories.
\end{thm}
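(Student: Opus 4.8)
The plan is to prove part~(a); part~(b) is dual, obtained by interchanging products with coproducts, contraacyclicity with coacyclicity, and resolving with coresolving subcategories, while reversing all arrows. Since a triangulated functor is an equivalence exactly when it is fully faithful and essentially surjective, I would establish these two properties separately, and the point of the argument is that each of them reduces to machinery already developed.

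For full faithfulness I would simply invoke Theorem~\ref{full-and-faithfulness-main-theorem}(a). Any resolving subcategory is self-resolving (as recorded in Section~\ref{self-resolving-subcategories-subsecn}), so the hypothesis that $\sL$ is resolving in $\sK$ in particular exhibits $(\bF,\sL)$ as a self-resolving exact DG\+subpair. The remaining assumptions of the present theorem, namely that $\bE$ has exact products and $\bF$ is closed under products, are precisely the extra hypotheses under which Theorem~\ref{full-and-faithfulness-main-theorem}(a) already asserts that $\sD^\ctr(\bF)\rarrow\sD^\ctr(\bE)$ is fully faithful. So this half requires no new work beyond the citation.

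For essential surjectivity I would adapt the first stage of the proof of Theorem~\ref{inj-proj-resolutions-star-conditions-theorem}(b), replacing graded-projective objects by objects of $\bF$. Given an arbitrary $E\in\bE$, I would iterate Lemma~\ref{one-step-exact-subcategory-resolution-lemma}(a)---applicable because $\sL$ is resolving, so every object of $\sK$ admits an admissible epimorphism from $\sL$---to produce short exact sequences $0\rarrow E_{i+1}\rarrow F_i\rarrow E_i\rarrow0$ in $\sZ^0(\bE)$ with $F_i\in\bF$ and $E_0=E$. Splicing gives an exact complex $\dotsb\rarrow F_1\rarrow F_0\rarrow E\rarrow0$ in $\sZ^0(\bE)$ with all $F_i\in\bF$. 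I would then form the product totalization $P=\Tot^\sqcap(F_\bu)$ of $\dotsb\rarrow F_1\rarrow F_0\rarrow0$ in $\bE$. Because a product totalization is a twist of the product $\prod_i F_i[-i]$ and $\bF$ is closed under both products and twists, the object $P$ lies in $\bF$. Finally, the cone of the natural closed morphism $P\rarrow E$ is the product totalization of the bounded-above exact complex $\dotsb\rarrow F_1\rarrow F_0\rarrow E\rarrow0$, which is contraacyclic in $\bE$ by Lemma~\ref{totalization-of-one-sided-bounded-co-contra-acyclic}(b) (here the hypotheses that $\bE$ has twists and exact products enter). Thus $P\rarrow E$ becomes an isomorphism in $\sD^\ctr(\bE)$ with $P\in\bF$, giving essential surjectivity.

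Before assembling these, I would record that the functor is well defined: admissible short exact sequences in $\sZ^0(\bF)$ remain admissible in $\sZ^0(\bE)$ since $\bF$ inherits its exact structure, and products agree in $\bF$ and $\bE$, so the inclusion carries $\Ac^\ctr(\bF)$ into $\Ac^\ctr(\bE)$. The only genuinely new ingredient is the essential-surjectivity construction, and within it the main point requiring care is the verification that the product totalization $P$ lands in $\bF$ rather than merely in $\bE$; this is exactly where the closure of $\bF$ under products and twists is indispensable, and it is the counterpart of the place where Theorem~\ref{finite-resolution-dimension-main-theorem} instead needed finite resolution dimension. I would stress, as the closing remark in the proof of Theorem~\ref{finite-resolution-dimension-main-theorem} already signals, that this one-sided-resolution argument succeeds for the contraderived category paired with a resolving subcategory (and dually for the coderived category paired with a coresolving one), but has no analogue for the opposite pairing---which is why part~(a) is stated only for $\sD^\ctr$ and part~(b) only for $\sD^\co$.
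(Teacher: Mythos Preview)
Your proposal is correct and follows essentially the same approach as the paper: full faithfulness via Theorem~\ref{full-and-faithfulness-main-theorem}(a) (resolving implies self-resolving), and essential surjectivity by iterating Lemma~\ref{one-step-exact-subcategory-resolution-lemma}(a) to build an $\bF$\+resolution, taking its product totalization, and applying Lemma~\ref{totalization-of-one-sided-bounded-co-contra-acyclic}(b). The paper's proof is identical in structure and in the lemmas invoked.
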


\begin{proof}
 Let us prove part~(a) (part~(b) is dual).
 By Theorem~\ref{full-and-faithfulness-main-theorem}(a),
the triangulated functor $\sD^\ctr(\bF)\rarrow\sD^\ctr(\bE)$ is fully
faithful; so we only need to show that it is essentially surjective.
 For this purpose, we will construct for any object $E\in\bE$
an object $F\in\bF$ together with a closed morphism $F\rarrow E$
of degree~$0$ in $\bE$ whose cone belongs to $\Ac^\ctr(\bE)$.

 By Lemma~\ref{one-step-exact-subcategory-resolution-lemma}(a),
there exists a short exact sequence $0\rarrow E_1\rarrow F_0\rarrow
E\rarrow0$ in $\sZ^0(\bE)$ with an object $F_0\in\bF$.
 Applying the same lemma to the object $E_1\in\bE$, we obtain
a short exact sequence $0\rarrow E_2\rarrow F_1\rarrow E_1\rarrow0$
in $\sZ^0(\bE)$ with an object $F_1\in\bF$.
 Proceeding in this way, we construct an exact complex
$\dotsb\rarrow F_2\rarrow F_1\rarrow F_0\rarrow E\rarrow0$ in
$\sZ^0(\bE)$ with objects $F_i\in\sZ^0(\bF)$.

 Denote by $F=\Tot^\sqcap(F_\bu)$ the product totalization of
the complex $\dotsb\rarrow F_2\rarrow F_1\rarrow F_0\rarrow0$ in
the DG\+category~$\bE$.
 Then the object $F\in\bE$ is a twist of the object
$\prod_{i=0}^\infty F_i[i]$, hence $F\in\bF$ by assumptions.
 The cone of the natural morphism $F\rarrow E$ in $\bE$ is the product
totalization of the bounded above exact complex
$\dotsb\rarrow F_2\rarrow F_1\rarrow F_0\rarrow E\rarrow0$
in $\sZ^0(\bE)$, so it is contraacyclic by
Lemma~\ref{totalization-of-one-sided-bounded-co-contra-acyclic}(b).
\end{proof}

\subsection{Examples}
 Let us formulate here the particular cases of
Theorems~\ref{full-and-faithfulness-main-theorem}
and~\ref{contraderived-resolving-equivalence-theorem} arising in
the context of Examples~\ref{exact-dg-category-of-complexes}\+-%
\ref{exact-dg-category-of-factorizations}
and~\ref{exact-DG-pairs-examples}.

 We refer to Section~\ref{finite-resol-dim-examples-subsecn} for
the definitions of the absolute derived, coderived, and contraderived
categories of an exact category~$\sE$.
 The following result can be found in~\cite[Propositions~A.2.1
and~A.3.1(b)]{Pcosh}.

\begin{cor} \label{complexes-full-and-faithfulness-corollary}
 Let\/ $\sE$ be an exact category and\/ $\sF$ be
a self-resolving subcategory in\/~$\sE$.
 Then the triangulated functor
$$
 \sD^\abs(\sF)\lrarrow\sD^\abs(\sE)
$$
induced by the inclusion of exact categories\/ $\sF\rightarrowtail\sE$
is fully faithful.

 Furthermore, assume that the exact category\/ $\sE$ has exact products
and the full subcategory\/ $\sF\subset\sE$ is closed under products.
 Then the triangulated functor
$$
 \sD^\ctr(\sF)\lrarrow\sD^\ctr(\sE)
$$
induced by the inclusion of exact categories\/ $\sF\rightarrowtail\sE$
is fully faithful.
 Assuming additionally that\/ $\sF$ is a resolving subcategory in\/
$\sE$, the latter functor is a triangulated equivalence.
\end{cor}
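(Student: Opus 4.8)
The plan is to deduce both parts from the general Theorems~\ref{full-and-faithfulness-main-theorem}(a) and~\ref{contraderived-resolving-equivalence-theorem}(a) by specializing to the exact DG\+pairs attached to complexes. First I would put $\bE=\bC(\sE)$, \,$\sK=\sG(\sE)$, \,$\bF=\bC(\sF)$, and $\sL=\sG(\sF)$, with the exact structures of Example~\ref{exact-dg-category-of-complexes}. By Example~\ref{exact-DG-pairs-examples}(4), \,$(\bC(\sE),\sG(\sE))$ is an exact DG\+pair and $(\bC(\sF),\sG(\sF))$ is a strict exact DG\+subpair in it, while by the definitions recalled in Section~\ref{finite-resol-dim-examples-subsecn} one has $\sD^\abs(\sF)=\sD^\abs(\bC(\sF))$ and $\sD^\ctr(\sF)=\sD^\ctr(\bC(\sF))$. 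Thus everything reduces to verifying the hypotheses of the two general theorems for this pair of DG\+pairs.

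The central point is that the passage $\sF\mapsto\sG(\sF)$ carries the self-resolving (resp.\ resolving) property of $\sF\subset\sE$ to the self-resolving (resp.\ resolving) property of $\sG(\sF)\subset\sG(\sE)$. Here I would use that $\sG(\sE)=\sE^\Gamma$ and that, by construction, a composable pair in $\sG(\sE)$ is an admissible short exact sequence if and only if it is one in every degree; consequently admissible monomorphisms, admissible epimorphisms, kernels of admissible epimorphisms, and extensions in $\sG(\sE)$ are all computed degreewise. Each of the three defining conditions of a self-resolving subcategory (Section~\ref{self-resolving-subcategories-subsecn}) then reduces to the corresponding condition on $\sF$ applied in each degree: closure under kernels of admissible epimorphisms is immediate, and given an admissible epimorphism $p\:K\rarrow L$ in $\sG(\sE)$ with $L\in\sG(\sF)$, one chooses for every $n\in\Gamma$ an object $F'_n\in\sF$, an admissible epimorphism $q^n\:F'_n\rarrow L^n$, and a morphism $r^n\:F'_n\rarrow K^n$ with $p^nr^n=q^n$, and assembles these into $L'=(F'_n)\in\sG(\sF)$ together with the degreewise morphisms $q\:L'\rarrow L$ and $r\:L'\rarrow K$. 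The analogous degreewise assembly handles the covering condition of a resolving subcategory. I expect this degreewise transfer, and in particular the care needed to identify admissible epimorphisms in $\sG(\sE)$ with degreewise admissible epimorphisms, to be the only real step of substance.

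With this in hand the statements follow. For the absolute derived categories, Theorem~\ref{full-and-faithfulness-main-theorem}(a) applies directly and gives full-and-faithfulness of $\sD^\abs(\sF)\rarrow\sD^\abs(\sE)$. For the contraderived categories I would note that $\bC(\sE)$ has exact products whenever $\sE$ has (as recalled in Section~\ref{finite-resol-dim-examples-subsecn}) and that $\bC(\sF)$ is closed under products in $\bC(\sE)$ exactly because products of complexes are formed degreewise and $\sF$ is closed under products; Theorem~\ref{full-and-faithfulness-main-theorem}(a) then yields full-and-faithfulness of $\sD^\ctr(\sF)\rarrow\sD^\ctr(\sE)$. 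Finally, when $\sF$ is resolving, I would invoke Theorem~\ref{contraderived-resolving-equivalence-theorem}(a): its remaining hypotheses hold since $\bC(\sE)$ always has twists (Section~\ref{complexes-subsecn}) and $\bC(\sF)$ is closed under twists---twisting changes only the differential of a complex and preserves its underlying graded object, which lies in $\sG(\sF)$, cf.\ Lemma~\ref{twists-into-isomorphisms} and Proposition~\ref{subcategory-L-prop}(a)---so the functor $\sD^\ctr(\sF)\rarrow\sD^\ctr(\sE)$ is a triangulated equivalence.
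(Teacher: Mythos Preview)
Your proposal is correct and follows essentially the same approach as the paper: set $\bE=\bC(\sE)$, $\bF=\bC(\sF)$, $\sK=\sG(\sE)$, $\sL=\sG(\sF)$ as in Examples~\ref{exact-dg-category-of-complexes} and~\ref{exact-DG-pairs-examples}(4), then apply Theorem~\ref{full-and-faithfulness-main-theorem}(a) for the first two assertions and Theorem~\ref{contraderived-resolving-equivalence-theorem}(a) for the third. The paper leaves the degreewise transfer of the (self-)resolving property and the closure of $\bC(\sF)$ under twists and products implicit, whereas you spell these out; this is the only difference.
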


\begin{proof}
 Put $\bE=\bC(\sE)$, \,$\bF=\bC(\sF)$, \,$\sK=\sG(\sE)$,
\,$\sL=\sG(\sF)$, as in Examples~\ref{exact-dg-category-of-complexes}
and~\ref{exact-DG-pairs-examples}(4), and apply
Theorems~\ref{full-and-faithfulness-main-theorem}(a)
(for the first two assertions of the corollary)
and~\ref{contraderived-resolving-equivalence-theorem}(a)
(for the third one).
\end{proof}

 Part~(a) of the following corollary is a generalization of
an assertion from~\cite[Remark~1.5]{EP}.

\begin{cor} \label{cdg-ring-full-and-faithfulness-corollary}
 Let $\biR^\cu=(R^*,d,h)$ be a CDG\+ring.
 Let\/ $\sK\subset R^*\smodl$ be a full subcategory as in
the first paragraph of
Corollary~\ref{cdg-ring-finite-resol-dim-corollary}, and let\/
$\bE\subset\biR^\cu\bmodl$ the related full DG\+subcategory
endowed with the inherited exact DG\+category structure. \par
 \textup{(a)} Let\/ $\sL$ be a self-resolving subcategory in\/~$\sK$.
 Denote by\/ $\bF\subset\bE$ the full DG\+sub\-cat\-e\-gory consisting
of all the CDG\+modules whose underlying graded $R^*$\+modules
belong to\/ $\sL$, and endow\/ $\bF$ with the inherited exact
DG\+category structure.
 Then the triangulated functor
$$
 \sD^\abs(\bF)\lrarrow\sD^\abs(\bE)
$$
induced by the inclusion of exact DG\+categories\/ $\bF\rightarrowtail
\bE$ is fully faithful.

 Furthermore, assume that the full subcategories\/ $\sL$ and\/ $\sK$
are preserved by the infinite products in $R^*\smodl$.
 Then the triangulated functor
$$
 \sD^\ctr(\bF)\lrarrow\sD^\ctr(\bE)
$$
induced by the inclusion of exact DG\+categories\/ $\bF\rightarrowtail
\bE$ is fully faithful.
 Assuming additionally that\/ $\sL$ is a resolving subcategory in\/
$\sK$, the latter functor is a triangulated equivalence. \par
 \textup{(b)} Let\/ $\sM$ be a self-coresolving subcategory in\/~$\sK$.
 Denote by\/ $\bG\subset\bE$ the full DG\+subcategory consisting
of all the CDG\+modules whose underlying graded $R^*$\+modules
belong to\/ $\sM$, and endow\/ $\bG$ with the inherited exact
DG\+category structure.
 Then the triangulated functor
$$
 \sD^\abs(\bG)\lrarrow\sD^\abs(\bE)
$$
induced by the inclusion of exact DG\+categories\/ $\bG\rightarrowtail
\bE$ is fully faithful.

 Furthermore, assume that the full subcategories\/ $\sM$ and\/ $\sK$
are preserved by the infinite direct sums in $R^*\smodl$.
 Then the triangulated functor
$$
 \sD^\co(\bG)\lrarrow\sD^\co(\bE)
$$
induced by the inclusion of exact DG\+categories\/ $\bG\rightarrowtail
\bE$ is fully faithful.
 Assuming additionally that\/ $\sM$ is a coresolving subcategory in\/
$\sK$, the latter functor is a triangulated equivalence. \par
\end{cor}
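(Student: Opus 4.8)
The plan is to deduce both parts directly from Theorems~\ref{full-and-faithfulness-main-theorem} and~\ref{contraderived-resolving-equivalence-theorem}, by exhibiting $(\bE,\sK)$, $(\bF,\sL)$, and $(\bG,\sM)$ as (strict) exact DG\+pairs and subpairs, exactly as in the proof of Corollary~\ref{cdg-ring-finite-resol-dim-corollary}. First I would recall from Example~\ref{abelian-dg-category-of-cdg-modules} that $\biR^\cu\bmodl$ is an abelian DG\+category, so that, using the equivalence $\Upsilon_{\biR^\cu}$ to identify $\sZ^0((\biR^\cu\bmodl)^\bec)$ with $R^*\smodl$, the pair $(\biR^\cu\bmodl,R^*\smodl)$ is an exact DG\+pair with its abelian exact structure. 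Under this identification the functor $\Phi$ becomes the forgetful functor assigning to a CDG\+module its underlying graded $R^*$\+module. Then, by Proposition~\ref{subcategory-L-prop}(c) and Example~\ref{exact-DG-pairs-examples}(1), the conditions imposed on $\sK$ in the first paragraph (the very same as in Corollary~\ref{cdg-ring-finite-resol-dim-corollary}) guarantee that $(\bE,\sK)$ is a strict exact DG\+subpair in $(\biR^\cu\bmodl,R^*\smodl)$, so that $\bE$ carries the asserted inherited exact DG\+category structure.

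Next, for part~(a) I would verify that $(\bF,\sL)$ is a strict exact DG\+subpair in $(\bE,\sK)$. The point is that a self-resolving subcategory $\sL\subset\sK$ inherits an exact category structure by the very definition of self-resolving, and is closed under extensions by the lemma of Section~\ref{self-resolving-subcategories-subsecn}. Hence Example~\ref{exact-DG-pairs-examples}(2) applies: since the condition $\Phi_\bE^\sK(F)\in\sL$ is precisely the condition that the underlying graded $R^*$\+module of $F$ belong to $\sL$, the DG\+subcategory $\bF$ of the corollary coincides with the one produced there, and $(\bF,\sL)$ is a strict exact DG\+subpair. Now Theorem~\ref{full-and-faithfulness-main-theorem}(a) gives full-and-faithfulness of $\sD^\abs(\bF)\rarrow\sD^\abs(\bE)$ with no further hypotheses. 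For the contraderived statement, infinite products exist and are exact in $R^*\smodl$, so when $\sK$ and $\sL$ are preserved by infinite products the DG\+category $\bE$ has exact products and $\bF\subset\bE$ is closed under products (closedness of $\bF$ under products following from the fact that $\Phi_\bE$ preserves products while $\sL$ is closed under them); thus Theorem~\ref{full-and-faithfulness-main-theorem}(a) yields full-and-faithfulness of $\sD^\ctr(\bF)\rarrow\sD^\ctr(\bE)$. When moreover $\sL$ is resolving in $\sK$, I would invoke Theorem~\ref{contraderived-resolving-equivalence-theorem}(a) in place of the previous one: $\bE$ has twists (being closed under twists in $\biR^\cu\bmodl$ by Proposition~\ref{subcategory-L-prop}(a)) and exact products, and $\bF$ is closed under twists (again by Proposition~\ref{subcategory-L-prop}(a)) and products, so the contraderived functor is a triangulated equivalence.

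Part~(b) is entirely dual: a self-coresolving $\sM\subset\sK$ inherits an exact structure and is closed under extensions (again by the lemma of Section~\ref{self-resolving-subcategories-subsecn}), so $(\bG,\sM)$ is a strict exact DG\+subpair in $(\bE,\sK)$, and one applies Theorem~\ref{full-and-faithfulness-main-theorem}(b) for the absolute and coderived full-and-faithfulness statements (using that coproducts are exact in $R^*\smodl$, and that $\sK$ and $\sM$ are closed under coproducts, so that $\bE$ has exact coproducts and $\bG$ is closed under coproducts), together with Theorem~\ref{contraderived-resolving-equivalence-theorem}(b) for the coderived equivalence when $\sM$ is coresolving.

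The verifications above are routine, and the substance of the result is carried entirely by the two cited theorems; the only genuine care required is the replacement of ``resolving'' by ``self-resolving''. Here one must remember that the definition of self-resolving already builds in the inheritance of an exact structure, while closedness under extensions---needed so that Example~\ref{exact-DG-pairs-examples}(2) produces $(\bF,\sL)$ rather than the a priori smaller $(\bF,\sL\cap\widetilde\sL)$---comes from the lemma of Section~\ref{self-resolving-subcategories-subsecn}. I expect this bookkeeping step to be the main (and essentially the only) obstacle.
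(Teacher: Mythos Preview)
Your proposal is correct and follows essentially the same approach as the paper, which simply says ``Similar to the proof of Corollary~\ref{cdg-ring-finite-resol-dim-corollary}''; you have spelled out exactly the bookkeeping that this entails, with the appropriate substitution of Theorems~\ref{full-and-faithfulness-main-theorem} and~\ref{contraderived-resolving-equivalence-theorem} for Theorem~\ref{finite-resolution-dimension-main-theorem}. Your observation that the lemma on closure under extensions for self-resolving subcategories is what makes Example~\ref{exact-DG-pairs-examples}(2) applicable in the weaker self-resolving setting is exactly the extra ingredient needed beyond the resolving case.
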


\begin{proof}
 Similar to the proof of
Corollary~\ref{cdg-ring-finite-resol-dim-corollary}.
\end{proof}

 Part~(a) of the next corollary is a generalization
of~\cite[Proposition~1.5(a\+-c)]{EP}.

\begin{cor} \label{cdg-quasi-algebra-full-and-faithfulness-corollary}
 Let $X$ be a scheme and $\biB^\cu=(B^*,d,h)$ be a quasi-coherent
CDG\+quasi-algebra over~$X$.
 Let $\sK\subset B^*\sqcoh$ be a full subcategory as in
the first paragraph of
Corollary~\ref{cdg-quasi-algebra-finite-resol-dim-corollary},
and let\/ $\bE\subset\biB^\cu\bqcoh$ the related full DG\+subcategory
endowed with the inherited exact DG\+category structure. \par
 \textup{(a)} Let\/ $\sL$ be a self-resolving subcategory in\/~$\sK$.
 Denote by\/ $\bF\subset\bE$ the full DG\+subcategory consisting
of all the quasi-coherent CDG\+modules whose underlying quasi-coherent
graded $B^*$\+modules belong to\/ $\sL$, and endow\/ $\bF$ with
the inherited exact DG\+category structure.
 Then the triangulated functor
$$
 \sD^\abs(\bF)\lrarrow\sD^\abs(\bE)
$$
induced by the inclusion of exact DG\+categories\/ $\bF\rightarrowtail
\bE$ is fully faithful. \par
 \textup{(b)} Let\/ $\sM$ be a self-coresolving subcategory in\/~$\sK$.
 Denote by\/ $\bG\subset\bE$ the full DG\+subcategory consisting
of all the quasi-coherent CDG\+modules whose underlying quasi-coherent
graded $B^*$\+modules belong to\/ $\sM$, and endow\/ $\bG$ with
the inherited exact DG\+category structure.
 Then the triangulated functor
$$
 \sD^\abs(\bG)\lrarrow\sD^\abs(\bE)
$$
induced by the inclusion of exact DG\+categories\/ $\bG\rightarrowtail
\bE$ is fully faithful.

 Furthermore, assume that the full subcategories\/ $\sM$ and\/ $\sK$
are preserved by the infinite direct sums in $B^*\sqcoh$.
 Then the triangulated functor
$$
 \sD^\co(\bG)\lrarrow\sD^\co(\bE)
$$
induced by the inclusion of exact DG\+categories\/ $\bG\rightarrowtail
\bE$ is fully faithful.
  Assuming additionally that\/ $\sM$ is a coresolving subcategory in\/
$\sK$, the latter functor is a triangulated equivalence.
\end{cor}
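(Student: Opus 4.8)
The plan is to deduce every assertion from the general Theorems~\ref{full-and-faithfulness-main-theorem} and~\ref{contraderived-resolving-equivalence-theorem} by identifying the relevant exact DG-pairs, following verbatim the pattern of the proofs of Corollaries~\ref{cdg-quasi-algebra-finite-resol-dim-corollary} and~\ref{cdg-ring-full-and-faithfulness-corollary}. First I would recall from Example~\ref{abelian-dg-category-of-quasi-coh-cdg-modules} that $\biB^\cu\bqcoh$ is an abelian DG-category and that $\Upsilon_{\biB^\cu}$ identifies $\sZ^0((\biB^\cu\bqcoh)^\bec)$ with the abelian category $B^*\sqcoh$ of quasi-coherent graded $B^*$-modules. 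The hypotheses imposed on $\sK$ in the first paragraph of Corollary~\ref{cdg-quasi-algebra-finite-resol-dim-corollary} are exactly those of Proposition~\ref{subcategory-L-prop} together with Example~\ref{exact-DG-pairs-examples}(1); applying part~(c) of that proposition shows that $(\bE,\sK)$ is a strict exact DG-subpair in $(\biB^\cu\bqcoh,B^*\sqcoh)$, so that $\bE$ indeed carries the inherited exact DG-category structure.

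Next I would pass to the subpairs. The crucial input is the Lemma in Section~\ref{self-resolving-subcategories-subsecn}, according to which any self-resolving subcategory $\sL$ (respectively, any self-coresolving subcategory $\sM$) is closed under extensions. With this, Example~\ref{exact-DG-pairs-examples}(2) applies and exhibits $(\bF,\sL)$ (resp.\ $(\bG,\sM)$) as a strict exact DG-subpair in $(\bE,\sK)$, where $\bF$ (resp.\ $\bG$) is the full DG-subcategory of those quasi-coherent CDG-modules whose underlying graded $B^*$-module lies in $\sL$ (resp.\ $\sM$); here one uses that $\sL$ and $\sM$ are preserved by the shift functors on $B^*\sqcoh$. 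Granting this, the full-and-faithfulness statements for the absolute derived categories in parts~(a) and~(b) are immediate applications of Theorem~\ref{full-and-faithfulness-main-theorem}(a) and~(b), since $\sL$ is self-resolving and $\sM$ is self-coresolving in $\sK$.

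For the coderived statement in part~(b) I would invoke the exactness of infinite coproducts in the Grothendieck abelian category $B^*\sqcoh$: under the added hypothesis that $\sM$ and $\sK$ are preserved by infinite direct sums, the exact DG-category $\bE$ has exact coproducts and $\bG\subset\bE$ is closed under coproducts (and $\sM$, $\sK\subset\sZ^0(\bE^\bec)$ are closed under coproducts too). The coderived half of Theorem~\ref{full-and-faithfulness-main-theorem}(b) then yields the fully faithful functor $\sD^\co(\bG)\rarrow\sD^\co(\bE)$. If moreover $\sM$ is coresolving, I would instead appeal to Theorem~\ref{contraderived-resolving-equivalence-theorem}(b); its hypotheses hold because $\biB^\cu\bqcoh$ has twists and, by Proposition~\ref{subcategory-L-prop}(a) together with Lemma~\ref{twists-into-isomorphisms}, both $\bE$ and $\bG$ are closed under twists, whence the triangulated equivalence $\sD^\co(\bG)\simeq\sD^\co(\bE)$.

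I expect no genuine obstacle, as the corollary is a direct specialization of the main theorems and the work lies entirely in matching hypotheses. The one point demanding care is the strictness of the exact DG-subpairs, which rests on the closure of self-(co)resolving subcategories under extensions and hence, via Example~\ref{exact-DG-pairs-examples}(2), on the inclusions $\sL\subset\widetilde\sL$ and $\sM\subset\widetilde\sM$ in $\sZ^0(\bE^\bec)$. I would also record, exactly as in the remark following Corollary~\ref{cdg-quasi-algebra-finite-resol-dim-corollary}, why part~(a) carries no contraderived assertion and part~(b) no contraderived equivalence: the functors of infinite product of quasi-coherent sheaves need not be exact, so neither the contraderived half of Theorem~\ref{full-and-faithfulness-main-theorem}(a) nor Theorem~\ref{contraderived-resolving-equivalence-theorem}(a) is available in the present setting.
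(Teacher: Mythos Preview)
Your proposal is correct and follows essentially the same approach as the paper, which simply says ``Similar to the proof of Corollary~\ref{cdg-quasi-algebra-finite-resol-dim-corollary}.'' You have spelled out precisely the details that reference encapsulates: setting up $(\bE,\sK)$ and $(\bF,\sL)$ (resp.\ $(\bG,\sM)$) as exact DG-subpairs via Proposition~\ref{subcategory-L-prop} and Example~\ref{exact-DG-pairs-examples}, then invoking Theorems~\ref{full-and-faithfulness-main-theorem} and~\ref{contraderived-resolving-equivalence-theorem}. Your observation that the unnamed Lemma in Section~\ref{self-resolving-subcategories-subsecn} supplies the extension-closure needed for Example~\ref{exact-DG-pairs-examples}(2) is a genuine detail the paper leaves implicit.
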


\begin{proof}
 Similar to the proof of
Corollary~\ref{cdg-quasi-algebra-finite-resol-dim-corollary}.
\end{proof}

 Let us point out once again that assertions about the contraderived
categories are absent in
Corollary~\ref{cdg-quasi-algebra-full-and-faithfulness-corollary}(a),
because the functors of infinite product of quasi-coherent sheaves
are not exact.
 If one wants to consider contraderived categories of module objects
over a quasi-coherent CDG\+quasi-algebra over a nonaffine scheme,
then one should work with contraherent cosheaves~\cite{Pcosh} instead
of quasi-coherent sheaves.

 The first assertion of the following corollary is a generalization
of~\cite[Corollary~2.3(h\+-k)]{EP}.

\begin{cor} \label{factorizations-full-and-faithfulness-corollary}
 Let\/ $\sE$ be an exact category and\/ $\Lambda\:\sE\rarrow\sE$ be
an autoequivalence preserving and reflecting short exact sequences.
 Assume that either\/ $\Gamma=\boZ$, or\/ $\Lambda$ is involutive
and\/ $\Gamma=\boZ/2$.
 Let $w\:\Id_\sE\rarrow\Lambda^2$ be a potential (as in
Section~\ref{factorizations-subsecn}).
 Let\/ $\sH$ be a self-resolving subcategory in\/ $\sE$ preserved by
the autoequivalences\/ $\Lambda$ and\/~$\Lambda^{-1}$.
 Then the triangulated functor
$$
 \sD^\abs(\bF(\sH,\Lambda,w))\lrarrow\sD^\abs(\bF(\sE,\Lambda,w))
$$
induced by the inclusion of exact categories\/ $\sH\rightarrowtail\sE$
is fully faithful.

 Furthermore, if the exact category\/ $\sE$ has exact products and
the full subcategory\/ $\sH\subset\sE$ is closed under products,
then the triangulated functor
$$
 \sD^\ctr(\bF(\sH,\Lambda,w))\lrarrow\sD^\ctr(\bF(\sE,\Lambda,w))
$$
induced by the inclusion of exact categories\/ $\sH\rightarrowtail\sE$
is fully faithful.
 Assuming additionally that\/ $\sH$ is a resolving subcategory in\/
$\sE$, the latter functor is a triangulated equivalence.
\end{cor}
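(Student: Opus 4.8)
The plan is to realize the factorization situation as an instance of an exact DG\+pair together with an exact DG\+subpair, and then invoke the general full-and-faithfulness and equivalence theorems of this section. Following Examples~\ref{exact-dg-category-of-factorizations} and~\ref{exact-DG-pairs-examples}(5), I would set $\bE=\bF(\sE,\Lambda,w)$, \,$\bF=\bF(\sH,\Lambda,w)$, \,$\sK=\sP(\sE,\Lambda)$, and $\sL=\sP(\sH,\Lambda)$, where $\sK$ is viewed as a full subcategory of $\sZ^0(\bE^\bec)$ via the embedding $\Upsilon_{\sE,\Lambda,w}$ and is endowed with the exact structure from Example~\ref{exact-dg-category-of-factorizations}. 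By Example~\ref{exact-DG-pairs-examples}(5), the pair $(\bE,\sK)$ is then an exact DG\+pair and $(\bF,\sL)$ is a strict exact DG\+subpair in it.

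The key step is to verify that $\sL=\sP(\sH,\Lambda)$ is self-resolving in the exact category $\sK=\sP(\sE,\Lambda)$. Here I would use the fact recorded in Section~\ref{factorizations-subsecn} that the forgetful functor $X^\ci\longmapsto X^0$ is an equivalence $\sP(\sE,\Lambda)\simeq\sE$; moreover, as noted in Example~\ref{exact-dg-category-of-factorizations}, the exact structure on $\sP(\sE,\Lambda)$ is precisely the one transported from $\sE$ along this equivalence, so the equivalence is exact. Since $\sH$ is preserved by $\Lambda$ and $\Lambda^{-1}$, under this equivalence the subcategory $\sP(\sH,\Lambda)$ corresponds exactly to $\sH$. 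As the defining properties of a self-resolving subcategory (closedness under kernels of admissible epimorphisms, together with the lifting property for admissible epimorphisms onto its objects) are purely exact-categorical notions preserved by exact equivalences, the hypothesis that $\sH$ is self-resolving in $\sE$ yields at once that $\sP(\sH,\Lambda)$ is self-resolving in $\sP(\sE,\Lambda)$. The same remark in the resolving case shows that $\sL$ is resolving in $\sK$ whenever $\sH$ is resolving in $\sE$.

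With these identifications in place, the full-and-faithfulness for absolute derived categories, and, under the product hypotheses, for contraderived categories, follow directly from Theorem~\ref{full-and-faithfulness-main-theorem}(a), while the equivalence in the resolving case follows from Theorem~\ref{contraderived-resolving-equivalence-theorem}(a). The remaining checks concern the product hypotheses. By Section~\ref{factorizations-subsecn}, $\bF(\sE,\Lambda,w)$ has infinite products whenever $\sE$ does; since the exact structure on $\sZ^0(\bE)=\sF(\sE,\Lambda,w)$ is the one reflected from $\sP(\sE,\Lambda)\simeq\sE$ along the componentwise forgetful functor, and products are computed componentwise, exactness of products in $\sE$ gives exactness of products in $\bE$. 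Likewise $\bF(\sH,\Lambda,w)$ is closed under products in $\bF(\sE,\Lambda,w)$ once $\sH$ is closed under products in $\sE$, and it is closed under twists because a twist leaves the underlying $\Lambda$\+periodic object, hence its components, unchanged; together with the fact that $\bF(\sE,\Lambda,w)$ always has twists (Section~\ref{factorizations-subsecn}), this supplies all the hypotheses of Theorem~\ref{contraderived-resolving-equivalence-theorem}(a). None of these verifications presents a genuine obstacle; the one conceptual point is the transfer of the self-resolving property along the exact equivalence $\sP(\sE,\Lambda)\simeq\sE$, which is exactly where the assumption that $\sH$ is preserved by $\Lambda$ and $\Lambda^{-1}$ is used.
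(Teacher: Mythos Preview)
Your proposal is correct and follows essentially the same approach as the paper: set up the exact DG\+pair $(\bE,\sK)=(\bF(\sE,\Lambda,w),\sP(\sE,\Lambda))$ and its DG\+subpair $(\bF,\sL)=(\bF(\sH,\Lambda,w),\sP(\sH,\Lambda))$ via Examples~\ref{exact-dg-category-of-factorizations} and~\ref{exact-DG-pairs-examples}(5), then apply Theorem~\ref{full-and-faithfulness-main-theorem}(a) for the first two assertions and Theorem~\ref{contraderived-resolving-equivalence-theorem}(a) for the third. The paper's proof states this in one sentence; you have usefully spelled out the verification that the (self-)resolving property transfers along the exact equivalence $\sP(\sE,\Lambda)\simeq\sE$ and checked the product and twist hypotheses explicitly, but the route is identical.
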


\begin{proof}
 Put $\bE=\bF(\sE,\Lambda,w)$, \,$\bF=\bF(\sH,\Lambda,w)$,
\,$\sK=\sP(\sE,\Lambda)$, \,$\sL=\sP(\sH,\Lambda)$,
as in Examples~\ref{exact-dg-category-of-factorizations}
and~\ref{exact-DG-pairs-examples}(5), and apply
Theorems~\ref{full-and-faithfulness-main-theorem}(a)
(for the first two assertions of the corollary)
and~\ref{contraderived-resolving-equivalence-theorem}(a)
(for the third one).
\end{proof}

\Section{Finite Homological Dimension Theorem}
\label{finite-homol-dim-secn}

\subsection{Strongly generated thick subcategories}
\label{strongly-generated-thick-subsecn}
 Let $\sT$ be a triangulated category and $\sM$, $\sN\subset\sT$
be two classes of objects.
 The following concept and notation goes back to~\cite[1.3.9]{BBD}.

 One says that an object $X\in\sT$ is an \emph{extension} of two
objects $M$ and $N\in\sT$, if there exists a distinguished
triangle $M\rarrow X\rarrow N\rarrow M[1]$ in~$\sT$.
 The class of all objects $X\in\sT$ for which there exists
a distinguished triangle $M\rarrow X\rarrow N\rarrow M[1]$ with
$M\in\sM$ and $N\in\sN$ is denoted by $\sM*\sN\subset\sT$.
 According to~\cite[Lemme~1.3.10]{BBD}, the operation~$*$ on
the subclasses of objects in~$\sT$ is associative.

 In particular, the notation $\sM^{*n}$ for the class of objects
$\sM*\sM*\dotsb*\sM$ ($n$~factors) is unambiguous for any given
integer $n\ge1$.
 Assuming for simplicity that the class of objects $\sM\subset\sT$
is closed under the shifts $[1]$, $[-1]$ and finite direct sums, and
following~\cite[Section~2.2]{BvdB}, we will denote by
$\langle\sM\rangle_n\subset\sT$ the class of all direct summands
of objects from $\sM^{*n}$.

 Let $\sX$ be a thick subcategory in a triangulated category $\sT$,
and let $\sM\subset\sX$ be a class of objects.
 Following the terminology in~\cite[Section~2.2]{BvdB}, we will say
that the class $\sM$ \emph{strongly generates} the thick subcategory
$\sX\subset\sT$ if there exists a finite integer $n\ge1$ such that
$\sX=\langle\sM\rangle_n$.

\begin{lem} \label{strongly-generated-closed-under-co-products}
\textup{(a)} Let\/ $\sT$ be a triangulated category with infinite
coproducts and\/ $\sM$, $\sN\subset\sT$ be two classes of objects
closed under coproducts in\/~$\sT$.
 Then the classes of objects\/ $\sM*\sN$ and $\langle\sM\rangle_n$
are also closed under coproducts in\/ $\sT$ (for all $n\ge1$). \par
\textup{(b)} Let\/ $\sT$ be a triangulated category with infinite
products and\/ $\sM$, $\sN\subset\sT$ be two classes of objects
closed under products in\/~$\sT$.
 Then the classes of objects\/ $\sM*\sN$ and $\langle\sM\rangle_n$
are also closed under products in\/ $\sT$ (for all $n\ge1$).
\end{lem}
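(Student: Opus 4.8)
The plan is to reduce everything to a single nonformal input, namely the standard fact that in a triangulated category with arbitrary coproducts a coproduct of distinguished triangles is again distinguished, and dually for products (cf.\ \cite{BN,Neem}). This is the only place where the triangulated structure genuinely enters; all the remaining steps are formal manipulations with coproducts and direct summands.

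First I would treat the class $\sM*\sN$ in part~(a). Given a family of objects $X_\alpha\in\sM*\sN$, I choose for each $\alpha$ a distinguished triangle
$$
 M_\alpha\rarrow X_\alpha\rarrow N_\alpha\rarrow M_\alpha[1]
$$
with $M_\alpha\in\sM$ and $N_\alpha\in\sN$. Taking the coproduct of these triangles yields a distinguished triangle
$$
 \coprod_\alpha M_\alpha\rarrow\coprod_\alpha X_\alpha\rarrow
 \coprod_\alpha N_\alpha\rarrow\Bigl(\coprod_\alpha M_\alpha\Bigr)[1],
$$
where I have used that the shift functor commutes with coproducts, so that the rotated vertex is literally $\bigl(\coprod_\alpha M_\alpha\bigr)[1]$. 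Since $\sM$ and $\sN$ are closed under coproducts by assumption, the outer terms lie in $\sM$ and $\sN$, respectively, whence $\coprod_\alpha X_\alpha\in\sM*\sN$.

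Next I would pass to $\langle\sM\rangle_n$. Using the associativity of the operation~$*$ \cite[Lemme~1.3.10]{BBD}, the class $\sM^{*n}$ is unambiguously defined, and applying the previous paragraph $(n-1)$ times (by induction on~$n$, with base case $\sM^{*1}=\sM$ closed under coproducts by hypothesis) shows that $\sM^{*n}$ is closed under coproducts. It then remains to handle the direct summands: if each $Y_\alpha$ is a direct summand of some $Z_\alpha\in\sM^{*n}$, say $Z_\alpha\simeq Y_\alpha\oplus W_\alpha$, then $\coprod_\alpha Z_\alpha\simeq\bigl(\coprod_\alpha Y_\alpha\bigr)\oplus\bigl(\coprod_\alpha W_\alpha\bigr)$, so $\coprod_\alpha Y_\alpha$ is a direct summand of $\coprod_\alpha Z_\alpha\in\sM^{*n}$, and therefore $\coprod_\alpha Y_\alpha\in\langle\sM\rangle_n$. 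This completes part~(a), and part~(b) follows by the dual argument, products of triangles replacing coproducts of triangles throughout.

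I expect no serious obstacle here, since the statement is essentially formal once the coproduct-of-triangles fact is available. The only points requiring a little care are the two indicated above: verifying that the shift really commutes with the coproduct, so that the coproduct triangle has the stated form, and treating the closure under coproducts of the class of direct summands \emph{directly} as in the last step, rather than trying to read it off from a triangulated property (being a direct summand is not by itself expressed by a triangle).
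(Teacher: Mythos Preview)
Your proposal is correct and takes essentially the same approach as the paper: the key input is that (co)products of distinguished triangles are distinguished, and the passage to $\langle\sM\rangle_n$ uses that (co)products of direct summands are direct summands of (co)products. The paper's proof is more terse (it proves part~(b) and leaves the induction on~$n$ implicit), but the content is identical.
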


\begin{proof}
 It suffices to prove part~(b) (part~(a) is dual).
 The assertion about the class $\sM*\nobreak\sN$ holds due to
the fact that the products of distinguished trianges are
distinguished triangles~\cite[Proposition~1.2.1]{Neem2}.
 The assertion about the class $\langle\sM\rangle_n$ easily
follows, because products of direct summands are direct summands
of products.
\end{proof}

\subsection{Absolutely acyclic objects as totalizations of
finite exact complexes}
 The following proposition provides a converse assertion to
Lemma~\ref{totalization-of-finite-absolutely-acyclic}.
 We will use the notation $\Ac^0(\bE)\subset\Ac^\abs(\bE)\subset
\sH^0(\bE)$ from Sections~\ref{derived-second-kind-definitions-subsecn}
and~\ref{full-and-faithfulness-subsecn}.

\begin{prop} \label{abs-acyclic-as-totalizations}
 Let\/ $\bE$ be an exact DG\+category.
 Then for any absolutely acyclic object $X\in\Ac^\abs(\bE)$
there exists a finite exact complex $Y_\bu$ in the exact
category\/ $\sZ^0(\bE)$ such that $X$ is a direct summand of
the object\/ $\Tot(Y_\bu)$ in the homotopy category\/ $\sH^0(\bE)$.
 More precisely, if $n\ge1$ is an integer such that
$X\in\langle\Ac^0(\bE)\rangle_n\subset\sH^0(\bE)$, then
there exists an exact complex\/ $0\rarrow Y_{n+1}\rarrow Y_n\rarrow
\dotsb\rarrow Y_1\rarrow Y_0\rarrow0$ in\/ $\sZ^0(\bE)$ such that
$X$ is a direct summand of\/ $\Tot(Y_\bu)$ in\/ $\sH^0(\bE)$.
\end{prop}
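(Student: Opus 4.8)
The plan is to argue by induction on the integer $n$ with $X\in\langle\Ac^0(\bE)\rangle_n$, proving the stronger statement that the class $\sP_n$ of all direct summands in $\sH^0(\bE)$ of objects $\Tot(Y_\bu)$, where $Y_\bu$ ranges over exact complexes $0\rarrow Y_{n+1}\rarrow\dotsb\rarrow Y_0\rarrow0$ in $\sZ^0(\bE)$, contains $\langle\Ac^0(\bE)\rangle_n$. For $n=1$ this is immediate: $\langle\Ac^0(\bE)\rangle_1$ consists of the direct summands of totalizations of short exact sequences $0\rarrow Y_2\rarrow Y_1\rarrow Y_0\rarrow0$, which are exactly the three-term exact complexes. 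Since $\sP_{n+1}$ is closed under direct summands by construction and $(\Ac^0(\bE))^{*(n+1)}=(\Ac^0(\bE))^{*n}*\Ac^0(\bE)$, it suffices to treat an object $W$ fitting into a distinguished triangle $M\rarrow W\rarrow N\rarrow M[1]$ with $M\in\langle\Ac^0(\bE)\rangle_n\subseteq\sP_n$ and $N\in\Ac^0(\bE)$.

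First I would reduce to the case where $M$ is an honest totalization. By the inductive hypothesis $M$ is a direct summand of $M_0=\Tot(Z_\bu)$ in $\sH^0(\bE)$ for an exact complex $Z_\bu$ of length $\le n+1$; write $M_0\simeq M\oplus\bar M$. The object $N=\Tot(V_\bu)$ for a short exact sequence $V_\bu$ is already a genuine totalization. Adding the split triangle $\bar M\overset{\id}\rarrow\bar M\rarrow0\rarrow\bar M[1]$ to the given triangle shows that $W\oplus\bar M$ lies in $M_0*N$, so, $\sP_{n+1}$ being closed under summands, it is enough to prove $M_0*N\subseteq\sP_{n+1}$ whenever $M_0$ and $N$ are actual totalizations of finite exact complexes. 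Writing any such extension as $W_0=\cone(g)$ for a closed degree-zero morphism $g\colon N[-1]\rarrow M_0$ (and invoking Corollary~\ref{cone-sequences-universally-admissible-exact} for the associated admissible sequence), I must show that the cone of a closed morphism between totalizations of finite exact complexes is a direct summand of a totalization of a finite exact complex of controlled length.

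This is the technical heart. The point to exploit is that if $g$ were the totalization $\Tot(\gamma)$ of a chain map $\gamma\colon V_\bu[-1]\rarrow Z_\bu$ in $\sZ^0(\bE)$, then the mapping-cone complex of $\gamma$ would be an exact complex in $\sZ^0(\bE)$ (the cone of a morphism of exact complexes is exact), of length $\le n+2$, whose totalization is canonically isomorphic to $\cone(g)$; no summand would even be needed. The whole difficulty is therefore concentrated in rigidifying $g$: a closed morphism between totalizations decomposes, in the underlying additive category $\bE^0$ (where $\Tot(Z_\bu)\cong\bigoplus_i Z_i[-i]$ by the iterated-cone description), into a matrix of components of all internal degrees, and only its ``diagonal'' part is a candidate chain map, the remaining components being higher homotopies that genuinely obstruct realizing $g$ as $\Tot(\gamma)$.

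To overcome this I would induct on the length of the longer complex $Z_\bu$, peeling off its top short exact sequence to obtain a brutal-truncation triangle $\Tot(Z_{\ge1})\rarrow\Tot(Z_\bu)\rarrow Z_0\rarrow\Tot(Z_{\ge1})[1]$ and applying the octahedral axiom to $g$, thereby reducing the cone to cones involving a strictly shorter complex together with one further short exact sequence — exactly the data tracked by $\sP$. The base case, where both complexes are short, is where the liftability technology of Section~\ref{lemmas-E-and-F-subsecn} enters: Lemmas~\ref{Hom-complex-into-Tot-lemma} and~\ref{liftability-preserved-by-Phi-Psi-adjunction} control precisely when the components of a morphism into a totalization lift, letting one correct the non-chain-map part of $g$ at the cost of enlarging $V_\bu$ and $Z_\bu$ by contractible exact complexes and passing to a direct summand — which is the source of the direct summands in the statement. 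I expect this rigidification, together with the bookkeeping that each extension step raises the length of the resulting exact complex by exactly one (so that $\langle\Ac^0(\bE)\rangle_n$ lands in complexes of length $n+1$), to be the main obstacle; the exactness of the cone complex and the summand reductions are by comparison routine.
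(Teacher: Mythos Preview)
Your induction-on-extensions approach has a genuine gap at its ``technical heart.''  The brutal truncation $Z_{\ge1}$ of an exact complex is \emph{not} exact (exactness at $Z_1$ fails), so $\Tot(Z_{\ge1})$ is not a totalization of an exact complex and the inner induction cannot even start as written.  Switching to smart truncation (splitting off the last short exact sequence $0\to K_1\to Z_1\to Z_0\to0$) repairs this, but then the octahedral axiom produces a triangle $M_0'\to W_0\to C$ with $M_0'=\Tot$ of a shorter exact complex and $C=\cone(N[-1]\to T)$ a cone of a morphism between two \emph{short} totalizations.  Even granting your base case, $C$ is only a \emph{summand} of some $\Tot(V')$ with $V'$ of length~$3$, so $W_0\oplus\bar C$ is an extension of $\Tot(V')$ by $M_0'$ --- and now the morphism you must analyze goes from a length-$3$ totalization into $M_0'$, not from a short one.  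Your inner induction hypothesis no longer applies, and iterating the reduction makes the ``short'' side grow without bound.  The base case itself is also not established: Lemmas~\ref{Hom-complex-into-Tot-lemma} and~\ref{liftability-preserved-by-Phi-Psi-adjunction} show when a morphism \emph{into} a totalization is null-homotopic, given a single liftability condition; they do not ``rigidify'' an arbitrary closed morphism between two totalizations into a chain map up to contractible summands.

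The paper avoids this entirely by proving a different auxiliary statement: for \emph{any} $P$ and any morphism $f\colon P\to X$ with $X\in\langle\Ac^0(\bE)\rangle_n$, there is an exact complex $0\to Y_n\to\dotsb\to Y_0\to P\to0$ in $\sZ^0(\bE)$ such that $\Tot(Y_\bu)\to P\xrightarrow{f}X$ vanishes in $\sH^0(\bE)$.  The key input is Lemma~\ref{triangle-with-a-totalization-in-vertex-lemma}: given a triangle $S\to X\to T\to S[1]$ with $T\in\Ac^0(\bE)$ and a morphism $P\to X$, one builds a single short exact sequence $0\to R\to Y\to P\to 0$ (with $Y$ contractible, coming from $\Psi^+$ of a pullback in $\sZ^0(\bE^\bec)$) and a morphism $R[1]\to S$ through which the composite $\cone(R\to Y)\to P\to X$ factors.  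Iterating this along a filtration of $X$ by $n$ objects of $\Ac^0(\bE)$ produces the length-$n$ resolution of~$P$; taking $P=X$, $f=\id_X$ then exhibits $X[-1]$ as a summand of $\Tot(Y_n\to\dotsb\to Y_0\to X)$.  The liftability lemmas are used only inside Lemma~\ref{triangle-with-a-totalization-in-vertex-lemma}, to show one specific composite is null-homotopic --- not to straighten morphisms between totalizations.
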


 The proof of the proposition is based on the following lemma.

\begin{lem} \label{triangle-with-a-totalization-in-vertex-lemma}
 Let\/ $\bE$ be an exact DG\+category and\/ $0\rarrow U\rarrow V\rarrow
W\rarrow0$ be a short exact sequence in the exact category\/
$\sZ^0(\bE)$, and $T=\Tot(U\to V\to W)\in\bE$ be its totalization.
 Let $S\rarrow X\rarrow T\rarrow S[1]$ be a distinguished triangle
in the homotopy category\/ $\sH^0(\bE)$, let $P\in\bE$ be
an object, and let $P\rarrow X$ be a morphism in\/ $\sH^0(\bE)$.
 Then there exists a short exact sequence\/ $0\rarrow R\rarrow Y\rarrow
P\rarrow0$ in\/ $\sZ^0(\bE)$ and a morphism $R[1]\rarrow S$ in\/
$\sH^0(\bE)$ such that the following composition $C\rarrow P\rarrow X$
is equal to the composition $C\rarrow R[1]\rarrow S\rarrow X$ in
the homotopy category\/ $\sH^0(\bE)$, i.~e., the pentagon on
the right-hand side of the diagram is commutative up to homotopy:
$$
 \xymatrix{
  S[1] & T \ar[l] & & X \ar[ll] & S \ar[l] \\
  & & P \ar[ur] \\
  R \ar@{>..>}[r] & Y \ar@{..>>}[ru] \ar@{..>}[rr]
  & & C \ar@{..>}[ul] \ar@{..>}[r] & R[1] \ar@{..>}[uu]
 }
$$
 Here $C$ is the cone of the closed morphism $R\rarrow Y$ in\/ $\bE$,
and $C\rarrow P$ and $C\rarrow R[1]$ are the natural closed morphisms
of degree\/~$0$.
 So the triangle in the lower middle part of the diagram is
commutative in\/ $\sZ^0(\bE)$, while the bottom line is the standard
distinguished triangle in\/ $\sH^0(\bE)$.
\end{lem}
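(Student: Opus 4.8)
The plan is to produce the short exact sequence $0\to R\to Y\to P\to 0$ by repeating the resolution construction from the proof of Proposition~\ref{totalizations-approachable-prop}(a), applied to $\bE$ regarded as the exact DG\+pair $(\bE,\sZ^0(\bE^\bec))$ (which is its own exact DG\+subpair, since $\sZ^0(\bE^\bec)$ is trivially self\+resolving in itself). Concretely, I would choose a closed degree\+$0$ representative of the composite $P\to X\to T$ in $\sZ^0(\bE)$ and write it, following Section~\ref{lemmas-E-and-F-subsecn}, as a triple $(f,g,h)$ with $h\in\Hom^{-1}_\bE(P,W)$. Viewing $h$ as a morphism $P\to W[-1]$ in $\bE^0$ and applying $\widetilde\Phi_\bE$, I would form the pullback $K$ of the admissible epimorphism $\Phi(V[-1])\to\Phi(W[-1])$ along $\widetilde\Phi_\bE(h)\:\Phi(P)\to\Phi(W[-1])$ in $\sZ^0(\bE^\bec)$; the pullback projection $K\to\Phi(P)$ is an admissible epimorphism, so by Lemma~\ref{mono-epi-exact-DG-adjunction}(b) the adjoint morphism $Y:=\Psi^+(K)\to P$ is an admissible epimorphism in $\sZ^0(\bE)$. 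Setting $R=\ker(Y\to P)$, $C=\cone(R\to Y)$, and letting $\mu\:C\to P$, $\nu\:C\to R[1]$, $\iota\:Y\to C$, $b\:Y\to P$ be the natural closed morphisms, I obtain the bottom row $R\to Y\xrightarrow{\iota}C\xrightarrow{\nu}R[1]$ as the standard distinguished triangle together with the commutative lower triangle $\mu\iota=b$ in $\sZ^0(\bE)$.

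The heart of the resolution step is to show that $C\xrightarrow{\mu}P\to X\to T$ is null\+homotopic in $\bE$. By the second pullback projection, the composite $K\to\Phi(P)\xrightarrow{\widetilde\Phi_\bE(h)}\Phi(W[-1])$ lifts through $\Phi(V[-1])$ by construction; Lemma~\ref{liftability-preserved-by-Phi-Psi-adjunction} then transports this liftability across the $\Phi$\+$\Psi^+$ adjunction, so that the $W$\+component of the closed morphism $Y\to P\to T$ lifts through $V$, and Lemma~\ref{Hom-complex-into-Tot-lemma}(b) gives that $Y\to P\to T$, hence also $C\xrightarrow{\mu}P\to T$, is homotopic to zero. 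This is precisely the intermediate conclusion established inside the proof of Proposition~\ref{totalizations-approachable-prop}(a) with $\bF=\bE$, so the bookkeeping of shifts and components can be quoted from there.

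With $\mu$ in hand I would finish by two successive factorizations through the two triangles. Since $C\xrightarrow{\mu}P\to X\to T$ vanishes in $\sH^0(\bE)$ and $S\to X\to T\to S[1]$ is distinguished, applying $\Hom_{\sH^0(\bE)}(C,{-})$ shows that $C\xrightarrow{\mu}P\to X$ factors as $C\xrightarrow{\delta}S\to X$ for some $\delta\:C\to S$. The decisive observation is that $Y=\Psi^+(K)$ is \emph{contractible} in $\bE$, its canonical contracting homotopy being part of the very definition of $\Psi^+$ (cf.\ the remark in the proof of Lemma~\ref{liftability-preserved-by-Phi-Psi-adjunction}); hence the composite $Y\xrightarrow{\iota}C\xrightarrow{\delta}S$ is a morphism out of a contractible object and so vanishes in $\sH^0(\bE)$. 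Applying $\Hom_{\sH^0(\bE)}({-},S)$ to the triangle $Y\xrightarrow{\iota}C\xrightarrow{\nu}R[1]$, exactness then yields $\delta=\gamma\nu$ for some $\gamma\:R[1]\to S$. Consequently $C\xrightarrow{\nu}R[1]\xrightarrow{\gamma}S\to X$ equals $C\xrightarrow{\delta}S\to X=C\xrightarrow{\mu}P\to X$, which is the asserted commutativity of the right\+hand pentagon.

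I expect the main obstacle to be the resolution step: one must secure that $(P\to X\to T)\circ\mu$ is genuinely null\+homotopic in $\bE$, which requires the full strength of Lemmas~\ref{Hom-complex-into-Tot-lemma} and~\ref{liftability-preserved-by-Phi-Psi-adjunction}—namely the \emph{preservation of liftability} under adjunction, not merely the triviality that morphisms out of contractible objects are homotopic to zero. Once this null\+homotopy is in place, the two factorizations are formal, with the contractibility of $Y$ doing the essential work in the second one.
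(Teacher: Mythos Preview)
Your proposal is correct and follows essentially the same approach as the paper's proof: both construct $K$ as the pullback in $\sZ^0(\bE^\bec)$, set $Y=\Psi^+(K)$, use Lemmas~\ref{Hom-complex-into-Tot-lemma} and~\ref{liftability-preserved-by-Phi-Psi-adjunction} to kill $C\to P\to X\to T$, and then exploit contractibility of $Y$ to produce the factorization through $R[1]$. The only cosmetic difference is that the paper phrases the last step as ``$C\to R[1]$ is an isomorphism in $\sH^0(\bE)$'' rather than invoking the long exact sequence of $\Hom_{\sH^0(\bE)}({-},S)$, which amounts to the same thing.
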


\begin{proof}
 This is a generalization of~\cite[Lemma~1.6.G]{EP}.
 Let us first briefly discuss why the complicated assertion of
the lemma has any chance to be true at all.
 Specifically, let us explain why the desired commutative diagram
exists \emph{in the absolute derived category\/ $\sD^\abs(\bE)$}.
 Indeed, in $\sD^\abs(\bE)$, the morphisms $S\rarrow X$ and
$C\rarrow P$ are isomorphisms.
 Choosing the object $Y\in\bE$ to be contractible, one makes
the morphism $C\rarrow R[1]$ an isomorphism (in $\sD^\abs(\bE)$
and even in $\sH^0(\bE)$) as well.
 Now it is clear that the composition $C\rarrow P\rarrow X$ factorizes
through the isomorphisms  $C\rarrow R[1]$ and $S\rarrow X$ in
the absolute derived category.

 To prove the lemma (i.~e., construct the desired commutative pentagon
diagram in the homotopy category $\sH^0(\bE)$), we present a simplified
version of the argument from~\cite{EP}.
 The argument starts with choosing a closed morphism $P\rarrow X$ of
degree~$0$ representing the given homotopy class.

 Consider the composition $P\rarrow X\rarrow T$ and represent it
by a triple $(f,g,h)$ with $f\in\Hom_\bE^1(P,U)$, \
$g\in\Hom_\bE^0(P,V)$, and $h\in\Hom_\bE^{-1}(P,W)$, as in
Section~\ref{lemmas-E-and-F-subsecn}.
 Arguing similarly to the proof of
Proposition~\ref{totalizations-approachable-prop}, we view~$h$ as
a morphism $P\rarrow W[-1]$ in the additive category $\bE^0$ and,
applying the functor $\widetilde\Phi_\bE$, produce a morphism
$\Phi(P)\rarrow\Phi(W[-1])$ in the category $\sZ^0(\bE^\bec)$.
 Applying the functor $\Phi_\bE$ to the admissible epimorphism
$V[-1]\rarrow W[-1]$ in the exact category $\sZ^0(\bE)$, we obtain
an admissible epimorphism $\Phi(V[-1])\rarrow\Phi(W[-1])$ in
the exact category $\sZ^0(\bE^\bec)$.
 Let $K$ be the related pullback object in $\sZ^0(\bE^\bec)$;
so we have a commutative square diagram as in the proof of
Proposition~\ref{totalizations-approachable-prop} with
an admissible epimorphism $K\rarrow\Phi(P)$ and a morphism
$K\rarrow\Phi(V[-1])$ in $\sZ^0(\bE^\bec)$.

 Put $Y=\Psi^+_\bE(K)\in\sZ^0(\bE)$.
 By Lemma~\ref{mono-epi-exact-DG-adjunction}(b), the morphism
$Y=\Psi^+(K)\rarrow P$ corresponding by adjunction to the admissible
epimorphism $K\rarrow\Phi(P)$ in $\sZ^0(\bE^\bec)$ is an admissible 
epimorphism in $\sZ^0(\bE)$.
 Let $R$ be kernel of the admissible epimorphism $Y\rarrow P$; so
we have a short exact sequence $0\rarrow R\rarrow Y\rarrow P\rarrow0$
in $\sZ^0(\bE)$.
 Put $C=\cone(R\rightarrowtail Y)\in\bE$, and let $C\rarrow P$ and
$C\rarrow R[1]$ be the natural closed morphisms of degree~$0$.
 We claim that the composition of closed morphisms
$C\rarrow P\rarrow X\rarrow T$ is homotopic to zero in~$\bE$.

 Indeed, one can continue to follow the proof of
Proposition~\ref{totalizations-approachable-prop}.
 By Lemma~\ref{Hom-complex-into-Tot-lemma}, it suffices to show that
the component $\tilde k'\in\Hom^0_\bE(C,W[-1])$ of our morphism
$C\rarrow T$ can be lifted to an element of $\Hom_\bE^0(C,V[-1])$.
 In the category $\bE^0$, the object $C$ is naturally isomorphic to
the direct sum $R[1]\oplus Y$, and the morphism $\tilde k'$ vanishes
on the component~$R[1]$.
 Hence it suffices to show that the related element $\tilde k\in
\Hom_\bE^0(Y,W[-1])$ can be lifted to an element of
$\Hom_\bE^0(Y,V[-1])$.

 The element~$\tilde k$ is the $W$\+component of the composition
$Y=\Psi^+(K)\rarrow P\rarrow X\rarrow T$, which is a closed morphism
in~$\bE$.
 According to Lemma~\ref{liftability-preserved-by-Phi-Psi-adjunction},
we only need to check that the component $K\rarrow\Phi(W[-1])$ of
the composition $K\rarrow\Phi\Psi^+(K)\rarrow\Phi(P)\rarrow\Phi(T)$
can be lifted to a morphism $K\rarrow\Phi(V[-1])$ in $\sZ^0(\bE^\bec)$.
 But we are given such a lifting by the construction of
the object $K\in\sZ^0(\bE^\bec)$.

 We have shown that the composition $C\rarrow P\rarrow X\rarrow T$
vanishes in $\sH^0(\bE)$, and it follows that the composition
$C\rarrow P\rarrow X$ factorizes through the morphism $S\rarrow X$
in the distringuished triangle $S\rarrow X\rarrow T\rarrow S[1]$.
 We have constructed the diagram of undotted arrows
$$
 \xymatrix{
  S[1] & T \ar[l] & & X \ar[ll] & S \ar[l] \\
  & & P \ar[ur] \\
  R \ar@{>->}[r] & Y \ar@{->>}[ru] \ar[rr] & & C \ar[ul] 
  \ar[uur] \ar[r] & R[1] \ar@{..>}[uu]
 }
$$
with the quadrangle in the right-hand side commutative up to homotopy.

 It remains to deduce the dotted factorization through $R[1]$
in $\sH^0(\bE)$.
 Here we simply notice that the object $Y=\Psi^+(K)$ is contractible
in $\bE$ by the definition of the functor $\Psi^+_\bE$, hence
$C\rarrow R[1]$ is an isomorphism in the homotopy category $\sH^0(\bE)$.
 (See~\cite[proof of Lemma~1.6.G]{EP} for a fancier argument not using
the observation that the functor $\Psi^+$ produces contractible
objects.)
\end{proof}

\begin{proof}[Proof of Proposition~\ref{abs-acyclic-as-totalizations}]
 We follow~\cite[first half of the proof of Theorem~1.6]{EP}.
 Let us prove the following more general claim: for any objects
$X\in\langle\Ac^0(\bE)\rangle_n$ and $P\in\sH^0(\bE)$, and any
morphism $f\:P\rarrow X$ in $\sH^0(\bE)$, there exists an exact complex
$0\rarrow Y_n\rarrow Y_{n-1}\rarrow\dotsb\rarrow Y_0\rarrow P\rarrow0$
in $\sZ^0(\bE)$ such that the composition $\Tot(Y_\bu)\rarrow P\rarrow
X$ of the natural closed morphism $\Tot(Y_\bu)\rarrow P$ with
the morphism~$f$ vanishes in $\sH^0(\bE)$.

 Indeed, it suffices to consider the case when $X\in\Ac^0(\bE)^{*n}$,
as the passage to a direct summand is trivial.
 So we have a sequence of objects $T_1$,~\dots, $T_n\in\Ac^0(\bE)$
and distinguished triangles $X_{i-1}\rarrow X_i\rarrow T_i\rarrow
X_{i-1}[1]$, \ $i=1$,~\dots,~$n$ in $\sH^0(\bE)$ such that $X_0=0$
and $X_n=X$.

 Applying Lemma~\ref{triangle-with-a-totalization-in-vertex-lemma}
to the distinguished triangle $X_{n-1}\rarrow X_n\rarrow T_n\rarrow
X_{n-1}[1]$ and the morphism $P\rarrow X_n$ in $\sH^0(\bE)$, we obtain
a short exact sequence $0\rarrow R_1\rarrow Y_0\rarrow P\rarrow0$
in $\sZ^0(\bE)$ and a morphism $R_1[1]\rarrow X_{n-1}$ in $\sH^0(\bE)$
such that the composition $\cone(R_1\to Y_0)\rarrow P\rarrow X_n$
is equal to the composition $\cone(R_1\to Y_0)\rarrow R_1[1]\rarrow
X_{n-1}\rarrow X_n$ in $\sH^0(\bE)$.
 Applying Lemma~\ref{triangle-with-a-totalization-in-vertex-lemma}
again to the morphism $R_1[1]\rarrow X_{n-1}$ and the distinguished
triangle $X_{n-2}\rarrow X_{n-1}\rarrow T_{n-1}\rarrow X_{n-2}[1]$,
we construct a short exact sequence $0\rarrow R_2\rarrow Y_1\rarrow R_1
\rarrow0$ in $\sZ^0(\bE)$ and a morphism $R_2[2]\rarrow X_{n-2}$ in
$\sH^0(\bE)$, etc.
 Finally we produce a short exact sequence $0\rarrow R_n\rarrow Y_{n-1}
\rarrow R_{n-1}\rarrow0$ in $\sZ^0(\bE)$ and a morphism $R_n[n]\rarrow
X_0=0$ in $\sH^0(\bE)$.
 Put $Y_n=R_n$.

 Now we have a commutative diagram in $\sH^0(\bE)$ as depicted on
Figure~1,
\begin{figure}[htb]
$$
 \xymatrix@C-=0.5cm{
 \Tot(Y_n\to\dotsb\to Y_1\to Y_0) \ar[rr] \ar[rd] \ar[dd]
 & & P \ar[r] & {X=X_n} \\
 & \cone(R_1\to Y_0) \ar[ru] \ar[rd] \\
 \Tot(Y_n\to\dotsb\to Y_2\to Y_1) \ar[rr] \ar[rd] \ar[dd]
 & & R_1[1] \ar[r] & X_{n-1} \ar[uu] \\
 & \cone(R_2\to Y_1)[1] \ar[ru] \ar[rd] \\
 \Tot(Y_n\to\dotsb\to Y_3\to Y_2) \ar[rr] \ar[dd]
 & & R_2[2] \ar[r] & X_{n-2} \ar[uu] \\ \\
 \vdots \ar[dd] & & & \vdots \ar[uu] \\ \\
 \Tot(Y_n\to Y_{n-1}) \ar[rr] \ar@{=}[rd]
 & & R_{n-1}[n-1] \ar[r] & X_1 \ar[uu] \\
 & \cone(R_n\to Y_{n-1})[n-1] \ar[ru] \ar[rd] \\
 & & R_n[n] \ar[r] & {X_0=0} \ar[uu]
 }
$$
\caption{The composition of morphisms vanishes in $\sH^0(\bE)$}
\end{figure}
showing that the composition of morphisms
$\Tot(Y_n\to\dotsb\to Y_0)\rarrow P\rarrow X$ vanishes in the homotopy
category, since it factorizes through a zero object.
 Indeed, the pentagons on the diagram are commutative in $\sH^0(\bE)$
by Lemma~\ref{triangle-with-a-totalization-in-vertex-lemma}, while
the triangles and the quadrangles are obviously commutative in
$\sZ^0(\bE)$ already.

 Having proved the more general claim, let us deduce the assertion of
proposition as it is stated.
 Given an object $X\in\langle\Ac^0(\bE)\rangle_n$, we put $P=X$ and
consider the identity morphism $f=\id_X$.
 As we have shown, there exists an exact complex
$0\rarrow Y_n\rarrow Y_{n-1}\rarrow\dotsb\rarrow Y_0\rarrow X\rarrow0$
in $\sZ^0(\bE)$ such that the natural closed morphism
$\Tot(Y_n\to\dotsb\to Y_0)\rarrow X$ is homotopic to zero in~$\bE$.
 It remains to consider the distinguished triangle
$$
 X[-1]\lrarrow\Tot(Y_n\to\dotsb\to Y_0\to X)\lrarrow
 \Tot(Y_n\to\dotsb\to Y_0)\lrarrow X
$$
in the homotopy category $\sH^0(\bE)$, and observe that the object
$X[-1]$ becomes a direct summand of the object $\Tot(Y_n\to\dotsb
\to Y_0\to X)$ in the triangulated category $\sH^0(\bE)$ whenever
the morphism $\Tot(Y_n\to\dotsb \to Y_0)\rarrow X$ vanishes
in $\sH^0(\bE)$.
\end{proof}

\subsection{Preliminaries on Yoneda extensions}
\label{prelim-Yoneda-Ext-subsecn}
 Let us recall the basics of the Yoneda Ext construction in exact
categories (see, e.~g., \cite[Sections~A.7\+-A.8]{Partin}).

 Given two objects $X$ and $Y$ in an exact category $\sK$, by
an \emph{$n$\+extension of $X$ by $Y$} one means an exact complex
$0\rarrow Y\rarrow A_n\rarrow\dotsb\rarrow A_1\rarrow X\rarrow0$
in~$\sK$.
 This means that the complex $0\rarrow Y\rarrow A_\bu\rarrow X
\rarrow0$ can be obtained by splicing (admissible) short exact
sequences in $\sK$, which we will denote by $0\rarrow Z^A_i\rarrow A_i
\rarrow Z^A_{i-1}\rarrow0$, \ $i=1$,~\dots, $n$, with $Z^A_n=Y$ and
$Z^A_0=X$.

 An \emph{elementary equivalence} acting from an $n$\+extension
$A_\bu$ to an $n$\+extension $B_\bu$ of $X$ by $Y$ is a morphism of
complexes $A_\bu\rarrow B_\bu$ inducing the identity maps on
the homology objects $X$ and~$Y$.
 The equivalence relation on the $n$\+extensions of $X$ by $Y$ is,
by the definition, generated by the elementary equivalences.
 When $n=1$, all elementary equivalences of $1$\+extensions are
isomorphisms, hence all equivalent $1$\+extensions are isomorphic;
but for $n>1$ this is no longer the case.

 The equivalence classes of $n$\+extensions of $X$ and $Y$ form
the abelian group $\Ext_\sK^n(X,Y)$.
 In fact, it is straightforward to see from the definition of
the derived category $\sD^\bb(\sK)$ (as presented in~\cite{Neem})
that the set of equivalence classes of $n$\+extensions of $X$ by $Y$
is naturally bijective to the group $\Ext_\sK^n(X,Y)=
\Hom_{\sD^\bb(\sK)}(X,Y[n])$.

 The class of all morphisms with exact cones in the homotopy category
of complexes in $\sK$ is localizing, i.~e., it satisfies the left and
right Ore conditions.
 This implies a similar property of the Yoneda $n$\+extensions.
 Namely, two $n$\+extensions $A_\bu$ and $B_\bu$ of $X$ by $Y$ are
equivalent if and only if there exists an $n$\+extension $C_\bu$ of $X$
by $Y$ together with a fraction (``roof'') of elementary equivalences
$A_\bu\larrow C_\bu\rarrow B_\bu$, and if and only if there exists
an $n$\+extension $D_\bu$ of $X$ by $Y$ together with a fraction of
elementary equivalences $A_\bu\rarrow D_\bu\larrow B_\bu$.

 The \emph{zero $n$\+extension} (corresponding to the zero element
$0\in\Ext_\sK^n(X,Y)$) is represented by the complex
$0\rarrow Y\rarrow Y\rarrow 0\rarrow\dotsb\rarrow 0\rarrow X
\rarrow X\rarrow0$ (for $n\ge2$) or $0\rarrow Y\rarrow Y\oplus X
\rarrow X\rarrow0$ (for $n=1$).
 Now, given an $n$\+extension $C_\bu$ of $X$ by $Y$, one easily
observes that an elementary equivalence acting from $C_\bu$ to
the zero $n$\+extension exists if and only if the admissible
monomorphism $Y\rarrow C_n$ is split.
 Dually, an elementary equivalence acting to an $n$\+extension
$D_\bu$ from the zero $n$\+extension exists if and only if
the admissible epimorphism $D_1\rarrow X$ is split.

 One says that an $n$\+extension $A_\bu$ of $X$ by $Y$ is
\emph{split} (or \emph{trivial}) if it is equivalent to the zero
$n$\+extension.
 We have proved the following lemma.

\begin{lem} \label{split-n-extension-lemma}
 Let\/ $\sK$ be an exact category, $X$, $Y\in\sK$ be two objects,
and $A_\bu$ be an $n$\+extension of $X$ by~$Y$.
 Then the $n$\+extension $A_\bu$ is split if and only if there exists
an elementary equivalence of $n$\+extensions $C_\bu\rarrow A_\bu$
of $X$ by $Y$ such that the admissible monomorphism $Y\rarrow C_n$ is
split, or equivalently, if and only if there exists an elementary
equivalence of $n$\+extensions $A_\bu\rarrow D_\bu$ of $X$ by $Y$
such that the admissible epimorphism $D_1\rarrow X$ is split.  \qed
\end{lem}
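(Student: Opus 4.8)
The plan is to assemble the statement from the three ingredients prepared in the preceding discussion: the definition of splitness, the Ore-type description of the equivalence relation on $n$\+extensions, and two observations identifying elementary equivalences to and from the zero $n$\+extension $O_\bu$ with the splitness of the outer admissible mono\-/epimorphism. First I would recall that, by definition, the $n$\+extension $A_\bu$ is split exactly when it is equivalent to $O_\bu$, and that, by the localization (left and right Ore) property of the class of morphisms with exact cones already recorded above, this equivalence can be witnessed by a single roof of elementary equivalences $A_\bu\larrow C_\bu\rarrow O_\bu$, and equivalently by a single fraction $A_\bu\rarrow D_\bu\larrow O_\bu$ of the opposite kind.

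The core of the argument is the verification of the two observations. On the monomorphism side, given an elementary equivalence $\phi\:C_\bu\rarrow O_\bu$, its leftmost component $\phi_n\:C_n\rarrow O_n=Y$ satisfies $\phi_n\circ\iota=\id_Y$, where $\iota\:Y\rarrow C_n$ is the admissible monomorphism of $C_\bu$; this is commutativity of the leftmost square, as the corresponding structure map of $O_\bu$ is $\id_Y$. Hence $\iota$ splits. Conversely, from a retraction $r\:C_n\rarrow Y$ of $\iota$ I would build an elementary equivalence $C_\bu\rarrow O_\bu$ by taking its leftmost component to be $r$, its intermediate components (landing in the zero objects $O_{n-1}=\dotsb=O_2=0$) to be zero, and its component $C_1\rarrow O_1=X$ to be the admissible epimorphism of $C_\bu$. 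The only square needing a check is the one forcing the composition $C_2\rarrow C_1\rarrow X$ to vanish, which holds by exactness of $C_\bu$ at $C_1$. The epimorphism side is strictly dual, characterizing elementary equivalences $O_\bu\rarrow D_\bu$ by the splitness of the admissible epimorphism $D_1\rarrow X$; the degenerate case $n=1$, with $O_\bu$ equal to $0\rarrow Y\rarrow Y\oplus X\rarrow X\rarrow0$, is handled in the same way.

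Finally I would combine these facts. Taking $O_\bu$ as the second extension in the roof characterization shows that $A_\bu$ is split if and only if there exists an elementary equivalence $C_\bu\rarrow A_\bu$ admitting a further elementary equivalence $C_\bu\rarrow O_\bu$, which by the first observation exists precisely when $Y\rarrow C_n$ splits; this yields the first claimed equivalence. Dually, the fraction $A_\bu\rarrow D_\bu\larrow O_\bu$ together with the second observation gives the assertion about $D_1\rarrow X$. I expect no genuine obstacle, since all the conceptual content sits in the already-stated Ore conditions; the only point demanding care is the bookkeeping of the chain-map squares in the two observations and the separate (but entirely parallel) treatment of the case $n=1$.
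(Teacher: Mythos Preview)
Your proposal is correct and follows essentially the same approach as the paper. In fact, the paper's proof \emph{is} the preceding discussion (hence the \qed\ in the statement): it records the Ore-type roof/fraction description of the equivalence relation, then states the two observations characterizing elementary equivalences to and from the zero $n$-extension, and declares ``We have proved the following lemma.'' You have faithfully reproduced this assembly and, in addition, spelled out the chain-map verification for the two observations that the paper leaves as ``one easily observes.''
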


 We will say that an elementary equivalence $C_\bu\rarrow A_\bu$ of
$n$\+extensions of $X$ by $Y$ in $\sK$ is an \emph{elementary
epiequivalence} if the induced morphisms of the objects of cocycles
$Z^C_i\rarrow Z^A_i$ are admissible epimorphisms for all
$1\le i\le n-1$.
 Clearly, it follows that the morphisms $C_i\rarrow A_i$ are also
admissible epimorphisms in this case for all $1\le i\le n$.
 Conversely, if the morphisms $C_i\rarrow A_i$ are admissible
epimorphisms for all $1\le i\le n-1$ and the exact category $\sK$
is weakly idempotent-complete, then an elementary equivalence
$C_\bu\rarrow A_\bu$ is an elementary epiequivalence.

 The next lemma can be used in conjunction with the previous one.
 
\begin{lem} \label{epic-elementary-equivalence-lemma}
 Let\/ $\sK$ be an exact category, $X$, $Y\in\sK$ be two objects,
and $f\:B_\bu\rarrow A_\bu$ be an elementary equivalence of
$n$\+extensions of $X$ by~$Y$.
 Then there exists an $n$\+extension $C_\bu$ of $X$ by $Y$ together
with elementary epiequivalences of $n$\+extensions $C_\bu\rarrow A_\bu$
and $C_\bu\rarrow B_\bu$.
\end{lem}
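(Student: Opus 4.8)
The plan is to unwind the definitions and then build $C_\bu$ by an $f$-twisted pullback construction, processing the extension from its $Y$-end downward. First I would fix notation as in the text: write $A_\bu$ and $B_\bu$ as spliced short exact sequences $0\rarrow Z^A_i\rarrow A_i\rarrow Z^A_{i-1}\rarrow0$ and $0\rarrow Z^B_i\rarrow B_i\rarrow Z^B_{i-1}\rarrow0$ with $Z^A_0=X=Z^B_0$ and $Z^A_n=Y=Z^B_n$, and record the induced maps on cocycles $\bar f_i\:Z^B_i\rarrow Z^A_i$, which satisfy $\bar f_0=\id_X$ and $\bar f_n=\id_Y$. The essential input from the hypothesis that $f$ is an elementary equivalence is the two families of relations these maps force on Yoneda classes: at the sub-end each bottom square exhibits $A_i$ as a pushout of $B_i$ along $\bar f_i$, while at the quotient-end each top square exhibits $B_i$ as a pullback of $A_i$ along $\bar f_{i-1}$ (cf.\ \cite[Sections~A.7--A.8]{Partin}). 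The case $n=1$ is immediate, since an elementary equivalence of $1$-extensions is an isomorphism and one takes $C_\bu=B_\bu$; so I would assume $n\ge2$.

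The construction itself produces, for each internal index $1\le i\le n-1$, an object $Z^C_i$ equipped with admissible epimorphisms $p^A_i\:Z^C_i\rarrow Z^A_i$ and $p^B_i\:Z^C_i\rarrow Z^B_i$, together with short exact sequences $0\rarrow Z^C_i\rarrow C_i\rarrow Z^C_{i-1}\rarrow0$ (with $Z^C_0=X$, $Z^C_n=Y$) and morphisms of complexes $C_\bu\rarrow A_\bu$, $C_\bu\rarrow B_\bu$ restricting to the $p^A_i$, $p^B_i$ on cocycles and to the identities on $X$ and $Y$. The key local step is the top splicing: I would define $Z^C_{n-1}$ as the pullback of the map $Z^A_{n-1}\oplus Z^B_{n-1}\rarrow Z^A_{n-1}$, \ $(a,b)\longmapsto a-\bar f_{n-1}(b)$, against the admissible epimorphism $A_n\rarrow Z^A_{n-1}$; the projection $Z^C_{n-1}\rarrow Z^A_{n-1}\oplus Z^B_{n-1}$ is then an admissible epimorphism, and composing with the two split projections gives $p^A_{n-1}$ and $p^B_{n-1}$. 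By construction the difference $p^A_{n-1}-\bar f_{n-1}p^B_{n-1}$ lifts to $A_n$, so $(p^A_{n-1})^*[A_n]=(p^B_{n-1})^*[B_n]$, and the common pullback extension $C_n$ maps to both top squares. Dually, at the quotient-end I would use a pushout along an admissible monomorphism to produce $C_1$ mapping to both bottom squares, using $[A_1]=(\bar f_1)_*[B_1]$. Admissibility of the resulting termwise maps $C_i\rarrow A_i$, $C_i\rarrow B_i$ follows from admissibility of the cocycle maps together with the dual of the obscure axiom \cite[Proposition~2.16]{Bueh}, exactly as in Remark~\ref{enough-projectives-implies-exact-products-remark}, while the pullback and pushout squares of short exact sequences are handled by \cite[Proposition~A.2 and Corollary~A.3]{Partin}.

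The hard part is to carry this out \emph{simultaneously} at all internal cocycles and to glue the local solutions into one complex $C_\bu$. Each internal object $Z^C_i$ plays two roles at once --- the sub-object in the $i$-th splicing and the quotient object in the $(i+1)$-st --- so it must satisfy a contravariant (pullback) constraint inherited from the level above and a covariant (pushout) constraint inherited from the level below, and these must be met by a single choice of $(Z^C_i,p^A_i,p^B_i)$. A naive choice such as $Z^C_i=Z^A_i\oplus Z^B_i$ with split projections covers the cocycles but fails to close up into a complex --- the two pullback constraints at the top then force $[A_n]=0=[B_n]$ --- and it is exactly here that the maps $\bar f_i$ must intervene, the relations coming from $f$ being an elementary equivalence being what renders the competing constraints consistent. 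I therefore expect the genuine content of the proof to be an induction on $n$ in which the top splicing is resolved first by the pullback above, and the truncated $(n-1)$-extensions of $X$ by $Z^C_{n-1}$ --- no longer related by a strict elementary equivalence but by a morphism covering $p^A_{n-1}$ and $p^B_{n-1}$ --- are brought back into the inductive framework; setting up and controlling this relative, end-covered version of the hypothesis is the main obstacle, after which Lemma~\ref{split-n-extension-lemma} supplies the splitting criterion that the whole construction is designed to feed.
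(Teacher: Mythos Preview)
Your approach is substantially more complicated than necessary, and the induction you sketch does not close as stated.  The paper's proof is a one-step explicit construction: set $C_\bu$ equal to $B_\bu$ direct-summed with the contractible two-term complexes $0\rarrow A_i\rarrow A_i\rarrow 0$ for $2\le i\le n$, so that $C_i=B_i\oplus A_i\oplus A_{i+1}$ in the middle range, $C_n=B_n\oplus A_n$, and $C_1=B_1\oplus A_2$.  The map $C_\bu\rarrow B_\bu$ is the split projection; the map $C_\bu\rarrow A_\bu$ has components $f_i$, the identity on $A_i$, and the differential $A_{i+1}\rarrow A_i$.  One then computes $Z^C_i=Z^B_i\oplus A_{i+1}$, and the induced map $Z^C_i\rarrow Z^A_i$ is an admissible epimorphism because its component $A_{i+1}\rarrow Z^A_i$ already is one.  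No induction, no pullbacks at internal cocycles, no gluing problem.

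The key point you are missing is the one the paper flags explicitly just after the statement: there is \emph{no} requirement that the triangle $C_\bu\rarrow B_\bu\rarrow A_\bu$ commute.  Your pullback construction of $Z^C_{n-1}$ is engineered precisely to make the two maps compatible through~$f$, and this is what forces you into the awkward ``relative, end-covered'' inductive hypothesis you yourself identify as the main obstacle.  That strengthened hypothesis is not the original statement, and you do not show how to reduce back to it; as written, the induction does not close.  Once you drop the compatibility requirement, the mapping-cylinder-type construction above becomes available and the proof is immediate.  (Amusingly, your pullback description of $Z^C_{n-1}$, once unwound, is isomorphic to $Z^B_{n-1}\oplus A_n$ --- exactly the paper's choice --- but you then abandon this explicit form in favor of an induction that is not needed.)
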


 Notice that there is \emph{no} claim of commutativity of
the triangle diagram of elementary equivalences of $n$\+extensions
$C_\bu\rarrow B_\bu\rarrow A_\bu$ in
Lemma~\ref{epic-elementary-equivalence-lemma}.
 This triangle is essentially \emph{never} commutative, and we do not
need it to be.
 Rather, it will be important for us that there exists an elementary
equivalence of $n$\+extensions acting from $C_\bu$ to the zero
$n$\+extension of $X$ by $Y$ whenever there is such an elementary
equivalence acting from $B_\bu$ to the zero $n$\+extension.

\begin{proof}
 This is~\cite[first paragraph of the proof of Lemma~4.4]{Partin}.
 The case $n=1$ is trivial, so we assume $n>1$.
 Put $C_i=B_i\oplus A_i\oplus A_{i+1}$ for $1<i<n$, \
$C_n=B_n\oplus A_n$, and $C_1=B_1\oplus A_2$.
 In the exact sequence $0\rarrow Y\rarrow C_n\rarrow\dotsb\rarrow
C_1\rarrow X\rarrow0$, the components of the differential $Y\rarrow C_n$
are the differential $Y\rarrow B_n$ and the zero morphism
$Y\rarrow A_n$.
 The only nonzero components of the differential $C_i\rarrow C_{i-1}$
are the differential $B_i\rarrow B_{i-1}$ and the identity map
$A_i\rarrow A_i$.
 The components of the differential $C_1\rarrow X$ are the differential
$B_1\rarrow X$ and the zero morphism $A_2\rarrow X$.
 So the exact complex $0\rarrow Y\rarrow C_\bu\rarrow X\rarrow0$ is
constructed as the direct sum of the exact complex $0\rarrow Y\rarrow
B_\bu\rarrow X\rarrow0$ and the contractible two-term complexes
$0\rarrow A_i\rarrow A_i\rarrow0$, where $1<i\le n$.

 The elementary equivalence $C_\bu\rarrow B_\bu$ is the direct summand
projection; so the only nonzero component of the morphism $C_i\rarrow
B_i$ is the identity morphism $B_i\rarrow B_i$.
 The elementary equivalence $C_\bu\rarrow A_\bu$ is formed by
the following morphisms $C_i\rarrow A_i$.
 The components of the morphism $C_n\rarrow A_n$ are the morphism
$f_n\:B_n\rarrow A_n$ and the identity morphism $A_n\rarrow A_n$.
 For $1<i<n$, the components of the morphism $C_i\rarrow A_i$ are
the morphism $f_i\:B_i\rarrow A_i$, the identity map $A_i\rarrow A_i$,
and the differential $A_{i+1}\rarrow A_i$.
 For $i=1$, the components of the morphism $C_1\rarrow A_1$ are
the morphism $f_1\:B_1\rarrow A_1$ and the differential
$A_2\rarrow A_1$.

 To show that the elementary equivalence $C_\bu\rarrow A_\bu$ is
an elementary epiequivalence, one can compute the objects of cocycles
$Z_i^C\in\sK$.
 In fact, one has $Z_i^C=Z_i^B\oplus A_{i+1}$ for all $1\le i\le n-1$.
 The components of the induced morphism $Z_i^C\rarrow Z_i^A$
are the induced morphism $Z_i^B\rarrow Z_i^A$ and the admissible
epimorphism $A_{i+1}\rarrow Z_i^A$; so the morphism $Z_i^C\rarrow
Z_i^A$ is an admissible epimorphism by the dual version
of~\cite[first assertion of Exercise~3.11(i)]{Bueh}.
\end{proof}

\begin{lem} \label{epi-onto-pullback-lemma}
 Let $(0\to W'\to A'\to Z'\to 0)\rarrow (0\to W\to A\to Z\to0)$ be
a morphism of short exact sequences in an exact category\/~$\sK$:
$$
 \xymatrix{
 0 \ar[r] & W' \ar[r] \ar[d] & A' \ar[r] \ar[d]
 & Z' \ar[r] \ar[d] & 0 \\
 0 \ar[r] & W \ar[r] & A \ar[r] & Z \ar[r] & 0
 }
$$ \par
\textup{(a)} If the morphism $W'\rarrow W$ is an admissible epimorphism,
then the induced morphism to the pullback in the rightmost square
$$
 A'\lrarrow A\sqcap_Z Z'
$$
is an admissible epimorphism. \par
\textup{(b)} If the morphism $Z'\rarrow Z$ is an isomorphism, then
the leftmost square is (both a pushout and) a pullback,
$W'=W\sqcap_A A'$.
\end{lem}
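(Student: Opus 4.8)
The plan is to derive both statements from standard properties of pullbacks and short exact sequences in exact categories (pullback-stability of deflations, preservation of kernels under pullback, the second isomorphism theorem, and the short five lemma), citing \cite{Bueh} and \cite[Appendix~A]{Partin} as needed. I would begin with part~(b), which is essentially formal. Writing $j\:W\rightarrowtail A$ and $q\:A\twoheadrightarrow Z$ for the bottom row and $f\:A'\rarrow A$ for the middle vertical morphism, I recall that $W=\ker(q)$, and that for any morphism $f$ the pullback of a kernel is the kernel of the composite: $W\sqcap_A A'=\ker(q\circ f\:A'\rarrow Z)$. Now $q\circ f$ equals the composite $A'\rarrow Z'\overset\sim\rarrow Z$ of the top deflation $A'\twoheadrightarrow Z'$ with the isomorphism $Z'\rarrow Z$, by commutativity of the rightmost square. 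Since precomposing a kernel with an isomorphism of its target does not change it, $\ker(q\circ f)=\ker(A'\rarrow Z')=W'$. Thus the canonical comparison morphism $W'\rarrow W\sqcap_A A'$ is an isomorphism, which is the pullback assertion. The parenthetical pushout assertion then follows because the leftmost square consists of two inflations with isomorphic cokernels ($Z'\simeq Z$) and is a pullback, hence bicartesian by~\cite[Proposition~A.2 and Corollary~A.3]{Partin}.

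For part~(a), I would first pull back the bottom short exact sequence $0\rarrow W\rarrow A\rarrow Z\rarrow0$ along $Z'\rarrow Z$. Pullback-stability of deflations together with preservation of the kernel give a short exact sequence $0\rarrow W\rarrow P\rarrow Z'\rarrow0$ with $P=A\sqcap_Z Z'$, and the universal property of the pullback produces the canonical morphism $u\:A'\rarrow P$ in question, fitting into a morphism of short exact sequences that is the identity on $Z'$ and the given admissible epimorphism $p\:W'\twoheadrightarrow W$ on the left (this compatibility is a routine check via the uniqueness clause in the universal property). Let $K=\ker(p)$. Composing the inflations $K\rightarrowtail W'\rightarrowtail A'$ yields an inflation $K\rightarrowtail A'$, and the second isomorphism theorem for exact categories (a standard consequence of the axioms; cf.~\cite{Bueh}) identifies its cokernel $A'/K$ as the middle term of a short exact sequence $0\rarrow W\rarrow A'/K\rarrow Z'\rarrow0$, since $W'/K\simeq W$ and $A'/W'\simeq Z'$.

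The key step, and the main obstacle, is to identify $u$ with the deflation $A'\twoheadrightarrow A'/K$. First I would check that $u$ annihilates $K$: the composite $K\rightarrowtail W'\rightarrowtail A'\overset u\rarrow P$ equals $K\rightarrowtail W'\overset p\rarrow W\rightarrowtail P$ by the commutativity just established, and this vanishes because $p\circ(K\rightarrowtail W')=0$. Hence $u$ factors as $\bar u\:A'/K\rarrow P$. Both the source and the target of $\bar u$ are extensions of $Z'$ by $W$, and $\bar u$ induces the identity on both $W$ and $Z'$; therefore $\bar u$ is an isomorphism by the short five lemma~\cite[Corollary~3.2]{Bueh}. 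Consequently $u$ is isomorphic to the cokernel map $A'\twoheadrightarrow A'/K$ of the inflation $K\rightarrowtail A'$, so it is an admissible epimorphism, as required. The only points demanding care are the verifications that all the comparison squares commute (handled by the uniqueness clauses in the universal properties) and that the invoked general diagram lemmas apply verbatim in a not-necessarily-abelian exact category.
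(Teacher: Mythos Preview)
Your argument is correct. For part~(b) you and the paper do essentially the same thing: the paper simply cites~\cite[Proposition~2.12]{Bueh}, while you unpack the content as $W\sqcap_A A'=\ker(qf)=\ker(A'\to Z')=W'$.

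For part~(a), however, your route genuinely differs from the paper's. The paper first forms $A''=A\sqcap_Z Z'$ and obtains the morphism of short exact sequences $(W'\to A'\to Z')\rarrow(W\to A''\to Z')$ with identity on~$Z'$, just as you do. But instead of quotienting by $K=\ker(W'\to W)$, the paper observes that in any such morphism of short exact sequences with an isomorphism on the right, the leftmost square is a \emph{pushout} (this is precisely the dual of part~(b)); hence $A''=A'\sqcup_{W'}W$. Then the dual of~\cite[Proposition~2.15]{Bueh} says the cobase change of the admissible epimorphism $W'\twoheadrightarrow W$ along the inflation $W'\rightarrowtail A'$ is again an admissible epimorphism with the same kernel, finishing the proof in one stroke. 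Your approach---Noether isomorphism to build $0\to W\to A'/K\to Z'\to0$, then the short five lemma to identify $\bar u\:A'/K\simeq P$---is more hands-on and self-contained, requiring only the most basic diagram lemmas; the paper's approach is shorter but relies on recognizing the bicartesian-square result and its dual.
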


\begin{proof}
 Part~(a): put $A''=A\sqcap_Z Z'$, and consider the pullback of
the short exact sequence $0\rarrow W\rarrow A\rarrow Z\rarrow0$ by
the morphism $Z'\rarrow Z$.
 Then we get a commutative triangle diagram of morphisms of short exact
sequences with identity morphisms on some of the leftmost and rightmost
terms:
$$
 \xymatrix{
 0 \ar[r] & W' \ar[r] \ar[d] & A' \ar[d] \ar[rd] \\
 0 \ar[r] & W \ar[r] \ar[rd] & A'' \ar[r] \ar[d]
 & Z' \ar[r] \ar[d] & 0 \\
 & & A \ar[r] & Z \ar[r] & 0
 }
$$
 Since both the short sequences in the upper half of the diagram are
exact, it follows that the upper leftmost square is a pushout (so
$A'\sqcup_{W'}W=A''=A\sqcap_ZZ'$).
 Finally, since $W'\rarrow W$ is an admissible epimorphism by
assumption, we can conclude that $A'\rarrow A''$ is also an admissible
epimorphism with the same kernel (by the dual assertion
to~\cite[Proposition~2.15]{Bueh}).

 Part~(b) is~\cite[Proposition~2.12(iv)$\Rightarrow$(iii)]{Bueh}.
\end{proof}

\subsection{Graded split Yoneda Ext classes}
\label{graded-split-Yoneda-Ext-subsecn}
 Now we pass to the following setting.
 Let $\sE$ and $\sK$ be two exact categories and $\Phi\:\sE\rarrow\sK$
be an exact functor.
 The aim of this Section~\ref{graded-split-Yoneda-Ext-subsecn} is to
prove the following proposition, which will be useful in
Section~\ref{finite-homological-dimension-subsecn}.

\begin{prop} \label{graded-split-Yoneda-extensions-prop}
 Assume that an exact functor\/ $\Phi\:\sE\rarrow\sK$ satisfies
the following condition:
\begin{itemize}
\item[($\spadesuit$)] For any two objects $B\in\sE$ and $K\in\sK$, and
any admissible epimorphism $K\rarrow\Phi(B)$ in\/~$\sK$, there exist
an admissible epimorphism $C\rarrow B$ in\/ $\sE$ and a morphism
$\Phi(C)\rarrow K$ in\/ $\sK$ making the triangle diagram
$\Phi(C)\rarrow K\rarrow\Phi(B)$ commutative in\/~$\sK$.
\end{itemize}
 Let $X$, $Y\in\sE$ be two objects, and let $A_\bu$ be
an $n$\+extension of $X$ by $Y$ in the exact category\/~$\sE$.
 Then the $n$\+extension\/ $\Phi(A_\bu)$ of the object\/ $\Phi(X)$ by
the object\/ $\Phi(Y)$ is split in the exact category\/ $\sK$ if and
only if there exists an $n$\+extension $C_\bu$ of $X$ by $Y$ in\/ $\sE$
together with an elementary equivalence of $n$\+extensions $C_\bu
\rarrow A_\bu$ in\/ $\sE$ such that the functor\/ $\Phi$ takes
the admissible monomorphism $Y\rarrow C_n$ in\/ $\sE$ to a split
monomorphism in\/~$\sK$.
\end{prop}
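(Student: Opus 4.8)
The plan is to prove the two implications separately; the ``if'' direction is immediate, and the ``only if'' direction carries all the weight. For the ``if'' direction I would apply $\Phi$ to the given elementary equivalence $C_\bu\rarrow A_\bu$. Since $\Phi$ is exact, it carries the exact complex $C_\bu$ to an $n$\+extension $\Phi(C_\bu)$ of $\Phi(X)$ by $\Phi(Y)$, and by functoriality ($\Phi(\id)=\id$) it carries $C_\bu\rarrow A_\bu$ to an elementary equivalence $\Phi(C_\bu)\rarrow\Phi(A_\bu)$. As $\Phi$ takes $Y\rarrow C_n$ to a split monomorphism by hypothesis, Lemma~\ref{split-n-extension-lemma} shows $\Phi(C_\bu)$ is split, and since $\Phi(C_\bu)$ and $\Phi(A_\bu)$ are equivalent $n$\+extensions, $\Phi(A_\bu)$ is split too.

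For the ``only if'' direction I would first pass to a convenient representative of the splitting inside $\sK$. Assuming $\Phi(A_\bu)$ is split, Lemma~\ref{split-n-extension-lemma} provides an elementary equivalence $D_\bu\rarrow\Phi(A_\bu)$ in $\sK$ with $\Phi(Y)\rarrow D_n$ split. Applying Lemma~\ref{epic-elementary-equivalence-lemma} together with the remark following it, I would replace $D_\bu$ by an $n$\+extension $E_\bu$ admitting an \emph{elementary epimorphism} $E_\bu\rarrow\Phi(A_\bu)$ while retaining that $\Phi(Y)\rarrow E_n$ is split. The gain is that the induced maps on cocycle objects $Z^E_i\rarrow\Phi(Z^A_i)$ are now admissible epimorphisms, which is exactly the shape needed to invoke condition~($\spadesuit$).

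The heart of the argument is an inductive lifting of $E_\bu\rarrow\Phi(A_\bu)$ to an elementary equivalence $C_\bu\rarrow A_\bu$ in $\sE$, one cocycle object at a time. For $0\le i\le n-1$ I would maintain an admissible epimorphism $\gamma_i\:Z^C_i\rarrow Z^A_i$ in $\sE$ and a morphism $\phi_i\:\Phi(Z^C_i)\rarrow Z^E_i$ in $\sK$ compatible with the projections to $\Phi(Z^A_i)$, starting from $Z^C_0=X$ with $\gamma_0=\phi_0=\id$. At stage $i$ I would form the pullback $\widetilde A_i=A_i\sqcap_{Z^A_{i-1}}Z^C_{i-1}$ in $\sE$ and the pullback $\widehat E_i=E_i\sqcap_{Z^E_{i-1}}\Phi(Z^C_{i-1})$ in $\sK$ along $\phi_{i-1}$; since $\Phi$ (being exact) preserves pullbacks of admissible epimorphisms, the compatibility of $\phi_{i-1}$ yields a canonical map $\widehat E_i\rarrow\Phi(\widetilde A_i)$, which is an admissible epimorphism by Lemma~\ref{epi-onto-pullback-lemma}(a), its kernel being that of $Z^E_i\rarrow\Phi(Z^A_i)$. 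Feeding this admissible epimorphism onto $\Phi(\widetilde A_i)$ into~($\spadesuit$) produces an admissible epimorphism $C_i\rarrow\widetilde A_i$ in $\sE$ together with a morphism $\Phi(C_i)\rarrow\widehat E_i$; restricting to kernels over $Z^C_{i-1}$ then defines $Z^C_i$, $\gamma_i$, and $\phi_i$, closing the induction. At the top step $i=n$ I would instead set $C_n=\widetilde A_n=A_n\sqcap_{Z^A_{n-1}}Z^C_{n-1}$, whose kernel is exactly $Y$, so that $C_\bu$ is a genuine $n$\+extension of $X$ by $Y$ with an elementary equivalence $C_\bu\rarrow A_\bu$; here the map $\widehat E_n\rarrow\Phi(\widetilde A_n)$ is, by the same Lemma~\ref{epi-onto-pullback-lemma}(a), an admissible epimorphism whose kernel is that of $Z^E_n\rarrow\Phi(Z^A_n)=\id_{\Phi(Y)}$, hence an isomorphism, and transporting the split monomorphism $\Phi(Y)\rarrow\widehat E_n$ (pulled back from the split $\Phi(Y)\rarrow E_n$) across it exhibits $\Phi(Y)\rarrow\Phi(C_n)$ as split.

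I expect the main obstacle to be the bookkeeping of the inductive step: checking that the maps $C_i\rarrow A_i$ assemble into an honest chain map (identity on $X$ and $Y$) and that the cocycle\+level data $\gamma_i$, $\phi_i$ carry the compatibility needed to build $\widehat E_{i+1}$ at the next stage. The genuinely delicate point is the closing\+up at $i=n$, where the top cocycle must be $Y$ on the nose while the splitting of $\Phi(Y)\rarrow E_n$ is transported faithfully to $\Phi(C_n)$; it is precisely here that the preliminary reduction to an elementary \emph{epimorphism} $E_\bu$ (rather than the mere equivalence $D_\bu$) and the identification $\widehat E_n\cong\Phi(\widetilde A_n)$ do the decisive work.
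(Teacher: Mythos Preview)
Your proposal is correct and follows essentially the same approach as the paper's proof: the same reduction via Lemmas~\ref{split-n-extension-lemma} and~\ref{epic-elementary-equivalence-lemma} to an elementary epimorphism in~$\sK$, the same inductive lifting through pullbacks $A_i\sqcap_{Z^A_{i-1}}Z^C_{i-1}$ and $E_i\sqcap_{Z^E_{i-1}}\Phi(Z^C_{i-1})$ using~($\spadesuit$), and the same final step taking $C_n$ to be the bare pullback with the splitting transported through the resulting isomorphism $\widehat E_n\simeq\Phi(C_n)$. Your organization around the cocycle data $(\gamma_i,\phi_i)$ is a slightly cleaner packaging of the same inductive invariant the paper tracks.
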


 Condition~($\spadesuit$) plays an important role in the theory of
the Yoneda Ext functor in exact categories;
see~\cite[Section~4.4]{Partin} and~\cite[condition~(i$'$) in
Section~0.1]{Pred}.
 In the special case of a fully faithful functor $\Phi$, this
condition is also a part of the definition of a self-resolving
subcategory in Section~\ref{self-resolving-subcategories-subsecn}.

\begin{proof}
 An exact functor $\Phi$ takes $n$\+extensions in $\sE$ to
$n$\+extensions in $\sK$, and it also takes elementary equivalences
of $n$\+extensions in $\sE$ to elementary equivalences of
$n$\+extensions in~$\sK$.
 This suffices to prove the ``if'' implication of the proposition.
 The nontrivial part is the ``only if''.

 We follow~\cite[second half of the proof of Theorem~1.6]{EP} with
some details added (as the exposition in~\cite{EP} is rather sketchy).
 So let $A_\bu$ be an $n$\+extension of $X$ by $Y$ in $\sE$ such that
the $n$\+extension $\Phi(A_\bu)$ in $\sK$ is split.
 By Lemma~\ref{split-n-extension-lemma}, there exists an elementary
$n$\+extension $K_\bu$ of $\Phi(X)$ by $\Phi(Y)$ in $\sK$ together with
an elementary equivalence of $n$\+extensions $K_\bu\rarrow\Phi(A_\bu)$
such that the admissible monomonorphism $\Phi(Y)\rarrow K_n$ is split.
 By Lemma~\ref{epic-elementary-equivalence-lemma}, we can assume that
$K_\bu\rarrow\Phi(A_\bu)$ is an elementary epiequivalence, i.~e.,
the induced morphisms $Z^K_i\rarrow\Phi(Z^A_i)$ are admissible
epimorphisms in~$\sK$ for all $1\le i\le n-1$.

 Applying condition~($\spadesuit$) to the admissible epimorphism
$K_1\rarrow\Phi(A_1)$ in $\sK$, we obtain an admissible epimorphism
$C_1\rarrow A_1$ in $\sE$ and a morphism $\Phi(C_1)\rarrow K_1$
in $\sK$ such that the triangle diagram
$\Phi(C_1)\rarrow K_1\rarrow\Phi(A_1)$ is commutative.
 It follows that the induced morphism $Z^C_1\rarrow Z^A_1$ is
an admissible epimorphism, too (with the same kernel as
the morphism $C_1\rarrow A_1$).
 We want to continue this procedure, producing objects $C_i\in\sE$
for all $2\le i\le n$ forming an $n$\+extension $C_\bu$ of $X$ by $Y$
in $\sE$, together with elementary equivalences of $n$\+extensions
$C_\bu\rarrow A_\bu$ in $\sE$ and $\Phi(C_\bu)\rarrow K_\bu$ in $\sK$
(forming a commutative diagram of elementary equivalences of
$n$\+extensions $\Phi(C_\bu)\rarrow K_\bu\rarrow\Phi(A_\bu)$ in~$\sK$).

 The next step of the construction can be formulated in the same
terms as in~\cite{EP}.
 In the commutative diagram with exact rows in~$\sK$
$$
 \xymatrix{
 0 \ar[r] & Z^K_2 \ar[r] \ar@{->>}[d] & K_2 \ar[r] \ar[d]
 & K_1 \ar[r] \ar[d] & \Phi(X) \ar[r] \ar@{=}[d] & 0 \\
 0 \ar[r] & \Phi(Z^A_2) \ar[r] & \Phi(A_2) \ar[r]
 & \Phi(A_1) \ar[r] & \Phi(X) \ar[r] & 0
 }
$$
the morphism to the pullback in the middle square,
$$
 K_2\lrarrow \Phi(A_2)\sqcap_{\Phi(A_1)}K_1,
$$
is an admissible epimorphism, as one can see by viewing
the diagram as a splice of two morphisms of short exact sequences
and using parts~(a) and~(b) of Lemma~\ref{epi-onto-pullback-lemma}.

 Consider the commutative diagram in~$\sK$
$$
 \xymatrix{
 & \Phi(C_1) \ar[d] \\
 K_2 \ar[r] \ar[d] & K_1 \ar[d] \\
 \Phi(A_2) \ar[r] & \Phi(A_1) 
 }
$$
 Any pullback of admissible epimorphism is an admissible epimorphism,
hence the morphism
$$
 K_2\sqcap_{K_1}\Phi(C_1) \lrarrow \Phi(A_2)\sqcap_{\Phi(A_1)}\Phi(C_1)
 \,=\, (\Phi(A_2)\sqcap_{\Phi(A_1)}K_1)\sqcap_{K_1}\Phi(C_1)
$$
is an admissible epimorphism.

 Applying condition~($\spadesuit$) to the object
$A_2\sqcap_{A_1}C_1\in\sE$ and the admissible epimorphism 
$K_2\sqcap_{K_1}\Phi(C_1)\rarrow \Phi(A_2\sqcap_{A_1}C_1)$ in $\sK$,
we obtain an admissible epimorphism $C_2\rarrow A_2\sqcap_{A_1}C_1$
in $\sE$ and a morphism $\Phi(C_2)\rarrow K_2\sqcap_{K_1}\Phi(C_1)$
in $\sK$ such that the triangle diagram $\Phi(C_2)\rarrow
K_2\sqcap_{K_1}\Phi(C_1)\rarrow\Phi(A_2\sqcap_{A_1}C_1)$ is
commutative in~$\sK$.
 Commutativity of the latter triangle trianslates into commutativity
of the triangle $\Phi(C_2)\rarrow K_2\rarrow\Phi(A_2)$ together
with commutativity of the two leftmost squares on the diagram
\begin{equation} \label{first-two-steps-diagram}
\begin{gathered}
 \xymatrix{
 \Phi(C_2) \ar[r] \ar[d] & \Phi(C_1) \ar[rd] \ar[d] \\
 K_2 \ar[r] \ar[d] & K_1 \ar[r] \ar[d] & \Phi(X) \ar[r] & 0 \\
 \Phi(A_2) \ar[r] & \Phi(A_1) \ar[ru] 
 }
\end{gathered}
\end{equation}
in $\sK$, where the outer square in the left-hand side is obtained by
applying $\Phi$ to a commutative square in~$\sE$.
 Furthermore, since $C_1\rarrow A_1$ and $C_2\rarrow A_2
\sqcap_{A_1}C_1$ are admissible epimorphisms and the sequence
$0\rarrow Z^A_2\rarrow A_2\rarrow A_1\rarrow X\rarrow0$ is exact,
it follows that the upper line of~\eqref{first-two-steps-diagram}
can be extended to an exact sequence $0\rarrow Z^C_2\rarrow C_2\rarrow
C_1\rarrow X\rarrow0$ in~$\sE$, and the induced morphism $Z^C_2
\rarrow Z^A_2$ is an admissible epimorphism.

 Let us step back and reinterpret our approach at this point, or
at least adjust the notation.
 Commutativity of the right-hand side of
the diagram~\eqref{first-two-steps-diagram} in $\sK$, with the same
object $\Phi(X)$ in the rightmost column in all the three rows,
together with  commutativity of the related outer triangle in $\sE$,
imply isomorphisms
$$
 A_2\sqcap_{A_1}C_1 = A_2\sqcap_{Z^A_1}Z^C_1 \quad\text{and}\quad
 \Phi(A_2)\sqcap_{\Phi(A_1)}K_1 = \Phi(A_2)\sqcap_{\Phi(Z^A_1)}Z^K_1.
$$
 So, instead of speaking about applying condition~($\spadesuit$) to
the admissible epimorphism $K_2\sqcap_{K_1}\Phi(C_1)\rarrow
\Phi(A_2\sqcap_{A_1}C_1)$, we could have said that we were applying it
to the admissible epimorphism $K_2\sqcap_{Z^K_1}\Phi(Z^C_1)\rarrow
\Phi(A_2\sqcap_{Z^A_1}Z^C_1)$.

 It is the latter point of view that we will carry over to the next
step.
 Indeed, one can observe that a morphism
$C_3\rarrow A_3\sqcap_{A_2}C_2$ in our construction \emph{cannot} be
an admissible epimorphism, generally speaking, because the composition
$A_3\sqcap_{A_2}C_2\rarrow C_2\rarrow C_1$ need not vanish.
 It is only the composition $A_3\sqcap_{A_2}C_2\rarrow C_2\rarrow C_1
\rarrow A_1$ that always vanishes.

 Let us spell out the construction of the object $C_{i+1}$ for
an arbitrary $1\le i\le n-2$.
 Suppose that an exact complex $0\rarrow Z^C_i\rarrow C_i\rarrow\dotsb
\rarrow C_1\rarrow X\rarrow0$ in $\sE$ together with a morphism of
complexes $(Z^C_i\to C_i\to\dotsb\to C_1\to X)\rarrow(Z^A_i\to A_i\to
\dotsb\to A_1\to X)$ acting by the identity morphism on the rightmost
terms~$X$ have been constructed already, together with a commutative
diagram in~$\sK$
\begin{equation} \label{first-i-steps-diagram}
\begin{gathered}
 \xymatrix{
 0 \ar[r] & \Phi(Z^C_i) \ar[r] \ar[d] & \Phi(C_i) \ar[r] \ar[d]
 & \dotsb \ar[r] & \Phi(C_1) \ar[rd] \ar[d] \\
 0 \ar[r] & Z^K_i \ar[r] \ar[d] & K_i \ar[r] \ar[d] & \dotsb \ar[r]
 & K_1 \ar[r] \ar[d] & \Phi(X) \ar[r] & 0 \\
 0 \ar[r] & \Phi(Z^A_i) \ar[r] & \Phi(A_i) \ar[r] & \dotsb \ar[r]
 & \Phi(A_1) \ar[ru] 
 }
\end{gathered}
\end{equation}

 Applying Lemma~\ref{epi-onto-pullback-lemma}(a) to the morphism of
short exact sequences in~$\sK$
$$
 \xymatrix{
 0 \ar[r] & Z^K_{i+1} \ar[r] \ar@{->>}[d] & K_{i+1} \ar[r] \ar[d]
 & Z^K_i \ar[r] \ar[d] & 0 \\
 0 \ar[r] & \Phi(Z^A_{i+1}) \ar[r] & \Phi(A_{i+1}) \ar[r]
 & \Phi(Z^A_i) \ar[r] & 0
 }
$$
we see that the morphism to the pullback in the rightmost square,
$$
 K_{i+1}\lrarrow\Phi(A_{i+1})\sqcap_{\Phi(Z^A_i)}Z^K_i,
$$
is an admissible epimorphism.

 Consider the commutative diagram in~$\sK$
$$
 \xymatrix{
 & \Phi(Z^C_i) \ar[d] \\
 K_{i+1} \ar[r] \ar[d] & Z^K_i \ar[d] \\
 \Phi(A_{i+1}) \ar[r] & \Phi(Z^A_i) 
 }
$$
 As above, we argue that any pullback of admissible epimorphism is
an admissible epimorphism, hence the morphism
$$
 K_{i+1}\sqcap_{Z^K_i}\Phi(Z^C_i) \lrarrow
 \Phi(A_{i+1})\sqcap_{\Phi(Z^A_i)}\Phi(Z^C_i)
 \,=\, (\Phi(A_{i+1})\sqcap_{\Phi(Z^A_i)}Z^K_i)
 \sqcap_{Z^K_i}\Phi(Z^C_i)
$$
is an admissible epimorphism.

 Applying condition~($\spadesuit$) to the object
 $A_{i+1}\sqcap_{Z^A_i}Z^C_i\in\sE$ and the admissible epimorphism
$K_{i+1}\sqcap_{Z^K_i}\Phi(Z^iC_i)\rarrow
\Phi(A_{i+1}\sqcap_{Z^A_i}Z^C_i)$ in $\sK$, we obtain an admissible
epimorphism
\begin{equation} \label{C-to-pullback-A-C-admissible-epi}
 C_{i+1}\lrarrow A_{i+1}\sqcap_{Z^A_i}Z^C_i
\end{equation}
in $\sE$ and a morphism
\begin{equation} \label{Phi-C-to-pullback-K-Phi-C-morphism}
 \Phi(C_{i+1})\lrarrow K_{i+1}\sqcap_{Z^K_i}\Phi(Z^C_i)
\end{equation}
in $\sK$ such that the triangle diagram
\begin{equation} \label{Phi-C-to-pullback-to-pullback-triangle}
 \Phi(C_{i+1})\lrarrow K_{i+1}\sqcap_{Z^K_i}\Phi(Z^C_i)
 \lrarrow\Phi(A_{i+1}\sqcap_{Z^A_i}Z^C_i)
\end{equation}
is commutative in~$\sK$.

 The morphism $A_{i+1}\sqcap_{Z^A_i}Z^C_i\rarrow Z^C_i$ is an admissible
epimorphism in $\sE$, because the morphism $A_{i+1}\rarrow Z^A_i$ is.
 Hence the composition $C_{i+1}\lrarrow A_{i+1}\sqcap_{Z^A_i}Z^C_i
\rarrow Z^C_i$ is an admissible epimorphism; denote its kernel
by $Z^C_{i+1}\in\sE$.
 Now the morphism~\eqref{C-to-pullback-A-C-admissible-epi} translates
into a commutative square, and passing to the kernels we obtain
a morphism of short exact sequences in~$\sE$
$$
 \xymatrix{
 0 \ar[r] & Z^C_{i+1} \ar[r] \ar[d] & C_{i+1} \ar[r] \ar[d]
 & Z^C_i \ar[r] \ar[d] & 0 \\
 0 \ar[r] & Z^A_{i+1} \ar[r] & A_{i+1} \ar[r]
 & Z^A_i \ar[r] & 0
 }
$$
 The morphism~\eqref{Phi-C-to-pullback-K-Phi-C-morphism} translates
into a commutative square as well, and passing to the kernels we obtain
a morphism of short exact sequences in~$\sK$
$$
 \xymatrix{
 0 \ar[r] & \Phi(Z^C_{i+1}) \ar[r] \ar[d] & \Phi(C_{i+1}) \ar[r] \ar[d]
 & \Phi(Z^C_i) \ar[r] \ar[d] & 0 \\
 0 \ar[r] & Z^K_{i+1} \ar[r] & K_{i+1} \ar[r]
 & Z^K_i \ar[r] & 0
 }
$$
 Finally, commutativity of the triangle
diagram~\eqref{Phi-C-to-pullback-to-pullback-triangle} translates into
commutativity of the triangle diagram of morphisms of short exact
sequences in~$\sK$
$$
 \xymatrix{
 0 \ar[r] & \Phi(Z^C_{i+1}) \ar[r] \ar[d] & \Phi(C_{i+1}) \ar[r] \ar[d]
 & \Phi(Z^C_i) \ar[r] \ar[d] & 0 \\
 0 \ar[r] & Z^K_{i+1} \ar[r] \ar[d] & K_{i+1} \ar[r] \ar[d]
 & Z^K_i \ar[r] \ar[d] & 0 \\
 0 \ar[r] & \Phi(Z^A_{i+1}) \ar[r] & \Phi(A_{i+1}) \ar[r]
 & \Phi(Z^A_i) \ar[r] & 0
 }
$$

 We have managed to extend the commutative triangle diagram of
morphisms of exact complexes~\eqref{first-i-steps-diagram} one step
further to the left.
 Proceeding in this way, we construct a diagram
like~\eqref{first-i-steps-diagram} for $i=n-1$, that is
$$
 \xymatrix{
 0 \ar[r] & \Phi(Z^C_{n-1}) \ar[r] \ar[d] & \Phi(C_{n-1}) \ar[r] \ar[d]
 & \dotsb \ar[r] & \Phi(C_1) \ar[rd] \ar[d] \\
 0 \ar[r] & Z^K_{n-1} \ar[r] \ar[d] & K_{n-1} \ar[r] \ar[d]
 & \dotsb \ar[r] & K_1 \ar[r] \ar[d] & \Phi(X) \ar[r] & 0 \\
 0 \ar[r] & \Phi(Z^A_{n-1}) \ar[r] & \Phi(A_{n-1}) \ar[r]
 & \dotsb \ar[r] & \Phi(A_1) \ar[ru] 
 }
$$

 Now we are coming close to an end of the construction, and
a different procedure is needed for $i=n$.
 The commutative diagram in~$\sK$
$$
 \xymatrix{
  & & K_n \ar[d] \ar[r] & Z^K_{n-1} \ar[d] \ar[r] & 0 \\
  0 \ar[r] & \Phi(Y) \ar[r] \ar[ru] & \Phi(A_n) \ar[r]
  & \Phi(Z^A_{n-1}) \ar[r] & 0
 }
$$
with short exact sequences $0\rarrow\Phi(Y)\rarrow K_n\rarrow
Z^K_{n-1}\rarrow0$ and $0\rarrow\Phi(Y)\rarrow\Phi(A_n)\rarrow
\Phi(Z^A_{n-1})\rarrow0$ implies that the square in
the right-hand side is a pullback.
 The object $C_n\in\sE$ is constructed by taking a pullback of
the short exact sequence $0\rarrow Y\rarrow A_n\rarrow Z^A_n\rarrow0$
by the morphism $Z^C_{n-1}\rarrow Z^A_{n-1}$, as on the diagram
$$
 \xymatrix{
  & & C_n \ar[d] \ar[r] & Z^C_{n-1} \ar[d] \ar[r] & 0 \\
  0 \ar[r] & Y \ar[r] \ar[ru] & A_n \ar[r]
  & Z^A_{n-1} \ar[r] & 0
 }
$$
 Then functoriality of the pullbacks implies commutativity of
the diagram in~$\sK$
$$
 \xymatrix{
 & & \Phi(C_n) \ar[r] \ar[d] & \Phi(Z^C_{n-1}) \ar[d] \\
 0 \ar[r] & \Phi(Y) \ar[r] \ar[ru] \ar[rd]
 & K_n \ar[r] \ar[d] & Z^K_{n-1} \ar[d] \\
 & & \Phi(A_n) \ar[r] & \Phi(Z^A_{n-1})
 }
$$
finishing the construction of the $n$\+extension $C_\bu$ and
the elementary equivalences of $n$\+extensions $C_\bu\rarrow A_\bu$
and $\Phi(C_\bu)\rarrow K_\bu$.

 It remains to recall that the $n$\+extension $K_\bu$ was chosen
at the beginning of this proof so that the admissible monomorphism
$\Phi(Y)\rarrow K_n$ were split.
 In other words, there was an elementary equivalence acting from
$K_\bu$ to the zero $n$\+extension of $\Phi(X)$ by $\Phi(Y)$.
 Hence there is also an elementary equivalence acting from
the $n$\+extension $\Phi(C_\bu)$ to the zero $n$\+extension.
 This means that the admissible monomorphism $\Phi(Y)\rarrow\Phi(C_n)$
is also split, as desired.
\end{proof}

\subsection{Finite homological dimension theorem}
\label{finite-homological-dimension-subsecn}
 The following proposition, formulated in the terminology and notation
of the paper~\cite{BvdB} (see
Section~\ref{strongly-generated-thick-subsecn}),
is the main result of Section~\ref{finite-homol-dim-secn}.

\begin{prop} \label{finite-homol-dim-abs-acycl-strongly-generated-prop}
 Let $(\bE,\sK)$ be an exact DG\+pair in which the exact category\/
$\sK$ has finite homological dimension.
 Then the full triangulated subcategory of absolutely acyclic
objects\/ $\Ac^\abs(\bE)\subset\sH^0(\bE)$ is strongly generated by
the totalizations of short exact sequences in\/ $\sZ^0(\bE)$.
 More precisely, if the homological dimension of\/ $\sK$ does not
exceed~$n$, then\/ $\Ac^\abs(\bE)=\langle\Ac^0(\bE)\rangle_n$.
\end{prop}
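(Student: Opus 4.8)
The plan is to establish the two inclusions $\langle\Ac^0(\bE)\rangle_n\subseteq\Ac^\abs(\bE)$ and $\Ac^\abs(\bE)\subseteq\langle\Ac^0(\bE)\rangle_n$ separately. The first is immediate: $\Ac^0(\bE)$ is closed under the shifts and under finite direct sums (a shift or direct sum of admissible short exact sequences is again one, and its totalization is the shift or direct sum of the totalizations), so the notation $\langle\Ac^0(\bE)\rangle_n$ is legitimate, and since $\Ac^\abs(\bE)$ is a thick subcategory containing $\Ac^0(\bE)$ it contains $\langle\Ac^0(\bE)\rangle_n$. For the reverse inclusion I would first reduce to a statement about totalizations. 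Since $\Ac^\abs(\bE)=\bigcup_{k\ge1}\langle\Ac^0(\bE)\rangle_k$ and $\langle\Ac^0(\bE)\rangle_n$ is closed under direct summands, Proposition~\ref{abs-acyclic-as-totalizations} shows it suffices to prove the following \emph{Core Claim}: for every finite exact complex $Y_\bu$ in $\sZ^0(\bE)$ one has $\Tot(Y_\bu)\in\langle\Ac^0(\bE)\rangle_n$. Indeed, any $X\in\Ac^\abs(\bE)$ is then a direct summand of some $\Tot(Y_\bu)\in\langle\Ac^0(\bE)\rangle_n$, hence lies in $\langle\Ac^0(\bE)\rangle_n$.

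The Core Claim I would prove by induction on the length of $Y_\bu$, the base case (length $\le n$) being Lemma~\ref{totalization-of-finite-absolutely-acyclic} together with $\langle\Ac^0(\bE)\rangle_k\subseteq\langle\Ac^0(\bE)\rangle_n$ for $k\le n$. The inductive step rests on a \emph{peeling} mechanism whose key input is the observation that an admissible short exact sequence $0\rarrow A\rarrow B\rarrow C\rarrow0$ in $\sZ^0(\bE)$ is split in the additive category $\bE^0$ if and only if its image $0\rarrow\Phi_\bE(A)\rarrow\Phi_\bE(B)\rarrow\Phi_\bE(C)\rarrow0$ is split in $\sK$; this holds because $\widetilde\Phi_\bE\:\bE^0\rarrow\sZ^0(\bE^\bec)$ of Lemma~\ref{Phi-Psi-extended-to-nonclosed-morphisms} is fully faithful and restricts to $\Phi_\bE$ on $\sZ^0(\bE)$. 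Now if the top short exact sequence $0\rarrow Y_m\rarrow Y_{m-1}\rarrow D\rarrow0$ of $Y_\bu$ is split in $\bE^0$, I may replace the two top terms by $D=\coker(Y_m\rarrow Y_{m-1})$: the brutal quotient map onto the shortened complex $\widetilde Y_\bu$ has kernel the contractible complex $(Y_m\overset{=}\rarrow Y_m)$, and since the resulting short exact sequence of complexes is \emph{degreewise} split in $\bE^0$, totalizing it yields a distinguished triangle $\cone(\id_{Y_m})\rarrow\Tot(Y_\bu)\rarrow\Tot(\widetilde Y_\bu)\rarrow$ in $\sH^0(\bE)$ whose first vertex is contractible. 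Hence $\Tot(Y_\bu)\simeq\Tot(\widetilde Y_\bu)$ and the length has dropped by one.

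The top short exact sequence is not split in general, and this is exactly where finite homological dimension enters. When $m>n$, the $n{+}1$ topmost arrows of $Y_\bu$ form an $(n{+}1)$-extension of a suitable cocycle object $Z$ by $Y_m$; applying the exact functor $\Phi_\bE$ and using $\Ext^{n+1}_\sK(\Phi_\bE Z,\Phi_\bE Y_m)=0$ (finite homological dimension of $\sK$), its image is a split Yoneda extension. I would then invoke Proposition~\ref{graded-split-Yoneda-extensions-prop} for the functor $\Phi_\bE^\sK\:\sZ^0(\bE)\rarrow\sK$ to produce an elementary-equivalent $(n{+}1)$-extension $C_\bu$ of $Z$ by $Y_m$ whose top admissible monomorphism $Y_m\rarrow C_{n+1}$ has split $\Phi_\bE$-image, hence, by the observation above, is split in $\bE^0$. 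Splicing $C_\bu$ with the unchanged lower part of $Y_\bu$ and peeling off $Y_m$ as in the previous paragraph produces a totalization of length $m-1$.

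Two steps will require the bulk of the care, and I expect them to be the main obstacle. First, Proposition~\ref{graded-split-Yoneda-extensions-prop} is only applicable once condition~($\spadesuit$) is verified for $\Phi_\bE^\sK$; I would check this using the adjunctions $\Psi^+_\bE\dashv\Phi_\bE\dashv\Psi^-_\bE$, the admissible-epimorphism statement of Lemma~\ref{mono-epi-exact-DG-adjunction}, and the natural short exact sequences relating $\Phi_\bE\Psi^\pm_\bE$ to the identity, but this verification is delicate and is where the exact DG-structure must be exploited. Second, and harder, I must reconcile the elementary equivalence $C_\bu\rarrow(\text{top of }Y_\bu)$ with the passage to totalizations: an elementary equivalence is merely a quasi-isomorphism and need not induce a homotopy equivalence of totalizations in the second-kind setting, so the induction must be organized so that $\Tot(Y_\bu)$ is controlled by the totalization of the split model $C_\bu$ only up to objects of $\Ac^0(\bE)$, which are then absorbed using the associativity of $*$ from~\cite[Lemme~1.3.10]{BBD} and the closure of $\langle\Ac^0(\bE)\rangle_n$ under direct summands. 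Keeping the number of $*$-factors pinned at exactly $n$ throughout the induction, rather than letting it accumulate, is the heart of the argument.
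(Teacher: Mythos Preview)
Your reduction via Proposition~\ref{abs-acyclic-as-totalizations} to the Core Claim is valid, and you have correctly identified all the ingredients: the peeling mechanism, condition~($\spadesuit$) for $\Phi_\bE^\sK$ (which the paper verifies as Lemma~\ref{exact-right-adjoint-implies-self-resolving-lemma}, exactly via the exact right adjoint $\Psi^-_\bE$ as you sketch), and Proposition~\ref{graded-split-Yoneda-extensions-prop}. But the inductive step, as you organize it, has a genuine gap that you yourself flag and do not close.

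The problem is that your inductive hypothesis---membership of $\Tot(Y_\bu)$ in $\langle\Ac^0(\bE)\rangle_n$---is too rigid to survive the elementary equivalence. After you replace the top $(n{+}1)$-extension by the split model $C_\bu$ and splice with the unchanged bottom, the map $\widetilde Y_\bu\rarrow Y_\bu$ induces a map on totalizations whose cone is the totalization of an exact complex of length roughly $n{+}3$, hence lies only in $\Ac^0(\bE)^{*(n+1)}$. So each shortening step introduces an error of $*$-depth $n{+}1$, and the factors accumulate: you end up showing $\Tot(Y_\bu)\in\langle\Ac^0(\bE)\rangle_{N(m)}$ with $N(m)$ growing in the length~$m$, not pinned at~$n$. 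Your proposed absorption into $\Ac^0(\bE)$ via associativity of~$*$ does not help, because the error object is not in $\Ac^0(\bE)$ but only in a high iterated~$*$.

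The paper's resolution is to change the inductive invariant. Rather than tracking $\Tot(Y_\bu)$ itself, one tracks the weaker datum ``the composition $\Tot(Y_\bu)\rarrow P\rarrow X$ vanishes in $\sH^0(\bE)$'', working with the exact complex $0\rarrow Y_{m+1}\rarrow\dotsb\rarrow Y_0\rarrow P\rarrow0$ resolving a \emph{fixed} object~$P$ (the ``more general claim'' in the proof of Proposition~\ref{abs-acyclic-as-totalizations}). One applies Proposition~\ref{graded-split-Yoneda-extensions-prop} to the \emph{entire} $(m{+}1)$-extension of $P$ by $Y_{m+1}$, not just the top piece; the resulting elementary equivalence is the identity on~$P$, so the induced map of totalizations commutes with the natural maps to~$P$, and vanishing of the composition to $X$ is inherited by the new complex. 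Then the $\bE^0$-split top peels off as you describe, shortening the complex by one while preserving the vanishing condition. At the end one specializes to $P=X$ and $f=\id_X$, and the distinguished triangle argument exhibits $X$ as a direct summand of $\Tot(Y_n\to\dotsb\to Y_0\to X)\in\Ac^0(\bE)^{*n}$. The point is that a vanishing condition on a morphism is a one-sided constraint that passes along maps of complexes for free, whereas the homotopy type of the totalization does not.
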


 The next theorem, whose idea goes back to~\cite[Theorem~1.6]{EP}, 
follows easily.

\begin{thm} \label{finite-homological-dimension-main-theorem}
 Let $(\bE,\sK)$ be an exact DG\+pair in which the exact category\/
$\sK$ has finite homological dimension. \par
\textup{(a)} Assume that the exact DG\+category $\bE$ has exact
coproducts.
 Then any coacyclic object in\/ $\bE$ is absolutely acyclic,
$\Ac^\co(\bE)=\Ac^\abs(\bE)$, and therefore the coderived category
of\/ $\bE$ coincides with the absolute derived category,
$$ 
 \sD^\abs(\bE)=\sD^\co(\bE).
$$ \par
\textup{(b)} Assume that the exact DG\+category $\bE$ has exact
products.
 Then any contraacyclic object in\/ $\bE$ is absolutely acyclic,
$\Ac^\ctr(\bE)=\Ac^\abs(\bE)$, and therefore the contraderived category
of\/ $\bE$ coincides with the absolute derived category,
$$ 
 \sD^\abs(\bE)=\sD^\ctr(\bE).
$$
\end{thm}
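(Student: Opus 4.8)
The plan is to deduce everything from the strong-generation statement of Proposition~\ref{finite-homol-dim-abs-acycl-strongly-generated-prop}, so that the theorem reduces to a short formal argument about triangulated subcategories of $\sH^0(\bE)$. I will prove part~(a); part~(b) is entirely dual, with products in place of coproducts, $\Ac^\ctr$ in place of $\Ac^\co$, and Lemma~\ref{strongly-generated-closed-under-co-products}(b) in place of~(a). Since $\sD^\abs(\bE)=\sH^0(\bE)/\Ac^\abs(\bE)$ and $\sD^\co(\bE)=\sH^0(\bE)/\Ac^\co(\bE)$ are Verdier quotients of one and the same homotopy category, the asserted equivalence $\sD^\abs(\bE)=\sD^\co(\bE)$ follows immediately once the equality of thick subcategories $\Ac^\co(\bE)=\Ac^\abs(\bE)$ is established, so this equality is the only thing to prove.

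One inclusion costs nothing and does not use finite homological dimension. The subcategory $\Ac^\co(\bE)\subset\sH^0(\bE)$ is closed under coproducts by its very definition, hence it is thick, by the observation recorded after the definitions of derived categories of the second kind (a triangulated subcategory closed under countable coproducts in a triangulated category with countable coproducts is thick, via~\cite[Proposition~3.2]{BN} and~\cite[Criterion~1.3]{Neem}). As $\Ac^\co(\bE)$ is thick and contains $\Ac^0(\bE)$ by construction, while $\Ac^\abs(\bE)$ is by definition the thick subcategory generated by $\Ac^0(\bE)$, I obtain $\Ac^\abs(\bE)\subset\Ac^\co(\bE)$.

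For the reverse inclusion I would first invoke Proposition~\ref{finite-homol-dim-abs-acycl-strongly-generated-prop}: writing $n$ for the homological dimension of $\sK$, this gives $\Ac^\abs(\bE)=\langle\Ac^0(\bE)\rangle_n$. The key verification is then that the class $\Ac^0(\bE)$ is closed under coproducts in $\sH^0(\bE)$. This rests on two points. First, the coproduct totalization is assembled from coproducts, shifts and twists in $\bE$, all of which commute with coproducts, so that a coproduct of totalizations of short exact sequences is the totalization of the coproduct of those sequences. Second, the coproduct of a family of short exact sequences in $\sZ^0(\bE)$ is again short exact, which is precisely the hypothesis that $\bE$ has exact coproducts (one of the equivalent conditions of Lemma~\ref{exact-co-products-lemma}(a)). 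Granting that $\Ac^0(\bE)$ is coproduct-closed, Lemma~\ref{strongly-generated-closed-under-co-products}(a) shows that $\langle\Ac^0(\bE)\rangle_n$ is closed under coproducts as well. Thus $\Ac^\abs(\bE)$ is a thick subcategory that is closed under coproducts and contains $\Ac^0(\bE)$; since $\Ac^\co(\bE)$ is the minimal triangulated subcategory with these latter two properties, $\Ac^\co(\bE)\subset\Ac^\abs(\bE)$. Combining the two inclusions yields $\Ac^\co(\bE)=\Ac^\abs(\bE)$, and hence the theorem.

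The main obstacle has been deliberately packaged into Proposition~\ref{finite-homol-dim-abs-acycl-strongly-generated-prop}, which I am taking as given; all the genuine difficulty, and the essential use of finite homological dimension through the \emph{uniform} bound $n$ on the number of extensions, resides there. At the level of the theorem itself, the only load-bearing step is the coproduct-closure of $\Ac^0(\bE)$, and the finiteness of $n$ is exactly what makes it usable: if $\Ac^\abs(\bE)$ were only the closure of $\Ac^0(\bE)$ under arbitrarily many extensions, then Lemma~\ref{strongly-generated-closed-under-co-products}(a) would not apply and coproduct-closure of $\Ac^\abs(\bE)$ could fail. The reduction to the finite level $\langle\Ac^0(\bE)\rangle_n$ provided by the Proposition is therefore precisely the ingredient that allows the formal argument above to close.
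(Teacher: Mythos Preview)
Your proof is correct and follows essentially the same approach as the paper's: invoke Proposition~\ref{finite-homol-dim-abs-acycl-strongly-generated-prop} to get $\Ac^\abs(\bE)=\langle\Ac^0(\bE)\rangle_n$, observe that $\Ac^0(\bE)$ is closed under coproducts (using exactness of coproducts in~$\bE$), and apply Lemma~\ref{strongly-generated-closed-under-co-products}(a) to conclude that $\Ac^\abs(\bE)$ is coproduct-closed and hence contains $\Ac^\co(\bE)$. Your exposition is more detailed than the paper's (which omits the easy inclusion and the explicit verification of coproduct-closure of $\Ac^0(\bE)$), but the logical skeleton is identical.
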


\begin{proof}
 One observes that, for any exact DG\+category $\bE$ with exact
coproducts, the class $\Ac^0(\bE)$ of all totalizations of short
exact sequences in $\sZ^0(\bE)$ is closed under coproducts
in $\sH^0(\bE)$.
 Dually, for any exact DG\+category $\bE$ with exact products,
the class $\Ac^0(\bE)$ of all totalizations of short exact sequences
in $\sZ^0(\bE)$ is closed under products in $\sH^0(\bE)$.
 Now, if the exact category $\sK$ has finite homological dimension~$n$,
then $\Ac^\abs(\bE)=\langle\Ac^0(\bE)\rangle_n$ according to
Proposition~\ref{finite-homol-dim-abs-acycl-strongly-generated-prop}.
 Hence, by Lemma~\ref{strongly-generated-closed-under-co-products},
the thick subcategory of all absolutely acyclic objects $\Ac^\abs(\bE)$
is closed under coproducts in $\sH^0(\bE)$ under the assumptions of
part~(a), and it is closed under products in $\sH^0(\bE)$
under the assumptions of part~(b).
\end{proof}

 Before proving
Proposition~\ref{finite-homol-dim-abs-acycl-strongly-generated-prop},
let us collect a couple of lemmas.
 The following lemma is a more precise version of
Lemma~\ref{totalization-of-finite-absolutely-acyclic}.

\begin{lem} \label{totalization-of-finite-length-n-has-dimension-n}
 Let\/ $\bE$ be an exact DG\+category, and let\/ $0\rarrow Y_{n+1}
\rarrow Y_n\rarrow\dotsb\rarrow Y_1\rarrow Y_0\rarrow0$ be
a finite exact complex with $n$~middle terms in the exact category\/
$\sZ^0(\bE)$.
 Then the totalization\/ $\Tot(Y_\bu)$ belongs to the class\/
$\Ac^0(\bE)^{*n}\subset\sH^0(\bE)$.
\end{lem}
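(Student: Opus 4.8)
The plan is to argue by induction on $n$, peeling off a single short exact sequence at each step by means of the octahedral axiom in the triangulated category $\sH^0(\bE)$. The base case $n=1$ is immediate: a finite exact complex $0\rarrow Y_2\rarrow Y_1\rarrow Y_0\rarrow0$ with one middle term is itself a short exact sequence in $\sZ^0(\bE)$, so its totalization lies in $\Ac^0(\bE)=\Ac^0(\bE)^{*1}$ by the very definition of $\Ac^0(\bE)$. Observe also that $\Ac^0(\bE)$ is preserved by the shift functors $[1]$ and $[-1]$ (the exact structure on $\sZ^0(\bE)$ is, being DG\+compatible), and hence so is each class $\Ac^0(\bE)^{*k}$; this will let me disregard overall degree shifts throughout the argument.

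For the inductive step, denote the differentials of $Y_\bu$ by $d_i\colon Y_i\rarrow Y_{i-1}$ and splice $Y_\bu$ into admissible short exact sequences $0\rarrow B_i\rarrow Y_i\rarrow B_{i-1}\rarrow0$ in $\sZ^0(\bE)$ for $1\le i\le n$, with $B_n=Y_{n+1}$ and $B_0=Y_0$. The bottom one, $S_1\colon 0\rarrow B_1\xrightarrow{\iota}Y_1\xrightarrow{d_1}Y_0\rarrow0$, has totalization $T_1=\Tot(S_1)\in\Ac^0(\bE)$. The shortened complex $\tilde Y_\bu\colon 0\rarrow Y_{n+1}\rarrow\dotsb\rarrow Y_2\xrightarrow{\bar d_2}B_1\rarrow0$, where $\bar d_2$ is the corestriction of $d_2$ onto $B_1=\ker(d_1)$, is exact with $n-1$ middle terms, so by the induction hypothesis $\Tot(\tilde Y_\bu)\in\Ac^0(\bE)^{*(n-1)}$.

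The brutal truncation of $Y_\bu$ at the bottom two terms is a termwise split short exact sequence of complexes $0\rarrow (Y_1\to Y_0)\rarrow Y_\bu\rarrow (Y_{n+1}\to\dotsb\to Y_2)\rarrow0$, and therefore yields a distinguished triangle in $\sH^0(\bE)$ whose connecting morphism $\delta$ is induced by the single crossing differential $d_2$; moreover $\Tot(Y_\bu)$ is identified with $\cone(\delta)$ up to shift. Using the factorization $d_2=\iota\bar d_2$, the morphism $\delta$ factors in $\sH^0(\bE)$ as
\[
 \Tot(Y_{n+1}\to\dotsb\to Y_2)\xrightarrow{\ \mu\ }B_1[\,\cdot\,]
 \xrightarrow{\ \tilde\iota\ }\Tot(Y_1\to Y_0),
\]
where $\mu$ and $\tilde\iota$ are the maps induced by $\bar d_2$ and $\iota$. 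Applying the octahedral axiom to this composable pair, and noting that $\cone(\tilde\iota)=T_1$ and $\cone(\mu)=\Tot(\tilde Y_\bu)$ (each by one further brutal truncation, of $S_1$ respectively $\tilde Y_\bu$), I obtain a distinguished triangle $\Tot(\tilde Y_\bu)\rarrow\Tot(Y_\bu)\rarrow T_1\rarrow\Tot(\tilde Y_\bu)[1]$. Hence $\Tot(Y_\bu)\in\Tot(\tilde Y_\bu)*T_1\subseteq\Ac^0(\bE)^{*(n-1)}*\Ac^0(\bE)=\Ac^0(\bE)^{*n}$, where associativity of $*$ is \cite[Lemme~1.3.10]{BBD}; this closes the induction.

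The main obstacle is the bookkeeping around the connecting morphism. I must verify that the connecting map of the brutal\+truncation triangle is (a shift of) the morphism induced by $d_2$, and that the formation of totalizations is functorial enough that this induced map literally factors as $\tilde\iota\circ\mu$ through the totalization of the single object $B_1$, so that the three cones occurring in the octahedron are exactly $\Tot(\tilde Y_\bu)$, $\Tot(Y_\bu)$, and $T_1$ up to one common shift. All of this is routine manipulation of totalizations of bounded complexes and of termwise split sequences, in the spirit of the proof of Lemma~\ref{totalization-of-finite-absolutely-acyclic}, but it requires careful tracking of the degree shifts introduced by the $\Tot$ construction; the shift\+invariance of $\Ac^0(\bE)^{*n}$ noted at the outset is what ultimately absorbs these.
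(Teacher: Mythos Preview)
Your proposal is correct and takes essentially the same approach as the paper: the paper's proof simply refers back to Lemma~\ref{totalization-of-finite-absolutely-acyclic}, whose argument is that the totalization of the exact complex is homotopy equivalent to an iterated cone of the totalizations of the $n$ spliced short exact sequences. Your induction together with the octahedral axiom spells out exactly this iterated-cone description one step at a time.
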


\begin{proof}
 See the proof of Lemma~\ref{totalization-of-finite-absolutely-acyclic}.
\end{proof}

 The next lemma establishes applicability of
Proposition~\ref{graded-split-Yoneda-extensions-prop} in our
context.

\begin{lem} \label{exact-right-adjoint-implies-self-resolving-lemma}
 Let\/ $\sE$ and\/ $\sK$ be exact categories, and let\/
$\Phi\:\sE\rarrow\sK$ be an additive functor.
 Assume that the functor\/ $\Phi$ has an exact right adjoint functor\/
$\Psi^-\:\sK\rarrow\sE$.
 Let $B\in\sE$ and $K\in\sK$ be two objects, and let $K\rarrow
\Phi(B)$ be an admissible epimorphism in\/~$\sK$.
 Then there exist an admissible epimorphism $C\rarrow B$ in\/ $\sE$
and a morphism\/ $\Phi(C)\rarrow K$ in\/ $\sK$ making the triangle
diagram\/ $\Phi(C)\rarrow K\rarrow\Phi(B)$ commutative in\/~$\sK$.
 In other words, condition~\textup{($\spadesuit$)} is satisfied.
\end{lem}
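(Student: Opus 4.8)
The plan is to build $C$ as a pullback and to obtain the morphism $\Phi(C)\rarrow K$ by transposition across the adjunction. Write $\eta\:\Id_\sE\rarrow\Psi^-\Phi$ and $\epsilon\:\Phi\Psi^-\rarrow\Id_\sK$ for the unit and counit of the adjunction $\Phi\dashv\Psi^-$, and denote the given admissible epimorphism by $p\:K\rarrow\Phi(B)$. The whole argument is essentially formal; the exactness of $\Psi^-$ is used only once, to guarantee that a certain morphism remains an admissible epimorphism.

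First I would apply $\Psi^-$ to~$p$. Since $\Psi^-$ is exact by hypothesis, the morphism $\Psi^-(p)\:\Psi^-(K)\rarrow\Psi^-\Phi(B)$ is an admissible epimorphism in~$\sE$. Next I would form the pullback of $\Psi^-(p)$ along the unit morphism $\eta_B\:B\rarrow\Psi^-\Phi(B)$, setting $C=B\sqcap_{\Psi^-\Phi(B)}\Psi^-(K)$, with projections $q\:C\rarrow B$ and $r\:C\rarrow\Psi^-(K)$, so that $\Psi^-(p)\circ r=\eta_B\circ q$. Because the pullback of an admissible epimorphism along an arbitrary morphism is again an admissible epimorphism in any exact category, the projection $q\:C\rarrow B$ is an admissible epimorphism; this is the map required in~($\spadesuit$).

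It then remains to define the edge $\Phi(C)\rarrow K$ and to verify the triangle. I would take the morphism $\tilde r\:\Phi(C)\rarrow K$ adjoint to $r$, namely $\tilde r=\epsilon_K\circ\Phi(r)$, and check that $p\circ\tilde r=\Phi(q)$, where $\Phi(q)$ is the third edge of the triangle $\Phi(C)\rarrow K\rarrow\Phi(B)$. This is a short diagram chase combining naturality of the counit applied to~$p$, the pullback relation, and the triangle identity $\epsilon_{\Phi(B)}\circ\Phi(\eta_B)=\id_{\Phi(B)}$:
\[
 p\circ\tilde r=p\circ\epsilon_K\circ\Phi(r)
 =\epsilon_{\Phi(B)}\circ\Phi\Psi^-(p)\circ\Phi(r)
 =\epsilon_{\Phi(B)}\circ\Phi(\eta_B\circ q)
 =\Phi(q).
\]

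There is no genuine obstacle here. The only points demanding any care are bookkeeping of the adjunction identities in the final computation and confirming that the transpose of the pullback projection $r$ is indeed the correct candidate for the morphism $\Phi(C)\rarrow K$; once the exactness of $\Psi^-$ and the base-change stability of admissible epimorphisms are in place, the statement — and hence the applicability of Proposition~\ref{graded-split-Yoneda-extensions-prop} in the setting of an exact DG\+pair, where $\Phi_\bE$ has the exact right adjoint $\Psi^-_\bE$ by Lemma~\ref{G-functors-in-DG-categories} and the definition of an exact DG\+category — follows immediately.
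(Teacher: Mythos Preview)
Your proof is correct and follows essentially the same approach as the paper: apply the exact functor $\Psi^-$ to the admissible epimorphism, pull back along the unit $\eta_B$ to produce $C\rarrow B$, and then transpose the other pullback projection across the adjunction to obtain $\Phi(C)\rarrow K$, verifying the triangle via naturality of the counit and the triangle identity. The paper presents the same construction diagrammatically; your computation $p\circ\tilde r=\Phi(q)$ is exactly the content of the paper's two commutative squares.
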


\begin{proof}
 This is a generalization of~\cite[Lemma~1.6.H]{EP}.
 Applying the exact functor $\Psi^-$ to the admissible epimorphism
$K\rarrow\Phi(B)$ in $\sK$, we obtain an admissible epimorphism
$\Psi^-(K)\rarrow\Psi^-\Phi(B)$ in~$\sE$.
 Consider the adjunction morphism $B\rarrow\Psi^-\Phi(B)$ in $\sE$,
and take the pullback, producing a commutative square diagram in $\sE$
with an admissible epimorphism $C\rarrow B$:
$$
 \xymatrix{
  C \ar[r] \ar@{->>}[d] & \Psi^-(K) \ar@{->>}[d] \\
  B \ar[r] & \Psi^-\Phi(B)
 }
$$
 Applying the functor $\Phi$, we obtain a commutative square in
the left-hand side of the diagram in~$\sK$
$$
 \xymatrix{
  \Phi(C) \ar[r] \ar[d] & \Phi\Psi^-(K) \ar[d] \ar[r] & K \ar[d] \\
  \Phi(B) \ar[r] & \Phi\Psi^-\Phi(B) \ar[r] & \Phi(B)
 }
$$
 Here the horizontal arrows in the right-hand side are the adjunction
morphisms, the square in the right-hand side is commutative because
the adjunction counit is a natural transformation, and the composition
in the lower line is the identity morphism by the general property of
adjunctions.
 It follows that the triangle $\Phi(C)\rarrow K\rarrow\Phi(B)$ is
commutative in~$\sK$.
\end{proof}

\begin{proof}[Proof of
Proposition~\ref{finite-homol-dim-abs-acycl-strongly-generated-prop}]
 Notice first of all that if the exact category $\sK$ has either
enough projective or enough injective objects, then the assertions of
the proposition follow from the proof of
Theorem~\ref{inj-proj-resolutions-finite-homol-dim-theorem}
(with Remark~\ref{exact-DG-pairs-work-in-proj-inj-section-remark}).
 In fact, one obtains a stronger result that any object of
$\Ac^\abs(\bE)$ is homotopy equivalent to the totalization of
an exact complex $0\rarrow Y_{n+1}\rarrow Y_{n-1}\rarrow\dotsb\rarrow
Y_1\rarrow Y_0\rarrow0$.
 By Lemma~\ref{totalization-of-finite-length-n-has-dimension-n}, it
follows that $\Ac^\abs(\bE)=\Ac^0(\bE)^{*n}$ in this case.

 In the general case, we continue to follow~\cite[proof of
Theorem~1.6]{EP} and proceed within the context of the proof of
Proposition~\ref{abs-acyclic-as-totalizations}.
 Let $X\in\Ac^\abs(\bE)$ and $P\in\sH^0(\bE)$ be two objects, and
let $f\:P\rarrow X$ be a morphism in $\sH^0(\bE)$.
 As it was shown in the proof of
Proposition~\ref{abs-acyclic-as-totalizations}, there exists
an exact complex $0\rarrow Y'_{m+1}\rarrow Y'_m\rarrow\dotsb\rarrow
Y'_0\rarrow P\rarrow0$ in $\sZ^0(\bE)$ such that the composition
$\Tot(Y'_\bu)\rarrow P\rarrow X$ vanishes in $\sH^0(\bE)$.

 Our aim is to show that one can have $m<n$.
 Assuming that $m\ge n$, we will explain how to shorten the complex
$Y'_\bu$ so as to obtain an exact complex $0\rarrow Y_m\rarrow
Y_{m-1}\rarrow\dotsb\rarrow Y_0\rarrow P\rarrow0$ in $\sZ^0(\bE)$
with the same property that the composition
$\Tot(Y_\bu)\rarrow P\rarrow X$ vanishes in $\sH^0(\bE)$.

 Applying the exact functor $\Phi_\bE^\sK\:\sZ^0(\bE)\rarrow\sK$, we
obtain an exact complex $0\rarrow\Phi(Y'_{m+1})\rarrow\Phi(Y'_m)
\rarrow\dotsb\rarrow\Phi(Y'_0)\rarrow\Phi(P)\rarrow0$ in $\sK$.
 This exact complex can be viewed as a Yoneda $(m+1)$\+extension
representing some element of $\Ext_\sK^{m+1}(\Phi(P),\Phi(Y'_{m+1}))$.
 By assumption, any such Ext group vanishes in~$\sK$; so the this
$(m+1)$\+extension splits in the exact category~$\sK$.

 Lemma~\ref{exact-right-adjoint-implies-self-resolving-lemma} tells
that Proposition~\ref{graded-split-Yoneda-extensions-prop} is
applicable to the exact functor $\Phi_\bE^\sK\:\sZ^0(\bE)\allowbreak
\rarrow\sK$.
 Applying the proposition, we obtain a morphism of exact complexes
in $\sE=\sZ^0(\bE)$ depicted in the lower half of the diagram
$$
 \xymatrix{
  & 0 \ar[r] & Y_m \ar[r] & Y_{m-1} \ar[r] & \dotsb \ar[r]
  & Y_0 \ar[rd] \\
  0 \ar[r] & Y'_{m+1} \ar[r] \ar[u] \ar[rd] & Y''_m \ar[r] \ar[u] \ar[d]
  & Y_{m-1} \ar[r] \ar@{=}[u] \ar[d] & \dotsb \ar[r]
  & Y_0 \ar[r] \ar@{=}[u] \ar[d] & P \ar[r] & 0 \\
  & & Y'_m \ar[r] & Y'_{m-1} \ar[r] & \dotsb \ar[r] & Y'_0 \ar[ru]
 }
$$
with the property the exact complex in the middle row is obtained
by splicing short exact sequences, the leftmost one of which is
taken to a split short exact sequence by the functor~$\Phi_\bE^\sK$.
 
 Let us express the latter property in notation with formulas.
 The exact complex $0\rarrow Y'_{m+1}\rarrow Y''_m\rarrow Y_{m-1}
\rarrow\dotsb\rarrow Y_0\rarrow P\rarrow0$ is obtained by splicing
a short exact sequence $0\rarrow Y'_{m+1}\rarrow Y''_m\rarrow
Y_m\rarrow0$ with an exact complex $0\rarrow Y_m\rarrow Y_{m-1}
\rarrow\dotsb\rarrow Y_0\rarrow P\rarrow0$ in $\sZ^0(\bE)$ (depicted
in the upper line of the diagram).
 The result of Proposition~\ref{graded-split-Yoneda-extensions-prop}
allows us to choose the exact complex in the middle row of
the diagram in such a way that the admissible monomorphism
$\Phi(Y'_{m+1})\rarrow\Phi(Y''_m)$ is split, or in other words,
the short exact sequence $0\rarrow\Phi(Y'_{m+1})\rarrow\Phi(Y''_m)
\rarrow\Phi(Y_m)\rarrow0$ is split in~$\sK$.

 In view of Lemma~\ref{Phi-Psi-extended-to-nonclosed-morphisms},
the latter property means that the short sequence $0\rarrow Y'_{m+1}
\rarrow Y''_m\rarrow Y_m\rarrow0$ in $\sZ^0(\bE)$ is split exact in
the additive category~$\bE^0$.
 Consequently, the totalization $\Tot(Y'_{m+1}\to Y''_m\to Y_m)$ is
contractible, i.~e., it represents a zero object in $\sH^0(\bE)$.
 It follows that the morphism of totalizations induced by
the morphism of complexes in the upper half of the diagram,
$$
 \Tot(Y'_{m+1}\to Y''_m\to Y_{m-1}\to\dotsb\to Y_0)
 \lrarrow \Tot(Y_m\to Y_{m-1}\to\dotsb\to Y_0)
$$
is a homotopy equivalence, i.~e., an isomorphism in $\sH^0(\bE)$.

 Since the composition 
$$
 \Tot(Y'_{m+1}\to Y'_m\to\dotsb\to Y'_0)\lrarrow P\lrarrow X
$$
vanishes in $\sH^0(\bE)$ by the choice of the complex $Y'_\bu$,
it follows that the composition
\begin{multline*}
 \Tot(Y'_{m+1}\to Y''_m\to Y_{m-1}\to\dotsb\to Y_0) \\ \lrarrow
 \Tot(Y'_{m+1}\to Y'_m\to\dotsb\to Y'_0)\lrarrow P\lrarrow X
\end{multline*}
vanishes in $\sH^0(\bE)$ as well.
 Since the composition
\begin{multline*}
 \Tot(Y'_{m+1}\to Y''_m\to Y_{m-1}\to\dotsb\to Y_0) \\
 \lrarrow\Tot(Y_m\to Y_{m-1}\to\dotsb\to Y_0) \lrarrow P\lrarrow X
\end{multline*}
vanishes and the leftmost map is an isomorphism in $\sH^0(\bE)$,
we can conclude that the composition
$$
 \Tot(Y_m\to Y_{m-1}\to\dotsb\to Y_0)
 \lrarrow P\lrarrow X
$$
vanishes in $\sH^0(\bE)$.

 We have managed to cut our original complex $Y'_\bu$ down to a shorter
complex $Y_\bu$ with the same property, as we wished.
 By decreasing induction in~$m$, we eventually arrive to an exact
complex $0\rarrow Y_n\rarrow Y_{n-1}\rarrow \dotsb \rarrow Y_0
\rarrow P\rarrow0$ in $\sZ^0(\bE)$ for which the composition
$\Tot(Y_\bu)\rarrow P\rarrow X$ vanishes in $\sH^0(\bE)$.

 Finally, given an absolutely acyclic object $X\in\Ac^\abs(\bE)$,
we take $P=X$ and $f=\id_X$, and find an exact complex
$0\rarrow Y_n\rarrow Y_{n-1}\rarrow \dotsb \rarrow Y_0
\rarrow X\rarrow0$ in $\sZ^0(\bE)$ for which the natural closed
morphism $\Tot(Y_\bu)\rarrow X$ is homotopic to zero.
 Then the argument spelled out at the very end of the proof of
Proposition~\ref{abs-acyclic-as-totalizations} shows that the object $X$
is a direct summand of the object $\Tot(Y_\bu\to X)$ in $\sH^0(\bE)$.

 It remains to apply
Lemma~\ref{totalization-of-finite-length-n-has-dimension-n} to
the effect that $\Tot(Y_\bu\to X)\in\Ac^0(\bE)^{*n}$ and
therefore $X\in\langle\Ac^0(\bE)\rangle_n$.
\end{proof}

\subsection{Examples}
 In the specific case of the exact DG\+category of complexes in
an exact category, as per Example~\ref{exact-dg-category-of-complexes},
a result stronger than the respective specialization of
Theorem~\ref{finite-homological-dimension-main-theorem} is long known.
 In this case, the absolute derived category coincides not only with
the coderived and/or contraderived category, but also with
the conventional unbounded derived category;
see~\cite[Remark~2.1]{Psemi}.

 Let us formulate here the particular cases of
Theorem~\ref{finite-homological-dimension-main-theorem} arising in
the context of Examples~\ref{abelian-dg-category-of-cdg-modules}\+-%
\ref{exact-dg-category-of-factorizations}
and~\ref{exact-DG-pairs-examples}.

\begin{cor}
 Let $\biR^\cu=(R^*,d,h)$ be a CDG\+ring.
 Let\/ $\sK\subset R^*\smodl$ be a full subcategory as in
the first paragraph of
Corollary~\ref{cdg-ring-finite-resol-dim-corollary}, and let\/
$\bE\subset\biR^\cu\bmodl$ the related full DG\+subcategory
endowed with the inherited exact DG\+category structure.
 Assume that the exact category\/ $\sK$ has finite homological
dimension. \par
 \textup{(a)} If the full subcategory\/ $\sK$ is preserved by
infinite direct sums in $R^*\smodl$, then all coacyclic objects
in\/ $\sH^0(\bE)$ are absolutely acyclic, that is\/
$\sA^\co(\bE)=\sA^\abs(\bE)$, and therefore the coderived 
category of\/ $\bE$ coincides with the absolute derived category,
$$
 \sD^\abs(\bE)=\sD^\co(\bE).
$$ \par
 \textup{(b)} If the full subcategory\/ $\sK$ is preserved by
infinite products in $R^*\smodl$, then all contraacyclic objects
in\/ $\sH^0(\bE)$ are absolutely acyclic, that is\/
$\sA^\ctr(\bE)=\sA^\abs(\bE)$, and therefore the contraderived 
category of\/ $\bE$ coincides with the absolute derived category,
$$
 \sD^\abs(\bE)=\sD^\co(\bE).
$$
\end{cor}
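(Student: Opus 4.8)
The plan is to obtain both parts as immediate specializations of Theorem~\ref{finite-homological-dimension-main-theorem}, so that the entire task reduces to checking that its hypotheses hold for the exact DG\+pair $(\bE,\sK)$. First I would recall, exactly as in the proof of Corollary~\ref{cdg-ring-finite-resol-dim-corollary}, that the DG\+category $\biR^\cu\bmodl$ is an abelian DG\+category by Example~\ref{abelian-dg-category-of-cdg-modules}; consequently $(\biR^\cu\bmodl,R^*\smodl)$ is an exact DG\+pair. Then, applying Proposition~\ref{subcategory-L-prop}(c) together with the discussion in Example~\ref{exact-DG-pairs-examples}(1), the assumptions imposed on $\sK$ in the first paragraph of Corollary~\ref{cdg-ring-finite-resol-dim-corollary} guarantee that $(\bE,\sK)$ is a strict exact DG\+subpair in $(\biR^\cu\bmodl,R^*\smodl)$, and in particular an exact DG\+pair. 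By the hypothesis of the present corollary, the exact category $\sK$ has finite homological dimension, so the standing assumption of Theorem~\ref{finite-homological-dimension-main-theorem} is satisfied.

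The only substantive point is to verify the exactness of infinite (co)products in the exact DG\+category $\bE$. Here I would argue at the level of the ambient abelian category rather than through the subtler category $\sZ^0(\bE^\bec)$. By Example~\ref{CDG-ring-bec-example}, the abelian category $\sZ^0(\biR^\cu\bmodl)$ is equivalent to the module category $R^*[\delta]\smodl$, which has exact infinite coproducts and exact infinite products. Moreover, the forgetful functor $\sZ^0(\biR^\cu\bmodl)\rarrow R^*\smodl$ is naturally identified (via the equivalence $\Upsilon_{\biR^\cu}$) with the functor $\Phi_\bE$, which preserves both limits and colimits since it has adjoints on both sides by Lemma~\ref{G-functors-in-DG-categories}. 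Therefore, in case~(a), closure of $\sK$ under coproducts in $R^*\smodl$ forces $\bE$ to be closed under coproducts in $\biR^\cu\bmodl$; as the inherited exact structure on $\sZ^0(\bE)$ consists of the short exact sequences of the ambient abelian category $\sZ^0(\biR^\cu\bmodl)$ with terms in $\bE$, a coproduct of such sequences is again exact with terms in $\bE$, so $\bE$ has exact coproducts. Dually, in case~(b), closure of $\sK$ under products yields closure of $\bE$ under products and hence exact products in $\bE$.

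With these verifications in hand, Theorem~\ref{finite-homological-dimension-main-theorem}(a) gives $\Ac^\co(\bE)=\Ac^\abs(\bE)$ and therefore
$$
 \sD^\abs(\bE)=\sD^\co(\bE)
$$
under the assumptions of part~(a), while Theorem~\ref{finite-homological-dimension-main-theorem}(b) gives $\Ac^\ctr(\bE)=\Ac^\abs(\bE)$ and therefore $\sD^\abs(\bE)=\sD^\ctr(\bE)$ under the assumptions of part~(b). I do not expect any genuine obstacle: the statement is a routine specialization of the main theorem, and the only mildly technical ingredient is the bookkeeping of the previous paragraph, showing that closure of $\sK$ under (co)products propagates to exactness of (co)products for the exact DG\+category $\bE$. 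The deeper content — that finite homological dimension of $\sK$ collapses coacyclicity (respectively contraacyclicity) onto absolute acyclicity — is entirely carried by Proposition~\ref{finite-homol-dim-abs-acycl-strongly-generated-prop} and Lemma~\ref{strongly-generated-closed-under-co-products} inside Theorem~\ref{finite-homological-dimension-main-theorem}, which I am free to invoke.
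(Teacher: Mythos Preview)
Your proposal is correct and follows essentially the same route as the paper: the paper's proof simply reads ``Similar to the proofs of Corollaries~\ref{cdg-ring-finite-resol-dim-corollary} and~\ref{cdg-ring-full-and-faithfulness-corollary},'' which, unpacked, is precisely your argument of verifying that $(\bE,\sK)$ is an exact DG\+pair via Proposition~\ref{subcategory-L-prop}(c) and Example~\ref{exact-DG-pairs-examples}(1), checking that closure of $\sK$ under (co)products in $R^*\smodl$ yields exact (co)products in $\bE$, and then invoking Theorem~\ref{finite-homological-dimension-main-theorem}. One small referencing nit: the equivalence $\sZ^0(\biR^\cu\bmodl)\simeq R^*[\delta]\smodl$ is stated in Example~\ref{abelian-dg-category-of-cdg-modules} (citing the proof of Proposition~\ref{G-plus-minus-prop}), not in Example~\ref{CDG-ring-bec-example}, which concerns $\bA^\bec$ rather than $\sZ^0(\bA)$.
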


\begin{proof}
 Similar to the proofs of
Corollaries~\ref{cdg-ring-finite-resol-dim-corollary}
and~\ref{cdg-ring-full-and-faithfulness-corollary}.
\end{proof}

 The next corollary is a generalization of~\cite[Theorem~1.6]{EP}.

\begin{cor}
 Let $X$ be a scheme and $\biB^\cu=(B^*,d,h)$ be a quasi-coherent
CDG\+quasi-algebra over~$X$.
 Let $\sK\subset B^*\sqcoh$ be a full subcategory as in
the first paragraph of
Corollary~\ref{cdg-quasi-algebra-finite-resol-dim-corollary},
and let\/ $\bE\subset\biB^\cu\bqcoh$ the related full DG\+subcategory
endowed with the inherited exact DG\+category structure. \par
 Assume that the full subcategory\/ $\sK$ is preserved by
infinite direct sums in $B^*\sqcoh$ and the exact category\/ $\sK$
has finite homological dimension.
 Then all coacyclic objects in\/ $\sH^0(\bE)$ are absolutely acyclic,
that is\/ $\sA^\co(\bE)=\sA^\abs(\bE)$, and therefore the coderived 
category of\/ $\bE$ coincides with the absolute derived category,
$$
 \sD^\abs(\bE)=\sD^\co(\bE).
$$
\end{cor}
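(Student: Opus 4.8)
The plan is to reduce the statement to an application of Theorem~\ref{finite-homological-dimension-main-theorem}(a), in exactly the same spirit in which the preceding corollaries of this subsection reduce to Theorems~\ref{finite-resolution-dimension-main-theorem} and~\ref{full-and-faithfulness-main-theorem}. Thus the entire task is to verify that the three hypotheses of that theorem are satisfied by the exact DG\+pair $(\bE,\sK)$ appearing in the statement: that $(\bE,\sK)$ is genuinely an exact DG\+pair, that $\sK$ has finite homological dimension, and that $\bE$ has exact coproducts. The first two are essentially supplied by earlier results, and only the third requires a short verification.

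First I would recall, following the proof of Corollary~\ref{cdg-quasi-algebra-finite-resol-dim-corollary}, that $\biB^\cu\bqcoh$ is an abelian DG\+category by Example~\ref{abelian-dg-category-of-quasi-coh-cdg-modules}, and that under the assumptions in the first paragraph of Corollary~\ref{cdg-quasi-algebra-finite-resol-dim-corollary} the pair $(\bE,\sK)$ is a strict exact DG\+subpair of $(\biB^\cu\bqcoh,B^*\sqcoh)$; this is a consequence of Proposition~\ref{subcategory-L-prop}(c) together with the discussion in Example~\ref{exact-DG-pairs-examples}(1). In particular $(\bE,\sK)$ is an exact DG\+pair, and the finite homological dimension of the exact category $\sK$ is assumed outright.

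It then remains to check that $\bE$ has exact coproducts. The DG\+category $\biB^\cu\bqcoh$ has infinite coproducts (Section~\ref{quasi-cdg-subsecn}), and the functor $\Phi$ for $\biB^\cu\bqcoh$, being a left adjoint, preserves them; since $\sK$ is closed under infinite direct sums in $B^*\sqcoh$ by hypothesis, it follows that the full DG\+subcategory $\bE$ is closed under coproducts in $\biB^\cu\bqcoh$, so infinite coproducts exist in $\bE$. To see that these coproducts are exact, I would invoke Lemma~\ref{exact-co-products-lemma}(a) and check exactness in $\sZ^0(\bE)$: the ambient category $\sZ^0(\biB^\cu\bqcoh)$ is Grothendieck abelian (Section~\ref{quasi-cdg-subsecn}), so its coproduct functor is exact; hence the coproduct of a family of admissible short exact sequences in $\sZ^0(\bE)$, computed in the ambient category, is again short exact, and its terms lie in $\sZ^0(\bE)$ because $\bE$ is closed under coproducts. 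Therefore it is an admissible short exact sequence for the inherited exact structure on $\sZ^0(\bE)$, which shows that $\bE$ has exact coproducts.

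With all three hypotheses in place, Theorem~\ref{finite-homological-dimension-main-theorem}(a) applies to $(\bE,\sK)$ and yields $\Ac^\co(\bE)=\Ac^\abs(\bE)$, whence $\sD^\abs(\bE)=\sD^\co(\bE)$. I do not expect any serious obstacle: the argument is purely a verification of hypotheses, parallel to the proofs of the earlier corollaries in this subsection. The only point requiring a little care is the exactness of the coproducts, which rests on the AB5 property of the Grothendieck abelian category $\sZ^0(\biB^\cu\bqcoh)$ and on the closedness of $\bE$ under coproducts inherited from the closedness of $\sK$ under direct sums. Note that only the coderived (coproduct) case is asserted, since the product functors on quasi-coherent sheaves need not be exact, so there is no contraderived counterpart in this setting.
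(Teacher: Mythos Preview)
Your proposal is correct and follows essentially the same route as the paper, which simply says the proof is ``similar to the proofs of Corollaries~\ref{cdg-quasi-algebra-finite-resol-dim-corollary} and~\ref{cdg-quasi-algebra-full-and-faithfulness-corollary}'': one verifies that $(\bE,\sK)$ is an exact DG\+pair (via Proposition~\ref{subcategory-L-prop}(c) and Example~\ref{exact-DG-pairs-examples}(1)) with exact coproducts, and then invokes Theorem~\ref{finite-homological-dimension-main-theorem}(a). Your verification of the exactness of coproducts is in fact more detailed than what the paper writes out.
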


\begin{proof}
 Similar to the proofs of
Corollaries~\ref{cdg-quasi-algebra-finite-resol-dim-corollary}
and~\ref{cdg-quasi-algebra-full-and-faithfulness-corollary}.
\end{proof}

 The following corollary is a generalization
of~\cite[Corollary~2.3(d,f)]{EP}.

\begin{cor} \label{factorization-finite-homol-dim-corollary}
 Let\/ $\sE$ be an exact category and\/ $\Lambda\:\sE\rarrow\sE$ be
an autoequivalence preserving and reflecting short exact sequences.
 Assume that either\/ $\Gamma=\boZ$, or\/ $\Lambda$ is involutive
and\/ $\Gamma=\boZ/2$.
 Let $w\:\Id_\sE\rarrow\Lambda^2$ be a potential (as in
Section~\ref{factorizations-subsecn}).
 Assume that the exact category\/ $\sE$ has finite homological
dimension and exact coproducts.
 Then all coacyclic objects in the DG\+category of factorizations\/
$\bF(\sE,\Lambda,w)$ are absolutely acyclic, that is\/
$\Ac^\co(\bF(\sE,\Lambda,w))=\Ac^\abs(\bF(\sE,\Lambda,w))$,
and therefore the coderived category of\/ $\bF(\sE,\Lambda,w)$
coincides with the absolute derived category,
$$
 \sD^\abs(\bF(\sE,\Lambda,w))=\sD^\co(\bF(\sE,\Lambda,w)).
$$
\end{cor}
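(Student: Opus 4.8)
The plan is to exhibit the statement as a special case of Theorem~\ref{finite-homological-dimension-main-theorem}(a). First I would set $\bE=\bF(\sE,\Lambda,w)$, endowed with the exact DG\+category structure of Example~\ref{exact-dg-category-of-factorizations}, and $\sK=\sP(\sE,\Lambda)$, the category of $\Lambda$\+periodic objects in $\sE$ equipped with the exact structure of the same example and regarded as a full subcategory of $\sZ^0(\bF(\sE,\Lambda,w)^\bec)$ via the embedding $\Upsilon_{\sE,\Lambda,w}$. By Example~\ref{exact-DG-pairs-examples}(5) (which is exactly where the assumption that $\Lambda$ preserves and reflects short exact sequences is used), the pair $(\bE,\sK)=(\bF(\sE,\Lambda,w),\sP(\sE,\Lambda))$ is an exact DG\+pair. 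It then only remains to verify the two hypotheses of the theorem: that $\sK$ has finite homological dimension and that $\bE$ has exact coproducts.

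For the finiteness of homological dimension, I would use the equivalence $\sP(\sE,\Lambda)\simeq\sE$ recalled in Section~\ref{factorizations-subsecn}, given by $X^\ci\longmapsto X^0$. Because the periodicity isomorphisms identify exactness of a sequence of $\Lambda$\+periodic objects in any single degree with its exactness in every degree, this category equivalence is in fact an equivalence of exact categories, hence preserves all Yoneda Ext groups. Therefore the finite homological dimension assumed for $\sE$ transfers directly to $\sK=\sP(\sE,\Lambda)$.

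For the exactness of coproducts, I would first note that $\bF(\sE,\Lambda,w)$ has infinite coproducts because $\sE$ does (Section~\ref{factorizations-subsecn}). By Lemma~\ref{exact-co-products-lemma}(a) it then suffices to check that coproducts are exact in $\sZ^0(\bE)=\sF(\sE,\Lambda,w)$. Here a short exact sequence is, by the construction of Example~\ref{exact-dg-category-of-factorizations} via Lemma~\ref{faithful-conservative-lifting-exact-structure}, precisely one whose image under the forgetful functor $\Theta\:\sF(\sE,\Lambda,w)\rarrow\sP(\sE,\Lambda)$ is a short exact sequence; and $\Theta$ preserves coproducts, being a left adjoint (to $G^-$). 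Since coproducts are exact in $\sP(\sE,\Lambda)\simeq\sE$ by the hypothesis of exact coproducts in $\sE$, the coproduct of any family of short exact sequences in $\sF(\sE,\Lambda,w)$ is again short exact. With both hypotheses in place, Theorem~\ref{finite-homological-dimension-main-theorem}(a) yields $\Ac^\co(\bF(\sE,\Lambda,w))=\Ac^\abs(\bF(\sE,\Lambda,w))$ and hence the asserted equality $\sD^\abs(\bF(\sE,\Lambda,w))=\sD^\co(\bF(\sE,\Lambda,w))$.

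The work is entirely a matter of correctly transporting the two properties through the identifications $\sK=\sP(\sE,\Lambda)\simeq\sE$ and $\sZ^0(\bE)=\sF(\sE,\Lambda,w)$; I expect the only point needing genuine care to be the verification that $\sP(\sE,\Lambda)\simeq\sE$ is an equivalence of \emph{exact} categories, so that finite homological dimension really does descend to $\sK$. Once that is confirmed, the corollary is an immediate application of the general theorem, exactly parallel to the deductions of Corollaries~\ref{cdg-ring-finite-resol-dim-corollary} and~\ref{factorizations-finite-resol-dim-corollary} from their respective main theorems.
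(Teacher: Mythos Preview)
Your proposal is correct and follows essentially the same approach as the paper: set up the exact DG\+pair $(\bF(\sE,\Lambda,w),\sP(\sE,\Lambda))$ via Example~\ref{exact-DG-pairs-examples}(5) and invoke Theorem~\ref{finite-homological-dimension-main-theorem}(a), which is precisely what the paper means by ``similar to the proofs of Corollaries~\ref{factorizations-finite-resol-dim-corollary} and~\ref{factorizations-full-and-faithfulness-corollary}.'' Your explicit verification of the hypotheses (finite homological dimension of $\sK$ via the exact equivalence $\sP(\sE,\Lambda)\simeq\sE$, and exact coproducts in $\bE$) simply unpacks what the paper leaves implicit.
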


\begin{proof}
 Similar to the proofs of
Corollaries~\ref{factorizations-finite-resol-dim-corollary}
and~\ref{factorizations-full-and-faithfulness-corollary}.
\end{proof}

\subsection{Generalizations}
 The following result, whose idea goes back
to~\cite[Proposition~A.6.1]{Pcosh}, is a generalization of
Theorem~\ref{finite-homological-dimension-main-theorem} with
a flavor of Theorem~\ref{full-and-faithfulness-main-theorem}.

\begin{thm} \label{finite-homol-dim-full-and-faithful-theorem}
 Let $(\bE,\sK)$ be an exact DG\+pair and $(\bF,\sL)\subset(\bE,\sK)$
be an exact DG\+subpair.
 Assume that the exact DG\+category\/ $\bE$ has exact products,
the full subcategory\/ $\sL$ is self-resolving in the exact category\/
$\sK$, and the exact category\/ $\sL$ has finite homological dimension.
 Then the triangulated functor
$$
 \sD^\abs(\bF)\lrarrow\sD^\ctr(\bE)
$$
induced by the inclusion of exact DG\+categories\/ $\bF\rightarrowtail
\bE$ is fully faithful.
\end{thm}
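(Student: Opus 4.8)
The plan is to deduce full-and-faithfulness from the approachability criterion of Lemma~\ref{approachable-implies-fully-faithful-lemma}, applied with $\sT=\sH^0(\bE)$, \ $\sS=\sH^0(\bF)$, \ $\sX=\Ac^\ctr(\bE)$, and $\sY=\Ac^\abs(\bF)$. First I would check that the functor in the statement is well defined: the inclusion $\bF\rightarrowtail\bE$ carries $\Ac^0(\bF)$ into $\Ac^0(\bE)\subset\Ac^\ctr(\bE)$, so it takes the thick subcategory $\Ac^\abs(\bF)\subset\sH^0(\bF)$ into the thick subcategory $\Ac^\ctr(\bE)\subset\sH^0(\bE)$; hence the composite $\sH^0(\bF)\rarrow\sH^0(\bE)\rarrow\sD^\ctr(\bE)$ annihilates $\Ac^\abs(\bF)$ and factors through $\sD^\abs(\bF)$. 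In particular $\sY=\Ac^\abs(\bF)$ is a full triangulated subcategory contained in $\sS\cap\sX$, so by Lemma~\ref{approachable-implies-fully-faithful-lemma} it remains to prove that every object of $\Ac^\ctr(\bE)$ is approachable from $\sH^0(\bF)$ via $\Ac^\abs(\bF)$.

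Let $\cA\subset\sH^0(\bE)$ denote the class of all objects approachable from $\sH^0(\bF)$ via $\Ac^\abs(\bF)$. Since $\sL$ is self-resolving, Proposition~\ref{totalizations-approachable-prop}(a) gives $\Ac^0(\bE)\subset\cA$ (approachability via $\Ac^0(\bF)\subset\Ac^\abs(\bF)$), and Lemmas~\ref{summands-approachable-lemma} and~\ref{shifts-cones-approachable-lemma} show that $\cA$ is a thick triangulated subcategory of $\sH^0(\bE)$; consequently $\langle\Ac^0(\bE)\rangle_m\subset\cA$ for every $m\ge1$. Because $\Ac^\ctr(\bE)$ is by definition the smallest triangulated subcategory of $\sH^0(\bE)$ that contains $\Ac^0(\bE)$ and is closed under products, the whole proof reduces to showing that $\cA$ is closed under infinite products.

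Here lies the main obstacle. Since $\bF$ is not assumed to be closed under products in $\bE$, the subcategory $\Ac^\abs(\bF)$ need not be closed under products in $\sH^0(\bE)$, so Lemma~\ref{co-products-approachable-lemma}(a) does not apply directly; this is precisely the point where the finite homological dimension hypothesis must enter. Applying Proposition~\ref{finite-homol-dim-abs-acycl-strongly-generated-prop} to the exact DG\+pair $(\bF,\sL)$, with $n$ the homological dimension of $\sL$, I obtain the strong generation $\Ac^\abs(\bF)=\langle\Ac^0(\bF)\rangle_n$. Now, given objects $X_\alpha\in\cA$ and a morphism $F\rarrow\prod_\alpha X_\alpha$ with $F\in\sH^0(\bF)$, I would factor each component $F\rarrow X_\alpha$ through some $Y_\alpha\in\Ac^\abs(\bF)$ and assemble these into $F\rarrow\prod_\alpha Y_\alpha\rarrow\prod_\alpha X_\alpha$, whose composite is the original morphism. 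The key computation is that $\prod_\alpha Y_\alpha$ lies in $\langle\Ac^0(\bE)\rangle_n\subset\cA$: writing each $Y_\alpha$ as a direct summand of an object of $\Ac^0(\bF)^{*n}$, one uses that products of distinguished triangles are distinguished (as in Lemma~\ref{strongly-generated-closed-under-co-products}) together with the fact that the product in $\sH^0(\bE)$ of totalizations of short exact sequences in $\sZ^0(\bF)$ is the totalization of the product short exact sequence, which is admissible in $\sZ^0(\bE)$ because $\bE$ has exact products, hence lies in $\Ac^0(\bE)$. An induction on $n$ along the $*$\+filtration then places $\prod_\alpha Y_\alpha$ in $\langle\Ac^0(\bE)\rangle_n$.

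Since $\prod_\alpha Y_\alpha\in\cA$, the morphism $F\rarrow\prod_\alpha Y_\alpha$ factors through $\Ac^\abs(\bF)$, and composing with $\prod_\alpha Y_\alpha\rarrow\prod_\alpha X_\alpha$ exhibits the required factorization of $F\rarrow\prod_\alpha X_\alpha$ through $\Ac^\abs(\bF)$. Thus $\prod_\alpha X_\alpha\in\cA$, so $\cA$ is closed under products; combined with the previous paragraph this yields $\Ac^\ctr(\bE)\subset\cA$, and Lemma~\ref{approachable-implies-fully-faithful-lemma} gives the theorem. I expect the genuinely delicate verification to be the identification of products of totalizations with totalizations of products (for which one invokes that bounded totalizations are iterated cones and that products commute with cones and shifts), since everything else is a formal manipulation of the approachable class. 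It is worth noting that taking $\bF=\bE$ and $\sL=\sK$ recovers the equivalence $\sD^\abs(\bE)=\sD^\ctr(\bE)$ of Theorem~\ref{finite-homological-dimension-main-theorem}, which is the degenerate case of this argument.
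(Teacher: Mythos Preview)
Your proof is correct and follows essentially the same strategy as the paper's: reduce to approachability via Lemma~\ref{approachable-implies-fully-faithful-lemma}, use Proposition~\ref{totalizations-approachable-prop}(a) and Lemmas~\ref{summands-approachable-lemma}, \ref{shifts-cones-approachable-lemma} to see the approachable class is thick and contains $\Ac^0(\bE)$, and then obtain closure under products by combining the strong generation $\Ac^\abs(\bF)=\langle\Ac^0(\bF)\rangle_n$ from Proposition~\ref{finite-homol-dim-abs-acycl-strongly-generated-prop} with the fact that $\langle\Ac^0(\bE)\rangle_n$ is closed under products. The only cosmetic difference is that the paper phrases the last step as $\prod_\alpha Y_\alpha\in\langle\Ac^0(\bE)\rangle_n\subset\Ac^\abs(\bE)$ and then invokes the argument of Theorem~\ref{full-and-faithfulness-main-theorem}(a) to conclude $\Ac^\abs(\bE)$ is approachable, whereas you go directly via $\langle\Ac^0(\bE)\rangle_n\subset\cA$; these are equivalent since $\cA$ is thick.
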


 Setting $\bF=\bE$ and $\sL=\sK$, one obtains
Theorem~\ref{finite-homological-dimension-main-theorem}(b) as
a particular case of
Theorem~\ref{finite-homol-dim-full-and-faithful-theorem}.

\begin{proof}
 When the full DG\+subcategory $\bF$ is closed under products in $\bE$,
one can use Theorem~\ref{finite-homological-dimension-main-theorem}(b)
to the effect that $\sD^\abs(\bF)=\sD^\ctr(\bF)$, and
Theorem~\ref{full-and-faithfulness-main-theorem}(a) tells that
the functor $\sD^\ctr(\bF)\rarrow\sD^\ctr(\bE)$ is fully faithful;
so the desired assertion follows.
 The proof in the general case is a combination of the arguments
proving the two mentioned previous theorems.

 According to Lemma~\ref{approachable-implies-fully-faithful-lemma},
it suffices to show that all objects of $\Ac^\ctr(\bE)$ are
approachable from $\sH^0(\bF)$ via $\Ac^\abs(\bF)$ in $\sH^0(\bE)$.
 By Proposition~\ref{totalizations-approachable-prop}(a), all objects
of $\Ac^0(\bE)$ are approachable from $\sH^0(\bF)$ via $\Ac^0(\bF)$,
hence also via $\Ac^\abs(\bF)$.
 According to Lemma~\ref{shifts-cones-approachable-lemma},
the class of all objects approachable from $\sH^0(\bF)$ via
$\Ac^\abs(\bF)$ is a full triangulated subcategory in $\sH^0(\bE)$.
 It remains to show that the class of all objects approachable
from $\sH^0(\bF)$ via $\Ac^\abs(\bF)$ is closed under infinite
products in $\sH^0(\bE)$. 

 Let $F\rarrow\prod_\alpha X_\alpha$ be a morphism in $\sH^0(\bE)$
with $F\in\sH^0(\bF)$ and all the objects $X_\alpha\in\sH^0(\bE)$
approachable from $\sH^0(\bF)$ via $\Ac^\abs(\bF)$.
 Consider the components $F\rarrow X_\alpha$ of the morphism
$F\rarrow\prod_\alpha X_\alpha$.
 Then for every~$\alpha$ there exists an object
$Y_\alpha\in\Ac^\abs(\bF)$ such that the morphism $F\rarrow X_\alpha$
factorizes through~$Y_\alpha$.
 It follows that the morphism $F\rarrow\prod_\alpha X_\alpha$
factorizes through $\prod_\alpha Y_\alpha$ in $\sH^0(\bE)$.

 According to
Proposition~\ref{finite-homol-dim-abs-acycl-strongly-generated-prop},
we have $\Ac^\abs(\bF)=\langle\Ac^0(\bF)\rangle_n$, where
$n$~is the homological dimension of the exact category~$\sL$.
 Furthermore, $\langle\Ac^0(\bF)\rangle_n\subset
\langle\Ac^0(\bE)\rangle_n$, and the class $\langle\Ac^0(\bE)\rangle_n$
is closed under products in $\sH^0(\bE)$ by
Lemma~\ref{strongly-generated-closed-under-co-products}(b).
 Thus $\prod_\alpha Y_\alpha\in\langle\Ac^0(\bE)\rangle_n$;
and by the definition $\langle\Ac^0(\bE)\rangle_n\subset\Ac^\abs(\bE)$.

 Following the proof of
Theorem~\ref{full-and-faithfulness-main-theorem}(a), all objects
of $\Ac^\abs(\bE)$ are approachable from $\sH^0(\bF)$ via
$\Ac^\abs(\bF)$ in $\sH^0(\bE)$.
 Therefore, any morphism $F\rarrow\prod_\alpha Y_\alpha$ factorizes
through an object from $\Ac^\abs(\bF)$, and we are done.
\end{proof}

 The next result, whose idea goes back
to~\cite[Corollary~A.6.2 and the subsequent paragraph]{Pcosh},
is a common generalization of
Theorems~\ref{inj-proj-resolutions-star-conditions-theorem}
and~\ref{finite-homological-dimension-main-theorem} with a flavor of
Theorem~\ref{contraderived-resolving-equivalence-theorem}.

\begin{thm} \label{finite-homol-dim-star-condition-theorem}
 Let $(\bE,\sK)$ be an exact DG\+pair and $(\bF,\sL)\subset(\bE,\sK)$
be a strict exact DG\+subpair.
 Assume that the exact DG\+category\/ $\bE$ has twists and
exact products, the full subcategory\/ $\sL$ is resolving in the exact
category\/ $\sK$, and the exact category\/ $\sL$ has finite homological
dimension.
 Furthermore, assume that there is a resolving subcategory\/ $\sM
\subset\sK$ preserved by the shift functors\/~$[n]$, \,$n\in\Gamma$
such that countable products of objects from\/ $\sM$ in\/
$\sZ^0(\bE^\bec)$ belong to\/ $\sK$ and have finite\/ $\sL$\+resolution
dimensions in\/~$\sK$.
 Then the triangulated functor
$$
 \sD^\abs(\bF)\lrarrow\sD^\ctr(\bE)
$$
induced by the inclusion of exact DG\+categories\/ $\bF\rightarrowtail
\bE$ is a triangulated equivalence.
\end{thm}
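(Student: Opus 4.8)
The plan is to split the statement into the two tasks that together give a triangulated equivalence: full-and-faithfulness and essential surjectivity. For full-and-faithfulness I would simply invoke Theorem~\ref{finite-homol-dim-full-and-faithful-theorem}. Indeed, all of its hypotheses are in force here: $(\bE,\sK)$ is an exact DG\+pair, $(\bF,\sL)$ is an exact DG\+subpair, $\bE$ has exact products, the full subcategory $\sL$ is resolving and hence \emph{a fortiori} self-resolving in $\sK$, and $\sL$ has finite homological dimension. Thus $\sD^\abs(\bF)\rarrow\sD^\ctr(\bE)$ is already known to be fully faithful, and the whole burden of the proof falls on essential surjectivity. Since the functor is fully faithful, its essential image is a triangulated subcategory of $\sD^\ctr(\bE)$, and it suffices to produce, for every object $E\in\bE$, an object $F\in\bF$ together with a closed morphism $F\rarrow E$ of degree~$0$ in $\bE$ whose cone is contraacyclic.

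The key device for this is the auxiliary resolving subcategory $\sM$, used in place of the (unavailable) closedness of $\bF$ under products that Theorem~\ref{contraderived-resolving-equivalence-theorem}(a) would require. Let $\bG\subset\bE$ be the full DG\+subcategory of objects $G$ with $\Phi(G)\in\sM$; by Example~\ref{exact-DG-pairs-examples}(2), since $\sM$ is resolving (hence closed under extensions), $(\bG,\sM)$ is a strict exact DG\+subpair of $(\bE,\sK)$, and the covering hypothesis of Lemma~\ref{one-step-exact-subcategory-resolution-lemma}(a) holds because $\sM$ is resolving in $\sK$. Applying that lemma repeatedly, I would build an exact complex
$$
 \dotsb\lrarrow G_2\lrarrow G_1\lrarrow G_0\lrarrow E\lrarrow0
$$
in $\sZ^0(\bE)$ with $G_i\in\bG$, and set $T=\Tot^\sqcap(G_\bu)\in\bE$ (the product totalization exists since $\bE$ has twists, shifts, and products). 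By Lemma~\ref{twists-into-isomorphisms}, $\Phi(T)$ is isomorphic in $\sZ^0(\bE^\bec)$ to a countable product of shifts of the $\Phi(G_i)$, hence of objects of $\sM$ (as $\sM$ is shift-preserved). By the crucial hypothesis on $\sM$, this product lies in $\sK$ and has finite $\sL$\+resolution dimension, say $\le n$. Meanwhile, the cone of the natural morphism $T\rarrow E$ is the product totalization of the bounded-above exact complex $\dotsb\rarrow G_1\rarrow G_0\rarrow E\rarrow0$, so it is contraacyclic by Lemma~\ref{totalization-of-one-sided-bounded-co-contra-acyclic}(b).

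It then remains to resolve the single object $T$ inside $\bF$ over finitely many steps. Using Lemma~\ref{one-step-exact-subcategory-resolution-lemma}(a) for the subpair $(\bF,\sL)$ (whose covering hypothesis holds since $\sL$ is resolving), I would construct an exact complex $0\rarrow D\rarrow F_{n-1}\rarrow\dotsb\rarrow F_0\rarrow T\rarrow0$ in $\sZ^0(\bE)$ with $F_i\in\bF$; since $\Phi(T)\in\sL_n$ and each $\Phi(F_i)\in\sL$, Proposition~\ref{resol-dim-well-defined}(a) yields $\Phi(D)\in\sL$, whence $D\in\bF$ by strictness of $(\bF,\sL)$. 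Putting $F=\Tot(F_n\to\dotsb\to F_0)$ with $F_n=D$, Lemma~\ref{twists-into-isomorphisms} identifies $\Phi(F)$ with a finite direct sum of shifts of the $\Phi(F_i)\in\sL$, so $F\in\bF$; and the cone of $F\rarrow T$ is the totalization of a finite exact complex, hence absolutely acyclic by Lemma~\ref{totalization-of-finite-absolutely-acyclic}, and therefore contraacyclic (as $\Ac^\abs(\bE)\subset\Ac^\ctr(\bE)$). Finally, the cone of the composite $F\rarrow T\rarrow E$ is an extension of two contraacyclic objects inside the triangulated subcategory $\Ac^\ctr(\bE)$, so it is contraacyclic, and $E$ lies in the essential image. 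Combined with the first paragraph, this gives the equivalence.

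The main obstacle is exactly the place where $\sM$ enters: a naive $\sL$\+resolution of $E$ followed by a product totalization would leave $\bF$, because $\sL$ need not be closed under products in $\sZ^0(\bE^\bec)$, so the totalized object might fail to lie in $\bF$. The two-stage construction above circumvents this by first totalizing an $\sM$\+resolution — whose products are controlled by hypothesis to have finite $\sL$\+resolution dimension — and only then performing a \emph{finite} $\sL$\+resolution, which stays in $\bF$ and contributes only an absolutely acyclic error term. This is precisely the mechanism generalizing the proof of Theorem~\ref{inj-proj-resolutions-star-conditions-theorem}(b); the delicate point to verify carefully will be that the product totalization $T$ genuinely satisfies $\Phi(T)\in\sK$ with finite $\sL$\+resolution dimension and that strictness of $(\bF,\sL)$ lets one lift the finite $\sL$\+resolution of $\Phi(T)$ back to a resolution in $\sZ^0(\bE)$ by objects of $\bF$.
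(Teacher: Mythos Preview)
Your proposal is correct and follows essentially the same two-stage argument as the paper: full-and-faithfulness via Theorem~\ref{finite-homol-dim-full-and-faithful-theorem}, then an $\sM$\+resolution product-totalized to an intermediate object $T$ with $\Phi(T)$ of finite $\sL$\+resolution dimension, followed by a finite $\sL$\+resolution of $T$ terminated via strictness and Proposition~\ref{resol-dim-well-defined}(a). The only cosmetic difference is packaging: you introduce the auxiliary DG\+subpair $(\bG,\sM)$ and invoke Lemma~\ref{one-step-exact-subcategory-resolution-lemma}(a), whereas the paper unwinds that lemma inline by directly applying $\Psi^+$ to objects of~$\sM$ and checking $\Phi\Psi^+(M_i)\in\sM$ from the short exact sequence $0\rarrow M_i\rarrow\Phi\Psi^+(M_i)\rarrow M_i[1]\rarrow0$.
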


 Taking $\bF=\bE_\bproj$ and $\sM=\sL=\sZ^0(\bE)_\proj$, one obtains
Theorem~\ref{inj-proj-resolutions-star-conditions-theorem}(b) as
a particular case of
Theorem~\ref{finite-homol-dim-star-condition-theorem}.
 Setting $\bF=\bE$ and $\sM=\sL=\sK$, one (almost) recovers
Theorem~\ref{finite-homological-dimension-main-theorem}(b) as
a particular case of
Theorem~\ref{finite-homol-dim-star-condition-theorem}.

\begin{proof}
 The triangulated functor $\sD^\abs(\bF)\rarrow\sD^\ctr(\bE)$ is
fully faithful by
Theorem~\ref{finite-homol-dim-full-and-faithful-theorem}.
 So it suffices to find for any object $E\in\bE$ an object
$F\in\bF$ together with a morphism $F\rarrow E$ in $\sH^0(\bE)$
with a cone belonging to $\Ac^\ctr(\bE)$.
 This is provable similarly to the proof of
Theorem~\ref{inj-proj-resolutions-star-conditions-theorem}(b).

 Specifically, choose an object $M_0\in\sM$ together with
an admissible epimorphism $M_0\rarrow\Phi_\bE^K(E)$ in~$\sK$.
 By Lemma~\ref{mono-epi-exact-DG-adjunction}(b), the corresponding
morphism $\Psi^+(M_0)\rarrow E$ is an admissible epimorphism
in $\sZ^0(\bE)$.
 Similarly to the proof of
Lemma~\ref{one-step-inj-proj-resolution-lemma}(b), we have a natural
short exact sequence $0\rarrow M_0\rarrow\Phi\Psi^+(M_0)\rarrow M_0[1]
\rarrow0$ in the exact category~$\sK$.
 Since the full subcategory $\sM\subset\sK$ is closed under shifts
and extensions, it follows that $\Phi\Psi^+(M_0)\in\sM$.

 Put $N_0=\Psi^+(M_0)$.
 Then we have a short exact sequence $0\rarrow E_1\rarrow N_0
\rarrow E\rarrow0$ in $\sZ^0(\bE)$ with $\Phi(N_0)\in\sM$.
 The same construction produces a short exact sequence
$0\rarrow E_2\rarrow N_1\rarrow E_1\rarrow0$ in $\sZ^0(\bE)$ with
$\Phi(N_1)\in\sM$, etc.
 Proceeding in this way, we construct an exact complex
$\dotsb\rarrow N_2\rarrow N_1\rarrow N_0\rarrow E\rarrow0$ in
$\sZ^0(\bE)$ with $\Phi(N_i)\in\sM$.

 Denote by $T=\Tot^\sqcap(N_\bu)$ the product totalization of
the complex $\dotsb\rarrow N_2\rarrow N_1\rarrow N_0\rarrow0$
in the DG\+category~$\bE$.
 By Lemma~\ref{twists-into-isomorphisms}, the object $\Phi(T)$
is isomorphic to the product $\prod_{i=0}^\infty\Phi(N_i)[-i]$
in the additive category $\sZ^0(\bE^\bec)$.
 By assumption, it follows that the object $\Phi(T)$ has finite
$\sL$\+resolution dimension in~$\sK$.
 Denote this resolution dimension by~$n$.
 The cone of the natural closed morphism $T\rarrow E$ in $\bE$
is contraacyclic by
Lemma~\ref{totalization-of-one-sided-bounded-co-contra-acyclic}(b).

 Choose an object $L_0\in\sL$ together with an admissible
epimorphism $L_0\rarrow\Phi(T)$ in~$\sK$.
 By Lemma~\ref{mono-epi-exact-DG-adjunction}(b), the corresponding
morphism $\Psi^+(L_0)\rarrow T$ is an admissible epimorphism
in $\sZ^0(\bE)$.
 Furthermore, we have $F_0=\Psi^+(L_0)\in\bF$.
 So we get a short exact sequence $0\rarrow T_1\rarrow F_0
\rarrow T\rarrow0$ in $\sZ^0(\bE)$ with $F_0\in\bF$.
 The same construction produces a short exact sequence
$0\rarrow T_2\rarrow F_1\rarrow T_1\rarrow0$ in $\sZ^0(\bE)$
with $F_1\in\bF$, etc.

 Proceeding in this way and arguing similarly to the beginning of
the proof of Theorem~\ref{finite-resolution-dimension-main-theorem}(a),
we construct an exact complex $0\rarrow D\rarrow F_{n-1}\rarrow
\dotsb\rarrow F_0\rarrow T\rarrow0$ in $\sZ^0(\bE)$ with $F_i\in\bF$.
 Then the complex $0\rarrow\Phi(D)\rarrow\Phi(F_{n-1})\rarrow\dotsb
\rarrow\Phi(F_0)\rarrow\Phi(T)\rarrow0$ is exact in $\sK$ with
$\Phi(F_i)\in\sL$.
  By Proposition~\ref{resol-dim-well-defined}(a), it follows that
$\Phi(D)\in\sL$ (we recall that $n$~is the $\sL$\+resolution
dimension of $\Phi(T)$ in~$\sK$).

 By the assumption that $(\bF,\sL)$ is a strict exact DG\+subpair
in $(\bE,\sK)$, we can conclude that $D\in\bF$.
 Put $F_n=D$, and denote by $F$ the totalization of the finite
complex $F_n\rarrow F_{n-1}\rarrow\dotsb\rarrow F_0$ in
the DG\+category~$\bF$.
 Then the cone of the natural closed morphism $F\rarrow T$ in $\bE$
is absolutely acyclic
by Lemma~\ref{totalization-of-finite-absolutely-acyclic}.
 We have $F\in\bF$, and the cone of the composition of closed
morphisms $F\rarrow T\rarrow E$ is contraacyclic in~$\bE$.
\end{proof}

\Section{Compact Generation Theorem}  \label{compact-generation-secn}

\subsection{Grothendieck DG-categories}
 We recall from Section~\ref{abelian-dg-categories-subsecn} that
an additive DG\+category $\bA$ with shifts and cones is said to be
\emph{abelian} if both the additive categories $\sZ^0(\bA)$ and
$\sZ^0(\bA^\bec)$ are abelian.
 In fact, according to
Proposition~\ref{A-abelian-implies-A-bec-abelian}, it suffices that
the additive category $\sZ^0(\bA)$ be abelian.
 Any abelian DG\+category carries the abelian exact DG\+category
structure, which consists of the abelian exact structures on both
the abelian categories $\sZ^0(\bA)$ and $\sZ^0(\bA^\bec)$.

 Let $\sC$ be a category.
 We recall that an object $G\in\sC$ is said to be a \emph{generator} of
$\sC$ if for any two parallel morphisms $f'$, $f''\:C\rarrow D$ in
$\sC$ the equation $f'g=f''g$ for all morphisms $g\:G\rarrow C$ in
$\sC$ implies $f'=f''$.
 A generator $G$ is called a \emph{strong generator} if for any
morphism $f\:C\rarrow D$ in $\sC$ bijectivity of the map of sets
$\Hom(G,f)\:\Hom_\sC(G,C)\rarrow\Hom_\sC(G,D)$ implies that
$f$~is an isomorphism (see~\cite[Section~0.6]{AR} for an equivalent
definition).
 Similarly one defines a multi-object generator (a \emph{generating
set of objects}) of a category, as in~\cite{AR}.

\begin{lem} \label{Phi-Psi-preserve-generators}
 Let\/ $\bA$ be an additive DG\+category with shifts and cones. \par
\textup{(a)} Let $A\in\bA$ be an object which is a generator of
the additive category\/ $\sZ^0(\bA)$.
 Then the object\/ $\Phi_\bA(A)\in\bA^\bec$ is a generator of
the additive category\/ $\sZ^0(\bA^\bec)$.
 Besides, the functor\/ $\Phi_\bA$ also takes strong generators to
strong generators. \par
\textup{(b)} Let $X\in\bA^\bec$ be an object which is a generator of
the additive category\/ $\sZ^0(\bA^\bec)$.
 Then the object\/ $\Psi^+_\bA(X)\in\bA$ is a generator of the additive
category\/ $\sZ^0(\bA)$.
 Besides, the functor\/ $\Psi^+_\bA$ also takes strong generators
to strong generators.
\end{lem}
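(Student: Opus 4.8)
The plan is to deduce both parts purely formally from the adjunction, faithfulness, and conservativity properties of the functors $\Phi_\bA$, $\Psi^+_\bA$, $\Psi^-_\bA$ established in Lemmas~\ref{G-functors-in-DG-categories} and~\ref{Phi-Psi-conservative}. Throughout I would use the standard additive reformulation: in an additive category an object $G$ is a generator exactly when the additive functor $\Hom(G,-)$ is faithful (equivalently, $\Hom(G,f) = 0$ forces $f = 0$), and the adjunction bijections in play are isomorphisms of abelian groups, so they preserve both vanishing and nonvanishing.

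For part~(a), the relevant adjunction is $\Phi_\bA \dashv \Psi^-_\bA$, giving a natural isomorphism $\Hom_{\sZ^0(\bA^\bec)}(\Phi_\bA(A), C) \cong \Hom_{\sZ^0(\bA)}(A, \Psi^-_\bA(C))$. Given a nonzero $f\colon C \to D$ in $\sZ^0(\bA^\bec)$, faithfulness of $\Psi^-_\bA$ yields $\Psi^-_\bA(f) \neq 0$, and since $A$ generates $\sZ^0(\bA)$ there is $h\colon A \to \Psi^-_\bA(C)$ with $\Psi^-_\bA(f)\circ h \neq 0$. Naturality of the adjunction in $C$ identifies $\Psi^-_\bA(f)\circ h$ with the image of $f\circ g$ under the adjunction isomorphism, where $g\colon \Phi_\bA(A) \to C$ is the morphism adjoint to $h$; hence $f\circ g \neq 0$, which shows $\Phi_\bA(A)$ is a generator. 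For the strong-generator assertion I would feed the same natural isomorphism a morphism $f$ for which $\Hom(\Phi_\bA(A), f)$ is bijective, conclude that $\Hom(A, \Psi^-_\bA(f))$ is bijective, invoke the strong-generator property of $A$ to obtain that $\Psi^-_\bA(f)$ is an isomorphism, and finally use conservativity of $\Psi^-_\bA$ to deduce that $f$ is an isomorphism.

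Part~(b) is exactly dual, now using the adjunction $\Psi^+_\bA \dashv \Phi_\bA$ and the natural isomorphism $\Hom_{\sZ^0(\bA)}(\Psi^+_\bA(X), C) \cong \Hom_{\sZ^0(\bA^\bec)}(X, \Phi_\bA(C))$. One replaces $\Psi^-_\bA$ by $\Phi_\bA$ throughout: faithfulness of $\Phi_\bA$ handles the generator statement and conservativity of $\Phi_\bA$ handles the strong-generator statement, with $X$ generating (resp.\ strongly generating) $\sZ^0(\bA^\bec)$. The argument is otherwise word-for-word the same as in part~(a).

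No serious obstacle is expected; the only points needing care are choosing the correct adjoint in each case — the right adjoint $\Psi^-_\bA$ in part~(a), because the test object occupies the first variable of $\Hom(\Phi_\bA(A),-)$, and $\Phi_\bA$ (with its left adjoint $\Psi^+_\bA$) in part~(b) — and tracking naturality carefully so that the nonvanishing or bijectivity of the Hom-level maps is genuinely transported across the adjunction. It is also worth recording the clean dichotomy that the plain generator condition is matched by faithfulness while the strong-generator condition is matched by conservativity.
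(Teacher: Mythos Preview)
Your proposal is correct and follows essentially the same route as the paper: the paper's proof simply records the abstract principles that a left adjoint with a faithful right adjoint preserves generators, and with a faithful conservative right adjoint preserves strong generators, citing Lemmas~\ref{G-functors-in-DG-categories} and~\ref{Phi-Psi-conservative} for these properties of $\Psi^-_\bA$ and $\Phi_\bA$. You have unpacked these principles explicitly via the adjunction isomorphisms, which is exactly the verification the paper leaves to the reader.
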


\begin{proof}
 By Lemma~\ref{G-functors-in-DG-categories}, the functors $\Phi$ and
$\Psi^+$ have faithful right adjoint functors ($\Psi^-$ and $\Phi$, 
respectively).
 A functor left adjoint to a faithful functor takes generators to
generators, as one can see immediately from the definitions.
 Furthermore, the functors $\Psi^-$ and $\Phi$ are conservative by
Lemma~\ref{Phi-Psi-conservative}.
 A functor left adjoint to a faithful conservative functor takes
strong generators to strong generators.
\end{proof}

 The definition of the \emph{coproduct} of an (infinite) family
of objects in a DG\+category was given in
Section~\ref{dg-categories-subsecn}.

\begin{lem} \label{coproducts-in-cocycles-and-in-DG}
 Let\/ $\bA$ be an additive DG\+category with shifts and cones.
 Then an object $A\in\bA$ is the coproduct of a family of objects
$A_\alpha\in\bA$ if and only if the object $A\in\sZ^0(\bA)$ is
the coproduct of the family of objects $A_\alpha\in\sZ^0(\bA)$.
 So the DG\+category\/ $\bA$ has coproducts if and only if
the additive category\/ $\sZ^0(\bA)$ does.
\end{lem}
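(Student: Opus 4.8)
The plan is to prove the two implications separately, reducing everything to the interplay between the coproduct in the graded category $\bA^*$ and the coproduct in its underlying preadditive category $\bA^0=(\bA^*)^0$, as recorded for graded categories with shifts in Section~\ref{graded-and-DG-secn}. Note that $\bA$ has shifts, so $\bA^*$ is a graded category with shifts, and the DG-categories $\bA^\bec$, $\Phi_\bA$, $\Psi^\pm_\bA$, $\widetilde\Phi_\bA$ are all available.

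For the forward implication, suppose $A=\coprod_\alpha A_\alpha$ in $\bA$. By the properties of coproducts recalled in Section~\ref{dg-categories-subsecn}, any coproduct in $\bA$ is also a coproduct in the graded category $\cZ(\bA)$; passing to the degree-$0$ components of the resulting isomorphisms of graded abelian groups $\Hom_{\cZ(\bA)}^\bu(A,Z)\simeq\prod_\alpha\Hom_{\cZ(\bA)}^\bu(A_\alpha,Z)$, valid for all $Z\in\bA$, yields exactly the statement that $A$ is the coproduct of the $A_\alpha$ in $\sZ^0(\bA)=(\cZ(\bA))^0$.

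The reverse implication is the substantive one, and I would bridge it through the almost involution $\bA\longmapsto\bA^\bec$. Suppose closed degree-$0$ morphisms $\iota_\alpha\:A_\alpha\rarrow A$ exhibit $A$ as $\coprod_\alpha A_\alpha$ in $\sZ^0(\bA)$. First, since $\Phi_\bA$ has the right adjoint $\Psi^-_\bA$ by Lemma~\ref{G-functors-in-DG-categories}, it preserves colimits and hence coproducts, so $\Phi_\bA(A)=\coprod_\alpha\Phi_\bA(A_\alpha)$ in $\sZ^0(\bA^\bec)$ with injections $\Phi_\bA(\iota_\alpha)$. Next, the fully faithful extension $\widetilde\Phi_\bA\:\bA^0\rarrow\sZ^0(\bA^\bec)$ of $\Phi_\bA$ from Lemma~\ref{Phi-Psi-extended-to-nonclosed-morphisms} identifies, for every object $W\in\bA$, the group $\Hom_{\bA^0}(A,W)$ with $\Hom_{\sZ^0(\bA^\bec)}(\Phi_\bA(A),\Phi_\bA(W))$, and likewise each $\Hom_{\bA^0}(A_\alpha,W)$ with $\Hom_{\sZ^0(\bA^\bec)}(\Phi_\bA(A_\alpha),\Phi_\bA(W))$. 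Combining these identifications with the coproduct isomorphism for $\Phi_\bA(A)$ and using that $\widetilde\Phi_\bA$ agrees with $\Phi_\bA$ on the closed morphisms $\iota_\alpha$, one obtains a natural isomorphism $\Hom_{\bA^0}(A,W)\simeq\prod_\alpha\Hom_{\bA^0}(A_\alpha,W)$ induced by the $\iota_\alpha$; that is, $A=\coprod_\alpha A_\alpha$ in $\bA^0$. Finally, since $\bA^*$ is a graded category with shifts, the criterion of Section~\ref{graded-and-DG-secn} promotes this to a coproduct in $\bA^*$, and as the $\iota_\alpha$ are closed, the characterization of coproducts in a DG-category from Section~\ref{dg-categories-subsecn} gives $A=\coprod_\alpha A_\alpha$ in $\bA$.

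The main obstacle throughout is the passage from closed degree-$0$ morphisms (all that the $\sZ^0(\bA)$-coproduct controls) to arbitrary degree-$0$ morphisms (needed for the $\bA^0$-, hence $\bA^*$- and $\bA$-coproduct). The key point is that this passage is exactly what the fully faithful functor $\widetilde\Phi_\bA$, built from cones, accomplishes, while the right adjoint $\Psi^-_\bA$ is what feeds the $\sZ^0(\bA)$-coproduct into $\sZ^0(\bA^\bec)$ in the first place; once both ingredients are in place the argument is formal.
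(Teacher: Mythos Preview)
Your proof is correct and follows essentially the same approach as the paper. Both arguments hinge on the same two ingredients: that $\Phi_\bA$ preserves coproducts (as a left adjoint) and that $\widetilde\Phi_\bA\:\bA^0\rarrow\sZ^0(\bA^\bec)$ is fully faithful; the paper phrases the reduction as ``replace $B$ by $B[n]$ and look in degree~$0$'', while you route through the general fact for graded categories with shifts, but these are the same computation in different packaging.
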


\begin{proof}
 The ``only if'' assertion follows directly from the fact that
the functor assigning to a complex of abelian groups its group of
degree~$0$ cocycles preserves products.
 This implication, mentioned in Section~\ref{dg-categories-subsecn}, 
does not require the assumptions of $\bA$ having shifts or cones.

 To prove the ``if'', assume that an object $A$ is the coproduct of
objects $A_\alpha$ in $\sZ^0(\bA)$.
 Let $\iota_\alpha\:A_\alpha\rarrow A$ be the structure morphisms of
$A$ as the coproduct of $A_\alpha$ in $\sZ^0(\bA)$; so $\iota_\alpha
\in\Hom^0_\bA(A_\alpha,A)$ are closed morphisms of degree~$0$ in
$\bA$, that is $d(\iota_\alpha)=0$.
 Then, for every object $B\in\bA$, the morphisms~$\iota_\alpha$
induce a morphism of complexes of abelian groups
\begin{equation} \label{coproduct-Hom-comparison}
 \Hom_\bA^\bu(A,B)\lrarrow
 \prod\nolimits_\alpha\Hom_\bA^\bu(A_\alpha,B).
\end{equation}
 We need to prove that~\eqref{coproduct-Hom-comparison} is
an isomorphism of complexes of abelian groups.
 For this purpose, it suffices to show that the same map is
an isomorphism of the underlying graded abelian groups of
the complexes in question.
 Replacing $B$ with its shifts $B[n]$, \,$n\in\Gamma$, it remains
to show that~\eqref{coproduct-Hom-comparison} is an isomorphism
of the underling abelian groups in the cohomological degree~$0$.

 The latter fact is established by the observation that the functor
$\Phi_\bA$, being a left adjoint, preserves coproducts, so
$\Phi(A)$ is the coproduct of $\Phi(A_\alpha)$ in $\sZ^0(\bA^\bec)$,
together with Lemma~\ref{Phi-Psi-extended-to-nonclosed-morphisms}.
\end{proof}

 We refer to~\cite[Theorem~1.5]{AR} or~\cite{AN}
for a discussion of \emph{directed} and \emph{filtered} colimits.

\begin{lem} \label{directed-colimits-exact-in-A-iff-in-A-bec}
 Let\/ $\bA$ be an abelian DG\+category with coproducts.
 Then the coproduct functors are exact in the abelian category\/
$\sZ^0(\bA)$ if and only if they are exact in the abelian category\/
$\sZ^0(\bA^\bec)$.
 The directed colimit functors are exact in\/ $\sZ^0(\bA)$ if and only
if they are exact in\/ $\sZ^0(\bA^\bec)$.
\end{lem}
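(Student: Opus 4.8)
The plan is to deduce both equivalences from the single observation that the mutually adjoint functors $\Phi=\Phi_\bA\colon\sZ^0(\bA)\to\sZ^0(\bA^\bec)$ and $\Psi=\Psi^+_\bA\colon\sZ^0(\bA^\bec)\to\sZ^0(\bA)$ are exact, faithful, and cocontinuous functors between abelian categories. First I would collect the relevant properties. By Lemma~\ref{G-functors-in-DG-categories} the functors $\Phi$ and $\Psi$ are faithful and fit into the chain of adjunctions $\Phi[-1]\dashv\Psi\dashv\Phi\dashv\Psi^-_\bA$; in particular each of $\Phi$ and $\Psi$ is simultaneously a left and a right adjoint, so each preserves all limits and all colimits, and in particular each is exact as a functor between the abelian categories $\sZ^0(\bA)$ and $\sZ^0(\bA^\bec)$ (this is precisely the computation in the proof of Lemma~\ref{abelian-exact-DG-structure}). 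Being left adjoints, $\Phi$ and $\Psi$ preserve arbitrary coproducts and arbitrary directed colimits. Finally, both abelian categories have coproducts, hence all directed colimits: $\sZ^0(\bA)$ by Lemma~\ref{coproducts-in-cocycles-and-in-DG}, while $\bA^\bec$ inherits coproducts from $\bA$ (Section~\ref{main-construction-subsecn}) and one applies Lemma~\ref{coproducts-in-cocycles-and-in-DG} to $\bA^\bec$. Thus the statement is meaningful.

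The engine of the argument is the standard fact, already invoked in Lemma~\ref{abelian-exact-DG-structure}, that a faithful exact functor between abelian categories reflects short exact sequences: if such a functor sends a composable pair $A\to B\to C$ to a short exact sequence, then $A\to B\to C$ was itself short exact. I would apply this reflection principle to $\Phi$ in one direction and to $\Psi$ in the other.

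For the coproduct statement, assume first that coproducts are exact in $\sZ^0(\bA^\bec)$, and take a family of short exact sequences $(0\to A_\alpha\to B_\alpha\to C_\alpha\to0)_\alpha$ in $\sZ^0(\bA)$. Applying the exact functor $\Phi$ gives short exact sequences in $\sZ^0(\bA^\bec)$, whose coproduct $0\to\coprod_\alpha\Phi(A_\alpha)\to\coprod_\alpha\Phi(B_\alpha)\to\coprod_\alpha\Phi(C_\alpha)\to0$ is exact by hypothesis; since $\Phi$ preserves coproducts, this is the image under $\Phi$ of $0\to\coprod_\alpha A_\alpha\to\coprod_\alpha B_\alpha\to\coprod_\alpha C_\alpha\to0$, which is therefore short exact by reflection. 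The converse is identical with the roles interchanged, applying the exact, faithful, coproduct-preserving functor $\Psi$ to a family of short exact sequences in $\sZ^0(\bA^\bec)$. The directed colimit statement is then proved verbatim, replacing ``family of short exact sequences'' by ``short exact sequence of directed systems'' and ``coproduct'' by ``directed colimit,'' and using that $\Phi$ and $\Psi$ preserve directed colimits.

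I do not expect a serious obstacle here: the whole content lies in the bookkeeping above, namely checking that $\Phi$ and $\Psi$ are simultaneously exact, faithful, and colimit-preserving, and that the two abelian categories possess the relevant colimits. The one point I would state with a line of care is that exactness of the coproduct (respectively directed colimit) functor is equivalent to the assertion that a coproduct (respectively directed colimit) of short exact sequences is short exact: these colimit functors are left adjoints, hence automatically right exact, so ``exact'' reduces to preservation of monomorphisms. This is exactly what makes the reflection argument applicable symmetrically in both directions.
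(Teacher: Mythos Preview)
Your proof is correct and takes essentially the same approach as the paper: the paper cites Lemma~\ref{exact-co-products-lemma}(a) for the coproduct case (whose proof is exactly your reflection argument via $\Phi$ and $\Psi^\pm$), and then sketches the directed-colimit case by noting that $\Phi_\bA$ and $\Psi^+_\bA$ are left adjoints (hence preserve colimits), exact, and faithful. Your write-up is simply a more detailed unpacking of the same idea, with the additional care of verifying that both categories have directed colimits.
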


\begin{proof}
 The assertion about coproducts is a particular case of
Lemma~\ref{exact-co-products-lemma}(a).
 The assertion about directed colimits is provable similarly: it follows
from the observation that the functors $\Phi_\bA$ and $\Psi^+_\bA$,
being left adjoints, preserve all colimits, together with the fact that
these functors are exact and faithful (so they take nonzero objects
to nonzero objects).
\end{proof}

 In an abelian category, any generator (or generating set) is strong,
and existence of coproducts implies existence of colimits.
 A \emph{Grothendieck category} is an abelian category with a generator,
infinite coproducts, and exact directed/filtered colimits.

\begin{prop}
 Let\/ $\bA$ be an abelian DG\+category. \par
\textup{(a)} If the abelian category\/ $\sZ^0(\bA)$ is Grothendieck,
then so is the abelian category\/ $\sZ^0(\bA^\bec)$.
 The DG\+category\/ $\bA$ has coproducts in this case. \par
\textup{(b)} If the DG\+category $\bA$ has coproducts and
the abelian category\/ $\sZ^0(\bA^\bec)$ is Grothendieck, then
the abelian category\/ $\sZ^0(\bA)$ is Grothendieck.
\end{prop}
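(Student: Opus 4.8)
The plan is to assemble both assertions from the three preceding lemmas, using that, by the very definition of an abelian DG\+category, both $\sZ^0(\bA)$ and $\sZ^0(\bA^\bec)$ are already abelian categories whenever $\bA$ is abelian. So in each part only the three remaining ingredients of the notion of a Grothendieck category — a generator, infinite coproducts, and exactness of directed colimits — need to be transported across the construction $\bA\longmapsto\bA^\bec$.

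For part~(a), I would first observe that since the Grothendieck category $\sZ^0(\bA)$ has infinite coproducts, Lemma~\ref{coproducts-in-cocycles-and-in-DG} shows the DG\+category $\bA$ has coproducts, which is the last assertion of~(a). It then follows from the general properties recorded in Section~\ref{main-construction-subsecn} that $\bA^\bec$ has coproducts, and applying the ``only if'' direction of Lemma~\ref{coproducts-in-cocycles-and-in-DG} to the DG\+category $\bA^\bec$ (which is additive with shifts and twists, hence has cones) yields coproducts in $\sZ^0(\bA^\bec)$. Next, choosing a generator $A$ of $\sZ^0(\bA)$, Lemma~\ref{Phi-Psi-preserve-generators}(a) produces a generator $\Phi_\bA(A)$ of $\sZ^0(\bA^\bec)$. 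Finally, since $\bA$ has coproducts and directed colimits are exact in $\sZ^0(\bA)$, the hypotheses of Lemma~\ref{directed-colimits-exact-in-A-iff-in-A-bec} are met, and that lemma gives exactness of directed colimits in $\sZ^0(\bA^\bec)$. Thus $\sZ^0(\bA^\bec)$ is Grothendieck.

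For part~(b), the argument is the dual assembly. By Lemma~\ref{coproducts-in-cocycles-and-in-DG}, the hypothesis that $\bA$ has coproducts gives coproducts in $\sZ^0(\bA)$. A generator $X$ of the Grothendieck category $\sZ^0(\bA^\bec)$ is carried to a generator $\Psi^+_\bA(X)$ of $\sZ^0(\bA)$ by Lemma~\ref{Phi-Psi-preserve-generators}(b). Exactness of directed colimits transfers from $\sZ^0(\bA^\bec)$ back to $\sZ^0(\bA)$ by Lemma~\ref{directed-colimits-exact-in-A-iff-in-A-bec}, again using that $\bA$ has coproducts. Hence $\sZ^0(\bA)$ is Grothendieck.

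Since all the genuine content has already been isolated in the lemmas, no serious obstacle remains; the only points requiring care are bookkeeping ones. Specifically, I must make sure the existence-of-coproducts hypothesis needed to invoke Lemma~\ref{directed-colimits-exact-in-A-iff-in-A-bec} is in force in each part (in~(a) it is deduced rather than assumed), and that Lemma~\ref{coproducts-in-cocycles-and-in-DG} is legitimately applied to $\bA^\bec$ in part~(a), which requires noting that $\bA^\bec$ is an additive DG\+category with shifts and cones. I would also recall that in a Grothendieck (indeed, in any abelian) category every generator is automatically strong and the existence of coproducts implies that of all colimits, so no separate verification of strongness or of the colimit hypotheses beyond directed colimits is needed.
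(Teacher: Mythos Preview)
Your proposal is correct and follows exactly the paper's approach: the paper's proof simply says ``Combine the results of Lemmas~\ref{Phi-Psi-preserve-generators}, \ref{coproducts-in-cocycles-and-in-DG}, and~\ref{directed-colimits-exact-in-A-iff-in-A-bec},'' and you have carefully spelled out how these three lemmas assemble into the two parts, including the bookkeeping checks (applicability of Lemma~\ref{coproducts-in-cocycles-and-in-DG} to $\bA^\bec$, and the coproduct hypothesis needed for Lemma~\ref{directed-colimits-exact-in-A-iff-in-A-bec}).
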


\begin{proof}
 Combine the results of Lemmas~\ref{Phi-Psi-preserve-generators},
\ref{coproducts-in-cocycles-and-in-DG},
and~\ref{directed-colimits-exact-in-A-iff-in-A-bec}.
\end{proof}

 We will say that an abelian DG\+category is \emph{Grothendieck} if
the abelian categories $\sZ^0(\bA)$ and $\sZ^0(\bA^\bec)$ are
Grothendieck.

\begin{exs} \label{Grothendieck-abelian-DG-examples}
 (1)~Let $\sA$ be a Grothendieck abelian category.
 Then the abelian DG\+category $\bC(\sA)$ of (unbounded) complexes
in $\sA$, as in Example~\ref{exact-dg-category-of-complexes}, is
a Grothendieck DG\+category.

 (2)~Let $\biR^\cu=(R^*,d,h)$ be a CDG\+ring.
 Then the abelian DG\+category $\biR^\cu\bmodl$ of left CDG\+modules
over $\biR^\cu$, as in Example~\ref{abelian-dg-category-of-cdg-modules},
is a Grothendieck DG\+category.
 In fact, $\biR^\cu\bmodl$ is even a locally finitely presentable
abelian DG\+category in the sense of the definition in
the next Section~\ref{finitely-generated-presentable-subsecn}.

 (3)~Let $X$ be a scheme and $\biB^\cu=(B^*,d,h)$ be a quasi-coherent
CDG\+quasi-algebra over~$X$.
 Then the abelian DG\+category $\biB^\cu\bqcoh$ of quasi-coherent left
CDG\+modules over $\biB^\cu$, as in
Example~\ref{abelian-dg-category-of-quasi-coh-cdg-modules}, is
a Grothendieck DG\+category.

 (4)~Let $\sA$ be a Grothendieck abelian category and
$\Lambda\:\sA\rarrow\sA$ be an autoequivalence.
 Assume that either $\Gamma=\boZ$, or $\Lambda$ is involutive
and $\Gamma=\boZ/2$.
 Let $w\:\Id_\sA\rarrow\Lambda^2$ be a potential.
 Then the abelian DG\+category of factorizations $\bF(\sA,\Lambda,w)$,
as in Example~\ref{exact-dg-category-of-factorizations},
is a Grothendieck DG\+category.
\end{exs}

\subsection{Finitely generated and finitely presentable objects}
\label{finitely-generated-presentable-subsecn}
 Let $\sC$ be a category with directed colimits.
 An object $P\in\sC$ is said to be \emph{finitely
presentable}~\cite[Definition~1.1]{AR} if the functor $\Hom_\sC(P,{-})$
from $\sC$ to the category of sets preserves directed colimits.
 The object $P$ is said to be \emph{finitely
generated} (\cite[Definitions~1.13(1) and~1.67]{AR} for
$\lambda=\aleph_0$) if the functor $\Hom_\sC(P,{-})$ preserves
the colimits of directed diagrams of monomorphisms.

\begin{lem} \label{Phi-Psi-preserve-generatedness-presentability}
 Let\/ $\bA$ be an additive DG\+category with shifts and cones
such that both the additive categories\/ $\sZ^0(\bA)$ and\/
$\sZ^0(\bA^\bec)$ have directed colimits.
 Then both the additive functors\/ $\Phi_\bA\:\sZ^0(\bA)\rarrow
\sZ^0(\bA^\bec)$ and\/ $\Psi^+_\bA\:\sZ^0(\bA^\bec)\rarrow\sZ^0(\bA)$
preserve finite generatedness and finite presentability of objects.
\end{lem}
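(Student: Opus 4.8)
The plan is to deduce both assertions from the chain of adjunctions $\Psi^+_\bA\dashv\Phi_\bA\dashv\Psi^-_\bA$ recorded in Lemma~\ref{G-functors-in-DG-categories}, combined with the standard categorical principle that a left adjoint preserves finitely presentable objects once its right adjoint preserves directed colimits, and preserves finitely generated objects once the right adjoint preserves both directed colimits and monomorphisms. Concretely, I would first record the formal statement: if $F\:\sC\rarrow\sD$ is a functor between additive categories with directed colimits and $F$ has a right adjoint $G\:\sD\rarrow\sC$ preserving directed colimits, then for any finitely presentable object $P\in\sC$ and any directed diagram $(D_i)$ in $\sD$ there are natural isomorphisms $\Hom_\sD(F(P),\varinjlim_i D_i)\simeq\Hom_\sC(P,G(\varinjlim_i D_i))\simeq\Hom_\sC(P,\varinjlim_i G(D_i))\simeq\varinjlim_i\Hom_\sC(P,G(D_i))\simeq\varinjlim_i\Hom_\sD(F(P),D_i)$, whence $F(P)$ is finitely presentable. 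If moreover $G$ preserves monomorphisms, then $G$ carries a directed diagram of monomorphisms to a directed diagram of monomorphisms, and the same computation restricted to such diagrams shows that $F$ preserves finitely generated objects.

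The key step is to check that the two right adjoints that I need, namely $\Phi_\bA$ (the right adjoint of $\Psi^+_\bA$) and $\Psi^-_\bA$ (the right adjoint of $\Phi_\bA$), both preserve directed colimits and monomorphisms. For $\Phi_\bA$ the colimit preservation is immediate, since $\Phi_\bA$ is itself a \emph{left} adjoint, namely to $\Psi^-_\bA$, by Lemma~\ref{G-functors-in-DG-categories}, and hence preserves all colimits. The point that genuinely uses the structure of the $\bec$\+construction is the preservation of directed colimits by $\Psi^-_\bA$, which a priori is only a right adjoint. Here I would invoke the shift identity $\Psi^-_\bA\simeq\Psi^+_\bA[1]$ from Lemma~\ref{G-functors-in-DG-categories}: the functor $\Psi^+_\bA$ is a left adjoint (to $\Phi_\bA$), so it preserves all colimits, and the shift $[1]\:\sZ^0(\bA)\rarrow\sZ^0(\bA)$ is an autoequivalence (as $\bA$ has shifts), so the composite $\Psi^-_\bA$ preserves directed colimits too. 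Finally, both $\Phi_\bA$ and $\Psi^-_\bA$, being right adjoints, preserve monomorphisms, since right adjoints preserve all limits and monomorphisms are detected by a pullback condition.

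Assembling these observations finishes the argument. Applying the general principle with $F=\Psi^+_\bA$ and $G=\Phi_\bA$ shows that $\Psi^+_\bA$ preserves finite generatedness and finite presentability, because $\Phi_\bA$ preserves directed colimits and monomorphisms; applying it with $F=\Phi_\bA$ and $G=\Psi^-_\bA$ shows the same for $\Phi_\bA$, because $\Psi^-_\bA$ preserves directed colimits and monomorphisms. The hypothesis that directed colimits exist in both $\sZ^0(\bA)$ and $\sZ^0(\bA^\bec)$ ensures that every colimit used above is defined. I expect the only non-formal ingredient to be the preservation of directed colimits by the right adjoint $\Psi^-_\bA$: this would fail for a generic right adjoint and is rescued precisely by the shift-periodicity identity $\Psi^-_\bA\simeq\Psi^+_\bA[1]$ special to the almost-involution construction, everything else being routine adjunction bookkeeping.
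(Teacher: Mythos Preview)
Your proof is correct and follows essentially the same approach as the paper: both argue that a left adjoint preserves finite presentability/generatedness when its right adjoint preserves directed colimits (and monomorphisms), and both observe that the right adjoints $\Phi_\bA$ and $\Psi^-_\bA$ do preserve directed colimits. The only cosmetic difference is in justifying the latter for $\Psi^-_\bA$: the paper says that $\Psi^-_\bA$ ``in turn has a right adjoint'' (implicitly using the shift-periodic adjunction chain), while you argue directly from $\Psi^-_\bA\simeq\Psi^+_\bA[1]$ with $\Psi^+_\bA$ a left adjoint and $[1]$ an autoequivalence---these are equivalent observations, and your phrasing is perhaps more self-contained relative to what is explicitly stated in Lemma~\ref{G-functors-in-DG-categories}.
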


\begin{proof}
 Any functor left adjoint to a functor preserving directed colimits
takes finitely generated objects to finitely generated objects and
finitely presentable objects to finitely presentable objects
(notice that any right adjoint functor takes monomorphisms to
monomorphisms).
 Furthermore, both the right adjoint functors $\Psi^-_\bA$ and
$\Phi_\bA$ preserve directed colimits, since they, in turn, have
right adjoints.
\end{proof}

\begin{lem} \label{Phi-Psi-reflect-generatedness-presentability}
 Let\/ $\bA$ be an additive DG\+category with shifts and cones
such that both the additive categories\/ $\sZ^0(\bA)$ and\/
$\sZ^0(\bA^\bec)$ have directed colimits.
 Then both the additive functors\/ $\Phi_\bA\:\sZ^0(\bA)\rarrow
\sZ^0(\bA^\bec)$ and\/ $\Psi^+_\bA\:\sZ^0(\bA^\bec)\rarrow\sZ^0(\bA)$
\emph{reflect} finite generatedness and finite presentability
of objects.
 In other words, an object $A\in\bA$ is finitely generated
(respectively, finitely presentable) in\/ $\sZ^0(\bA)$ whenever
the object\/ $\Phi_\bA(A)$ is finitely generated (resp., finitely
presentable) in\/ $\sZ^0(\bA^\bec)$.
 An object $X\in\bA^\bec$ is finitely generated (resp., finitely
presentable) in\/ $\sZ^0(\bA^\bec)$ whenever the object\/
$\Psi^+_\bA(X)$ is finitely generated (resp., finitely presentable)
in\/ $\sZ^0(\bA)$.
\end{lem}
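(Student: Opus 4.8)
The plan is to reduce both reflection statements to a single internal assertion about the functor $\Xi$, and then to analyze the canonical short exact sequence of Lemma~\ref{cone-kernel-cokernel}. First I would record the following consequence of the preservation Lemma~\ref{Phi-Psi-preserve-generatedness-presentability} together with Lemma~\ref{Xi-decomposed}: since $\Psi^+_\bA$ preserves finite generatedness and finite presentability and $\Xi_\bA\simeq\Psi^+_\bA\circ\Phi_\bA$, any object $A$ with $\Phi_\bA(A)$ finitely generated (resp.\ presentable) has $\Xi_\bA(A)=\Psi^+_\bA\Phi_\bA(A)$ finitely generated (resp.\ presentable); dually, using $\Phi_\bA\circ\Psi^-_\bA\simeq\Xi_{\bA^\bec}$ together with $\Psi^-_\bA=\Psi^+_\bA[1]$ and the fact (Lemma~\ref{twists-into-isomorphisms}) that $\Phi$ and $\Psi^\pm$ carry $[n]$ to $[-n]$, any $X$ with $\Psi^+_\bA(X)$ finitely generated (resp.\ presentable) has $\Xi_{\bA^\bec}(X)$ finitely generated (resp.\ presentable), up to a shift. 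Thus it suffices to prove the purely internal statement: for any additive DG\+category $\bC$ with shifts and cones such that $\sZ^0(\bC)$ has directed colimits, the endofunctor $\Xi_\bC\:\sZ^0(\bC)\rarrow\sZ^0(\bC)$ reflects finite generatedness and finite presentability. Applying this to $\bC=\bA$ and to $\bC=\bA^\bec$ (which is again additive with shifts and cones, and has directed colimits by hypothesis) yields the two claims.

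For the internal statement I would use the natural short exact sequence $0\rarrow C[-1]\rarrow\Xi_\bC(C)\rarrow C\rarrow0$ in $\sZ^0(\bC)$ provided by Lemma~\ref{cone-kernel-cokernel}, in which the morphism $\Xi_\bC(C)\rarrow C$ is a cokernel, hence a regular, and therefore strong, epimorphism, and $C[-1]\rarrow\Xi_\bC(C)$ is its kernel. Begin with finite generatedness: assuming $\Xi_\bC(C)$ is finitely generated, the object $C$ is a regular-epimorphic (cokernel) image of a finitely generated object, hence finitely generated by the standard closure of finitely generated objects under regular quotients. Since the hypotheses are stable under the shift autoequivalences and $\Xi_\bC(C[n])\simeq\Xi_\bC(C)[n]$, this simultaneously gives that $C[n]$ is finitely generated for every $n\in\Gamma$ whenever $\Xi_\bC(C)$ is; in particular $C[-1]$ is finitely generated. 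For finite presentability, assume $\Xi_\bC(C)$ is finitely presentable; then it is finitely generated, so by the previous step $C[-1]$ is finitely generated, and the short exact sequence exhibits $C$ as the quotient in an extension whose middle term $\Xi_\bC(C)$ is finitely presentable and whose sub-object $C[-1]$ is finitely generated, whence $C$ is finitely presentable by the standard fact that in such a short exact sequence the quotient is finitely presentable.

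The main obstacle I anticipate is justifying these closure properties in the stated generality, where $\sZ^0(\bC)$ is only assumed additive with directed colimits rather than abelian. The delicate point is the base case, that a regular-epimorphic image of a finitely generated object is finitely generated: the usual diagonal fill-in argument, pairing the strong epimorphism $\Xi_\bC(C)\rarrow C$ against the colimit injections $B_i\rarrow\varinjlim_i B_i$ of a directed system of monomorphisms, requires those colimit injections to be monomorphisms, i.e.\ a mild exactness of directed colimits of monomorphisms. I would either invoke the corresponding standard facts about finitely generated and finitely presentable objects (Ad\'amek--Rosick\'y), or observe that in the abelian (Grothendieck, locally finitely presentable) categories $\sZ^0(\bC)$ occurring in the applications this exactness is automatic, so that the argument goes through; the remainder of the proof is then the routine bookkeeping sketched above.
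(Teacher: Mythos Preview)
Your reduction to the internal statement about $\Xi$ is the same starting move as the paper's. The divergence is in how you pass from ``$\Xi_\bC(C)$ is finitely presentable (or generated)'' to the same property for~$C$. You try to use the short exact sequence $0\rarrow C[-1]\rarrow\Xi_\bC(C)\rarrow C\rarrow0$ directly, and this forces you through two closure statements---``regular quotient of fg is fg'' and ``quotient in a short exact sequence with fp middle and fg subobject is fp''---neither of which is straightforward under the paper's definition of finitely generated (preservation of directed colimits of \emph{diagrams of monomorphisms}) in a bare additive category with directed colimits. You correctly identify the obstacle in the first step, but your suggested remedies (invoke Ad\'amek--Rosick\'y, or restrict to the abelian applications) do not prove the lemma as stated; and the second step has the analogous problem, since with this definition of fg you do not get injectivity of $\varinjlim\Hom(K,X_i)\to\Hom(K,\varinjlim X_i)$ for arbitrary directed systems.

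The paper sidesteps all of this with one extra observation: from the kernel--cokernel pair $C[-1]\rarrow\Xi_\bC(C)\rarrow C$ of Lemma~\ref{cone-kernel-cokernel} and its shift, the object $C$ is the cokernel of the natural composite
\[
\Xi_\bC(C)[-1]\lrarrow C[-1]\lrarrow\Xi_\bC(C),
\]
because the first arrow is a (regular) epimorphism. Thus $C$ is the cokernel of a morphism between two objects, \emph{both} of which are already finitely presentable (resp.\ finitely generated) once $\Xi_\bC(C)$ is, since the shift is an autoequivalence. Then one only needs the clean fact that a cokernel of a morphism between finitely presentable (resp.\ finitely generated) objects is again such: $\Hom_{\sZ^0(\bC)}(C,{-})$ is the kernel of the map $\Hom(\Xi_\bC(C),{-})\to\Hom(\Xi_\bC(C)[-1],{-})$, and kernels commute with filtered colimits (resp.\ directed colimits of monomorphisms) in abelian groups. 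This avoids any appeal to exactness properties of directed colimits in $\sZ^0(\bC)$ and closes the gap in your argument.
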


\begin{proof}
 Let $A\in\bA$ be an object such that the object $\Phi(A)$ is finitely
presentable in $\sZ^0(\bA^\bec)$.
 Then, by Lemma~\ref{Phi-Psi-preserve-generatedness-presentability},
the object $\Psi^+_\bA\Phi_\bA(A)$ is finitely presentable
in $\sZ^0(\bA)$.
 According to Lemma~\ref{Xi-decomposed}, we have
$\Psi^+_\bA\Phi_\bA(A)\simeq\Xi_\bA(A)$.
 By Lemma~\ref{cone-kernel-cokernel}, we have a natural kernel-cokernel
pair of morphisms $A[-1]\rarrow\Xi_\bA(A)\rarrow A$ in $\sZ^0(\bA)$;
hence the object $A$ is the cokernel of a natural morphism $\Xi_\bA(A)
[-1]\rarrow\Xi_\bA(A)$ in $\sZ^0(\bA)$.
 It remains to observe that the class of finitely presentable objects
is preserved by cokernels.
 All the other assertions are provable similarly.
\end{proof}

 A category $\sC$ is said to be \emph{locally finitely presentable} if
it has all colimits and a strong generating set consisting of finitely 
presentable objects~\cite[Definition~1.9 and Theorem~1.11]{AR}.
 The class (or full subcategory) of all finitely presentable objects
in $\sC$ is denoted by $\sC_\fp\subset\sC$.
 Classical references on locally finitely presentable (and more
generally, finitely accessible) additive categories include
the papers~\cite{Len,CB,Kr0}; see also the discussion
in~\cite[Section~8.1]{PS5}.

\begin{prop}
 Let\/ $\bA$ be an additive DG\+category with shifts and cones. \par
\textup{(a)} If the additive category\/ $\sZ^0(\bA)$ is locally\/
finitely presentable, then so is the additive category\/
$\sZ^0(\bA^\bec)$. \par
\textup{(b)} If the additive category\/ $\sZ^0(\bA)$ has all
colimits and the additive category\/ $\sZ^0(\bA^\bec)$ is locally\/
finitely presentable, then the additive category\/ $\sZ^0(\bA)$
is locally finitely presentable as well.
\end{prop}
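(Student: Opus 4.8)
The plan is to reduce both parts to the Adámek--Rosický characterization already cited in the text (\cite[Definition~1.9 and Theorem~1.11]{AR}): an additive category is locally finitely presentable precisely when it is cocomplete and admits a strong generating \emph{set} consisting of finitely presentable objects. Both ingredients (cocompleteness, and a strong generating set of finitely presentable objects) will be transferred back and forth between $\sZ^0(\bA)$ and $\sZ^0(\bA^\bec)$ along the adjoint triple $\Psi^+_\bA\dashv\Phi_\bA\dashv\Psi^-_\bA$ of Lemma~\ref{G-functors-in-DG-categories}. Throughout I will use that the single-object statements of Lemma~\ref{Phi-Psi-preserve-generators} extend verbatim to generating \emph{sets}, since their proofs only invoke the adjunction together with faithfulness and conservativity of the relevant right adjoints.

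For part~(a), assume $\sZ^0(\bA)$ is locally finitely presentable, hence cocomplete. First I would establish cocompleteness of $\sZ^0(\bA^\bec)$. Coproducts are obtained by the chain: $\sZ^0(\bA)$ has coproducts, so by Lemma~\ref{coproducts-in-cocycles-and-in-DG} the DG\+category $\bA$ has coproducts; then $\bA^\bec$ has coproducts (Section~\ref{main-construction-subsecn}); then Lemma~\ref{coproducts-in-cocycles-and-in-DG} applied to $\bA^\bec$ gives coproducts in $\sZ^0(\bA^\bec)$. Cokernels are supplied by Lemma~\ref{Psi-creates-co-kernels}(b): for any morphism $g$ in $\sZ^0(\bA^\bec)$, the morphism $\Psi^+_\bA(g)$ has a cokernel in the cocomplete category $\sZ^0(\bA)$, whence $g$ has a cokernel. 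An additive category with all coproducts and all cokernels has all colimits, so $\sZ^0(\bA^\bec)$ is cocomplete. For the strong generating set, I would take a strong generating set of finitely presentable objects in $\sZ^0(\bA)$ and apply $\Phi_\bA$: it remains a strong generating set by Lemma~\ref{Phi-Psi-preserve-generators}(a) (as $\Phi_\bA$ is left adjoint to the faithful conservative functor $\Psi^-_\bA$), and it consists of finitely presentable objects by Lemma~\ref{Phi-Psi-preserve-generatedness-presentability} (now applicable, since both categories are cocomplete and hence have directed colimits). Theorem~1.11 of \cite{AR} then finishes part~(a).

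For part~(b), cocompleteness of $\sZ^0(\bA)$ is assumed outright, so only a strong generating set of finitely presentable objects must be produced. Here I would take such a set in the locally finitely presentable category $\sZ^0(\bA^\bec)$ and apply $\Psi^+_\bA$: it stays a strong generating set by Lemma~\ref{Phi-Psi-preserve-generators}(b) ($\Psi^+_\bA$ being left adjoint to the faithful conservative $\Phi_\bA$), and it stays within the finitely presentable objects by Lemma~\ref{Phi-Psi-preserve-generatedness-presentability}. Again \cite[Theorem~1.11]{AR} gives the conclusion.

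I expect the only genuinely nonformal point to be the cocompleteness step in part~(a): coproducts transfer easily, but the existence of cokernels (and hence of all colimits) in $\sZ^0(\bA^\bec)$ rests essentially on Lemma~\ref{Psi-creates-co-kernels}, which is precisely the technical input distinguishing $\sZ^0(\bA^\bec)$ from a naively well-behaved category. The remaining care is bookkeeping: checking that the cited lemmas, stated for a single generator, apply to a whole generating set, and confirming that directed colimits are available wherever Lemma~\ref{Phi-Psi-preserve-generatedness-presentability} is invoked (which is automatic once cocompleteness is in hand).
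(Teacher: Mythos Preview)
Your proposal is correct and follows essentially the same approach as the paper: first transfer cocompleteness from $\sZ^0(\bA)$ to $\sZ^0(\bA^\bec)$ using Lemma~\ref{coproducts-in-cocycles-and-in-DG} for coproducts and Lemma~\ref{Psi-creates-co-kernels} for cokernels, then move strong generating sets of finitely presentable objects back and forth via Lemmas~\ref{Phi-Psi-preserve-generators} and~\ref{Phi-Psi-preserve-generatedness-presentability}. The paper's proof is slightly terser but structurally identical.
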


\begin{proof}
 Let us show that the additive category $\sZ^0(\bA^\bec)$ has colimits
whenever the additive category $\sZ^0(\bA)$ does.
 Indeed, the DG\+category $\bA$ has coproducts in this case by
Lemma~\ref{coproducts-in-cocycles-and-in-DG}, hence so does
the DG\+category $\bA^\bec$ and the additive category $\sZ^0(\bA^\bec)$.
 Furthermore, by Lemma~\ref{Psi-creates-co-kernels} the additive
category $\sZ^0(\bA^\bec)$ has cokernels whenever the additive
category $\sZ^0(\bA)$ does.
 Therefore, all colimits exist in $\sZ^0(\bA^\bec)$.

 Now we can assume that both the categories $\sZ^0(\bA)$ and
and $\sZ^0(\bA^\bec)$ have colimits.
 Lemma~\ref{Phi-Psi-preserve-generators} has an obvious version for
single generators replaced with generating sets, which can be applied
together with Lemma~\ref{Phi-Psi-preserve-generatedness-presentability}
to prove that the category $\sZ^0(\bA)$ has a set of finitely
presentable generators if and only if the category
$\sZ^0(\bA^\bec)$ does.
\end{proof}

 An additive DG\+category $\bA$ with shifts and cones is said to be
\emph{locally finitely presentable} if both the additive categories
$\sZ^0(\bA)$ and $\sZ^0(\bA^\bec)$ are locally finitely presentable.
 Similarly one can define the notion of a \emph{locally
$\lambda$\+presentable DG\+category} for a regular cardinal~$\lambda$
(based on the notion of a locally $\lambda$\+presentable
category~\cite[Definition~1.17 and Theorem~1.20]{AR}).
 We refer to~\cite[Section~6.2]{PS5} for a detailed discussion of
locally presentable DG\+categories.

 Let $\sA$ be a Grothendieck abelian category.
 Then an object of $\sA$ is finitely generated if and only if it
cannot be produced as the union of an infinite ascending chain of its
subobjects.
 Equivalently, an object $A\in\sA$ is finitely generated if and only if,
whenever $A$ is the sum of a family of its subobjects $(A_\alpha)$,
there exists a finite subfamily in $(A_\alpha)$ whose sum is also equal
to~$A$.
 The full subcategory $\sA_\fg\subset\sA$ of all finitely generated
objects in $\sA$ is closed under quotients and extensions.
 All finitely generated objects are quotients of finite direct sums of
copies of the generator of the category; so up to isomorphism, there is
only a set of objects in~$\sA_\fg$.

 We will only define \emph{local finite generatedness} in the context
of \emph{abelian} categories, referring to~\cite[Section~1.E]{AR} for
the general case and taking into account the known fact that any
locally finitely generated abelian category is Grothendieck (see,
e.~g., \cite[Corollary~9.6]{PS1} for a more general result).
 A Grothendieck abelian category is said to be \emph{locally finitely
generated} if it has a generating set consisting of finitely generated
objects.
 Any object in a locally finitely generated abelian category is
the sum of its finitely generated subobjects.

\begin{prop} \label{loc-fin-gen-abelian-DG-category-prop}
 Let\/ $\bA$ be a Grothendieck abelian DG\+category.
 The the abelian category\/ $\sZ^0(\bA)$ is locally finitely generated
if and only if the category\/ $\sZ^0(\bA)$ is locally
finitely generated.
\end{prop}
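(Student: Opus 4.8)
Since $\bA$ is a Grothendieck abelian DG\+category, both the abelian categories $\sZ^0(\bA)$ and $\sZ^0(\bA^\bec)$ are Grothendieck by definition; in particular each of them has all colimits and exact directed colimits, and each has a single generator, so the only thing that can distinguish ``locally finitely generated'' from merely ``Grothendieck'' is the existence of a generating \emph{set} consisting of \emph{finitely generated} objects. The plan is therefore to transport such a generating set back and forth between $\sZ^0(\bA)$ and $\sZ^0(\bA^\bec)$ along the adjoint functors $\Phi_\bA$ and $\Psi^+_\bA$, exactly in the spirit of the preceding proposition on local finite presentability, the real content having already been packaged into the earlier lemmas.

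First I would record the generating-set version of Lemma~\ref{Phi-Psi-preserve-generators}. By Lemma~\ref{G-functors-in-DG-categories} the functor $\Phi_\bA$ is left adjoint to the faithful functor $\Psi^-_\bA$, and $\Psi^+_\bA$ is left adjoint to the faithful functor $\Phi_\bA$; so both $\Phi_\bA$ and $\Psi^+_\bA$ take generating sets to generating sets. This is the standard adjunction-plus-faithfulness argument: for parallel morphisms $f'\neq f''$ in the target category the inequality is detected after applying the faithful right adjoint, is then separated by a morphism out of a member of the given generating set in the source category, and this separating morphism transports across the adjunction isomorphism (which intertwines post-composition on the two sides) to a morphism out of the image of that member.

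Then the ``only if'' direction runs as follows. Starting from a generating set $\{A_i\}$ of finitely generated objects in $\sZ^0(\bA)$, the generating-set version just noted shows that $\{\Phi_\bA(A_i)\}$ is a generating set in $\sZ^0(\bA^\bec)$, while Lemma~\ref{Phi-Psi-preserve-generatedness-presentability} guarantees that each $\Phi_\bA(A_i)$ is again finitely generated. Hence $\sZ^0(\bA^\bec)$ is locally finitely generated. The ``if'' direction is entirely symmetric: one applies $\Psi^+_\bA$ to a generating set $\{X_j\}$ of finitely generated objects in $\sZ^0(\bA^\bec)$, using Lemma~\ref{Phi-Psi-preserve-generators}(b) and Lemma~\ref{Phi-Psi-preserve-generatedness-presentability} again.

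The argument is short precisely because existence of colimits, exactness of directed colimits, and faithfulness and conservativity of the relevant adjoints (Lemma~\ref{Phi-Psi-conservative}) are all supplied by the Grothendieck hypothesis and the cited lemmas. The one place demanding a little care---and the closest thing to an obstacle here---is the upgrade from preservation of a single (strong) generator to preservation of a whole generating set; this requires spelling out that the adjunction isomorphism intertwines the post-composition maps, so that detection of a nonequality by $\{A_i\}$ in $\sZ^0(\bA)$ really does yield detection by $\{\Phi_\bA(A_i)\}$ in $\sZ^0(\bA^\bec)$. Once this is granted, the equivalence asserted in Proposition~\ref{loc-fin-gen-abelian-DG-category-prop} follows immediately.
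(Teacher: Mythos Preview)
Your proof is correct and follows essentially the same approach as the paper: the paper's proof simply cites Lemmas~\ref{Phi-Psi-preserve-generators} and~\ref{Phi-Psi-preserve-generatedness-presentability}, and you have correctly unpacked how these two lemmas combine (transport the generating set along $\Phi_\bA$ or $\Psi^+_\bA$, noting that these functors preserve both generating sets and finite generatedness). You have also correctly read through the typo in the statement, interpreting the second occurrence of $\sZ^0(\bA)$ as $\sZ^0(\bA^\bec)$, which is what the surrounding context and the proof make clear is intended.
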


\begin{proof}
 Follows from Lemmas~\ref{Phi-Psi-preserve-generators}
and~\ref{Phi-Psi-preserve-generatedness-presentability}.
\end{proof}

 A Grothendieck abelian DG\+category is said to be \emph{locally
finitely generated} if it satisfies the equivalent conditions of
Proposition~\ref{loc-fin-gen-abelian-DG-category-prop}.

\subsection{Exactly embedded full abelian DG\+subcategories}
 Let $\sA$ be an abelian category and $\sB\subset\sA$ be a full
subcategory.

 Assume that $\sB$ is an abelian category.
 We will say that $\sB$ is an \emph{exactly embedded full abelian
subcategory} in $\sA$ if the inclusion $\sB\rightarrowtail\sA$ is
an exact functor between abelian categories.
 Equivalently, this means that $\sB$ inherits an exact category
structure from the abelian exact category structure of $\sA$ and
the inherited exact category structure on $\sB$ is the abelian
exact category structure.

 A full subcategory $\sB$ in an abelian category $\sA$ is an exactly
embedded full abelian subcategory if and only if it is closed under
finite direct sums, kernels, and cokernels.

 Let $\bA$ be an abelian DG\+category, and $\bB\subset\bA$ be
a full additive DG\+subcategory closed under shifts and cones.
 Assume that $\bB$ is an abelian DG\+category.

 We will say that $\bB$ is an \emph{exactly embedded full abelian
DG\+subcategory} in $\bA$ if both the fully faithful inclusions
$\sZ^0(\bB)\rightarrowtail\sZ^0(\bA)$ and $\sZ^0(\bB^\bec)
\rightarrowtail\sZ^0(\bA^\bec)$ are exact functors between abelian
categories.
 Equivalently, this means that the full DG\+subcategory $\bB$ inherits
an exact DG\+category structure from the abelian exact DG\+category
structure of $\bA$ and the inherited exact DG\+category structure on
$\bB$ is the abelian exact DG\+category structure.

 A more detailed discussion of exactly embedded full abelian
DG\+subcategories in abelian DG\+categories can be found
in~\cite[Section~3.3]{PS5}; see, in particular,
\cite[Proposition~3.11]{PS5}.

\subsection{Locally Noetherian DG-categories}
\label{locally-Noetherian-DG-categories-subsecn}
 Let $\sA$ be a Grothendieck abelian category.
 An object $N\in\sA$ is said to be \emph{Noetherian} if it has no
infinite ascending chain of subobjects.
 Equivalently, an object in $\sA$ is Noetherian if and only if all
its subobjects are finitely generated.
 The class of all Noetherian objects in $\sA$ is closed under
subobjects, quotients, and extensions; so the full subcategory of
Noetherian objects is an exactly embedded full abelian subcategory
in~$\sA$.

\begin{lem} \label{Phi-Psi-preserve-reflect-Noetherianity}
 Let\/ $\bA$ be a Grothendieck abelian DG\+category.
 Then both the additive functors\/ $\Phi_\bA\:\sZ^0(\bA)\rarrow
\sZ^0(\bA^\bec)$ and\/ $\Psi^+_\bA\:\sZ^0(\bA^\bec)\rarrow\sZ^0(\bA)$
\emph{preserve} and \emph{reflect} Noetherianity of objects.
 The class of all Noetherian objects in\/ $\sZ^0(\bA)$ is preserved
by shifts and twists (hence also by cones).
\end{lem}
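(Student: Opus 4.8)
The plan is to reduce everything to two soft properties of the functors $\Phi_\bA$ and $\Psi^+_\bA$: both are exact (they have adjoints on both sides by Lemma~\ref{G-functors-in-DG-categories}, hence preserve kernels and cokernels) and both are conservative (Lemma~\ref{Phi-Psi-conservative}). First I would record the routine facts to be used repeatedly: in a Grothendieck abelian category the class of Noetherian objects is closed under subobjects, quotients, and extensions (in particular under finite direct sums), and the shift functors $[n]$ are autoequivalences of $\sZ^0(\bA)$ and of $\sZ^0(\bA^\bec)$, so they preserve and reflect Noetherianity trivially.

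For reflection I would use a single observation valid for any exact conservative functor $F$ between Grothendieck abelian categories: if $F(M)$ is Noetherian, then $M$ is Noetherian. Indeed, an ascending chain $M_1\subseteq M_2\subseteq\dotsb$ of subobjects of $M$ is sent by the exact functor $F$ to an ascending chain $F(M_1)\subseteq F(M_2)\subseteq\dotsb$ of subobjects of $F(M)$, which stabilizes; since each inclusion $M_n\rightarrowtail M_{n+1}$ becomes an isomorphism after applying $F$, conservativity of $F$ forces $M_n=M_{n+1}$ for large~$n$. Applying this to $F=\Phi_\bA$ and to $F=\Psi^+_\bA$ gives reflection for both functors.

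For preservation I would run the same chain argument, but composed with the complementary adjoint so as to land in a $\Xi$\+type object whose Noetherianity is visible from an extension. By Lemma~\ref{Xi-decomposed} one has $\Psi^+_\bA\circ\Phi_\bA\simeq\Xi_\bA$, and by Lemma~\ref{cone-kernel-cokernel} there is a short exact sequence $0\rarrow B[-1]\rarrow\Xi_\bA(B)\rarrow B\rarrow0$ in $\sZ^0(\bA)$; hence if $B$ is Noetherian then so is $\Xi_\bA(B)$, being a shift of a Noetherian object extended by a Noetherian object. A chain of subobjects of $\Phi_\bA(B)$ then maps, via the exact functor $\Psi^+_\bA$, to a chain of subobjects of $\Xi_\bA(B)$, which stabilizes; conservativity of $\Psi^+_\bA$ descends the stabilization to $\Phi_\bA(B)$, so $\Phi_\bA(B)$ is Noetherian. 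Dually, using $\Psi^-_\bA=\Psi^+_\bA[1]$ together with the fact that $\Phi_\bA$ interchanges $[n]$ and $[-n]$ (Lemma~\ref{twists-into-isomorphisms}) and the identity $\Phi_\bA\circ\Psi^-_\bA\simeq\Xi_{\bA^\bec}$ of Lemma~\ref{Xi-decomposed}, I would identify $\Phi_\bA\Psi^+_\bA(X)\simeq\Xi_{\bA^\bec}(X)[1]$, which is Noetherian whenever $X$ is; then the exact conservative functor $\Phi_\bA$ transports chains in $\Psi^+_\bA(X)$ into this object and descends stabilization, yielding preservation for $\Psi^+_\bA$.

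Finally I would deduce the closure statements for $\sZ^0(\bA)$. For twists: if $Y$ is a twist of $X$ with $X$ Noetherian, then $\Phi_\bA$ carries the defining (not necessarily closed) isomorphism $X\rarrow Y$ to an isomorphism $\Phi_\bA(X)\simeq\Phi_\bA(Y)$ in $\sZ^0(\bA^\bec)$ (Lemma~\ref{twists-into-isomorphisms}); since $\Phi_\bA$ preserves Noetherianity, $\Phi_\bA(Y)$ is Noetherian, and since $\Phi_\bA$ reflects it, $Y$ is Noetherian. For cones: by Corollary~\ref{cone-sequences-universally-admissible-exact} the natural sequence $0\rarrow Y\rarrow\cone(f)\rarrow X[1]\rarrow0$ is admissible exact for the abelian exact structure on $\sZ^0(\bA)$, so $\cone(f)$ is an extension of Noetherian objects and hence Noetherian (equivalently, cones are assembled from direct sums, shifts, and twists, each of which preserves Noetherianity). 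The one place requiring genuine care, and the main obstacle, is the preservation direction: reflection is a formal consequence of exactness and conservativity, whereas preservation works only because the composite of a functor with its adjoint is a $\Xi$\+type functor whose Noetherianity I can read off the canonical extension sequence, and I must track the single shift in $\Phi_\bA\Psi^+_\bA\simeq\Xi_{\bA^\bec}[1]$ carefully for the dual argument to go through.
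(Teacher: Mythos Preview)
Your proposal is correct and follows essentially the same approach as the paper: reflection via the exact-and-conservative (the paper says ``faithful exact'') property of $\Phi_\bA$ and $\Psi^+_\bA$, preservation by bootstrapping through the $\Xi$-extension sequence and then applying reflection for the complementary functor, and closure under twists via Lemma~\ref{twists-into-isomorphisms}. The only cosmetic difference is that the paper invokes the already-established reflection step directly (``since $\Psi^+_\bA$ reflects Noetherianity, $\Phi_\bA(A)$ is Noetherian''), whereas you re-run the chain-and-conservativity argument inline; and your careful tracking of the shift in $\Phi_\bA\Psi^+_\bA\simeq\Xi_{\bA^\bec}[1]$ is correct and slightly more explicit than the paper's ``similarly''.
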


\begin{proof}
 Any faithful exact functor reflects Noetherianity of objects,
because it takes an infinite ascending chain of subobjects to
an infinite ascending chain of subobjects.
 Now let $A\in\sZ^0(\bA)$ be a Noetherian object.
 By Lemma~\ref{cone-kernel-cokernel}, we have a short exact sequence
$0\rarrow A[-1]\rarrow\Xi_\bA(A)\rarrow A\rarrow0$ in the abelian
category $\sZ^0(\bA)$, implying that $\Xi_\bA(A)$ is also a Noetherian
object in $\sZ^0(\bA)$.
 According to Lemma~\ref{Xi-decomposed}, we have $\Xi_\bA(A)\simeq
\Psi^+_\bA\Phi_\bA(A)$.
 Since the functor $\Psi^+_\bA$ reflects Noetherianity, we can conclude
that $\Phi_\bA(A)$ is a Noetherian object in $\sZ^0(\bA^\bec)$.
 The preservation of Noetherianity by the functor $\Psi^+_\bA$ is
provable similarly.

 Concerning the second assertion of the lemma, the preservation by
shifts is obvious, and the preservation by cones can be deduced
from the result obtained in the proof of
Lemma~\ref{cone-kernel-cokernel}.
 More generally, the preservation of Noetherianity by twists follows
from the facts that the functor $\Phi_\bA$ preserves and reflects
Noetherianity and takes twists to isomorphisms (see
Lemma~\ref{twists-into-isomorphisms}).
\end{proof}

 A Grothendieck abelian category $\sA$ is said to be \emph{locally
Noetherian} if it has a generating set of Noetherian objects.
 Equivalently, this means that any object of $\sA$ is the sum of its
Noetherian subobjects.
 An object of a locally Noetherian category is finitely generated
if and only if it is finitely presentable and if and only if it is
Noetherian.

\begin{prop} \label{locally-Noetherian-DG-category-prop}
 Let\/ $\bA$ be a Grothendieck abelian DG\+category.
 Then the category\/ $\sZ^0(\bA)$ is locally Noetherian if and only if
the category\/ $\sZ^0(\bA^\bec)$ is locally Noetherian.
\end{prop}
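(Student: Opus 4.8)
The plan is to deduce the equivalence directly from Lemmas~\ref{Phi-Psi-preserve-generators} and~\ref{Phi-Psi-preserve-reflect-Noetherianity}, in the same spirit as the proof of Proposition~\ref{loc-fin-gen-abelian-DG-category-prop}. Recall that a Grothendieck abelian category is locally Noetherian if and only if it admits a generating set consisting of Noetherian objects. Thus the whole statement reduces to transporting such generating sets back and forth between $\sZ^0(\bA)$ and $\sZ^0(\bA^\bec)$ along the two adjoint functors $\Phi_\bA$ and $\Psi^+_\bA$, while controlling Noetherianity.

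First I would prove the direct implication. Assuming $\sZ^0(\bA)$ is locally Noetherian, I choose a generating set $\{G_i\}$ of Noetherian objects in $\sZ^0(\bA)$ and apply the functor $\Phi_\bA\:\sZ^0(\bA)\rarrow\sZ^0(\bA^\bec)$ to it. By Lemma~\ref{Phi-Psi-preserve-reflect-Noetherianity}, each $\Phi_\bA(G_i)$ is a Noetherian object of $\sZ^0(\bA^\bec)$. By the generating-set version of Lemma~\ref{Phi-Psi-preserve-generators}(a) (valid because $\Phi_\bA$ is left adjoint to the faithful functor $\Psi^-_\bA$, per Lemma~\ref{G-functors-in-DG-categories}), the family $\{\Phi_\bA(G_i)\}$ is a generating set in $\sZ^0(\bA^\bec)$. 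Hence $\sZ^0(\bA^\bec)$ has a generating set of Noetherian objects and is locally Noetherian.

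The converse is entirely symmetric, with $\Psi^+_\bA\:\sZ^0(\bA^\bec)\rarrow\sZ^0(\bA)$ playing the role of $\Phi_\bA$. Starting from a generating set of Noetherian objects in $\sZ^0(\bA^\bec)$, its image under $\Psi^+_\bA$ again consists of Noetherian objects by Lemma~\ref{Phi-Psi-preserve-reflect-Noetherianity} and is a generating set by Lemma~\ref{Phi-Psi-preserve-generators}(b). This yields a generating set of Noetherian objects in $\sZ^0(\bA)$.

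There is no serious obstacle here: the substance is already packaged into the two cited lemmas, whose proofs rest on the adjunction, faithfulness, and conservativity properties of $\Phi_\bA$ and $\Psi^+_\bA$ recorded in Lemmas~\ref{G-functors-in-DG-categories} and~\ref{Phi-Psi-conservative}. The only point requiring a word of care is the passage from Lemma~\ref{Phi-Psi-preserve-generators}, as stated for a single generator, to the version for generating sets invoked above; I would note, exactly as in the proof of the locally-finitely-generated case, that this strengthening is immediate, so that the \emph{image family} genuinely generates rather than merely each member mapping to a generator.
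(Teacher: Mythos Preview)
Your proposal is correct and is precisely the paper's argument: the paper's proof consists of the single sentence ``Follows from Lemmas~\ref{Phi-Psi-preserve-generators} and~\ref{Phi-Psi-preserve-reflect-Noetherianity},'' and you have spelled out exactly how those two lemmas combine. The remark about passing from a single generator to a generating set in Lemma~\ref{Phi-Psi-preserve-generators} is apt and mirrors the paper's treatment of the analogous locally-finitely-generated case.
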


\begin{proof}
 Follows from Lemmas~\ref{Phi-Psi-preserve-generators}
and~\ref{Phi-Psi-preserve-reflect-Noetherianity}.
\end{proof}

 We will say that a DG\+category $\bA$ is \emph{locally Noetherian}
if it satisfies the equivalent conditions of
Proposition~\ref{locally-Noetherian-DG-category-prop}.
 Assuming this is the case, consider the full DG\+sub\-cat\-e\-gory
$\bA_\bfg\subset\bA$ whose objects are all the finitely generated
(equivalently, Noetherian) objects in $\sZ^0(\bA)$, that is
$\sZ^0(\bA_\bfg)=\sZ^0(\bA)_\fg$.
 It follows from Lemma~\ref{Phi-Psi-preserve-reflect-Noetherianity}
that the objects of $\sZ^0((\bA_\bfg)^\bec)$ are all the finitely
generated (equivalently, Noetherian) objects of $\sZ^0(\bA^\bec)$
(so the notation $\bA_\bfg^\bec$ is unambigous).
 The full DG\+subcategory of Noetherian objects $\bA_\bfg$ is
an exactly embedded full abelian DG\+subcategory in~$\bA$.

\begin{exs} \label{locally-Noetherian-examples}
 (1)~Let $\biR^\cu=(R^*,d,h)$ be a CDG\+ring and $\bA=\biR^\cu\bmodl$ be
the DG\+category of left CDG\+modules over~$\biR^\cu$.
 Consider the DG\+ring $\widehat\biR^\bu=(\widehat\biR^*,\d)$
from Section~\ref{cdg-revisited-subsecn}; so $\widehat\biR^*$ is
the graded ring $R^*[\delta]$ with the sign of the grading changed.

 According to Example~\ref{CDG-ring-bec-example}, the abelian
category $\sZ^0(\bA^\bec)$ is equivalent to $R^*\smodl$.
 On the other hand, it was mentioned in the proof of
Proposition~\ref{G-plus-minus-prop} that the abelian category
$\sZ^0(\bA)$ is obviously equivalent to $R^*[\delta]\smodl$.

 The category $R^*\smodl$ is locally Noetherian if and only if
the graded ring $R^*$ is graded left Noetherian.
 Similarly, the category $R^*[\delta]\smodl$ is locally Noetherian
if and only if the graded ring $R^*[\delta]$ is graded left
Noetherian.
 Hence Proposition~\ref{locally-Noetherian-DG-category-prop} provides
a fancy proof of the assertion that the graded ring $R^*[\delta]$
is graded left Noetherian if and only if the graded ring $R^*$~is.
 These equivalent conditions characterize CDG\+rings $\biR^\cu$
for which the DG\+category $\biR^\cu\bmodl$ is locally Noetherian.

 Given a regular cardinal~$\lambda$, one can similarly show that
every homogeneous left ideal in $R^*$ has~$<\lambda$ generators
if and only if every homogeneous left ideal in $R^*[\delta]$
has~$<\lambda$ generators.

\smallskip
 (2)~Let $X$ be a quasi-compact quasi-separated scheme and $B^*$
be a quasi-coherent graded quasi-algebra over~$X$.
 Then the Grothendieck abelian category $B^*\sqcoh$ of quasi-coherent
graded left modules over $B^*$ is locally Noetherian if and only if,
for every affine open subscheme $U\subset X$, the graded ring
$B^*(U)$ is graded left Noetherian.
 If this is the case, the quasi-coherent graded quasi-algebra $B^*$
is said to be \emph{graded left Noetherian}.
 Furthermore, a quasi-coherent graded left $B^*$\+module $M^*$ is
Noetherian if and only if, for every affine open subscheme
$U\subset X$, the graded $B^*(U)$\+module $M^*(U)$ is Noetherian.

 Let $\biB^\cu=(B^*,d,h)$ be a quasi-coherent CDG\+quasi-algebra
over $X$ and $\bA=\biB^\cu\bqcoh$ be the DG\+category of quasi-coherent
left CDG\+modules over~$\biB^\cu$.
 According to Example~\ref{qcoh-CDG-bec-example}, the abelian
category $\sZ^0(\bA^\bec)$ is equivalent to $B^*\sqcoh$.
 Thus the Grothendieck DG\+category $\biB^\cu\bqcoh$ is locally Noetherian
if and only if the quasi-coherent graded quasi-algebra $B^*$ is graded
left Noetherian.
\end{exs}

\subsection{Locally coherent DG-categories}
\label{locally-coherent-DG-categories-subsecn}
 Let $\sA$ be a locally finitely generated abelian category.
 A finitely generated object $A\in\sA$ is called \emph{coherent} if,
for any morphism $f\:B\rarrow A$ to $A$ from a finitely generated
object $B\in\sA$, the kernel of~$f$ is also finitely generated.
 The class of all coherent objects in $\sA$ is closed under kernels,
cokernels, and extensions.

\begin{lem} \label{Phi-Psi-preserve-reflect-coherence}
 Let\/ $\bA$ be a locally finitely generated abelian DG\+category.
 Then both the additive functors\/ $\Phi_\bA\:\sZ^0(\bA)\rarrow
\sZ^0(\bA^\bec)$ and\/ $\Psi^+_\bA\:\sZ^0(\bA^\bec)\rarrow\sZ^0(\bA)$
\emph{preserve} and \emph{reflect} coherence of objects.
 The class of all coherent objects in\/ $\sZ^0(\bA)$ is preserved
by shifts and twists (hence also by cones).
\end{lem}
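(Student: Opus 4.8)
The plan is to follow the proof of Lemma~\ref{Phi-Psi-preserve-reflect-Noetherianity} almost verbatim, replacing ``Noetherian'' by ``coherent'' and adding the bookkeeping forced by the fact that coherence, unlike Noetherianity, is a two-step condition (finite generatedness of the object together with finite generatedness of the kernel of every morphism into it from a finitely generated object). Throughout I would use that $\bA$ is a locally finitely generated abelian DG\+category, so that both $\sZ^0(\bA)$ and $\sZ^0(\bA^\bec)$ are locally finitely generated abelian categories in which coherence makes sense and the coherent objects are closed under kernels, cokernels, extensions, and hence finite direct sums; that $\Phi_\bA$ and $\Psi^+_\bA$ are exact and faithful; and that they preserve finite generatedness by Lemma~\ref{Phi-Psi-preserve-generatedness-presentability} and reflect it by Lemma~\ref{Phi-Psi-reflect-generatedness-presentability}.

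First I would establish \emph{reflection} directly, independently of preservation. Suppose $\Phi_\bA(A)$ is coherent; then $\Phi_\bA(A)$ is finitely generated, so $A$ is finitely generated by reflection of finite generatedness. Given any morphism $f\:B\rarrow A$ with $B$ finitely generated, exactness of $\Phi_\bA$ yields $\Phi_\bA(\ker f)\simeq\ker\Phi_\bA(f)$; here $\Phi_\bA(B)$ is finitely generated by preservation and $\ker\Phi_\bA(f)$ is finitely generated by coherence of $\Phi_\bA(A)$, so $\ker f$ is finitely generated by reflection. Thus $A$ is coherent, and $\Phi_\bA$ reflects coherence. The identical argument with $\Psi^+_\bA$ in place of $\Phi_\bA$ shows that $\Psi^+_\bA$ reflects coherence.

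With reflection in hand, \emph{preservation} follows by the $\Xi$\+decomposition trick. The shift functors $[1]$ and $[-1]$ are mutually inverse exact autoequivalences of $\sZ^0(\bA)$, hence preserve coherence; in particular $A[-1]$ is coherent whenever $A$ is. Given $A$ coherent, the natural sequence $0\rarrow A[-1]\rarrow\Xi_\bA(A)\rarrow A\rarrow0$ of Lemma~\ref{cone-kernel-cokernel}, together with closedness of coherent objects under extensions, shows $\Xi_\bA(A)$ is coherent. By Lemma~\ref{Xi-decomposed} we have $\Xi_\bA(A)\simeq\Psi^+_\bA\Phi_\bA(A)$, and since $\Psi^+_\bA$ reflects coherence, $\Phi_\bA(A)$ is coherent. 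Dually, using $\Xi_{\bA^\bec}\simeq\Phi_\bA\circ\Psi^-_\bA$, the sequence $0\rarrow X[-1]\rarrow\Xi_{\bA^\bec}(X)\rarrow X\rarrow0$ in $\sZ^0(\bA^\bec)$, the identity $\Psi^-_\bA=\Psi^+_\bA[1]$ from Lemma~\ref{G-functors-in-DG-categories}, and reflection of coherence by $\Phi_\bA$, one shows that $\Psi^+_\bA$ preserves coherence.

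Finally, for the last sentence: preservation by shifts is the autoequivalence observation just used. For twists, recall that $\bA$ has twists by Proposition~\ref{abelian-DG-categories-have-twists} and that $\Phi_\bA$ sends twists to isomorphisms by Lemma~\ref{twists-into-isomorphisms}; so if $A'$ is a twist of a coherent object $A$, then $\Phi_\bA(A')\simeq\Phi_\bA(A)$ is coherent (by preservation), whence $A'$ is coherent by reflection. Closedness under cones then follows because in an additive DG\+category with shifts and twists the cone of a closed morphism $f\:A\rarrow B$ is a twist of $B\oplus A[1]$, which is coherent as a finite direct sum of coherent objects. The only conceptual point---and the reason preservation cannot be verified straight from the definition---is that a morphism into $\Phi_\bA(A)$ from a finitely generated object of $\sZ^0(\bA^\bec)$ need not lift to a morphism in $\sZ^0(\bA)$; this is precisely what the indirect route through extension-closedness and the decomposition $\Xi_\bA\simeq\Psi^+_\bA\Phi_\bA$ circumvents, and it is the main (albeit mild) obstacle.
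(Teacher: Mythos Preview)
Your proof is correct and follows essentially the same approach as the paper: the paper proves reflection of coherence by $\Phi_\bA$ exactly as you do, then remarks that the rest is ``similar to the proof of Lemma~\ref{Phi-Psi-preserve-reflect-Noetherianity} and based on the fact that the class of all coherent objects is closed under extensions,'' which you have spelled out in full.
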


\begin{proof}
 Let us show that the functor $\Phi_\bA$ reflects coherence.
 Let $A\in\bA$ be an object for which the object $\Phi(A)$ is
coherent in $\sZ^0(\bA^\bec)$.
 Let $B\in\sZ^0(\bA)$ be a finitely generated object and
$f\:B\rarrow A$ be a morphism in $\sZ^0(\bA)$.
 Put $K=\ker(f)\in\sZ^0(\bA)$.
 By Lemma~\ref{Phi-Psi-preserve-generatedness-presentability},
the object $\Phi(B)$ is finitely generated in $\sZ^0(\bA^\bec)$.
 Hence the kernel of the morphism $\Phi(f)\:\Phi(B)\rarrow\Phi(A)$
is finitely generated in $\sZ^0(\bA^\bec)$ as well.
 Since the functor $\Phi_\bA$ is exact, we have $\ker\Phi(f)=
\Phi(K)$, so the object $\Phi(K)$ is finitely generated.
 Using Lemma~\ref{Phi-Psi-reflect-generatedness-presentability},
we can conclude that the object $K\in\sZ^0(\bA)$ is finitely
generated.
 Thus the object $A\in\bA$ is coherent.

 The rest of the proof is similar to that of the proof of
Lemma~\ref{Phi-Psi-preserve-reflect-Noetherianity} and based on
the fact that the class of all coherent objects is closed under
extensions.
 For an alternative exposition, see~\cite[Lemmas~8.5\+-8.6]{PS5}.
\end{proof}

 A locally finitely generated abelian category is said to be
\emph{locally coherent} if it has a generating set consisting of
coherent objects~\cite[Section~2]{Ro}.
 Any locally coherent category is locally finitely presentable.
 An object of a locally coherent category is coherent if and only if
it is finitely presentable.
 We refer to~\cite[Section~13]{PS3} and~\cite[Section~8.2]{PS5} for
a further discussion. 

\begin{prop} \label{locally-coherent-DG-category-prop}
 Let\/ $\bA$ be a locally finitely generated abelian DG\+category.
 Then the category\/ $\sZ^0(\bA)$ is locally coherent if and only if
the category\/ $\sZ^0(\bA^\bec)$ is locally coherent.
\end{prop}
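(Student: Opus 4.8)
The plan is to mimic the proofs of Propositions~\ref{loc-fin-gen-abelian-DG-category-prop} and~\ref{locally-Noetherian-DG-category-prop}, transporting a generating set of coherent objects back and forth across the adjoint functors $\Phi_\bA$ and $\Psi^+_\bA$. Since $\bA$ is assumed to be a locally finitely generated abelian DG\+category, by definition (Proposition~\ref{loc-fin-gen-abelian-DG-category-prop}) both $\sZ^0(\bA)$ and $\sZ^0(\bA^\bec)$ are already locally finitely generated abelian categories; in particular both have all colimits and exact directed colimits. Recalling that a locally finitely generated abelian category is locally coherent precisely when it admits a generating set consisting of coherent objects, it therefore remains only to prove that $\sZ^0(\bA)$ has a generating set of coherent objects if and only if $\sZ^0(\bA^\bec)$ does.

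First I would treat the ``only if'' direction. Assuming $\sZ^0(\bA)$ is locally coherent, choose a generating set $\{G_i\}$ of $\sZ^0(\bA)$ consisting of coherent objects. Applying the functor $\Phi_\bA$, the evident generating-set version of Lemma~\ref{Phi-Psi-preserve-generators}(a) shows that $\{\Phi_\bA(G_i)\}$ is a generating set of $\sZ^0(\bA^\bec)$, while Lemma~\ref{Phi-Psi-preserve-reflect-coherence} guarantees that each $\Phi_\bA(G_i)$ is a coherent object of $\sZ^0(\bA^\bec)$. Thus $\sZ^0(\bA^\bec)$ is a locally finitely generated abelian category admitting a generating set of coherent objects, i.e., it is locally coherent. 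The ``if'' direction is entirely symmetric: one applies the functor $\Psi^+_\bA$ together with the generating-set version of Lemma~\ref{Phi-Psi-preserve-generators}(b) and, again, Lemma~\ref{Phi-Psi-preserve-reflect-coherence} (which asserts preservation of coherence in both directions).

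There is essentially no hard step here: all the substance has been relegated to the already-established lemmas, exactly as in the locally Noetherian case of Proposition~\ref{locally-Noetherian-DG-category-prop}. The only point deserving a word of care is that Lemma~\ref{Phi-Psi-preserve-generators} is literally stated for a \emph{single} generator, so I would explicitly invoke its obvious extension to generating sets of objects (the very same extension is used in the proof that local finite presentability passes between $\sZ^0(\bA)$ and $\sZ^0(\bA^\bec)$). Accordingly, the written-out proof can be kept to a one-line citation of Lemmas~\ref{Phi-Psi-preserve-generators} and~\ref{Phi-Psi-preserve-reflect-coherence}, in the same terse style as the preceding two propositions.
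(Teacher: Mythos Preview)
Your proposal is correct and follows exactly the paper's approach: the paper's proof is the single line ``Follows from Lemmas~\ref{Phi-Psi-preserve-generators} and~\ref{Phi-Psi-preserve-reflect-coherence},'' which is precisely the argument you spell out.
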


\begin{proof}
 Follows from Lemmas~\ref{Phi-Psi-preserve-generators}
and~\ref{Phi-Psi-preserve-reflect-coherence}.
\end{proof}

 We will say that a DG\+category $\bA$ is \emph{locally coherent}
if it satisfies the equivalent conditions of
Proposition~\ref{locally-coherent-DG-category-prop}.
 If this is the case, consider the full DG\+sub\-cat\-e\-gory
$\bA_\bfp\subset\bA$ whose objects are all the finitely presentable
(equivalently, coherent) objects in $\sZ^0(\bA)$, that is
$\sZ^0(\bA_\bfp)=\sZ^0(\bA)_\fp$.
 It follows from Lemma~\ref{Phi-Psi-preserve-reflect-coherence}
that the objects of $\sZ^0((\bA_\bfp)^\bec)$ are all the finitely
presentable (equivalently, coherent) objects of $\sZ^0(\bA^\bec)$
(so the notation $\bA_\bfp^\bec$ is unambigous).
 The full DG\+subcategory of coherent objects $\bA_\bfp$ is
an exactly embedded full abelian DG\+subcategory in~$\bA$.

\begin{ex} \label{locally-coherent-cdg-ring-example}
 Let $\biR^\cu=(R^*,d,h)$ be a CDG\+ring and $\bA=\biR^\cu\bmodl$ be
the DG\+category of left CDG\+modules over~$\biR^\cu$.
 Continuing the discussion in
Example~\ref{locally-Noetherian-examples}(1), we observe that
the category of graded modules $R^*\smodl$ is locally coherent if
and only if the graded ring $R^*$ is graded left coherent (i.~e.,
all finitely generated homogeneous left ideals in $R^*$ are finitely
presented as left $R^*$\+modules).
 Similarly, the category $R^*[\delta]\smodl$ is locally coherent
if and only if the graded ring $R^*[\delta]$ is graded left
coherent.
 Thus Proposition~\ref{locally-coherent-DG-category-prop} provides
a fancy proof of the assertion that the graded ring $R^*[\delta]$
is graded left coherent if and only if the graded ring $R^*$~is.
 These equivalent conditions characterize CDG\+rings $\biR^\cu$
for which the DG\+category $\biR^\cu\bmodl$ is locally coherent.
 (See also~\cite[Corollary~8.9]{PS5}.)
\end{ex}

\subsection{Small objects}
 Let $\sA$ be an additive category with infinite coproducts,
and let $(A_\upsilon\in\sA)_{\upsilon\in\Upsilon}$ be a family of 
objects in $\sA$ indexed by some set~$\Upsilon$.
 Then, for any subset $\Xi\subset\Upsilon$, the coproduct
$\coprod_{\xi\in\Xi}A_\xi$ can be viewed as a split subobject
of the coproduct $\coprod_{\upsilon\in\Upsilon}A_\upsilon$.
 In other words, there is a natural split monomorphism
$\coprod_{\xi\in\Xi}A_\xi\rarrow
\coprod_{\upsilon\in\Upsilon}A_\upsilon$ in~$\sA$, whose cokernel
is the natural split epimorphism $\coprod_{\upsilon\in\Upsilon}
A_\upsilon\rarrow\coprod_{\zeta\in\Upsilon\setminus\Xi}A_\zeta$.

 For any object $S\in\sA$, there is a natural map of abelian groups
\begin{equation} \label{Hom-into-coproduct-comparison}
 \bigoplus\nolimits_{\upsilon\in\Upsilon}\Hom_\sA(S,A_\upsilon)
 \lrarrow\Hom_\sA(S,\coprod\nolimits_{\upsilon\in\Upsilon}A_\upsilon).
\end{equation}
 Using the fact that the morphism $\coprod_{\xi\in\Xi}A_\xi\rarrow
\coprod_{\upsilon\in\Upsilon}A_\upsilon$ is a split monomorphism
for any finite subset $\Xi\subset\Upsilon$, one can immediately see
that the map~\eqref{Hom-into-coproduct-comparison} is injective
for all objects $S$ and $A_\upsilon\in\sA$.
 
 We will say that an object $S\in\sA$ is \emph{small} if, for any
family of objects $(A_\upsilon\in\sA)_{\upsilon\in\Upsilon}$,
the map~\eqref{Hom-into-coproduct-comparison} is an isomorphism.
 Equivalently, an object $S\in\sA$ is small if and only if, for
every family of objects $(A_\upsilon)_{\upsilon\in\Upsilon}$ and
every morphism $f\:S\rarrow\coprod_{\upsilon\in\Upsilon}A_\upsilon$
in $\sA$, there exists a finite subset $\Xi\subset\Upsilon$ such that
the morphism~$f$ factorizes through the split monomorphism
$\coprod_{\xi\in\Xi}A_\xi\rarrow\coprod_{\upsilon\in\Upsilon}
A_\upsilon$; or equivalently, the composition of~$f$ with the split
epimorphism $\coprod_{\upsilon\in\Upsilon}A_\upsilon\rarrow
\coprod_{\zeta\in\Upsilon\setminus\Xi}A_\zeta$ vanishes.

 Clearly, any finitely generated object in an additive category
with directed colimits is small.
 Small modules are otherwise known as ``dually slender''; and small
objects were called ``weakly finitely generated'' in~\cite{PS1};
we refer to~\cite[Remark~9.4]{PS1} for a further discussion with
references.
 Small objects in triangulated categories are known as \emph{compact}.

\begin{lem} \label{small-objects-closure-properties-lemma}
 Let\/ $\sA$ be an additive category with coproducts. \par
\textup{(a)} If $S\rarrow T$ is an epimorphism in\/ $\sA$ and $S\in\sA$
is a small object, then $T$ is also a small object in\/~$\sA$. \par
\textup{(b)} If $S\overset j\rarrow T\overset k\rarrow R\rarrow0$ is
a composable pair of morphisms in\/ $\sA$ such that $k=\coker j$,
and both $S$ and $R$ are small objects, then $T$ is also a small
object in\/~$\sA$.
\end{lem}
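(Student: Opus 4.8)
The plan is to work throughout with the equivalent ``finite factorization'' reformulation of smallness recorded just before the lemma: an object $S$ is small precisely when, for every family $(A_\upsilon)_{\upsilon\in\Upsilon}$ and every morphism $f\:S\rarrow\coprod_{\upsilon\in\Upsilon}A_\upsilon$, the composition of $f$ with the split epimorphism $\coprod_{\upsilon\in\Upsilon}A_\upsilon\rarrow\coprod_{\zeta\in\Upsilon\setminus\Xi}A_\zeta$ vanishes for some finite subset $\Xi\subset\Upsilon$. (Injectivity of the map~\eqref{Hom-into-coproduct-comparison} is automatic, so this surjectivity-type condition is all that needs checking.) I will use repeatedly that, for a finite subset $\Xi$, the inclusion $\coprod_{\xi\in\Xi}A_\xi\rarrow\coprod_{\upsilon\in\Upsilon}A_\upsilon$ is a kernel of the complementary projection $q\:\coprod_{\upsilon\in\Upsilon}A_\upsilon\rarrow\coprod_{\zeta\in\Upsilon\setminus\Xi}A_\zeta$, so that $qf=0$ is equivalent to $f$ factoring through the finite subcoproduct.

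For part~(a), let $p\:S\rarrow T$ be the given epimorphism with $S$ small, and let $f\:T\rarrow\coprod_\upsilon A_\upsilon$ be arbitrary. First I would apply smallness of $S$ to the composite $fp\:S\rarrow\coprod_\upsilon A_\upsilon$, obtaining a finite $\Xi$ with $q(fp)=0$ for the complementary projection~$q$. Since $q(fp)=(qf)p$ and $p$ is an epimorphism, it follows that $qf=0$, whence $f$ factors through $\coprod_{\xi\in\Xi}A_\xi$. As $f$ was arbitrary, $T$ is small.

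For part~(b) I would run a two-stage argument, peeling off a finite piece using each of the two small objects in turn. Given $f\:T\rarrow\coprod_\upsilon A_\upsilon$, first apply smallness of $S$ to $fj$, producing a finite $\Xi_1$ with $q_1 fj=0$, where $q_1\:\coprod_\upsilon A_\upsilon\rarrow\coprod_{\zeta\in\Upsilon\setminus\Xi_1}A_\zeta$ is the projection away from $\Xi_1$. Because $k=\coker j$, the morphism $q_1 f$ kills $j$ and hence factors as $q_1 f=gk$ for a unique $g\:R\rarrow\coprod_{\zeta\in\Upsilon\setminus\Xi_1}A_\zeta$. Now apply smallness of $R$ to $g$, obtaining a finite $\Xi_2\subset\Upsilon\setminus\Xi_1$ with $q_2 g=0$, where $q_2$ is the projection away from $\Xi_2$ inside $\coprod_{\zeta\in\Upsilon\setminus\Xi_1}A_\zeta$. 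Setting $\Xi=\Xi_1\cup\Xi_2$ and letting $q$ be the projection away from $\Xi$, one has $q=q_2 q_1$, so that $qf=q_2 q_1 f=q_2 gk=0$. Hence $f$ factors through the finite subcoproduct $\coprod_{\xi\in\Xi}A_\xi$, and $T$ is small.

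The argument needs nothing beyond the additive structure and the universal property of the cokernel $k=\coker j$; no exactness or abelianness is invoked, which matches the bare hypotheses of the lemma. The only genuinely delicate point — the step I would treat most carefully — is the bookkeeping in part~(b): one must arrange the two finite exceptional sets to be disjoint (by taking $\Xi_2$ inside $\Upsilon\setminus\Xi_1$) so that the two complementary projections compose cleanly as $q=q_2 q_1$, which is what makes the vanishing $qf=0$ fall out. Everything else is a routine diagram chase.
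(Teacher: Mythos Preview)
Your proof is correct and follows essentially the same route as the paper. The only cosmetic difference is that the paper observes part~(a) is the special case $R=0$ of part~(b) (since $k=\coker j$ with $R=0$ forces $j$ to be an epimorphism), whereas you prove~(a) directly; but your direct argument for~(a) is exactly what the paper's proof of~(b) reduces to in that special case, so the content is identical.
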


\begin{proof}
 Part~(a) is the particular case of part~(b) for $R=0$.
 To prove part~(b), consider a family of objects
$(A_\upsilon\in\sA)_{\upsilon\in\Upsilon}$ and a morphism
$f\:T\rarrow\coprod_{\upsilon\in\Upsilon}A_\upsilon$ in~$\sA$.
 Consider the composition $S\overset j\rarrow T\overset f\rarrow
\coprod_{\upsilon\in\Upsilon}A_\upsilon$.
 Since the object $S\in\sA$ is small, there exists a finite
subset $\Xi'\subset\Upsilon$ such that the composition $S\overset j
\rarrow T\overset f\rarrow\coprod_{\upsilon\in\Upsilon}A_\upsilon
\rarrow\coprod_{\zeta\in\Upsilon\setminus\Xi'}A_\zeta$ vanishes.
 Since the morphism~$k$ is a cokernel of~$j$, it follows that
the morphism $T\rarrow\coprod_{\zeta\in\Upsilon\setminus\Xi'}A_\zeta$
factorizes through the epimorphism~$k$.

 So we obtain a morphism $g\:R\rarrow
\coprod_{\zeta\in\Upsilon\setminus\Xi'}A_\zeta$.
 Since the object $R\in\sA$ is small, there exists a finite subset
$\Xi''\subset\Upsilon\setminus\Xi'$ such that the composition
$R\overset g\rarrow\coprod_{\zeta\in\Upsilon\setminus\Xi'}A_\zeta
\rarrow\coprod_{\zeta\in\Upsilon\setminus(\Xi'\cup\Xi'')}A_\zeta$
vanishes.
 Now one easily concludes that the composition
$T\overset f\rarrow\coprod_{\zeta\in\Upsilon}A_\upsilon\rarrow
\coprod_{\zeta\in\Upsilon\setminus(\Xi'\cup\Xi'')}A_\zeta$ vanishes,
and it remains to say that the union $\Xi'\cup\Xi''$ of two finite
subsets $\Xi'$ and $\Xi''$ in $\Upsilon$ is finite.
\end{proof}

\begin{lem} \label{Phi-Psi-preserve-reflect-smallness}
 Let\/ $\bA$ be an additive DG\+category with shifts, cones, and
infinite coproducts.
 Then both the additive functors\/ $\Phi_\bA\:\sZ^0(\bA)\rarrow
\sZ^0(\bA^\bec)$ and\/ $\Psi^+_\bA\:\sZ^0(\bA^\bec)\rarrow\sZ^0(\bA)$
\emph{preserve} and \emph{reflect} smallness of objects.
 The class of all small object in\/ $\sZ^0(\bA)$ is preserved
by shifts and twists (hence also by cones).
\end{lem}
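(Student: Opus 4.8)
The plan is to reduce everything to three ingredients already available: the adjunction properties of $\Phi_\bA$ and $\Psi^+_\bA$ from Lemma~\ref{G-functors-in-DG-categories}, the decomposition $\Xi_\bA\simeq\Psi^+_\bA\circ\Phi_\bA$ from Lemma~\ref{Xi-decomposed}, and the closure properties of small objects in Lemma~\ref{small-objects-closure-properties-lemma}. Throughout I use that the additive categories $\sZ^0(\bA)$ and $\sZ^0(\bA^\bec)$ have coproducts (by the hypothesis on $\bA$ together with Lemma~\ref{coproducts-in-cocycles-and-in-DG}), so that smallness is meaningful in both.

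First I would establish \emph{preservation}. The underlying general fact is that a left adjoint whose right adjoint preserves coproducts takes small objects to small objects: for a small object $S$ and a family $(B_\upsilon)_{\upsilon\in\Upsilon}$, the chain of isomorphisms $\Hom(FS,\coprod_\upsilon B_\upsilon)\cong\Hom(S,G\coprod_\upsilon B_\upsilon)\cong\Hom(S,\coprod_\upsilon GB_\upsilon)\cong\bigoplus_\upsilon\Hom(S,GB_\upsilon)\cong\bigoplus_\upsilon\Hom(FS,B_\upsilon)$ identifies the comparison map~\eqref{Hom-into-coproduct-comparison} for $FS$ with an isomorphism. I apply this to $\Phi_\bA$, whose right adjoint $\Psi^-_\bA$ preserves coproducts (being itself a left adjoint, as noted in the proof of Lemma~\ref{Phi-Psi-preserve-generatedness-presentability}), and to $\Psi^+_\bA$, whose right adjoint is $\Phi_\bA$ (again a coproduct-preserving left adjoint).

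Next, \emph{reflection} follows from preservation by way of $\Xi$. If $\Phi_\bA(A)$ is small, then $\Psi^+_\bA\Phi_\bA(A)\simeq\Xi_\bA(A)$ is small by the preservation just proved and Lemma~\ref{Xi-decomposed}; since the natural morphism $\Xi_\bA(A)\rarrow A$ is a cokernel, hence an epimorphism, in $\sZ^0(\bA)$ by Lemma~\ref{cone-kernel-cokernel}, Lemma~\ref{small-objects-closure-properties-lemma}(a) forces $A$ to be small. For $\Psi^+_\bA$ I argue inside $\bA^\bec$ (which is additive with shifts, cones, and coproducts): if $\Psi^+_\bA(X)$ is small then $\Phi_\bA\Psi^+_\bA(X)$ is small, and this object is a shift of $\Xi_{\bA^\bec}(X)$ (using $\Phi_\bA\Psi^-_\bA\simeq\Xi_{\bA^\bec}$ from Lemma~\ref{Xi-decomposed}, the identity $\Psi^-_\bA=\Psi^+_\bA[1]$, and the fact from Lemma~\ref{twists-into-isomorphisms} that $\Phi_\bA$ turns shifts $[n]$ into $[-n]$); hence $\Xi_{\bA^\bec}(X)$ is small, and the epimorphism $\Xi_{\bA^\bec}(X)\rarrow X$ makes $X$ small as before.

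Finally, for the closure assertions: the shift functors $[n]$ are autoequivalences of $\sZ^0(\bA)$ preserving coproducts, so they preserve smallness; any twist $X(a)$ that exists in $\bA$ satisfies $\Phi_\bA(X(a))\cong\Phi_\bA(X)$ by Lemma~\ref{twists-into-isomorphisms}, so combining preservation and reflection for $\Phi_\bA$ shows twists preserve smallness; and for the cone of a closed morphism $f\:X\rarrow Y$ of degree~$0$, the natural sequence $Y\rarrow\cone(f)\rarrow X[1]$ exhibits $X[1]$ as the cokernel of $Y\rarrow\cone(f)$ in $\sZ^0(\bA)$ (Lemma~\ref{cone-kernel-cokernel}), so Lemma~\ref{small-objects-closure-properties-lemma}(b) yields smallness of $\cone(f)$ whenever $X$ and $Y$ are small. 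The argument is essentially formal once the pieces are assembled; the only points requiring genuine care are the naturality verification in the preservation step (that the adjunction isomorphisms really carry~\eqref{Hom-into-coproduct-comparison} to an isomorphism) and the shift bookkeeping in the reflection argument for $\Psi^+_\bA$, neither of which I expect to be a serious obstacle.
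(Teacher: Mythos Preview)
Your proof is correct, but the organization differs from the paper's. The paper proves \emph{reflection} first and directly: if $\Phi(S)$ is small, then for any $f\colon S\rarrow\coprod_\upsilon A_\upsilon$ one applies $\Phi$ (which preserves coproducts as a left adjoint), uses smallness of $\Phi(S)$ to find a finite $\Xi\subset\Upsilon$ for which the composition to the complement vanishes, and then invokes faithfulness of $\Phi$ to conclude the same for $f$ itself. Preservation is then deduced from reflection via $\Xi_\bA\simeq\Psi^+_\bA\Phi_\bA$ and Lemma~\ref{small-objects-closure-properties-lemma}(b) (the extension $A[-1]\rarrow\Xi_\bA(A)\rarrow A$ shows $\Xi_\bA(A)$ is small when $A$ is, whence $\Phi_\bA(A)$ is small by reflection for $\Psi^+$). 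You reverse the logic: preservation comes first from the clean categorical principle that a left adjoint with coproduct-preserving right adjoint sends small objects to small objects, and reflection is then read off the epimorphism $\Xi_\bA(A)\rarrow A$ via Lemma~\ref{small-objects-closure-properties-lemma}(a). Your route buys a reusable general fact about adjunctions; the paper's route is slightly more elementary, needing only faithfulness rather than the full adjunction bookkeeping. Both reach the same destination with comparable effort.
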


\begin{proof}
 Let us show that the functor $\Phi_\bA$ reflects smallness.
 Let $S\in\bA$ be an object for which the object $\Phi(S)$ is small
in $\sZ^0(\bA^\bec)$.
 Let $(A_\upsilon)_{\upsilon\in\Upsilon}$ be a family of objects
in $\bA$ and $S\rarrow\coprod_{\upsilon\in\Upsilon}A_\upsilon$ be
a morphism in $\sZ^0(\bA)$.
 The functor $\Phi_\bA$, being a left adjoint, preserves coproducts;
so we have the induced morphism $\Phi(f)\:\Phi(S)\rarrow
\coprod_{\upsilon\in\Upsilon}\Phi(A_\upsilon)$ in $\sZ^0(\bA^\bec)$.
 By assumption, there exists a finite subset $\Xi\subset\Upsilon$
for which the composition $\Phi(S)\rarrow\coprod_{\upsilon\in\Upsilon}
\Phi(A_\upsilon)\rarrow\coprod_{\zeta\in\Upsilon\setminus\Xi}
\Phi(A_\zeta)$ is zero.
 This composition can be obtained by applying the functor $\Phi$
to the composition $S\rarrow\coprod_{\upsilon\in\Upsilon}A_\upsilon
\rarrow\coprod_{\zeta\in\Upsilon\setminus\Xi}A_\zeta$ of morphisms
in the category $\sZ^0(\bA)$.
 Since the functor $\Phi_\bA$ is faithful, it follows that the latter
composition vanishes in $\sZ^0(\bA)$, as desired.

 The rest of the proof is similar to that of the proofs of
Lemmas~\ref{Phi-Psi-preserve-reflect-Noetherianity}
and~\ref{Phi-Psi-preserve-reflect-coherence}, and uses
Lemmas~\ref{Xi-decomposed}, \ref{cone-kernel-cokernel},
and~\ref{small-objects-closure-properties-lemma}(b).
\end{proof}

\begin{lem} \label{small-in-cocycles-implies-compact-in-cohomology}
 Let\/ $\bA$ be an additive DG\+category with shifts, cones, and
infinite coproducts, and let $S\in\bA$ be an object which is small
in the additive category\/ $\sZ^0(\bA)$.
 Then the object $S$ is also small (compact) in the triangulated
category\/ $\sH^0(\bA)$.
\end{lem}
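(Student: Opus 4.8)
The plan is to verify directly that $S$ satisfies the defining condition of a small (compact) object in the triangulated category $\sH^0(\bA)$, namely that for every family of objects $(A_\upsilon)_{\upsilon\in\Upsilon}$ the natural comparison map
$$
 \bigoplus\nolimits_{\upsilon\in\Upsilon}\Hom_{\sH^0(\bA)}(S,A_\upsilon)
 \lrarrow\Hom_{\sH^0(\bA)}(S,\coprod\nolimits_{\upsilon\in\Upsilon}A_\upsilon)
$$
is an isomorphism. First I would record that all the coproducts in sight agree: infinite coproducts exist in $\sH^0(\bA)$ because they exist in $\bA$, and any coproduct in $\bA$ is simultaneously a coproduct in $\sZ^0(\bA)$ (Lemma~\ref{coproducts-in-cocycles-and-in-DG}) and in the graded category $\cH(\bA)$, hence in $\sH^0(\bA)$ (see Section~\ref{dg-categories-subsecn}). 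Since $\sH^0(\bA)$ is an additive category with coproducts, the comparison map above is injective by the very same argument as in the paragraph preceding the definition of a small object: a finite subcoproduct $\coprod_{\xi\in\Xi}A_\xi$ is a split subobject of $\coprod_\upsilon A_\upsilon$, so composing with the retraction onto the finite biproduct recovers the individual components. Thus the whole problem reduces to proving surjectivity.

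For surjectivity I would take an arbitrary element of $\Hom_{\sH^0(\bA)}(S,\coprod_\upsilon A_\upsilon)=H^0\Hom_\bA^\bu(S,\coprod_\upsilon A_\upsilon)$ and represent it by a closed morphism of degree~$0$, that is, by a morphism $f\:S\rarrow\coprod_\upsilon A_\upsilon$ in the additive category $\sZ^0(\bA)$, where the coproduct is the one formed in $\sZ^0(\bA)$ (coinciding with the one in $\bA$ and in $\sH^0(\bA)$ by the previous paragraph). Now the hypothesis that $S$ is small in $\sZ^0(\bA)$ applies: by the factorization reformulation of smallness, there exist a finite subset $\Xi\subset\Upsilon$ and a closed morphism $g\:S\rarrow\coprod_{\xi\in\Xi}A_\xi$ of degree~$0$ such that $f$ is the composition of $g$ with the split monomorphism $\coprod_{\xi\in\Xi}A_\xi\rarrow\coprod_\upsilon A_\upsilon$ (itself a closed morphism of degree~$0$, being a structure map of the coproduct).

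Passing to homotopy classes, the class $[f]$ equals the image of $[g]$ under the map induced by the inclusion of the finite subcoproduct. Since $\coprod_{\xi\in\Xi}A_\xi$ is a finite coproduct, it is a finite direct sum, so $\Hom_{\sH^0(\bA)}(S,\coprod_{\xi\in\Xi}A_\xi)=\bigoplus_{\xi\in\Xi}\Hom_{\sH^0(\bA)}(S,A_\xi)$; hence $[f]$ lies in the image of the comparison map. This proves surjectivity and, together with the injectivity already noted, shows that $S$ is compact in $\sH^0(\bA)$.

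The argument is essentially formal once the agreement of coproducts across $\bA$, $\sZ^0(\bA)$, and $\sH^0(\bA)$ is in place. Accordingly, the only point genuinely requiring care—the main obstacle in a loose sense—is to check cleanly that the finite-subcoproduct factorization produced at the level of closed morphisms in $\sZ^0(\bA)$ descends to a genuine factorization in $\sH^0(\bA)$. This rests precisely on the structure maps of the coproduct being closed morphisms of degree~$0$ (so that they induce morphisms in $\sH^0(\bA)$) and on the coproduct in $\bA$ serving simultaneously as the coproduct in $\sZ^0(\bA)$ and in $\sH^0(\bA)$; everything else is the standard reformulation of smallness and the commutation of finite direct sums with $\Hom$.
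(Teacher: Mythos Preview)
Your argument is correct and coincides with the alternative proof the paper itself offers at the end: lift a homotopy class to a closed morphism in $\sZ^0(\bA)$, apply smallness there to factor through a finite subcoproduct, and descend back to $\sH^0(\bA)$. The paper's primary proof takes a slightly different route, establishing the stronger statement that the comparison map~\eqref{small-object-comparison-map} is an isomorphism of complexes by passing through the fully faithful functor $\widetilde\Phi_\bA$ and Lemma~\ref{Phi-Psi-preserve-reflect-smallness}; but the paper then explicitly remarks that your direct argument suffices.
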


\begin{proof}
 Let $(A_\upsilon)_{\upsilon\in\Upsilon}$ be a family of objects 
in~$\bA$.
 Then there is a natural map of complexes of abelian groups
\begin{equation} \label{small-object-comparison-map}
 \bigoplus\nolimits_{\upsilon\in\Upsilon}\Hom_\bA^\bu(S,A_\upsilon)
 \lrarrow\Hom_\bA^\bu(S,\coprod\nolimits_{\upsilon\in\Upsilon}
 A_\upsilon).
\end{equation}
induced by the natural closed morphisms $A_\xi\rarrow\coprod_\upsilon
A_\upsilon$ of degree~$0$ in~$\bA$.
 We would like to show that~\eqref{small-object-comparison-map}
is an isomorphism of complexes of abelian groups.

 For this purpose, it suffices to check that the underlying map
of~\eqref{small-object-comparison-map} is an isomorphism of
graded abelian groups.
 Replacing all $A_\upsilon$ with $A_\upsilon[n]$ for a given
$n\in\Gamma$, the question is reduced to showing
that~\eqref{small-object-comparison-map} is an isomorphism on
the grading components of degree zero.

 It remains to interpret the map induced
by~\eqref{small-object-comparison-map} on the grading components
of degree zero as the map
\begin{equation} \label{small-object-degree-zero-comparison-map}
 \bigoplus\nolimits_{\upsilon\in\Upsilon}
 \Hom_{\sZ^0(\bA^\bec)}(\Phi(S),\Phi(A_\upsilon))\lrarrow
 \Hom_{\sZ^0(\bA^\bec)}(\Phi(S),
 \coprod\nolimits_{\upsilon\in\Upsilon}\Phi(A_\upsilon))
\end{equation}
using Lemma~\ref{Phi-Psi-extended-to-nonclosed-morphisms} and
the fact that the functor $\Phi$ preserves coproducts.
 Then the map~\eqref{small-object-degree-zero-comparison-map} is
an isomorphism of abelian groups, since the object $\Phi(S)\in
\sZ^0(\bA^\bec)$ is small by
Lemma~\ref{Phi-Psi-preserve-reflect-smallness}.
 Thus the map~\eqref{small-object-comparison-map} is an isomorphism
on the grading components of degree zero, as desired.

 We have shown that the map~\eqref{small-object-comparison-map}
is an isomorphism of complexes of abelian groups.
 The passage to the degree~$0$ cohomology groups of the complexes
in~\eqref{small-object-comparison-map} proves that $S$ is a small
object in $\sH^0(\bA)$.

 Alternatively, one can simply say that if every morphism
$S\rarrow\coprod_{\upsilon\in\Upsilon}A_\upsilon$ in the category
$\sZ^0(\bA)$ factorizes through $\coprod_{\xi\in\Xi}A_\xi$ for some
finite subset $\Xi\subset\Upsilon$, then the same holds for every
morphism $S\rarrow\coprod_{\upsilon\in\Upsilon}A_\upsilon$ in
the category $\sH^0(\bA)$, because any morphism in $\sH^0(\bA)$
can be lifted to a morphism between the same objects $\sZ^0(\bA)$
and the coproducts in $\sZ^0(\bA)$ and $\sH^0(\bA)$ agree.
\end{proof}

\subsection{Full-and-faithfulness and compactness theorem}
 We start with two lemmas, which are formulated in terms
of the notion of approachability in triangulated categories
(see Section~\ref{approachability-subsecn}).
 The following one should be compared with
Lemma~\ref{co-products-approachable-lemma}.

\begin{lem} \label{coproducts-approachable-from-compacts-lemma}
 Let\/ $\sT$ be a (triangulated) category with infinite coproducts
and\/ $\sS$, $\sY\subset\sT$ be two full subcategories.
 Assume that the full subcategory\/ $\sS$ consists of (some)
compact objects in\/ $\sT$, while the full subcategory\/ $\sY$
is closed under finite direct sums in\/~$\sT$.
 Then the full subcategory of all objects approachable from\/ $\sS$
via\/ $\sY$ is closed under coproducts in\/~$\sT$.
\end{lem}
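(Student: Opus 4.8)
The plan is to exploit compactness of the objects of $\sS$ in order to reduce any morphism into an infinite coproduct to a morphism into a finite subcoproduct, and then to use the closedness of $\sY$ under finite direct sums to dispose of the finite case. As in the proofs of Lemmas~\ref{summands-approachable-lemma} and~\ref{co-products-approachable-lemma}, the argument does not really use the triangulated structure of $\sT$; only the additive structure with coproducts and the smallness (compactness) of the objects of $\sS$ enter, so the parenthetical ``triangulated'' is immaterial.

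First I would fix the setup. Let $(X_\beta)_{\beta\in B}$ be a family of objects of $\sT$, each approachable from $\sS$ via $\sY$, and put $X=\coprod_{\beta\in B}X_\beta$. Given an object $S\in\sS$ and a morphism $f\:S\rarrow X$, I must factorize $f$ through an object of $\sY$. Since $S$ is compact, the natural map $\bigoplus_{\beta\in B}\Hom_\sT(S,X_\beta)\rarrow\Hom_\sT(S,X)$ is an isomorphism, so $f$ lies in its image. Hence there is a finite subset $F\subset B$ and morphisms $f_\beta\:S\rarrow X_\beta$, $\beta\in F$, such that $f$ equals the composition $S\rarrow\coprod_{\beta\in F}X_\beta\rarrow X$, where the second arrow is the split inclusion and the first arrow has the $f_\beta$ as its coordinates.

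Next I would treat the finite coproduct. For each $\beta\in F$, the morphism $f_\beta\:S\rarrow X_\beta$ factorizes through some object $Y_\beta\in\sY$ as $S\overset{g_\beta}\rarrow Y_\beta\overset{h_\beta}\rarrow X_\beta$, because $X_\beta$ is approachable from $\sS$ via $\sY$. Since $F$ is finite, the coproduct $Y=\coprod_{\beta\in F}Y_\beta$ agrees with the product $\prod_{\beta\in F}Y_\beta$, and it belongs to $\sY$ by the assumption that $\sY$ is closed under finite direct sums. Assembling the $g_\beta$ into a single morphism $g\:S\rarrow Y$ (via the product structure) and the $h_\beta$ into $h=\coprod_{\beta\in F}h_\beta\:Y\rarrow\coprod_{\beta\in F}X_\beta$, one checks by comparing coordinates (composing with the projections onto the $X_\beta$ and using that $h$ is diagonal, so $\pi_\beta\circ h=h_\beta\circ\pi_\beta$) that $h\circ g$ has coordinates $h_\beta\circ g_\beta=f_\beta$, hence coincides with the original morphism $S\rarrow\coprod_{\beta\in F}X_\beta$. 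Composing further with the split inclusion $\coprod_{\beta\in F}X_\beta\rarrow X$ yields a factorization of $f$ through $Y\in\sY$, proving that $X$ is approachable from $\sS$ via $\sY$. The only point requiring any care is this coordinatewise identification through the finite biproduct; I do not expect any genuine obstacle.
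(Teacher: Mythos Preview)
Your proposal is correct and follows essentially the same approach as the paper: use compactness of $S$ to reduce the morphism into the coproduct to one into a finite subcoproduct, then factor each component through an object of $\sY$ and assemble these into a single factorization through the finite direct sum $\bigoplus_{\beta\in F}Y_\beta\in\sY$. The paper's proof is more terse, omitting the coordinatewise verification you spell out, but the argument is the same.
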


\begin{proof}
 Let $(X_\upsilon)_{\upsilon\in\Upsilon}$ be a family of objects
in $\sT$ such that, for every $\upsilon\in\Upsilon$, the object
$X_\upsilon$ is approachable from $\sS$ via~$\sY$.
 Let $S\in\sS$ be an object and $f\:S\rarrow
\coprod_{\upsilon\in\Upsilon}X_\upsilon$ be a morphism in~$\sT$.
 Then, since the object $S$ is compact in $\sT$, the morphism~$f$
factorizes as $S\rarrow\bigoplus_{\xi\in\Xi}X_\xi\rarrow
\coprod_{\upsilon\in\Upsilon}X_\upsilon$, where $\Xi$ is
a finite subset of~$\Upsilon$.
 Since every component $S\rarrow X_\xi$, \,$\xi\in\Xi$ factorizes
through an object of $\sY$ and the full subcategory $\sY$ is closed
under finite direct sums in $\sT$, we can conclude that
the morphism~$f$ factorizes through an object of~$\sY$.
\end{proof}

\begin{lem} \label{compacts-in-quotient-category-lemma}
 Let\/ $\sT$ be a triangulated category with infinite coproducts and\/
$\sS$, $\sX\subset\sT$ be (strictly) full triangulated subcategories
such that the full subcategory\/ $\sX$ is closed under coproducts
in\/~$\sT$.
 Let\/ $\sY\subset\sS\cap\sX$ be a full triangulated subcategory in
the intersection.
 Assume that all the objects of\/ $\sS$ are compact in\/ $\sT$ and
all the objects of\/ $\sX$ are approachable from\/ $\sS$ via\/~$\sY$.
 Then the triangulated functor between the Verdier quotient categories
$$
 \sS/\sY\lrarrow\sT/\sX
$$
induced by the inclusion of the triangulated categories\/ $\sS
\rightarrowtail\sT$ is fully faithful, and all the objects in its
image are compact in\/ $\sT/\sX$.
\end{lem}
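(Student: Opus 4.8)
The full-and-faithfulness assertion requires no new work: the hypotheses here are exactly those of Lemma~\ref{approachable-implies-fully-faithful-lemma}, which already gives that the functor $\sS/\sY\rarrow\sT/\sX$ induced by the inclusion is fully faithful. So the genuinely new content is the compactness claim. Write $Q\colon\sT\rarrow\sT/\sX$ for the Verdier quotient functor. Since $\sT$ has coproducts and $\sX$ is a triangulated subcategory closed under coproducts, it is a standard fact about Verdier quotients that $\sT/\sX$ has coproducts and that $Q$ preserves them, the coproduct in $\sT/\sX$ being computed by forming it in $\sT$ and applying~$Q$. The objects in the image of $\sS/\sY\rarrow\sT/\sX$ are precisely the objects $Q(S)$ with $S\in\sS$, so I must show each $Q(S)$ is compact in $\sT/\sX$. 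Recalling that in any additive category with coproducts the comparison map $\bigoplus_\upsilon\Hom(Q(S),Q(Z_\upsilon))\rarrow\Hom(Q(S),\coprod_\upsilon Q(Z_\upsilon))$ is automatically injective (the finite-subcoproduct inclusions being split monomorphisms), only its surjectivity has to be established.

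The plan for surjectivity is a roof manipulation driven by approachability. A morphism $\phi\colon Q(S)\rarrow Q(\coprod_\upsilon Z_\upsilon)$ is represented by a fraction $Q(S)\overset{Q(s)^{-1}}{\llarrow}Q(W)\overset{Q(g)}{\lrarrow}Q(\coprod_\upsilon Z_\upsilon)$, where $s\colon W\rarrow S$ has $\cone(s)\in\sX$ and $g\colon W\rarrow\coprod_\upsilon Z_\upsilon$ is a morphism of~$\sT$. Complete $s$ to a distinguished triangle $W\overset{s}{\rarrow}S\overset{\partial}{\rarrow}C\rarrow W[1]$ with $C=\cone(s)\in\sX$. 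Because $C\in\sX$ and every object of $\sX$ is approachable from $\sS$ via $\sY$, the approachability criterion of Section~\ref{approachability-subsecn}, applied to the morphism $\partial\colon S\rarrow C$ with $S\in\sS$, yields an object $S''$ and a morphism $e\colon S''\rarrow S$ with $\cone(e)\in\sY$ such that $\partial e=0$; since $\sS$ is triangulated and $\sY\subset\sS$, the same criterion guarantees $S''\in\sS$. The vanishing $\partial e=0$ lets me factor $e$ through the fibre of $\partial$, i.e.\ $e=s e'$ for some $e'\colon S''\rarrow W$. As $\cone(e)\in\sY\subset\sX$, the morphism $Q(e)$ is invertible in $\sT/\sX$, and a direct computation gives $\phi\circ Q(e)=Q(g)\circ Q(s)^{-1}\circ Q(s)\circ Q(e')=Q(ge')$. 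Thus, after the isomorphism $Q(e)$, the morphism $\phi$ is represented by the genuine morphism $ge'\colon S''\rarrow\coprod_\upsilon Z_\upsilon$ of~$\sT$.

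The final step exploits compactness in~$\sT$. Since $S''\in\sS$ is compact in $\sT$, the morphism $ge'$ factors through the canonical inclusion of a finite subcoproduct $\bigoplus_{\xi\in\Xi}Z_\xi\rarrow\coprod_\upsilon Z_\upsilon$. Applying $Q$ and using that $Q$ preserves both the finite coproduct (a biproduct, hence preserved by any additive functor) and the infinite coproduct, the induced morphism $Q(S'')\rarrow\coprod_\upsilon Q(Z_\upsilon)$ becomes a finite sum of morphisms through the individual summands $Q(Z_\xi)$, so it lies in the image of the comparison map for $Q(S'')$; transporting back along the isomorphism $Q(e)$ shows that $\phi$ lies in the image of the comparison map for $Q(S)$. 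Combined with the automatic injectivity, this proves that $Q(S)$ is compact. I expect the main obstacle to be precisely the middle step: converting the backwards fraction $Q(s)^{-1}$ into an honest morphism of $\sT$ by using approachability to kill the connecting map~$\partial$, while keeping control that the replacement source $S''$ still lies in $\sS$ and is therefore compact in~$\sT$. The surrounding arguments are the standard dictionary between compactness and factorization through finite subcoproducts, together with the formal properties of the quotient functor~$Q$.
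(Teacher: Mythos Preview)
Your proof is correct and follows essentially the same approach as the paper: both convert the backward leg of the roof into an honest morphism from a compact object in $\sS$ by using approachability of the cone, then invoke compactness in~$\sT$. Your version is slightly more streamlined because you invoke the equivalent criterion for approachability stated in Section~\ref{approachability-subsecn} directly (obtaining $e\colon S''\rarrow S$ with cone in $\sY$ and $\partial e=0$, then precomposing $\phi$ with the isomorphism $Q(e)$), whereas the paper reconstructs this object explicitly as $G=\cone(S\to Y)[-1]$ and then needs the octahedron axiom to check that the resulting map $G\rarrow D$ has cone in~$\sX$; the underlying idea is the same.
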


\begin{proof}
 The full-and-faithfulness assertion is a particular case
of Lemma~\ref{approachable-implies-fully-faithful-lemma}.
 Before proving compactness, we need to explain why coproducts
exist in the quotient category~$\sT/\sX$.
 Indeed, since the triangulated category $\sT$ has coproducts and
the full subcategory $\sX\subset\sT$ is closed under coproducts,
it follows that the quotient category $\sT/\sX$ also has coproducts
and the Verdier quotient functor $\sT\rarrow\sT/\sX$ preserves
coproducts by~\cite[Lemma~1.5]{BN} or~\cite[Lemma~3.2.10]{Neem2}.

 To establish the compactness, we have to recall some details of
the proof of Lemma~\ref{approachable-implies-fully-faithful-lemma}.
 Consider a family of objects $(T_\upsilon\in\sT)_{\upsilon\in\Upsilon}$
and an object $S\in\sS$.
 By the construction of the triangulated Verdier quotient category,
any morphism $S\rarrow\coprod_{\upsilon\in\Upsilon}T_\upsilon$ in
the quotient category $\sT/\sX$ is represented by a fraction
(or ``roof'') $S\larrow D\rarrow\coprod_{\upsilon\in\Upsilon}
T_\upsilon$ of two morphisms $D\rarrow S$ and $D\rarrow
\coprod_{\upsilon\in\Upsilon} T_\upsilon$ in $\sT$, where a cone $X$
of the morphism $D\rarrow S$ belongs to~$\sX$.
 So we have a distinguished triangle $D\rarrow S\rarrow X\rarrow D[1]$
in $\sT$ with $S\in\sS$ and $X\in\sX$.
 By the approachability assumption of the lemma, the morphism
$S\rarrow X$ factorizes as $S\rarrow Y\rarrow X$, where $Y\in\sY$.
 It is important for our argument that $Y\in\sS\cap\sX$.

 Put $G=\cone(S\to Y)[-1]$, so we have a distinguished triangle
$G\rarrow S\rarrow Y\rarrow G[1]$ in $\sS\subset\sT$.
 Then the pair of morphisms $\id_S\:S\rarrow S$ and $Y\rarrow X$
can be completed to a morphism of distinguished triangles in~$\sT$;
so we get a morphism $G\rarrow D$ making the triangle diagram
$G\rarrow D\rarrow S$ commutative.
$$
 \xymatrix{
  G \ar[r] \ar@{..>}[d] & S \ar[r] \ar@{=}[d] & Y \ar[r] \ar[d]
  & G[1] \ar@{..>}[d] \\
  D \ar[r] & S \ar[r] & X \ar[r] & D[1]
 }
$$
 By the octahedron axiom, since both the morphisms $D\rarrow S$ and
$G\rarrow S$ have cones belonging to $\sX$, a cone of the morphism
$G\rarrow D$ also belongs to~$\sX$.

 Now our morphism $S\rarrow\coprod_{\upsilon\in\Upsilon}T_\upsilon$
in $\sT/\sX$ can be represented by the fraction
$$
 S\llarrow D\llarrow G\lrarrow D\lrarrow
 \coprod\nolimits_{\upsilon\in\Upsilon}T_\upsilon
$$
of morphisms in $\sT$ with an object $G\in\sS$.
 Consider the composition $G\rarrow D\rarrow
\coprod_{\upsilon\in\Upsilon}T_\upsilon$ in~$\sT$.
 Since the object $G\in\sS$ is compact in $\sT$, it follows that there
exists a finite subset of indices $\Xi\subset\Upsilon$ such that
the morphism $G\rarrow\coprod_{\upsilon\in\Upsilon}T_\upsilon$
factorizes as $G\rarrow\coprod_{\xi\in\Xi}T_\xi\rarrow
\coprod_{\upsilon\in\Upsilon}T_\upsilon$ in~$\sT$.
 Thus the original morphism $S\rarrow\coprod_{\upsilon\in\Upsilon}
T_\upsilon$ in the quotient category $\sT/\sX$ factorizes as
$S\rarrow\coprod_{\xi\in\Xi}T_\xi\rarrow
\coprod_{\upsilon\in\Upsilon}T_\upsilon$.
\end{proof}

 The following theorem is a generalization
of~\cite[Theorem~3.11.1]{Pkoszul} and~\cite[Corollary~2.6(a)]{Pfp}.

\begin{thm} \label{full-and-faithfulness-and-compactness-main-theorem}
 Let $(\bE,\sK)$ be an exact DG\+pair and $(\bF,\sL)\subset(\bE,\sK)$
be an exact DG\+subpair.
 Assume that the DG\+category\/ $\bE$ has exact coproducts,
the full subcategory\/ $\sK\subset\sZ^0(\bE^\bec)$ is closed under
coproducts, the full subcategory\/ $\sL$ is self-resolving in\/ $\sK$,
and all the objects of\/ $\sL$ are small in\/~$\sK$.
 Then the triangulated functor
$$
 \sD^\abs(\bF)\lrarrow\sD^\co(\bE)
$$
induced by the inclusion of exact DG\+categories\/ $\bF\rightarrowtail
\bE$ is fully faithful, and all the objects in its essential image
are compact in\/ $\sD^\co(\bE)$.
\end{thm}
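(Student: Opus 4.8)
The plan is to deduce the statement from Lemma~\ref{compacts-in-quotient-category-lemma}, applied to the triangulated category $\sT=\sH^0(\bE)$, its full triangulated subcategories $\sS=\sH^0(\bF)$ and $\sX=\Ac^\co(\bE)$, and the full triangulated subcategory $\sY=\Ac^\abs(\bF)$. Several hypotheses of that lemma are immediate: since $\bE$ has (exact) coproducts, the category $\sT=\sH^0(\bE)$ has coproducts; the subcategory $\sX=\Ac^\co(\bE)$ is triangulated and closed under coproducts by its very definition; and $\sY=\Ac^\abs(\bF)$ is a thick, hence full triangulated, subcategory of $\sS=\sH^0(\bF)$. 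The inclusion $\sY\subset\sS\cap\sX$ holds because the totalizations of short exact sequences in $\sZ^0(\bF)$ are totalizations of short exact sequences in $\sZ^0(\bE)$ (as $\bF$ inherits its exact structure from $\bE$), so the image of $\Ac^0(\bF)$ lies in $\Ac^0(\bE)$, whence $\Ac^\abs(\bF)\subset\Ac^\abs(\bE)\subset\Ac^\co(\bE)$. It then remains to verify the two substantive hypotheses: that every object of $\sS$ is compact in $\sT$, and that every object of $\sX$ is approachable from $\sS$ via~$\sY$.

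For compactness, I would first show that every object $F\in\bF$ is small in the additive category $\sZ^0(\bE)$, and then invoke Lemma~\ref{small-in-cocycles-implies-compact-in-cohomology} to pass to compactness in $\sH^0(\bE)$. Given a family $(E_\upsilon)_{\upsilon\in\Upsilon}$ in $\sZ^0(\bE)$ and a morphism $f\:F\rarrow\coprod_\upsilon E_\upsilon$, apply the functor $\Phi_\bE$: it preserves coproducts and takes values in $\sK$, so $\Phi_\bE(f)$ is a morphism $\Phi_\bE(F)\rarrow\coprod_\upsilon\Phi_\bE(E_\upsilon)$ between objects of~$\sK$, the coproduct being formed inside $\sK$ because $\sK$ is closed under coproducts in $\sZ^0(\bE^\bec)$. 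Since $\Phi_\bE(F)\in\sL$ and all objects of $\sL$ are small in~$\sK$, the morphism $\Phi_\bE(f)$ factors through a finite subcoproduct; equivalently its composition with the split projection onto $\coprod_{\zeta\in\Upsilon\setminus\Xi}\Phi_\bE(E_\zeta)$ vanishes for some finite $\Xi\subset\Upsilon$. As $\Phi_\bE$ is faithful and commutes with these split projections, the corresponding composition of $f$ with $\coprod_\upsilon E_\upsilon\rarrow\coprod_{\zeta\in\Upsilon\setminus\Xi}E_\zeta$ already vanishes in $\sZ^0(\bE)$, proving $F$ small. The delicate point here is precisely that smallness is only assumed in~$\sK$, not in all of $\sZ^0(\bE^\bec)$; this is enough because the coproducts that arise are coproducts of images $\Phi_\bE(E_\upsilon)$, which stay inside the coproduct-closed subcategory~$\sK$.

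For approachability, the starting input is Proposition~\ref{totalizations-approachable-prop}(a): since $\sL$ is self-resolving in $\sK$, every object of $\Ac^0(\bE)$ is approachable from $\sH^0(\bF)$ via $\Ac^0(\bF)$, hence a fortiori via the larger subcategory $\Ac^\abs(\bF)$. I then propagate this to all of $\Ac^\co(\bE)$ using the closure properties of the class of objects approachable from $\sS=\sH^0(\bF)$ via $\sY=\Ac^\abs(\bF)$: it is closed under direct summands (Lemma~\ref{summands-approachable-lemma}), it is a triangulated subcategory (Lemma~\ref{shifts-cones-approachable-lemma}, applicable since $\sY\subset\sS$), and it is closed under coproducts (Lemma~\ref{coproducts-approachable-from-compacts-lemma}, whose hypotheses hold because the objects of $\sS$ are compact by the previous paragraph and $\sY$ is closed under finite direct sums). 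This class therefore contains the minimal triangulated, coproduct-closed subcategory generated by $\Ac^0(\bE)$, which is exactly $\Ac^\co(\bE)$.

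With both substantive hypotheses in hand, Lemma~\ref{compacts-in-quotient-category-lemma} yields that the induced functor $\sS/\sY=\sD^\abs(\bF)\rarrow\sT/\sX=\sD^\co(\bE)$ is fully faithful and sends every object to a compact object of $\sD^\co(\bE)$, which is the desired assertion. I expect the main obstacle to be the compactness step, and within it the careful bookkeeping that smallness relative to the subcategory $\sK$ (rather than the full category $\sZ^0(\bE^\bec)$), together with the coproduct-closedness of~$\sK$, suffices to produce smallness in $\sZ^0(\bE)$; the approachability step, by contrast, is a clean bootstrapping from Proposition~\ref{totalizations-approachable-prop}(a) through the three closure lemmas.
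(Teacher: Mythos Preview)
Your proposal is correct and follows essentially the same route as the paper's proof: the same application of Lemma~\ref{compacts-in-quotient-category-lemma} with $\sT=\sH^0(\bE)$, $\sS=\sH^0(\bF)$, $\sX=\Ac^\co(\bE)$, $\sY=\Ac^\abs(\bF)$, the same compactness argument via smallness in $\sZ^0(\bE)$ plus Lemma~\ref{small-in-cocycles-implies-compact-in-cohomology}, and the same approachability bootstrap via Proposition~\ref{totalizations-approachable-prop}(a) together with Lemmas~\ref{shifts-cones-approachable-lemma} and~\ref{coproducts-approachable-from-compacts-lemma}. Your invocation of Lemma~\ref{summands-approachable-lemma} is harmless but unnecessary, since $\Ac^\co(\bE)$ is defined as a triangulated (not thick) subcategory closed under coproducts.
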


\begin{proof}
 First of all, let us explain once again why coproducts exist in
the coderived category $\sD^\co(\bE)$ (so the compactness assertion
makes sense).
 Indeed, coproducts exist in the homotopy category $\sH^0(\bE)$
(induced by the coproducts in the DG\+category~$\bE$), and
the full subcategory of coacyclic objects $\Ac^\co(\bE)\subset
\sH^0(\bE)$ is closed under coproducts by the definition.
 By~\cite[Lemma~1.5]{BN} or~\cite[Lemma~3.2.10]{Neem2},
it follows that coproducts exist in the quotient category
$\sD^\co(\bE)=\sH^0(\bE)/\Ac^\co(\bE)$, and the Verdier quotient
functor $\sH^0(\bE)\rarrow\sD^\co(\bE)$ preserves coproducts.

 Furthermore, following the proof of the ``$\Phi_\bA$ reflects
smallness'' assertion in
Lemma~\ref{Phi-Psi-preserve-reflect-smallness}, one can see
that the assumption that all the objects of $\sL$ are small in $\sK$
implies that all the objects of $\sZ^0(\bF)$ are small in $\sZ^0(\bE)$.
 Hence, by Lemma~\ref{small-in-cocycles-implies-compact-in-cohomology},
all objects of $\sH^0(\bF)$ are compact in $\sH^0(\bE)$.

 Let us show that all the objects of the full subcategory $\Ac^\co(\bE)$
are approachable from $\sH^0(\bF)$ via $\Ac^\abs(\bF)$ in $\sH^0(\bE)$.
 The argument follows the proof of
Theorem~\ref{full-and-faithfulness-main-theorem} with suitable
variations.
 By Proposition~\ref{totalizations-approachable-prop}(a), all
the objects of $\Ac^0(\bE)$ are approachable from $\sH^0(\bF)$
via $\Ac^0(\bF)$, hence also via $\Ac^\abs(\bF)$.
 By Lemma~\ref{shifts-cones-approachable-lemma}, the class of all
objects approachable from $\sH^0(\bF)$ via $\Ac^\abs(\bF)$ is
a full triangulated subcategory in $\sH^0(\bE)$.
 Finally, according to
Lemma~\ref{coproducts-approachable-from-compacts-lemma}, the class
of all objects approachable from $\sH^0(\bF)$ via $\Ac^\abs(\bF)$ is
closed under coproducts in $\sH^0(\bE)$.
 We can conclude that all the objects of $\Ac^\co(\bE)$ belong to
this class.

 Thus Lemma~\ref{compacts-in-quotient-category-lemma} is applicable
to $\sT=\sH^0(\bE)$, \ $\sS=\sH^0(\bF)$, \ $\sX=\Ac^\co(\bE)$,
and $\sY=\Ac^\abs(\bF)$, providing both the full-and-faithfulness
and compactness assertions of the theorem.
\end{proof} 

\subsection{Compact generation: Noetherian case}
 Let $\bA$ be an abelian DG\+category.
 Recall the notation $\bA_\binj\subset\bA$ for the full DG\+subcategory
formed by all the graded-injective objects, i.~e., such objects
$J\in\bA$ that the object $\Phi_\bA(J)$ is injective in the abelian
category $\sZ^0(\bA^\bec)$ (see
Section~\ref{injective-projective-semiorthogonality-subsecn}).

 In this section we consider a locally Noetherian (Grothendieck
abelian) DG\+cat\-e\-gory~$\bA$.
 The following proposition is a particular case of
Theorem~\ref{inj-proj-resolutions-star-conditions-theorem}(a).

\begin{prop} \label{Noetherian-coderived-category-prop}
 Let\/ $\bA$ be a locally Noetherian DG\+category.
 Then the triangulated functor
$$
 \sH^0(\bA_\binj)\lrarrow\sD^\co(\bA)
$$
induced by the inclusion of exact/abelian DG\+categories\/ $\bA_\binj
\rightarrowtail\bA$ is an equivalence of triangulated categories.
\end{prop}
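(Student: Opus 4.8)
The plan is to obtain the proposition as a direct specialization of Theorem~\ref{inj-proj-resolutions-star-conditions-theorem}(a) applied to the exact DG\+category $\bE=\bA$. All the work lies in checking that a locally Noetherian DG\+category satisfies the three hypotheses of that theorem: (i)~$\bA$ is an exact DG\+category with twists and infinite coproducts; (ii)~the exact category $\sZ^0(\bA^\bec)$ has enough injective objects; and (iii)~$\sZ^0(\bA^\bec)$ satisfies condition~\textup{($*$)}. Once these are verified, the cited theorem produces exactly the asserted triangulated equivalence $\sH^0(\bA_\binj)\simeq\sD^\co(\bA)$, where the graded\+injective objects $\bA_\binj$ are understood with respect to the abelian exact DG\+category structure (Section~\ref{injective-projective-semiorthogonality-subsecn}).

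First I would unwind the definitions from Section~\ref{locally-Noetherian-DG-categories-subsecn}. A locally Noetherian DG\+category $\bA$ is in particular a Grothendieck abelian DG\+category, so both $\sZ^0(\bA)$ and $\sZ^0(\bA^\bec)$ are Grothendieck abelian categories. By Proposition~\ref{abelian-DG-categories-have-twists}, an abelian DG\+category is idempotent-complete and has all twists. Since $\sZ^0(\bA)$ is Grothendieck it has infinite coproducts, and by Lemma~\ref{coproducts-in-cocycles-and-in-DG} these coproducts also exist in the DG\+category $\bA$ itself. Endowing $\bA$ with the abelian exact DG\+category structure provided by Lemma~\ref{abelian-exact-DG-structure}, we obtain an exact DG\+category with twists and infinite coproducts, which settles~(i). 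For~(ii), it suffices to recall that the Grothendieck abelian category $\sZ^0(\bA^\bec)$ automatically has enough injective objects, and that for the abelian exact structure the injective objects of the exact category coincide with the injective objects of the ambient abelian category.

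The one genuinely substantive step is~(iii). Here I would invoke the classical fact that \emph{in a locally Noetherian Grothendieck abelian category arbitrary coproducts of injective objects are again injective}. Since $\sZ^0(\bA^\bec)$ is locally Noetherian, any countable (indeed arbitrary) coproduct of injectives in $\sZ^0(\bA^\bec)$ is injective, hence has injective dimension~$0$; thus condition~\textup{($*$)} holds in the strongest possible form, with the bound $n=0$. I expect this to be the only place where a nontrivial external input is needed; the remaining hypotheses~(i) and~(ii) are a routine assembly of the structural lemmas already established. With~(i)--(iii) in hand, Theorem~\ref{inj-proj-resolutions-star-conditions-theorem}(a) applies and yields the equivalence, completing the proof.
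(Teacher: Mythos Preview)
Your proposal is correct and follows essentially the same route as the paper: both reduce the proposition to Theorem~\ref{inj-proj-resolutions-star-conditions-theorem}(a) and verify its hypotheses by invoking Proposition~\ref{abelian-DG-categories-have-twists} for twists, the Grothendieck property for coproducts and enough injectives, and the classical fact that coproducts of injectives are injective in a locally Noetherian Grothendieck category to establish condition~\textup{($*$)}.
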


\begin{proof}
 It suffices to check that the assumptions of
Theorem~\ref{inj-proj-resolutions-star-conditions-theorem}(a)
are satisfied for the DG\+category $\bA$ with its abelian exact
DG\+category structure.
 Any Grothendieck abelian category has exact coproducts, hence
any Grothendieck abelian DG\+category also has exact coproducts
(cf.\ Lemma~\ref{coproducts-in-cocycles-and-in-DG}).
 Moreover, any Grothendieck abelian category has enough injective
objects.
 Furthermore, any abelian DG\+category has twists by
Proposition~\ref{abelian-DG-categories-have-twists}.
 Finally, coproducts of injective objects are injective in any
locally Noetherian Grothendieck category; so condition~($*$)
is trivially satisfied for locally Noetherian categories.
\end{proof}

 The following theorem, provable by an argument due to Arinkin,
is our generalization of~\cite[Theorem~3.11.2]{Pkoszul}
and~\cite[Proposition~1.5(d)]{EP}.

\begin{thm} \label{Noetherian-compact-generation-theorem}
 Let\/ $\bA$ be a locally Noetherian DG\+category and\/
$\bA_\bfg\subset\bA$ be its full DG\+subcategory of finitely
generated (Noetherian) objects.
 Then the triangulated functor\/
$$
 \sD^\abs(\bA_\bfg)\lrarrow\sD^\co(\bA)
$$
induced by the inclusion of abelian DG\+categories\/
$\bA_\bfg\rarrow\bA$ is fully faithful, and (representatives of
the isomorphism classes of all) the objects in its image form a set of
compact generators of the triangulated category\/ $\sD^\co(\bA)$.
\end{thm}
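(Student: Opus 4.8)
The plan is to obtain full\+and\+faithfulness together with compactness directly from Theorem~\ref{full-and-faithfulness-and-compactness-main-theorem}, and then to establish generation separately by an orthogonality argument. First I would form the exact DG\+pair $(\bE,\sK)=(\bA,\sZ^0(\bA^\bec))$ (an exact DG\+pair by Example~\ref{exact-DG-pairs-examples}(0), using the abelian exact DG\+category structure on the Grothendieck abelian DG\+category $\bA$) and the exact DG\+subpair $(\bF,\sL)=(\bA_\bfg,\sZ^0(\bA_\bfg^\bec))$, where $\sL\subset\sZ^0(\bA^\bec)$ is the full subcategory of Noetherian objects. That this is genuinely an exact DG\+subpair follows since $\bA_\bfg$ is an exactly embedded full abelian DG\+subcategory of $\bA$ (Section~\ref{locally-Noetherian-DG-categories-subsecn}) together with Lemma~\ref{Phi-Psi-preserve-reflect-Noetherianity}. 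I would then verify the hypotheses of Theorem~\ref{full-and-faithfulness-and-compactness-main-theorem}: the DG\+category $\bA$ has exact coproducts (as any Grothendieck abelian DG\+category does, cf.\ the proof of Proposition~\ref{Noetherian-coderived-category-prop}); $\sK=\sZ^0(\bA^\bec)$ is trivially closed under coproducts in itself; all objects of $\sL$ are small in $\sK$ because Noetherian objects are finitely generated; and $\sL$ is self\+resolving in the locally Noetherian Grothendieck category $\sK$. The only nonformal point is the lifting axiom of a self\+resolving subcategory: given a Noetherian $F$ and an admissible epimorphism $E\rarrow F$ in $\sK$, I would write $E$ as the directed union of its Noetherian subobjects $E_\alpha$, note that their images in $F$ form an ascending chain with union $F$, and use Noetherianity of $F$ to find $\alpha$ with $E_\alpha\rarrow F$ already surjective; then $F'=E_\alpha$ works. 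With all hypotheses in place, Theorem~\ref{full-and-faithfulness-and-compactness-main-theorem} yields that $\sD^\abs(\bA_\bfg)\rarrow\sD^\co(\bA)$ is fully faithful with essential image consisting of compact objects.

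It remains to prove that this image generates $\sD^\co(\bA)$. As the image objects are compact, it suffices to show that their right orthogonal complement vanishes, i.e.\ that any $T\in\sD^\co(\bA)$ with $\Hom_{\sD^\co(\bA)}(N[n],T)=0$ for all $N\in\bA_\bfg$ and all $n\in\Gamma$ is zero. Using Proposition~\ref{Noetherian-coderived-category-prop}, I would represent $T$ by a graded\+injective object $J\in\bA_\binj$. By the semiorthogonality of Theorem~\ref{inj-proj-resolutions-semiorthogonality-theorem}(a), morphisms into $J$ are unchanged under the Verdier localization $\sH^0(\bA)\rarrow\sD^\co(\bA)$, so $\Hom_{\sD^\co(\bA)}(N[n],J)=\Hom_{\sH^0(\bA)}(N[n],J)=H^n\Hom^\bu_\bA(N,J)$. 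Thus the hypothesis becomes the statement that the complex of abelian groups $\Hom^\bu_\bA(N,J)$ is acyclic for every Noetherian $N$, and the goal is reduced to showing that such a $J$ is contractible (equivalently, zero in $\sH^0(\bA_\binj)\simeq\sD^\co(\bA)$).

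Here I would run Arinkin's argument. The key reduction is to prove that $\Hom^\bu_\bA(M,J)$ is acyclic for \emph{every} object $M\in\bA$; taking $M=J$ then exhibits the cocycle $\id_J$ as a coboundary in $\Hom^\bu_\bA(J,J)$, which is precisely a contracting homotopy, so $J$ is contractible. The crucial input is that, since $\Phi_\bA(J)$ is injective in $\sZ^0(\bA^\bec)$, applying the exact functor $\Phi_\bA$ followed by the exact functor $\Hom_{\sZ^0(\bA^\bec)}(-,\Phi_\bA(J))$ shows that $\Hom^\bu_\bA(-,J)$ carries short exact sequences in $\sZ^0(\bA)$ to short exact sequences of complexes (identifying the graded morphism groups via Lemma~\ref{Phi-Psi-extended-to-nonclosed-morphisms}) and carries directed unions of subobjects to inverse limits with surjective transition maps. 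Writing an arbitrary $M$ as a continuous well\+ordered union of subobjects $M_\alpha$ with Noetherian successive quotients $M_{\alpha+1}/M_\alpha$ (available in a locally Noetherian category), I would prove acyclicity of $\Hom^\bu_\bA(M,J)$ by transfinite induction along this filtration: the successor step is immediate from the long exact cohomology sequence of the short exact sequence $0\rarrow\Hom^\bu_\bA(M_{\alpha+1}/M_\alpha,J)\rarrow\Hom^\bu_\bA(M_{\alpha+1},J)\rarrow\Hom^\bu_\bA(M_\alpha,J)\rarrow0$, whose leftmost term is acyclic by the already established Noetherian case.

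The main obstacle is the limit\+ordinal step of this induction, where $\Hom^\bu_\bA(M_\gamma,J)=\varprojlim_{\alpha<\gamma}\Hom^\bu_\bA(M_\alpha,J)$ is a transfinite inverse limit of acyclic complexes, and a naive inverse limit of acyclic complexes need not be acyclic because of $\varprojlim^1$ (and higher derived\+limit) phenomena for systems of uncountable cofinality. This is exactly where injectivity of $\Phi_\bA(J)$ is indispensable: it forces every transition map in the inverse system to be degreewise surjective (a Mittag--Leffler condition, using that shifts preserve injectivity and that $\Phi_\bA$ sends the monomorphisms $M_\alpha\hookrightarrow M_\beta$ to monomorphisms), and I would combine this surjectivity with the continuity of the filtration to control the derived limits and conclude acyclicity at limit stages. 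Carrying out this derived\+limit bookkeeping carefully is the technical heart of the proof. Finally, since $\bA_\bfg$ has only a set of isomorphism classes of objects (every Noetherian object being a quotient of a finite direct sum of members of a generating set), the image in question is a \emph{set} of compact generators, as asserted.
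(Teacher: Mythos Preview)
Your proposal is correct and follows the paper's architecture closely: the full-and-faithfulness and compactness come from Theorem~\ref{full-and-faithfulness-and-compactness-main-theorem} applied to exactly the exact DG-subpair you describe, and the generation argument reduces via Proposition~\ref{Noetherian-coderived-category-prop} and Theorem~\ref{inj-proj-resolutions-semiorthogonality-theorem}(a) to showing that a graded-injective $J$ with $\Hom_{\sH^0(\bA)}(E,J)=0$ for all Noetherian $E$ is contractible.

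The one point of divergence is the organization of that last step.  The paper packages it as Lemma~\ref{Arinkin-lemma} and runs a Zorn's-lemma argument directly: one considers pairs $(B,t_B)$ with $B\subset J$ a subobject in $\sZ^0(\bA)$ and $t_B$ a contracting homotopy for the inclusion $B\hookrightarrow J$, takes a maximal such pair, and shows $B=J$ by extending $t_B$ over a Noetherian quotient $C/B$.  The limit step in Zorn's lemma is then \emph{trivial}---a compatible chain of partial homotopies glues to a homotopy on the union because $\Hom^{-1}_\bA(\bigcup B_\alpha,J)=\varprojlim\Hom^{-1}_\bA(B_\alpha,J)$.  Your route (proving $\Hom^\bu_\bA(M,J)$ acyclic for every $M$ by transfinite induction on a filtration of $M$, then specializing to $M=J$) is equivalent, but your framing of the limit step as a ``derived-limit bookkeeping'' problem makes it look harder than it is.  With the inductive hypothesis stated as mere acyclicity of each $\Hom^\bu_\bA(M_\alpha,J)$, passing to the inverse limit is indeed delicate; the clean argument (which is what the paper does, and which you would rediscover once you tried to write it out) is to carry a \emph{compatible family of primitives} through the recursion, so that the limit step is just gluing.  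Degreewise surjectivity of the transition maps is used at successor steps to lift the already-constructed primitive, not at limit steps.
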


\begin{proof}
 To prove the full-and-faithfulness of the triangulated functor
in question and compactness of all the objects in its essential image,
we apply
Theorem~\ref{full-and-faithfulness-and-compactness-main-theorem}.
 Consider the exact DG\+pair $(\bA,\sZ^0(\bA^\bec))$ and its exact
DG\+subpair $(\bA_\bfg,\sZ^0(\bA^\bec_\bfg))$ (recall that
$\sZ^0(\bA^\bec_\bfg)=\sZ^0(\bA^\bec)_\fg$, as per the discussion in
Section~\ref{locally-Noetherian-DG-categories-subsecn}).
 Recall further that all finitely generated objects are small, and
observe that the full subcategory of Noetherian objects is
self-resolvable in any locally Noetherian category.

 In order to prove that the Noetherian objects generate the coderived
category $\sD^\co(\bA)$, we use the result of
Proposition~\ref{Noetherian-coderived-category-prop}, telling that
any object of $\sD^\co(\bA)$ can be represented by a graded-injective
object of~$\bA$.
 Furthermore, the semiorthogonality assertion of
Theorem~\ref{inj-proj-resolutions-semiorthogonality-theorem}(a) implies
that the Verdier quotient functor $\sH^0(\bA)\rarrow\sD^\co(\bA)$
induces an isomorphism of the Hom groups $\Hom_{\sH^0(\bA)}(A,J)\simeq
\Hom_{\sD^\co(\bA)}(A,J)$ for any objects $A\in\bA$ and
$J\in\bA_\binj$.
 It remains to use the following lemma, going back to Arinkin, in
order to finish the proof of the theorem.
\end{proof}

\begin{lem} \label{Arinkin-lemma}
 Let\/ $\bA$ be a locally Noetherian DG\+category and $J\in\bA_\binj$
be a graded-injective object.
 Assume that all closed morphisms $E\rarrow J$ from Noetherian objects
$E\in\bA_\bfg$ to $J$ are homotopic to zero in\/~$\bA$.
 Then the object $J$ is contractible.
\end{lem}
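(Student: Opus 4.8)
The plan is to produce a contracting homotopy $\sigma\in\Hom^{-1}_\bA(J,J)$ with $d(\sigma)=\id_J$; this is exactly what it means for $J$ to be contractible, and no zero-square normalization is needed for this conclusion. First I would restate the hypothesis homologically. Since $\bA_\bfg$ is closed under shifts by Lemma~\ref{Phi-Psi-preserve-reflect-Noetherianity}, the assumption that every closed degree-$0$ morphism $E\rarrow J$ with $E\in\bA_\bfg$ is null-homotopic is equivalent to the acyclicity of the complex $\Hom^\bu_\bA(E,J)$ for every Noetherian object $E\in\bA_\bfg$.

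Next I would transfer the problem into the locally Noetherian Grothendieck category $\sC=\sZ^0(\bA^\bec)$, in which $\Phi_\bA(J)$ is injective by the definition of $\bA_\binj$. The fully faithful functor $\widetilde\Phi_\bA\:\bA^0\rarrow\sC$ of Lemma~\ref{Phi-Psi-extended-to-nonclosed-morphisms}, together with the fact that $\Phi_\bA$ sends the shift $[1]$ to the inverse shift $[-1]$ (Lemma~\ref{twists-into-isomorphisms}), identifies $\Hom^{-1}_\bA(J,J)=\Hom^0_\bA(J,J[-1])$ with $\Hom_\sC(\Phi_\bA(J),\Phi_\bA(J)[1])$ and $\Hom^0_\bA(J,J)$ with $\Hom_\sC(\Phi_\bA(J),\Phi_\bA(J))$. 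The crucial point of this dictionary is that both $\Phi_\bA(J)$ and $\Phi_\bA(J)[1]$ lie in the essential image of $\widetilde\Phi_\bA$, which is a \emph{full} subcategory of $\sC$; hence any morphism $\Phi_\bA(J)\rarrow\Phi_\bA(J)[1]$ manufactured inside $\sC$ is automatically a genuine degree-$(-1)$ morphism of $\bA$. Moreover $\Phi_\bA(J)[n]$ is injective for every $n$, the shift being an exact autoequivalence of $\sC$, so morphisms into these objects extend along monomorphisms. The adjunctions $\Psi^+_\bA\dashv\Phi_\bA\dashv\Psi^-_\bA$ of Lemma~\ref{G-functors-in-DG-categories} let me read the hypothesis on the $\sC$-side: for a Noetherian subobject $N\subset\Phi_\bA(J)$ the object $\Psi^+_\bA(N)$ is again Noetherian by Lemma~\ref{Phi-Psi-preserve-reflect-Noetherianity}, and closed morphisms $\Psi^+_\bA(N)\rarrow J$ --- which correspond by adjunction to morphisms $N\rarrow\Phi_\bA(J)$ --- are null-homotopic.

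With this in hand I would build $\sigma$ by transfinite recursion along a filtration supplied by local Noetherianity. Choose a continuous well-ordered increasing filtration $0=G_0\subset G_1\subset\dotsb\subset G_\lambda=\Phi_\bA(J)$ by subobjects of $\sC$ with Noetherian successive quotients $G_{\alpha+1}/G_\alpha$. The recursion maintains a morphism $\sigma_\alpha\in\Hom^{-1}_\bA(J,J)$ whose defect $e_\alpha=d(\sigma_\alpha)-\id_J\in\Hom_\sC(\Phi_\bA(J),\Phi_\bA(J))$ vanishes on $G_\alpha$, i.e.\ factors through $\Phi_\bA(J)/G_\alpha$. At a successor stage the restriction of $e_\alpha$ to the Noetherian piece $G_{\alpha+1}/G_\alpha$ becomes, after passing through $\Psi^+_\bA$, a closed morphism from a Noetherian object into $J$; the reformulated hypothesis provides a null-homotopy of it, which is the correction term, and injectivity of $\Phi_\bA(J)[1]$ extends this correction from $G_{\alpha+1}$ to all of $\Phi_\bA(J)$, producing $\sigma_{\alpha+1}$ with defect vanishing on $G_{\alpha+1}$. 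Limit stages are handled through continuity of the filtration and exactness of directed colimits (Lemma~\ref{directed-colimits-exact-in-A-iff-in-A-bec}). At the top, $G_\lambda=\Phi_\bA(J)$ forces $e_\lambda=0$, so $d(\sigma_\lambda)=\id_J$ and $J$ is contractible.

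The main obstacle is the successor step together with the coherence demanded at limit ordinals. Identifying the differential $d$ of $\Hom^\bu_\bA(J,J)$ with the correct natural operation on $\sC$-morphisms (via the canonical maps relating $\Phi_\bA(J)$ and its shift that arise from the $\bec$-construction, cf.\ Lemmas~\ref{Xi-decomposed} and~\ref{cone-kernel-cokernel}) must be carried out precisely, so that the restriction of the defect $e_\alpha$ to a Noetherian quotient is \emph{literally} a coboundary in a complex of the form $\Hom^\bu_\bA(\Psi^+_\bA(N),J)$ to which the acyclicity hypothesis applies. Because injective extensions are non-unique, the maps $\sigma_\alpha$ will not restrict compatibly of their own accord, so the recursion has to be organized around the vanishing of defects on the $G_\alpha$ rather than around literal extension of the $\sigma_\alpha$, and the limit-stage verification then amounts to matching a compatible family of defect-vanishing conditions. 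This transfinite bookkeeping, shuttling between the equation $d(\sigma)=\id_J$ on the $\bA$-side and the monomorphism-extension problem on the injective object $\Phi_\bA(J)[1]$ on the $\sC$-side, is where the bulk of the work will lie.
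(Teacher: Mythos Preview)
The core idea---build a contracting homotopy by transfinite induction, correcting the defect on a Noetherian quotient at each stage using the hypothesis and extending via injectivity---is exactly the paper's approach. But your implementation has two real gaps.

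First, at the successor step you pass to $\sC=\sZ^0(\bA^\bec)$ and invoke the hypothesis through the adjunction: the restriction of the defect to $G_{\alpha+1}/G_\alpha$ corresponds to a closed morphism $\Psi^+_\bA(G_{\alpha+1}/G_\alpha)\to J$. But every object in the image of $\Psi^+_\bA$ is contractible in $\bA$ by construction (the structural $\sigma$ is a contracting homotopy), so that morphism is null-homotopic automatically; the hypothesis is never engaged, and the null-homotopy you obtain lives on the wrong object to serve as a correction to~$\sigma_\alpha$. The paper avoids this by filtering $J$ itself by subobjects $B\subset J$ in $\sZ^0(\bA)$: then the quotient $E=C/B$ is Noetherian in $\sZ^0(\bA)$, the residual morphism $\iota_C-d(t'_C)$ is a genuine closed morphism $E\to J$ in $\sZ^0(\bA)$, and the hypothesis applies directly to give a homotopy $t_E$ with $d(t_E)=f$, which one then adds back as $t_C=t'_C+t_E\pi$.

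Second, tracking a global $\sigma_\alpha\in\Hom^{-1}_\bA(J,J)$ with defect vanishing on $G_\alpha$ does not survive limit ordinals: the $\sigma_\alpha$'s are incompatible, and a family of defect-vanishing conditions on an increasing chain of subobjects does not by itself produce a single map whose defect vanishes on the union. The paper handles this with Zorn's lemma on the poset of pairs $(B,t_B)$ with $t_B\in\Hom^{-1}_\bA(B,J)$, $d(t_B)=\iota_B$, ordered by $B\subset C$ and $t_C|_B=t_B$; chains have upper bounds via directed colimits (the inclusion $\sZ^0(\bA)\hookrightarrow\bA^0$ preserves them), and the single passage to $\sC$ is only to extend $t_B$ along $B\hookrightarrow C$ using injectivity of $\Phi_\bA(J)$. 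With this organization the ``main obstacle'' you identify---translating the differential into $\sC$---never arises, because the equation $d(t)=\iota$ is always written in $\bA$.
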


\begin{proof}
 Consider the following poset~$P$.
 The elements of $P$ are pairs $(B,t_B)$, where $B\subset J$ is
a subobject of the object $J$ in the abelian category $\sZ^0(\bA)$,
and $h\in\Hom^{-1}_\bA(B,J)$ is a contracting homotopy for
the identity embedding $\iota_B\:B\rarrow J$, that is $\iota_B=d(t)$.
 The partial order on $P$ is defined in the obvious way: we say that
$(B,t_B)\le (C,t_C)$ in $P$ if $B\subset C\subset J$ and $t_B=t_C|_B$.

 It is important to observe that, since directed colimits are exact
in $\sZ^0(\bA)$, the union of a chain of subobjects coincides with
their direct limit.
 Moreover, the identity functor $\sZ^0(\bA)\rarrow\bA^0$ preserves
direct limits (since so does the functor $\Phi_\bA\:\sZ^0(\bA)
\rarrow\sZ^0(\bA^\bec)$, which can be extended to a fully faithful
embedding $\widetilde\Phi_\bA\:\bA^0\rarrow\sZ^0(\bA^\bec)$; see
Lemma~\ref{Phi-Psi-extended-to-nonclosed-morphisms}).
 Hence, interpreting the homotopies~$t_B$ as morphisms
$B\rarrow J[-1]$ in $\bA^0$ and using the direct limit property of
the union, one can show that any chain (linearly ordered subset)
in $P$ has an upper bound.

 By Zorn's lemma, the poset $B$ has a maximal element $(B,t_B)$.
 It suffices to prove that the subobject $B\subset J$ cannot be
different from the whole of~$J$.
 Indeed, otherwise there exists a subobject $C\subset J$ such
$B\subset C$ and the quotient object $C/B$ is a nonzero Noetherian
object in $\sZ^0(\bA)$.
 It remains to show that the homotopy $t_B\in\Hom^{-1}_\bA(B,J)$
can be extended to a compatible homotopy $t_C\in\Hom^{-1}_\bA(C,J)$,
i.~e., an element $t_C$~can be chosen so that $d(t_C)=\iota_C$
and $t_B=t_C|_B$.

 Consider the element $\widetilde\Phi(t_B)\in\Hom_{\sZ^0(\bA^\bec)}
(\Phi(B),\Phi(J[-1]))$.
 Since $\Phi(B)$ is a subobject of $\Phi(C)$ in $\sZ^0(\bA^\bec)$ and
the object $\Phi(J[-1])=\Phi(J)[1]$ is injective in $\sZ^0(\bA^\bec)$,
the morphism $\widetilde\Phi(t_B)$ can be extended to a morphism
$s'_C\in\Hom_{\sZ^0(\bA^\bec)}(\Phi(C),\Phi(J[-1]))$.
 Since the functor $\widetilde\Phi$ is fully faithful, we have
an element $t'_C\in\Hom^0_\bA(C,J[-1])\simeq\Hom^{-1}_\bA(C,J)$
such that $s'_C=\widetilde\Phi(t'_C)$.
 Simply put, we can extend our morphism $t_B\in\Hom^{-1}_\bA(B,J)$ to
some morphism $t'_C\in\Hom^{-1}_\bA(C,J)$ such that $t_B=t'_C|_B$.

 Now the inclusion $B\rightarrowtail C$ is a morphism in $\sZ^0(\bA)$,
so it is a closed morphism in~$\bA$.
 Therefore, we have $d(t'_C)|_B=d(t'_C|_B)=d(t_B)=\iota_B$.
 Hence, the difference $\iota_C-d(t'_C)$ vanishes in restriction
to~$B$.
 Furthermore, we have $d(\iota_C-d(t'_C))=d(\iota_C)-d^2(t'_C)=0$;
so $\iota_C-d(t'_C)\:C\rarrow J$ is a morphism in $\sZ^0(\bA)$
vanishing in restriction to~$B$.

 Denote by $E$ the quotient object $C/B$ in the abelian category
$\sZ^0(\bA)$; so $E\in\bA_\bfg$.
 Let $\pi\:C\rarrow E$ be the natural epimorphism in $\sZ^0(\bA)$.
 Then there exists a morphism $f\:E\rarrow J$ in $\sZ^0(\bA)$
such that $\iota_C-d(t'_C)=f\pi$.
 By the assumption of the lemma, the morphism $f$ must be homotopic
to zero in~$\bA$; so there is a homotopy $t_E\in\Hom_\bA^{-1}(E,J)$
such that $d(t_E)=f$.
 Finally, we have $\iota_C=d(t'_C)+f\pi=d(t'_C+t_E\pi)$ and
$(t'_C+t_E\pi)|_B=t'_C|_B+t_E\pi|_B=t_B$.
 It remains to put $t_C=t'_C+t_E\pi$.
\end{proof}

\begin{rem} \label{compact-objects-are-direct-summands-remark}
 Let us warn the reader that
Theorem~\ref{Noetherian-compact-generation-theorem} does \emph{not} 
completely describe the full triangulated subcategory of compact
objects in the compactly generated triangulated category $\sD^\co(\bA)$.
 Rather, the theorem claims that the full triangulated subcategory
$\sD^\abs(\bA_\bfg)$ consists of compact objects and generates
the triangulated category $\sD^\co(\bA)$.
 It follows that the full triangulated subcategory of compact objects
in $\sD^\co(\bA)$ is the thick closure (coinciding with
the idempotent completion) of $\sD^\abs(\bA_\bfg)$, that is,
any compact object of $\sD^\co(\bA)$ is a \emph{direct summand} of
an object coming from $\sD^\abs(\bA_\bfg)$ \,\cite[Lemma~2.2]{Neem1}.
\end{rem}

\subsection{Examples}
 Let us formulate here the particular cases of
Theorem~\ref{Noetherian-compact-generation-theorem} arising in
the context of Examples~\ref{Grothendieck-abelian-DG-examples}
and~\ref{locally-Noetherian-examples}.

 The following well-known result is due to
Krause~\cite[Proposition~2.3]{Kr}.

\begin{cor} \label{complexes-Noetherian-compact-generators}
 Let\/ $\sA$ be a locally Noetherian Grothendieck abelian category
and\/ $\sA_\fg\subset\sA$ be its full abelian subcategory of
Noetherian objects.
 Then the triangulated functor
$$
 \sD^\bb(\sA_\fg)\lrarrow\sD^\co(\sA)
$$
induced by the inclusion of abelian categories\/ $\sA_\fg\rightarrowtail
\sA$ is fully faithful, and its essential image coincides with
the full subcategory of compact objects in a compactly generated
triangulated category\/ $\sD^\co(\sA)$.
\end{cor}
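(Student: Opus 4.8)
The plan is to specialize Theorem~\ref{Noetherian-compact-generation-theorem} to the abelian DG\+category $\bA=\bC(\sA)$ of complexes in~$\sA$. First I would record the identifications from Section~\ref{A-bec-examples-subsecn}: by Example~\ref{complexes-bec-example} the functor $\Upsilon_\sA$ is an equivalence $\sG(\sA)\simeq\sZ^0(\bC(\sA)^\bec)$ (the category $\sA$ being abelian, hence idempotent-complete), while $\sZ^0(\bC(\sA))=\sC(\sA)$; and by Example~\ref{Grothendieck-abelian-DG-examples}(1) the DG\+category $\bC(\sA)$ is Grothendieck abelian. Since $\sG(\sA)=\sA^\Gamma$ is locally Noetherian precisely when $\sA$ is (a generating set of Noetherian objects being given by the one-term graded objects concentrated in a single degree and equal there to a Noetherian generator of~$\sA$), Proposition~\ref{locally-Noetherian-DG-category-prop} shows that $\bC(\sA)$ is a locally Noetherian DG\+category, so the theorem applies.

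Next I would identify the full abelian DG\+subcategory $\bC(\sA)_\bfg$ of Noetherian objects. By Lemma~\ref{Phi-Psi-preserve-reflect-Noetherianity}, a complex $X^\bu$ is Noetherian in $\sC(\sA)$ if and only if its underlying graded object $\Phi(X^\bu)\in\sG(\sA)$ is Noetherian, and a graded object in $\sG(\sA)$ is Noetherian exactly when it has finitely many nonzero components, each of them Noetherian in~$\sA$ (an infinite family of nonzero components produces a nonstabilizing ascending chain of subobjects). Hence $\sZ^0(\bC(\sA)_\bfg)$ is the category of bounded complexes of Noetherian objects, so that $\bC(\sA)_\bfg=\bC^\bb(\sA_\fg)$, the DG\+category of bounded complexes over the Noetherian abelian category~$\sA_\fg$. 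Since $\sD^\co(\sA)$ is by definition $\sD^\co(\bC(\sA))$, Theorem~\ref{Noetherian-compact-generation-theorem} now yields that the functor $\sD^\abs(\bC^\bb(\sA_\fg))\rarrow\sD^\co(\sA)$ is fully faithful and that its image is a set of compact generators.

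The technical heart of the argument is the identification $\sD^\abs(\bC^\bb(\sA_\fg))\simeq\sD^\bb(\sA_\fg)$. Both categories are Verdier quotients of $\sH^0(\bC^\bb(\sA_\fg))=\sK^\bb(\sA_\fg)$, so it suffices to show that the subcategory $\Ac^\abs$ of absolutely acyclic objects (for the inherited exact DG\+structure) coincides with the thick subcategory $\Ac^\bb$ of acyclic bounded complexes. The inclusion $\Ac^\abs\subseteq\Ac^\bb$ holds because the totalization of any admissible short exact sequence of bounded complexes is acyclic and the acyclic complexes form a thick subcategory of $\sK^\bb(\sA_\fg)$. For the reverse inclusion, a standard induction on the length of the complex — reducing, via the short exact sequences of cocycles $0\rarrow Z^i\rarrow X^i\rarrow Z^{i+1}\rarrow0$, to the case of three-term complexes, which are (up to shift) totalizations of short exact sequences in $\sA_\fg$ and hence absolutely acyclic by Lemma~\ref{totalization-of-finite-absolutely-acyclic} — shows that every bounded acyclic complex is absolutely acyclic. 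This gives $\Ac^\abs=\Ac^\bb$ and the desired equivalence of quotient categories.

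Finally I would upgrade ``set of compact generators'' to ``all compact objects.'' By Remark~\ref{compact-objects-are-direct-summands-remark}, the full subcategory of compact objects of $\sD^\co(\sA)$ is the thick (idempotent) closure of the essential image of $\sD^\bb(\sA_\fg)$; so it is enough to know that $\sD^\bb(\sA_\fg)$ is idempotent-complete, which holds because it is the bounded derived category of an idempotent-complete (indeed abelian) category. Then the fully faithful image of $\sD^\bb(\sA_\fg)$ is already closed under direct summands in $\sD^\co(\sA)$ and therefore coincides with the subcategory of compact objects. The hard part will be the bounded ``absolutely acyclic $=$ acyclic'' identification together with this idempotent-completeness point; by contrast, verifying the hypotheses of Theorem~\ref{Noetherian-compact-generation-theorem} and carrying out the two category identifications are routine.
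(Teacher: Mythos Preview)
Your proposal is correct and follows essentially the same route as the paper's proof: specialize Theorem~\ref{Noetherian-compact-generation-theorem} to $\bA=\bC(\sA)$, identify $\bC(\sA)_\bfg=\bC^\bb(\sA_\fg)$, establish $\sD^\abs(\bC^\bb(\sA_\fg))=\sD^\bb(\sA_\fg)$, and close with idempotent-completeness of $\sD^\bb(\sA_\fg)$. The paper handles the last two points more tersely, citing Lemma~\ref{totalization-of-finite-absolutely-acyclic} (or \cite[Section~A.1]{Pcosh}) for the derived-category identification and \cite[Theorem~2.8]{BS} for idempotent-completeness, whereas you spell out the $\Ac^\abs=\Ac^\bb$ argument explicitly; but the substance is the same.
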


\begin{proof}
 The notation $\sD^\bb(\sB)$ for the bounded derived category of
an exact/abelian category $\sB$ presumes that the complexes are graded
by the usual grading group of integers $\Gamma=\boZ$.
 To obtain this corollary as a particular case of
Theorem~\ref{Noetherian-compact-generation-theorem} one observes,
first of all, that the abelian category of unbounded complexes
$\sC(\sA)$ is locally Noetherian for any locally Noetherian abelian
category~$\sA$.
 The Noetherian objects of the category of complexes $\sC(\sA)$ are
the bounded complexes of Noetherian objects in $\sA$, that is
$\sC(\sA)_\fg=\sC^\bb(\sA_\fg)$.

 Furthermore, for any abelian category $\sB$ one can consider
the exactly embedded full abelian DG\+subcategory of bounded complexes
in $\bC(\sB)$, denoted by $\bC^\bb(\sB)\subset\bC(\sB)$.
 Then the absolute derived category $\sD^\abs(\bC^\bb(\sB))$ can be
easily seen to agree with the bounded derived category of $\sB$,
i.~e., $\sD^\abs(\bC^\bb(\sB))=\sD^\bb(\sB)$
(cf.\ Lemma~\ref{totalization-of-finite-absolutely-acyclic}
or~\cite[Section~A.1]{Pcosh}).
 In particular, $\sD^\abs(\bC(\sA)_\bfg)=\sD^\bb(\sA_\fg)$.

 Finally, Theorem~\ref{Noetherian-compact-generation-theorem} still
gives a weaker result since is leaves the question of direct summands
open (as per Remark~\ref{compact-objects-are-direct-summands-remark}).
 To deduce the full assertion formulated in the corollary, one needs
to check that the bounded derived category $\sD^\bb(\sB)$ is
idempotent-complete for any abelian category~$\sB$.
 A more general result, for idempotent-complete exact categories
rather than abelian ones, can be found in~\cite[Theorem~2.8]{BS}.
\end{proof}

 The next assertion can be found in~\cite[Theorems~3.11.1
and~3.11.2]{Pkoszul}.

\begin{cor} \label{cdg-modules-Noetherian-compact-generators}
 Let $\biR^\cu=(R^*,d,h)$ be a CDG\+ring such that the graded ring
$R^*$ is graded left Noetherian.
 Let $\bA=\biR^\cu\bmodl$ be the locally Noetherian DG\+category of
left CDG\+modules over $\biR^\cu$, and let\/
$\bA_\bfg=\biR^\cu\bmodl_\bfg\subset\biR^\cu\bmodl$ be the exactly
embedded full abelian DG\+subcategory whose objects are
the CDG\+modules with Noetherian underlying graded $R^*$\+modules.
 Then the triangulated functor
$$
 \sD^\abs(\biR^\cu\bmodl_\bfg)\lrarrow\sD^\co(\biR^\cu\bmodl)
$$
induced by the inclusion of abelian DG\+categories $\biR^\cu\bmodl_\bfg
\rightarrowtail\biR^\cu\bmodl$ is fully faithful, and the objects in
its image form a set of compact generators of the coderived
category\/ $\sD^\co(\biR^\cu\bmodl)$.
\end{cor}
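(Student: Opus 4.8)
The plan is to obtain this corollary as a direct application of the general Theorem~\ref{Noetherian-compact-generation-theorem}, so that all the work lies in verifying its hypotheses for the DG\+category $\bA=\biR^\cu\bmodl$ and in matching the abstractly defined subcategory $\bA_\bfg$ against the explicit description in the statement. First I would recall from Example~\ref{abelian-dg-category-of-cdg-modules} (equivalently Example~\ref{Grothendieck-abelian-DG-examples}(2)) that $\biR^\cu\bmodl$ is a Grothendieck abelian DG\+category: the abelian category $\sZ^0(\bA)$ of CDG\+modules and closed morphisms of degree~$0$ is equivalent to $R^*[\delta]\smodl$, while the functor $\Upsilon_{\biR^\cu}$ of Example~\ref{CDG-ring-bec-example} identifies $\sZ^0(\bA^\bec)$ with the Grothendieck abelian category $R^*\smodl$ of graded left $R^*$\+modules.

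Next I would establish local Noetherianity. By Example~\ref{locally-Noetherian-examples}(1), the category $\sZ^0(\bA^\bec)\simeq R^*\smodl$ is locally Noetherian exactly because $R^*$ is graded left Noetherian by assumption; by Proposition~\ref{locally-Noetherian-DG-category-prop} this is equivalent to local Noetherianity of $\sZ^0(\bA)$, so $\bA$ is a locally Noetherian DG\+category in the sense of Section~\ref{locally-Noetherian-DG-categories-subsecn}. (In passing, this is precisely the ``fancy'' route to the elementary fact that $R^*[\delta]$ is graded left Noetherian if and only if $R^*$~is.)

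The one point requiring care is the identification of subcategories. The abstract definition gives $\sZ^0(\bA_\bfg)=\sZ^0(\bA)_\fg$, the Noetherian objects of $\sZ^0(\bA)\simeq R^*[\delta]\smodl$, whereas the corollary describes $\bA_\bfg$ through the underlying graded $R^*$\+modules. Here I would invoke Lemma~\ref{Phi-Psi-preserve-reflect-Noetherianity}, by which the functor $\Phi_\bA$ preserves and reflects Noetherianity, together with the observation (from diagram~\eqref{cdg-modules-tilde-Phi-diagram} and Example~\ref{CDG-ring-bec-example}) that the composite of $\Phi_\bA$ with the equivalence $\Upsilon_{\biR^\cu}\:R^*\smodl\overset\simeq\rarrow\sZ^0(\bA^\bec)$ is the forgetful functor sending $(M^*,d_M)$ to its underlying graded $R^*$\+module $M^*$. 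Consequently a CDG\+module is Noetherian in $\sZ^0(\bA)$ if and only if its underlying graded $R^*$\+module is Noetherian, which identifies $\bA_\bfg$ with the exactly embedded full abelian DG\+subcategory named in the corollary.

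With these identifications in place, the conclusion follows at once from Theorem~\ref{Noetherian-compact-generation-theorem}: the induced functor $\sD^\abs(\biR^\cu\bmodl_\bfg)\rarrow\sD^\co(\biR^\cu\bmodl)$ is fully faithful and the objects in its image form a set of compact generators of $\sD^\co(\biR^\cu\bmodl)$. The only genuine obstacle is conceptual rather than computational: one must be careful that ``finitely generated object of $\sZ^0(\bA)$'' is to be computed in $R^*[\delta]\smodl$ and not in $R^*\smodl$, and that the coincidence of these two notions of Noetherianity is exactly what Lemma~\ref{Phi-Psi-preserve-reflect-Noetherianity} furnishes.
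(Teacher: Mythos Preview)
Your proposal is correct and follows essentially the same route as the paper's proof: both invoke Example~\ref{locally-Noetherian-examples}(1) for local Noetherianity, Lemma~\ref{Phi-Psi-preserve-reflect-Noetherianity} to identify the Noetherian CDG\+modules with those having Noetherian underlying graded $R^*$\+module, and then apply Theorem~\ref{Noetherian-compact-generation-theorem}. Your version is simply more explicit about the intermediate identifications (the role of $\Upsilon_{\biR^\cu}$ and the commutative diagram~\eqref{cdg-modules-tilde-Phi-diagram}), which the paper leaves implicit.
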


\begin{proof}
 Notice that, by
Lemmas~\ref{Phi-Psi-preserve-generatedness-presentability}\+-%
\ref{Phi-Psi-reflect-generatedness-presentability} or by
Lemma~\ref{Phi-Psi-preserve-reflect-Noetherianity}, a left
CDG\+module $(M^*,d_M)$ over $(R^*,d,h)$ is Noetherian as
an object of $\sZ^0(\biR^\cu\bmodl)$ if and only if the graded
$R^*$\+module $M^*$ is Noetherian as an object of the category
of graded modules $R^*\smodl$.
 The rest is explained in Example~\ref{locally-Noetherian-examples}(1)
and Theorem~\ref{Noetherian-compact-generation-theorem}.
\end{proof}

 The following corollary is a generalization
of~\cite[Proposition~1.5(d)]{EP}.

\begin{cor} \label{quasi-coh-cdg-modules-Noetherian-compact-generators}
 Let $X$ be a quasi-compact quasi-separated scheme and
$\biB^\cu=(B^*,d,h)$ be a quasi-coherent CDG\+quasi-algebra over~$X$
such that the quasi-coherent graded quasi-algebra $B^*$ is graded
left Noetherian.
 Let $\bA=\biB^\cu\bqcoh$ be the locally Noetherian DG\+category of
quasi-coherent left CDG\+modules over $\biB^\cu$, and let\/ $\bA_\bfg=
\biB^\cu\bqcoh_\bfg\subset\biB^\cu\bqcoh$ be the exactly embedded full abelian
DG\+subcategory whose objects are the CDG\+modules with Noetherian
underlying quasi-coherent graded $B^*$\+modules.
 Then the triangulated functor
$$
 \sD^\abs(\biB^\cu\bqcoh_\bfg)\lrarrow\sD^\co(\biB^\cu\bqcoh)
$$
induced by the inclusion of abelian DG\+categories $\biB^\cu\bqcoh_\bfg
\rightarrowtail\biB^\cu\bqcoh$ is fully faithful, and the objects in
its image form a set of compact generators of the coderived
category\/ $\sD^\co(\biB^\cu\bqcoh)$.
\end{cor}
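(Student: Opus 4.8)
The plan is to deduce the corollary from the general compact generation result of Theorem~\ref{Noetherian-compact-generation-theorem}, in exactly the same way that Corollaries~\ref{complexes-Noetherian-compact-generators} and~\ref{cdg-modules-Noetherian-compact-generators} were deduced from it. So the whole argument reduces to verifying that, under the stated hypotheses, the abelian DG\+category $\bA=\biB^\cu\bqcoh$ is locally Noetherian in the sense of Section~\ref{locally-Noetherian-DG-categories-subsecn}, and that its full DG\+subcategory of Noetherian objects coincides with the full DG\+subcategory $\biB^\cu\bqcoh_\bfg$ described in the statement (the quasi-coherent CDG\+modules whose underlying quasi-coherent graded $B^*$\+modules are Noetherian). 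Once these two identifications are in place, Theorem~\ref{Noetherian-compact-generation-theorem} applies verbatim and yields both the full-and-faithfulness of the functor and the compact generation of $\sD^\co(\biB^\cu\bqcoh)$ by the image.

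First I would recall from Example~\ref{Grothendieck-abelian-DG-examples}(3) that $\biB^\cu\bqcoh$ is a Grothendieck abelian DG\+category, so that the notion of local Noetherianity applies to it. The key dictionary is provided by the equivalence $\Upsilon_{\biB^\cu}\colon B^*\sqcoh\rarrow\sZ^0((\biB^\cu\bqcoh)^\bec)$ of Example~\ref{qcoh-CDG-bec-example}, which turns the abelian category $\sZ^0(\bA^\bec)$ into the abelian category of quasi-coherent graded left $B^*$\+modules. Example~\ref{locally-Noetherian-examples}(2) then tells that the Grothendieck DG\+category $\biB^\cu\bqcoh$ is locally Noetherian precisely when $B^*\sqcoh$ is locally Noetherian, which (for $X$ quasi-compact and quasi-separated) holds if and only if the quasi-coherent graded quasi-algebra $B^*$ is graded left Noetherian~--- exactly our hypothesis. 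For the matching of the finitely generated objects, I would invoke Lemma~\ref{Phi-Psi-preserve-reflect-Noetherianity}, according to which the functors $\Phi_\bA$ and $\Psi^+_\bA$ both preserve and reflect Noetherianity; combined with $\Upsilon_{\biB^\cu}$, this shows that a quasi-coherent CDG\+module is Noetherian as an object of $\sZ^0(\biB^\cu\bqcoh)$ if and only if its underlying quasi-coherent graded $B^*$\+module is Noetherian, so that $\sZ^0(\bA_\bfg)=\sZ^0(\bA)_\fg$ agrees with the category singled out in the statement.

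The genuine content~--- and the only place where quasi-compactness and quasi-separatedness of $X$ enter~--- is concentrated in Example~\ref{locally-Noetherian-examples}(2): namely, that graded left Noetherianity of the sheaf of graded rings $B^*$ (which is checked affine-locally) is equivalent to local Noetherianity of the Grothendieck category $B^*\sqcoh$, and that a quasi-coherent graded module is Noetherian if and only if its module of sections over each affine open subscheme is. Thus the main obstacle to a fully self-contained proof would be this geometric input about quasi-coherent sheaves over a qcqs scheme. Since it is already recorded in Example~\ref{locally-Noetherian-examples}(2), however, the proof here is a direct specialization of Theorem~\ref{Noetherian-compact-generation-theorem}, entirely parallel to the proof of Corollary~\ref{cdg-modules-Noetherian-compact-generators}, and I do not expect any further difficulty beyond assembling these citations.
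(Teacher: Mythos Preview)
Your proposal is correct and follows essentially the same route as the paper: identify $\sZ^0(\bA^\bec)$ with $B^*\sqcoh$ via $\Upsilon_{\biB^\cu}$, invoke Example~\ref{locally-Noetherian-examples}(2) for local Noetherianity, use Lemma~\ref{Phi-Psi-preserve-reflect-Noetherianity} (or equivalently Lemmas~\ref{Phi-Psi-preserve-generatedness-presentability}--\ref{Phi-Psi-reflect-generatedness-presentability}) to match up the Noetherian objects, and then apply Theorem~\ref{Noetherian-compact-generation-theorem}. The paper's proof is terser but cites exactly the same ingredients.
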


\begin{proof}
 Similarly to the proof of
Corollary~\ref{cdg-modules-Noetherian-compact-generators}, the same
lemmas imply that a quasi-coherent CDG\+module $(M^*,d_M)$ over
$(B^*,d,h)$ is Noetherian as an object of $\sZ^0(\biB^\cu\bqcoh)$ if and
only if the quasi-coherent graded $B^*$\+module $M^*$ is Noetherian as
an object of the category of quasi-coherent graded modules $B^*\sqcoh$.
 Then it remains to refer to
Example~\ref{locally-Noetherian-examples}(2) and
Theorem~\ref{Noetherian-compact-generation-theorem}.
\end{proof}

 The next corollary generalizes~\cite[Corollary~2.3(l)]{EP}.

\begin{cor} \label{factorizations-Noetherian-compact-generators}
 Let\/ $\sA$ be a locally Noetherian abelian category and\/
$\Lambda\:\sA\rarrow\sA$ be an autoequivalence.
 Assume that either\/ $\Gamma=\boZ$, or\/ $\Lambda$ is involutive
and\/ $\Gamma=\boZ/2$.
 Let $w\:\Id_\sA\rarrow\Lambda^2$ be a potential (as in
Section~\ref{factorizations-subsecn}).
 Then the triangulated functor
$$
 \sD^\abs(\bF(\sA_\fg,\Lambda,w))\lrarrow
 \sD^\co(\bF(\sA,\Lambda,w))
$$
induced by the inclusion of abelian categories\/ $\sA_\fg\rightarrowtail
\sA$ is fully faithful, and the objects in its image form a set of
compact generators of the coderived category\/
$\sD^\co(\bF(\sA,\Lambda,w)$.
\end{cor}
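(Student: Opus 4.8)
The plan is to realize this corollary as a direct instance of Theorem~\ref{Noetherian-compact-generation-theorem}, applied to the abelian DG\+category $\bA=\bF(\sA,\Lambda,w)$ of factorizations of~$w$, following the pattern of Corollaries~\ref{factorizations-finite-resol-dim-corollary}, \ref{factorizations-full-and-faithfulness-corollary}, and~\ref{factorization-finite-homol-dim-corollary}. The two things to verify are that $\bA$ is a locally Noetherian DG\+category and that its full DG\+subcategory $\bA_\bfg$ of Noetherian objects coincides with $\bF(\sA_\fg,\Lambda,w)$; everything else is supplied by the cited theorem.

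First I would record that $\bA=\bF(\sA,\Lambda,w)$ is a Grothendieck abelian DG\+category by Example~\ref{Grothendieck-abelian-DG-examples}(4). To see it is locally Noetherian, I would invoke Proposition~\ref{locally-Noetherian-DG-category-prop}, which reduces the question to the abelian category $\sZ^0(\bA^\bec)$. By Example~\ref{factorizations-bec-example}, the functor $\Upsilon_{\sA,\Lambda,w}$ provides an equivalence $\sZ^0(\bA^\bec)\simeq\sP(\sA,\Lambda)$, and by the discussion in Section~\ref{factorizations-subsecn} the category of $\Lambda$\+periodic objects $\sP(\sA,\Lambda)$ is in turn equivalent to~$\sA$ (via $X^\ci\mapsto X^0$). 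Since $\sA$ is locally Noetherian by assumption, so is $\sZ^0(\bA^\bec)$, and hence $\bA$ is a locally Noetherian DG\+category.

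Next I would identify the finitely generated objects. By Lemma~\ref{Phi-Psi-preserve-reflect-Noetherianity} the functor $\Phi_\bA$ preserves and reflects Noetherianity, so a factorization $\biM^\cu$ is Noetherian in $\sZ^0(\bA)$ precisely when $\Phi_\bA(\biM^\cu)$ is Noetherian in $\sZ^0(\bA^\bec)$. Under the commutative triangle~\eqref{factorizations-tilde-Phi-diagram}, composing $\Phi_\bA$ with the inverse of the equivalence $\Upsilon_{\sA,\Lambda,w}\:\sP(\sA,\Lambda)\overset\simeq\rarrow\sZ^0(\bA^\bec)$ is the forgetful functor sending a factorization to its underlying $\Lambda$\+periodic object $M^\ci$; hence $\biM^\cu$ is Noetherian iff $M^\ci$ is Noetherian in $\sP(\sA,\Lambda)\simeq\sA$, iff every component $M^i$ is a Noetherian object of~$\sA$. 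Here I would use that the autoequivalences $\Lambda$ and $\Lambda^{-1}$ preserve Noetherianity, so that $\sA_\fg$ is stable under $\Lambda$ and the restrictions of $\Lambda$ and~$w$ to $\sA_\fg$ make sense. This gives the identification $\bA_\bfg=\bF(\sA_\fg,\Lambda,w)$ as exactly embedded full abelian DG\+subcategories, the inclusion being the one induced by $\sA_\fg\rightarrowtail\sA$.

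With these two identifications in hand, the corollary follows immediately from Theorem~\ref{Noetherian-compact-generation-theorem}. The genuine content is entirely in that theorem, so the only real work is the bookkeeping of the preceding two paragraphs, and the one point deserving care is the identification $\bA_\bfg=\bF(\sA_\fg,\Lambda,w)$, which relies on transporting Noetherianity across the chain of equivalences $\sZ^0(\bA^\bec)\simeq\sP(\sA,\Lambda)\simeq\sA$ by means of Lemma~\ref{Phi-Psi-preserve-reflect-Noetherianity}.
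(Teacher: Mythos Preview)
Your proposal is correct and follows essentially the same route as the paper's own proof: verify that $\bF(\sA,\Lambda,w)$ is locally Noetherian, identify $\bF(\sA,\Lambda,w)_\bfg=\bF(\sA_\fg,\Lambda,w)$ via Lemma~\ref{Phi-Psi-preserve-reflect-Noetherianity}, and then invoke Theorem~\ref{Noetherian-compact-generation-theorem}. The paper's version is terser and simply asserts the first step, whereas you spell it out through Proposition~\ref{locally-Noetherian-DG-category-prop} and the equivalence $\sZ^0(\bA^\bec)\simeq\sP(\sA,\Lambda)\simeq\sA$, but the argument is the same.
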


\begin{proof}
 Here, as usual, we denote the restrictions of $\Lambda$ and~$w$
to $\sA_\fg\subset\sA$ simply by $\Lambda$ and~$w$.
 First of all, we observe that the abelian DG\+category of
factorizations $\bF(\sA,\Lambda,w)$ is locally Noetherian whenever
an abelian category $\sA$ is locally Noetherian.
 Using, e.~g.,
Lemmas~\ref{Phi-Psi-preserve-generatedness-presentability}\+-%
\ref{Phi-Psi-reflect-generatedness-presentability} or
Lemma~\ref{Phi-Psi-preserve-reflect-Noetherianity}, one shows that
the full DG\+subcategory of Noetherian objects
$\bF(\sA,\Lambda,w)_\bfg\subset\bF(\sA,\Lambda,w)$ is
the abelian DG\+category of factorizations in $\sA_\fg$, that is
$\bF(\sA,\Lambda,w)_\bfg=\bF(\sA_\fg,\Lambda,w)$.
 Then it remains to apply
Theorem~\ref{Noetherian-compact-generation-theorem}.
\end{proof}

\subsection{Fp-injective and fp-projective objects}
\label{fp-injective-projective-subsecn}
 We start the discussion in the context of locally finitely presentable
categories before passing to the better behaved special case of
locally coherent categories.
 Recall from the discussion in
Section~\ref{finitely-generated-presentable-subsecn} that any locally
finitely generated abelian category is Grothendieck (a more general
result can be found in~\cite[Corollary~9.6]{PS1}); in particular,
any locally finitely presentable abelian category is Grothendieck.
 For a discussion of fp\+injective and fp\+projective modules over
a ring, in a context closely related to our exposition in this paper,
see~\cite[Section~1]{Pfp} and the references therein.

 Let $\sA$ be a locally finitely presentable abelian category.
 An object $J\in\sA$ is said to be \emph{fp\+injective} if
$\Ext_\sA^1(E,J)=0$ for all finitely presentable objects $E\in\sA$.
 An object $P\in\sA$ is said to be \emph{fp\+projective} if
$\Ext_\sA^1(P,J)=0$ for all fp\+injective objects $J\in\sA$.
 We will denote the full subcategories of fp\+injective and
fp\+projective objects by $\sA_\fpinj$ and $\sA_\fproj\subset\sA$,
respectively.

 Notice that the condition $\Ext_\sA^1(S,J)=0$ for all \emph{finitely
generated} objects in a locally finitely generated abelian category
$\sA$ implies injectivity of the object~$J$.
 Hence in a locally Noetherian category $\sA$, where all finitely
generated objects are finitely presentable, one has that all
fp\+injective objects are injective and all objects are fp\+projective.
 This is \emph{not} true in a locally coherent category which is not
locally Noetherian.
 For locally coherent categories, fp\+injectivity and fp\+projectivity
are important technical concepts.

 Let $\sA$ be a Grothendieck category and $A\in\sA$ be an object.
 An (ordinal-indexed increasing) \emph{filtration} $F$ on $A$ is
a family of subobjects $F_i\subset A$, indexed by the ordinals
$0\le i\le\alpha$, where $\alpha$~is a fixed ordinal, and satisfying
the following conditions:
\begin{itemize}
\item $F_0=0$ and $F_\alpha=A$;
\item $F_i\subset F_j$ for all ordinals $i\le j\le\alpha$;
\item $F_j=\bigcup_{i<j}F_i$ for all limit ordinals $j\le\alpha$.
\end{itemize}
 Given a filtration $F$ on an object $A$, one says that the object $A$
is \emph{filtered by} the quotient objects $F_{i+1}/F_i$, where
$0\le i<\alpha$.
 In an alternative terminology, the object $A$ is said to be
a \emph{transfinitely iterated extension} (\emph{in the sense of
the directed colimit}) of the objects $F_{i+1}/F_i\in\sA$.
 Notice that, in particular, an infinite coproduct is a transfinitely
iterated extension.

\begin{prop} \label{fp-Eklof-Trlifaj}
 Let\/ $\sA$ be a locally finitely generated abelian category. \par
\textup{(a)} For any object $A\in\sA$ there exists a short exact
sequence
$$
 0\rarrow J'\rarrow P\rarrow A\rarrow0
$$
in\/ $\sA$ with an fp\+injective object $J'$ and an fp\+projective
object~$P$. \par
\textup{(b)} For any object $A\in\sA$ there exists a short exact
sequence
$$
 0\rarrow A\rarrow J\rarrow P'\rarrow0
$$
in\/ $\sA$ with an fp\+injective object $J$ and an fp\+projective
object~$P'$. \par
\textup{(c)} An object $P\in\sA$ is fp\+projective if and only if it is
a direct summand of an object filtered by finitely presentable objects.
\end{prop}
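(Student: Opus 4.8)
The plan is to recognize the pair $(\sA_\fproj,\sA_\fpinj)$ as the complete cotorsion pair in the Grothendieck category $\sA$ generated, with respect to $\Ext^1$, by the set of finitely presentable objects, and to read off all three assertions from the Eklof--Trlifaj theorem together with Eklof's lemma. First I would set up the cotorsion-theoretic framework. Writing $\sS^{\perp_1}$ for the class of objects $C$ with $\Ext^1_\sA(S,C)=0$ for all $S\in\sS$, and ${}^{\perp_1}\sD$ dually, I take $\sS$ to be a set of representatives of the isomorphism classes of finitely presentable objects (there is only a set of these). The definitions then give immediately $\sA_\fpinj=\sS^{\perp_1}$ and $\sA_\fproj={}^{\perp_1}(\sA_\fpinj)={}^{\perp_1}(\sS^{\perp_1})$. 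Since every finitely presentable object is fp-projective, one has $\sS\subseteq\sA_\fproj$, and a short formal computation with the orthogonality operators shows that $(\sA_\fproj,\sA_\fpinj)$ is a genuine cotorsion pair, i.e.\ $\sA_\fpinj=\sA_\fproj^{\perp_1}$ as well. In the locally finitely presentable case the set $\sS$ is moreover a generating set of~$\sA$.

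Next I would invoke the Eklof--Trlifaj machinery in the Grothendieck setting. Eklof's lemma asserts that the left-hand class ${}^{\perp_1}(\sS^{\perp_1})$ is closed under transfinitely iterated extensions (filtrations); this yields the ``if'' direction of part~(c), since an object filtered by finitely presentable objects is filtered by objects of $\sA_\fproj$ and hence lies in $\sA_\fproj$, while the left class of any cotorsion pair is automatically closed under direct summands. The Eklof--Trlifaj theorem then asserts that the cotorsion pair generated by a generating set in a Grothendieck category is complete, and that its left-hand class is exactly the class of direct summands of $\sS$-filtered objects. The second statement is precisely the ``only if'' direction of~(c); more explicitly, for $P\in\sA_\fproj$ the completeness construction produces a special precover $0\rarrow C\rarrow F\rarrow P\rarrow0$ with $F$ an $\sS$-filtered object and $C\in\sA_\fpinj$, whence $\Ext^1_\sA(P,C)=0$ forces the sequence to split, exhibiting $P$ as a direct summand of the $\sS$-filtered object $F$. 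Completeness also yields, for every $A\in\sA$, a special $\sA_\fproj$-precover $0\rarrow J'\rarrow P\rarrow A\rarrow0$ with $J'\in\sA_\fpinj$ and $P\in\sA_\fproj$, which is part~(a), and a special $\sA_\fpinj$-preenvelope $0\rarrow A\rarrow J\rarrow P'\rarrow0$ with $J\in\sA_\fpinj$ and $P'\in\sA_\fproj$, which is part~(b).

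The hard part will be the verification of Eklof's lemma and of the Eklof--Trlifaj theorem in this categorical generality rather than for module categories. The delicate points are the behaviour of the Yoneda $\Ext^1$ under transfinitely iterated extensions (Eklof's lemma), and the set-theoretic bounds underlying the transfinite attachment / small-object argument that builds the $\sS$-filtered approximations; this is exactly where the Grothendieck hypothesis --- a generator together with exactness of directed colimits --- is essential. A secondary subtlety is the role of a generator in the filtering set: this is harmless in the locally finitely presentable case, where $\sA_\fp$ is itself a generating set, so that no extra generator need be adjoined to $\sS$ and~(c) holds with filtrations by finitely presentable objects alone; in the merely locally finitely generated case one would have to track the generator with more care. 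Since these are established results in the theory of cotorsion pairs and deconstructibility in Grothendieck (and locally presentable) categories, I would cite them in the form already available in the literature rather than reprove them from scratch.
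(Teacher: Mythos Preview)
Your proposal is correct and follows essentially the same approach as the paper: both recognize the statement as the completeness of the cotorsion pair generated by the set of finitely presentable objects, invoke the Eklof--Trlifaj theorem in its categorical form (the paper cites \cite{ET} and \cite[Theorems~3.3 and~3.4]{PS4}), and note the relevant existence of enough fp\+projectives and fp\+injectives. Your treatment is somewhat more explicit about the cotorsion-pair formalism and the subtlety concerning generators in the locally finitely generated versus locally finitely presentable cases, which the paper's brief proof also touches on in its final sentence.
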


\begin{proof}
 This is a version of a classical theorem of Eklof and
Trlifaj~\cite{ET}.
 For a generalization covering the situation at hand, see,
e.~g., \cite[Theorems~3.3 and~3.4]{PS4}.
 It is important here that there are enough fp\+injective objects
in $\sA$, in the sense that any object is a subobject of
an fp\+injective one (and in fact, even a subobject of an injective
object), and there are enough fp\+projective objects, in the sense
that any object is a quotient of an fp\+projective object.
 Indeed, coproducts of fp\+projective objects are easily seen to
be fp\+projective; in particular, coproducts of finitely presentable
objects are fp\+projective; and any object in a locally finitely
presentable category is a quotient of a coproduct of finitely
presentable ones.
\end{proof}

\begin{lem} \label{fp-inj-proj-closure-properties-in-loc-presentable}
 Let\/ $\sA$ be a locally finitely presentable abelian category.
 Then \par
\textup{(a)} both the classes of fp\+injective and fp\+projective
objects in\/ $\sA$ are closed under extensions and direct summands;
\par
\textup{(b)} the class of all fp\+projective objects in\/ $\sA$ is
closed under transfinitely iterated extensions; \par
\textup{(c)} both the classes of fp\+projective and fp\+injective
objects in\/ $\sA$ are closed under coproducts.
 The class of all fp\+injective objects is also closed under products.
\end{lem}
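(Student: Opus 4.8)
The plan is to treat the eight closure statements by the standard homological machinery of Yoneda $\Ext$, postponing the one genuinely delicate point to the end. Throughout I would use that $\sA$, being locally finitely presentable, is Grothendieck, so that it is complete and cocomplete and directed colimits are exact.

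First, for part~(a) I would invoke the biadditivity and the long exact sequences of $\Ext$. For fp\+injectivity under extensions, given $0\rarrow J'\rarrow J\rarrow J''\rarrow0$ with $J'$, $J''\in\sA_\fpinj$ and any finitely presentable $E$, the fragment $\Ext_\sA^1(E,J')\rarrow\Ext_\sA^1(E,J)\rarrow\Ext_\sA^1(E,J'')$ of the long exact sequence in the second variable forces $\Ext_\sA^1(E,J)=0$; closure under direct summands is read off from $\Ext_\sA^1(E,J_1\oplus J_2)\simeq\Ext_\sA^1(E,J_1)\oplus\Ext_\sA^1(E,J_2)$. The two dual assertions for $\sA_\fproj$ are proved the same way, using the long exact sequence and additivity of $\Ext$ in the first variable tested against an arbitrary fp\+injective object. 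For part~(b) I would simply quote the Eklof lemma: by definition $\sA_\fproj$ is the class of objects $P$ with $\Ext_\sA^1(P,J)=0$ for all $J\in\sA_\fpinj$, i.e.\ the left $\Ext^1$\+orthogonal class of $\sA_\fpinj$, and the Eklof lemma (available in this generality; cf.\ \cite{ET} and \cite[Theorems~3.3 and~3.4]{PS4}) states precisely that such a left orthogonal class is closed under transfinitely iterated extensions.

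Part~(c) splits into four statements of unequal difficulty. Closure of $\sA_\fproj$ under coproducts is immediate from $\Ext_\sA^1(\coprod_\alpha P_\alpha,J)\simeq\prod_\alpha\Ext_\sA^1(P_\alpha,J)$, expressing that Yoneda $\Ext$ turns a coproduct in the first variable into a product. Closure of $\sA_\fpinj$ under products I would prove directly: given fp\+injective $J_\alpha$, a finitely presentable $E$, and an extension $0\rarrow\prod_\alpha J_\alpha\rarrow M\rarrow E\rarrow0$, push it out along each projection $\prod_\alpha J_\alpha\rarrow J_\beta$, split the result by fp\+injectivity of $J_\beta$, and reassemble the retractions $M\rarrow J_\beta$ into one retraction $M\rarrow\prod_\alpha J_\alpha$ through the universal property of the product; hence the extension splits and $\Ext_\sA^1(E,\prod_\alpha J_\alpha)=0$.

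The hard part will be closure of $\sA_\fpinj$ under coproducts, because without projective objects the module\+theoretic computation of $\Ext_\sA^1(E,\coprod_\alpha J_\alpha)$ from a finite presentation of $E$ is unavailable, and fp\+injectivity is in general \emph{not} stable under arbitrary directed colimits. My plan is to reduce it to the product case just settled, via two observations: the canonical morphism $\coprod_\alpha J_\alpha\rarrow\prod_\alpha J_\alpha$ is a pure monomorphism in a locally finitely presentable Grothendieck category, and a pure subobject of an fp\+injective object is again fp\+injective. The second is a short long\+exact\+sequence argument: for a pure sequence $0\rarrow J\rarrow J'\rarrow J''\rarrow0$ with $J'\in\sA_\fpinj$ and finitely presentable $E$, purity makes $\Hom_\sA(E,J')\rarrow\Hom_\sA(E,J'')$ surjective, so the connecting map $\Hom_\sA(E,J'')\rarrow\Ext_\sA^1(E,J)$ vanishes and $\Ext_\sA^1(E,J)$ injects into $\Ext_\sA^1(E,J')=0$. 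The one input I would isolate and justify (or cite from the purity theory of locally finitely presentable categories) is the purity of $\coprod_\alpha J_\alpha\rarrow\prod_\alpha J_\alpha$; this is exactly where finite presentability of the test objects is used, through the fact that $\Hom_\sA(E,-)$ preserves the directed colimit $\coprod_\alpha J_\alpha=\varinjlim_{F}\bigoplus_{\alpha\in F}J_\alpha$ over finite subsets $F$, whose transition maps are split monomorphisms.
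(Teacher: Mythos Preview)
Your proof is correct. Parts~(a), (b), and the easier half of~(c) follow essentially the same line as the paper. The one genuine divergence is in showing that coproducts of fp\+injective objects are fp\+injective. The paper argues that for $E$ finitely presentable the functor $\Ext^1_\sA(E,{-})$ actually commutes with coproducts: one presents $\Ext^1_\sA(E,B)$ as a filtered colimit of cokernels of maps $\Hom_\sA(F,B)\rarrow\Hom_\sA(G,B)$ over short exact sequences $0\rarrow G\rarrow F\rarrow E\rarrow0$ with $F$ finitely presentable, and observes that the kernel $G$ is then finitely generated, hence small, so both Hom functors preserve coproducts. Your route via purity---$\coprod_\alpha J_\alpha\hookrightarrow\prod_\alpha J_\alpha$ is a pure monomorphism and pure subobjects of fp\+injectives are fp\+injective---is equally valid and arguably more conceptual. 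The paper's argument is more self-contained (no purity machinery) and yields the stronger intermediate statement that $\Ext^1_\sA(E,{-})$ preserves coproducts for $E\in\sA_\fp$; this technique is reused verbatim in the proof of Lemma~\ref{fp-inj-proj-closure-properties-in-loc-coherent}(a), where in the locally coherent case $G$ becomes finitely presentable and the same computation shows closure of $\sA_\fpinj$ under directed colimits.
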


\begin{proof}
 Part~(a) follows immediately from the definitions.
 Part~(b) is a version of a classical
\emph{Eklof lemma}~\cite[Lemma~1]{ET} and essentially a part of
Proposition~\ref{fp-Eklof-Trlifaj} (cf.~\cite[Proposition~1.3]{PS4}).
 In part~(c), the coproducts of fp\+projective objects are
fp\+projective and the products of fp\+injective objects are
fp\+injective by a general property of $\Ext^1$\+orthogonal
classes in abelian categories with (co)products.

 The fact that coproducts of fp\+injective objects are fp\+injective
is a property of right $\Ext^1$\+orthogonal clases to sets of
finitely presentable objects in locally finitely presentable
categories.
 Notice that, for any finitely presentable object $E\in\sA$ and
any object $B\in\sA$, the group $\Ext^1_\sA(E,B)$ can be computed
as the filtered colimit of cokernels of the restriction maps
$\Hom_\sA(F,B)\rarrow\Hom_\sA(G,B)$ taken over all the short exact
sequences $0\rarrow G\rarrow F\rarrow E\rarrow0$ in $\sA$ with
a \emph{finitely presentable} object~$F$.
 Furthermore, the cokernel $G$ of any epimorphism of finitely
presentable objects $F\rarrow E$ is a finitely generated (hence
small) object.
\end{proof}

\begin{lem} \label{fp-inj-proj-closure-properties-in-loc-coherent}
 Let\/ $\sA$ be a locally coherent abelian category.  Then \par
\textup{(a)} the class of all fp\+injective objects in\/ $\sA$
is closed under directed colimits; \par
\textup{(b)} the class of all fp\+injective objects in\/ $\sA$
is closed under the cokernels of monomorphisms; \par
\textup{(c)} the class of all fp\+projective objects in\/ $\sA$
is closed under the kernels of epimorphisms; \par
\textup{(d)} one has\/ $\Ext_\sA^n(P,J)=0$ for any fp\+projective
object $P\in\sA$, any fp\+injective object $J\in\sA$, and all
integers $n\ge1$.
\end{lem}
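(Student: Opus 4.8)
The plan is to reduce all four statements to a single analytic property of coherent objects — that $\Ext^1_\sA(E,-)$ commutes with directed colimits — together with one genuinely new argument for part~(b). First I would record this \emph{continuity} of $\Ext^1$: following the computation in the proof of Lemma~\ref{fp-inj-proj-closure-properties-in-loc-presentable}, for a finitely presentable $E$ the group $\Ext^1_\sA(E,B)$ is a filtered colimit of cokernels of restriction maps $\Hom_\sA(F,B)\rarrow\Hom_\sA(G,B)$ taken over short exact sequences $0\rarrow G\rarrow F\rarrow E\rarrow0$ with $F$ finitely presentable. In a locally coherent category $G$ is then coherent as well, so both $\Hom_\sA(F,-)$ and $\Hom_\sA(G,-)$ commute with directed colimits, and since cokernels and filtered colimits do too, so does $\Ext^1_\sA(E,-)$. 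Part~(a) is then immediate: if $J=\varinjlim J_\lambda$ with all $J_\lambda$ fp\+injective, then $\Ext^1_\sA(E,J)=\varinjlim\Ext^1_\sA(E,J_\lambda)=0$ for every coherent $E$, so $J$ is fp\+injective.

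The heart of the matter is part~(b), which I would prove \emph{directly}, without invoking higher Ext. Let $0\rarrow J'\rarrow J\rarrow J''\rarrow0$ be exact with $J'$ and $J$ fp\+injective. Since $J'$ is fp\+injective it is absolutely pure, so the sequence is pure exact; by Crawley--Boevey's description of pure epimorphisms in a locally finitely presentable additive category~\cite{CB,Kr0}, it is a filtered colimit of split short exact sequences $0\rarrow A_i\rarrow A_i\oplus C_i\rarrow C_i\rarrow0$ with each $C_i$ finitely presentable, and $J''=\varinjlim C_i$. Fix a coherent $E$. Continuity of $\Ext^1$ gives $\varinjlim\Ext^1_\sA(E,A_i\oplus C_i)=\Ext^1_\sA(E,J)=0$. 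The split epimorphisms $A_i\oplus C_i\rarrow C_i$ induce split surjections $q_i\:\Ext^1_\sA(E,A_i\oplus C_i)\rarrow\Ext^1_\sA(E,C_i)$ compatible with the transition maps (that is, $c_{ij}\circ q_i=q_j\circ b_{ij}$). A short diagram chase then shows $\varinjlim\Ext^1_\sA(E,C_i)=0$: a class $y\in\Ext^1_\sA(E,C_i)$ lifts along the section of $q_i$ to a class that dies at some later stage $j$ in $\Ext^1_\sA(E,A_i\oplus C_i)$, and applying $q_j$ there kills $c_{ij}(y)$. Continuity of $\Ext^1$ once more yields $\Ext^1_\sA(E,J'')=\varinjlim\Ext^1_\sA(E,C_i)=0$, so $J''$ is fp\+injective.

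With (b) established, parts (d) and (c) follow by dimension shifting, carefully routed through genuine injectives — which exist since $\sA$ is Grothendieck — rather than projectives, which need not exist. Given an fp\+injective $J$, choose an injective coresolution $0\rarrow J\rarrow I^0\rarrow I^1\rarrow\dotsb$ in $\sA$; its cosyzygies ($Z^0=\coker(J\rarrow I^0)$ and $Z^k=\coker(I^{k-1}\rarrow I^k)$ for $k\ge1$) are fp\+injective by induction on $k$, each being the cokernel of a monomorphism between two fp\+injective objects, so that (b) applies at every step. For an fp\+projective $P$, shifting through the injective terms gives $\Ext^n_\sA(P,J)\simeq\Ext^1_\sA(P,Z^{n-2})=0$ for $n\ge2$, while the case $n=1$ is the definition of fp\+projectivity; this proves~(d). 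Finally, for~(c), given $0\rarrow P'\rarrow P\rarrow P''\rarrow0$ with $P$, $P''$ fp\+projective and $J$ fp\+injective, the exact sequence $\Ext^1_\sA(P,J)\rarrow\Ext^1_\sA(P',J)\rarrow\Ext^2_\sA(P'',J)$ has both flanking terms zero — the first because $P$ is fp\+projective, the second by~(d) — whence $\Ext^1_\sA(P',J)=0$ and $P'$ is fp\+projective.

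The step I expect to be the main obstacle is part~(b). Everything else is either the continuity computation or routine dimension shifting, but (b) is exactly where naive long-exact-sequence manipulation runs in a circle: one obtains $\Ext^1_\sA(E,J'')\hookrightarrow\Ext^2_\sA(E,J')$, and controlling the latter reduces back to~(b). The purity of the sequence combined with the continuity of $\Ext^1$ is what breaks the circle, and the delicate point is verifying that the split surjections $q_i$ are compatible with the transition maps of the filtered system, so that the colimit of the $C_i$\+parts genuinely vanishes. Before writing the details I would pin down the precise form of Crawley--Boevey's theorem in the additive setting of~\cite{CB,Kr0} and confirm the equivalence of fp\+injectivity with absolute purity in a locally coherent category, both of which are used at the start of~(b).
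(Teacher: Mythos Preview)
Your proof is correct. Part~(a) matches the paper exactly: the key observation is that in a locally coherent category the kernel $G$ of an epimorphism $F\to E$ between coherent objects is again coherent, whence $\Ext^1_\sA(E,-)$ commutes with directed colimits.

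For parts~(b)--(d), the paper is extremely terse: it asserts that these are equivalent formulations of the fp cotorsion pair being hereditary, cites \cite[Lemma~1.4]{PS4} for that equivalence, and says the hereditary property ``follows from the same observation as part~(a)'' --- without spelling out how. Your route (b) $\Rightarrow$ (d) $\Rightarrow$ (c) via injective coresolutions is precisely the standard hereditary-cotorsion-pair machinery the paper is alluding to, and your direct purity argument for~(b) is a natural way to establish the starting point in the absence of enough projectives (which is exactly where, as you note, the naive dimension-shift on the first variable would stall). The argument does ultimately rest on the same coherence observation, via continuity of $\Ext^1$, so it is compatible with the paper's one-line claim; what you gain is a self-contained proof, what the paper gains is brevity at the cost of an external reference.

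One simplification: you do not actually need the precise form of Crawley--Boevey's theorem. Write $J''=\varinjlim C_i$ with $C_i$ coherent and pull back the sequence along each $C_i\to J''$; the resulting sequences $0\to J'\to J\times_{J''}C_i\to C_i\to 0$ split because $C_i$ is coherent and $J'$ is fp-injective, and since pullback commutes with directed colimits in a Grothendieck category, this already exhibits the original sequence as a filtered colimit of split sequences (with $A_i=J'$ constant). Your colimit argument then goes through verbatim, and the point you flagged for later verification disappears.
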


\begin{proof}
 The proof of part~(a) is similar to the second paragraph of
the proof of
Lemma~\ref{fp-inj-proj-closure-properties-in-loc-presentable} above;
one only needs to observe that in a locally coherent category
$\sA$ the object $G$ is finitely presentable (coherent), too.
 Parts~(b\+-d) are equivalent expressions of the assertion that
\emph{the fp cotorsion pair is hereditary} in a locally
coherent category (see~\cite[Lemma~1.4]{PS4} for the general
theory) and follow from the same observation as part~(a).
\end{proof}

 Notice that
Lemmas~\ref{fp-inj-proj-closure-properties-in-loc-presentable}(a)
and~\ref{fp-inj-proj-closure-properties-in-loc-coherent}(b\+-c) and
the observations in the proof of Proposition~\ref{fp-Eklof-Trlifaj}
imply, in particular, that the class of all fp\+projective objects
is resolving, while the class of all fp\+injective objects is
coresolving in a locally coherent category~$\sA$.
 Hence one can speak about the \emph{fp\+projective} (resolution)
and the \emph{fp\+injective} (coresolution) \emph{dimensions} of
objects of~$\sA$.

\begin{lem} \label{fp-projective-dimension-lemma}
 Given a locally coherent category\/ $\sA$ and an integer $n\ge0$,
the following two conditions are equivalent:
\begin{enumerate}
\item all the objects of\/ $\sA$ have fp\+projective dimensions
not exceeding~$n$;
\item all the fp\+injective objects in\/ $\sA$ have injective
dimensions not exceeding~$n$.
\end{enumerate}
\end{lem}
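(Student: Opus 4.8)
The plan is to reduce both conditions to one and the same symmetric $\Ext$-vanishing statement, namely that $\Ext^{n+1}_\sA(X,J)=0$ for every object $X\in\sA$ and every fp-injective object $J\in\sA_\fpinj$. Once this common form is extracted, the equivalence of~(1) and~(2) is immediate, so the whole content lies in translating each of the two dimension conditions into it.

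First I would dispose of condition~(2), which is the easier half. Since $\sA$ is locally coherent it is in particular locally finitely presentable, hence Grothendieck, so it has enough injective objects. By the injective coresolution criterion recalled just before Theorem~\ref{inj-proj-resolutions-finite-homol-dim-theorem}, together with standard dimension shifting along an injective coresolution (where the higher $\Ext$ of the injective terms vanishes for free), an object $J\in\sA$ satisfies $\idi_\sA J\le n$ if and only if $\Ext^{n+1}_\sA(X,J)=0$ for all $X\in\sA$. Quantifying over all fp-injective $J$, condition~(2) becomes exactly the stated common form.

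The heart of the argument is condition~(1). I would establish that for any object $A\in\sA$ its fp-projective dimension (its resolution dimension with respect to the resolving subcategory $\sA_\fproj\subset\sA$, which is resolving by the remark following Lemma~\ref{fp-inj-proj-closure-properties-in-loc-coherent}) is $\le n$ if and only if $\Ext^{n+1}_\sA(A,J)=0$ for all $J\in\sA_\fpinj$. To this end I would first use that there are enough fp-projective objects (Proposition~\ref{fp-Eklof-Trlifaj}, or directly that any object is a quotient of a coproduct of finitely presentable objects) to build an fp-projective resolution $\dotsb\to P_1\to P_0\to A\to 0$ and form its $n$-th syzygy $D$, fitting into an exact sequence $0\to D\to P_{n-1}\to\dotsb\to P_0\to A\to 0$. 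By Proposition~\ref{resol-dim-well-defined}(a), $A$ has fp-projective dimension $\le n$ precisely when $D\in\sA_\fproj$, and by the very definition of fp-projectivity this means $\Ext^1_\sA(D,J)=0$ for all $J\in\sA_\fpinj$. The decisive step is then dimension shifting: splitting the resolution into short exact sequences $0\to\Omega^{k+1}A\to P_k\to\Omega^k A\to 0$ and running the long exact sequence of $\Ext_\sA(-,J)$, the terms $\Ext^{\ge 1}_\sA(P_k,J)$ vanish because the fp cotorsion pair is hereditary (Lemma~\ref{fp-inj-proj-closure-properties-in-loc-coherent}(d)). This yields natural isomorphisms $\Ext^1_\sA(D,J)\simeq\Ext^{n+1}_\sA(A,J)$ for every fp-injective $J$, completing the translation of~(1) into the common form.

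The main obstacle, such as it is, is precisely this dimension-shifting step: it is exactly the heredity of the fp cotorsion pair, i.e.\ the vanishing of all higher $\Ext$ groups between fp-projectives and fp-injectives, that makes the higher $\Ext^{n+1}_\sA(A,J)$ computable from the single group $\Ext^1_\sA(D,J)$ and hence makes the two asymmetric-looking dimension conditions collapse to the symmetric statement (whereas on the injective side of~(2) the analogous vanishing is automatic). Note that no finiteness of dimension need be assumed a priori, since each direction of both $\Ext$ characterizations is unconditional. Finally, comparing the two translations shows that~(1) and~(2) are literally the same condition, which proves the lemma.
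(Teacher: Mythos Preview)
Your proof is correct and follows essentially the same approach as the paper: both reduce conditions~(1) and~(2) to the common $\Ext$-vanishing statement $\Ext^{>n}_\sA(A,J)=0$ for all $A\in\sA$ and $J\in\sA_\fpinj$ (the paper states it for all $i>n$, you for $i=n+1$, which is equivalent in the presence of enough injectives and fp-projectives). The paper's proof is a one-liner that leaves implicit precisely the details you spell out, in particular the dimension shifting along an fp-projective resolution using the hereditary property of the fp cotorsion pair from Lemma~\ref{fp-inj-proj-closure-properties-in-loc-coherent}(d).
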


\begin{proof}
 Both the conditions are equivalent to vanishing of the Ext groups
$\Ext^i_\bA(A,J)$ with $i>n$ for all objects $A\in\sA$ and
$J\in\sA_\fpinj$.
\end{proof}

 We will say that the \emph{fp\+projective dimension} of a locally
coherent category $\sA$ is~$\le n$ if the equivalent conditions
of Lemma~\ref{fp-projective-dimension-lemma} hold.
 In particular, a locally coherent category has fp\+projective
dimension zero if and only if it is locally Noetherian.

\begin{lem} \label{Ext-exact-adjunction-lemma}
 Let\/ $\sE$ and\/ $\sF$ be exact categories, and let\/ $\Phi\:\sE
\rarrow\sF$ and\/ $\Psi\:\sF\rarrow\sE$ be an adjoint pair of
exact functors, with the functor\/ $\Psi$ right adjoint to\/~$\Phi$.
 Then for any two objects $E\in\sE$ and $F\in\sF$ there are natural
isomorphisms of Yoneda Ext groups
$$
 \Ext^n_\sE(E,\Psi(F))\simeq\Ext^n_\sF(\Phi(E),F)
 \qquad\text{for all \ $n\ge0$.}
$$
\end{lem}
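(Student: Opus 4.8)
The plan is to reduce the assertion to the case $n=0$, where it is exactly the given adjunction $\Hom_\sE(E,\Psi(F))\simeq\Hom_\sF(\Phi(E),F)$, by passing to the bounded derived categories. Recall from Section~\ref{prelim-Yoneda-Ext-subsecn} that for any exact category $\sK$ one has $\Ext^n_\sK(X,Y)=\Hom_{\sD^\bb(\sK)}(X,Y[n])$, and that the class of morphisms with exact cones in the homotopy category of complexes in $\sK$ is localizing. First I would observe that an exact functor between exact categories preserves conflations, hence acyclic bounded complexes, hence morphisms with exact cones; therefore, applied termwise to complexes, both $\Phi$ and $\Psi$ descend to triangulated functors $\sD^\bb(\Phi)\:\sD^\bb(\sE)\rarrow\sD^\bb(\sF)$ and $\sD^\bb(\Psi)\:\sD^\bb(\sF)\rarrow\sD^\bb(\sE)$, each commuting with the shift. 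In particular $\sD^\bb(\Psi)(F[n])=\Psi(F)[n]$ and $\sD^\bb(\Phi)(E)=\Phi(E)$, the objects being viewed as complexes concentrated in degree~$0$.

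The key step is to lift the adjunction $\Phi\dashv\Psi$ to an adjunction $\sD^\bb(\Phi)\dashv\sD^\bb(\Psi)$. I would first work at the level of the homotopy categories of bounded complexes, where applying $\Phi$ and $\Psi$ termwise yields an adjoint pair: the unit $\Id_\sE\rarrow\Psi\Phi$ and counit $\Phi\Psi\rarrow\Id_\sF$ are obtained by applying the original unit and counit in each degree, and these are morphisms of complexes (chain maps) precisely because the original unit and counit are natural transformations; the triangle identities then hold degreewise. Since both functors invert the respective localizing classes, these termwise unit and counit descend to natural transformations between the localized functors, and the triangle identities persist after localization. This is an instance of the general principle that an adjunction descends to the localizations whenever each functor carries the localizing class of its source into the localizing class of its target.

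Granting the derived adjunction, the conclusion is immediate and natural in both arguments:
\begin{multline*}
 \Ext^n_\sF(\Phi(E),F)=\Hom_{\sD^\bb(\sF)}(\sD^\bb(\Phi)(E),F[n]) \\
 \simeq\Hom_{\sD^\bb(\sE)}(E,\sD^\bb(\Psi)(F[n]))=\Ext^n_\sE(E,\Psi(F)).
\end{multline*}
The hard part will be the careful verification that the adjunction descends to the bounded derived categories, that is, that the localization calculus (the left and right Ore conditions for the localizing classes) is respected by the unit and counit, so that the derived unit and counit are well defined and satisfy the triangle identities. An alternative, more hands-on route avoids derived categories altogether: send an $n$\+extension of $\Phi(E)$ by $F$ in $\sF$ to the $n$\+extension of $E$ by $\Psi(F)$ obtained by applying the exact functor $\Psi$ and then pulling back along the adjunction unit $E\rarrow\Psi\Phi(E)$, and conversely apply $\Phi$ and push out along the counit $\Phi\Psi(F)\rarrow F$; one then checks, using the techniques of Section~\ref{prelim-Yoneda-Ext-subsecn}, that these operations respect the equivalence relation on $n$\+extensions and are mutually inverse. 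This second approach is elementary but notationally heavier, so I would present the derived-category argument as the main one.
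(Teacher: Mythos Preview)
Your proposal is correct and follows essentially the same approach as the paper: lift the adjunction $\Phi\dashv\Psi$ to the bounded derived categories by constructing the unit and counit termwise on complexes and checking the triangle identities, then read off the Ext isomorphism from $\Ext^n_\sK(X,Y)=\Hom_{\sD^\bb(\sK)}(X,Y[n])$. Your write-up is in fact more detailed than the paper's brief sketch, and the alternative hands-on Yoneda-extension argument you mention is a welcome supplement.
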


\begin{proof}
 One can, e.~g., show that the functors induced by $\Phi$ and $\Psi$
on the (bounded or unbounded) derived categories of $\sE$ and $\sF$
are adjoint, by constructing the natural morphisms of adjunction unit
and counit for complexes and noticing that the required equations for
the compositions hold.
 Then the desired isomorphism of the Ext groups follows.
\end{proof}

\begin{lem} \label{Phi-Psi-preserve-fp-injectivity-projectivity}
 Let\/ $\bA$ be a locally coherent DG\+category.  Then \par
\textup{(a)} both the additive functors\/ $\Phi_\bA\:\sZ^0(\bA)\rarrow
\sZ^0(\bA^\bec)$ and\/ $\Psi^+_\bA\:\sZ^0(\bA^\bec)\rarrow\sZ^0(\bA)$
preserve fp\+injectivity of objects; \par
\textup{(b)} both the functors also preserve fp\+projectivity of
objects.
\end{lem}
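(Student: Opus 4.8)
The plan is to deduce everything formally from the two-sided adjunction $\Psi^+_\bA\dashv\Phi_\bA\dashv\Psi^-_\bA$ of Lemma~\ref{G-functors-in-DG-categories}, the Ext-adjunction isomorphisms of Lemma~\ref{Ext-exact-adjunction-lemma}, and the preservation of finite presentability (equivalently, coherence) by the functors $\Phi_\bA$ and $\Psi^\pm_\bA$ from Lemma~\ref{Phi-Psi-preserve-generatedness-presentability}. Throughout I would use that in the locally coherent abelian categories $\sZ^0(\bA)$ and $\sZ^0(\bA^\bec)$ an object $J$ is fp\+injective exactly when $\Ext^1(E,J)=0$ for every finitely presentable $E$, and an object $P$ is fp\+projective exactly when $\Ext^1(P,J)=0$ for every fp\+injective $J$. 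Recall also that all three functors are exact between these abelian categories by the definition of an exact (here abelian) DG\+category, so Lemma~\ref{Ext-exact-adjunction-lemma} applies to each adjoint pair.

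For part~(a), I would first treat $\Phi_\bA$. Given an fp\+injective object $J\in\sZ^0(\bA)$ and a finitely presentable object $E\in\sZ^0(\bA^\bec)$, the adjunction $\Psi^+_\bA\dashv\Phi_\bA$ and Lemma~\ref{Ext-exact-adjunction-lemma} give a natural isomorphism $\Ext^1_{\sZ^0(\bA^\bec)}(E,\Phi_\bA(J))\simeq\Ext^1_{\sZ^0(\bA)}(\Psi^+_\bA(E),J)$. Since $\Psi^+_\bA(E)$ is finitely presentable by Lemma~\ref{Phi-Psi-preserve-generatedness-presentability} and $J$ is fp\+injective, the right-hand side vanishes; as $E$ was arbitrary, $\Phi_\bA(J)$ is fp\+injective. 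For $\Psi^+_\bA$ I would run the dual computation using $\Phi_\bA\dashv\Psi^-_\bA$: for fp\+injective $J\in\sZ^0(\bA^\bec)$ and finitely presentable $E\in\sZ^0(\bA)$ one gets $\Ext^1_{\sZ^0(\bA)}(E,\Psi^-_\bA(J))\simeq\Ext^1_{\sZ^0(\bA^\bec)}(\Phi_\bA(E),J)=0$, so $\Psi^-_\bA(J)$ is fp\+injective. Since $\Psi^+_\bA=\Psi^-_\bA[-1]$ by Lemma~\ref{G-functors-in-DG-categories}, and the class of fp\+injective objects is invariant under the shift autoequivalences (which are exact and preserve finite presentability), it follows that $\Psi^+_\bA(J)$ is fp\+injective as well.

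For part~(b), I would feed part~(a) back through the same isomorphisms. To see that $\Phi_\bA$ preserves fp\+projectivity, take an fp\+projective $P\in\sZ^0(\bA)$ and an arbitrary fp\+injective $J\in\sZ^0(\bA^\bec)$; the adjunction $\Phi_\bA\dashv\Psi^-_\bA$ gives $\Ext^1_{\sZ^0(\bA^\bec)}(\Phi_\bA(P),J)\simeq\Ext^1_{\sZ^0(\bA)}(P,\Psi^-_\bA(J))$, and $\Psi^-_\bA(J)$ is fp\+injective by part~(a), so the right-hand side vanishes and $\Phi_\bA(P)$ is fp\+projective. Dually, for an fp\+projective $P\in\sZ^0(\bA^\bec)$ and an fp\+injective $J\in\sZ^0(\bA)$, the adjunction $\Psi^+_\bA\dashv\Phi_\bA$ yields $\Ext^1_{\sZ^0(\bA)}(\Psi^+_\bA(P),J)\simeq\Ext^1_{\sZ^0(\bA^\bec)}(P,\Phi_\bA(J))=0$ since $\Phi_\bA(J)$ is fp\+injective by part~(a); hence $\Psi^+_\bA(P)$ is fp\+projective. (Alternatively, part~(b) can be obtained directly from Proposition~\ref{fp-Eklof-Trlifaj}(c): both $\Phi_\bA$ and $\Psi^+_\bA$, being exact left adjoints, preserve coproducts, directed colimits, hence transfinitely iterated extensions and, by Lemma~\ref{Phi-Psi-preserve-generatedness-presentability}, finitely presentable subquotients, as well as direct summands, so they send objects filtered by finitely presentable objects to objects of the same kind.)

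I expect no serious obstacle here: once part~(a) is established, part~(b) is a purely formal consequence of the adjunction isomorphisms. The only points requiring genuine care are bookkeeping the directions of the three adjunctions correctly when invoking Lemma~\ref{Ext-exact-adjunction-lemma}, and the shift bookkeeping $\Psi^+_\bA=\Psi^-_\bA[-1]$ needed to pass between $\Psi^-_\bA$ and $\Psi^+_\bA$ in part~(a); both are routine given the shift-invariance of the fp\+injective and fp\+projective classes.
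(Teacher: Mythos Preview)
Your proof is correct and follows essentially the same approach as the paper: both use the Ext-adjunction isomorphisms of Lemma~\ref{Ext-exact-adjunction-lemma} together with the preservation of finite presentability (Lemma~\ref{Phi-Psi-preserve-generatedness-presentability}) to establish part~(a), then deduce part~(b) from part~(a) via the same adjunctions, with the shift relation $\Psi^+_\bA=\Psi^-_\bA[-1]$ bridging the two $\Psi$-functors. Your version simply spells out explicitly the four computations that the paper's terse proof leaves to the reader.
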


\begin{proof}
 Using Lemma~\ref{Ext-exact-adjunction-lemma} for $n=1$, part~(a)
follows from the fact that both the functors preserve finite 
presentability of objects
(Lemma~\ref{Phi-Psi-preserve-generatedness-presentability}), and
part~(b) follows from part~(a).
 Here, as usual, one should keep in mind that the functor
$\Psi^-_\bA$ right adjoint to $\Phi_\bA$ only differs from
the functor $\Psi^+_\bA$ by a shift, $\Psi^-(X)=\Psi^+(X)[1]$.
\end{proof}

 Let $\bA$ be a locally coherent (Grothendieck abelian) DG\+category.
 An object $J\in\bA$ is said to be \emph{graded-fp-injective} if
$\Phi_\bA(J)\in\sZ^0(\bA^\bec)$ is an fp\+injective object of
the abelian category $\sZ^0(\bA^\bec)$.
 We will denote the full DG\+subcategory of graded-fp-injective
objects by $\bA_\bfpinj\subset\bA$.
 The following lemma is similar to Lemma~\ref{graded-proj-inj-lemma}.

\begin{lem}
 For any locally coherent DG\+category\/ $\bA$, the full DG\+subcategory
of graded-fp-injective objects\/ $\bA_\bfpinj\subset\bA$ is additive
and closed under shifts and twists (hence also under cones) as well as
under direct summands, infinite products, and infinite coproducts.
 The full DG\+subcategory\/ $\bA_\bfpinj$ inherits an exact DG\+category
structure from the abelian exact DG\+category structure of\/~$\bA$.
 The inclusion\/ $\sZ^0(\bA^\bec)_\fpinj\subset
\sZ^0((\bA_\bfpinj)^\bec)$ holds in\/ $\sZ^0(\bA^\bec)$.
\end{lem}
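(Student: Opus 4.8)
The plan is to imitate the proof of Lemma~\ref{graded-proj-inj-lemma} and reduce the entire statement to Proposition~\ref{subcategory-L-prop}, applied to the full subcategory $\sL = \sZ^0(\bA^\bec)_\fpinj \subset \sZ^0(\bA^\bec)$ of fp\+injective objects in the abelian category $\sZ^0(\bA^\bec)$. The point is that, by the definition of graded\+fp\+injective objects, $\bA_\bfpinj \subset \bA$ is exactly the full DG\+subcategory $\bF$ associated with $\sL$ in Proposition~\ref{subcategory-L-prop}: an object $J \in \bA$ lies in $\bA_\bfpinj$ precisely when $\Phi_\bA(J) \in \sL$.

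First I would verify the hypotheses of Proposition~\ref{subcategory-L-prop} for $\sL$. By Lemma~\ref{fp-inj-proj-closure-properties-in-loc-presentable}(a), the class of fp\+injective objects is closed under extensions and direct summands; hence $\sL$ is additive, closed under direct summands, and (being closed under extensions) inherits an exact category structure from the abelian exact structure on $\sZ^0(\bA^\bec)$ as a fully exact subcategory. Moreover $\sL$ is preserved by the shift functors $[n]$, since each shift is an exact autoequivalence of $\sZ^0(\bA^\bec)$ and therefore preserves both finite presentability of objects and the $\Ext^1$ groups detecting fp\+injectivity. With these facts in hand, Proposition~\ref{subcategory-L-prop}(a) gives that $\bA_\bfpinj$ is a full additive DG\+subcategory closed under shifts and twists (hence under cones), part~(b) gives closure under direct summands, part~(c) gives that $\bA_\bfpinj$ inherits an exact DG\+category structure, and part~(d) --- applicable because $\sL$ is closed under extensions --- yields the desired inclusion $\sZ^0(\bA^\bec)_\fpinj = \sL \subset \sZ^0((\bA_\bfpinj)^\bec)$.

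It remains to treat the infinite products and coproducts, which are not part of the conclusion of Proposition~\ref{subcategory-L-prop}. Here I would use that $\bA$, being a Grothendieck abelian DG\+category, has all products and coproducts (the coproducts by Lemma~\ref{coproducts-in-cocycles-and-in-DG} and the products by its evident dual), together with the fact that $\Phi_\bA$ preserves both, being simultaneously a left and a right adjoint as recalled in Section~\ref{derived-second-kind-definitions-subsecn}. For any family $(J_\alpha)$ in $\bA_\bfpinj$ we then have $\Phi_\bA(\prod_\alpha J_\alpha) \simeq \prod_\alpha \Phi_\bA(J_\alpha)$ and $\Phi_\bA(\coprod_\alpha J_\alpha) \simeq \coprod_\alpha \Phi_\bA(J_\alpha)$ in $\sZ^0(\bA^\bec)$; since fp\+injective objects are closed under both products and coproducts by Lemma~\ref{fp-inj-proj-closure-properties-in-loc-presentable}(c), both of these images are fp\+injective, so $\prod_\alpha J_\alpha$ and $\coprod_\alpha J_\alpha$ lie in $\bA_\bfpinj$.

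The argument is essentially routine once the earlier machinery is in place, so I expect no genuine obstacle. The one feature worth flagging --- and the respect in which this lemma differs from Lemma~\ref{graded-proj-inj-lemma}, where graded\+projective objects are closed only under coproducts and graded\+injective objects only under products --- is that $\bA_\bfpinj$ is closed under \emph{both} products and coproducts. This symmetry is precisely the content of Lemma~\ref{fp-inj-proj-closure-properties-in-loc-presentable}(c) transported through $\Phi_\bA$, and it is the only place where a genuine input beyond the formal reduction to Proposition~\ref{subcategory-L-prop} is needed.
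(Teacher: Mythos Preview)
Your proposal is correct and follows essentially the same approach as the paper: set $\sL=\sZ^0(\bA^\bec)_\fpinj$, verify the hypotheses of Proposition~\ref{subcategory-L-prop}(a--d) using Lemma~\ref{fp-inj-proj-closure-properties-in-loc-presentable}(a), and handle products and coproducts separately via Lemma~\ref{fp-inj-proj-closure-properties-in-loc-presentable}(c) and the fact that $\Phi_\bA$ preserves both. The paper additionally records that $\sL$ is closed under cokernels of monomorphisms (Lemma~\ref{fp-inj-proj-closure-properties-in-loc-coherent}(b)), but this is not actually used in the argument for the present lemma, so your omission of it is harmless.
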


\begin{proof}
 Let $\sL=\sZ^0(\bA^\bec)_\fpinj$ be the full subcategory of
fp\+injective objects in the locally coherent abelian category
$\sZ^0(\bA^\bec)$.
 The full subcategory $\sL$ is additive, closed under extensions,
direct summands, and the cokernels of monomorphisms in $\sZ^0(\bA^\bec)$
(Lemmas~\ref{fp-inj-proj-closure-properties-in-loc-presentable}(a)
and~\ref{fp-inj-proj-closure-properties-in-loc-coherent}(b)), and
preserved by the shift functors acting on $\sZ^0(\bA^\bec)$.
 By Proposition~\ref{subcategory-L-prop}(a\+-d), all the assertions
of the lemma follow except the ones concerning infinite products
and coproducts.
 To prove these, one recalls that the full subcategory
$\sL=\sZ^0(\bA^\bec)_\fpinj\subset\sZ^0(\bA^\bec)$ is also closed
under products and coproducts
(Lemma~\ref{fp-inj-proj-closure-properties-in-loc-presentable}(c)),
and the functor $\Phi_\bA$ preserves products and coproducts.
\end{proof}

\subsection{Compact generation: $\aleph_n$-Noetherian coherent case}
 The following proposition is a generalization
of~\cite[Theorem~2.2]{Pfp}.

\begin{prop} \label{coherent-two-coderived-categories-prop}
 Let\/ $\bA$ be a locally coherent DG\+category.
 Then the triangulated functor
$$
 \sD^\co(\bA_\bfpinj)\lrarrow\sD^\co(\bA)
$$
induced by the inclusion of exact/abelian DG\+categories\/ $\bA_\bfpinj
\rightarrowtail\bA$ is an equivalence of triangulated categories.
\end{prop}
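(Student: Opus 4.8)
The plan is to obtain this as a direct application of Theorem~\ref{contraderived-resolving-equivalence-theorem}(b). I would take the canonical exact DG\+pair $(\bA,\sZ^0(\bA^\bec))$ of Example~\ref{exact-DG-pairs-examples}(0) as the ambient datum, and the pair $(\bA_\bfpinj,\sM)$ as the DG\+subpair, where $\sM=\sZ^0(\bA^\bec)_\fpinj$ is the full subcategory of fp\+injective objects in the locally coherent abelian category $\sZ^0(\bA^\bec)$. Once the hypotheses of that theorem are checked for this data, its conclusion is exactly the asserted triangulated equivalence $\sD^\co(\bA_\bfpinj)\rarrow\sD^\co(\bA)$.

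First I would record the ambient structural facts. A locally coherent DG\+category is Grothendieck abelian, so both $\sZ^0(\bA)$ and $\sZ^0(\bA^\bec)$ are Grothendieck categories; in particular their coproduct functors are exact, whence by Lemma~\ref{exact-co-products-lemma}(a) the exact DG\+category $\bA$ has exact coproducts. By Proposition~\ref{abelian-DG-categories-have-twists} the abelian DG\+category $\bA$ has twists, and the lemma preceding the present proposition shows that $\bA_\bfpinj$ is closed under twists and under infinite coproducts in $\bA$. This supplies all the assumptions of Theorem~\ref{contraderived-resolving-equivalence-theorem}(b) concerning twists and coproducts.

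Next I would verify that $\sM$ is coresolving in $\sK=\sZ^0(\bA^\bec)$: the class of fp\+injectives is closed under extensions by Lemma~\ref{fp-inj-proj-closure-properties-in-loc-presentable}(a) and under the cokernels of monomorphisms by Lemma~\ref{fp-inj-proj-closure-properties-in-loc-coherent}(b); and since every object of a Grothendieck category admits a monomorphism into an injective, which is fp\+injective, there are enough fp\+injectives. It then remains to certify that $(\bA_\bfpinj,\sM)$ is a strict exact DG\+subpair of $(\bA,\sK)$: the subcategory $\sM$ is preserved by shifts and, being closed under extensions, inherits an exact category structure; the inclusion $\sM\subset\sZ^0(\bA_\bfpinj^\bec)$ is the content of the preceding lemma; the image of $\Phi_\bA$ on $\sZ^0(\bA_\bfpinj)$ lies in $\sM$ by the very definition of a graded\+fp\+injective object; and $\bA_\bfpinj$ inherits an exact DG\+category structure. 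Alternatively, one may package all of this by applying the construction of Proposition~\ref{subcategory-L-prop} with $\sL=\sM$ together with Example~\ref{exact-DG-pairs-examples}(2).

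I do not anticipate a genuine obstacle, since the analytic substance (full\+and\+faithfulness via approachability, and essential surjectivity via product totalizations of fp\+injective coresolutions) is already carried by Theorem~\ref{contraderived-resolving-equivalence-theorem}(b). The only point that demands care is the bookkeeping in the choice of the DG\+subpair: one must take $\sM=\sZ^0(\bA^\bec)_\fpinj$, and \emph{not} the possibly strictly larger $\sZ^0(\bA_\bfpinj^\bec)$, as the second component, so that the coresolving hypothesis is the one that genuinely holds inside $\sK=\sZ^0(\bA^\bec)$. The inclusion $\sM\subset\sZ^0(\bA_\bfpinj^\bec)$ can indeed be strict (cf.\ Example~\ref{L-tilde-example}), but this is harmless, as Theorem~\ref{contraderived-resolving-equivalence-theorem} only requires $\sM$ to be coresolving in $\sK$.
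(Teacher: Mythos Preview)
Your proposal is correct and takes essentially the same approach as the paper: apply Theorem~\ref{contraderived-resolving-equivalence-theorem}(b) to the exact DG\+subpair $(\bA_\bfpinj,\sZ^0(\bA^\bec)_\fpinj)$ in $(\bA,\sZ^0(\bA^\bec))$, verifying coresolvingness via Lemmas~\ref{fp-inj-proj-closure-properties-in-loc-presentable}(a) and~\ref{fp-inj-proj-closure-properties-in-loc-coherent}(b) together with the existence of enough injectives, and the twist hypothesis via Proposition~\ref{abelian-DG-categories-have-twists}. Your write-up is more explicit than the paper's (in particular, your remark on the choice of $\sM$ versus the possibly larger $\sZ^0(\bA_\bfpinj^\bec)$ is a useful clarification), but the argument is the same; note also that Theorem~\ref{contraderived-resolving-equivalence-theorem} does not require strictness of the DG\+subpair, so that part of your verification is not strictly needed.
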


\begin{proof}
 Apply Theorem~\ref{contraderived-resolving-equivalence-theorem}(b) to
the exact DG\+subpair $(\bG,\sM)=(\bA_\bfpinj,\sZ^0(\bA^\bec)_\fpinj)$
in the exact DG\+pair $(\bE,\sK)=(\bA,\sZ^0(\bA^\bec))$.
 To check the assumptions, recall that any abelian DG\+category has
twists by Proposition~\ref{abelian-DG-categories-have-twists}.
 Furthermore, the class of all fp\+injective objects is coresolving
in a locally coherent abelian category $\sZ^0(\bA^\bec)$ according to
Lemmas~\ref{fp-inj-proj-closure-properties-in-loc-presentable}(a)
and~\ref{fp-inj-proj-closure-properties-in-loc-coherent}(b), and
because there are enough injective (hence enough fp\+injective) objects.
\end{proof}

 The next proposition is a generalization
of~\cite[Theorem~2.4]{Pfp}.
 It is also a generalization of
Proposition~\ref{Noetherian-coderived-category-prop} above.

\begin{prop} \label{coherent-finite-fp-dimen-coderived-category-prop}
 Let\/ $\bA$ be a locally coherent DG\+category.
 Assume that the locally coherent abelian category\/ $\sZ^0(\bA^\bec)$
has finite fp\+projective dimension.
 Then the triangulated functor
$$
 \sH^0(\bA_\binj)\lrarrow\sD^\co(\bA)
$$
induced by the inclusion of exact/abelian DG\+categories\/ $\bA_\binj
\rightarrowtail\bA$ is an equivalence of triangulated categories.
\end{prop}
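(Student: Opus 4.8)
The plan is to verify that the hypotheses of Theorem~\ref{inj-proj-resolutions-star-conditions-theorem}(a) hold for the exact DG\+category $\bE=\bA$ equipped with its abelian exact DG\+category structure (as in Lemma~\ref{abelian-exact-DG-structure}). The conclusion of that theorem, applied with $\bE=\bA$, asserts that the composite of $\sH^0(\bA_\binj)\rarrow\sH^0(\bA)$ with the Verdier quotient functor $\sH^0(\bA)\rarrow\sD^\co(\bA)$ is a triangulated equivalence $\sH^0(\bA_\binj)\simeq\sD^\co(\bA)$, which is precisely the statement of the proposition.

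First I would record the structural hypotheses, which are immediate. Any abelian DG\+category is idempotent-complete and has all twists by Proposition~\ref{abelian-DG-categories-have-twists}; and since $\bA$ is Grothendieck, the abelian category $\sZ^0(\bA)$ has infinite coproducts, so $\bA$ has infinite coproducts by Lemma~\ref{coproducts-in-cocycles-and-in-DG}. Moreover, $\sK=\sZ^0(\bA^\bec)$ is a Grothendieck (locally coherent) abelian category, hence has enough injective objects. Thus $\bA$ is an exact DG\+category with twists and infinite coproducts whose associated exact category $\sZ^0(\bA^\bec)$ has enough injectives, as required.

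The substantive step is to check condition~\textup{($*$)} for the abelian exact structure on $\sK$, namely that every countable coproduct of injective objects of $\sK$ has finite injective dimension. Let $n$ be the fp\+projective dimension of $\sK$, which is finite by hypothesis. By Lemma~\ref{fp-projective-dimension-lemma}, every fp\+injective object of $\sK$ then has injective dimension $\le n$. Since any injective object is in particular fp\+injective, and the class of fp\+injective objects is closed under coproducts by Lemma~\ref{fp-inj-proj-closure-properties-in-loc-presentable}(c), any coproduct of injective objects of $\sK$ is fp\+injective and therefore has injective dimension $\le n$. This establishes condition~\textup{($*$)}, in fact with a uniform bound. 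With all the hypotheses verified, Theorem~\ref{inj-proj-resolutions-star-conditions-theorem}(a) yields the desired equivalence directly.

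I expect the passage from the finiteness of fp\+projective dimension to condition~\textup{($*$)} to be the only point requiring genuine attention; the remaining verifications are routine unwindings of the definitions together with the structural results already established for Grothendieck abelian DG\+categories. As an alternative organization one could first invoke Proposition~\ref{coherent-two-coderived-categories-prop} to replace $\bA$ by $\bA_\bfpinj$ and then apply the star-condition theorem to $\bA_\bfpinj$, but this would force an analysis of the injective objects of the inherited exact structure on $\sZ^0((\bA_\bfpinj)^\bec)$, so the direct application to $\bA$ is preferable.
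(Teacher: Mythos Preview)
Your proof is correct and follows essentially the same route as the paper's: apply Theorem~\ref{inj-proj-resolutions-star-conditions-theorem}(a) after verifying condition~($*$) by noting that coproducts of injectives are fp\+injective (Lemma~\ref{fp-inj-proj-closure-properties-in-loc-presentable}(c)) and that fp\+injectives have injective dimension bounded by the fp\+projective dimension (Lemma~\ref{fp-projective-dimension-lemma}). The paper's proof is terser, deferring the structural checks to the proof of Proposition~\ref{Noetherian-coderived-category-prop}, but the argument is the same.
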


\begin{proof}
 The point is that condition~($*$) is satisfied for locally coherent
abelian categories of finite fp\+projective dimension, and consequently
Theorem~\ref{inj-proj-resolutions-star-conditions-theorem}(a)
is applicable.
 Indeed, in a locally coherent category of finite fp\+projective
dimension, all coproducts of injective objects are fp\+injective
by Lemma~\ref{fp-inj-proj-closure-properties-in-loc-presentable}(c),
and all fp\+injective objects have finite injective dimensions
by the definition.
 The rest was already explained in the proof of
Proposition~\ref{Noetherian-coderived-category-prop}. 
\end{proof}

 The following theorem is our version of~\cite[Corollary~6.13]{Sto2}
and~\cite[Corollary~2.6]{Pfp}.
 It is also a generalization of
Theorem~\ref{Noetherian-compact-generation-theorem} above.

\begin{thm} \label{coherent-finite-fp-dimen-compact-generation-theorem}
 Let\/ $\bA$ be a locally coherent DG\+category and\/
$\bA_\bfp\subset\bA$ be its full DG\+subcategory of finitely
presentable (coherent) objects.
 Assume that \emph{both} the locally coherent abelian categories\/
$\sZ^0(\bA)$ and\/ $\sZ^0(\bA^\bec)$ have finite fp\+projective
dimensions.
 Then the triangulated functor\/
$$
 \sD^\abs(\bA_\bfp)\lrarrow\sD^\co(\bA)
$$
induced by the inclusion of abelian DG\+categories\/
$\bA_\bfp\rarrow\bA$ is fully faithful, and the objects in its image
form a set of compact generators of the coderived category\/
$\sD^\co(\bA)$.
\end{thm}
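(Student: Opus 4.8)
The plan is to treat full\+and\+faithfulness together with compactness first, and then to establish generation separately, the latter being where the two finiteness hypotheses enter. For the first part I would apply Theorem~\ref{full-and-faithfulness-and-compactness-main-theorem} to the exact DG\+pair $(\bA,\sZ^0(\bA^\bec))$ and its exact DG\+subpair $(\bA_\bfp,\sZ^0(\bA^\bec)_\fp)$ (recall from Section~\ref{locally-coherent-DG-categories-subsecn} that $\sZ^0(\bA_\bfp^\bec)=\sZ^0(\bA^\bec)_\fp$ and that $\bA_\bfp$ is an exactly embedded full abelian DG\+subcategory, so this really is an exact DG\+subpair). The hypotheses are checked as follows: $\bA$ is Grothendieck, hence has exact coproducts; $\sK=\sZ^0(\bA^\bec)$ is trivially closed under coproducts; the coherent objects of $\sK$ are finitely presentable, hence small; and $\sL=\sZ^0(\bA^\bec)_\fp$ is self\+resolving in $\sK$. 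The only nonformal point is self\+resolvability: given a coherent $F$ and an epimorphism $E\rarrow F$ in $\sK$, one writes $E$ as a filtered colimit of its coherent stages $E_i$, notes that the subobjects $\mathrm{im}(E_i\to F)$ form a directed family with union $F$, and uses finite generation of $F$ to find a single stage $E_i\twoheadrightarrow F$; this $E_i$ is the required coherent object mapping compatibly to $E$ and onto $F$ (closure of the coherent objects under kernels of epimorphisms is part of local coherence). Theorem~\ref{full-and-faithfulness-and-compactness-main-theorem} then yields that $\sD^\abs(\bA_\bfp)\rarrow\sD^\co(\bA)$ is fully faithful with compact essential image; note that this step uses neither finiteness hypothesis.

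For generation I would use the standard criterion that a set of compact objects generates if and only if its right orthogonal vanishes. Since every object of $\sD^\abs(\bA_\bfp)$ is the image of a coherent object $E\in\bA_\bfp$, an object right orthogonal to the image is precisely one right orthogonal to all coherent $E$ and their shifts. By Proposition~\ref{coherent-finite-fp-dimen-coderived-category-prop} (which uses that $\sZ^0(\bA^\bec)$ has finite fp\+projective dimension) any object of $\sD^\co(\bA)$ is represented by a graded\+injective object $J\in\bA_\binj$, and by the semiorthogonality of Theorem~\ref{inj-proj-resolutions-semiorthogonality-theorem}(a) one has $\Hom_{\sD^\co(\bA)}(A,J)\simeq\Hom_{\sH^0(\bA)}(A,J)$ for every $A\in\bA$. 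Thus generation reduces to the following \emph{coherent Arinkin lemma}: if $J$ is graded\+injective and every closed morphism $E\rarrow J$ from a coherent object $E$ is null\+homotopic, then $J$ is contractible.

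I would prove this lemma in two moves. The heart is to show that $\Hom^\bu_\bA(P,J)$ is acyclic for every fp\+projective object $P$ of $\sZ^0(\bA)$. By Proposition~\ref{fp-Eklof-Trlifaj}(c), $P$ is a direct summand of an object $\widetilde P$ filtered by coherent objects, so it suffices to build, for each closed $f\:\widetilde P\rarrow J[n]$, a contracting homotopy by transfinite induction along the filtration $(F_i)$ of $\widetilde P$. At a successor step one first extends the partial homotopy $t_i\:F_i\rarrow J[n-1]$ to a morphism $\widetilde t\:F_{i+1}\rarrow J[n-1]$ in $\bA^0$ --- possible because $\Phi_\bA(J)$ is injective in $\sZ^0(\bA^\bec)$ and $\widetilde\Phi_\bA$ is fully faithful (Lemma~\ref{Phi-Psi-extended-to-nonclosed-morphisms}) --- and then corrects $\widetilde t$ using that $f|_{F_{i+1}}-d(\widetilde t)$ vanishes on $F_i$, hence factors through the coherent quotient $F_{i+1}/F_i$, where it is null\+homotopic by hypothesis; the correction term glues to give $t_{i+1}$ restricting to $t_i$. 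Limit ordinals are handled exactly as in the proof of Lemma~\ref{Arinkin-lemma}, using exactness of directed colimits in $\sZ^0(\bA)$ and the fact that the inclusion $\sZ^0(\bA)\rarrow\bA^0$ preserves directed colimits; passing to the summand $P$ gives acyclicity of $\Hom^\bu_\bA(P,J)$. The second move invokes the finite fp\+projective dimension of $\sZ^0(\bA)$: any $A\in\bA$ has a finite fp\+projective resolution $0\rarrow P_d\rarrow\dotsb\rarrow P_0\rarrow A\rarrow0$ in $\sZ^0(\bA)$, whose totalization is absolutely acyclic by Lemma~\ref{totalization-of-finite-absolutely-acyclic} and hence isomorphic to $A$ in $\sD^\co(\bA)$; since $\Hom^\bu_\bA(\Tot(P_\bu),J)$ is the finite totalization of the acyclic complexes $\Hom^\bu_\bA(P_i,J)$, it is acyclic, whence $\Hom_{\sH^0(\bA)}(A,J[m])=0$ for all $m$. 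Taking $A=J$ shows $\id_J$ null\+homotopic, i.e.\ $J$ contractible; the essential smallness of the category of coherent objects guarantees that the generators form a set.

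I expect the transfinite step just described --- equivalently, closure of the class $\{X:\Hom^\bu_\bA(X,J)\text{ acyclic}\}$ under the filtered extensions producing fp\+projectives --- to be the main obstacle. It is exactly the point where the passage from the locally Noetherian situation of Lemma~\ref{Arinkin-lemma} (in which $J$ itself is a directed union of coherent subobjects) to the merely locally coherent situation fails naively, and where the finite fp\+projective dimension of $\sZ^0(\bA)$ must be leveraged, through fp\+projective resolutions and the Eklof\+type gluing of homotopies, to recover the conclusion.
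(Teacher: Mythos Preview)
Your proposal is correct and follows essentially the same route as the paper: Theorem~\ref{full-and-faithfulness-and-compactness-main-theorem} for full\+and\+faithfulness and compactness, then Proposition~\ref{coherent-finite-fp-dimen-coderived-category-prop} plus semiorthogonality to reduce generation to a coherent Arinkin\+type lemma, which is proved by passing through fp\+projective resolutions in $\sZ^0(\bA)$ (using its finite fp\+projective dimension) and the description of fp\+projectives as summands of objects filtered by coherents.

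The one genuine difference is in how you handle the Eklof\+type step, i.e., the passage from ``$\Hom_{\sH^0(\bA)}(E,J)=0$ for all coherent $E$'' to ``$\Hom_{\sH^0(\bA)}(F,J)=0$ for all $F$ filtered by coherents''. You do this by a direct transfinite induction extending contracting homotopies along the filtration, mimicking the proof of Lemma~\ref{Arinkin-lemma}. The paper instead observes (in the lemma immediately following the theorem) that for a graded\+injective $J$ there is a natural isomorphism $\Hom_{\sH^0(\bA)}(E,J)\simeq\Ext^1_{\sZ^0(\bA)}(E,J[-1])$, so the hypothesis says precisely that $J[-1]$ is fp\+injective in $\sZ^0(\bA)$, and the desired vanishing for $F$ is then literally the classical Eklof lemma. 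This is shorter and more conceptual; your hands\+on argument works equally well but is essentially a reproof of Eklof in this specific setting. The paper also orders the two moves oppositely (resolution first, then filtered\+by\+coherents), but that is immaterial.
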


\begin{proof}
 To prove the full-and-faithfulness of the triangulated inclusion
functor and compactness of all the objects in its essential image,
we once again apply
Theorem~\ref{full-and-faithfulness-and-compactness-main-theorem}.
 Consider the exact DG\+pair $(\bE,\sK)=(\bA,\sZ^0(\bA^\bec))$ and its
exact DG\+subpair $(\bF,\sL)=(\bA_\bfp,\sZ^0(\bA^\bec_\bfp))$
(recall that $\sZ^0(\bA^\bec_\bfp)=\sZ^0(\bA^\bec)_\fp$, according to
the discussion in Section~\ref{locally-coherent-DG-categories-subsecn}).
 Recall further that all finitely generated (hence all finitely
presentable) objects are small, and observe that the full subcategory
of coherent objects is self-resolvable in any locally coherent category.

 In order to prove that the coherent objects generate the coderived
category $\sD^\co(\bA)$, we use the result of
Proposition~\ref{coherent-finite-fp-dimen-coderived-category-prop},
telling that any object of $\sD^\co(\bA)$ can be represented by
a graded-injective object of $\bA$ under the assumption of finite
fp\+projective dimension of $\sZ^0(\bA^\bec)$.
 The semiorthogonality assertion of
Theorem~\ref{inj-proj-resolutions-semiorthogonality-theorem}(a) implies
that the Verdier quotient functor $\sH^0(\bA)\rarrow\sD^\co(\bA)$
induces an isomorphism of the Hom groups $\Hom_{\sH^0(\bA)}(A,J)\simeq
\Hom_{\sD^\co(\bA)}(A,J)$ for all objects $A\in\bA$ and
$J\in\bA_\binj$.

 Let $J\in\bA_\binj$ be a graded-injective object such that all
closed morphisms $E\rarrow J$ from coherent objects $E\in\bA_\bfp$
to $J$ are homotopic to zero in~$\bA$.
 We have to prove that the object $J$ is contractible.
 For this purpose, we will show that all closed morphisms $A\rarrow J$
from an arbitrary object $A\in\bA$ to the object $J$ are homotopic
to zero.

 We start with picking an epimorphism $P_0\rarrow A$ in $\sZ^0(\bA)$
with an fp\+projective object~$P_0$.
 Denoting the kernel of this epimorphism by~$A_1$, we choose
an epimorphism $P_1\rarrow A_1$ with an fp\+projective object~$P_1$,
etc.
 Proceeding in this way, we construct an exact sequence
$0\rarrow Q\rarrow P_{n-1}\rarrow\dotsb\rarrow P_0\rarrow A\rarrow0$
in the abelian category $\sZ^0(\bA)$, where $n$~is the fp\+projective
dimension of $\sZ^0(\bA)$.
 Then the object $Q\in\sZ^0(\bA)$ is also fp\+projective.
 Put $P_n=Q$.

 By Lemma~\ref{totalization-of-finite-absolutely-acyclic}, the object
$X=\Tot(P_\bu\to A)$ is absolutely acyclic in~$\bA$.
 Put $B=\Tot(P_\bu)\in\bA$.
 Then we have a distinguished triangle $X\rarrow B\rarrow A\rarrow
X[1]$ in $\sH^0(\bA)$.
 Using
Theorem~\ref{inj-proj-resolutions-semiorthogonality-theorem}(a) again,
we see that it suffices to show that all closed morphisms $B\rarrow J$
are homotopic to zero in~$\bA$.

 Now the object $B\in\sZ^0(\bA)$ is fp\+projective as a finitely
iterated extension of fp\+projective objects
(use the proof of Lemma~\ref{cone-kernel-cokernel} together with
Lemma~\ref{fp-inj-proj-closure-properties-in-loc-presentable}(a)
or~(b)).
 By Proposition~\ref{fp-Eklof-Trlifaj}(c), the object $B\in\sZ^0(\bA)$
is a direct summand of an object $F$ filtered by coherent objects
in~$\sZ^0(\bA)$.
 Clearly, it is enough to show that all closed morphisms $F\rarrow J$
are homotopic to zero in~$\bA$.

 The following lemma, which is a rather straightforward generalization
of Arinkin's Lemma~\ref{Arinkin-lemma}, finishes the proof of
the theorem.
\end{proof}

\begin{lem}
 Let\/ $\bA$ be a locally finitely presentable abelian DG\+category
and $J\in\bA_\binj$ be a graded-injective object.
 Assume that all closed morphisms $E\rarrow J$ from finitely
presentable objects $E$ of\/ $\sZ^0(\bA)$ to $J$ are homotopic to zero
in\/~$\bA$.
 Let $F\in\bA$ be an object filtered by finitely presentable objects
in\/ $\sZ^0(\bA)$.
 Then all closed morphisms $F\rarrow J$ are also homotopic to zero
in\/~$\bA$.
\end{lem}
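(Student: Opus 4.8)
The plan is to imitate the proof of Arinkin's Lemma~\ref{Arinkin-lemma}, replacing the Zorn's lemma argument over subobjects of a single object by a transfinite induction along the given filtration of $F$. Fix a closed morphism $g\:F\rarrow J$ of degree~$0$ together with a filtration $(F_i)_{0\le i\le\alpha}$ of $F$ in $\sZ^0(\bA)$, so that $F_0=0$, \ $F_\alpha=F$, \ $F_j=\bigcup_{i<j}F_i$ for all limit ordinals~$j$, and every successive quotient $F_{i+1}/F_i$ is finitely presentable. I would construct, by transfinite induction on~$i$, a compatible system of morphisms $t_i\in\Hom^{-1}_\bA(F_i,J)$ such that $d(t_i)=g|_{F_i}$ and $t_{i'}=t_i|_{F_{i'}}$ for all $i'\le i$. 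Setting $t_0=0$ and taking $t=t_\alpha$ at the end would then exhibit $g=d(t)$ as homotopic to zero, as desired.

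The successor step is essentially a copy of the extension step in the proof of Lemma~\ref{Arinkin-lemma}. Given $t_i$, apply the fully faithful functor $\widetilde\Phi_\bA$ from Lemma~\ref{Phi-Psi-extended-to-nonclosed-morphisms} to obtain an element $\widetilde\Phi(t_i)\in\Hom_{\sZ^0(\bA^\bec)}(\Phi(F_i),\Phi(J)[1])$, where $\Phi(J)[1]=\Phi(J[-1])$ is injective in $\sZ^0(\bA^\bec)$ by Lemma~\ref{twists-into-isomorphisms} (since $J\in\bA_\binj$ and shifts of injectives are injective). As $\Phi_\bA$ is exact, the morphism $\Phi(F_i)\rarrow\Phi(F_{i+1})$ is a monomorphism, so $\widetilde\Phi(t_i)$ extends to a morphism on $\Phi(F_{i+1})$, which by full faithfulness of $\widetilde\Phi_\bA$ comes from some $t'_{i+1}\in\Hom^{-1}_\bA(F_{i+1},J)$ restricting to $t_i$ on~$F_i$. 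The closed morphism $g|_{F_{i+1}}-d(t'_{i+1})$ then vanishes in restriction to $F_i$, hence factors as $f\pi$ through the natural epimorphism $\pi\:F_{i+1}\rarrow E=F_{i+1}/F_i$ in $\sZ^0(\bA)$, with $f\:E\rarrow J$ a closed morphism out of a finitely presentable object. By the hypothesis of the lemma, $f=d(t_E)$ for some $t_E\in\Hom^{-1}_\bA(E,J)$, and $t_{i+1}=t'_{i+1}+t_E\pi$ satisfies $d(t_{i+1})=g|_{F_{i+1}}$ and $t_{i+1}|_{F_i}=t_i$.

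The limit step is where the transfinite induction calls upon the colimit-exactness already exploited in the proof of Lemma~\ref{Arinkin-lemma}. For a limit ordinal~$j$, the subobject $F_j=\bigcup_{i<j}F_i$ coincides with the directed colimit $\varinjlim_{i<j}F_i$, because directed colimits are exact in the Grothendieck category $\sZ^0(\bA)$; moreover the inclusion $\sZ^0(\bA)\rarrow\bA^0$ preserves directed colimits, since $\Phi_\bA$ does and it extends to the fully faithful $\widetilde\Phi_\bA\:\bA^0\rarrow\sZ^0(\bA^\bec)$. Hence the compatible family $(t_i)_{i<j}$, viewed as degree~$0$ morphisms $F_i\rarrow J[-1]$ in $\bA^0$, glues to a unique $t_j\in\Hom^{-1}_\bA(F_j,J)$ with $t_j|_{F_i}=t_i$; and since $d(t_j)$ and $g|_{F_j}$ are two degree~$0$ morphisms $F_j\rarrow J$ agreeing on every~$F_i$, they coincide by the same colimit property.

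I expect the successor step to be entirely routine (a verbatim adaptation of Arinkin's extension argument, with "finitely presentable" in place of "Noetherian"), and the only genuine point requiring care to be the limit step: namely the verification that a compatible family of \emph{not necessarily closed} degree~$-1$ morphisms out of the $F_i$ assembles into a single morphism out of $F_j$. This hinges precisely on the inclusion $\sZ^0(\bA)\rarrow\bA^0$ preserving directed colimits, which was already established and used in the proof of Lemma~\ref{Arinkin-lemma}. Thus no input beyond the local finite presentability of $\bA$ (ensuring $\sZ^0(\bA)$ is Grothendieck with exact directed colimits) is needed.
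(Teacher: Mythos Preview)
Your argument is correct and is precisely the direct adaptation of Arinkin's Lemma~\ref{Arinkin-lemma} that the paper explicitly acknowledges as viable but chooses not to pursue. The paper instead reformulates the problem via the subsequent lemma, which identifies $\Hom_{\sH^0(\bA)}(A,J)$ with $\Ext^1_{\sZ^0(\bA)}(A,J[-1])$ for graded-injective~$J$; the hypothesis then says $J[-1]$ is fp\+injective in $\sZ^0(\bA)$, and the conclusion follows at once from the Eklof lemma (the $\Ext^1$\+orthogonal to any class is closed under transfinitely iterated extensions). Your transfinite induction is more hands-on and self-contained, requiring no auxiliary $\Ext$\+translation; the paper's route, by contrast, makes the connection with cotorsion-theoretic machinery explicit and thereby situates the lemma within the fp\+injective/fp\+projective framework developed in Section~\ref{fp-injective-projective-subsecn}.
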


\begin{proof}
 Instead of adopting the proof of Lemma~\ref{Arinkin-lemma} to
the new situation (which is quite easy to do), we will structure
the argument differently in order to demonstrate a connection
with the Eklof lemma.
 The next lemma provides natural isomorphisms of abelian groups
$\Ext^1_{\sZ^0(\bA)}(E,J[-1])\simeq\Hom_{\sH^0(\bA)}(E,J)$ and
$\Ext^1_{\sZ^0(\bA)}(F,J[-1])\simeq\Hom_{\sH^0(\bA)}(F,J)$.
 The former isomorphism shows that the object $J$ is fp\+injective in
$\sZ^0(\bA)$ in our assumptions, and then the latter isomorphism
implies the desired vanishing of closed morphisms $F\rarrow J$
up to homotopy in~$\bA$.
\end{proof}

\begin{lem}
 Let\/ $\bE$ be an exact DG\+category.
 Then, for any two objects $A$ and $B\in\bE$, the kernel of
the map of abelian groups
\begin{equation} \label{Phi-Ext1-map}
 \Phi_\bE\:\Ext^1_{\sZ^0(\bE)}(A,B)\lrarrow
 \Ext^1_{\sZ^0(\bE^\bec)}(\Phi_\bE(A),\Phi_\bE(B))
\end{equation}
induced by the exact functor\/ $\Phi_\bE\:\sZ^0(\bE)\rarrow
\sZ^0(\bE^\bec)$ is naturally isomorphic to the group\/
$\Hom_{\sH^0(\bE)}(A,B[1])$.
 In particular, if either $A$ is a graded-projective object
in\/ $\bE$, or $B$ is a graded-injective object in\/ $\bE$, then
$$
 \Ext^1_{\sZ^0(\bE)}(A,B)\simeq\Hom_{\sH^0(\bA)}(A,B[1]).
$$
\end{lem}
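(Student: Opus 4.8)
The plan is to identify both sides of the asserted isomorphism with the group of \emph{graded-split} extensions of $A$ by $B$, i.e.\ short exact sequences in $\sZ^0(\bE)$ that split in the underlying additive category $\bE^0$. First I would describe the kernel of~\eqref{Phi-Ext1-map} explicitly. An element of $\Ext^1_{\sZ^0(\bE)}(A,B)$ is represented by an admissible short exact sequence $0\rarrow B\rarrow E\rarrow A\rarrow0$ in $\sZ^0(\bE)$, and since $\Phi_\bE$ is exact (by the definition of an exact DG\+category), its image under~\eqref{Phi-Ext1-map} is the class of $0\rarrow\Phi_\bE(B)\rarrow\Phi_\bE(E)\rarrow\Phi_\bE(A)\rarrow0$ in $\sZ^0(\bE^\bec)$. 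Being a $1$\+extension, this image vanishes precisely when the latter sequence splits in $\sZ^0(\bE^\bec)$. Invoking the fully faithful extension $\widetilde\Phi_\bE\:\bE^0\rarrow\sZ^0(\bE^\bec)$ from Lemma~\ref{Phi-Psi-extended-to-nonclosed-morphisms}, any splitting morphism in $\sZ^0(\bE^\bec)$ corresponds to a (not necessarily closed) degree\+$0$ morphism in $\bE$, and conversely; a short diagram chase with the faithfulness of $\widetilde\Phi_\bE$ then shows that the kernel of~\eqref{Phi-Ext1-map} consists exactly of the classes of short exact sequences that are split in~$\bE^0$.

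Next I would parametrize graded-split extensions by closed morphisms. By the cone/short-sequence correspondence recalled in Section~\ref{dg-categories-subsecn}, every short exact sequence $0\rarrow B\rarrow E\rarrow A\rarrow0$ of closed morphisms of degree\+$0$ which is split in $\bE^0$ is isomorphic to one of the form $0\rarrow B\rarrow\cone(f)\rarrow A\rarrow0$ for a closed morphism $f\in\Hom_{\sZ^0(\bE)}(A,B[1])$ (equivalently, a closed morphism $A[-1]\rarrow B$ of degree\+$0$). Assigning to $f$ the class of $\cone(f)$ therefore produces a surjection $\Hom_{\sZ^0(\bE)}(A,B[1])\rarrow\ker\eqref{Phi-Ext1-map}$. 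That this map is additive, matching the sum of morphisms with the Baer sum of extensions, follows from the biadditivity of the cone construction; I would confirm it by a routine computation using the explicit twist presentation of the cone as $(B\oplus A[1])(a_f)$.

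It remains to compute the kernel of this surjection, which I expect to be the main obstacle. The claim is that $\cone(f)$ represents the zero class in $\Ext^1_{\sZ^0(\bE)}(A,B)$ --- i.e.\ the graded-split sequence admits a \emph{closed} degree\+$0$ splitting --- if and only if $f$ is homotopic to zero in $\bE$. A non-closed section $\iota'\:A\rarrow\cone(f)$ always exists by the construction of the cone; the obstruction to making it closed is $d(\iota')$, which is governed by~$f$, and replacing $\iota'$ by $\iota'+\iota h$ for a degree\+$0$ morphism $h\:A\rarrow B$ alters this obstruction by a coboundary. Tracking this through the explicit formulas for $\iota,\iota',\pi,\pi'$ shows that a closed splitting exists exactly when $f=d(h)$ for some $h\in\Hom^0_\bE(A,B)$, that is, when $[f]=0$ in $\Hom_{\sH^0(\bE)}(A,B[1])=H^1\Hom_\bE^\bu(A,B)$. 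Combined with additivity, this identifies the kernel of the surjection with the null-homotopic morphisms, so it descends to the desired natural isomorphism $\Hom_{\sH^0(\bE)}(A,B[1])\simeq\ker\eqref{Phi-Ext1-map}$, naturality in $A$ and $B$ being inherited from the functoriality of the cone. Finally, for the ``in particular'' clause: if $A$ is graded-projective then $\Phi_\bE(A)$ is projective and $\Ext^1_{\sZ^0(\bE^\bec)}(\Phi_\bE(A),\Phi_\bE(B))=0$, while if $B$ is graded-injective then $\Phi_\bE(B)$ is injective and the same target group vanishes; in either case~\eqref{Phi-Ext1-map} is the zero map, its kernel is all of $\Ext^1_{\sZ^0(\bE)}(A,B)$, and we obtain $\Ext^1_{\sZ^0(\bE)}(A,B)\simeq\Hom_{\sH^0(\bE)}(A,B[1])$.
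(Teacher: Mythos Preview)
Your proof is correct and follows essentially the same approach as the paper: identify the kernel of~\eqref{Phi-Ext1-map} with the classes of short exact sequences in $\sZ^0(\bE)$ that are split in $\bE^0$ via Lemma~\ref{Phi-Psi-extended-to-nonclosed-morphisms}, then invoke the cone/graded-split correspondence from Section~\ref{dg-categories-subsecn} to parametrize these by homotopy classes of closed morphisms $A\rarrow B[1]$. The paper's proof is simply a terse summary of this argument with references, whereas you have written out the details (additivity, the null-homotopic characterization of when $\cone(f)$ splits closedly, and the graded-projective/injective reduction).
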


\begin{proof}
 This is a quite well-known observation (formulated explicitly
for complexes in abelian categories in~\cite[Lemma~5.1]{PS4}
and for abelian DG\+categories in~\cite[Lemma~6.1]{PS5}).
 The point is that the kernel of the map~\eqref{Phi-Ext1-map}
is the abelian group of equivalence classes of short sequences
$0\rarrow B\rarrow C\rarrow A\rarrow0$ in $\sZ^0(\bE)$ that are
split exact in~$\bE^0$ (in view of
Lemma~\ref{Phi-Psi-extended-to-nonclosed-morphisms}).
 Irrespectively of any exact structure on $\bE$, in any DG\+category
with shifts and cones such ``graded split'' short exact sequences
are indexed by the closed morphisms $f\:A\rarrow B[1]$ up to homotopy,
and the object $C$ is recovered as $C=\cone(f)[-1]$ (as was mentioned
already in Section~\ref{dg-categories-subsecn}).
\end{proof}

\begin{cor} \label{coherent-aleph-n-Noetherian-cdg-ring-corollary}
 Let $\biR^\cu=(R^*,d,h)$ be a CDG\+ring.
 Assume that the graded ring $R^*$ is graded left coherent and there
exists an integer $n\ge0$ such that all homogenenous left ideals in
$R^*$ have less than\/ $\aleph_n$ generators.
 Let\/ $\bA=\biR^\cu\bmodl$ be the locally coherent DG\+category of
left CDG\+modules over $\biR^\cu$, and let $\bA_\bfp=\biR^\cu\bmodl_\bfp
\subset\biR^\cu\bmodl$ be the exactly embedded full abelian
DG\+subcategory whose objects are the CDG\+modules with coherent
(finitely presentable) underlying graded $R^*$\+modules.
 Then the triangulated functor
$$
 \sD^\abs(\biR^\cu\bmodl_\bfp)\lrarrow\sD^\co(\biR^\cu\bmodl)
$$
induced by the inclusion of abelian DG\+categories $\biR^\cu\bmodl_\bfp
\rightarrowtail\biR^\cu\bmodl$ is fully faithful, and the objects in
its image form a set of compact generators of the coderived
category\/ $\sD^\co(\biR^\cu\bmodl)$.
\end{cor}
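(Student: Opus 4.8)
The plan is to obtain this corollary as a specialization of Theorem~\ref{coherent-finite-fp-dimen-compact-generation-theorem} to the abelian DG\+category $\bA=\biR^\cu\bmodl$. First I would check that $\bA$ is locally coherent: by Example~\ref{locally-coherent-cdg-ring-example}, the graded ring $R^*[\delta]$ is graded left coherent as soon as $R^*$ is, and this is precisely the condition for $\biR^\cu\bmodl$ to be a locally coherent DG\+category. I would then record the two standard identifications of the underlying abelian categories coming from Examples~\ref{CDG-ring-bec-example} and~\ref{abelian-dg-category-of-cdg-modules}, namely $\sZ^0(\bA^\bec)\simeq R^*\smodl$ (the graded $R^*$\+modules) and $\sZ^0(\bA)\simeq R^*[\delta]\smodl$ (the graded $R^*[\delta]$\+modules).

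The substance of the proof lies in verifying the two finiteness hypotheses of Theorem~\ref{coherent-finite-fp-dimen-compact-generation-theorem}, one for $\sZ^0(\bA)$ and one for $\sZ^0(\bA^\bec)$. Both are consequences of the $\aleph_n$\+Noetherianity assumption. I would first transport this assumption from $R^*$ to $R^*[\delta]$: by the last paragraph of Example~\ref{locally-Noetherian-examples}(1), taken with $\lambda=\aleph_n$, every homogeneous left ideal of $R^*$ has fewer than $\aleph_n$ generators if and only if the same holds for $R^*[\delta]$; combined with the coherence of $R^*[\delta]$ noted above, this makes both $R^*$ and $R^*[\delta]$ graded left coherent and graded left $\aleph_n$\+Noetherian. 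The main obstacle, and the only input not already assembled in the preceding sections, is the homological principle that a graded left coherent ring all of whose homogeneous left ideals have fewer than $\aleph_n$ generators has a graded-module category of finite fp\+projective dimension. This is exactly the content of \cite[Corollary~6.13]{Sto2} and \cite[Corollary~2.6]{Pfp} that Theorem~\ref{coherent-finite-fp-dimen-compact-generation-theorem} was designed to abstract; applying it to $R^*$ and to $R^*[\delta]$ yields finite fp\+projective dimension for $R^*\smodl\simeq\sZ^0(\bA^\bec)$ and for $R^*[\delta]\smodl\simeq\sZ^0(\bA)$, respectively.

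Once both finiteness conditions are in hand, Theorem~\ref{coherent-finite-fp-dimen-compact-generation-theorem} applies directly and delivers the full-and-faithfulness of $\sD^\abs(\bA_\bfp)\to\sD^\co(\bA)$ together with the compact generation statement. The remaining task is purely bookkeeping: to identify the abstract subcategory $\bA_\bfp$ of coherent objects with the concrete $\biR^\cu\bmodl_\bfp$ of the statement. Here I would invoke Lemma~\ref{Phi-Psi-preserve-reflect-coherence}, which says that $\Phi_\bA$ preserves and reflects coherence, together with the commutative triangle (from Example~\ref{CDG-ring-bec-example}, cf.\ Example~\ref{L-tilde-example}) expressing the forgetful functor $\sZ^0(\biR^\cu\bmodl)\to R^*\smodl$ as $\Upsilon_{\biR^\cu}^{-1}\circ\Phi_\bA$. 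Together these show that a CDG\+module is coherent (finitely presentable) in $\sZ^0(\bA)$ exactly when its underlying graded $R^*$\+module is coherent, which matches the description in the statement and completes the deduction. As with the analogous Corollary~\ref{cdg-modules-Noetherian-compact-generators} in the Noetherian case, the whole argument is a direct specialization of the developed machinery, with the passage from the cardinal bound on ideal generators to finiteness of the fp\+projective dimension being the one genuinely nontrivial ingredient.
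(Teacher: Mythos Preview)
Your proposal is correct and follows the same route as the paper: verify local coherence via Example~\ref{locally-coherent-cdg-ring-example}, transfer the $\aleph_n$\+bound to $R^*[\delta]$ via Example~\ref{locally-Noetherian-examples}(1), deduce finite fp\+projective dimension for both $R^*\smodl$ and $R^*[\delta]\smodl$, and then invoke Theorem~\ref{coherent-finite-fp-dimen-compact-generation-theorem} together with Lemma~\ref{Phi-Psi-preserve-reflect-coherence} to identify $\bA_\bfp$. One small correction: the external input you need for the fp\+projective dimension bound is the graded version of \cite[Proposition~2.3]{Pfp}, not \cite[Corollary~6.13]{Sto2} or \cite[Corollary~2.6]{Pfp}---the latter two are compact generation results (the precursors of Theorem~\ref{coherent-finite-fp-dimen-compact-generation-theorem} itself), not the cardinal-to-dimension principle.
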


\begin{proof}
 According to Example~\ref{locally-coherent-cdg-ring-example},
the abelian DG\+category $\biR^\cu\bmodl$ is locally coherent under our
assumptions.
  Notice that, by
Lemmas~\ref{Phi-Psi-preserve-generatedness-presentability}\+-%
\ref{Phi-Psi-reflect-generatedness-presentability} or by
Lemma~\ref{Phi-Psi-preserve-reflect-coherence}, a left CDG\+module
$(M^*,d_M)$ over $(R^*,d,h)$ is coherent as an object of
$\sZ^0(\biR^\cu\bmodl)$ if and only if the graded $R^*$\+module $M^*$
is coherent as an object of the category of graded modules $R^*\smodl$.

 Let us explain how the $\aleph_n$\+Noetherianity assumption in
the formulation of the corollary implies both the fp\+projective
dimension assumptions of
Theorem~\ref{coherent-finite-fp-dimen-compact-generation-theorem}.
 As stated in Example~\ref{locally-Noetherian-examples}(1), all
homogeneous left ideals in the graded ring $R^*[\delta]$ have less
than\/ $\aleph_n$ generators if and only if all homogeneous left
ideals in the graded ring $R^*$ have this property.
 According to the graded version of~\cite[Proposition~2.3]{Pfp},
the fp\+projective dimension of the locally coherent category
of graded left $R^*$\+modules $\sZ^0(\biR^\cu\bmodl^\bec)$ does not
exceed~$n$ in this case.
 The same, of course, applies to the locally coherent category
$\sZ^0(\biR^\cu\bmodl)$ of graded left modules over $R^*[\delta]$.

 We have checked all the assumptions of
Theorem~\ref{coherent-finite-fp-dimen-compact-generation-theorem}
in the situation at hand, and invoking this theorem proves
the corollary.
\end{proof}

\begin{rem}
 The results of the paper~\cite{PS5} allow to drop the finite
fp\+dimension conditions in
Theorem~\ref{coherent-finite-fp-dimen-compact-generation-theorem}
and the $\aleph_n$\+Noetherianity condition in
Corollary~\ref{coherent-aleph-n-Noetherian-cdg-ring-corollary}
at the cost of replacing the coderived category in the sense
of the present author (as defined in
Section~\ref{derived-second-kind-definitions-subsecn}) by
the coderived category in the sense of Becker
(as defined in~\cite[Section~7.3]{PS5}).
 The respective results in~\cite{PS5} are~\cite[Theorem~0.2
or~8.19]{PS5} and~\cite[Corollary~0.3 or~8.20]{PS5}.

 The terminology of Positselski's vs.\ Becker's co/contraderived
categories was established in~\cite[Remark~9.2]{PS4}
and~\cite[Section~7.9]{Pksurv}.
 It is an open problem whether the present author's and Becker's
approaches to defining coderived and contraderived categories
\emph{ever} produce different outputs within their common domain
of definition (see~\cite[Examples~2.5(3) and~2.6(3)]{Pps} for
a discussion).

 Let $\bA$ be a locally coherent DG\+category.
 Assuming that the locally coherent abelian category $\sZ^0(\bA^\bec)$
has finite fp\+projective dimension,
Proposition~\ref{coherent-finite-fp-dimen-coderived-category-prop} tells
that Positselski's and Becker's coderived categories of $\bA$ agree.
 So, even if one is interested the coderived category in the sense of
the present author, the methods of the paper~\cite{PS5} provide
an improvement over our results in that they imply that the condition
of finite fp\+projective dimension of the category $\sZ^0(\bA)$ can be
dropped in
Theorem~\ref{coherent-finite-fp-dimen-compact-generation-theorem},
and only for the category $\sZ^0(\bA^\bec)$ this condition is needed.
 This conclusion is essentially based on~\cite[Corollary~7.19]{PS5},
which is a more powerful technique for proving that finitely presentable
objects generate the coderived category, as compared to our argument
in the proof of
Theorem~\ref{coherent-finite-fp-dimen-compact-generation-theorem} above.
\end{rem}

\bigskip

\end{document}